\documentclass{article}

\usepackage[latin1]{inputenc}
\usepackage[T1]{fontenc}
\usepackage[english]{babel}
\usepackage{textcomp}
\usepackage{mathtools,amssymb,amsmath}
\usepackage{amsthm}
\usepackage{lmodern}

\usepackage[a4paper]{geometry}
\usepackage{graphicx}
\usepackage[table]{xcolor}
\usepackage{array}
\usepackage{calc}
\usepackage{microtype}

\usepackage{titlesec}
\usepackage{titletoc}
\usepackage{fancyhdr}
\usepackage{titling}
\usepackage{enumitem}

\usepackage{stmaryrd}
\usepackage{array}
\usepackage{pdfpages}
\usepackage{tikz}
\usepackage{vmargin}
\usepackage{mathrsfs}
\usepackage{float}

\usetikzlibrary{decorations}
\usetikzlibrary{decorations.pathmorphing}
\usetikzlibrary{decorations.pathreplacing}
\usetikzlibrary{decorations.shapes}
\usetikzlibrary{decorations.text}
\usetikzlibrary{decorations.markings}
\usetikzlibrary{decorations.fractals}
\usetikzlibrary{decorations.footprints}

\usepackage{hyperref}
\hypersetup{pdfstartview=XYZ}


\newcommand{\diff}{\mathop{}\mathopen{}\mathrm{d}}
\newcommand{\mtext}[1]{\quad\text{#1}\quad}
\newcommand{\abs}[1]{\left\lvert #1 \right\rvert}
\newcommand{\enstq}[2]{\left\{#1\mathrel{}:\mathrel{}#2\right\}}

\newcommand{\proba}[1]{\mathbb{P}\left[#1\right]}
\newcommand{\probacond}[2]{\mathbb{P}\left[#1\mathrel{}\middle|\mathrel{} #2\right]}

\newcommand{\E}{{\mathbb{E}}}
\newcommand{\nn}{\mathbb{N}}
\newcommand{\zz}{\mathbb{Z}}
\newcommand{\rr}{{\mathbb{R}}}
\newcommand{\dd}{{\mathbb{D}}}
\newcommand{\pp}{{\mathbb{P}}}

\newcommand{\zuz}{{\{0,1,2\}^\zz}}
\newcommand{\ba}{{\mathbf{a}}}
\newcommand{\bn}{{\mathbf{n}}}
\newcommand{\bm}{{\mathbf{m}}}
\newcommand{\bk}{{\mathbf{k}}}
\newcommand{\bv}{{\mathbf{v}}}
\newcommand{\bd}{{\mathbf{d}}}
\newcommand{\be}{{\mathbf{e}}}
\newcommand{\bdelta}{{\boldsymbol{\delta}}}

\newcommand{\cA}{{\mathcal A}}
\newcommand{\cB}{{\mathcal B}}
\newcommand{\cC}{{\mathcal C}}

\newcommand{\cE}{{\mathcal E}}

\newcommand{\cI}{{\mathcal I}}

\newcommand{\cL}{{\mathcal L}}
\newcommand{\cM}{{\mathcal M}}
\newcommand{\cP}{{\mathcal P}}
\newcommand{\cR}{{\mathcal R}}
\newcommand{\cS}{{\mathcal S}}
\newcommand{\cT}{{\mathcal T}}
\newcommand{\cU}{{\mathcal U}}
\newcommand{\cV}{{\mathcal V}}

\newcommand{\fP}{{\mathfrak P}}

\newcommand{\ttau}{{\tilde \tau}}

\newcommand{\tT}{{\tilde T}}

\newcommand{\tX}{{\tilde X}}

\newcommand{\tH}{{\tilde H}}
\newcommand{\tOmega}{{\tilde \Omega}}

\newcommand{\al}{{\ba_\la}}
\newcommand{\nl}{{\bn_\la}}
\newcommand{\ml}{{\bm_\la}}
\newcommand{\mla}{\bm_\la^\alpha}
\newcommand{\kl}{{\bk_\la}}
\newcommand{\klp}{{\bk_{\la,\pi}}}
\newcommand{\vlp}{{\bv_{\la,\pi}}}
\newcommand{\elp}{{\be_{\la,\pi}}}
\newcommand{\indiq}[1]{\textrm{\textbf{1}}_{\{#1\}}}
\newcommand{\la}{{\lambda}}
\newcommand{\intot}{{\int_0^t}}
\newcommand{\sm}{{{s-}}}
\newcommand{\tm}{{{t-}}}
\newcommand{\e}{{\varepsilon}}

\newcommand{\proco}{{\check{\zeta}}}

\newcommand{\eg}{{\it e.g. }}
\newcommand{\ie}{{\it i.e. }}
\newcommand{\md}{\medskip}

\newcommand{\intervalle}[4]{\mathopen{#1}#2
                            \mathclose{}\mathpunct{},#3
                            \mathclose{#4}}
\newcommand{\intervalleff}[2]{\intervalle{[}{#1}{#2}{]}}
\newcommand{\intervalleof}[2]{\intervalle{(}{#1}{#2}{]}}
\newcommand{\intervallefo}[2]{\intervalle{[}{#1}{#2}{)}}
\newcommand{\intervalleoo}[2]{\intervalle{(}{#1}{#2}{)}}

\newcommand{\intervalleentier}[2]{\intervalle\llbracket{#1}{#2}\rrbracket}

\makeatletter

\@addtoreset{equation}{section}
\makeatother

\newtheorem{theo}{\indent Theorem}[section]
\newtheorem{prop}[theo]{\indent Proposition}
\newtheorem{rem}[theo]{\indent Remark}
\newtheorem{lem}[theo]{\indent Lemma}
\newtheorem{defin}[theo]{\indent Definition}
\newtheorem{cor}[theo]{\indent Corollary}

\title{Asymptotics of one dimensional forest fire process with non instantaneous propagation}
\author{Jean-Maxime Le Cousin}
\date{\today}
\setcounter{tocdepth}{2}

\begin{document}
\maketitle
\begin{abstract}
Consider the following forest fire model where the possible locations of trees are
the sites of $\zz$. Each site has three possible states: 'vacant', 'occupied' or
'burning'. Vacant sites become occupied at rate $1$. At each site, ignition (by
lightning) occurs at rate $\la$. When a site is ignited, a fire starts and
propagates to neighbors at rate $\pi$. We study the asymptotic behavior of this process as $\la\to0$ and $\pi\to\infty$. We show that there are three possible classes of scaling limits, according to the regime in which $\la\to0$ and $\pi\to\infty$.
\end{abstract}

\tableofcontents
\newpage

\section{Introduction}
This section is devoted to preliminaries. We first define the $(\la,\pi)-$forest fire process with non instantaneous propagation. We then recall some known results about forest fire processes. Finally, we give the plan of the present paper.

\subsection{The discrete model}
Here we introduce the forest fire model with non instantaneous propagation.

\begin{defin}\label{definition FFP}
Let $\lambda\in\intervalleof{0}{1}$ and $\pi\geq1$ be fixed. For each $i\in\zz$, we
consider three Poisson processes, $N^S(i)=(N^S_t(i))_{t\geq0},
N^M(i)=(N^M_t(i))_{t\geq0}$ and $N^P(i)=(N^P_t(i))_{t\geq0}$ with respective parameters
$1,\la$ and $\pi$, all of these processes being independent. Consider a
$\{0,1,2\}$-valued process $(\eta^{\lambda,\pi}_t(i))_{t\geq0,i \in\zz}$ such that
a.s., for all $i \in\zz$, $(\eta^{\lambda,\pi}_t(i))_{t\geq0}$ is c\`adl\`ag. We
say that $(\eta^{\lambda,\pi}_t(i))_{t\geq0,i \in\zz}$ is a $(\lambda,\pi)-$forest
fire process ($(\la,\pi)-$FFP in short) if a.s., for all $i \in\zz$, all $t\geq0$,

\begin{align*}
\eta^{\lambda,\pi}_t(i)=&\int_0^t\textrm{\textbf{1}}_{\{\eta_{s-}^{\la,\pi}(i)=0\}}\diff
N_s^S(i)+\int_0^t \textrm{\textbf{1}}_{\{\eta_{s-}^{\la,\pi}(i)=1\}}\diff N_s^M(i)\\
&+\int_0^t
\textrm{\textbf{1}}_{\{\eta_{s-}^{\la,\pi}(i+1)=2,\eta_{s-}^{\la,\pi}(i)=1\}}\diff
N_s^P(i+1)+\int_0^t
\textrm{\textbf{1}}_{\{\eta_{s-}^{\la,\pi}(i-1)=2,\eta_{s-}^{\la,\pi}(i)=1\}}\diff
N_s^P(i-1)\\
&- 2\int_0^t \textrm{\textbf{1}}_{\{\eta_{s-}^{\la,\pi}(i)=2\}}\diff N_s^P(i).
\end{align*}
\end{defin}
Formally, we say that $\eta_t^{\lambda,\pi}(i)=0$ if there is no tree at site $i$ at
time $t$ and $\eta_t^{\lambda,\pi}(i)=1$ if the site $i$ is occupied. The case
$\eta_t^{\lambda,\pi}(i)=2$ means that the site $i$ is burning. Thus, the forest fire process starts from an empty initial
configuration, seeds fall according to some i.i.d. Poisson processes of parameter $1$
and matches fall according to some i.i.d. Poisson processes of parameter $\lambda$.
When a seed falls on an empty site, a tree appears immediately. When a match falls on
an occupied site, a fire starts and waits for an exponential time of parameter $\pi$
before it propagates to its neighbors and vanishes. If its right (resp. left) neighbor is occupied then it becomes
burning. Seeds falling on occupied sites, matches falling on vacant sites and fires propagating to vacant sites have no
effect.

This process can be shown to exist and to be unique (for almost every realization of
$N^S,N^M,N^P$) by using a \textit{graphical construction}. Indeed, to build the
process until a given time $T>0$, it suffices to work between sites $i$ which are
vacant until time $T$ [because $N^S_T(i)=0$]. Interaction cannot
cross such sites. Since such sites are a.s. infinitely many, this allows us to
handle a graphical construction. It should be pointed out that this construction
only works in dimension~$1$. 

For $a,b\in \zz$, we set $\intervalleentier{a}{b}=\{a,\dots,b\}\subset \zz$. For
$\eta \in \zuz$ and $i \in \zz$, we define the occupied connected component around
$i$ as
\[C(\eta,i)=\begin{cases} \emptyset& \mtext{if} \eta(i)=0 \text{ or } 2,\\
                                                \intervalleentier{l(\eta,i)}{r(\eta,i)}
& \mtext{if} \eta(i)=1,
        \end{cases}\]
where $l(\eta,i)=\sup\{k< i:\; \eta(k)=0\mtext{or}2\}+1$ and $r(\eta,i)=\inf\{k >
i:\; \eta(k)=0\mtext{or}2\}-1$.

\subsection{Motivation and references}
Consider a graph $G=(S,A)$, $S$ being the set of vertices and $A$ the set of edges.
Introduce the space of configurations $E=\{0,1,2\}^S$. For $\eta\in E$, we say that
$\eta(i)=0$ if the site $i\in S$ is vacant, $\eta(i)=1$ if the site $i$ is occupied by a tree and
$\eta(i)=2$ if the tree in $i$ is burning. Two sites are neighbors if there is an
edge between them. We call forests the connected components of occupied sites. For
$i\in S$ and $\eta\in E$, we denote by $C(\eta,i)$ the forest around $i$ in the
configuration $\eta$ (with $C(\eta,i)=\emptyset$ if $\eta(i)=0$ or $\eta(i)=2$). We
consider the following rules
\begin{itemize}
        \item vacant sites become occupied (a seed falls and a tree immediately
grows) at rate $1$;
        \item occupied sites take fire (a match falls) at rate $\la>0$;
        \item fires propagate to neighbors (inside the forest) at rate $\pi>0$.
\end{itemize}
Such a model was introduced by Henley \cite{h} and Drossel and Schwabl \cite{ds} as a toy model for
forest fire propagation and as an example of a simple model intended to clarify the
concept of {\itshape self-organized criticality}.

The study of self-organized critical systems has become rather popular
in physics since the end of the 80's. These are simple models supposed to clarify
temporal
and spatial randomness observed in a variety of natural phenomena showing long range
correlations, like sand piles, avalanches, earthquakes, stock market crashes, forest
fires, shapes of
mountains, clouds, etc. It is remarkable that such phenomena, reminiscent of critical
behavior,
arise so frequently in nature where nobody is here to finely tune the parameters to
critical
values. The most classical model is the sand pile model introduced in 1987 in \cite{btw1}, but many variants or related models have been proposed and studied more or less rigorously, describing earthquakes (see \cite{ofc}) or forest fires (see \cite{h}).

The features of the model depend on the geometry of the graph; we only consider in
this paper the case $S=\zz$ (with its natural set of edges). They also depend on the
laws of the processes governing seeds, matches and propagation. We work here in the
classical case where all processes are Poisson processes.

From the point of view of self-organized criticality, the interesting regime is the
asymptotic behavior of the forest-fire process as $\la\to0$ and $\pi\to\infty$:
then fires are very rare, but concern huge occupied components. We present three
possible limit processes (depending on the regime at which $\la\to0$ and $\pi\to\infty$)
arising when we suitably rescale space and accelerate time.

\subsection*{Forest fire on $\zz$}
All the available results concern the limit case where the
propagation is instantaneous ($\pi=\infty$): when a tree takes fire, the whole forest (to which it
belongs) is {\it destroyed immediately}. The model is thus:
\begin{itemize}
        \item vacant sites become occupied (a seed falls and a tree immediately
grows) at rate 1;
        \item matches fall on occupied sites at rate $\la$ and then burn
instantaneously the corresponding forest.
\end{itemize}
We denote $\eta^\la_t \in \{0, 1\}^\zz$ the configuration at time $t$. Observe that (possible) infinite clusters in the initial configuration would
immediately disappear.

The following results are related to this model.
\subsubsection*{Asymptotic density}
Van den Berg and J\'arai study in \cite{vdbj} the asymptotic density of vacant sites
in the limit $\la \to 0$. Their result states that there are two constants $0<c<C$
such that for any initial configuration, for any $\la >0$ small enough, for $t$
large enough (of order $\log(1/\la)$), 
\[\frac{c}{\log(1/\la)} \leq \proba{\eta^\la_t(0) = 0} \leq \frac{C}{\log(1/\la)}.\]
This is coherent with the intuition that the rarer fires are, the more space is
occupied by trees (although because of the lack of monotonicity, this is not
straightforward). We mention that such a result was stated in Drossel-Clar-Schwabl
\cite{dcs}. But the proof in \cite{dcs} is not rigorous: it is based on  the {\it
ansatz}  that the cluster sizes were following a cutoff power law, for cluster-sizes
up to some $s_{max}^\la$ defined by $s_{max}^\la \log{s_{max}^\la} = 1/\la$, i.e.
\[s_{max}^\la \simeq \frac 1 {\la \log(1/\la)}.\]  
In \cite{vdbj}, van den Berg and J\'arai also show that the cluster sizes cannot
follow the predicted power law.

\subsubsection*{Sizes of clusters, first results}
\label{clusterssize}
In \cite{bp}, Brouwer and Pennanen show that this last {\it ansatz} holds true up to
$s_{max}^{1/3}$. More specifically, they show that there are some constants $0<c<C$ such
that for all $0 < \la < 1$ and all stationary measures $\mu_\la$ (invariant by
translation) of the forest fire model on $\zz$ with parameter $\la$, for all
$x<(s_{max}^\la)^{1/3}$, 
\[\frac{c}{(1+x)\log{(1/\la)}} \leq \mu_\la \left(|C(\eta,0)| = x \right) 
\leq \frac{C}{(1+x) \log{(1/\la)}}.\]
Observe that this estimate is valid for relatively small clusters 
that will not be seen after rescaling (microscopic clusters).  

\subsubsection*{Scaling limits}
Still in the limit case where the propagation is instantaneous, Bressaud and Fournier have proved in \cite{bf} that in the asymptotic of rare
matches, the forest fire process converges, under suitable normalization, to some
limit forest fire process. They described precisely the dynamics of this limit
process and have shown that it is unique, that it can be built by using a graphical
construction and thus can be perfectly simulated. Using the limit process, they have
also estimated the size of clusters. Very roughly, they have proved that in a very
weak sense, for $\la$ small enough and for $t$ large enough (of order
$\log(1/\la)$), the cluster-size distribution resembles 
\[\proba{ C(\eta^\la_{t}, 0) = x} \simeq \frac{a}{(x+1) \log(1/\la)}\indiq{x \ll
1/(\la\log(1/\la))} + b\la\log(1/\la) e^{-x\la\log(1/\la)},\]
where $a,b$ are two positive constants. This means that there are two types of
clusters: {\it microscopic clusters}, described by a power-like law and {\it
macroscopic} clusters, described by an exponential-like law. This shows a {\it phase
transition} around the {\it critical size} $1/(\la\log(1/\la))$.

In \cite{bfnew}, Bressaud and Fournier have extended their results by replacing Poisson processes
by the case where seeds (respectively matches) fall on each site of $\zz$
independently, according to some stationary renewal processes, with stationary delay
distributed according to some law $\nu_S$ (respectively $\nu_M^\la$). This means
that for any time $t \geq 0$ and on any site $i \in\zz$, the time we have to wait
for the next seed is a $\nu_S-$distributed random variable. They also assume that
$\nu_S$ has a bounded support or a tail with fast or regular or slow variations.
They prove that, after rescaling, the corresponding forest fire process converges,
as $\la\to0$, to a limit process. They show that there are four classes of limit
processes, according to the fact that
\begin{itemize}
        \item $\nu_S$ has a bounded support,
        \item $\nu_S$ has a tail with fast decay,
        \item $\nu_S$ has a tail with polynomial decay,
        \item $\nu_S$ has a tail with logarithmic decay.
\end{itemize}
They see that the limit forest fire process build in \cite{bf} is quite universal: it
describes the asymptotics of a large class (roughly exponential decay for $\nu_S$)
of forest fire processes. A similar limit process arises when $\nu_S$ has bounded
support. But some quite different limit processes arise when $\nu_S$ has a heavy
tail.

\subsubsection*{Main idea of the present paper}

From the modelling point of view, the instantaneously destroying of clusters is
not clearly justified. The goal of this paper is to extend the result in \cite{bf} to the case
where fires need a random time to propagate to neighbors. 

We thus consider the case where seeds (resp. matches) fall on each site of $\zz$
independently, according to some Poisson processes with parameter $1$ (resp. $\la$)
and where a burning tree has to wait for an exponential time of parameter $\pi$ to propagate
to neighbors. Since the scaling in \cite{bf} depends only on the seed and match processes (i.e. only on $1$ and $\la$), the time and space scales will be the same here. We will separate three cases :
\begin{itemize}
       \item the case where
fires propagate very fast;
	   \item the case
where fires propagate very slowly;
        \item the intermediate case.
\end{itemize}
The first case is the most physically realistic and the most widely used. We will show that, if $\pi$ is large, then everything happens as if $\pi=\infty$ (instantaneous propagation). The other cases are even though mathematically interesting.
\subsection{Plan of the paper}
In Section 2, we start by explaining the heuristic scales and the relevant quantities (rescaled macroscopic clusters and measure of microscopic clusters).
We then give our main results (scaling limits and cluster-size distribution) together with heuristic proof. In Section 3, we study the existence and uniqueness of the limit process. In Section 4, we study the effect of fires in the discrete process, which will be usefull in the rest of the paper (propagation through an occupied zone). In Section 5, we give a discrete version of Section 3. The rest of the paper is devoted to the rigorous proof of our results: we treat the convergence in the regime $\cR(\infty,z_0)$ in Section 7, in the regime $\cR(p)$, for some $p\in\intervalleoo{0}{\infty}$ in Section 8 and finally in the regime $\cR(0)$ in Section 9. In the end of each two last sections, we deduce estimates on the cluster size distribution for the process.

\section{Main results}
\subsection{Notation}\label{notations}
In the whole paper, we use the convention $1/\infty=0$ and $1/0=\infty$.

We denote, for $J=[a,b]$ an interval of $\rr$, by $|J|=b-a$ the length of $J$ and for
$\alpha>0$, we set $\alpha J = [\alpha a, \alpha b]$. 

For $I\subset \zz$, $|I|=\#I$ stands for the number of
elements in $I$. For $I=\intervalleentier{a}{b}=\{a,\dots,b\} \subset \zz$ and
$\alpha>0$, we will set $\alpha I := \intervalleff{\alpha a}{\alpha b}\subset \rr$. For
$\alpha>0$, we of course take the convention that $\alpha \emptyset=\emptyset$.

For $x\in \rr$, $\lfloor x \rfloor$ stands for the integer part of $x$.

We denote by $\cI=\{[a,b],a\leq b\}$ the set of all closed finite intervals of
$\rr$. For two intervals $[a,b]$ and $[c,d]$, we set
\[\bdelta([a,b],[c,d])=|a-c|+|b-d|, \quad \bdelta([a,b],\emptyset)=|b-a|.\]

For $(x,I), (y,J)$ in $\dd([0,T], \rr_+ \times \cI\cup\{\emptyset\})$, the set of
c\`adl\`ag functions from $[0,T]$ into $\rr_+\times \cI\cup\{\emptyset\}$,  we
define 
\[\bd_T((x,I),(y,J))=\int_0^T\Big[|x(t)-y(t)|+\bdelta(I_t,J_t)\Big]\diff t.\]
For two functions $I,J\colon\intervalleff{0}{T}\to\cI\cup\{\emptyset\}$, we set
\[\bdelta_T(I,J)=\int_0^T\bdelta(I_t,J_t)\diff t.\]

For $(x,t)\in\rr\times\intervalleff{0}{T}$ we also set, for $p\geq0$,
\[\Lambda^ p_{(x,t)}  \coloneqq \enstq{(x+z,t-p\abs{z})}{\abs{z}\leq t/p}\]
($(r,v)\in\Lambda^ p_{(x,t)}\iff v=t-p\abs{r-x}$) and its part which joins $(y,s)$ to $(x,t)$
\[\Lambda^ p_{(x,t)}(y,s)=\begin{cases}
\enstq{(z,t-p\abs{z-x})}{z\in\intervalleff{x}{y}} & \text{if } (y,s)\in\Lambda^p_{(x,s)}\text{ and }y>x, \\
\enstq{(z,t-p\abs{z-x})}{z\in\intervalleff{y}{x}} & \text{if } (y,s)\in\Lambda^p_{(x,s)}\text{ and }y<x,\\
\emptyset &\text{else.}
\end{cases}\]
Similarly, we define
\begin{align*}
\mathcal{V}^ p_{(x,t)} &= \enstq{(x+z,t+p\abs{z})}{z\in\rr}\\
\mathcal{V}^ p_{(x,t)}(y,s) &=\begin{cases}
\enstq{(z,t+p\abs{z-x})}{z\in\intervalleff{x}{y}} & \text{if } (y,s)\in\mathcal{V}^ p_{(x,t)}\text{ and }y>x, \\
\enstq{(z,t+p\abs{z-x})}{z\in\intervalleff{y}{x}} & \text{if } (y,s)\in\mathcal{V}^ p_{(x,t)}\text{ and }y<x, \\
\emptyset &\text{else},
\end{cases}
\end{align*}
see Figure \ref{lap}. Observe that $\Lambda^ p_{(x,t)}(y,s)=\mathcal{V}^ p_{(y,s)}(x,t)$. Also observe that $\Lambda^0_{(x,t)}=\mathcal{V}^0_{(x,t)}=\enstq{(z,t)}{z\in\rr}$.
\begin{figure}[h!]
\fbox{
\begin{minipage}[c]{0.95\textwidth}
\centering
\begin{tikzpicture}
\draw[->] (0,-2.3)--(0,.5) node[below,right] {$t$};
\draw[->] (-7,-2.3)--(7,-2.3) node[below,right] {$x$};

\draw (-4,0)--(-7,-1.2);
\draw (-4,0)--(1.75,-2.3) node[above] at (-4,0) {$(x,t)$};
\draw (-2,-.7)--(-2,-.9) node[above] at (-2,-.8) {$(y,s)$};
\draw[decorate,decoration={brace,raise=0.2cm,mirror}] (-4,0) -- (-2,-.8) node[below=0.2cm,pos=0.5,sloped] {$\Lambda^ p_{(x,t)}(y,s)$};

\draw (4,-1)--(7,.2);
\draw (4,-1)--(1,.2) node[below,left] at (4,-1.1) {$(x,t)$};
\draw (6,-.1)--(6,-.3) node[above] at (6,-.2) {$(y,s)$};
\draw[decorate,decoration={brace,raise=0.2cm,mirror}] (4,-1) -- (6,-.2) node[below=0.2cm,pos=0.5,sloped] {$\mathcal{V}^ p_{(x,t)}(y,s)$};

\end{tikzpicture}\caption{$\Lambda^p$ and $\mathcal{V}^p$}\label{lap}
\vspace{.5cm}
\parbox{13.3cm}{
\footnotesize{
On the left side is drawn $\Lambda^ p_{(x,t)}$ and $\Lambda^ p_{(x,t)}(y,s)$. On the right side is drawn $\mathcal{V}^ p_{(x,t)}$ and $\mathcal{V}^ p_{(x,t)}(y,s)$.
}}
\end{minipage}}
\end{figure}

\subsection{Heuristic scales and relevant quantities}

We look for some time scale for which tree clusters see about one fire per unit of
time. But for $\la$ very small, clusters will be very large before a match falls inside.
We thus also have to rescale space. Since this does not depend on $\pi$, these scales are the same as in \cite{bf}. We also have to find the different regimes at which $\la\to0$ and $\pi\to\infty$.

\subsubsection*{Time scale}
For $\la>0$ very small and for $t$ not too large, one might neglect fires, so that
roughly, each site is vacant with probability $e^{-t}$. Indeed, the time we have to
wait for the first seed follows, on each site, the law $\cE(1)$. Thus
$C(\eta_t^{\la,\pi},0)\simeq \intervalleentier{-X}{Y}$, where $X,Y$ are geometric
random
variables with parameter $e^{-t}$. Consequently, for $t$ not too large,
\[\abs{C(\eta_t^{\la,\pi},0)}\simeq e^t.\]
On the other hand, the rate that at which matches fall in the cluster $C(\eta^{\la,\pi}_t,0)$ is ${\lambda}|C(\eta_t^{\la,\pi},0)|$. So we decide to accelerate time by a factor 
\begin{equation}\label{def al}
\al=\log(1/{\lambda}).
\end{equation}
In this way,
${\lambda}|C(\eta_{\ba_\la}^{\la,\pi},0)|\simeq1$.

\subsubsection*{Space scale}
We now rescale space in such a way that during a time interval of order
$\ba_\la=\log(1/{\lambda})$, something like one match falls per unit of (space) length.
Since fires occur at rate ${\lambda}$, our space scale has to be of order 
\begin{equation}\label{def nl}
\nl=\left\lfloor\frac{1}{\lambda\al}\right\rfloor=\left\lfloor\frac{1}{\lambda\log(1/\lambda)}\right\rfloor.
\end{equation}
This means that we will identify
$\intervalleentier{0}{\bn_\la}\subset\zz$ with
$[0,1]\subset{\rr}$.

\subsubsection*{Propagation velocity}
The time needed for a fire to destroy a macroscopic cluster (which contains about
$\nl$ sites) is of order $\frac{\nl}{\pi}$. Indeed, a burning tree waits for an
exponential time of parameter $\pi$ before it propagates to neighbors. Thus, if a fire
starts at $0$, it needs roughly a time
$\nl/\pi$ to reach $\nl$. We have to
compare
the time $\nl/\pi$ to the characteristic time $\al$. Thus we have to separate
the three following regimes, as $\la\to0$ and $\pi\to\infty$ (observe that $\frac{\nl}{\al\pi}\simeq\frac1{\la\log^2(1/\la)\pi}$):
\begin{itemize}
        \item $\frac1{\la\log^2(1/\la)\pi}\to0$, which corresponds to the case where
fires propagate very fast;
        \item $\frac1{\la\log^2(1/\la)\pi}\to p$, for some
$p\in\intervalleoo{0}{\infty}$,
which is an intermediate case;
        \item $\frac1{\la\log^2(1/\la)\pi}\to\infty$, which corresponds to the case
where fires propagate very slowly.
\end{itemize}
Recall that, when neglecting fires and for $t<1$, $1/\la^t$ is the order of magnitude of the occupied cluster around $0$ at time $\al t$.
Thus a match falling in $0$ at time $\al t$ needs a time of order $1/(\la^t\pi)$ to destroy the whole component. In
order to treat the last case, we suppose that there exists
$z_0\in\intervallefo{0}{1}$ such that
\begin{equation}\label{inftyseuil}
\frac1{\la^{t}\pi}\to\begin{cases}
0 & \text{if } t<z_0,\\
\infty & \text{if } t>z_0.
\end{cases}\end{equation}
This means that if the match falls at time $\al t<\al z_0$, there are few occupied sites around $0$. Thus the fire destroys the whole component in a time of order $1/(\la^t\pi)\ll{\al}$. On the other hand, if the match falls a time $\al t>\al z_0$ then the component is
too big to be destroyed before $\al T$, for all $T>0$.

To summarize, we will treat separately the three following regimes, as $\la\to0$ and $\pi\to\infty$.
\begin{enumerate}
	\item $\cR(0)$: $\frac{\nl}{\al\pi}\ll1$, the fast regime;
	\item $\cR(p)$: $\frac{\nl}{\al\pi}\sim p\in\intervalleoo{0}{\infty}$, the intermediate regime;
	\item $\cR(\infty,z_0)$: $\frac{\nl}{\al\pi}\gg1$ and $\frac{\log(\pi)}{\log(1/\la)}\to z_0\in\intervalleff{0}{1}$, the slow regime.
\end{enumerate}

\subsubsection*{Rescaled clusters}
We thus set, for ${\lambda}\in(0,1)$, $\pi\geq 1$, $t\geq0$ and $x\in{\mathbb{R}}$, recalling Subsection \ref{notations},

\begin{equation}\label{dlambda}
D^{\lambda,\pi}_t(x)\coloneqq\frac1{\nl} C\left(\eta^{\la,\pi}_{\al t},\lfloor
\nl x\rfloor\right).
\end{equation}

However, this creates an immediate difficulty: recalling that $C(\eta^{\la,\pi}_t,0)
\simeq e^t$ for $t$ not too large, we see that for each site $x$, 
$|D^{\lambda,\pi}_t(x)|\simeq{\lambda}\log(1/{\lambda}) e^{t
\log(1/{\lambda})}={\lambda}^{1-t} \log(1/{\lambda})$, of which the limit when
${\lambda}\to0$ is
$0$ for $t<1$ and $+\infty$ for $t\geq1$.

For $t\geq1$, there might be fires in effect and one hopes that this will make the
possible limit of $|D^{\lambda,\pi}_t(x)|$ finite.
However, fires can only reduce the size of clusters so that for $t<1$, the limit of
$|D^{\lambda,\pi}_t(x)|$ will really be $0$. This cannot be a Markov process because it remains at $0$ during a time
interval of length exactly $1$. We thus need to keep track of more information in
order to control when
it exits from $0$.

To have an idea of the sizes of microscopic clusters, we keep some information about {\it the degree of smallness} of microscopic clusters. We
consider 
\begin{equation}\label{def ml}
{\bf m_\la}=\left\lfloor\frac{1}{\la\ba_\la^2}\right\rfloor=\left\lfloor\frac{1}{\lambda\log^2(1/\la)}\right\rfloor.
\end{equation}
Remark that $\ml\ll\nl$ but $\ml\gg\la^ {-t}$, for all $t\in\intervallefo{0}{1}$.
We introduce, for $\la>0$, $\pi\geq1$, $x\in \rr$, $t\geq0$,
\begin{align}\label{zlambda}
K_t^{\la,\pi}(x) &=\frac{\abs{\left\{ i\in\intervalleentier{\lfloor
 \nl x\rfloor-\bm_\la}{\lfloor \nl x\rfloor+\bm_\la} :
\eta^{\la,\pi}_{\al t}(i)=1\right\}}}{2\bm_\la+1}\in\intervalleff{0}{1},\\
Z_t^{\la,\pi}(x) &=\frac{-\log(1-K_t^{\la,\pi}(x))}{\log(1/\la)}\wedge 1\in\intervalleff{0}{1}.
\end{align}
Observe that $K^{\la,\pi}_t(x)$ stands for the {\it local density of occupied sites}
around $ \lfloor  \nl x \rfloor $ at time $\al t$. This density is {\it local}
because $\bm_\la \ll \bn_\la$.
We hope that for $t<1$, neglecting fires, $K^{\la,\pi}_t(x)\simeq 1-\la^t$,
whence $Z^{\la,\pi}_t(x)\simeq t$.

For all $\la>0$ small enough (we need that $2\bm_\la+1<1/\la$), it also holds that
 $Z^{\la,\pi}_t(x)=1$ if and only if $K^{\la,\pi}_t(x)=1$, i.e. if and only if  all
the sites are occupied around $\lfloor \nl x \rfloor$. Indeed,
$Z^{\la,\pi}_t(x)=1$
implies that $-\log(1-K^{\la,\pi}_t(x))\geq \log(1/\la)$, so that $K^{\la,\pi}_t(x) \geq
1-\la> 1-1/(2\ml+1)$, whence $K^{\la,\pi}_t(x)=1$. 

\subsubsection*{Final description}
We will study the $(\lambda,\pi)-$FFP through
$(D^{\la,\pi}_t(x),Z_t^{\la,\pi}(x))_{t\geq 0,x\in \rr}$. The main idea is that for
$\la>0$ very small and $\pi\geq1$ large enough:
\begin{itemize}
        \item if $Z^{\la,\pi}_t(x)=z\in(0,1)$, then $|D^{\la,\pi}_t(x)|\simeq 0$
and the
(rescaled) cluster containing $x$ is microscopic (in the sense that the non-rescaled
cluster containing $\lfloor\nl x\rfloor$ is small when compared to $\bn_\la$), but we control the local
density of occupied sites around $x$, which resembles $1-\la^ z$. Observe that
this density tends to $1$ as $\la \to 0$ for all $z\in (0,1)$;
        \item if $Z^{\la,\pi}_t(x)=1$ and $D_t^{\la,\pi}(x)=[a,b]$, then the
(rescaled)
cluster containing $x$ is macroscopic and has a length equal to $\abs{b-a}$ (or
$|C(\eta^{\la,\pi}_{\al t},\lfloor \nl x \rfloor)|\simeq \nl\abs{b-a}$ in
the original scales).
\end{itemize}

\begin{defin}
Let $(E,d)$ be a metric space. 

Let $p\geq0$. In the rest of the paper, we will say that $f(\la,\pi)\in E$ tends to $\ell\in E$ when $\la\to0$ and $\pi\to\infty$ in the regime $\cR(p)$ if for all $\delta>0$, there are $\e>0$ and $\la_0\in\intervalleof{0}{1}$ such that for all $\la\in\intervalleoo{0}{\la_0}$ and all $\pi\geq1$ in such a way that $\abs{\frac{\nl}{\al\pi}-p}<\e$, there holds $d(f(\la,\pi),\ell)<\delta$.

Let $z_0\in\intervalleff{0}{1}$. Similarly, we will say that $f(\la,\pi)\in E$ tends to $\ell\in E$ when $\la\to0$ and $\pi\to\infty$ in the regime $\cR(\infty,z_0)$ if for all $\delta>0$, there are $\e>0$, $K_0>0$ and $\la_0\in\intervalleof{0}{1}$ such that for all $\la\in\intervalleoo{0}{\la_0}$ and all $\pi\geq1$ in such a way that $\frac{\nl}{\al\pi}\geq K_0$ and $\abs{\frac{\log(\pi)}{\log(1/\la)}-z_0}<\e$, there holds $d(f(\la,\pi),\ell)<\delta$.
\end{defin}

\subsection{Main results when $p\in\intervallefo{0}{\infty}$}
In this section, we are interested in the regime $\cR(p)$, for some $p\in\intervallefo{0}{\infty}$.
We treat together the cases $p=0$ and $p\in\intervalleoo{0}{\infty}$. There are just few differences between these two cases: see Remark \ref{rem p=0} for an alternative definition in the case $p=0$.

\subsubsection{Definition of the limit forest fire process}
We now describe the limit process. We want this process to be Markov and this forces
us to add some variables. We consider a Poisson measure $\pi_M(\diff x,\diff t)$ on
$\rr\times[0,\infty)$, with intensity measure $\diff x\diff t$, whose marks
correspond to matches. Recall Notation \ref{notations}.

\begin{defin}\label{dfplffp}
Let $p\geq0$. A process $(Z_t(x),H_t(x),F_t(x))_{t\geq 0,x\in \rr}$ with
values in $\rr_+\times \rr_+\times\nn$ such that a.s., for all $x\in\rr$,
$(Z_t(x),H_t(x))_{t\geq 0}$ is c\`adl\`ag, is said to be a $p-$limit-forest-fire-process (or LFFP$(p)$ in short), if a.s., for all $t\geq 0$, all $x \in \rr$,
\begin{align}\label{eqplffp}
Z_t(x)&= \intot \indiq{Z_s(x)<1}\diff s-\sum_{s\leq t}(F_s(x)\wedge 1),\notag\\
H_t(x)&= \intot Z_{s-}(x)\indiq{Z_{s-}(x)<1}\pi_M(\{x\}\times\diff s)-\intot
\indiq{H_{s}(x)>0}\diff s,\\
F_t(x)&= \iint_{(y,s)\in\Lambda^ p_{(x,t)}} \indiq{\forall (r,v)\in\Lambda^
p_{(x,t)}(y,s)\, ,\,
Z_{v-}(r)=1\text{ and }H_{v-}(r)=0}\pi_M(\diff y,\diff
s).\notag
\end{align}
\end{defin}
To the LFFP$(p)$, we associate the process $D_t(x) = [L_t(x),R_t(x)]$, with
\begin{align*}
L_t(x) =& \sup\{ y\leq x:\; Z_t(y)<1 \hbox{ or } H_t(y)>0\},\\
R_t(x) =& \inf\{ y\geq x:\; Z_t(y)<1 \hbox{ or } H_t(y)>0\}.
\end{align*}
A typical path of the finite box version of the LFFP$(p)$ is drawn and commented in
Figure \ref{plffpdraw} and a simulation algorithm is explained in the
proof of Proposition \ref{algo}.

\begin{rem}\label{rem p=0}
If $p=0$, we can rewrite the process $(Z_t(x),H_t(x),F_t(x))_{t\geq 0,x\in \rr}$ as follow
\begin{align*}
Z_t(x)&= \intot \indiq{Z_s(x)<1}\diff s-\int_0^t\int_\rr\indiq{Z_{s-}(x)=1,y\in
D_{s-}(x)}\pi_M(\diff y,\diff s),\\
H_t(x) &= \intot Z_{s-}(x)\indiq{Z_{s-}(x)<1}\pi_M(\{x\}\times\diff s)-\intot
\indiq{H_{s}(x)>0}\diff s,\\
F_t(x) &= \int_\rr\indiq{Z_{t-}(x)=1,y\in D_{t-}(x)}\pi_M(\diff
y\times\{t\}),
\end{align*}
where $D_{t-}(x)$ is defined as above. Indeed, for all $x\in\rr$, all $t\geq0$, 
\[\enstq{(y,s)}{\forall (r,v)\in\Lambda^0_{(x,t)}(y,s) : Z_{v-}(r)=1\text{ and }H_{v-}(r)=0}=D_t(x)\times\{t\}\]
With a slightly different formulation, this limit process is the same as
in \cite{bf} where the propagation is instantaneous. This relationship is very natural. Indeed, the case $p=0$ corresponds to the case where the propagation velocity is very high.
\end{rem}

\subsubsection{Formal dynamics}\label{formal dyn}

Let us explain the dynamics of this process. For
$p\in\intervallefo{0}{\infty}$, we consider $T>0$ fixed and set
$\cA_T=
\{x \in \rr:\; \pi_M(\{x\}\times\intervalleff{0}{T})> 0\}$.
For each $t\geq 0$, $x\in \rr$, $D_t(x)$ stands for the occupied cluster containing
$x$. We call this cluster {\it microscopic} if $D_t(x)=\{x\}$. Otherwise, we
call it {\itshape macroscopic}.

\md

{\it 1. Initial condition.}
We have $Z_0(x)=H_0(x)=F_0(x)=0$
and $D_0(x)=\{x\}$ for all $x\in \rr$.

\md

{\it 2. Occupation of vacant zones.}
We consider here $x\in \rr\setminus \cA_T$. Then we have $H_t(x)=0$ for all $t\in
[0,T]$. When $Z_t(x)<1$, $D_t(x)=\{x\}$ and $Z_t(x)$ stands for the {\it
local density of occupied sites} around $x$. Then $Z_t(x)$ grows linearly until it reaches $1$, as described by
the first term on the RHS of the first equation in (\ref{eqplffp}). When $Z_t(x)=1$,
the cluster containing $x$ is macroscopic and is described by $D_t(x)$.

\md

{\it 3. Microscopic fires.} Here we assume that $x\in\cA_T$ and that the
corresponding mark of $\pi_M$ happens at some time $t$ where $Z_\tm(x)<1$. In such a
case, the cluster containing $x$ is microscopic. Then we set $H_t(x)=Z_\tm(x)$, as
described by the first term on the RHS of the second equation of (\ref{eqplffp}) and
we leave unchanged the value of $Z_t(x)$ and $F_t(x)$. We then let $H_t(x)$ decrease linearly
until it reaches $0$, see the second term on the RHS of the second equation in
(\ref{eqplffp}). At all times where $H_t(x)>0$, that is during
$\intervallefo{t}{t+Z_\tm(x)}$, the site $x$ acts like a barrier (see Point 4. below).

\md

{\it 4. Macroscopic fires.} 
Here we assume that $y\in\cA_T$ and that the corresponding mark of
$\pi_M$ happens
at some time $s$ where $Z_\sm(y)=1$. This means that the cluster containing $y$ is
macroscopic. Thus this mark creates 2 fires: one goes to the left, the other
to the right. These fires propagates along of $\mathcal{V}^ p_{(y,s)}$, until they are stopped by a microscopic zone or a barrier or an other fire.

In other words, for all $(x,t)\in\rr\times\rr_+$, we set $F_t(x)=0$ unless there exists one (or two) mark $(y,s)$ of $\pi_M$ such that $(y,s)\in\Lambda^p_{(x,t)}$ (or equivalently $(x,t)\in\cV^p_{(y,s)}$) and for all $(r,v)\in\Lambda^p_{(x,t)}(y,s), Z_ {v-}(r)=1$ and $H_ {v-}(r)=0$, in which case we set $F_t(x)=1$ (or $F_t(x)=2$). When $x$ is crossed by a fire, $Z_t(x)$ jumps from $1$ to $0$, see the second term on the RHS of the first equation in \eqref{eqplffp}.

\md

{\it 5. Clusters.} Finally the definition of the clusters $(D_t(x))_{x\in\rr}$
becomes more clear: these clusters are delimited by zones with local density smaller
than $1$ (i.e. $Z_t(y)<1$)
or by sites where a microscopic fire has (recently) started (i.e. $H_t(y)>0$).

\subsubsection{Well posedness}
The existence and uniqueness of the LFFP$(0)$ has been proved in \cite{bf}. The proof in the case $p\in\intervalleoo{0}{\infty}$ is in the same spirit.
\begin{theo}\label{well posedness}
For any Poisson measure $\pi_M(\diff x,\diff t)$ on
$\rr\times\intervallefo{0}{\infty}$ with intensity measure $\diff x\diff t$, there
a.s. exists a unique LFFP$(p)$. Furthermore, it can be constructed graphically and
its restriction to any finite box $[0,T]\times\intervalleff{-n}{n}$ can be perfectly simulated.
\end{theo}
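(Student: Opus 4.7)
The plan is to mimic the graphical construction from \cite{bf} (which settles the case $p = 0$), exploiting the fact that for $p > 0$, fires propagate at the finite speed $1/p$; this automatically localizes the dynamics to bounded space-time windows. Fix $T > 0$ and $n \in \nn$; it suffices to construct and identify $(Z, H, F)$ on $[-n, n] \times [0, T]$.

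\textbf{Localization.} Set $B_{n, T} := [-n - T/p,\, n + T/p] \times [0, T]$. I claim that the values of $(Z_t(x), H_t(x), F_t(x))$ for $(x, t) \in [-n, n] \times [0, T]$ are determined entirely by the marks of $\pi_M$ lying in $B_{n, T}$. Indeed, a mark $(y, s)$ can influence a point $(x, t)$ of the target box only if either $y = x$ (in which case the mark may create a microscopic barrier at $x$) or $(x, t) \in \mathcal{V}^p_{(y, s)}$ (in which case the fire originating from $(y, s)$ reaches $(x, t)$). In the first case $y \in [-n, n]$; in the second, $t = s + p|x - y|$ with $t \leq T$ forces $|x - y| \leq T/p$, hence $y \in [-n - T/p, n + T/p]$. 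Since $\pi_M(B_{n, T})$ is a.s. finite, the relevant marks form a finite set, which I order by time as $(y_1, s_1), \dots, (y_k, s_k)$ with $s_1 < \dots < s_k$.

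\textbf{Inductive construction.} I construct $(Z, H, F)$ piecewise on the intervals between consecutive marks. On each such interval $(s_i, s_{i + 1})$, the evolution is fully deterministic: $Z_t(x)$ grows linearly at rate $1$ up to the cap at $1$, $H_t(x)$ decays linearly at rate $1$ down to $0$, and every ongoing macroscopic fire from a past mark $(y_j, s_j)$ propagates in both directions along $\mathcal{V}^p_{(y_j, s_j)}$ at speed $1/p$, being extinguished on each side at the first space-time point $(r, v)$ where $Z_{v -}(r) < 1$ or $H_{v -}(r) > 0$. At each mark time $s_i$, I apply the deterministic update described in Section~\ref{formal dyn}: if $Z_{s_i -}(y_i) < 1$ I set $H_{s_i}(y_i) := Z_{s_i -}(y_i)$ (microscopic fire), whereas if $Z_{s_i -}(y_i) = 1$ and $H_{s_i -}(y_i) = 0$ I launch two new fire fronts (leftward and rightward) from $(y_i, s_i)$. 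Because there are finitely many marks and each fire front moves at finite speed, only finitely many extinction/saturation events occur in each $(s_i, s_{i+1})$, and the state admits a simple finite combinatorial description (macroscopic intervals separated by growing regions and barriers) that evolves transparently. A direct inspection then shows that the process so defined satisfies the integral equations~\eqref{eqplffp} and yields the cluster process $D_t(x) = [L_t(x), R_t(x)]$ via its definition.

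\textbf{Uniqueness, consistency, and simulation.} Uniqueness follows from the same induction: any process satisfying~\eqref{eqplffp} must undergo the same deterministic evolution between marks and the same deterministic update at each mark, and therefore coincides with the constructed one. Consistency across different $(n, T)$ is immediate from the localization, and gluing yields a global LFFP$(p)$ on $\rr \times [0, \infty)$. Since the construction is fully algorithmic and only requires sampling the finitely many marks of $\pi_M$ in $B_{n, T}$, perfect simulation on any finite box is automatic. The main technical delicacy is tracking fire fronts that may simultaneously meet other fires, microscopic zones, or barriers, and verifying that the algorithmic stopping points of each fire match exactly the set $\{(r, v) \in \Lambda^p_{(x, t)}(y, s) : Z_{v -}(r) = 1,\, H_{v -}(r) = 0\}$ implicit in the integral defining $F_t(x)$. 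This is essentially bookkeeping, but must be executed carefully to align the graphical construction with the integral formulation.
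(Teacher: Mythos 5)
Your strategy is sound for $p>0$ but diverges from the paper's. The paper proves Theorem~\ref{well posedness} through Proposition~\ref{restriction limite}: it first constructs the boxed $A$-LFFP$(p)$ algorithmically (Proposition~\ref{algo}) and then shows, by a \emph{probabilistic wall} argument, that the true LFFP$(p)$ coincides with the boxed version on a smaller box with probability $\geq 1-C_T e^{-\alpha_T A}$. The walls are monotone space-time paths $(x_t,t)$ along which $Z<1$ or $H>0$, produced by observing (Step 1) that any mark creates a barrier lasting at least $1/2$ unit of time and then demanding (Step 2) a dense, monotone train of marks in a strip $(a,a+1)\times[0,T]$; since these events are i.i.d.\ across $a\in\zz$, walls occur a.s., cutting $\rr$ into independent pieces. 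That argument works uniformly in $p\geq 0$, covering also the instantaneous regime, and simultaneously yields the quantitative bound of Proposition~\ref{restriction limite}--3, which the paper uses later. Your light-cone localization avoids the $A$-LFFP$(p)$ intermediary, is deterministic (an a.s.\ statement rather than a high-probability one), and is cleaner for $p>0$; but it exploits $p>0$ in an essential way and produces neither the $p=0$ case nor the exponential tail estimate.

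There is, however, a genuine gap in your localization step. You assert that a mark $(y,s)$ can influence $(x,t)\in[-n,n]\times[0,T]$ only if $y=x$ or $(x,t)\in\cV^p_{(y,s)}$. That is a claim about \emph{direct} influence only, and it is false as a description of the dependency structure of~\eqref{eqplffp}: influence propagates through chains. A distant mark $(y_1,s_1)$ can burn some site $y_2$ and reset $Z(y_2)$ to $0$, which then determines whether a later mark at $y_2$ is microscopic or macroscopic, which in turn determines whether a fire reaches $(x,t)$ --- yet $(y_1,s_1)$ satisfies neither of your two criteria relative to $(x,t)$. The correct statement is that the entire dependency set of $(x,t)$ under~\eqref{eqplffp} is contained in the backward cone $\{(r,v): 0\leq v\leq t-p|r-x|\}$, and this requires a recursive verification: if $(r,v)$ lies in that cone and $(r',v')$ is any point whose value enters the equations for $(Z,H,F)$ at $(r,v)$, then $(r',v')$ again lies in the cone of $(x,t)$. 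This does hold by the triangle inequality (if $v'\leq u-p|r'-r|$ with $u<v\leq t-p|r-x|$, then $v'< t-p|r'-x|$), and it shows that your $B_{n,T}$ is indeed the right window, but the argument must be stated: your dichotomy, taken literally, does not establish the claim. Once that is repaired, the algorithmic construction and gluing you sketch parallel the paper's Proposition~\ref{algo}; the ``bookkeeping'' you flag is precisely the role of the paper's set $\cE_{t}^{r}$ of potential extinction points in \eqref{meeting fire}--\eqref{stop fire}.
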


The LFFP$(p)$ $(Z_t(x),H_t(x),F_t(x))_{t\geq 0,x\in \rr}$ is furthermore
Markov, since it solves a well-posed time homogeneous Poisson-driven S.D.E.

\subsubsection{The convergence result}

\begin{theo}\label{converge}
Consider for each $\lambda\in\intervalleof{0}{1}, \pi\geq1$, the process
$(Z^{\la,\pi}_t(x),D_t^{\la,\pi})_{t\geq0,x\in\rr}$ associated to the
$(\lambda,\pi)-$FFP. Consider also the LFFP$(p)$
$(Z_t(x),H_t(x),F_t(x))_{t\geq
0,x\in \rr}$ and the associated $(D_t(x))_{t\geq0, x\in\rr}$. We assume that $\la\to0$ and $\pi\to\infty$ in the regime $\cR(p)$, for some $p\in\intervallefo{0}{\infty}$.
\begin{enumerate}
        \item For any $T>0$, any finite subset $\{x_1,\dots,x_q\}\subset\rr$,
$(Z^{\la,\pi}_t(x_i),D_t^{\la,\pi}(x_i))_{t\in\intervalleff{0}{T},i=1,\dots,q}$ goes in law to
$(Z_t(x_i),D_t(x_i))_{t\in\intervalleff{0}{T},i=1,\dots,q}$ in
$\dd(\intervalleff{0}{T},\rr\times(\cI\cup\{\emptyset\}))$. Here $\dd(\intervalleff{0}{T},\rr\times(\cI\cup\{\emptyset\}))$ is endowed with
the distance $\bd_T$.
        \item For any finite subset
$\{(x_1,t_1),\dots,(x_q,t_q)\}\subset\rr\times\intervallefo{0}{\infty}$,
$(Z^{\la,\pi}_{t_i}(x_i),D_{t_i}^{\la,\pi}(x_i))_{i=1,\dots,q}$ goes in law to
$(Z_{t_i}(x_i),D_{t_i}(x_i))_{i=1,\dots,q}$ in $(\rr\times(\cI\cup\{\emptyset\}))^q$.
Here $\cI\cup\{\emptyset\}$ is endowed with $\bdelta$.
        \item\label{estim cluster size} For all $t>0$, 
\[\left( \frac{\log(|C(\eta_{\al t}^{\lambda,\pi},0)|)}{\log(1/\la)} \indiq{|C(\eta_{\al
t}^{\lambda,\pi},0)|\geq1}\right)\wedge 1\]
goes in law to $Z_t(0)$.
\end{enumerate}
\end{theo}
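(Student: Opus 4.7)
The plan is to prove Parts 1--2 by coupling the $(\la,\pi)-$FFP with the LFFP$(p)$ through a single Poisson match measure on a finite space--time box, and to deduce Part 3 from the pointwise convergence of $Z^{\la,\pi}_t(0)$ together with a geometric argument on cluster size. Fix $T>0$ and $A>\max_i|x_i|+1$. Seen on the macroscopic scales $(i,t)\mapsto(i/\nl,t/\al)$, the match processes on $\intervalleentier{-A\nl}{A\nl}\times\intervalleff{0}{\al T}$ form a Poisson measure of intensity $\la\al\nl\to 1$ per unit of macroscopic area, so by a standard coupling one may assume it coincides with $\pi_M$ restricted to $\intervalleff{-A}{A}\times\intervalleff{0}{T}$ with probability tending to $1$. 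I would then use the discrete graphical construction of Section~5 and the continuous one of Theorem~\ref{well posedness} to build both processes pathwise from the common noise.

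On this coupling only finitely many matches of $\pi_M$ a.s.\ fall in the box, and I would process them chronologically, establishing between consecutive matches three quantitative ingredients. \emph{(a) Density in vacant regions.} At any $x$ whose surrounding window $\intervalleentier{\lfloor\nl x\rfloor-\ml}{\lfloor\nl x\rfloor+\ml}$ has not yet been touched by a fire, the seed clocks are independent, so a law of large numbers over the $(2\ml+1)\gg\la^{-t}$ sites yields $K^{\la,\pi}_t(x)=1-\la^{Z_t(x)}+o(1)$ and hence $Z^{\la,\pi}_t(x)\to Z_t(x)$ uniformly on $\intervalleff{0}{T}$. \emph{(b) Microscopic fires.} When a match falls at rescaled time $t$ on a site with $Z^{\la,\pi}_{t-}(x)=z<1$, the burnt interval has typical length $\la^{-z}\ll\ml$ and burns in rescaled duration $\la^{-z}/(\pi\al)\sim p\la^{1-z}\al=o(1)$; regrowth from local density zero back to $1-\la^z$ at rate $1$ then takes real time $z\al$, i.e.\ rescaled duration exactly $z$, during which the burnt window acts as a barrier blocking any crossing fire. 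This matches $H_t(x)=z$ decaying linearly to $0$ in the limit SDE, while $Z^{\la,\pi}_t(x)$ is unaffected at leading order since $\la^{-z}\ll\ml$. \emph{(c) Macroscopic fires.} If a match falls on a site with $Z^{\la,\pi}_{s-}(y)=1$, the fire propagates at discrete rate $\pi$, crossing $\nl u$ occupied sites in rescaled time $\nl u/(\al\pi)\to pu$, and stops exactly when it reaches a point with $Z^{\la,\pi}<1$ or $H^{\la,\pi}>0$, matching the indicator inside $F_t(x)$ along $\cV^p_{(y,s)}$.

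The main obstacle is the interaction of (a)--(c) over $\intervalleff{0}{T}$: a newly burnt macroscopic zone may partially refill and reconnect to surrounding occupied regions, a microscopic barrier may or may not block a passing fire depending on its residual height $H_t$, and two macroscopic fire fronts may collide. I would address this by showing that between two consecutive matches of $\pi_M$ the $\bd_T$-error grows by at most $o_{\la,\pi}(1)$ in probability, using Bernstein-type concentration for the i.i.d.\ seed and propagation clocks at a finite set of witness positions identified by (a)--(c). At each match, (b) or (c) then updates the relevant portion of the configuration consistently in both processes up to an $o_{\la,\pi}(1)$ error. In the regime $\cR(0)$, (c) degenerates to instantaneous propagation, matching the alternative formulation of the limit in Remark~\ref{rem p=0} and the results of \cite{bf}.

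Parts 2 and 3 then follow without substantial extra work. For Part 2, the limit process a.s.\ has no jump at any fixed deterministic $t_i$ (since $\pi_M$ has no atom on any deterministic horizontal line and the cones $\cV^p$ from earlier matches sweep each fixed point only at a deterministic time of Lebesgue-negligible probability), so evaluation at $(t_i,x_i)$ is a.s.\ continuous along the coupled sequence, yielding joint convergence in law in $(\rr\times(\cI\cup\{\emptyset\}))^q$. For Part 3, (a) gives $Z^{\la,\pi}_t(0)\to Z_t(0)$ in probability; conditional on $K^{\la,\pi}_t(0)=1-\la^z$ with $z<1$, the cluster $C(\eta^{\la,\pi}_{\al t},0)$ has two endpoints at geometric distances of parameter approximately $\la^z$, so $\log|C(\eta^{\la,\pi}_{\al t},0)|/\log(1/\la)\to z=Z_t(0)$ in probability; on $\{Z_t(0)=1\}$ the cluster is macroscopic of rescaled length $|D_t(0)|>0$, so $\log|C(\eta^{\la,\pi}_{\al t},0)|/\log(1/\la)\to 1$, and the cutoff $\wedge 1$ delivers the announced limit in both cases.
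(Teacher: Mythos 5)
Your outline captures several of the paper's ingredients (localization, the barrier height $z$ produced by a microscopic fire, the linear propagation of macroscopic fronts at slope $p$), but it misses the central difficulty that the paper spends most of its technical effort on: the \emph{persistent effect} of microscopic fires.

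You write that a microscopic barrier at $x$ ``acts as a barrier blocking any crossing fire'' for rescaled duration $z$, matching the linear decay of $H_t(x)$. That is only part of the mechanism. After $H_t(x)$ has returned to $0$, the barrier can \emph{still} stop a macroscopic front, because the two one-sided values $Z_t(x-)$ and $Z_t(x+)$ can have been reset to $0$ by fires arriving alternately from the left and the right; as long as one of them is strictly below $1$ at the moment a front reaches $x$, the front is stopped. This is the ``ping-pong'' phenomenon formalized in the paper through the conditions $(PP1)$--$(PP2)$ and Lemma \ref{persisplem}, and it is what forces the definition of $\chi^0_t$ to include the set of $x$ with $Z_t(x+)\neq Z_t(x-)$, not just $H_t(x)>0$. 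Your inductive scheme of ``processing matches chronologically and bounding the $\bd_T$-error by $o(1)$ between matches using Bernstein concentration at witness positions'' does not produce this: verifying that the discrete front actually stops at such an $x$ requires exhibiting a \emph{vacant} site near $\lfloor\nl x\rfloor$ at the passage time, which in turn requires propagating a whole chain of alternating burns/regrowths (Case~3 of Lemma \ref{stop}), not a one-shot concentration bound. Without this ingredient the argument would wrongly allow fronts to cross sites where, in the limit process, they must stop.

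A second, related gap concerns the coupling itself. You take ``a single common noise''; the paper instead attaches to each match $q$ an \emph{independent} pair of families $(N^{S,q},N^{P,q})$ and only stitches them into the global processes on the windows $[X_q]_{\la,\pi}$. This is not a cosmetic choice: it is what makes $\Theta^{\la,\pi,q}_{\cM}$ in Lemma \ref{micro fire p} and the events $\Omega^{S,P}_{\fP}$ of Lemma \ref{persisplem} independent of $\pi_M$, so that one may condition on $\pi_M$, enumerate the a.s.\ finite list of candidate ping-pong sequences $\cU_q$, and apply the lemma to each. With a single common noise these conditional independence arguments do not go through as stated. I would also point out that in Part~3 the identification of $\log|C(\eta^{\la,\pi}_{\al t},0)|/\log(1/\la)$ with $Z_t(0)$ needs the explicit statement that, off the burnt window, the seed field around $0$ after time $\al\tau_t(0)$ is \emph{exactly} a fresh family (Step~3 of the paper's final proof), not merely an approximation in density; the density $K^{\la,\pi}_t(0)$ alone does not pin down the distribution of the cluster endpoints.
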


Point \ref{estim cluster size} will allow us to check some estimates on the
cluster-size distribution. Since we deal with finite-dimensional marginals in space,
it is quite clear that the processes $H$ and $F$ do not appear in the limit, since
for each $x\in\rr$, for all $t\geq0$, a.s., $H_t(x)=F_t(x)=0$. (of course, it is false
that a.s., for all $x\in\rr$, all $t\geq0, H_t(x)=F_t(x)=0$). We obtain the
convergence of $D^{\lambda,\pi}$ (resp. $Z^{\lambda,\pi}$) to $D$ (resp. $Z$) only when
integrating in time. We cannot hope for a Skorokhod convergence since the limit
process $D(x)$ (resp. $Z(x)$) jumps instantaneously from $\{x\}$ (resp. $1$) to some
interval with positive length (resp. $0$), while $D^{\la,\pi}(x)$ (resp.
$Z^{\la,\pi}(x)$) needs
many small jumps, in a very short interval, to become macroscopic (resp. empty). 

The space $(\dd(\intervalleff{0}{T},\rr\times(\cI\cup\{\emptyset\})),\bd_T)$ is not a complete metric space since $\bd_T$ is too weak. However, it seems that it is not really a problem because in the proof, we use a coupling argument and obtain a convergence in probability.

\subsubsection{Heuristics argument}\label{heuristics}
We now explain roughly the reasons why Theorem~\ref{converge} holds. We consider a
$({\lambda},\pi)-$FFP $(\eta^{\lambda,\pi}_t(i))_{t\geq0,i\in\zz}$ and the associated process
$(Z^{\lambda,\pi}_t(x)$, $D^{\lambda,\pi}_t(x))_{t\geq 0,x\in\rr}$. We
assume below that ${\lambda}$ is very small, $\pi$ very large and $\nl/(\al\pi)$ close to $p$.

0. \textit{Scales.} With our scales, there are $\nl=\lfloor 1/({\lambda}\log(1/{\lambda}))\rfloor$
sites per unit of length. Approximately one fire starts per unit of time per unit of
length. A vacant site
becomes occupied at rate $\al=\log(1/{\lambda})$.

1. \textit{Initial condition.}
We have, for all $x\in{\mathbb{R}}$, $(Z^{\lambda,\pi}_0(x),D^{\lambda,\pi}_0(x))=
(0,\emptyset) \simeq(0,\{x\})$.

2. \textit{Occupation of vacant zones.} Assume that no match falls in a zone
$[a,b]$ (which correspond to the zone $\intervalleentier{\nl a}{\nl b}$ before rescaling) during $\intervalleff{0}{1}$ (or $\intervalleff{0}{\al}$ before rescaling).
\begin{enumerate}[label=\alph*.]
        \item For $s\in[0,1)$, we have $D^{\lambda,\pi}_{s}(x) \simeq[x \pm{\lambda}^{1-s}]\simeq\{x\}$ and $Z^{\lambda,\pi}_{s}(x)\simeq s$ for all $x\in[a,b]$.

Indeed, each site is occupied with probability $1-e^{-\al s}=1-\la^ s$. Thus the local density is roughly  $K_t^{\la,\pi}\simeq 1-\la^ s$, whence $Z_t^{\la,\pi}(x)\simeq s$, while the typical size of occupied clusters is $\la^ s$, whence $D_s^{\la,\pi}(x)\simeq\left[ x\pm\la^ s/\nl\right]\simeq\left[x\pm\la^ {1-s}\right]$.
        \item At time $s=1$, $Z^{\lambda,\pi}_{1}(x)\simeq1$ and all the sites in $[a,b]$ are occupied (with
very high probability).

Indeed, we have $(b-a)\nl$ sites and each of
them is occupied at time $1$ with probability
$1-e^{-\al}=1-{\lambda}$ so that all of them are occupied with
probability $(1-{\lambda})^{(b-a)\nl}\simeq e^{-(b-a)/\log(1/{\lambda})}$, which
goes to $1$ as ${\lambda}\to0$.
\end{enumerate}
Assume now that the zone around $x$ (i.e. the zone $\intervalleentier{\lfloor\nl x\rfloor-\ml}{\lfloor\nl x\rfloor+\ml}$ before rescaling) has been destroyed at time $t$ (or at time $\al t$ before rescaling) by a fire. Then, observations 2a. and 2b. above still hold:
\begin{enumerate}[label=\roman*.]
	\item for $s\in\intervallefo{0}{1}$ and if no fire starts in $\intervalleentier{\lfloor\nl x\rfloor-\ml}{\lfloor\nl x\rfloor+\ml}$ during $\intervalleff{\al t}{\al (t+s)}$, we have $D^{\lambda,\pi}_{t+s}(x) \simeq[x \pm{\lambda}^{1-s}]\simeq\{x\}$ and
$Z^{\lambda,\pi}_{t+s}(x)\simeq s$;
	\item $Z^{\lambda,\pi}_{t+1}(x)\simeq1$ and all the sites around $x$ are occupied at time $t+1$ with very high probability.
\end{enumerate}

3. \textit{Microscopic fires.}
Assume that a fire starts at some location $x$ (i.e. $\lfloor\nl x\rfloor$ before
rescaling) at some time $t$ (or $\al t$ before rescaling) with
$Z_{t-}^{\lambda,\pi}(x)=z\in(0,1)$. The possible clusters on the left and right of
$x$ cannot be connected during (approximately) $[t,t+z]$, but they can be
connected after (approximately) $t+z$. In other words, $x$ acts like a barrier
during $[t,t+z]$.

Indeed, the connected component $A$ of $x$ (or $\lfloor\nl x\rfloor$ before rescaling) at time $t$ (or $\al t$ before rescaling) has a size of order $\la^{1-z}$ (which
thus contains approximately ${\lambda}^{1-z}\nl \simeq {\lambda}^{-z}$ sites). The fire destroys the component $A$ in a time of order $1/(\la^ {z}\al\pi)\ll1$ (or $1/(\la^z\pi)\ll\al$ in original scale). Thus this fire
crosses very
fast the component $A$ and each site of $A$ becomes burning and then empty (i.e. $\eta^{\la,\pi}(i)$ jumps from $1$ to $2$ then from $2$ to $0$) during the time interval $\intervalleff{t}{t+1/(\la^
z\al\pi)}\simeq\{t\}$ (or $\intervalleff{\al t}{\al t+1/(\la^
z\pi)}\simeq\{\al t\}$ before rescaling). The probability that a fire starts again in $A$ is very small. Thus, using the same computation as in point
2, we
observe that $\mathbb{P}$[A is completely occupied at time $t+s$]$\simeq (1-\la^
s)^{\la^ {-z}}\simeq e^{-\la^ {s-z}}$. When $\la\to0$, this quantity tends to $0$ if
$s<z$ and to $1$ if $s>z$.

4. \textit{Macroscopic fires.}
Assume, now, that a fire starts at some place $x$ (i.e. $\lfloor\nl x\rfloor$
before rescaling) at some time $t$ (or $\al t$ before rescaling) and that
$Z^{\lambda,\pi}_{t-}(x) \simeq1$. Thus,
$D^{\lambda,\pi}_{t-}(x)$ is macroscopic (i.e. its length is of order $1$ in our
scales). Then the match creates two fires: one propagates to the left and one to the right at speed $p$ ($p$ unit times per unit space). There are only two burning trees at each instant with very high probability. Of course, these fires are stopped when they meet a vacant site (i.e. a microscopic zone or a barrier) or another fire.

Indeed, we have to wait for an exponential time of
parameter $\pi$ between each propagation in the original scales. It then produces two independent Poisson processes of parameter
$\pi$ which stand for the location of the fires. Then, for $b>x$, this Poisson process is at $\lfloor\nl
b\rfloor$ in the original scale (or in $b$ after rescaling) roughly at time $\al t+(\nl/\pi)(b-x)$ (or at time $t+(\nl/(\al\pi))(b-x)\simeq t+p(b-x)$ after rescaling). All sites $i\in\intervalleentier{\lfloor\nl x\rfloor}{\lfloor\nl
b\rfloor}$ becomes successively burning and empty roughly at time $\al t+(i-\lfloor\nl x\rfloor)/\pi$ in the original scale (or the site $y=i/\nl\in\rr$ is burning at time $t+p(y-x)$ after rescaling).

5. \textit{Clusters.} For $t\geq0$, $x\in{\mathbb{R}}$, the cluster
$D_t^{\lambda,\pi}(x)$ resembles $[x\pm{\lambda}^{1-z}]\simeq\{x\}$ if
$Z_t^{\lambda,\pi}(x)=z\in(0,1)$. We then say that $x$ is microscopic. Now,
macroscopic clusters are delimited either by microscopic zones or by sites where
there has been recently a microscopic fire (see point 3) or by a burning tree. 

Comparing the arguments above to the rough description of the LFFP$(p)$ (see Section
\ref{formal dyn}), our hope is that the $({\lambda},\pi)$-FFP resembles the LFFP$(p)$
for
${\lambda}>0$ very small, $\pi$ very large and $1/(\la\al^2\pi)$ close to $p$.

\begin{rem}\label{rem explain p=0}
Remark \ref{rem p=0} is now more clear. Consider the regime $\cR(0)$. If a fire starts at $x$ (or $\lfloor \nl x\rfloor$ before rescaling) at time $t$ (or $\al t$ before rescaling), the time needed to reach a point $b$ (or $\lfloor \nl b\rfloor$ before rescaling) is roughly $\nl |b-x|/(\al\pi)\simeq 0$ (or $\nl(b-x)/\pi\ll\al$ before rescaling). It means that if $b\in D^0_\tm(x)$ (or $\lfloor \nl b\rfloor\in C(\eta^{\la,\pi}_{\al t-},\lfloor \nl x\rfloor)$ before rescaling) the fire reaches $b$ at time $t+\nl|b-x|/(\al\pi)\simeq t$. In the scaling limit, the cluster containing $x$ is thus destroyed instantaneously.
\end{rem}

\subsubsection{Cluster size distribution}
We will deduce from Theorem \ref{converge} the following estimates on the cluster-size distribution.
\begin{cor}\label{cor1}
Let $p\in\intervallefo{0}{\infty}$ be fixed.
Let
$(Z_t(x),H_t(x),F_t(x))_{t\geq 0, x\in \rr}$ be 
a LFFP$(p)$ and $(D_t(x))_{t\geq0, x\in\rr}$ the associated process.
For each $\la\in (0,1]$ and $\pi\geq1$, let $(\eta^{\la,\pi}_t(i))_{t\geq 0, i \in \zz}$ be a 
$(\la,\pi)-$FFP.
\begin{enumerate}[label=\alph*.]
	\item For all $t\geq (5+p)/2$, all $0< a < b < 1$, for some $0<c_1<c_2$ depending on $p$, as $\la\to0$ and $\pi\to\infty$ in the regime $\cR(p)$,
\[\lim_{\la,\pi}
\proba{\abs{C(\eta^{\la,\pi}_{\al t},0)} \in \intervalleff{1/\la^ a}{1/\la^ b}} \\
=\proba{Z_t(0)\in \intervalleff{a}{b}}\in \intervalleff{c_1(b-a)}{c_2 (b-a)}.\]
	\item For all
$t \geq 3/2$, all $B>0$, for some $0<c_1<c_2$ and $0< \kappa_1 <\kappa_2$ depending on $p$, as $\la\to0$ and $\pi\to\infty$ in the regime $\cR(p)$, 
\[\lim_{\la,\pi}
\proba{\abs{C(\eta^{\la,\pi}_{\al t},0)}
\geq B \nl }=\proba{\abs{D_t(0)}\geq B}
\in [c_1 e^{-\kappa_2 B}, c_2 e^{-\kappa_1 B}].\]
\end{enumerate}
\end{cor}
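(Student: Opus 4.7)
The plan is to derive both parts from Theorem \ref{converge}: the first equality in each part is a direct consequence of the convergence, and the bounds are obtained by a direct analysis of the LFFP$(p)$.

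For part (a), fix $0<a<b<1$ and note that, for any $\la\in\intervalleoo{0}{1}$, the event $\abs{C(\eta^{\la,\pi}_{\al t},0)}\in\intervalleff{1/\la^ a}{1/\la^ b}$ coincides with the event that $\left(\frac{\log(|C(\eta^{\la,\pi}_{\al t},0)|)}{\log(1/\la)}\indiq{|C(\eta^{\la,\pi}_{\al t},0)|\geq 1}\right)\wedge 1$ belongs to $\intervalleff{a}{b}$ (using $b<1$ to discard the minimum). Theorem \ref{converge}, point \ref{estim cluster size}, therefore gives the first equality, modulo continuity of the law of $Z_t(0)$ at $a$ and $b$, which will follow from the density bound stated next. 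To bound $\proba{Z_t(0)\in\intervalleff{a}{b}}$, let $\tau_t$ be the time of the last downward jump of $s\mapsto Z_s(0)$ in $\intervalleff{0}{t}$ (with $\tau_t=0$ if there is none). By the first equation of \eqref{eqplffp}, $Z_s(0)$ grows linearly at rate $1$ between such jumps and remains at $1$ once it reaches it, hence $Z_t(0)=(t-\tau_t)\wedge 1$ and $\proba{Z_t(0)\in\intervalleff{a}{b}}=\proba{\tau_t\in\intervalleff{t-b}{t-a}}$. It thus suffices to show that the law of $\tau_t$ restricted to $\intervalleoo{0}{t-1}$ admits a density bounded above and below by positive constants depending only on $p$. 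The upper bound follows by comparing $\tau_t$ with the full arrival process of fires reaching $0$: each such fire originates from a mark $(y,s)$ of $\pi_M$ with $s+p\abs{y}=\tau_t$, whose unconditional intensity at time $\tau_t$ is of order $1+\tau_t/p$. The lower bound requires exhibiting, for any sub-interval $J\subset\intervalleff{t-b}{t-a}$, an event of probability at least $c_1\abs{J}$ on which $\tau_t\in J$; the hypothesis $t\geq(5+p)/2$ leaves room in time for a complete regeneration cycle (regrowth from $0$ to local density $1$, formation of a macroscopic cluster, and arrival of a fire at $0$).

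For part (b), Theorem \ref{converge}, point 2, applied to $(0,t)$ gives $D^{\la,\pi}_t(0)\to D_t(0)$ in law on $(\cI\cup\{\emptyset\},\bdelta)$. Since the map $I\mapsto\abs{I}$ is $1$-Lipschitz for $\bdelta$ (with $\abs{\emptyset}=0$), the length $\abs{D^{\la,\pi}_t(0)}=\abs{C(\eta^{\la,\pi}_{\al t},0)}/\nl$ converges in law to $\abs{D_t(0)}$, and the first equality follows at every continuity point $B>0$ of the law of $\abs{D_t(0)}$. For the exponential tail, the upper bound uses that $\{\abs{D_t(0)}\geq B\}$ forces one of the intervals $\intervalleff{0}{B}$ or $\intervalleff{-B}{0}$ to contain no ``barrier point'' $y$ with $Z_t(y)<1$ or $H_t(y)>0$. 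Partitioning this interval into sub-intervals of fixed small length, one shows that each sub-interval contains such a barrier with probability bounded below (the hypothesis $t\geq 3/2$ ensures that a match has had time to fall on any fixed sub-interval and produce a microscopic zone via the dynamics of \eqref{eqplffp}), whence a product estimate yields exponential decay in $B$. For the matching lower bound, one constructs an event on $\pi_M$ -- for instance, no match falling in the past light cone of $\intervalleff{-B/2}{B/2}\times\{t\}$ after time $1$, together with a suitable behaviour of the local density in $\intervalleff{-B/2}{B/2}$ before time $1$ -- of probability at least $e^{-\kappa_2 B}$ on which the cluster containing $0$ at time $t$ covers $\intervalleff{-B/2}{B/2}$.

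The main obstacle in each part is the lower bound: one must construct explicit events on the Poisson measure $\pi_M$ realizing the prescribed local cluster structure while controlling the complex blocking mechanisms (microscopic barriers $H$ and in-flight fires $F$) of the LFFP$(p)$. The specific numerical thresholds $(5+p)/2$ and $3/2$ in the hypotheses arise precisely from counting the elementary time units needed to realize these events (regrowth to local density $1$, propagation at speed $1/p$, and so on).
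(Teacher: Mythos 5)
Your overall plan — reduce to Theorem \ref{converge} plus density/tail bounds on the LFFP$(p)$, via $\tau_t$ for part (a) and $|D_t(0)|$ for part (b) — is exactly the paper's skeleton (the paper's bounds are collected in Lemma \ref{zunif}). The reduction of part (a) to the law of $\tau_t$ using $Z_t(0)=(t-\tau_t)\wedge 1$, and the passage from Theorem \ref{converge}-2 to $|D_t^{\la,\pi}(0)|\to|D_t(0)|$ via Lipschitz continuity of $I\mapsto|I|$, are both correct and match the paper. But the bounds themselves, which you flag as "the main obstacle," contain two real gaps.

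For the upper bound of part (a), your intensity estimate $\sim 1+\tau_t/p$ is the intensity of \emph{all} marks on the past light-cone $\Lambda^p_{(0,t)}$ at height $\tau_t$, most of which do not actually send a fire to $0$. As stated this would produce a constant $c_2$ growing linearly in $t$, contradicting the $p$-only dependence in the statement. The paper avoids this by introducing the auxiliary space-like cluster $\mathscr{D}_s(0)$, the set of points from which an unobstructed fire can reach $(0,s)$, writing
\[\proba{Z_t(0)\in\intervalleff{a}{b}}\leq \int_{t-b}^{t-a}\E\bigl[|\mathscr{D}_s(0)|\bigr]\diff s,\]
and then proving an exponential tail for $|\mathscr{D}_s(0)|$ \emph{uniformly in} $s$ (Lemma \ref{zunif}-(v)), which is what delivers a $t$-independent $c_2$. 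You would need some such uniform bound on the spatial extent of "fire sources for $0$" to make your upper bound work.

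For the lower bound of part (b), your candidate event — "no match falling in the past light cone of $\intervalleff{-B/2}{B/2}\times\{t\}$ after time $1$" — is a trapezoidal region of area of order $(B+(t-1)/p)(t-1)$, so its probability decays like $e^{-c(B+t)t}$, again $t$-dependent (and in fact much too small, since it bans far more matches than necessary). The paper's event $\Omega_{t,B}$ is a positive construction, not a mere exclusion: it places a lattice of roughly $B/\Delta$ marks in a thin time-slab $\intervalleff{t-11/8}{t-5/4}$, spaced $\Delta=\frac{3}{16p}$ apart, so that each mark's microscopic barrier and short-range macroscopic fire mutually stop all fires threatening $\intervalleff{0}{B}$; on top of that it bans matches only in the \emph{bounded-depth} slab $\intervalleff{-5/(4p)}{B+5/(4p)}\times\intervalleff{t-5/4}{t}$. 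Each factor costs $e^{-O(B)}$ with constants depending only on $p$, yielding $c_1 e^{-\kappa_2 B}$. The same construction underlies the lower bound of Lemma \ref{zunif}-(vi) which in turn drives the lower bound in part (a), so your sketched regeneration-cycle argument needs this kind of barrier lattice (rather than a match-free past cone) to actually close.
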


This result shows that there is a {\it phase transition} around the critical size $\nl$: the cluster-size distribution changes of shape at $\nl$. The main idea is that two types of clusters are present: macroscopic clusters, of which the size is of order $\nl$ and microscopic clusters, of which the size is smaller than $\nl$.

\subsection{Main results for $p=\infty$}
In this section, we are interested in the regime $\cR(\infty,z_0)$, for some $z_0\in\intervalleff{0}{1}$.
\subsubsection{Definition of the limit process}
In this regime, the limit process is much simpler, in the sense that fires only have a local (in space) effect (but can have long time effect). This is due to the fact that a fire can't go too far away in a finite time.

We consider a Poisson measure $\pi_M(\diff x,\diff t)$ on $\rr\times[0,\infty)$,
with intensity measure $\diff x\diff t$, whose marks correspond to matches.
\begin{defin}\label{dfinflffp}
Let $z_0\in\intervalleff{0}{1}$. A process $(Y_t(x))_{t\geq 0,x\in \rr}$ with
values in $\rr_+$ such that a.s., for all $x\in\rr$, $(Y_t(x))_{t\geq 0}$ is
c\`adl\`ag, is said to be a LFFP$(\infty,z_0)$ if a.s., for all
$t\geq 0$, all $x \in \rr$,
\begin{equation}\label{eqinflffp}
Y_t(x)= \int_0^{t\wedge z_0} s\,\pi_M(\{x\}\times\diff s)-\intot\indiq{Y_{s}(x)\in\intervallefo{0}{1}}\diff s + \indiq{t\geq z_0}\pi_M(\{ x\}\times\intervalleff{z_0}{t}).
\end{equation}
\end{defin}
The process $Y$ takes its values in $\intervalleff{0}{1}$ and can be non-zero only at locations where $\pi_M(\{x\}\times\rr)\neq0$. If the mark of $\pi_M$ happens at time $t<z_0$, then the (microscopic) cluster containing $x$ is destroyed instantaneously and  $Y_s(x)\in\intervalleoo{0}{1}$ during $\intervallefo{t}{2t}$: $x$ acts like a barrier during this time interval. If the mark happens at time $t>z_0$ then the cluster containing $x$ is too big to be destroyed and $Y_s(x)=1$ for ever: there is always a burning tree close to $x$. We then naturally associate the process $D_t(x) = \intervalleff{L_t(x)}{R_t(x)}$, with
\begin{align*}
L_t(x) =&\begin{cases}
x &\mtext{if} t<1,\\
\sup\{ y\leq x:\; Y_t(y)>0\} & \mtext{if} t\geq1;
\end{cases}\\
R_t(x) =&\begin{cases}
x &\mtext{if} t<1,\\
\inf\{ y\geq x:\; Y_t(y)>0\} & \mtext{if} t\geq1.
\end{cases}
\end{align*}

A typical path of the finite box version of the LFFP$(\infty,z_0)$ is drawn and commented in
Figure \ref{llffpdraw}.

\begin{rem}
The process $Y$ is a time inhomogeneous Markov process. To make it homogeneous, we can add a second variable $Z$ as in the first equation \eqref{eqplffp} in the Definition \ref{dfplffp}.
\end{rem}
\begin{figure}[h!]
\fbox{
\begin{minipage}[c]{0.95\textwidth}
\centering
\begin{tikzpicture}
\fill[fill=gray!50!white] (-6,4) rectangle (6,6.8);
\draw[-] (-6,0)--(6,0);
\draw[->] (-6,0)--(-6,7) node[left] {t};
\draw[-] (-6.1,4)--(-5.9,4) node[left, pos=0.5] {$1$};
\draw[dashed] (-6,3.3)--(6.2,3.3) node[left=12.2cm] {$z_0$};

\draw[thick] (-1,1)--(-1,2) node[below] at (-1,1) {{\tiny $(X_2,T_2)$}}; 
\draw[red] (-1,1) node {$\bullet$};
\draw[thick] (-5,2.15)--(-5,4.2) node[below] at (-5,2.15) {{\tiny $(X_4,T_4)$}};
\draw[red] (-5,2.1) node {$\bullet$};
\draw[thick] (4,1.6)--(4,3.2) node[below] at (4,1.6) {{\tiny $(X_3,T_3)$}};
\draw[red] (4,1.6) node {$\bullet$};
\draw[thick]  (0,.7)--(0,1.4) node[below] at (0,.7) {{\tiny $(X_1,T_1)$}};
\draw[red] (0,.7) node {$\bullet$};
\draw[thick] (-3,2.9)--(-3,5.8) node[below] at (-3,2.9) {{\tiny $(X_6,T_6)$}};
\draw[red] (-3,2.9) node {$\bullet$};
\draw[thick] (5,3.2)--(5,6.4) node[below] at (5,3.2) {{\tiny $(X_7,T_7)$}};
\draw[red] (5,3.2) node {$\bullet$};
\draw[thick] (2,2.4)--(2,4.8) node[below] at (2,2.4) {{\tiny $(X_5,T_5)$}}
node[below] at (-4.3,3.8) {{\tiny $(X_{10},T_{10})$}}
node[below, right] at (-2.8,3.4) {{\tiny $(X_{9},T_{9})$}}
node[below] at (4.5,3.9) {{\tiny $(X_{11},T_{11})$}}
node[below] at (-2,4.5) {{\tiny $(X_{13},T_{13})$}}
node[below] at (-5.8,3.4) {{\tiny $(X_{8},T_{8})$}}
node[below] at (-1.3,6.3) {{\tiny $(X_{16},T_{16})$}}
node[below] at (2.8,4.7) {{\tiny $(X_{14},T_{14})$}}
node[below] at (4.3,5.2) {{\tiny $(X_{15},T_{15})$}}
node[below] at (.7,4.2) {{\tiny $(X_{12},T_{12})$}};
\draw[red] (2,2.4) node {$\bullet$};

\draw[thick, red, dashed] (-4.3,3.8)--(-4.3,6.8);
\draw[red] (-4.3,3.8) node {$\bullet$};
\draw[thick, red, dashed] (-2.8,3.4)--(-2.8,6.8);
\draw[red] (-2.8,3.4) node {$\bullet$};
\draw[thick, red, dashed] (4.5,3.9)--(4.5,6.8);
\draw[red] (4.5,3.9) node {$\bullet$};

\draw[thick, red, dashed] (-2,4.5)--(-2,6.8);
\draw[red] (-2,4.5) node {$\bullet$};
\draw[thick, red, dashed] (-5.8,3.4)--(-5.8,6.8);
\draw[red] (-5.8,3.4) node {$\bullet$};
\draw[thick, red, dashed] (-1.3,6.3)--(-1.3,6.8);
\draw[red] (-1.3,6.3) node {$\bullet$};
\draw[thick, red, dashed] (2.8,4.7)--(2.8,6.8);
\draw[red] (2.8,4.7) node {$\bullet$};
\draw[thick, red, dashed] (4.3,5.2)--(4.3,6.8);
\draw[red] (4.3,5.2) node {$\bullet$};
\draw[thick, red, dashed] (.7,4.2)--(.7,6.8);
\draw[red] (.7,4.2) node {$\bullet$};
\end{tikzpicture}\caption{LFF$(\infty,z_0)-$process in a finite box.}\label{llffpdraw}
\vspace{.5cm}
\parbox{13.3cm}{
\footnotesize{
The marks of $\pi_M$ are represented by $\color{red}{\bullet}$'s. The filled zones represents zones in which $\abs{D(x)}>0$. The plain vertical segments represent the sites where $Y_t(x)\in\intervalleoo{0}{1}$ and the dashed vertical segments represent the sites where $Y_t(x)=1$. In the rest of the space, we always have $Y_t(x)=0$. Until time $1$, all the particles are microscopic. Matches $1$ to $7$ falls before $z_0$. At each of these marks, a process $Y$ starts and its life-time equals the instant where it has started. This creates a barrier with height $T_k$ (the segment above $T_k$ ends at time $2T_k$). The other matches falls after $z_0$. At each of these marks, a process $Y$ starts and remains equal to $1$ forever.

Thus, for each $x\in\intervalleff{-A}{A}$, $D^ A_t(x)=\{x\}$ for $t\in\intervallefo{0}{1}$ and merge at $t=1$. Here we have at time $1$ the clusters $[-A,X_8]$, $[X_8,X_4]$, $[X_4,X_{10}]$, $[X_{10},X_6]$, $[X_6,X_9]$, $[X_9,X_5]$, $[X_5,X_{11}]$, $[X_{11},X_7]$ and , $[X_7,A]$.

Remark that $t\mapsto\abs{D_t(x)}$ is non-increasing on $\intervallefo{2z_0}{\infty}$ for all $x$.
}}
\end{minipage}}
\end{figure}
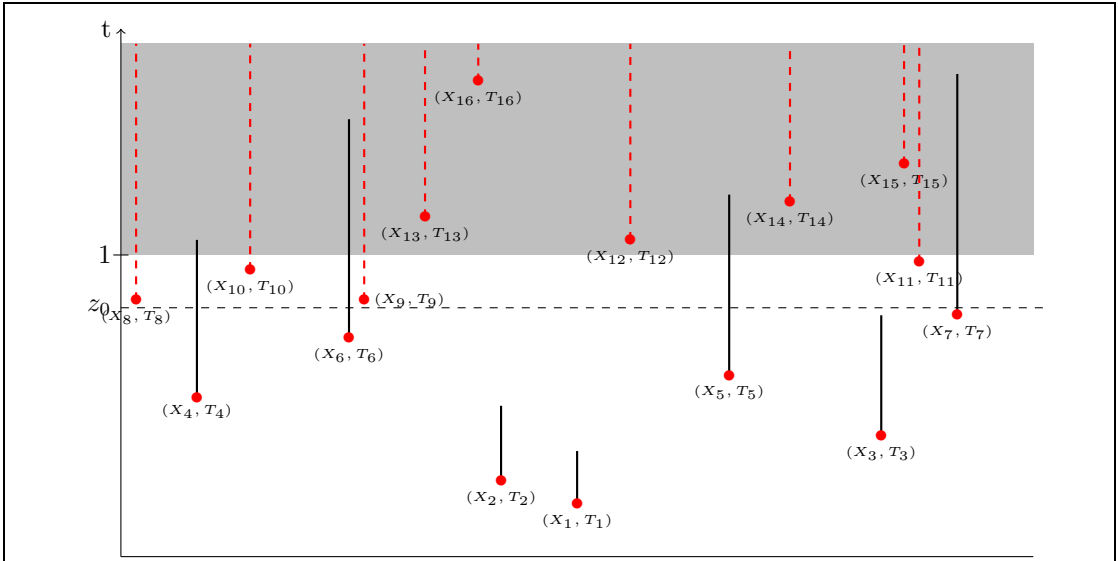

\subsubsection{Formal dynamics}\label{formal dyn infty}

Let us explain the dynamics of this process. We consider $\cA=\enstq{x \in \rr}{\pi_M(\{x\}\times\intervallefo{0}{\infty})> 0}$.
For each $t\geq 0$, $x\in \rr$, $D_t(x)$ stands for the occupied cluster containing
$x$. We call this cluster {\it microscopic} if $D_t(x)=\{x\}$. Otherwise, we
call it {\itshape macroscopic}.

{\it 1. Initial condition.}
We have $Y_0(x)=0$
and $D_0(x)=\{x\}$ for all $x\in \rr$.

{\it 2. Occupation of vacant zones.}
We consider here $x\in \rr\setminus \cA$. Then we have $Y_t(x)=0$ for all $t\in
\intervallefo{0}{\infty}$. When $t<1$, $D_t(x)=\{x\}$. When $t\geq1$,
the cluster containing $x$ is macroscopic and is described by $D_t(x)$.

{\it 3. First kind of fires.} Here we assume that $x\in\cA$ and that the
corresponding mark of $\pi_M$ happens at some time $t<z_0$. We set $Y_t(x)=t$, as
described by the first term on the RHS of the equation of \eqref{eqinflffp}. We then let $Y_t(x)$ decrease linearly
until it reaches $0$, see the second term on the RHS of the equation in
\eqref{eqinflffp} (i.e. $Y_s(x)=\min(2t-s,0)\indiq{s\geq t}$).

{\it 4. Second kind of fires.} 
Here we assume that $x\in\cA$ and that the corresponding mark of
$\pi_M$ happens
at some time $t$ where $t>z_0$. Then we set $Y_s(x)=1$ for all $s\in\intervallefo{t}{\infty}$ see the third term of the RHS of the equation \eqref{eqinflffp}.

{\it 5. Clusters.} Finally the definition of the clusters $(D_t(x))_{x\in\rr}$
becomes more clear: these clusters remain microscopic until $t=1$. For $t\geq 1$, $(D_t(x))_{x\in\rr,t\geq1}$ is delimited by sites where a fire of first kind has (recently) started (i.e. $Y_t(y)\in\intervalleoo{0}{1}$) or by sites where a fire of second kind has started (i.e. $Y_t(y)=1$). Remark that for $t\geq 2z_0$, only fires of second kind delimit the clusters.

\subsubsection{Well posedness}
The following proposition is obvious from the definition, see Figure \ref{llffpdraw}. 

\begin{prop}
Let $\pi_M$ be a Poisson measure on $\rr\times\intervallefo{0}{\infty}$ with intensity measure $\diff x\diff t$. There a.s. exists a unique LFFP$(\infty,z_0)$ $(Y_t(x))_{t\geq0,x\in\rr}$. It can be simulated exactly on any finite box $\intervalleff{0}{T}\times\intervalleff{-n}{n}$. 
\end{prop}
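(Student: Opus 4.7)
The plan is to exploit the fact that equation \eqref{eqinflffp} is pointwise in $x$: the right-hand side depends on $\pi_M$ only through $\pi_M(\{x\}\times\cdot)$. Hence the problem decouples across sites, and both existence and uniqueness can be settled line by line. This is in sharp contrast with Theorem \ref{well posedness}: in the present regime fires are so slow that they have no effective spatial range in the limit, which leaves each vertical line to evolve autonomously.

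First I would note that because $\pi_M$ has product intensity $dx\,dt$, the atoms in any compact window are conditionally i.i.d.\ uniform and therefore have pairwise distinct abscissae a.s.; a countable exhaustion then yields a.s.\ pairwise distinct abscissae of \emph{all} atoms. In particular, a.s.\ each vertical line $\{x\}\times\intervallefo{0}{\infty}$ carries at most one atom of $\pi_M$, which we denote $(x,T(x))$ when it exists. I would then write down the solution by a three-case analysis at each fixed $x$: (i) no atom above $x$, in which case $Y_t(x):=0$; (ii) a unique atom at time $T<z_0$, in which case $Y_t(x):=(2T-t)^+\indiq{t\geq T}$; (iii) a unique atom at time $T\geq z_0$, in which case $Y_t(x):=\indiq{t\geq T}$. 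Verifying \eqref{eqinflffp} in each case is a direct computation of its three right-hand-side terms, reading the drift indicator $\indiq{Y_s(x)\in\intervallefo{0}{1}}$ as effective only while $Y_s(x)\in(0,1)$, in analogy with the $\indiq{H_s(x)>0}$ convention of \eqref{eqplffp}, so that the drift does not push $Y$ below $0$.

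Uniqueness follows by the same line-by-line argument: between the at most one jump time on $\{x\}\times\intervallefo{0}{\infty}$, equation \eqref{eqinflffp} reduces to a scalar ODE whose c\`adl\`ag solution is uniquely determined by the post-jump value imposed by either the first or the third term. For the exact simulation on a finite box $\intervalleff{0}{T}\times\intervalleff{-n}{n}$, one samples a Poisson$(2nT)$-distributed number $N$ of atoms, places them i.i.d.\ uniformly in the box to obtain $(X_k,T_k)_{k=1,\dots,N}$, and evaluates $Y_\cdot(X_k)$ from the closed forms (ii)--(iii) (with $Y_\cdot(x)\equiv 0$ elsewhere); $D_t(x)$ is then read off from its definition. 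No real obstacle arises here — the only subtlety is the indicator convention just mentioned.
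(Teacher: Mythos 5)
Your proposal is correct and captures precisely what the paper treats as immediate: the paper offers no proof beyond the remark ``the following proposition is obvious from the definition'' together with Figure~\ref{llffpdraw}, and your line-by-line decoupling argument (the RHS of \eqref{eqinflffp} involves $\pi_M$ and $Y$ only through the single line $\{x\}\times\intervallefo{0}{\infty}$, which a.s.\ carries at most one atom) followed by the explicit closed forms in cases (i)--(iii) is exactly the reasoning being elided. You are also right to flag and resolve the one genuine subtlety, namely that $\indiq{Y_s(x)\in\intervallefo{0}{1}}$ must be read as effective only on $(0,1)$ (or equivalently that $0$ is absorbing), since otherwise the equation would force $Y$ negative from the start and would have no $\intervalleff{0}{1}$-valued solution at all, contradicting the paper's own assertion that $Y$ takes values in $\intervalleff{0}{1}$; this matches the $\indiq{H_s(x)>0}$ convention in \eqref{eqplffp} and the paper's informal formula $Y_s(x)=(2t-s)^+\indiq{s\geq t}$ (the paper's ``$\min$'' there is clearly a misprint for ``$\max$'').
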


\subsubsection{The convergence result}
We will prove the following result.
\begin{theo}\label{theoinfty}
Let $z_0\in\intervalleff{0}{1}$. Consider for each $\la\in\intervalleof{0}{1}$ and $\pi\geq1$ the process $(D_t^{\la,\pi}(x))_{t\geq0,x\in\rr}$ associated with the $(\la,\pi)-$FFP. Consider also the LFFP$(\infty,z_0)$ $(Y_t(x))_{t\geq0,x\in\rr}$ and the associated $(D_t(x))_{t\geq0,x\in\rr}$ process. We assume that $\la\to0$ and $\pi\to\infty$ in the slow regime $\cR(\infty,z_0)$.
\begin{enumerate}
	\item For any $T>0$, any finite subset $\{x_1,\dots,x_q\}\subset\rr$, $(D_t^{\la,\pi}(x_i))_{t\in\intervalleff{0}{T},i=1,\dots,q}$ goes in law to $(D_t(x_i))_{t\in\intervalleff{0}{T}, i=1,\dots,q}$ in $\dd(\intervalleff{0}{T},\cI)^q$. Here $\dd(\intervalleff{0}{T},\cI)^q$ is endowed with $\bdelta_T$.
	\item For any finite subset $\{(x_1,t_1),\dots,(x_q,t_q)\}\subset\rr\times\intervallefo{0}{\infty}$, $(D_{t_i}^{\la,\pi}(x_i))_{i=1,\dots,q}$ goes in law to $(D_{t_i}(x_i))_{t\in\intervalleff{0}{T}, i=1,\dots,q}$ in $\cI^q$, $\cI$ being endowed with $\bdelta$.
\end{enumerate}
\end{theo}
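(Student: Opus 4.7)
The plan is to couple the discrete $(\la,\pi)$-FFP and the LFFP$(\infty,z_0)$ by using a common match process: the matches of the discrete model, viewed after rescaling space by $\nl$ and time by $\al$, form a Poisson measure on $\rr\times\rr_+$ whose intensity $\la\nl\al\to1$ converges to $\pi_M$. The decisive observation, specific to the slow regime, is the dichotomy
\[\frac{\la^{-s}}{\al\pi}\to 0 \text{ if }s<z_0,\qquad \frac{\la^{-s}}{\al\pi}\to +\infty \text{ if }s>z_0,\]
which follows from $\log(\pi)/\log(1/\la)\to z_0$. A fire born at rescaled time $s<1$ has, at the moment it starts, a cluster of typical size $\la^{-s}$ sites to consume; it needs original time $\la^{-s}/\pi$, i.e.\ rescaled time $\la^{-s}/(\al\pi)$, to finish. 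The dichotomy therefore says that for $s<z_0$ the fire is instantaneous at the rescaled level, and for $s>z_0$ it does not have enough time to finish during any bounded rescaled interval. These are precisely the two situations encoded in \eqref{eqinflffp}.

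First, I would reduce the problem to a finite box $[-A,A]\times[0,T]$ and condition on the a.s.-finite set of marks of $\pi_M$ in that box. As in \cite{bf}, the graphical construction of the discrete FFP, combined with the a.s.\ existence of sites that remain vacant during the whole window, decouples the dynamics on independent slices. Between consecutive marks, using the local-density variable $K^{\la,\pi}_t$ and the estimates of Section 5 (the discrete counterpart to the limit process), I would show that with probability tending to $1$, any site $x$ whose $\ml$-neighbourhood is untouched by the marks becomes fully occupied by rescaled time $1$ and stays so, and that for such $x$ the discrete rescaled cluster $D^{\la,\pi}_t(x)$ agrees with $[L_t(x),R_t(x)]$ up to an error of order $o(1)$ in length.

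Then comes the match-by-match analysis, which is the heart of the proof. For each mark $(y,s)$ of $\pi_M\cap([-A,A]\times[0,T])$ with $s$ bounded away from $z_0$, the estimates of Section 4 on the effect of a fire in an occupied zone give me two cases. If $s<z_0-\e$: the fire consumes the cluster of $\approx\la^{-s}$ sites containing $\lfloor\nl y\rfloor$ in rescaled time $\la^{-s}/(\al\pi)=o(1)$, leaving a vacant block; this block then refills, and by an elementary maximum-of-exponentials computation the time at which the last of the $\approx\la^{-s}$ destroyed sites becomes occupied is, in rescaled units, exactly $s+o(1)$; during the interval $[s,2s]$ the site $y$ separates the two sides of the real line, matching $Y_t(y)>0$ on $[s,2s]$. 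If $s>z_0+\e$: the fire propagates at most $(\pi\al/\nl)(T-s)=o(1)$ rescaled units on either side during the remaining window, hence in rescaled space it appears as a permanent vacancy at $y$, matching $Y_t(y)=1$ for all $t\geq s$. Summing these approximations in $L^1$-norm against $\bdelta_T$ over the finite collection of marks yields convergence in probability of $D^{\la,\pi}_\cdot(x_i)$ to $D_\cdot(x_i)$, hence point (1). Point (2) follows from (1) together with the fact that the time projection of $\pi_M$ is Lebesgue-null, so $t\mapsto D_t(x_i)$ is a.s.\ continuous at any deterministic $t_i$.

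The main obstacle is the treatment of marks whose time coordinate lies close to $z_0$: there $\la^{-s}/(\al\pi)$ is of order $1$ and neither of the two pictures above is clean. I would handle this by first fixing a small $\e>0$, discarding the event that a mark of $\pi_M\cap([-A,A]\times[0,T])$ falls in the slab $[z_0-\e,z_0+\e]$ (an event of probability $\leq 2A\e$), running the analysis above, and finally letting $\e\to0$ after the limits $\la\to0$, $\pi\to\infty$. A secondary difficulty is to control the accumulated error across the random but a.s.\ finite set of marks uniformly; this is handled by the graphical-construction argument that a.s.\ separates space into independent slices at every relevant time.
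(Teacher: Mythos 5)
Your proposal follows essentially the same route as the paper: couple the discrete matches to the Poisson measure $\pi_M$ driving the limit process, reduce to a finite box, exploit the dichotomy $\la^{-s}/(\al\pi)\to 0$ for $s<z_0$ and $\to\infty$ for $s>z_0$, then do a match-by-match analysis distinguishing fires before $z_0$ (instantaneous, create a barrier of rescaled duration $\simeq s$ via an extreme-value computation on the refilling times) from fires after $z_0$ (never finish within the window, leave a permanent local vacancy), discarding a small slab of probability around $z_0$. This is exactly the architecture of Section~7 (coupling in Subsection~\ref{coupling infty}, the sweet event built from $\Omega_M^0$, $\Omega^{P,T}$, $\Omega_1^S$, $\Omega_2^S$, $\Omega_3^{S,P}$; barrier-height Lemma~\ref{micro fire infty}; the localization Lemmas~\ref{lemloc}, \ref{lemlocfeu}, \ref{lemloctrou}; conclusion via Lemmas~\ref{lemma0-}, \ref{lemma0+}).

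One genuine gap: you claim that point~(2) ``follows from~(1) together with the fact that the time projection of $\pi_M$ is Lebesgue-null, so $t\mapsto D_t(x_i)$ is a.s.\ continuous at any deterministic $t_i$.'' That inference does not go through. Convergence in the $\bdelta_T$ metric is $L^1$-in-time, and $L^1$ convergence (even to a limit that is a.s.\ continuous at $t_i$) does not imply convergence at the fixed time $t_i$: a pulse of width $o(1)$ sitting at $t_i$ is killed in $L^1$ but not pointwise, and nothing in the FFP structure rules this out a priori. The paper avoids this by proving the single-time estimate \eqref{convinfty1} directly (this is what the match-by-match analysis actually delivers) and \emph{then} deducing the integrated statement \eqref{convinfty2} by dominated convergence, using the uniform bound $\bdelta(D_t^{\la,\pi,A}(x_0),D_t^A(x_0))\le 4A$. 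Your match-by-match argument in fact produces the single-time bound for each fixed $t_0$ with the marks avoiding a neighbourhood of $t_0$; you should present it in that order and derive the $\bdelta_T$ statement afterwards, rather than the other way around.
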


\subsubsection{Heuristics arguments}
We assume below that $\la>0$ is very small, $\pi\geq1$ is very large, $\la\ba_\la^2\pi$ is close to $0$ and $\log(\pi)/\log(1/\la)$ is close to $z_0$.

0. \textit{Scales.} With our scales, there are $\nl=\lfloor 1/({\lambda}\log(1/{\lambda}))\rfloor$
sites per unit of length. Approximately one fire starts per unit of time per unit of
length. A vacant site
becomes occupied at rate $\al=\log(1/{\lambda})$.

1. {\it Initial condition.}
We have, for all $x\in{\mathbb{R}}$, $D^{\lambda,\pi}_0(x)=\emptyset \simeq \{x\}$ and $D_0(x)=\{x\}$.

2. {\it Occupation of vacant zones.} Exactly as in the regime $\cR(p)$, $D^{\lambda,\pi}_{t}(x)\simeq[x \pm{\lambda}^{1-t}]\simeq\{x\}$ for $t<1$ and the clusters become macroscopic at time $1$.

3. {\it First kind of fires.} Assume that a match falls at some place $x$ (or $\lfloor\nl x\rfloor$ in the original scales) at some time $t<z_0$ (or $\al t<\al z_0$ in the original scales). Then the fire burns almost immediately the occupied cluster and it needs roughly a time $t$ (or $\al t$ in the original scales) to be filled again. Thus $x$ acts like a barrier during $\intervallefo{t}{2t}$.
 
Indeed, the connected component $A$ of $x$ (or $\lfloor\nl x\rfloor$ before rescaling) at time $t$ (or $\al t$ before rescaling) has a size of order $\la^{1-t}$ (which
thus contains approximately ${\lambda}^{1-t}\nl \simeq {\lambda}^{-t}$ sites). The fire destroys the component $A$ in a time of order $1/(\la^ {t}\al\pi)\ll1$ (or $1/(\la^t\pi)\ll\al$ in original scales) due to $\cR(\infty,z_0)$. Thus this fire
crosses very
fast the component $A$ and each site of $A$ becomes burning and then empty (i.e. $\eta^{\la,\pi}(i)$ jumps from $1$ to $2$ then from $2$ to $0$) during the time interval $\intervalleff{t}{t+1/(\la^
t\al\pi)}\simeq\{t\}$ (or $\intervalleff{\al t}{\al t+1/(\la^
t\pi)}\simeq\{\al t\}$ before rescaling). The probability that a fire starts again in $A$ is very small. Thus, we
observe that $\mathbb{P}$[A is completely occupied at time $t+s$]$\simeq (1-\la^
s)^{\la^ {-z}}\simeq e^{-\la^ {s-z}}$. When $\la\to0$, this quantity tends to $0$ if
$s<t$ and to $1$ if $s>t$.

4. {\it Second kind of fires.} Assume that a match falls at some place $x$ (or $\lfloor\nl x\rfloor$ in the original scales) at some time $t>z_0$ (or $\al t>\al z_0$ in the original scales). Then the fire needs an infinite time (in our scales) to burn the occupied cluster, so that there is a burning site close to $x$ forever. 

Indeed, $D^{\lambda,\pi}_{t}(x)$ contains roughly $\la^ {-t}$ sites if $t\in\intervalleoo{z_0}{1}$ and $\nl$ sites if $t\geq1$. In any case, the time needed for the fire to cross this cluster is of order $\abs{D^{\la,\pi}_t(x)}/\pi$, which is very large when compared to $\al$ in the regime $\cR(\infty,z_0)$. Thus, the fire cannot reach the rim of $D^{\la,\pi}_t(x)$.

5. {\it Clusters.} For $t\geq0$, $x\in{\mathbb{R}}$, the cluster
$D_t^{\lambda,\pi}(x)$ resembles $[x\pm{\lambda}^{1-z}]\simeq\{x\}$ if
$t<1$. Now,
macroscopic clusters emerge when $t\geq1$ and are delimited either by a burning tree or by sites where
there has been recently a microscopic fire (see point 3).

Comparing the arguments above to the rough description of the LFFP$(\infty,z_0)$ (see Section
\ref{formal dyn infty}), our hope is that the $({\lambda},\pi)-$FFP resembles the LFFP$(\infty,z_0)$
in the regime $\cR(\infty,z_0)$.

\subsubsection{Cluster-size distribution}
The following corollary is easily deduced from the Theorem \ref{theoinfty}.
\begin{cor}\label{cor2}
Let $z_0\in\intervalleff{0}{1}$.
Let
$(Y_t(x))_{t\geq 0, x\in \rr}$ be 
a LFFP$(\infty,z_0)$ and $(D_t(x))_{t\geq0, x\in\rr}$ the associated process.
For each $\la\in (0,1]$ and $\pi\geq1$, let $(\eta^{\la,\pi}_t(i))_{t\geq 0, i \in \zz}$ be a 
$(\la,\pi)-$FFP.

For all $t > 2z_0$, as $\la\to0$ and $\pi\to\infty$ in the regime $\cR(\infty,z_0)$,
\[\frac1{\nl}\abs{ C(\eta^{\la,\pi}_{\al t},0)}\xrightarrow{\cL}\abs{D_t(0)}\sim\Gamma(2,t-z_0).\]
\end{cor}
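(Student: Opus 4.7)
The plan is to deduce the convergence directly from Theorem~\ref{theoinfty} and then to identify the law of $\abs{D_t(0)}$ through a short Poisson computation. Applying point~$2$ of Theorem~\ref{theoinfty} with $q=1$ and $(x_1,t_1)=(0,t)$, one obtains the convergence in law of $D_t^{\la,\pi}(0)$ to $D_t(0)$ in $(\cI\cup\{\emptyset\},\bdelta)$. The length map $I\mapsto\abs{I}$ on $\cI\cup\{\emptyset\}$ (with the convention $\abs{\emptyset}=0$) satisfies $\abs{\abs{I}-\abs{J}}\le \bdelta(I,J)$, hence is $1$-Lipschitz, and the continuous mapping theorem yields $\abs{D_t^{\la,\pi}(0)}\to\abs{D_t(0)}$ in law. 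Since \eqref{dlambda} gives $\abs{D_t^{\la,\pi}(0)}=\frac{1}{\nl}\abs{C(\eta^{\la,\pi}_{\al t},0)}$, the convergence claim is reduced to identifying the distribution of $\abs{D_t(0)}$ when $t>2z_0$.

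For this second step, I would observe that any first-kind fire ignited at a time $s<z_0$ creates a barrier only on the interval $\intervallefo{s}{2s}\subset\intervallefo{0}{2z_0}$, so that no first-kind barrier survives past time $2z_0$. Therefore, for $t>2z_0$, the set $\{y\in\rr:Y_t(y)>0\}$ coincides with the set of $x$-coordinates of the marks of $\pi_M$ in $\rr\times\intervalleoo{z_0}{t}$, each such site carrying $Y_t(y)=1$. Restricting to the (only potentially nondegenerate) range $t\geq1$, the definition of $D_t(0)$ reduces to $D_t(0)=\intervalleff{L_t(0)}{R_t(0)}$, where $L_t(0)$ and $R_t(0)$ are the nearest such sites lying respectively on the left and on the right of the origin.

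Now the projection onto the first coordinate of $\pi_M$ restricted to $\rr\times\intervalleoo{z_0}{t}$ is a Poisson point process on $\rr$ of intensity $(t-z_0)\diff y$, since almost surely no two marks share the same $x$-coordinate. By the standard memoryless property of Poisson processes, $R_t(0)$ and $-L_t(0)$ are therefore independent and $\cE(t-z_0)$-distributed, so that $\abs{D_t(0)}=R_t(0)-L_t(0)$ is a sum of two independent $\cE(t-z_0)$ random variables, hence $\Gamma(2,t-z_0)$-distributed.

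The only substantive input is Theorem~\ref{theoinfty}; the subsequent identification of the barrier structure past time $2z_0$ and the ensuing computation of the law of $\abs{D_t(0)}$ are both routine, so I do not anticipate any genuine obstacle in carrying this argument through in detail.
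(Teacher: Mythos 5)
Your argument is correct and takes essentially the same route as the paper: invoke Theorem~\ref{theoinfty}(2) together with \eqref{dlambda} to reduce to identifying the law of $\abs{D_t(0)}$, then observe that for $t>2z_0$ all first-kind barriers have expired so that $\{y:Y_t(y)>0\}$ is the projection of the marks of $\pi_M$ in $\rr\times\intervalleoo{z_0}{t}$, a Poisson process on $\rr$ of intensity $t-z_0$, whence $\abs{D_t(0)}\sim\Gamma(2,t-z_0)$. You make explicit two small steps the paper elides (the $1$-Lipschitz continuity of the length map for the continuous mapping theorem, and the implicit restriction to $t\geq1$ so that $D_t(0)$ is nondegenerate), but the substance is identical.
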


This result shows that for $t$ large enough, there are only macroscopic clusters, that is clusters with size of order $\nl$.

We immediately give the proof of Corollary \ref{cor2}. For $t\geq0$, Theorem \ref{theoinfty} shows that, when $\la\to0$ and $\pi\to\infty$ in the regime $\cR(\infty,z_0)$,
\[\frac1{\nl}\abs{ C(\eta^{\la,\pi}_{\al t},0)}\xrightarrow{\cL}\abs{D_t(0)}.\]
Furthermore, if $t>2z_0$, only fires of the second kind (i.e. matches falling after $z_0$) still have an effect. Indeed, when a match falls in $x$ at time $t<z_0$, it creates a barrier in $x$ during $\intervallefo{t}{2t}\subset\intervalleff{0}{2z_0}$. Thus, $D_t(0)$ is only delimited by sites where a match has fallen during $\intervalleff{z_0}{t}$. This is a Poisson process on $\rr$ with intensity $t-z_0$. Consequently,
\[\abs{D_t(0)}\sim\Gamma(2,t-z_0).\]
\subsubsection{Irreversibility}
It might look surprising at the first glance that the limit process is non-reversible while the discrete process is reversible. Indeed, for $t\geq 1\wedge2z_0$, clusters in the limit process are macroscopic and the sizes are non-increasing. On the other hand, in the discrete process, it is quite clear that, when working in a finite box, the process returns to its original state. This is due to the time scale: we have to wait a very long time to observe again the original state.

\section{Existence and uniqueness of the limit process}
The goal of this section is to show that the limit processes are well-defined, unique, can
be obtained from a graphical construction and can be restricted to a finite box.

\subsection{Restriction of the LFFP$(\infty,z_0)$ to a finite box}
Let $z_0\in\intervalleff{0}{1}$ be fixed. In this subsection, we study the LFFP$(\infty,z_0)$.
\begin{prop}\label{restriction limite infini}
Let $\pi_M$ a Poisson measure on $\rr\times\intervallefo{0}{\infty}$ with intensity measure $\diff x\diff t$ and $A>0$.
\begin{enumerate}
	\item  The values of $(Y_t(x))_{t\geq0,x\in\intervalleff{-A}{A}}$ are entirely determined by $\pi_M|_{\intervalleff{-A}{A}\times\rr_+}$. Actually, for all $x\in\rr$, the values of $(Y_t(x))_{t\geq0}$ are entirely determined by $\pi_M|_{\{x\}\times\rr_+}$.
	\item There exists some constants $\alpha>0$ and $C>0$ not depending on $A>0$ such that
\begin{equation}\label{proba restriction infty}
\proba{(D_t(x))_{t\geq 0, x\in\intervalleff{-A/2}{A/2}}\subset\intervalleff{-A}{A}}\geq 1-Ce^{-\alpha A}.
\end{equation}
\end{enumerate}
\end{prop}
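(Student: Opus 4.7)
The first assertion is essentially tautological: every term on the right-hand side of~\eqref{eqinflffp} involves only $\pi_M(\{x\}\times\cdot)$, so the equation decouples across $x$ and one solves it fibre by fibre by integrating between the (a.s.\ finitely many) marks of $\pi_M$ on $\{x\}\times\intervalleff{0}{T}$. In particular $Y_t(x)=0$ identically in $t$ at every $x$ carrying no mark, and $(Y_t(x))_{t\geq 0,x\in\intervalleff{-A}{A}}$ is measurable with respect to $\pi_M|_{\intervalleff{-A}{A}\times\rr_+}$.

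For the second assertion, note that $D_t(x)=\{x\}$ for $t<1$, so the inclusion is automatic whenever $x\in\intervalleff{-A/2}{A/2}$. For $t\geq 1$, the definition of $L_t,R_t$ shows that $D_t(x)\subset\intervalleff{-A}{A}$ simultaneously for every such $x$ as soon as
\[\cB^-_t:=\enstq{y\in\intervalleff{-A}{-A/2}}{Y_t(y)>0}\neq\emptyset\mtext{and}\cB^+_t:=\enstq{y\in\intervalleff{A/2}{A}}{Y_t(y)>0}\neq\emptyset.\]
By symmetry and a union bound, it suffices to upper-bound $\proba{\exists t\geq 1:\cB^-_t=\emptyset}$.

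In the main case $z_0<1$, any mark $(y,s)\in\intervalleff{-A}{-A/2}\times\intervalleoo{z_0}{1}$ produces a second-kind fire and gives $Y_t(y)=1$ for all $t\geq s$, in particular for all $t\geq 1$; hence the failure event is contained in the event that no such mark exists, whose probability is $e^{-(A/2)(1-z_0)}$. This delivers the estimate with $\alpha=(1-z_0)/2$.

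The main obstacle is the edge case $z_0=1$, because then no second-kind barrier is active at $t=1$. I would split $t\geq 1$ into the tail $t\geq 2$, controlled by any second-kind mark in $\intervalleff{-A}{-A/2}\times\intervalleoo{1}{2}$ (failure probability $e^{-A/2}$), and the delicate window $t\in\intervalleff{1}{2}$. On this window, $y\in\cB^-_t$ iff either a first-kind mark at $y$ has time $s\in\intervalleoo{t/2}{1}$ or a second-kind mark at $y$ has time in $\intervalleoo{1}{t}$. Writing $s^-$ for the largest first-kind time in $\intervalleff{-A}{-A/2}\times\intervalleoo{1/2}{1}$ (and $1/2$ if none) and $s^+$ for the smallest second-kind time in $\intervalleff{-A}{-A/2}\times\intervalleoo{1}{\infty}$, the failure event reduces to $\{2s^-<\min(s^+,2)\}$. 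Using independence of $s^-$ and $s^+$ and the explicit exponential statistics ($s^+-1$ is $\mathrm{Exp}(A/2)$, and $s^-$ has an explicit law on $\intervalleoo{1/2}{1}$), a direct integration bounds this probability by a constant multiple of $e^{-A/4}$. Combining both regimes and invoking symmetry for $\cB^+$ yields the announced estimate for some $C,\alpha>0$ depending on $z_0$.
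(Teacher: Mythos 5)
Your first part matches the paper exactly: both note that the equation for $Y$ decouples across $x$ and is determined by $\pi_M|_{\{x\}\times\rr_+}$.

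Your second part is correct but takes a more piecemeal route than the paper. Where you split into $z_0<1$ versus $z_0=1$, the paper constructs a \emph{single} event that works simultaneously for all $z_0\in\intervalleff{0}{1}$: it demands one mark $(X_1,\tau_1)$ in $\intervalleff{A/2}{A}\times\intervalleoo{3/4}{1}$ and one mark $(X_2,\tau_2)$ in $\intervalleff{A/2}{A}\times\intervalleoo{1}{3/2}$ (and symmetrically on the left). The key observation is that, \emph{regardless of} whether $\tau_1\lessgtr z_0$, the mark $(X_1,\tau_1)$ produces $Y_t(X_1)>0$ on $\intervallefo{\tau_1}{2\tau_1}\supset\intervalleff{1}{3/2}$ — a first-kind fire gives the barrier on $\intervallefo{\tau_1}{2\tau_1}$ with $2\tau_1>3/2$, while a second-kind fire gives it on all of $\intervallefo{\tau_1}{\infty}$ — and the mark $(X_2,\tau_2)$ with $\tau_2>1\geq z_0$ is automatically second-kind and covers $\intervalleff{3/2}{\infty}$. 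This yields $C=4$, $\alpha=1/8$ uniformly in $z_0$, whereas your bound for $z_0<1$ has $\alpha=(1-z_0)/2$ degenerating as $z_0\to 1$, forcing the separate and more computational $z^-/s^+$ analysis at $z_0=1$. Your $z_0=1$ computation is itself correct (I checked: $\Pr[2s^-<\min(s^+,2)]=2e^{-A/4}-e^{-A/2}$), so there is no gap; the paper's choice of overlapping time windows $\intervalleoo{3/4}{1}$ and $\intervalleoo{1}{3/2}$ simply eliminates the case analysis and buys cleaner, $z_0$-free constants.
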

\begin{proof}
 The first part of  Proposition \ref{restriction limite infini} is obvious from the definition of the process $(Y_t(x))_{t\geq0,x\in\rr}$. In order to prove the second part, consider the event $\Omega_A^+$ on which $\pi_M$ has at least one mark $(X_1,\tau_1)$ in $\intervalleff{A/2}{A}\times \intervalleoo{3/4}{1}$ and at least one mark $(X_2,\tau_2)$ in $\intervalleff{A/2}{A}\times \intervalleoo{1}{3/2}$.

Observe now that on $\Omega_A^+$, $Y_t(X_1)>0$ for all $t\in\intervallefo{\tau_1}{2\tau_1}\supset\intervalleff{1}{3/2}$, because it is a either a fire of first kind (if $\tau_1\leq z_0$) or $X_1$ burns for ever (if $\tau_1>z_0$), and $Y_t(X_2)=1$ for all $t\in\intervallefo{\tau_2}{\infty}\supset\intervalleff{3/2}{\infty}$ because $\tau_2>1\geq z_0$, then $X_2$ burns for ever, because it is necessarily a fire of second kind.

Similarily, we define the event $\Omega_A^-$ on which $\pi_M$ has at least one mark $(\tX_1,\tilde{\tau}_1)$ in $\intervalleff{-A}{-A/2}\times \intervalleoo{3/4}{1}$ and at least one mark $(\tX_2,\tilde{\tau}_2)$ in $\intervalleff{-A}{-A/2}\times \intervalleoo{1}{3/2}$.

Thus, on $\Omega_A^+\cap\Omega_A^-$, $D_t(x)\subset\intervalleff{-A}{A}$ for all $t\geq 0$ and all $x\in\intervalleff{-A/2}{A/2}$. Finally, we can bound from below the left hand side of \eqref{proba restriction infty} by
\[\proba{\Omega_A^+\cap\Omega_A^-}\geq 1-2(e^{-A/8}+e^{-A/4})\geq1-4e^{-A/8}\]
whence \eqref{proba restriction infty} with $C=4$ and $\alpha=1/8$.
\end{proof}

\begin{defin}
Let $z_0\in\intervalleff{0}{1}$ and $(Y_t(x))_{x\in\rr,t\geq}$ be a LFFP$(\infty,z_0)$. For all $A>0$ and for $x\in\intervalleff{-A}{A}$, we define the process $D_t^A(x) = \intervalleff{L_t^A(x)}{R_t^A(x)}$, with
\begin{align*}
L_t^A(x) =&\begin{cases}
x &\mtext{if} t<1,\\
\sup\{ y\leq x:\; Y_t(y)>0\}\vee(-A) & \mtext{if} t\geq1;
\end{cases}\\
R_t^A(x) =&\begin{cases}
x &\mtext{if} t<1,\\
\inf\{ y\geq x:\; Y_t(y)>0\}\wedge\phantom{(-} A\phantom{)} & \mtext{if} t\geq1.
\end{cases}
\end{align*}
\end{defin}
As a corollary of Proposition \ref{restriction limite infini}, we have, for $A>0$, \[\proba{(D_t(x))_{t\geq 0, x\in\intervalleff{-A/2}{A/2}}=(D_t^A(x))_{t\geq 0, x\in\intervalleff{-A/2}{A/2}}}\geq1-Ce^{-\alpha A}.\]
\subsection{Restriction of the LFFP$(p)$ to a finite box}
The aim of this subsection is to prove Theorem \ref{well posedness}. We define an analogous process of LFFP$(p)$ on a finite space interval, which can be perfectly simulated. We then show that these two processes are equal with very high probability.
\subsubsection{Algorithm}
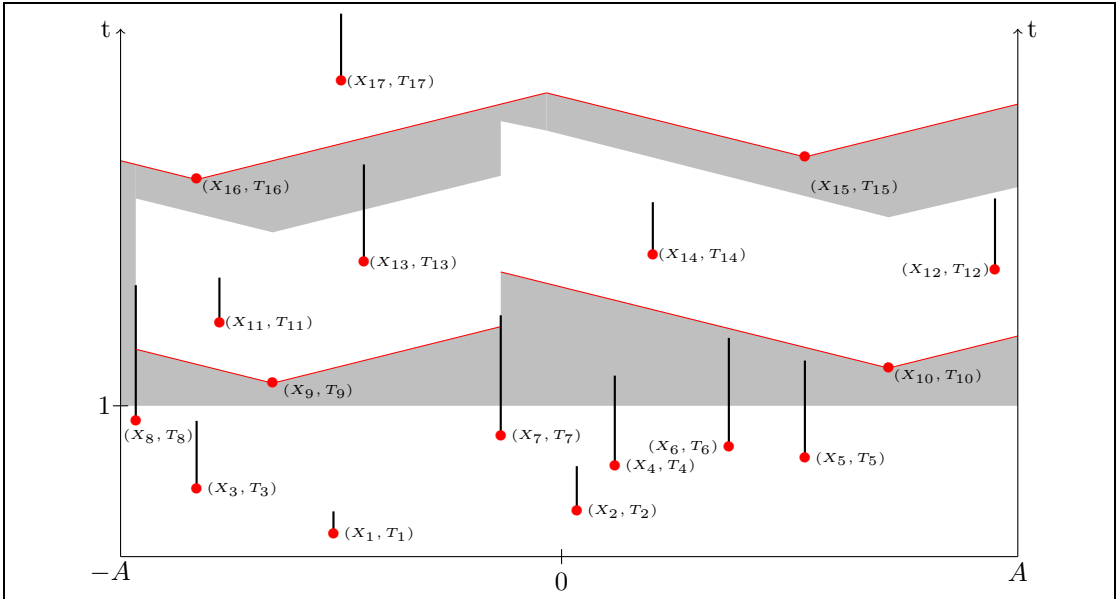
\begin{figure}[h!]
\fbox{
\begin{minipage}[c]{0.95\textwidth}
\centering
\begin{tikzpicture} 
\fill[fill=gray!50!white] (-6,2) -- (-6,5.25)--(-5.8,5.2)--(-5.8,2)--cycle;

\fill[fill=gray!50!white] (-5.8,2) --(-5.8,2.75)-- (-4,2.3)--(-1,3.05)--(-1,2)--cycle;
\draw[thick] (-5.8,1.8)--(-5.8,3.6)node at (-5.5,1.6) {{\tiny $(X_8,T_8)$}};
\draw[red] (-5.8,1.8) node {$\bullet$};
\draw[red] (-4,2.3) node {$\bullet$};
\draw node at (-3.4,2.2) {{\tiny $(X_9,T_9)$}};
\draw[red] (-4,2.3)--(-1,3.05);
\draw[red] (-4,2.3)--(-5.8,2.75);

\fill[fill=gray!50!white]
(-5,5)--(-.4,6.15)--(-.4,5.65)--(-1,5.775)--(-1,5.05)--(-4,4.3)--(-5.8,4.75)--(-5.8,5.2)--cycle;
\draw[red] (-5,5) node {$\bullet$};
\draw node at (-4.35,4.9) {{\tiny $(X_{16},T_{16})$}};
\draw[red] (-5,5)--(-.4,6.15);
\draw[red] (-5,5)--(-6,5.25);

\fill[fill=gray!50!white] (-1,2) -- (-1,3.775) -- (4.1,2.5) --
(5.8,2.925)--(5.8,2)--cycle;
\draw[thick] (3,1.3)--(3,2.6);
\draw[red] (3,1.3) node {$\bullet$};
\draw node at (3.6,1.3) {{\tiny $(X_{5},T_{5})$}};
\draw[red] (4.1,2.5) node {$\bullet$};
\draw[red] (4.1,2.5)--(-1,3.775);
\draw[red] (4.1,2.5)--(5.8,2.925);
\draw node at (4.75,2.4) {{\tiny $(X_{10},T_{10})$}};
\draw[thick] (-1,1.6)--(-1,3.2)node at (-.4,1.6) {{\tiny $(X_{7},T_7)$}};
\draw[red] (-1,1.6) node {$\bullet$};

\fill[fill=gray!50!white]
(4.1,4.5)--(-.4,5.65)--(-.4,6.15)--(3,5.3)--(5.8,6)--(5.8,4.9)--cycle;
\draw[red] (3,5.3) node {$\bullet$};
\draw[red] (3,5.3)--(5.8,6);
\draw[red] (3,5.3)--(-.4,6.15);
\draw node at (3.65,4.9) {{\tiny $(X_{15},T_{15})$}};

\draw[-] (-6,0)--(5.8,0) node at (-6,-.2) {$-A\phantom{-}$} node at (5.8,-.2) {$A$};
\draw[-] (-.2,0.1)--(-.2,-.1) node[below] {$0$};
\draw[->] (-6,0)--(-6,7) node[left] {t};
\draw[->] (5.8,0)--(5.8,7) node[right] {t};
\draw[-] (-6.1,2)--(-5.9,2) node[left, pos=0.5] {$1$};

\draw[thick] (-3.2,.3)--(-3.2,.6) node at (-2.6,.3) {{\tiny $(X_1,T_1)$}};
\draw[red] (-3.2,.3) node {$\bullet$};
\draw[thick] (0,.6)--(0,1.2) node at (.6,.6) {{\tiny $(X_2,T_2)$}};
\draw[red] (0,.6) node {$\bullet$};
\draw[thick] (2,1.45)--(2,2.9)node at (1.4,1.45) {{\tiny $(X_6,T_6)$}};
\draw[red] (2,1.45) node {$\bullet$};
\draw[thick] (-5,.9)--(-5,1.8)node at (-4.4,.9) {{\tiny $(X_3,T_3)$}};
\draw[red] (-5,.9) node {$\bullet$};
\draw[thick] (.5,1.2)--(.5,2.4)node at (1.1,1.2) {{\tiny $(X_4,T_4)$}};
\draw[red] (.5,1.2) node {$\bullet$};
\draw[thick] (-4.7,3.1)--(-4.7,3.7)node at (-4.05,3.1) {{\tiny $(X_{11},T_{11})$}};
\draw[red] (-4.7,3.1) node {$\bullet$};
\draw[thick] (-2.8,3.9)--(-2.8,5.2)node at (-2.15,3.9) {{\tiny $(X_{13},T_{13})$}};
\draw[red] (-2.8,3.9) node {$\bullet$};
\draw[thick] (5.5,3.8)--(5.5,4.75)node at (4.85,3.8) {{\tiny $(X_{12},T_{12})$}};
\draw[red] (5.5,3.8) node {$\bullet$};
\draw[thick] (1,4)--(1,4.7)node at (1.65,4) {{\tiny $(X_{14},T_{14})$}};
\draw[red] (1,4) node {$\bullet$};
\draw[thick] (-3.1,6.3)--(-3.1,7.2)node at (-2.45,6.3) {{\tiny $(X_{17},T_{17})$}};
\draw[red] (-3.1,6.3) node {$\bullet$};
\end{tikzpicture} \caption{LFFP$(p)$ in a finite box}\label{plffpdraw}
\vspace{.5cm}
\parbox{13.3cm}{
\footnotesize{
The marks of $\pi_M$ (matches) are represented as $\color{red}{\bullet}$'s. The filled zones represent zones in which $Z_t^A(x)=1$, that is
macroscopic clusters. In the rest of the
space, we always have $Z_t^A(x)<1$. The plain vertical segments represent the sites where
$H_t^A(x)>0$. $F_t^A(x)=0$ except on the lines with slope $p$ where $F^A_t(x)=1$ or $F_t^A(x)=2$ in the crossing point of the fires starting in $(X_{15},T_{15})$ and $(X_{16},T_{16})$. Until time $1$, all of the clusters are
microscopic. The first eigth marks of the Poisson measure fall in that zone. As a
consequence, at each of these marks, a process $H^A$ starts. Their lifetime is
equal to the instant where they have started (e.g., the segment above $(X_1,T_1)$
ends at time $2T_1$). At time $1$, all clusters where there has been no mark
become macroscopic and merge together. However, this is limited by vertical
segments. Here, at time $1$, we have the clusters $[-A,X_8]$, $[X_8,X_7]$,
$[X_7,X_4]$, $[X_4,X_6]$, $[X_6,X_5]$ and $[X_5,A]$. The segment above $(X_4,T_4)$
ends at time $2T_4$ and thus, at this time, the clusters $[X_7,X_4]$ and $[X_4,X_6]$
merge into $[X_7,X_6]$. The ninth mark falls in the (macroscopic) zone $[X_8,X_7]$
and thus two fires start. They cross the cluster $[X_8,X_7]$ at speed $p$, i.e. cross
$[X_8,X_7]$ with a slope $p$. A process $H^A$ then starts at $X_{11}$ at time $T_{11}$. Since $Z^A_{T_{11}-}(X_{11})=T_{11}-(T_9+p\abs{X_9-X_{11}})$
[because $Z^A_{T_9+p\abs{X_9-X_{11}}}(X_{11})$ has been set to $0$], the segment above
$(X_{11},T_{11})$ will end at time
$2T_{11}-(T_9+p\abs{X_9-X_{11}})$. 
On the other hand, a fire starts at $X_{10}$ at time $T_{10}$ and crosses the cluster
of $X_{10}$ at speed $p$. A site $x$ in $[X_7,A]$ remains microscopic from time $T_{10}+p\abs{X_{10}-x}$ until
time $T_{10}+p\abs{X_{10}-x}+1$. The two matches $14$ and $12$ create microscopic
fires (because they fall on sites where $Z^A_t(x)<1$). Observe finally that the 15th and the 16th fires are stopped by each oher.

With this realization, we have $0 \in(X_{7},X_{2})$ and, thus, 
$Z^A_t(0)= t$ for $t \in[0,1]$, then $Z^A_t(0)=1$ for $t \in [1,T_{10}+pX_{10})$,
then $Z^A_t(0)= t-(T_{10}+pX_{10})$ for $t \in[T_{10}+pX_{10},T_{10}+pX_{10}+1)$,
then $Z^A_t(0)=1$ for $t \in[T_{10}+pX_{10}+1,T_{16}+pX_{15})$, etc. We also
see that $D^A_t(0)=\{0\}$ for $t \in[0,1)$, $D^A_t(0)=[X_7,X_4]$ for $t\in[1,2T_4)$,
$D^A_t(0)=[X_7,X_6]$ for $t\in[2T_4,2T_{6})$,
$D^A_t(0)=[X_7,X_{10}+\frac{T_{10}-t}{p})$ for $t\in[2T_{6},T_{10}+pX_{10})$,
$D^A_t(0)=\{0\}$ for $t\in[T_{10}+pX_{10},T_{10}+pX_{10}+1)$, etc. We finally have $F_t^A(0)=0$ for all $t\neq\{T_{10}+pX_{10},T_{15}+pX_{15}\}$ and $F_{T_{10}+pX_{10}}^A(0)=F_{T_{15}+pX_{15}}^A(0)=1$.
}}
\end{minipage}}
\end{figure}

Let $p\in\intervallefo{0}{\infty}$. Here we show that when working on a
finite space interval, the LFFP$(p)$ is somewhat discrete. We consider a Poisson measure $\pi_M(\diff x,\diff t)$ on
$\rr\times\intervallefo{0}{\infty}$ with intensity measure $\diff x\diff t$.
\begin{defin}
Let $A>0$. A process $( Z^A_t(x), H^A_t(x), F^A_t(x))_{t\geq0, x\in\intervalleff{-A}{A}}$ with values in $\rr_+\times\rr_+\times\nn-$ such that
a.s., for all $x\in\intervalleff{-A}{A}$, $(Z^{A}_t(x),H^{A}_t(x))_{t\geq0}$ is
c\`adl\`ag, is a $A-$LFFP$(p)$ if a.s., for all $t\geq0$, all
$x\in\intervalleff{-A}{A}$,
\begin{align}
Z_t^{A}(x)&= \intot \indiq{Z_s^{A}(x)<1}\diff s-\sum_{s\leq t}(F^{A}_s\wedge 1),\notag\\
H_t^{A}(x)&= \intot Z_{s-}^{A}(x)\indiq{Z_{s-}^{A}(x)<1}\pi_M(\{x\}\times\diff
s)-\intot
\indiq{H_{s}^{A}(x)>0}\diff s,\\
F_t^{A}(x)&= \iint_{(y,s)\in\Lambda^p_{(x,t)}\cap(\intervalleff{-A}{A}\times\intervallefo{0}{\infty})} \indiq{\forall (r,v)\in \Lambda^p_{(x,t)}(y,s) ,\, Z_{v-}^{A}(r)=1\text{ and }H_{v-}^{A}(r)=0}\pi_M(\diff y,\diff s).\notag
\end{align}
\end{defin}
To the $A-$LFFP$(p)$, as usual, we associate the process $D_t^{A}(x) = [L_t^{A}(x),R_t^{A}(x)]$, with
\begin{align*}
L_t^{A}(x) =& (-A)\vee\sup\{ y\in\intervalleff{-A}{x}:\; Z_t^{A}(y)<1 \text{ or
} H_t^{A}(y)>0\},\\
R_t^{A}(x) =& A\wedge\inf\{ y\in\intervalleff{x}{A}:\; Z_t^{A}(y)<1 \text{ or }
H_t^{A}(y)>0\}.
\end{align*}

A typical path of
$(Z^{A}_t(x),H^{A}_t(x),F^{A}_t(x))_{t\geq0,x\in\intervalleff{-A}{A}}$ is drawn
in figure \ref{plffpdraw}.

The proof of the following proposition shows the construction of the
$A-$LFFP$(p)$ in an algorithmic way.

\begin{prop}\label{algo}
Consider a Poisson measure $\pi_M(\diff x,\diff t)$ on
$\rr\times\intervallefo{0}{\infty}$ with intensity measure $\diff x\diff t$. For any
$A>0$ and $p\ge0$, there a.s. exists a unique $A-$LFFP$(p)$ which can be perfectly simulated.
\end{prop}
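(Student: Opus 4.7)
The strategy is to construct $(Z^A, H^A, F^A)$ inductively on the marks of $\pi_M$ in the strip $\intervalleff{-A}{A} \times \intervallefo{0}{\infty}$, exploiting that on this finite-width box only finitely many marks fall in any bounded time interval and that between consecutive marks the evolution is purely deterministic. Fix an arbitrary horizon $T > 0$. Almost surely, the marks of $\pi_M$ in $\intervalleff{-A}{A} \times \intervalleff{0}{T}$ are finite in number; label them $(X_1, T_1), \ldots, (X_n, T_n)$ with $0 < T_1 < \cdots < T_n \leq T$. We initialize $Z^A_0 \equiv H^A_0 \equiv F^A_0 \equiv 0$ on $\intervalleff{-A}{A}$.

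Given the process on $\intervalleff{0}{T_{k-1}}$ (with $T_0 := 0$), we extend it to $\intervalleoo{T_{k-1}}{T_k}$ as follows. Away from the (finitely many) sites currently being crossed by fires ignited at earlier marks, $Z^A$ grows linearly at rate $1$ up to the cap $1$, $H^A$ decreases linearly at rate $1$ down to $0$, and $F^A = 0$. Each in-progress fire travels from its origin $(X_j, T_j)$ along $\mathcal{V}^p_{(X_j, T_j)}$ at speed $1/p$ (instantaneously if $p = 0$), burning the sites it crosses---there we set $F^A = 1$ and reset $Z^A$ to $0$---until it is extinguished the first time it reaches a point with $Z^A < 1$ or $H^A > 0$, collides with another fire, or leaves the box. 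On $\intervalleoo{T_{k-1}}{T_k}$ this amounts to only finitely many potential events, which can be resolved in chronological order by elementary algebra.

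Having determined the state at $T_k-$, we process the mark $(X_k, T_k)$: if $Z^A_{T_k-}(X_k) < 1$ or $H^A_{T_k-}(X_k) > 0$, we set $H^A_{T_k}(X_k) := Z^A_{T_k-}(X_k)$, creating a new barrier of duration $Z^A_{T_k-}(X_k)$; otherwise $Z^A_{T_k-}(X_k) = 1$ and $H^A_{T_k-}(X_k) = 0$, and we launch two new fires from $(X_k, T_k)$ (one rightward, one leftward), which are then carried by the between-marks rule of the previous paragraph. Iterating through all $n$ marks and letting $T \to \infty$ yields a process defined on $\intervallefo{0}{\infty}$. Because each stage involves only finitely many explicit numerical operations on the finite list of marks, this recipe simultaneously provides a perfect simulation algorithm on any window $\intervalleff{0}{T} \times \intervalleff{-A}{A}$.

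Uniqueness will follow by comparing any two candidate solutions $(Z^A, H^A, F^A)$ and $(\tilde Z^A, \tilde H^A, \tilde F^A)$ inductively on the $T_k$. The defining integral equations leave no freedom between marks (they reduce to explicit linear ODEs for $Z^A$ and $H^A$, with $F^A$ determined by the preceding fire trajectories) and they dictate the jump at each $T_k$ given the state at $T_k-$. The main delicate point will be verifying that the backward-looking definition of $F^A$ (as an integral over the backward cone $\Lambda^p$) matches the forward-propagating fires used in the algorithm; this rests on the identity $\Lambda^p_{(x,t)}(y,s) = \mathcal{V}^p_{(y,s)}(x,t)$ noted in Subsection \ref{notations}, together with careful bookkeeping at collision points where two fires meet along characteristics that both remain unobstructed, at which $F^A$ should jump by $2$---exactly what the algorithm prescribes.
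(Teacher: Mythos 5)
Your proposal is correct and follows essentially the same strategy as the paper: an inductive construction over the finite set of marks $(X_k,T_k)$ of $\pi_M$ in the box, with deterministic evolution between marks (linear growth of $Z^A$, linear decay of $H^A$, propagation of fires along $\mathcal V^p$-characteristics until they hit a microscopic zone/barrier, collide, or exit the box), which delivers both existence/uniqueness and a perfect-simulation algorithm since every step reduces to finitely many algebraic operations. The paper makes the "finitely many potential events between marks" more explicit by introducing the set $\cE^{T_{q+1}}_{T_q}$ of possible fire-meeting and fire-stopping points and subdividing each inter-mark interval accordingly, and it distinguishes the collision/stopping cases at those intermediate times more carefully, but this is the same idea you flagged as "resolved in chronological order by elementary algebra." Two small remarks: the paper restricts its written proof to $p>0$ and cites earlier work for $p=0$, whereas you treat both uniformly, which is fine; and your rule at a mark "if $Z<1$ \emph{or} $H>0$, set $H:=Z$" is harmlessly redundant, since a.s.\ distinct marks have distinct spatial coordinates and hence $H^A_{T_k-}(X_k)=0$, so the clause "$H>0$" never triggers.
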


\begin{proof}[Algorithm] Here we only treat the case $p>0$. The case $p=0$ is much easier and has been treated in \cite{bf}, as mentioned in Remark \ref{rem p=0}.

Consider the marks $(X_k,T_k)_{k=1,\dots,n}$ of $\pi_M$ in
$\intervalleff{-A}{A}\times\intervalleff{0}{T}$, ordered chronologically and set
$T_0=0$. We describe the construction via an algorithm, which also shows uniqueness, in the sense that there is no choice in the construction.

Suppose that we have built the process  $(Z^{A}_t(x),H^{A}_t(x),F^{A}_t(x))_{x\in\intervalleff{-A}{A}}$ at some time $t\geq0$. We then can set
\begin{align*}
\chi_t^+ &=\enstq{x\in\intervalleff{-A}{A}}{F_t^{A}(x)=1\text{ and }Z^{A}_t(x+)=1},\\
\chi_t^- &=\enstq{x\in\intervalleff{-A}{A}}{F_t^{A}(x)=1\text{ and }Z^{A}_t(x-)=1},\\
\chi_t^0
&=\enstq{x\in\intervalleff{-A}{A}}{H_t^{A}(x)>0\text{ or }Z_t^{A}(x+)\neq Z_t^{A}(x-)}\cup\{-A,A\},\\
\chi_t &=\chi_t^+\cup\chi_t^-\cup\chi_t^0,
\end{align*}
where $Z_t^A(x+)=\lim_{\substack{y\to x,\\y>x}}Z_t^A(y)$ (resp. $Z_t^A(x-)=\lim_{\substack{y\to x,\\y<x}}Z_t^A(y)$). Observe that $\chi_t^+$ (resp. $\chi_t^-$) is the set of fires at time $t$ that spread to the right (resp. to the left) and that $\chi^0_t$ is the set of sites where a fire can be stopped (barrier or microscopic zone).  We also define, for $r>t$, 
\begin{align}
\cE_{t}^{r}\coloneqq&\bigcup_{x\in\chi_t^ +, y\in\chi_t^ -}\mathcal{V}^p_{(x,t)}\cap\mathcal{V}^p_{(y,t)}\cap(\intervalleff{-A}{A}\times\intervalleff{t}{r})\label{meeting fire}\\
&\cup\bigcup_{x\in\chi_t^+\cup\chi_t^-, y\in\chi_t^0}\cV^p_{(x,t)}\cap(\{y\}\times\intervalleff{t}{r}). \label{stop fire}
\end{align}
The set \eqref{meeting fire} is the possible locations $(y,s)$ where two fires may meet during $\intervalleff{t}{r}$. The set \eqref{stop fire} is the possible locations $(y,s)$ where a fire may be stopped by a microscopic zone or a barrier during $\intervalleff{t}{r}$. Thus, $\cE_{t}^{r}$ is the set of possible locations $(y,s)$ where a fire may be stopped during $\intervalleff{t}{r}$, when no match falls in $\intervalleff{-A}{A}$ during $\intervalleff{t}{r}$.

\md

\noindent{\bf Step 0.} Put $Z_0^{A}(x)=H_0^{A}(x)=F^A_0(x)=0$ for all
$x\in\intervalleff{-A}{A}$.

Assume that, for some $q\in\{0,\dots,n-1\}$, the process 
$(Z^{A}_t(x),H^{A}_t(x),F^{A}_t(x))_{t\in\intervalleff{0}{T_q},x\in\intervalleff{-A}{A}}$
has been built.

\md

\noindent{\bf Step $q+1$.} We build $(Z^{A}_t(x),H^{A}_t(x),F^{A}_t(x))_{t\in\intervalleof{T_q}{T_{q+1}},x\in\intervalleff{-A}{A}}$
in the following way: for $x\in\intervalleff{-A}{A}$ and
$t\in\intervalleoo{T_q}{T_{q+1}}$, we set
$H_t^{A}(x)=\max(0,H_{T_q}^{A}(x)-(t-T_q))$. We then set, recall \eqref{meeting fire} and \eqref{stop fire},
\[\cE_{T_q}^{T_{q+1}}=\left\{(X^1_q,T^1_q),\dots,(X_q^{N},T_q^{N})\right\}\]
ordered chronologically, and put $(X^0_q,T_q^0)=(X_q,T_q)$ and $(X^{N+1}_q,T_q^{N+1})=(X_{q+1},T_{q+1})$. Observe that a.s. $T_q=T_q^0<T_q^1<\dots <T_q^N<T_q^ {N+1}=T_{q+1}$. Assume that the process has been built until $T^k_q$, for some $k\in\{0,\dots,N\}$. We then build the process on $\intervalleof{T^k_q}{T^{k+1}_q}$. Recall that no match falls in $\intervalleff{-A}{A}$ during the time interval $\intervalleoo{T_q^k}{T_q^{k+1}}$.

We first compute $(F_t^A(x))_{t\in\intervalleoo{T_q^k}{T_q^{k+1}},x\in\intervalleff{-A}{A}}$. Since a fire can't be stopped during $\intervalleoo{T^k_q}{T^{k+1}_q}$, if $x\in\chi^+_{T_q^k}$, we set $F_s^A(y)=1$ for all $(y,s)\in \cV^p_{(x,T_q^k)}(x+\frac{T_q^{k+1}-T_q^k}{p},T_q^{k+1})$, recall Subsection \ref{notations}, while, if $x\in\chi^-_{T_q^k}$, we set $F_s^A(y)=1$ for all $(y,s)\in  \cV^p_{(x,T_q^k)}(x-\frac{T_q^{k+1}-T_q^k}{p},T_q^{k+1})$. Otherwise, that is if $(y,s)\not\in\left(\bigcup_{x\in\chi_{T_q^k}^+} \cV^p_{(x,T_q^k)}(x+\frac{T_q^{k+1}-T_q^k}{p},T_q^{k+1})\right)\cup \left(\bigcup_{x\in\chi_{T_q^k}^-} \cV^p_{(x,T_q^k)}(x-\frac{T_q^{k+1}-T_q^k}{p},T_q^{k+1})\right)$, we set $F_s^A(y)=0$. To summarize, for all $(y,s)\in\intervalleff{-A}{A}\times\intervalleoo{T_q^k}{T_q^{k+1}}$, we have
\[F_s^A(y)=\begin{cases}
1 &\text{if } y-\frac{s-T_q^k}{p}\in\chi_{T_q^k}^+\\
1 &\text{if } y+\frac{s-T_q^k}{p}\in\chi_{T_q^k}^-\\
0 &\text{else.}
\end{cases}\]

We then compute $(Z_t^A(x))_{t\in\intervalleoo{T_q^k}{T_q^{k+1}},x\in\intervalleff{-A}{A}}$. Let us fix $x\in\intervalleff{-A}{A}$. We set $N_x\coloneqq\#\enstq{s\in\intervalleoo{T_q^k}{T_q^{k+1}}}{F^A_s(x)=1}$ and $\tau_0\coloneqq T_q^k$. If $N_x\geq1$, for $j=0,\dots,N_x-1$, we set $\tau_{j+1}\coloneqq\inf\enstq{s\in\intervalleoo{\tau_j}{T^{k+1}_q}}{F^{A}_s(x)=1})$. While $x$ isn't crossed by a fire,  $Z_s^A(x)$ grows linearly. We thus have, for all $s\in\intervalleoo{T^k_q}{T^{k+1}_q}$
\[Z_{s}^{A}(x)=\begin{cases}
\min(Z_{T_q^k}^{A}(x)+s-T_q^k,1) &\text{if } s\in\intervalleoo{T_q^k}{\tau_{1}},\\
\min(s-\tau_{j},1) &\text{if } s\in\intervallefo{\tau_{j}}{\tau_{j+1}}\text{ and }N_x\geq j\geq1,\\
\min(s-\tau_{N_x},1) &\text{if } s\in\intervallefo{\tau_{N_x}}{T_q^{k+1}}.
\end{cases}\]
if $N_x\geq1$, whereas
\[Z_{s}^{A}(x)=\min(Z_{T_q^k}^{A}(x)+s-T_q^k,1)\]
if $N_x=0$.

We finally compute $F_{T^{k+1}_q}^A(x)$, $Z^{A}_{T^ {k+1}_q}(x)$ and $H^A_{T_{q+1}}(x)$ for all $x\in\intervalleff{-A}{A}$.

\md

{\it Case 1.} If $x\neq X_q^{k+1}$, observe that at most one fire can reach $x$ at time ${T^ {k+1}_q}$ (else $x\in\cE_{T_q^k}^{T_q^{k+1}}$). If $x-\frac{T_q^{k+1}-T_q^k}{p}\in\chi_{T_q^ k}^ +$ or $x+\frac{T_q^{k+1}-T_q^k}{p}\in\chi_{T_q^ k}^-$, that is if a fire reaches $x$ at time $T_q^{k+1}$, we set $F_{T_q^{k+1}}^A(x)=1$ and $Z_{T_q^{k+1}}^A(x)=0$. Else, we set $F_{T_q^{k+1}}^A(x)=0$ and $Z_{T_q^{k+1}}^A(x)=Z_{T_q^{k+1}-}^A(x)$.

\md

{\it Case 2.} If $x=X_q^{k+1}$ and $k<N$, observe that $X_q^{k+1}$ isn't crossed by a fire during $\intervalleoo{T^k_q}{T^{k+1}_q}$ i.e. $N_{X^ {k+1}_q}=0$. If $X_q^{k+1}-\frac{T_q^{k+1}-T_q^k}{p}\not\in\chi_{T_q^k}^+$ and $X_q^{k+1}+\frac{T_q^{k+1}-T_q^k}{p}\not\in\chi_{T_q^k}^-$ (i.e. if the fire which might have reached $X_q^{k+1}$ has been stopped before $T_q^k$) or if $H^{A}_{T_q^{k+1}-}(X_{q}^{k+1})>0$ or $Z^{A}_{T_q^{k+1}-}(X_{q}^{k+1})<1$ (i.e. if there has been recently a microscopic fire), then put $F_{T_q^{k+1}}^A(X_q^{k+1})=0$. Else, there is one (or two) fire that reaches $X_q^{k+1}$ at time $T_q^{k+1}$ and we set $F^A_{T_q^{k+1}}(X_q^{k+1})=1$ (or $2$). To summarize, we put
\begin{multline*}
F_{T_q^{k+1}}^A(X_q^{k+1})=\indiq{H^{A}_{T_q^{k+1}-}(X_{q}^{k+1})=0\text{ and }Z^{A}_{T_q^{k+1}-}(X_{q}^{k+1})=1}\\
\times\left(\indiq{X_q^{k+1}-\frac{T_q^{k+1}-T_q^k}{p}\in\chi_{T_q^k}^+}+\indiq{X_q^{k+1}+\frac{T_q^{k+1}-T_q^k}{p}\in\chi_{T_q^k}^-}\right).
\end{multline*}
We finally put
\[Z_{T^{k+1}_q}^{A}(X_q^{k+1})=Z_{T^{k+1}_q-}^{A}(X_q^{k+1})\indiq{F_{T_q^{k+1}}^A(X_q^{k+1})=0}.\]

\md

{\it Case 3.} If $x=X_{q+1}=X_q^{N+1}$ and $k=N$, a match falls in $X_{q+1}$ at  time $T_{q+1}=T_q^{N+1}$. We then set
\[Z_{T_{q+1}}^{A}(X_{q+1})=Z_{T_{q+1}-}^{A}(X_{q+1}) \indiq{Z_{T_{q+1}-}^{A}(X_{q+1})<1}\]
and
\[F^A_{T_{q+1}}(X_{q+1})=\indiq{Z_{T_{q+1}-}^{A}(X_{q+1})=1}.\]

To conclude the construction, we set, for all $x\in\intervalleff{-A}{A}$
\[H^A_{T_{q+1}}(x)=\begin{cases}
H^A_{T_{q+1}-}(x) &\text{if } x\neq X_{q+1},\\
Z^A_{T_{q+1}-}(X_{q+1})\indiq{Z^A_{T_{q+1}-}(X_{q+1})<1} & \text{if } x=X_{q+1}.\qedhere
\end{cases}\]
\end{proof}

\subsubsection{Restriction of the LFFP$(p)$ to a finite box}
We now prove a refined version of Theorem \ref{well posedness}.
\begin{prop}\label{restriction limite}
Let $p\in\intervallefo{0}{\infty}$ and $\pi_M$ be a Poisson measure on $\rr\times\intervallefo{0}{\infty}$ with
intensity measure $\diff x\diff t$.
\begin{enumerate}
        \item There exists a unique LFFP$(p)$
$(Z_t(x),H_t(x),F_t(x))_{t\geq 0,x\in\rr}$.
        \item It can be perfectly simulated on
$\intervalleff{-n}{n}\times\intervalleff{0}{T}$ for any $T>0$, any $n>0$.
        \item For $A>0$, let
$(Z^{A}_t(x),H^{A}_t(x),F^{A}_t(x))_{t\geq0,x\in\intervalleff{-A}{A}}$ be the
unique $A-$LFFP$(p)$ and the associated $(D^{A}_t(x))_{t\geq0,x\in\intervalleff{-A}{A}}$. There holds
\begin{multline} 
\mathbb{P}\left[
(Z_t(x),H_t(x),F_t(x),D_t(x))_{t\in\intervalleff{0}{T},x\in\intervalleff{-A/2}{A/2}}\right.\\
        \left.
=(Z^{A}_t(x),H^{A}_t(x),F^{A}_t(x),D^{A}_t(x))_{t\in\intervalleff{0}{T},x\in\intervalleff{-A/2}{A/2}}\right]\geq
1-C_Te^{-\alpha_T A}\label{proba restriction}
\end{multline}
for some constants $\alpha_T>0$ and $C_T>0$ not depending on $A>0$.
\end{enumerate}
\end{prop}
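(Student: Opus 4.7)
The plan is to construct the LFFP$(p)$ as a projective limit of the $A-$LFFP$(p)$'s given by Proposition \ref{algo}, following the template of Proposition \ref{restriction limite infini}. For each $A>0$, the $A-$LFFP$(p)$ $(Z^A,H^A,F^A)$ is determined deterministically by the marks of $\pi_M$ in $\intervalleff{-A}{A}\times\intervallefo{0}{\infty}$, which yields its existence, uniqueness and perfect simulability. Once the coupling inequality \eqref{proba restriction} is established, one defines $Z_t(x)\coloneqq Z^A_t(x)$ (and similarly for $H$, $F$, $D$) for any $A\geq 2|x|$; a Borel--Cantelli argument along $A=2^n$ then shows that the resulting process is a.s.\ well-defined on all of $\rr\times\rr_+$ and is the unique LFFP$(p)$.

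For the coupling estimate I would introduce the event $\Omega_{A,T}$ that $\pi_M$ has at least one mark in each of the disjoint boxes
\[B_k^+\coloneqq\intervalleff{A/2}{A}\times\intervalleff{k/2}{(k+1)/2},\quad B_k^-\coloneqq\intervalleff{-A}{-A/2}\times\intervalleff{k/2}{(k+1)/2},\]
for $k=0,1,\dots,\lceil 2T\rceil$. The $2(\lceil 2T\rceil+1)$ events are independent, each of probability $1-e^{-A/4}$, so
\[\proba{\Omega_{A,T}}\geq(1-e^{-A/4})^{2(\lceil 2T\rceil+1)}\geq 1-2(\lceil 2T\rceil+1)\,e^{-A/4},\]
which yields \eqref{proba restriction} with $\alpha_T=1/4$ and $C_T=2(\lceil 2T\rceil+1)$.

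The heart of the argument is showing that on $\Omega_{A,T}$, the strip $\intervalleff{A/2}{A}$ contains a \emph{barrier point} at every time $t\in\intervalleff{0}{T}$ (some $y$ with $Z^A_t(y)<1$ or $H^A_t(y)>0$), and likewise on the left. For $t<1$ this is automatic, since $Z^A_t\leq t<1$ everywhere. For $t\geq 1$, a mark $(y_k,s_k)\in B_k^+$ with $s_k\geq 1$ a.s.\ lands on a fresh site where $Z^A_{s_k-}(y_k)=1$, so it triggers a macroscopic fire that resets $Z^A(y_k)$ to $0$ and creates a barrier at $y_k$ throughout $\intervallefo{s_k}{s_k+1}$. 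Since the spacing $s_{k+1}-s_k\leq 1$ holds between consecutive marks, these intervals cover $\intervalleff{1}{T}$ without gap. The marks in $B_0^+$ and $B_1^+$ fall at times $s\leq 1$, where they produce microscopic-fire barriers persisting for $2s$ time units, which bridge the initial portion $\intervalleff{0}{1}$ together with the trivial barrier coming from $Z^A_t\leq t$. Once the barrier structure on both sides is secured, the algorithmic construction of Proposition \ref{algo} makes it clear that no fire or macroscopic cluster originating outside $\intervalleff{-A}{A}$ can influence the dynamics inside $\intervalleff{-A/2}{A/2}\times\intervalleff{0}{T}$, so the $A-$LFFP$(p)$ and every $A'-$LFFP$(p)$ with $A'\geq A$ coincide there. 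The main obstacle is the careful bookkeeping needed when matches fall in rapid succession on an already-disturbed site, producing only short-lived barriers: ensuring that the chain of overlapping barriers never breaks requires a concrete case analysis, although the underlying ideas are elementary.
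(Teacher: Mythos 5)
There is a genuine gap, and it is precisely the issue you mentioned in passing as ``careful bookkeeping'' and then set aside. For $p>0$ a fire's front traces a line of slope $\pm 1/p$ in the $(x,t)$-plane, so a sequence of barriers at positions $y_0,y_1,\dots\in[A/2,A]$ active during overlapping time intervals $I_0,I_1,\dots$ blocks all left-moving fires \emph{only if} the positions are monotone (non-increasing) in $k$. If the $y_k$ are scattered, a fire can slip through: cross $y_{k+1}$ before $I_{k+1}$ opens, then reach $y_k<y_{k+1}$ only after $I_k$ has closed. Your event $\Omega_{A,T}$ puts one mark per time-box at an essentially uniform random spot in the wide strip $[A/2,A]$; these positions are almost surely not monotone, so ``once the barrier structure on both sides is secured\dots no fire can influence'' is unjustified. (Your argument is fine for $p=0$, where there is no propagation dynamics and it suffices that at every time there is \emph{some} barrier in the strip, but the statement covers all $p\ge 0$ with constants uniform in $p$.)

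This is exactly what the paper's events $\Omega_a^l,\Omega_a^r$ are built to provide. They demand that \emph{all} marks of $\pi_M$ in a \emph{width-one} strip $[a,a+1]\times[0,T]$ fall in increasing (resp.\ decreasing) spatial order with consecutive time gaps below $1/4$; combined with the uniform ``half-unit barrier duration'' fact of Step~1 (any mark produces $H^A>0$ or $Z^A<1$ for at least $1/2$ time unit, whatever the local state), this chains the barriers into a monotone c\`adl\`ag curve $x_t$ that every fire of slope $\pm 1/p$ must hit. The exponential-in-$A$ bound then comes from the independence of $\Omega_a^l$ over integer $a$ in $[-A,-A/2-1]$, giving $1-2(1-q_T)^{A/2-2}$. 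Your outer scaffolding (projective limit and perfect simulation from the $A$-box processes, disjoint boxes for independence, exponential form of the estimate) is the right shape; the missing ingredient is the monotonicity of the barrier positions, and it cannot be dispensed with for $p>0$.
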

\begin{proof}
We divide the proof into several step. We work on $\intervalleff{0}{T}$.

\md

\noindent{\bf Step 1.} We observe that for a mark $(X,\tau)$ of $\pi_M$ with $X\in\intervalleff{-A}{A}$, we have $H^A_t(X)>0$ or $Z^A_t(X)<1$ for all $t\in\intervallefo{\tau}{\tau+1/2}$.
	
Indeed, assume first that $Z^A_{\tau-}(X)\in\intervallefo{0}{1/2}$. Then $Z_t^A(X)=Z^A_{\tau-}(X)+t-\tau<1$ for all $t\in\intervalleff{\tau}{\tau+1/2}$.

Assume next that $Z^A_{\tau-}(X)\in\intervallefo{1/2}{1}$. Then $H_\tau^A(X)=Z^A_{\tau-}\geq1/2$, so that $H^A_t(X)=H_\tau^A(X)-t+\tau>0$ for all $t\in\intervallefo{\tau}{\tau+1/2}$.

If finally $Z^A_{\tau-}(X)=1$, then $Z^A_{\tau}(X)=0$, whence $Z^A_{t}(X)=t-\tau<1$ for $t\in\intervallefo{\tau}{\tau+1}$.

\md

\noindent{\bf Step 2.} For $a\in\rr$, we consider the event $\Omega_a^l$ defined as follows: for $\{(X_k,T_k)\}_{k=1,\dots,n}$ the marks of $\pi_M$ restricted to $\intervallefo{a}{a+1}\times\intervalleff{0}{T}$ ordered chronologically, for $T_0=0$, $T_{n+1}=T$, we put $\Omega_a^l=\{\max_{i=0,\dots,n}(T_{i+1}-T_i)<1/4\}\cap\{\min_{i=1,\dots,n-1}(X_{i+1}-X_i)>0\}$.
	
We immediately deduce from Step 1 that for any $a\in\rr$, any $A>\abs{a}+1$,
\begin{multline*}
\Omega_a^l\subset\{\exists x:\intervalleff{0}{T}\to\intervalleoo{a}{a+1},t\mapsto
x_t\text{ non decreasing}\\
\text{and for all } t\in\intervalleff{0}{T}, H_t^A(x_t)>0\text{ or }Z_t^A(x_t)<1\}.
\end{multline*}
Thus, on $\Omega_a^l$, clusters on the left of $a$ cannot be connected to clusters on the right of $a+1$ during $\intervalleff{0}{T}$. Furthermore, since the function $x$ is non decreasing, a fire starting from the left of $a$ can't cross the zone $\intervalleoo{a}{a+1}$ (i.e. it necessarily would be stopped by some $x_{t_0}$). Thus, matches falling at the left of $a$ do not affect the zone $\intervalleoo{a+1}{\infty}$.

In the same way, we put $\Omega_a^r=\{\max_{i=0,\dots,n}(T_{i+1}-T_i)<1/4\}\cap\{\max_{i=1,\dots,n-1}(X_{i+1}-X_i)<0\}$. 
We of course have, for any $a\in\rr$, $A>\abs{a}+1$,
\begin{multline*}
\Omega_a^r\subset\{\exists y:\intervalleff{0}{T}\to\intervalleoo{a}{a+1},t\mapsto
y_t\text{ non increasing }\\
\text{and for all } t\in\intervalleff{0}{T}, H_t^A(y_t)>0\text{ or }Z_t^A(y_t)<1\}.
\end{multline*}
As above, on $\Omega_a^r$, clusters on the right of $a+1$ cannot be connected to clusters on the left of $a$ during $\intervalleff{0}{T}$ and the fact that $y$ is non increasing ensures us that matches falling on the right on $a+1$ do not affect the zone $\intervalleoo{-\infty}{a}$.

\md

\noindent{\bf Step 3.} Obviously, $q_T=\proba{\Omega_a^l}=\proba{\Omega_a^r}$ is positive and does not depend on $a$. Furthermore, $\Omega_a^l$ (resp.  $\Omega_a^r$) is independent of $\Omega_b^l$ (resp.  $\Omega_b^r$) for all $a,b\in\zz$ with $a\neq b$. Hence there are a.s. infinitely many $a\in\zz$ (resp. $b\in\zz$) such that $\Omega_a^l$ (resp. $\Omega_b^r$) is realized.
	
Then it is routine to deduce the well-posedness of the LFFP$(p)$. The perfect simulation algorithm on a finite-box $\intervalleff{-n}{n}\times\intervalleff{0}{T}$ is also easy: find $a_1<a_2$ with $a_1+1<-n<n<a_2$ such that $\Omega_{a_1}^l\cap\Omega_{a_2}^r$ is realized. Then apply the same rules as for the $A-$LFFP$(p)$ to simulate the process in $\intervalleff{a_1}{a_2+1}$. This will give the true LFFP$(p)$ inside $\intervalleff{a_1+1}{a_2}$ during $\intervalleff{0}{T}$.

Finally, we can clearly bound from below the left hand side of \eqref{proba restriction} by
\[\proba{(\cup_{a\in\intervalleff{-A}{-A/2-1}\cap\zz}\,\Omega_a^l)\cap(\cup_{a\in\intervalleff{A/2}{A-1}\cap\zz}\,\Omega_a^r)}\geq 1-2(1-q_T)^{A/2-2}\]
whence \eqref{proba restriction} with $C_T=2/(1-q_T)^2$ and $\alpha_T=-\log(1-q_T)/2$.
\end{proof}

\section{Propagation Lemmas}\label{section propagation lemma}
Here we study the
propagation of a fire through an occupied cluster. When a match falls on an occupied
cluster, two fires start: one goes to the left and one goes to the right. This
propagation is not necessarily linear, it sometimes can regress. However there are
few 'sparks'.

Consider two families of Poisson processes $(N_t^S(i))_{t\geq0,i\in\zz}$ and
$(N_t^P(i))_{t\geq0,i\in\zz}$ with respective rates $1$ and $\pi$, all these
processes being independent. We consider {\it the propagation process ignited at $(0,0)$} defined by
\begin{align*}
\check{\zeta}^{\la,\pi}_t(i) =& 1+\indiq{i=0}+\int_0^t\indiq{{\check{\zeta}}^{\la,\pi}_{s-}(i)=0}\diff
N_s^S(i)\\
&+\int_0^t \indiq{{\check{\zeta}}_{s-}^{\la,\pi}(i+1)=2,{\proco}_{s-}^{\la,\pi}(i)=1}\diff
N_s^P(i+1)+\int_0^t
\indiq{{\proco}_{s-}^{\la,\pi}(i-1)=2,\proco_{s-}^{\la,\pi}(i)=1}\diff N_s^P(i-1)\\
&- 2\int_0^t \indiq{{\proco}_{s-}^{\la,\pi}(i)=2}\diff N_s^P(i).
\end{align*}
Roughly, the process $(\proco^{\la,\pi}_t(i))_{t\geq0,i\in\zz}$ starts from an occupied initial configuration and a match falls on the site $0$ at time $0$. Afterwards the fire spreads into $\zz$. We are interested in the space-time position of burning trees (i.e. $(i,t)\in\zz\times\intervallefo{0}{\infty}$ such that $\proco^{\la,\pi}_t(i) =2$), when $\la\to0$ and $\pi\to\infty$ in the different regimes.

We set, for $t\geq0$,
\begin{align}
i_t^+ &=\max\enstq{i\geq0}{\proco_t^{\la,\pi}(i)=2}\label{right front}\\
i_t^- &=\min\enstq{i\leq0}{\proco_t^{\la,\pi}(i)=2}\label{left front}
\end{align}
the right and the left fronts at time $t$. Observe that $(i_t^+)_{t\geq0}$ and $(-i_t^-)_{t\geq0}$ are two Poisson processes with intensity $\pi$. For $i\in\zz$, we set
\begin{align}
T_i &=\inf\enstq{s\geq0}{\check{\zeta}^{\la,\pi}_s(i)=2}\label{def burning time}\\
&=\begin{cases}
\inf\enstq{s\geq0}{i_s^+=i} &\text{if }i\geq0,\\
\inf\enstq{s\geq0}{i_s^-=i} &\text{if }i\leq0,
\end{cases}\notag
\end{align}
which represents the first time that the site $i\in\nn$ is burning. We clearly have for all $t\geq0$, 
\[\proco^{\la,\pi}_t(i_t^-)=2=\proco^{\la,\pi}_t(i_t^+)\]
and for all $i\not\in\intervalleentier{i_t^-}{i_t^+}$,
\[\proco^{\la,\pi}_t(i)=1.\]
In this section, we will show that burning trees at some time $t$ are \emph{concentrated} around $i_t^+$ and $i_t^-$. We say that a site $i$ is a  \emph{spark} at time $t$ if it is a burning tree such that $i\not\in\{i^-_t,i^+_t\}$.

We recall that $\al=\log(1/\la)$, $\nl=\left\lfloor\frac1{\la\al}\right\rfloor$ and we introduce $\e_\la=\frac{1}{\ba_\la^{3}}$. For $B>0$, we finally set $B_\la=\lfloor B\nl\rfloor$.

The following Definition will be usefull.
\begin{defin}\label{close to}
Let $p\geq0$. In the rest of the paper, we will say that \emph{a statement $\cS(\la,\pi)$ holds for all $(\la,\pi)$ sufficiently close to the regime $\cR(p)$} if there are $\e_0>0$ and $\la_0\in\intervalleoo{0}{1}$ such that for all $\la\in\intervalleoo{0}{\la_0}$ and all $\pi\geq1$ such that $\abs{\frac{\nl}{\al\pi}-p}<\e_0$, the statement $\cS(\la,\pi)$ holds.

Similarly, let $z_0\in\intervalleff{0}{1}$. We will say that \emph{a statement $\cS(\la,\pi)$ holds for all $(\la,\pi)$ sufficiently close to the regime $\cR(\infty,z_0)$} if there are $\e_0>0$, $\la_0\in\intervalleoo{0}{1}$ and $K_0>0$ such that for all $\la\in\intervalleoo{0}{\la_0}$ and all $\pi\geq 1$ such that $\frac{\nl}{\al\pi}>K_0$ and $\abs{\frac{\log(\pi)}{\log(1/\la)}-z_0}<\e_0$, the statement $\cS(\la,\pi)$ holds.
\end{defin}

\subsection{Propagation lemma in the regime $\cR(p)$, for some $p\in\intervalleoo{0}{\infty}$}
We first study the propagation in the regime $\cR(p)$, for some $p>0$.
\begin{lem}\label{propagation lemma p}
Let $p>0, T>0$. There exists an event $\Omega^{P,T}_{\la,\pi}$ depending only on the Poisson processes $(N_t^S(i),N^P_t(i))_{t\in\intervalleff{0}{\al (T+\e_\la)},i\in\intervalleentier{-\lfloor\al\pi(T+\e_\la)\rfloor}{\lfloor\al\pi(T+\e_\la)\rfloor}}$ such that
\begin{multline*}
\Omega^{P,T}_{\la,\pi}\subset\{\text{At any time } t\in\intervalleff{0}{\al T},
\text{any burning tree belongs to }\\
\intervalleentier{-\lfloor(t+\al\e_\la)\pi\rfloor}{-\lfloor(t-\al\e_\la)\pi\rfloor}\cup\intervalleentier{\lfloor(t-\al\e_\la)\pi\rfloor}{\lfloor(t+\al\e_\la)\pi\rfloor}\\
\text{ and is either }i_t^+ \text{ or } i_t^-\text{ or has vacant neighbors}\},
\end{multline*}
where the event on the right concerns $(\proco^{\la,\pi}_t(i))_{i\in\zz,t\geq0}$, and 
\[\lim_{\la,\pi}\proba{\Omega^{P,T}_{\la,\pi}}=1\]
when $\la\to0$ and $\pi\to\infty$ in the regime $\cR(p)$.
\end{lem}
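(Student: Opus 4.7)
The plan is to build $\Omega^{P,T}_{\la,\pi}$ as the intersection of three high-probability events: (i) concentration of the two fronts around $\pm\pi t$, (ii) absence of long backward ``spark chains'', and (iii) absence of sparks whose front-side neighbor has regrown. The key comparison throughout is between an exponential clock of rate $1$ (regrowth) and an exponential clock of rate $\pi$ (propagation): each such comparison produces an $O(1/\pi)$ bound, which is the source of smallness in $\cR(p)$ (where $\pi\sim 1/(p\la\al^2)$, so $\pi/\al^k\to\infty$ for every $k$).

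\medskip

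\textbf{Step 1: fronts.} At every $t$, the rightmost burning site $i^+_t$ has its right neighbor $i^+_t+1$ still in state $1$ (never touched by the fire), so $(i^+_t)_{t\geq 0}$ jumps by $+1$ at the constant rate $\pi$ and is a Poisson process of intensity $\pi$; the same holds for $(-i^-_t)_{t\geq 0}$, independent of the right one after the first jump at site $0$. Applying the standard exponential martingale bound to $i^+_t-\pi t$ yields
\[
\proba{\sup_{t\leq\al(T+\e_\la)}\abs{i^+_t-\pi t}\geq \al\e_\la\pi/2}\leq 2\exp\!\left(-\frac{\pi}{8\al^5(T+\e_\la)}\right),
\]
which vanishes in $\cR(p)$ since $\pi/\al^5\to\infty$; the same estimate holds for $i^-_t$.

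\medskip

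\textbf{Step 2: spark chains and bad sparks.} A burning tree at time $t$ outside $\{i_t^+,i_t^-\}$ must originate from a propagation $N^P(j)$ firing at a site $j$ whose inner neighbor had regrown after previously burning. A ``first-generation'' spark just behind the right front is created, at a given front step, precisely when the freshly vacated inner site regrows before the next front jump, i.e., when an $\mathcal{E}(1)$ clock beats an independent $\mathcal{E}(\pi)$ clock: the probability is $1/(1+\pi)$. Summing over the $\sim\pi\al(T+\e_\la)$ front steps bounds the expected number of first-generation sparks by $\al(T+\e_\la)$. A similar clock comparison shows that each such spark either propagates further inward (producing a spark of depth $2$) or sees its front-side neighbor regrow during its lifetime with conditional probability $O(1/\pi)$; hence the expected numbers of sparks of depth $\geq 2$ and of ``bad'' sparks (non-front burning trees with both neighbors occupied) are each $O(\al T/\pi)\to 0$ in $\cR(p)$, and by Markov's inequality both vanish with probability tending to $1$.

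\medskip

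\textbf{Conclusion.} Take $\Omega^{P,T}_{\la,\pi}$ to be the intersection of the three events above. On this event, every burning tree at $t\in\intervalleff{0}{\al T}$ is either a front (in the claimed interval by Step 1) or a first-generation spark at distance exactly $1$ behind a front (still in the claimed interval since $\al\e_\la\pi/2+1\leq\al\e_\la\pi$ for $\la$ small), and in the latter case has a vacant front-side neighbor. Moreover, Step 1 confines the fire to $\intervalleentier{-\lfloor\al\pi(T+\e_\la)\rfloor}{\lfloor\al\pi(T+\e_\la)\rfloor}$ up to time $\al(T+\e_\la)$, so $\Omega^{P,T}_{\la,\pi}$ is measurable with respect to the Poisson marks in the announced box. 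The main obstacle is the a priori possibility of arbitrarily long backward spark chains through regrown regions; it is resolved by iterating the $\mathcal{E}(1)$-vs-$\mathcal{E}(\pi)$ comparison, which produces a geometric decay in depth and hence all of the Markov bounds above.
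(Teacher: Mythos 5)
Your strategy is the same as the paper's: concentrate the two fronts around $\pm\pi t$ by a martingale estimate, then rule out ``bad'' sparks by $\cE(1)$-vs-$\cE(\pi)$ clock comparisons summed over $O(\al\pi T)$ sites. The paper does this by defining, for each site $i$, the events $\Omega_i^1$ (the vacated site never regrows before the next front step) and $\Omega_i^2$ (it regrows, but the resulting spark is short-lived and has vacant neighbors), and showing $\proba{(\Omega_i^1\cup\Omega_i^2)^c}=O(1/\pi^2)$.

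There is, however, a genuine gap in your Conclusion. You claim that a surviving first-generation spark sits ``at distance exactly $1$ behind a front (still in the claimed interval since $\al\e_\la\pi/2+1\leq\al\e_\la\pi$).'' This misidentifies the geometry. The spark at site $i$ ignites at time $T_{i+2}$, at which moment the front $i_t^+$ is already at $i+2$; the spark then persists for an independent $\cE(\pi)$ duration $Y_i^2$ while the front continues to advance at rate $\pi$. If $Y_i^2$ is not small, then by the time $t$ at which the spark is observed, the site $i$ lies well below $(t-\al\e_\la)\pi$ and violates the positional constraint of the lemma, regardless of whether its neighbors are vacant. Your list of events never constrains $Y_i^2$. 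The paper's $\Omega_i^2$ explicitly includes the condition that $i$ ``burns for the second time during less than $\al\e_\la/4$'', precisely so that a spark still burning at time $t$ satisfies $T_{i+2}+\al\e_\la/4\geq t$, which combined with the front estimate forces $i\geq\lfloor(t-\al\e_\la)\pi\rfloor$. This is a separate estimate from your Step 2 — it is the tail of $\cE(\pi)$ beyond the $\la$-dependent threshold $\al\e_\la/4$ (with failure probability $e^{-\pi\al\e_\la/4}\ll 1/\pi$ in $\cR(p)$), not another $\cE(1)$-vs-$\cE(\pi)$ race — and it must be added to your good event before the inclusion in the lemma holds.
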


\begin{proof}
Recall that a \emph{spark} at time $t$ is a burning tree $i$ such that $i\not\in\{i^-_t,i^+_t\}$. We say that a site $i$ \emph{propagates for the first time} when the first fire at $i$ extinguishes and spreads to its neighbors (if they are occupied). Observe that for $i\geq0$, this happens at time $T_{i+1}$, while for $i\leq0$, this happens at time $T_{i-1}$.

Consider, for $i\geq0$, the events
\begin{multline}\label{omega1}
\Omega_i^1 =\{i\text{ remains vacant from the instant at which it propagates for the first time}\\
\text{until the instant at which the fire in $i+1$ propagates for the first time}\}
\end{multline}
and 
\begin{multline}\label{omega2}
\Omega_i^2=\{i\text{ is occupied when the fire in $i+1$ propagates for the first time,}\\
\text{ but then, $i$ burns for the second time during less than }\al\e_\la/4\\
\text{ and no seed has fallen on its neighbors $i-1$, $i+1$}\\
\text{from the instant they burnt for the first time until $i$ propagates for the second time}\}
\end{multline}
and similar events for $i\leq0$ (replace $i+1$ by $i-1$). Recall \eqref{right front}, \eqref{left front} and remark that the event on the right hand side in Lemma \ref{propagation lemma p} contains the event
\begin{multline*}
\Omega^{P,T}_{\la,\pi}=\left\{\sup_{t\in\intervalleff{0}{\al T}}\abs{i_t^+-\pi t}\leq \frac{\al\pi\e_\la}{2}\right\}\cap\left\{\sup_{t\in\intervalleff{0}{\al T}}\abs{i_t^-+\pi t}\leq \frac{\al\pi\e_\la}{2}\right\}\\
\cap\{\forall i\in\intervalleentier{i_{\al T}^-+1}{i_{\al T}^+-1}, \Omega_i^1\text{ or }\Omega_i^2\text{ is realized}\}.
\end{multline*}
Indeed, the two first terms ensure that the right (resp. left) front at time $t\in\intervalleff{0}{\al T}$ belongs to $\intervalleentier{\lfloor(t-\al\e_\la/2)\pi\rfloor}{\lfloor(t+\al\e_\la/2)\pi\rfloor}$ (resp. $\intervalleentier{-\lfloor(t+\al\e_\la/2)\pi\rfloor}{-\lfloor(t-\al\e_\la/2)\pi\rfloor}$). This in particular implies that for all $i\in\intervalleentier{-\lfloor(T-\e_\la/2)\al\pi\rfloor}{\lfloor(T-\e_\la/2)\al\pi\rfloor}$, 
\[T_i\in\left[\frac{|i|}{\pi}-\frac{\al\e_\la}{2},\,\frac{|i|}{\pi}+\frac{\al\e_\la}{2}\right].\]
The last term says that either $i$ remains vacant until $i+1$ propagates (i.e. there is no spark) or a seed has fallen on $i$ but then $i$ has vacant neighbors when it propagates for the second time (i.e. the spark has a size $1$). Finally remark that on $\Omega^{P,T}_{\la,\pi}$, for $t\in\intervalleff{0}{\al T}$,
\[\enstq{0\leq i\leq i_t^+}{T_{i+2}+\frac{\al\e_\la}{4}\geq t}\subset\intervalleentier{\lfloor(t-\al\e_\la)\pi\rfloor}{i_t^+}\]
and 
\[\enstq{0\geq i\geq i_t^-}{T_{i-2}+\frac{\al\e_\la}{4}\geq t}\subset\intervalleentier{i_t^-}{-\lfloor(t-\al\e_\la)\pi\rfloor},\]
thus a burning tree (i.e. a front or a spark) necessarily belongs to $\intervalleentier{\lfloor(t-\al\e_\la)\pi\rfloor}{\lfloor(t+\al\e_\la)\pi\rfloor}\cup\intervalleentier{-\lfloor(t+\al\e_\la)\pi\rfloor}{-\lfloor(t-\al\e_\la)\pi\rfloor}$, as desired.

Clearly, $\Omega^{P,T}_{\la,\pi}$ depends only on the Poisson processes $(N_t^S(i),N^P_t(i))_{t\geq0,i\in\zz}$ through $t\in\intervalleff{0}{\al (T+\e_\la)}$ and $i\in\intervalleentier{-\lfloor\al\pi(T+\e_\la)\rfloor}{\lfloor\al\pi(T+\e_\la)\rfloor}$. It remains to prove that $\proba{\Omega^{P,T}_{\la,\pi}}$ tends to $1$ when $\la\to0$ and $\pi\to\infty$ in the regime $\cR(p)$.

Since $(i_t^+)_{t\geq0}$ and $(-i_t^-)_{t\geq0}$ are two Poisson processes with intensity $\pi$, the maximal inequality for martingales gives
\begin{align}
\proba{\sup_{t\in\intervalleff{0}{\al T}}\abs{i_t^-+\pi t}> \frac{\al\pi\e_\la}{2}}&=\proba{\sup_{t\in\intervalleff{0}{\al T}}\abs{i_t^+-\pi t}> \frac{\al\pi\e_\la}{2}}\notag\\
& \leq\left(\frac2{\al\pi\e_\la}\right)^4\times(3(\al\pi T)^2+\al\pi T)\notag\\
&\leq \frac{16T^2}{(\al\pi\e_\la^2)^2}=\frac{16T^2\ba_\la^{10}}{\pi^2}\label{max ineq}
\end{align}
which tends to $0$ when $\la\to0$ and $\pi\to\infty$ in the regime $\cR(p)$.

Next, for all $i\geq0$, we have 
\begin{equation}\label{proba omega1}
\proba{\Omega_i^1}=\frac{\pi}{1+\pi}
\end{equation}
because seeds fall on $i$ at rate $1$ while the fire on $i+1$ propagates at rate $\pi$.

Now, for all $i\geq0$, we set
\begin{align*}
X_i &=\inf\enstq{s>T_{i+1}}{N^S_s(i)-N^S_{T_{i+1}}(i)>0}-T_{i+1},\\
Y^1_i &=  T_{i+1}-T_{i},\\
Y^2_i &= \inf\enstq{s>T_{i+2}}{N^P_s(i)-N^P_{T_{i+2}}(i)>0}-T_{i+2}.
\end{align*}
Let $i\geq0$. At time $T_i$, the site $i$ is burning and propagates to neighbors at time $T_{i+1}$. Thus, $X_i$ is the time we have to wait for a seed to fall again on $i$ after it propagates for the first time. Furthermore, $Y_i^1$ stands for the duration that $i$ is burning for the first time. If a seed falls on $i$ before $T_{i+2}$, that is before the burning tree $i+1$ propagates, then $i$ becomes again burning at time $T_{i+2}$ and burns during $\intervallefo{T_{i+2}}{T_{i+2}+Y_i^2}$.

The random variables $(X_i)_{i\in\nn}$ are exponential random variables with parameter $1$ and the random variables $(Y_i^1)_{i\in\nn}$ and $(Y_i^2)_{i\in\nn}$ are exponential random variables with parameter $\pi$. All these random variables are independent.

Then observe that 
\begin{equation}\label{star}
\Omega_i^2=\left(\{X_i\leq Y_{i+1}^1\}\cap\{Y_i^2<\frac{\al\e_\la}{4}\}\cap\{X_{i-1}>Y_i^1+Y_{i+1}^1+Y_i^2\}\cap\{X_{i+1}>Y_i^2\}\right).
\end{equation}
We have by independence
\begin{align*}
\probacond{\Omega_i^2}{Y_i^1,Y_{i+1}^1,Y_i^2} &=(1-e^{-Y_{i+1}^1})\times\indiq{Y_i^2\leq\frac{\al\e_\la}{4}}\times e^{-(Y_i^1+Y_{i+1}^1+Y_i^2)}\times e^{-Y_i^2}\\
&=(1-e^{-Y_{i+1}^1})\times e^{-Y_{i+1}^1}\times e^{-Y_i^1}\times e^{-2Y_i^2}\times\indiq{Y_i^2\leq\frac{\al\e_\la}{4}}.
\end{align*}
Integrating,
\begin{align}
\proba{\Omega^2_i}&=\pi^3\int_0^\infty(1-e^{-x})e^{-(\pi+1)x}\diff x\times\int_0^\infty e^{-(\pi+1)y}\diff y\times\int_0^{\al\e_\la/4} e^{-(\pi+2)z}\diff z\notag\\
&=\frac{\pi^3}{(1+\pi)^2(2+\pi)^2}(1-e^{-(2+\pi)\al\e_\la/4}).\label{proba omega 2}
\end{align}
Finally, note that, in the regime $\cR(p)$,
\begin{align*}
\proba{\Omega_i^1\cup\Omega_i^2}=\proba{\Omega_i^1}+\proba{\Omega_i^2} &=\frac{\pi}{1+\pi}+\frac{\pi^3}{(1+\pi)^2(2+\pi)^2}(1-e^{-(2+\pi)\al\e_\la/4})\\
&=1-\frac{5\pi^2+8\pi+4+\pi^3e^{-(2+\pi)\al\e_\la/4}}{(1+\pi)^2(2+\pi)^2}\\
&\geq 1-\frac{\alpha}{\pi^2}
\end{align*}
for some constant $\alpha>0$, because $e^{-(2+\pi)\al\e_\la/4}\ll 1/\pi$ when $\la\to0$ and $\pi\to\infty$ in the regime $\cR(p)$ (indeed, $\pi\sim1/(p\la\log^2(1/\la))$ whence $(2+\pi)\al\e_\la\simeq 1/(p\la\log^3(1/\la))$). Similar computations hold for $i\leq0$.

Consequently, the probability of $\{\forall i\in\intervalleentier{i_{\al T}^-+1}{i_{\al T}^+-1}, \Omega_i^1\text{ or }\Omega_i^2\text{ is realized}\}$ knowing $\left\{\sup_{t\in\intervalleff{0}{\al T}}\abs{i_t^+-\pi t}\leq \frac{\al\pi\e_\la}{2}\right\}\cap\left\{\sup_{t\in\intervalleff{0}{\al T}}\abs{i_t^-+\pi t}\leq \frac{\al\pi\e_\la}{2}\right\}$ is bounded from below by
\begin{align}
1-\sum_{i=-\lfloor\al\pi (T+\e_\la)\rfloor}^{\lfloor\al\pi (T+\e_\la)\rfloor}\proba{(\Omega_i^1\cup\Omega_i^2)^c}&= 1-\sum_{i=-\lfloor\al\pi (T+\e_\la)\rfloor}^{\lfloor\al\pi (T+\e_\la)\rfloor}(1-\proba{\Omega_i^1}-\proba{\Omega_i^2})\notag\\
&\geq 1-\alpha\frac{\al\pi(T+1)}{\pi^2}= 1-\alpha_T\frac{\al}{\pi}\label{ineq propa p}
\end{align}
which tends to $1$ when $\la\to0$ and $\pi\to\infty$ in the regime $\cR(p)$. Gathering \eqref{max ineq} and \eqref{ineq propa p} concludes the proof of Lemma \ref{propagation lemma p}.
\end{proof}

\subsection{Propagation lemma in the regime $\cR(0)$}
For all $A>0$, we set
\begin{equation}
\varkappa^{A}_{\la,\pi}=\frac{\nl A}{\al\pi}+\e_\la
\end{equation}
which tends to $0$ when $\la\to0$ and $\pi\to\infty$ in the regime $\cR(0)$.
\begin{lem}\label{propagation lemma 0}
Let $A, B>0$. There exists an event $\Omega^{P,A,B}_{\la,\pi}$ depending only on the Poisson processes $(N_t^S(i),N^P_t(i))_{t\in\intervalleff{0}{\al\varkappa_{\la,\pi}^{A\vee B}},i\in\intervalleentier{-A_\la-\ml}{B_\la+\ml}}$ such that
\begin{multline*}
\Omega^{P,A,B}_{\la,\pi}\subset\{\text{There is no more burning tree in } \intervalleentier{-A_\la}{B_\la}
\text{ at time }\al\varkappa_{\la,\pi}^{A\vee B}\\
\text{ and a burning tree in }\intervalleentier{-A_\la}{B_\la}\text{ at some time }0\leq t\leq \al\varkappa_{\la,\pi}^{A\vee B}\\
\text{ is either }i_t^+ \text{ or } i_t^-\text{ or has vacant neighbors}\}
\end{multline*}
where the event on the right concerns $(\proco^{\la,\pi}_t(i))_{t\geq0,i\in\zz}$, and 
\[\lim_{\la,\pi}\proba{\Omega^{P,A,B}_{\la,\pi}}=1\]
when $\la\to0$ and $\pi\to\infty$ in the regime $\cR(0)$.
\end{lem}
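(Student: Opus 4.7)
The plan is to adapt the proof of Lemma \ref{propagation lemma p} to the faster regime $\cR(0)$. The time horizon is now $\al\varkappa^{A\vee B}_{\la,\pi}=\nl(A\vee B)/\pi + \al\e_\la$, which tends to $0$ as $\la\to0$ and $\pi\to\infty$ in $\cR(0)$, and the relevant spatial window is the fixed (in rescaled coordinates) interval $\intervalleentier{-A_\la}{B_\la}$ rather than one growing with $\al$. I would define, exactly as in \eqref{omega1}--\eqref{omega2}, events $\Omega_i^1$ and $\Omega_i^2$ for each $i\in\intervalleentier{-A_\la}{B_\la}$, together with a front-deviation event
\[
\Omega^F_{\la,\pi}=\Bigl\{\sup_{t\leq\al\varkappa^{A\vee B}_{\la,\pi}}\abs{i_t^+-\pi t}\leq\frac{\al\pi\e_\la}{2}\Bigr\}\cap\Bigl\{\sup_{t\leq\al\varkappa^{A\vee B}_{\la,\pi}}\abs{i_t^-+\pi t}\leq\frac{\al\pi\e_\la}{2}\Bigr\},
\]
and set $\Omega^{P,A,B}_{\la,\pi}:=\Omega^F_{\la,\pi}\cap\bigcap_{i\in\intervalleentier{-A_\la}{B_\la}}(\Omega_i^1\cup\Omega_i^2)$. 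This event depends only on the Poisson processes in the claimed region, since $\al\pi\e_\la\leq\ml$ in $\cR(0)$ as soon as $\la$ is small.

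On $\Omega^{P,A,B}_{\la,\pi}$ I would verify both conclusions of the lemma. The deviation bound forces $i^+_{\al\varkappa^{A\vee B}_{\la,\pi}}\geq\nl(A\vee B)+\al\pi\e_\la/2>B_\la$, and symmetrically for the left front, so every $i\in\intervalleentier{-A_\la}{B_\la}$ has already propagated for the first time by the terminal time. On $\Omega_i^1$ the site $i$ never burns a second time; on $\Omega_i^2$ the secondary burn has duration less than $\al\e_\la/4$, starts at time $T_{|i|+2}\leq|i|/\pi+\al\e_\la/2$, and has both neighbors vacant throughout. This gives both claims: at the terminal time the interval is free of burning trees (since every spark is extinguished by $|i|/\pi+3\al\e_\la/4<\al\varkappa^{A\vee B}_{\la,\pi}$), and any earlier burning tree in the interval that is not a front is a spark with vacant neighbors.

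For the probability estimate, Doob's $L^2$-inequality applied to the compensated Poisson martingale $i_t^+-\pi t$ gives $\proba{(\Omega^F_{\la,\pi})^c}\leq 8\varkappa^{A\vee B}_{\la,\pi}/(\al\pi\e_\la^2)$, and the explicit computations \eqref{proba omega1} and \eqref{proba omega 2} yield $\proba{(\Omega_i^1\cup\Omega_i^2)^c}\leq\alpha/\pi^2$ for some $\alpha>0$, using that $e^{-(2+\pi)\al\e_\la/4}\ll 1/\pi$ in $\cR(0)$ (since $\pi\al\e_\la\to\infty$). A union bound over $\lesssim(A+B)\nl$ sites produces a total cost $\lesssim(A+B)\nl/\pi^2$. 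To conclude $\proba{\Omega^{P,A,B}_{\la,\pi}}\to 1$ I would decompose $\nl/\pi^2=(\nl/(\al\pi))\cdot(\al/\pi)$: the first factor vanishes by definition of $\cR(0)$, and the second because $\pi\gg\ml\gg\al$ in this regime.

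The main obstacle is purely quantitative, namely keeping track of how fast $\pi$ must grow: in $\cR(0)$ one has $\pi\gg 1/(\la\log^2(1/\la))$, which is much larger than $\al=\log(1/\la)$. The specific choice $\e_\la=1/\ba_\la^3$ is tuned so that $\al\e_\la\to 0$ (so that sparks fit into the $\al\varkappa^{A\vee B}_{\la,\pi}$ time budget) while $\pi\al\e_\la\to\infty$ fast enough to make both the Doob bound and the union bound vanish.
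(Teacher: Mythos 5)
Your template is the right one — reuse the front-control and the $\Omega_i^1\cup\Omega_i^2$ spark events from Lemma \ref{propagation lemma p} — but you have dropped two ingredients that the paper's proof makes essential, and without them your event does not imply the conclusion.

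First, you only impose $\Omega_i^1\cup\Omega_i^2$ for $i\in\intervalleentier{-A_\la}{B_\la}$, whereas the paper imposes them on the larger buffered range $\intervalleentier{-A_\la-\ml+1}{B_\la+\ml-1}$. Second, and more importantly, the paper adds two vacancy conditions: the existence of some $i\in\intervalleentier{-A_\la-\ml+1}{-A_\la}$ and some $i\in\intervalleentier{B_\la}{B_\la+\ml-1}$ on which \emph{no seed falls at all} on $\intervalleff{0}{\al\varkappa_{\la,\pi}^{A\vee B}}$. These are not decorative: once the fronts leave the controlled region, the fire continues burning in the uncontrolled zone (beyond $\pm(A_\la+\ml)$), where the Poisson processes are free and a chain of re-ignited sparks could drift back toward $\intervalleentier{-A_\la}{B_\la}$. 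The events $\Omega_i^1\cup\Omega_i^2$ only control the \emph{second} burn of a site and say nothing about a later burn caused by a returning fire. The never-seeded sites are firebreaks: once they have burned their first time they stay vacant for the rest of the time horizon, so nothing from outside the buffer can re-enter the window. Your argument ``any earlier burning tree that is not a front is a spark with vacant neighbors'' is asserted but not established for possible third burns that a returning fire would cause, and your event simply does not forbid them.

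Separately, your measurability remark ``$\al\pi\e_\la\leq\ml$ in $\cR(0)$ as soon as $\la$ is small'' is false. In $\cR(0)$ one only has a \emph{lower} bound on $\pi$ (namely $\pi\gg 1/(\la\ba_\la^2)$); no upper bound is imposed. Since $\al\pi\e_\la=\pi/\ba_\la^2$ and $\ml\approx 1/(\la\ba_\la^2)$, the inequality $\al\pi\e_\la\leq\ml$ is equivalent to $\pi\leq 1/\la$, which fails for, e.g., $\pi=1/\la^2$, a perfectly admissible choice in $\cR(0)$. This is one reason the paper uses the one-sided hitting-time bounds $T_{\pm(A_\la+\ml)}\leq\nl(A\vee B)/\pi+\al\e_\la/2$ (directly measurable with respect to the restricted region and time) rather than a two-sided supremum deviation over the whole interval.
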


\begin{proof}
Recall \eqref{def burning time}, \eqref{omega1} and \eqref{omega2}. We set
\begin{multline*}
\Omega^{P,A,B}_{\la,\pi}=\left\{T_{B_\la+\ml}\leq \frac{\nl B}{\pi}+\frac{\al\e_\la}{2}\right\}\cap\left\{T_{-A_\la-\ml}\leq \frac{\nl A}{\pi}+\frac{\al\e_\la}{2}\right\}\\
\cap\bigcap_{i\in\intervalleentier{-A_\la-\ml+1}{B_\la+\ml-1}}(\Omega_i^1\cup\Omega_i^2)\\
\cap\left\{\exists i\in\intervalleentier{-A_\la-\ml+1}{-A_\la}, N^S_{\al\varkappa_{\la,\pi}^{A\vee B}}(i)=0\right\}\cap\left\{\exists i\in\intervalleentier{B_\la}{B_\la+\ml-1}, N^S_{\al\varkappa_{\la,\pi}^{A\vee B}}(i)=0\right\}.
\end{multline*}
Observe now that the event on the right hand side in Lemma \ref{propagation lemma 0} contains the event $\Omega^{P,A,B}_{\la,\pi}$. Indeed, the two first terms ensure that the left and the right fronts are outside $\intervalleentier{-A_\la}{B_\la}$ at time $\al\varkappa_{\la,\pi}^{A\vee B}$ whereas the third term ensures that a spark burns not for a long time and has vacants neighbors. The two last terms prevent from a return of a fire.

It remains to prove that $\proba{\Omega^{P,A,B}_{\la,\pi}}$ tends to $1$. First, observe that $T_{B_\la+\ml}$ is a sum of $B_\la+\ml$ i.i.d. exponential random variables with parameter $\pi$, then, Chebyshev's inequality implies
\begin{align*}
\proba{T_{B_\la+\ml}>\frac{\nl B}{\pi}+\frac{\al\e_\la}{2}}\leq \proba{\abs{T_{B_\la+\ml}-\frac{\nl B}{\pi}}>\frac{\al\e_\la}{2}} &\leq \frac{4}{(\al\e_\la)^2}\frac{B_\la+\ml}{\pi^2}\\
&\leq C_B\frac{\nl}{\al\pi}\frac{1}{\al\pi\e_\la^ 2}
\end{align*}
which tends to $0$ when $\la\to0$ and $\pi\to\infty$ in the regime $\cR(0)$. Similar computation holds for $T_{-A_\la-\ml}$.

A basic calculation, as in \eqref{ineq propa p}, shows that (because it also holds true that $e^{-(2+\pi)\al\e_\la/4}\ll 1/\pi$ in the regime $\cR(0)$)
\[\proba{\bigcap_{i\in\intervalleentier{-A_\la-\ml+1}{B_\la+\ml-1}}(\Omega_i^1\cup\Omega_i^2)} \geq1-\alpha_T\frac{\al}{\pi}\quad(\text{for some }\alpha_T>0),\]
which tends to $1$ when $\la\to0$ and $\pi\to\infty$ in the regime $\cR(0)$.

Finally, as soon as $\varkappa_{\la,\pi}^{A\vee B}\leq\frac{1}{2}$, it holds that, using space stationarity,
\begin{align*}
\proba{\exists i\in\intervalleentier{B_\la}{B_\la+\ml-1}, N^S_{\al\varkappa_{\la,\pi}^{A\vee B}}(i)=0}\geq&\,\proba{\exists i\in\intervalleentier{0}{\ml-1}, N^S_{\al/2}(i)=0}\\
&=1-(1-e^{-\al/2})^{\ml-1}\simeq1-e^{-\sqrt{\la}(\ml-1)}
\end{align*}
which tends to $1$ when $\la\to0$ and $\pi\to\infty$ in the regime $\cR(0)$.
\end{proof}

\subsection{Propagation lemma in the regime $\cR(\infty,z_0)$}
We first introduce, for $\la\in\intervalleof{0}{1}$ and $\gamma\in\intervalleoo{0}{1}$,
\[\bm_\la^\gamma=\left\lfloor\frac{\gamma}{\la^ {\gamma+(1-\gamma)z_0}\al}\right\rfloor.\]
For $z_0=1$, $\bm_\la^ \gamma$ is nothing but $\lfloor\gamma\nl\rfloor$. For $z_0\in\intervallefo{0}{1}$ and $\gamma\in\intervalleoo{0}{1}$, observe that 
\[z_0<\gamma+(1-\gamma)z_0<1,\]
so that $\bm_\la^ \gamma\ll\nl$. In any cases, we have $\bm_\la^ \gamma/\nl\leq\gamma$.

\begin{lem}\label{propagation lemma infty}
Let $T>0$. For all $z_0\in\intervalleff{0}{1}$ and all $\gamma\in\intervalleoo{0}{1}$, there exists an event $\Omega^{P,T,\gamma}_{\la,\pi}$ depending only on the Poisson processes $(N_t^S(i),N^P_t(i))_{t\in\intervalleff{0}{\al T},i\in\intervalleentier{-\bm_\la^ \gamma}{\bm_\la^ \gamma}}$, such that
\[\Omega^{P,T,\gamma}_{\la,\pi}\subset\{i_{\al T}^+ \text{ and } i_{\al T}^- \text{ belong to }\intervalleentier{-\bm_\la^ \gamma}{\bm_\la^ \gamma}\},\]
where the event on the right concerns the process $(\proco^{\la,\pi}_t(i))_{t\geq0,i\in\zz}$, and 
\[\lim_{\la,\pi}\proba{\Omega^{P,T,\gamma}_{\la,\pi}}=1\]
when $\la\to0$ and $\pi\to\infty$ in the regime $\cR(\infty,z_0)$.
\end{lem}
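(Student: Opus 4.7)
The two fronts $(i_t^+)_{t\geq 0}$ and $(-i_t^-)_{t\geq 0}$ are Poisson processes of intensity $\pi$, as noted just after \eqref{left front}: starting from the fully occupied initial configuration of $\proco$, each front jumps at the underlying $N^P$-rate $\pi$ without ever having to wait for a seed, and the successive inter-jump times of the right front are just the first jumps of $N^P(0),N^P(1),\dots$ after the preceding burning time. I therefore take
\[\Omega^{P,T,\gamma}_{\la,\pi}\coloneqq \{i_{\al T}^+\leq \bm_\la^\gamma\}\cap\{-i_{\al T}^-\leq \bm_\la^\gamma\},\]
which is exactly (hence trivially contained in) the event announced in the lemma. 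To decide whether the right front has not crossed level $\bm_\la^\gamma$ by time $\al T$ it is enough to inductively compute the first $\bm_\la^\gamma+1$ burning times $T_1,\dots,T_{\bm_\la^\gamma+1}$ (or their minimum with $\al T$) from $(N^P_t(i))_{t\in\intervalleff{0}{\al T},\,i\in\intervalleentier{0}{\bm_\la^\gamma}}$; symmetrically for the left front. Hence $\Omega^{P,T,\gamma}_{\la,\pi}$ lies in the $\sigma$-algebra generated by the stated Poisson processes.

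The second step is Markov's inequality applied to the $\mathrm{Poisson}(\al T\pi)$ random variables $i_{\al T}^+$ and $-i_{\al T}^-$, which gives
\[\proba{(\Omega^{P,T,\gamma}_{\la,\pi})^c}\leq \frac{2\,\al T\pi}{\bm_\la^\gamma+1},\]
so the lemma reduces to checking that $\al T\pi/\bm_\la^\gamma\to 0$ in the regime $\cR(\infty,z_0)$. I split this into the two natural subcases.

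If $z_0=1$, then $\bm_\la^\gamma=\lfloor\gamma\nl\rfloor$ and $\al T\pi/\bm_\la^\gamma$ is of order $T/\big(\gamma\cdot\nl/(\al\pi)\big)$, which tends to $0$ because $\nl/(\al\pi)\to\infty$ by definition of $\cR(\infty,1)$. If $z_0<1$, I fix any $\e\in\intervalleoo{0}{\gamma(1-z_0)}$; then for $(\la,\pi)$ sufficiently close to $\cR(\infty,z_0)$ in the sense of Definition~\ref{close to}, one has $\log(\pi)/\log(1/\la)<z_0+\e$, whence $\pi\leq \la^{-(z_0+\e)}$, and combined with the lower bound $\bm_\la^\gamma\geq (\gamma/\al)\la^{-(\gamma+(1-\gamma)z_0)}-1$ this gives, for some constant $C>0$ and all $\la$ small enough,
\[\frac{\al T\pi}{\bm_\la^\gamma}\leq \frac{CT\al^2}{\gamma}\,\la^{\gamma(1-z_0)-\e}.\]
Since $\gamma(1-z_0)-\e>0$, the polynomial factor $\la^{\gamma(1-z_0)-\e}$ kills the logarithmic blow-up $\al^2=\log^2(1/\la)$, and the bound tends to $0$.

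There is no real obstacle: the argument is essentially one application of Markov's inequality to a Poisson random variable, exploiting the Poisson nature of the fronts and the fact that $\bm_\la^\gamma$ has been calibrated precisely so that $\al T\pi\ll\bm_\la^\gamma$ in both subregimes. The only care needed is choosing $\e$ strictly below $\gamma(1-z_0)$ to obtain a positive exponent on $\la$ when $z_0<1$.
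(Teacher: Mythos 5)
Your proof is correct and follows essentially the same route as the paper's: same choice of event, Markov's inequality applied to the Poisson$(\al T\pi)$ random variables $i_{\al T}^+$ and $-i_{\al T}^-$, and the same case split on $z_0=1$ versus $z_0<1$. The only difference is that you spell out the $\e$-management in the $z_0<1$ case more explicitly, which the paper leaves implicit in "tends to $0$ because $\log(\pi)/\log(1/\la)\to z_0$".
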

\begin{proof}
Recall \eqref{right front} and \eqref{left front}. We define
\[\Omega^{P,T,\gamma}_{\la,\pi}=\{0\leq i_{\al T}^+\leq\bm_\la^\gamma\}\cap\{-\bm_\la^\gamma\leq i_{\al T}^-\leq0\},\]
which clearly implies that $i_{\al T}^+$ and $i_{\al T}^-$ belong to $\intervalleentier{-\bm_\la^ \gamma}{\bm_\la^ \gamma}$. Markov's inequality shows that
\[\proba{i^-_{\al T}<-\bm_\la^ \gamma}=\proba{i^+_{\al T}>\bm_\la^ \gamma}\leq \frac{\al\pi T}{\bm_\la^ \gamma}\simeq \frac{T}{\gamma}\ba_\la^ 2\pi\la^ {\gamma+(1-\gamma)z_0},\]
which tends to $0$ when $\la\to0$ and $\pi\to\infty$ in the regime $\cR(\infty,z_0)$. Indeed, for $z_0=1$, then $\frac{T}{\gamma}\ba_\la^ 2\pi\la=\frac{T}{\gamma}\frac{\al\pi}{\nl}$ tends to $0$ (it is the definition of the regime $\cR(\infty,1)$), while, for $z_0\in\intervallefo{0}{1}$, $z_0<\gamma+(1-\gamma)z_0<1$, then $\frac{T}{\gamma}\ba_\la^ 2\pi\la^ {\gamma+(1-\gamma)z_0}=\frac{T}{\gamma}\frac{\ba_\la^2\pi}{\la^ {z_0}}\la^ {(1-z_0)\gamma}$ tends to $0$, because $\log(\pi)/\log(1/\la)$ tends to $z_0$.
\end{proof}

For $z\in\intervalleoo{0}{1}$, we next define
\[\kappa_{\la,\pi}^z=\frac{1}{\la^{z}\al\pi}+\e_\la.\]
Observe that, if $0<z<z_0$, then $\al\kappa_{\la,\pi}^z$ tends to $0$ when $\la\to0$ and $\pi\to\infty$ in the regime $\cR(\infty,z_0)$.
\begin{lem}\label{propagation lemma micro infty}
For all $z_0\in\intervalleof{0}{1}$ and all $z\in\intervalleoo{0}{z_0}$, there exists an event $\Omega^{P,z}_{\la,\pi}$, depending only on the Poisson processes $(N_t^S(i),N^P_t(i))_{t\in\intervalleff{0}{\al T},i\in\intervalleentier{-\bm_\la^ \gamma}{\bm_\la^ \gamma}}$, such that
\begin{multline*}
\Omega^{P,z}_{\la,\pi}\subset\{i_{\al\kappa_{\la,\pi}^z}^+\text{ and }-i_{\al\kappa_{\la,\pi}^z}^-\text{are greater than }\lfloor\la^ {-z}\rfloor\\
\text{and all }i\in\intervalleentier{i_{\al\kappa_{\la,\pi}^z}^-+1}{i_{\al\kappa_{\la,\pi}^z}^+-1}\text{ burns exactly once before }\al\kappa_{\la,\pi}^z\},
\end{multline*}
where the event on the right concerns the process $(\proco^{\la,\pi}_t(i))_{t\geq0,i\in\zz}$, and	
\[\lim_{\la,\pi}\proba{\Omega^{P,z}_{\la,\pi}}=1\]
when $\la\to0$ and $\pi\to\infty$ in the regime $\cR(\infty,z_0)$.
\end{lem}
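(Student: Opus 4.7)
The plan is to imitate the proof of Lemma~\ref{propagation lemma p}, but using only the events $\Omega_i^1$ defined in~\eqref{omega1} rather than $\Omega_i^1\cup\Omega_i^2$, because the time window $\al\kappa^z_{\la,\pi}$ is now short: indeed $\al\kappa^z_{\la,\pi}=1/(\la^z\pi)+1/\al^2$, and one sees from the definition of $\cR(\infty,z_0)$ together with the assumption $z<z_0$ that both terms tend to $0$. I set
\[
\Omega^{P,z}_{\la,\pi}:=\bigl\{i^+_{\al\kappa^z_{\la,\pi}}\ge\lfloor\la^{-z}\rfloor\bigr\}\cap\bigl\{-i^-_{\al\kappa^z_{\la,\pi}}\ge\lfloor\la^{-z}\rfloor\bigr\}\cap\bigcap_{i\in\intervalleentier{i^-_{\al\kappa^z_{\la,\pi}}+1}{i^+_{\al\kappa^z_{\la,\pi}}-1}}\Omega_i^1,
\]
where for $i\le -1$ the event $\Omega_i^1$ is the obvious analog of~\eqref{omega1} obtained by replacing $i+1$ with $i-1$. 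By construction $\Omega^{P,z}_{\la,\pi}$ depends only on the Poisson processes $N^S(i),N^P(i)$ restricted to $t\in\intervalleff{0}{\al\kappa^z_{\la,\pi}}$ and $i$ lying between the two (random) fronts.

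To check the inclusion, only the claim that each interior site $i$ burns exactly once before $\al\kappa^z_{\la,\pi}$ needs argument: being interior, $i$ burns at least once at $T_i$, and it remains to exclude re-burns. Suppose for contradiction that an interior site re-burns, and let $i_*$ be the first such re-burning in chronological order. The re-ignition of $i_*$ comes from some burning neighbor $j_*\in\{i_*-1,i_*+1\}$; by minimality of $i_*$, this neighbor cannot itself be currently in a re-burn, so it is burning for the first time, and hence propagates either at $T_{i_*}$ (in which case this would be the first ignition of $i_*$, not a re-ignition) or at $T_{i_*\pm 2}$. Thus the re-ignition must come from $j_*=i_*+1$ at time $T_{i_*+2}$ (the negative side is symmetric), which forces $X_{i_*}<Y^1_{i_*+1}$ and contradicts $\Omega_{i_*}^1$.

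It remains to prove $\pp[\Omega^{P,z}_{\la,\pi}]\to 1$. The front $(i^+_t)_{t\ge 0}$ is a Poisson process of intensity $\pi$, so $i^+_{\al\kappa^z_{\la,\pi}}$ is Poisson with mean $\pi\al\kappa^z_{\la,\pi}=\la^{-z}+\pi/\al^2$. The hypothesis $\log\pi/\log(1/\la)\to z_0>z$ yields $\pi\la^z\to\infty$ polynomially in $1/\la$, so $\pi/\al^2\gg\la^{-z}$; Chebyshev's inequality applied to this Poisson variable then gives $\pp[i^+_{\al\kappa^z_{\la,\pi}}<\lfloor\la^{-z}\rfloor]\to 0$, and symmetrically for $-i^-_{\al\kappa^z_{\la,\pi}}$. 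As for the events $\Omega_i^1$, note that $(\Omega_i^1)^c=\{X_i<Y^1_{i+1}\}$ is a function of $N^S(i)$ and $N^P(i+2)$ only, whereas $\{i+1\le i^+_{\al\kappa^z_{\la,\pi}}\}$ depends only on $N^P(1),\dots,N^P(i+1)$; these two events are therefore independent, and using~\eqref{proba omega1} one has $\pp[(\Omega_i^1)^c]=1/(1+\pi)$, so
\[
\pp\Bigl[\exists\,i\in\intervalleentier{1}{i^+_{\al\kappa^z_{\la,\pi}}-1}:\,(\Omega_i^1)^c\Bigr]\le\sum_{i\ge 1}\pp[i+1\le i^+_{\al\kappa^z_{\la,\pi}}]\cdot\frac{1}{1+\pi}\le\frac{\E[i^+_{\al\kappa^z_{\la,\pi}}]}{1+\pi}\le\al\kappa^z_{\la,\pi}\to 0,
\]
with the symmetric estimate for the negative side. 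Combining these bounds yields $\pp[\Omega^{P,z}_{\la,\pi}]\to 1$.

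The main (very mild) obstacle is the cascade argument of the second paragraph showing that controlling only the \emph{direct} re-ignition events $\Omega_i^1$ is enough to rule out every re-burn in $[0,\al\kappa^z_{\la,\pi}]$; once this is in place, the quantitative bounds reduce to routine Poisson-process concentration.
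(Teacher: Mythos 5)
Your construction and the paper's are essentially the same in spirit (both intersect a localization of the fronts with $\bigcap\Omega_i^1$, and both use $\proba{\Omega_i^1}=\pi/(1+\pi)$ plus the fact that $\al\kappa^z_{\la,\pi}\to0$), but there are two concrete points worth flagging.

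First, your independence claim is stated incorrectly. You write that $(\Omega_i^1)^c=\{X_i<Y^1_{i+1}\}$ "is a function of $N^S(i)$ and $N^P(i+2)$ only"; in fact $Y^1_{i+1}=T_{i+2}-T_{i+1}$ is the first jump of $N^P(i+1)$ after $T_{i+1}$, so $(\Omega_i^1)^c$ depends on $N^S(i)$, $N^P(i+1)$ \emph{and} on $T_{i+1}$, which is a function of $N^P(0),\dots,N^P(i)$; likewise $\{i+1\le i^+_{\al\kappa^z_{\la,\pi}}\}=\{T_{i+1}\le\al\kappa^z_{\la,\pi}\}$ is a function of $N^P(0),\dots,N^P(i)$, not $N^P(1),\dots,N^P(i+1)$. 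So the two events are \emph{not} functions of disjoint families of Poisson processes. The factorization you need is still true, but the correct reason is the strong Markov property: conditionally on $T_{i+1}$, the pair $(X_i,Y^1_{i+1})$ is $\cE(1)\times\cE(\pi)$ independent of $T_{i+1}$, so $\proba{(\Omega_i^1)^c\mid T_{i+1}}=1/(1+\pi)$ is constant and the event is independent of any event measurable in $T_{i+1}$. With this fix your union bound stands.

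Second, your event intersects $\Omega_i^1$ over the \emph{random} range $\intervalleentier{i^-_{\al\kappa^z}+1}{i^+_{\al\kappa^z}-1}$, and you observe only a one-sided lower bound on the fronts. As a result, $\Omega^{P,z}_{\la,\pi}$ as you define it is not manifestly measurable with respect to the Poisson processes on a \emph{fixed, deterministic} window of sites, which is precisely what the lemma statement requires (this measurability is what is used later, e.g.\ in the block-independence arguments of Proposition \ref{restriction limite loc}). The paper resolves this by imposing a \emph{two-sided} bound on $i^{\pm}_{\al\kappa^z}$ (within $\al\pi\e_\la$ of the mean, via Chebyshev) and then intersecting $\Omega_i^1$ over the fixed window $\intervalleentier{-\lfloor\la^{-z}+2\al\pi\e_\la\rfloor}{\lfloor\la^{-z}+2\al\pi\e_\la\rfloor}$, which on that front event is guaranteed to contain $\intervalleentier{i^-}{i^+}$. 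Your construction can be repaired the same way (add the upper bound on $\pm i^{\pm}$ and enlarge the intersection to a fixed window), at the cost of only another vanishing term in the probability bound. Your cascade argument for why $\bigcap\Omega_i^1$ over interior sites rules out every re-burn is correct and is a useful explicit version of the step the paper leaves implicit.
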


\begin{proof}
Let $z\in\intervalleoo{0}{z_0}$. Recall \eqref{right front}, \eqref{left front}, \eqref{omega1} and remark that $\al\pi\kappa_{\la,\pi}^z=\la^ {-z}+\al\pi\e_\la$. We define
\begin{multline*}
\Omega^{P,z}_{\la,\pi}=\left\{i_{\al \kappa_{\la,\pi}^z}^+\in\intervalleentier{\lfloor\la^ {-z}\rfloor}{\lfloor\la^ {-z}+2\al\pi\e_\la\rfloor}\right\}\cap\left\{i_{\al \kappa_{\la,\pi}^z}^-\in\intervalleentier{-\lfloor\la^ {-z}-2\al\pi\e_\la\rfloor}{\lfloor\la^ {-z}\rfloor}\right\}\\
\cap\bigcap_{i\in\intervalleentier{-\lfloor\la^ {-z}+2\al\pi\e_\la\rfloor}{\lfloor\la^ {-z}+2\al\pi\e_\la\rfloor}}\Omega_i^1.
\end{multline*}
Observe that the event on the right hand side in Lemma \ref{propagation lemma micro infty} contains the event $\Omega^{P,z}_{\la,\pi}$. Indeed, as in the proof of Lemma \ref{propagation lemma p}, the two first terms situate the left and the right fronts. The third term ensures that there is no spark in the zone $\intervalleentier{-\lfloor\al\pi(\kappa_{\la,\pi}^z+\e_\la)\rfloor}{\lfloor\al\pi(\kappa_{\la,\pi}^z+\e_\la)\rfloor}\supset\intervalleentier{i^-_{\al \kappa_{\la,\pi}^z}}{i^+_{\al \kappa_{\la,\pi}^ z}}\supset\intervalleentier{-\lfloor\la^{-z}\rfloor}{\lfloor\la^{-z}\rfloor}$. 

Since $i_{\al\kappa_{\la,\pi}^z}^+$ and $-i_{\al\kappa_{\la,\pi}^z}^-$ are two Poisson random variables with parameter $\al\pi\kappa_{\la,\pi}^z$, Chebyshev's inequality shows
\begin{multline*}
\proba{i_{\al\kappa_ {\la,\pi}^z}^-\not\in\intervalleentier{-\lfloor\la^ {-z}-2\al\e_\la\pi\rfloor}{-\lfloor\la^ {-z}\rfloor}}=\proba{|i_{\al\kappa_ {\la,\pi}^z}^-+\al\pi\kappa_ {\la,\pi}^z|> \al\pi\e_\la}\\
=\proba{i_{\al\kappa_ {\la,\pi}^z}^+\not\in\intervalleentier{\lfloor\la^ {-z}\rfloor}{\lfloor\la^ {-z}+2\al\e_\la\pi\rfloor}}= \proba{\abs{i_{\al\kappa_ {\la,\pi}^z}^+-\al\pi\kappa_ {\la,\pi}^z}> \al\pi\e_\la}
\\
\leq\frac{\al\pi\kappa_{\la,\pi}^z}{(\al\e_\la\pi)^2}=\frac{\kappa_{\la,\pi}^z}{\al\pi\e_\la^2}=\kappa_{\la,\pi}^z\frac{\ba_\la^ 3}{\pi}
\end{multline*}
which again tends to $0$ when $\la\to0$ and $\pi\to\infty$ in the regime $\cR(\infty,z_0)$ (because $\log(\pi)\sim z_0\al$). 

Finally, we still have $\proba{\Omega_i^1}=\frac{\pi}{1+\pi}$, recall \eqref{proba omega1}, whence
\[\proba{\bigcap_{i\in\intervalleentier{-\lfloor\al\pi(\kappa_{\la,\pi}^z+\e_\la)\rfloor}{\lfloor\al\pi(\kappa_{\la,\pi}^z+\e_\la)\rfloor}}\Omega_i^1}=\left(\frac{\pi}{1+\pi}\right)^{2\lfloor\al\pi(\kappa_{\la,\pi}^z+\e_\la)\rfloor+1}\simeq e^{-2\al(\kappa_{\la,\pi}^z+\e_\la)}\]
which tends to $1$ when $\la\to0$ and $\pi\to\infty$ in the regime $\cR(\infty,z_0)$. This concludes the proof of Lemma \ref{propagation lemma micro infty}.
\end{proof}

\subsection{Application to the $(\la,\pi)-$FFP}\label{application ffp}

We next give some useful definitions.
\begin{defin}\label{definition1 application}
Consider two families of Poisson processes $(N_t^S(i))_{t\geq0,i\in\zz}$ and
$(N_t^P(i))_{t\geq0,i\in\zz}$ with respective rates $1$ and $\pi$, all these
processes being independent. Let $(x_0,t_0)\in\rr\times\rr_+$. We call
\begin{itemize}
	\item \emph{propagation process ignited at $(x_0,t_0)$} the process $(\proco^{\la,\pi,0}_t(i))_{t\geq0,i\in\zz}$ built using the seed processes family $(N_t^{S,0}(i))_{t\geq0,i\in\zz}=(N_{t+\al t_0}^S(i+\lfloor\nl x_0\rfloor)-N_{\al t_0}^S(i+\lfloor\nl x_0\rfloor))_{t\geq0,i\in\zz}$ and the propagation processes family $(N_t^{P,0}(i))_{t\geq0,i\in\zz}=(N_{t+\al t_0}^P(i+\lfloor\nl x_0\rfloor)-N_{\al t_0}^P(i+\lfloor\nl x_0\rfloor))_{t\geq0,i\in\zz}$;
	\item \emph{right and left fronts of the propagation process ignited at $(x_0,t_0)$} the   processes $(i_t^{0,+})_{t\geq0}$ and $(i_t^{0,-})_{t\geq0}$, where for $t\geq0$
\begin{align*}
i_t^{0,+} &=\max\enstq{i\geq0}{\proco_t^{\la,\pi,0}(i)=2},\\
i_t^{0,-} &=\min\enstq{i\leq0}{\proco_t^{\la,\pi,0}(i)=2}.
\end{align*}
The processes $(i_t^{0,+})_{t\geq0}$ and $(-i_t^{0,-})_{t\geq0}$ are Poisson processes with parameter $\pi$;
	\item \emph{burning times of the propagation process ignited at $(x_0,t_0)$} the sequence $(T_i^0)_{i\in\zz}$ where, for $i\in\zz$,
\begin{align*}
T_i^ 0 &=\inf\enstq{s\geq0}{\check{\zeta}^{\la,\pi,0}_s(i)=2}\\
&=\begin{cases}
\inf\enstq{s\geq0}{i_s^{0,+}=i} &\text{if }i\geq0,\\
\inf\enstq{s\geq0}{i_s^{0,-}=i} &\text{if }i\leq0.
\end{cases}\notag
\end{align*}
\end{itemize}
Observe that $(T_i^0)_{i\in\zz}$,$(i_t^{0,+})_{t\geq0}$ and $(-i_t^{0,-})_{t\geq0}$ only depend on the propagation processes family $(N^P_t(i))_{t\geq0,i\in\zz}$.
\end{defin}
We then reformulate Lemmas \ref{propagation lemma p}, \ref{propagation lemma 0}, \ref{propagation lemma infty} and \ref{propagation lemma micro infty} with the previous definition.
\begin{defin}\label{definition2 application}
Consider two families of Poisson processes $(N_t^S(i))_{t\geq0,i\in\zz}$ and
$(N_t^P(i))_{t\geq0,i\in\zz}$ with respective rates $1$ and $\pi$, all these
processes being independent. Let $(x_0,t_0)\in\rr\times\rr_+$ and  $(\proco^{\la,\pi,0}_t(i))_{t\geq0,i\in\zz}$ be the propagation process ignited at $(x_0,t_0)$, recall Definition \ref{definition1 application}.
\begin{itemize}
	\item We define, for $T>0$, $\Omega^{P,T}_{\la,\pi}(x_0,t_0) \coloneqq \Omega^{P,T}_{\la,\pi}$, where $\Omega^{P,T}_{\la,\pi}$ is defined as in Lemma \ref{propagation lemma p}, using the process $(\proco^{\la,\pi,0}_t(i))_{t\geq0,i\in\zz}$.
	
Lemma \ref{propagation lemma p} implies that for all $\delta>0$, $\proba{\Omega^{P,T}_{\la,\pi}(x_0,t_0)}\geq1-\delta$ for all $(\la,\pi)$ sufficiently close to the regime $\cR(p)$.
	\item We define, for $A,B>0$, $\Omega^{P,A,B}_{\la,\pi}(x_0,t_0) \coloneqq \Omega^{P,A,B}_{\la,\pi}$, where $\Omega^{P,A,B}_{\la,\pi}$ is defined as in Lemma \ref{propagation lemma 0}, using the process $(\proco^{\la,\pi,0}_t(i))_{t\geq0,i\in\zz}$.

Lemma \ref{propagation lemma 0} implies that for all $\delta>0$, $\proba{\Omega^{P,A,B}_{\la,\pi}(x_0,t_0)}\geq1-\delta$ for all $(\la,\pi)$ sufficiently close to the regime $\cR(0)$.
	\item We define, for $z_0\in\intervalleff{0}{1}$ and $\gamma\in\intervalleoo{0}{1}$, $\Omega^{P,T,\gamma}_{\la,\pi}(x_0,t_0) \coloneqq \Omega^{P,T,\gamma}_{\la,\pi}$, where $\Omega^{P,T,\gamma}_{\la,\pi}$ is defined as in Lemma \ref{propagation lemma infty}, using the process $(\proco^{\la,\pi,0}_t(i))_{t\geq0,i\in\zz}$.

Lemma \ref{propagation lemma infty} implies that for all $\delta>0$, $\proba{\Omega^{P,T,\gamma}_{\la,\pi}(x_0,t_0)}\geq1-\delta$ for all $(\la,\pi)$ sufficiently close to the regime $\cR(\infty,z_0)$.
	\item We define, for $z_0\in\intervalleof{0}{1}$ and $z\in\intervalleoo{0}{z_0}$, $\Omega^{P,z}_{\la,\pi}(x_0,t_0) \coloneqq \Omega^{P,z}_{\la,\pi}$, where $\Omega^{P,z}_{\la,\pi}$ is defined as in Lemma \ref{propagation lemma micro infty}, using the process $(\proco^{\la,\pi,0}_t(i))_{t\geq0,i\in\zz}$.

Lemma \ref{propagation lemma micro infty} implies that for all $\delta>0$, $\proba{\Omega^{P,z}_{\la,\pi}(x_0,t_0)}\geq1-\delta$ for all $(\la,\pi)$ sufficiently close to the regime $\cR(\infty,z_0)$.
\end{itemize}
\end{defin}
Finally, we define the destroyed component by a fire starting on $\lfloor\nl x_0\rfloor$ at time $\al t_0$. Indeed, knowing the sequence of burning times $(T_i)_{i\in\zz}$ and conditionally on a suitable event defined above, we can localize the set of sites which are burning by a fire.
\begin{defin}\label{def destroyed comp}
Consider a family of independent Poisson processes $(N_t^P(i))_{t\geq0,i\in\zz}$ with rate $\pi$. Let $(x_0,t_0)\in\rr\times\intervalleff{0}{T}$ and let $(T_i^0)_{i\in\zz}$ be the burning times of the propagation process ignited at $(x_0,t_0)$. For a $\nn-$valued process $(\eta_t(i))_{t\geq0,i\in\zz}$, we define
\begin{equation}\label{destroyed comp}
C^P((\eta_{t}(i))_{i\in\zz,t\geq0},(x_0,t_0))=\intervalleentier{\lfloor\nl x_0\rfloor+i^g}{\lfloor\nl x_0\rfloor+i^d}
\end{equation}
where 
\begin{align*}
i^g &=\max\enstq{i\leq0}{\eta_{\al t_0+T^0_i-}(\lfloor\nl x_0\rfloor+i)=0}+1,\\
i^d &=\min\enstq{i\geq0}{\eta_{\al t_0+T^0_i-}(\lfloor\nl x_0\rfloor+i)=0}-1.
\end{align*}
We will use this definition with the $(\la,\pi)-$FFP: on a suitable event, $C^P((\eta_{t}^{\la,\pi}(i))_{i\in\zz,t\geq0},(x_0,t_0))$ is exactly the component destroyed by a match falling in $\lfloor\nl x_0\rfloor$ at time $\al t_0$, see the comments below.
\end{defin}

Let now $(\eta^{\la,\pi}_t(i))_{t\geq0,i\in\zz}$ be the $(\la,\pi)-$FFP. Let $(x_0,t_0) \in\rr\times\intervallefo{0}{\infty}$ be fixed in the rest of the section. Assume that a match falls in $\lfloor\nl x_0\rfloor$ at some time $\al t_0$. Then, on an appropriate event and regardless of the other phenomena, fires propagate with the good speed while they spread in occupied zones. 
Indeed, consider $(\proco^{\la,\pi,0}_t(i))_{t\geq0,i\in\zz}$ the propagation process ignited at $(x_0,t_0)$, the associated right front $(i_t^{0,+})_{t\geq0}$ and left front $(i_t^{0,-})_{t\geq0}$  and the associated burning times $(T_i^0)_{i\in\zz}$. Remark that $T^0_{i-\lfloor\nl x_0\rfloor}$ is the time needed for the fire starting in $\lfloor\nl x_0\rfloor$ at time $\al t_0$ to reach $i$.
\paragraph{Microscopic fire:} we describe here the effect of a microscopic fire in the discrete process in the different regimes. Let $\la\in\intervalleof{0}{1}$ and $\pi\geq1$.
\begin{description}
	\item[Micro$(p)$:] here we focus on the regime $\cR(p)$, for some $p>0$. Set $\kappa_{\la,\pi}^0=\frac{\ml}{\al\pi}+\e_\la$. Assume that 
\begin{enumerate}[label=$\rhd$]
	\item there are $-\ml<i_1<0<i_2<\ml$ such that $\eta^{\la,\pi}_{\al t}(\lfloor\nl x_0\rfloor+i_1)=\eta^{\la,\pi}_{\al t}(\lfloor\nl x_0\rfloor+i_2)=0$ for all $t\in\intervalleff{t_0}{t_0+\kappa_{\la,\pi}^0}$,
	\item there is no burning tree in $\intervalleentier{\lfloor\nl x_0\rfloor+i_1}{\lfloor\nl x_0\rfloor+i_2}$ at time $\al t_0-$,
	\item no other match falls in $\intervalleentier{\lfloor\nl x_0\rfloor+i_1}{\lfloor\nl x_0\rfloor+i_2}$ during $\intervalleff{\al t_0}{\al (t_0+\kappa_{\la,\pi}^0)}$.
\end{enumerate}	
Then, on $\Omega^{P,T}_{\la,\pi}(x_0,t_0)$, we have 
\[C^P((\eta_{t}^{\la,\pi}(i))_{t\geq0,i\in\zz},(x_0,t_0))\subset\intervalleentier{\lfloor\nl x_0\rfloor+i_1}{\lfloor\nl x_0\rfloor+i_2}.\]
Furthermore, $\eta_{\al(t_0+\kappa_{\la,\pi}^0)}^{\la,\pi}(i)\leq1$ for all $i\in\intervalleentier{\lfloor\nl x_0\rfloor+i_1}{\lfloor\nl x_0\rfloor+i_2}$ and the fire destroys exactly the component $C^P((\eta_{t}^{\la,\pi}(i))_{t\geq0,i\in\zz},(x_0,t_0))$.

Indeed, since $\ml=\al\pi(\kappa_{\la,\pi}^0-\e_\la)$, on $\Omega^{P,T}_{\la,\pi}(x_0,t_0)$,  there holds that $T_{i_1}^0\leq \al\kappa_{\la,\pi}^0$ and $T_{i_2}^0\leq\al\kappa_{\la,\pi}^0$ (the left front satisfies $i^-_{\al\kappa_{\la,\pi}^0}\leq i_1$ and the right front satifies $i^+_{\al\kappa_{\la,\pi}^0}\geq i_2$, thanks to Lemma \ref{propagation lemma p}). Consequently, 
\[C^P((\eta_{t}^{\la,\pi}(i))_{t\geq0,i\in\zz},(x_0,t_0))\coloneqq \intervalleentier{\lfloor\nl x_0\rfloor+i^g}{\lfloor\nl x_0\rfloor+i^d}\subset\intervalleentier{\lfloor\nl x_0\rfloor+i_1}{\lfloor\nl x_0\rfloor+i_2}\]
where $i^g$ and $i^d$ are defined in Definition \ref{def destroyed comp}. Observe now that, by construction, for all $i\in\intervalleentier{i^g}{i^d}$
\[\eta^{\la,\pi}_{\al t_0+T^0_i}(\lfloor\nl x_0\rfloor+i)=2=\proco^{\la,\pi,0}_{T_i^0}(i)\]
and $\eta^{\la,\pi}_{\al t_0+T^0_{i^g-1}}(\lfloor\nl x_0\rfloor+i^g-1)=0=\eta^{\la,\pi}_{\al t_0+T^0_{i^d+1}}(\lfloor\nl x_0\rfloor+i^d+1)$. Recall that on $\Omega_{\la,\pi}^{T,P}(x_0,t_0)$, a spark at time $t\in\intervalleff{0}{\al T}$ for the process $(\proco^{\la,\pi,0}_t(i))_{t\geq0,i\in\zz}$ has vacant neighbors. Since for all $i\in\intervalleentier{i^g}{i^d}$, the processes $(\proco^{\la,\pi,0}_t(i))_{t\geq0}$ and $(\eta^{\la,\pi}_{\al t_0 +t}(\lfloor\nl x_0\rfloor+i))_{t\geq0}$ evolve with the same seed processes and the same propagation processes after burning for the first time until $\al\kappa_{\la,\pi}^0$, a straightforward observation shows that for all $i\in\intervalleentier{i^g+1}{i^d-1}$,
\[\eta^{\la,\pi}_{\al(t_0+\kappa_{\la,\pi}^0)}(\lfloor\nl x_0\rfloor+i)=\proco^{\la,\pi,0}_{\al \kappa_{\la,\pi}^0}(i)\]
and a site $i\in\intervalleentier{\lfloor\nl x_0\rfloor+i_1}{\lfloor\nl x_0\rfloor+i_2}\setminus  C^P((\eta_{t}^{\la,\pi}(i))_{t\geq0,i\in\zz},(x_0,t_0))$ can't be burnt during $\intervalleff{\al t_0}{\al(t_0+\kappa_{\la,\pi}^0)}$. Observe also that $i^g$ and $i^d$ burn exactly once during $\intervalleff{\al t_0}{\al(t_0+\kappa_{\la,\pi}^0)}$ (because the site $i^d+1$ is vacant at time $T_{i^d+1}^0$ and $i^g-1$ is vacant at time $T_{i^g-1}^0$ with $T_{i^g}^0\vee T_{i^d}^0\leq \al\kappa_{\la,\pi}^0$).

On $\Omega^{P,T}_{\la,\pi}(x_0,t_0)$, there is no more burning tree in $\intervalleentier{-\ml}{\ml}\supset\intervalleentier{i^g}{i^d}$ at time $\al\kappa_{\la,\pi}^0$ for the process $(\proco^{\la,\pi,0}_t(i))_{t\geq0,i\in\zz}$ (because $\ml=\al\pi(\kappa_{\la,\pi}^0-\e_\la)$) and  consequently, its also holds true in $\intervalleentier{\lfloor\nl x_0\rfloor+i^g}{\lfloor\nl x_0\rfloor+i^d}$ at time $\al(t_0+\kappa_{\la,\pi}^0)$ for the process $(\eta_{t}^{\la,\pi}(i))_{t\geq0,i\in\zz}$.

All this implies that, on $\Omega_{\la,\pi}^{P,T}(x_0,t_0)$,  $\eta_{\al(t_0+\kappa_{\la,\pi}^0)}^{\la,\pi}(i)\leq1$ for all $i\in C^P((\eta_{t}^{\la,\pi}(i))_{t\geq0,i\in\zz},(x_0,t_0))$ and therefore for all $i\in\intervalleentier{\lfloor\nl x_0\rfloor+i_1}{\lfloor\nl x_0\rfloor+i_2}$.
	\item[Micro$(0)$:] here we focus on the regime $\cR(0)$. Let $A,B>0$ and recall that, for $A>0$, $\varkappa_{\la,\pi}^A=\frac{\nl A}{\al\pi}+\e_\la$. Assume that 
\begin{enumerate}[label=$\rhd$]
	\item there are $-\ml<i_1<0<i_2<\ml$ such that $\eta^{\la,\pi}_{\al t}(\lfloor\nl x_0\rfloor+i_1)=\eta^{\la,\pi}_{\al t}(\lfloor\nl x_0\rfloor+i_2)=0$ for all $t\in\intervalleff{t_0}{t_0+\varkappa_{\la,\pi}^{A\vee B}}$,
	\item there is no burning tree in $\intervalleentier{\lfloor\nl x_0\rfloor+i_1}{\lfloor\nl x_0\rfloor+i_2}$ at time $\al t_0-$,
	\item no other match falls in $\intervalleentier{\lfloor\nl x_0\rfloor+i_1}{\lfloor\nl x_0\rfloor+i_2}$ during $\intervalleff{\al t_0}{\al (t_0+\varkappa_{\la,\pi}^{A\vee B})}$.
\end{enumerate}	
Then, on $\Omega^{P,A,B}_{\la,\pi}(x_0,t_0)$, we have 
\[C^P((\eta_{t}^{\la,\pi}(i))_{t\geq0,i\in\zz},(x_0,t_0))\subset\intervalleentier{\lfloor\nl x_0\rfloor+i_1}{\lfloor\nl x_0\rfloor+i_2}.\]
Furthermore, $\eta_{\al(t_0+\varkappa_{\la,\pi}^{A\vee B})}^{\la,\pi}(i)\leq1$ for all $i\in\intervalleentier{\lfloor\nl x_0\rfloor+i_1}{\lfloor\nl x_0\rfloor+i_2}$ and the fire destroys exactly the zone $C^P((\eta_{t}^{\la,\pi}(i))_{t\geq0,i\in\zz},(x_0,t_0))$.

Indeed, this can be checked exactly as above (replace $\kappa_{\la,\pi}^0$ by $\varkappa_{\la,\pi}^{A\vee B}$ and $\Omega_{\la,\pi}^{P,T}(x_0,t_0)$ by $\Omega^{P,A,B}_{\la,\pi}(x_0,t_0)$).
	\item[Micro$(\infty,z_0)$:] here we focus on the regime $\cR(\infty,z_0)$, for some $z_0\in\intervalleof{0}{1}$ (in the case $z_0=0$, there are only fires of the second kind). Let $0<z<z_0$ and recall that $\kappa_{\la,\pi}^z=\frac{1}{\la^ z\al\pi}+\e_\la$. Assume that
\begin{enumerate}[label=$\rhd$]
	\item there are $-\lfloor\la^ {-z}\rfloor<i_1<0<i_2<\lfloor\la^ {-z}\rfloor$ such that $\eta^{\la,\pi}_{\al t}(\lfloor\nl x_0\rfloor+i_1)=\eta^{\la,\pi}_{\al t}(\lfloor\nl x_0\rfloor+i_2)=0$ for all $t\in\intervalleff{t_0}{t_0+\kappa_{\la,\pi}^z}$,
	\item there is no burning tree in $\intervalleentier{\lfloor\nl x_0\rfloor+i_1}{\lfloor\nl x_0\rfloor+i_2}$ at time $\al t_0-$,
	\item no other match falls in $\intervalleentier{\lfloor\nl x_0\rfloor+i_1}{\lfloor\nl x_0\rfloor+i_2}$ during $\intervalleff{\al t_0}{\al (t_0+\kappa_{\la,\pi}^z)}$.
\end{enumerate}
Then, on $\Omega_{\la,\pi}^{P,z}(x_0,t_0)$, as above (replace $\kappa_{\la,\pi}^0$ by $\kappa_{\la,\pi}^z$ and $\Omega_{\la,\pi}^{P,T}(x_0,t_0)$ by $\Omega_{\la,\pi}^{P,z}(x_0,t_0)$)
\[C^P((\eta_{t}^{\la,\pi}(i))_{t\geq0,i\in\zz},(x_0,t_0))\coloneqq\intervalleentier{\lfloor\nl x_0\rfloor+i^g}{\lfloor\nl x_0\rfloor+i^d}\subset\intervalleentier{\lfloor\nl x_0\rfloor+i_1}{\lfloor\nl x_0\rfloor+i_2}.\]
Furthermore, $\eta_{\al(t_0+\kappa_{\la,\pi}^z)}^{\la,\pi}(i)\leq1$ for all $i\in\intervalleentier{\lfloor\nl x_0\rfloor+i_1}{\lfloor\nl x_0\rfloor+i_2}$ and the fire destroys exactly the zone $C^P((\eta_{t}^{\la,\pi}(i))_{t\geq0,i\in\zz},(x_0,t_0))$.

More precisely, on $\Omega_{\la,\pi}^{P,z}(x_0,t_0)$, for the process $(\proco^{\la,\pi,0}_t(i))_{t\geq0,i\in\zz}$, all site $i\in\intervalleentier{i_{\al\kappa_{\la,\pi}^z}^{0,-}+1}{i_{\al\kappa_{\la,\pi}^z}^{0,+}-1}$ burns exactly once before $\al\kappa_{\la,\pi}^z$. Thus, for the process $(\proco^{\la,\pi,0}_t(i))_{t\geq0,i\in\zz}$, there is no spark in $\intervalleentier{i_{\al\kappa_{\la,\pi}^z}^{0,-}+1}{i_{\al\kappa_{\la,\pi}^z}^{0,+}-1}$ at any time   $t\in\intervalleff{0}{\al\kappa_{\la,\pi}^z}$.

Since, for all $i\in\intervalleentier{i^g}{i^d}$, the processes $(\proco^{\la,\pi,0}_t(i))_{t\geq0}$ and $(\eta^{\la,\pi}_{\al t_0 +t}(\lfloor\nl x_0\rfloor+i))_{t\geq0}$ evolve with the same seed processes and the same propagation processes after burning for the first time until $\al\kappa_{\la,\pi}^z$, a straightforward observation shows that, for all $t\in\intervalleff{\al t_0}{\al (t_0+\kappa_{\la,\pi}^z)}$, and all $i\in C^P((\eta^{\la,\pi}_t(i))_{t\geq0,i\in\zz},(x_0,t_0))$, for $i\geq\lfloor\nl x_0\rfloor$,
\[\eta^{\la,\pi}_{t}(i)=\begin{cases}
\min(\eta^{\la,\pi}_{\al t_0}(i)+N^S_{t+\al t_0-}(i)-N^S_{\al t_0}(i),1) &\text{if } \al t_0\leq t<\al t_0+T^0_{i-\lfloor\nl x_0\rfloor}\\
2 &\text{if } \al t_0+T^0_{i-\lfloor\nl x_0\rfloor}\leq t<\al t_0 +T^0_{i+1-\lfloor\nl x_0\rfloor}\\
\min(N^S_{t}(i)-N^S_{T_{i+1-\lfloor\nl x_0\rfloor}}(i),1) &\text{if } \al t_0+T^0_{i+1-\lfloor\nl x_0\rfloor}\leq t\leq\al (t_0+\kappa_{\la,\pi}^z),
\end{cases}\]
and, for $i\leq\lfloor\nl x_0\rfloor$,
\[\eta^{\la,\pi}_{t}(i)=\begin{cases}
\min(\eta^{\la,\pi}_{\al t_0-}(i)+N^S_{t+\al t_0}(i)-N^S_{\al t_0}(i),1) &\text{if } \al t_0\leq  t<\al t_0+T^0_{i-\lfloor\nl x_0\rfloor}\\
2 &\text{if } \al t_0+T^0_{i-\lfloor\nl x_0\rfloor}\leq t<\al t_0 +T^0_{i-1-\lfloor\nl x_0\rfloor}\\
\min(N^S_{t}(i)-N^S_{T^0_{i-1-\lfloor\nl x_0\rfloor}}(i),1) &\text{if } \al t_0+T^0_{i-1-\lfloor\nl x_0\rfloor}\leq t\leq\al (t_0+\kappa_{\la,\pi}^z),
\end{cases}\]
Finally, for $i\in\intervalleentier{\lfloor\nl x_0\rfloor+i_1}{\lfloor\nl x_0\rfloor+i_2}\setminus C^P((\eta^{\la,\pi}_t(i))_{t\geq0,i\in\zz},(x_0,t_0))$ and $t\in\intervalleff{\al t_0}{\al(t_0+\kappa_{\la,\pi}^z)}$, $\eta^{\la,\pi}_{t}(i)$ is nothing but $\min(\eta^{\la,\pi}_{\al t_0}(i)+N^S_{t+\al t_0}(i)-N^S_{\al t_0}(i),1)$.
\end{description}
\paragraph{Macroscopic fire:} let $\la\in\intervalleof{0}{1}$ and $\pi\geq1$. Recall that, for $x>x_0$, $T^0_{\lfloor\nl x\rfloor-\lfloor\nl x_0\rfloor}$ is the time needed for the fire starting in $\lfloor\nl x_0\rfloor$ at time $\al t_0$ to reach $\lfloor\nl x\rfloor$. 
\begin{description}
	\item[Macro$(p)$:] here we focus on the regime $\cR(p)$, for some $p>0$. On $\Omega_{\la,\pi}^{P,T}(x_0,t_0)$, if $0\leq x-x_0\leq (T-t_0-\e_\la)\frac{\al\pi}{\nl}$,  there holds that
\begin{align*}
\frac{\al t_0+T_{\lfloor\nl x\rfloor-\lfloor\nl x_0\rfloor}^0}{\al}\in&\intervalleff{t_0+\frac{\lfloor\nl x\rfloor-\lfloor\nl x_0\rfloor}{\al\pi}-\e_\la}{t_0+\frac{\lfloor\nl x\rfloor-\lfloor\nl x_0\rfloor}{\al\pi}+\e_\la}
\end{align*}
and observe that, when $\la\to0$ and $\pi\to\infty$ in the regime $\cR(p)$,
\[\intervalleff{t_0+\frac{\lfloor\nl x\rfloor-\lfloor\nl x_0\rfloor}{\al\pi}-\e_\la}{t_0+\frac{\lfloor\nl x\rfloor-\lfloor\nl x_0\rfloor}{\al\pi}+\e_\la}\simeq\{t_0+p(x-x_0)\}.\]
This is just a rewriting of Lemma \ref{propagation lemma p}.
	\item[Macro$(0)$:] here we focus on the regime $\cR(0)$. On $\Omega^{P,A,B}_{\la,\pi}(x_0,t_0)$, for some $B>x-x_0$ and $A>0$, there holds that
\[\frac{\al t_0+T_{\lfloor\nl x\rfloor-\lfloor\nl x_0\rfloor}^0}{\al}\in\intervalleff{t_0}{t_0+\varkappa_{\la,\pi}^{B}}\]
and observe that $\intervalleff{t_0}{t_0+\varkappa_{\la,\pi}^{B}}\simeq\{t_0\}$ when $\la\to0$ and $\pi\to\infty$ in the regime $\cR(p)$.

Besides, assume that
\begin{enumerate}[label=$\rhd$]
	\item there are $\lfloor\nl (x_0-A)\rfloor<i_1<\lfloor\nl x_0\rfloor<i_2<\lfloor\nl (x_0+B)\rfloor$ such that $\eta_{\al s}^{\la,\pi}(i_1)=\eta_{\al s}^{\la,\pi}(i_1)=0$ for all $s\in\intervalleff{t_0}{t_0+\varkappa_{\la,\pi}^{A\vee B}}$,
	\item there is no burning tree in $\intervalleentier{i_1}{i_2}$ at time $\al t_0$,
	\item no other match falls in $\intervalleentier{i_1}{i_2}$ during $\intervalleff{\al t_0}{\al (t_0+\varkappa_{\la,\pi}^{A\vee B})}$.
\end{enumerate}
Then, on $\Omega^{P,A,B}_{\la,\pi}(x_0,t_0)$, we have 
\[C^P((\eta_{t}^{\la,\pi}(i))_{t\geq0,i\in\zz},(x_0,t_0))\subset\intervalleentier{\lfloor\nl x_0\rfloor+i_1}{\lfloor\nl x_0\rfloor+i_2}.\]
Furthermore, $\eta_{\al(t_0+\varkappa_{\la,\pi}^{A\vee B})}^{\la,\pi}(i)\leq1$ for all $i\in\intervalleentier{\lfloor\nl x_0\rfloor+i_1}{\lfloor\nl x_0\rfloor+i_2}$ and the fire destroys exactly the zone $C^P((\eta_{t}^{\la,\pi}(i))_{t\geq0,i\in\zz},(x_0,t_0))$.

This can be shown exactly as in the case {\bf Micro$(p)$} (the two statement are very similar).
	\item[Macro$(\infty,z_0)$:] here we focus on fires of second kind in the regime $\cR(\infty,z_0)$, for some $z_0\in\intervalleff{0}{1}$. Let $\gamma\in\intervalleoo{0}{1}$, on $\Omega^{P,T,\gamma}_{\la,\pi}(x_0,t_0)$, there holds that
\[x_0-\frac{\bm_\la^ \gamma}{\nl}\leq\frac{\lfloor\nl x_0\rfloor+i^{0,-}_{\al T}}{\nl}\leq x_0\leq\frac{\lfloor\nl x_0\rfloor+1+i^{0,+}_{\al T}}{\nl}
\leq x_0+\frac{\bm_\la^ \gamma}{\nl}\]
and observe that $\bm_\la^ \gamma/\nl\leq\gamma$: this is just a rewriting of Lemma \ref{propagation lemma infty}. Thus, since $\gamma$ can be chosen arbitrarily small, in the regime $\cR(\infty,z_0)$, fires have only a local effect.
\end{description}

\section{Localization of the $(\la,\pi)-$FFP}
Recall that $\al,\nl$ and $\ml$ are defined in \eqref{def al}, \eqref{def nl} and \eqref{def ml}. For $A>0$, we set $A_\la=\lfloor A\nl\rfloor$ and $I_A^\la=\intervalleentier{-A_\la}{A_\la}$. For $i\in\zz$, we set $i_\la=\intervallefo{i/\nl}{(i+1)/\nl}$ and $\e_\la=1/\ba_\la^ 3$.

We first introduce the $(\la,\pi,A)-$FFP.
\begin{defin}
Let $\lambda\in\intervalleof{0}{1}, \pi\geq1$ and $A>0$ be fixed. For each $i\in
I_A^\la$, we consider three independent Poisson processes,
$N^S(i)=(N^S_t(i))_{t\geq0}, N^M(i)=(N^M_t(i))_{t\geq0}$ and
$N^P(i)=(N^P_t(i))_{t\geq0}$ of respective parameters $1,\la$ and $\pi$, all these
processes being independent. Consider a $\{0,1,2\}$-valued process
$(\eta^{\lambda,\pi,A}_t(i))_{t\geq0,i \in I_A^\la}$ such that a.s., for all $i \in
I_A^\la$, $(\eta^{\lambda,\pi,A}_t(i))_{t\geq0}$ is c\`adl\`ag. We say that
$(\eta^{\lambda,\pi,A}_t(i))_{t\geq0,i \in I_A^\la}$ is a $(\lambda,\pi,A)-$FFP if
a.s., for all $i \in I_A^\la$, all $t\geq0$
\begin{align*}
\eta^{\lambda,\pi,A}_t(i)=&\int_0^t\textrm{\textbf{1}}_{\{\eta_{s-}^{\la,\pi,A}(i)=0\}}\diff
N_s^S(i)+\int_0^t \textrm{\textbf{1}}_{\{\eta_{s-}^{\la,\pi,A}(i)=1\}}\diff
N_s^M(i)\\
&+\int_0^t
\textrm{\textbf{1}}_{\{\eta_{s-}^{\la,\pi,A}(i+1)=2,\eta_{s-}^{\la,\pi,A}(i)=1\}}\diff
N_s^P(i+1)+\int_0^t
\textrm{\textbf{1}}_{\{\eta_{s-}^{\la,\pi,A}(i-1)=2,\eta_{s-}^{\la,\pi,A}(i)=1\}}\diff
N_s^P(i-1)\\
&- 2\int_0^t \textrm{\textbf{1}}_{\{\eta_{s-}^{\la,\pi,A}(i)=2\}}\diff N_s^P(i)
\end{align*}
with the convention $N^S_t(A_\la+1)=N^S_t(-A_\la-1)=0$ for all $t\geq0$.
\end{defin}

For $\eta \in \{0,1,2\}^{I_A^\la}$ and $i \in I_A^\la$, we define the occupied 
connected component around $i$ as
\[C_A(\eta,i)=\begin{cases} \emptyset& \text{if } \eta(i)=0\text{ or } 2,\\
                            \intervalleentier{l_A(\eta,i)}{r_A(\eta,i)} & \text{if } \eta(i)=1,
\end{cases}\] 
where 
\begin{align*}
l_A(\eta,i) &=(-A_\la)\vee(\sup\{k< i:\; \eta(k)=0\text{ or }2\}+1),\\
r_A(\eta,i) &=A_\la\wedge\left(\inf\{k > i:\; \eta(k)=0\text{ or }2\}-1\right).
\end{align*}

For $x\in\intervalleff{-A}{A}$ and $t\geq0$, we also introduce
\begin{align}
D^{\lambda,\pi,A}_t(x) &= \frac1{\nl} C_A\left(\eta^{\la,\pi,A}_{\al t},\lfloor\nl x\rfloor\right)\subset\intervalleff{-A_\la /\nl}{A_\la/\nl}\simeq\intervalleff{-A}{A},\\
K_t^{\la,\pi,A}(x) &=\frac{\abs{\left\{ i\in\intervalleentier{\lfloor\nl x\rfloor-\ml}{\lfloor\nl x\rfloor+\ml}\cap I_A^\la :\eta^{\la,\pi,A}_{\al t}(i)=1\right\}}}{\abs{\intervalleentier{\lfloor\nl x\rfloor-\ml}{\lfloor\nl x\rfloor+\ml}\cap I_A^\la}}\in\intervalleff{0}{1},\\ 
Z_t^{\la,\pi,A}(x) &=\frac{-\log\left(1-K_t^{\la,\pi,A}(x)\right)}{\log(1/\la)}\wedge
1\in\intervalleff{0}{1}.
\end{align}

We now give a discrete version of Proposition \ref{restriction limite}. Recall Definition \ref{close to}.
\begin{prop}\label{restriction limite loc}
Let $T>0,\la\in\intervalleof{0}{1}$ and $\pi\geq1$. For each $i\in\zz$, we consider
three Poisson processes $N^S(i)=(N^S_t(i))_{t\geq0}, N^M(i)=(N^M_t(i))_{t\geq0}$ and
$N^P(i)=(N^P_t(i))_{t\geq0}$, all these processes being independent. Let
$(\eta^{\lambda,\pi}_t(i))_{t\geq0,i \in \zz}$ be the corresponding $(\la,\pi)-$FFP
and for each $A>0$, let $(\eta^{\lambda,\pi,A}_t(i))_{t\geq0,i \in I_{A}^\la}$ be the
corresponding $(\la,\pi,A)-$FFP. There are some constants $\alpha_T>0$
and $C_T>0$ such that for all $A\geq1$, all $(\la,\pi)$ sufficiently close to the regime $\cR(p)$, for some $p\geq0$ (or to the regime $\cR(\infty,z_0)$, for some $z_0\in\intervalleff{0}{1}$),
\begin{multline*}
\mathbb{P}\left[ (\eta^{\lambda,\pi}_t(i))_{t\in\intervalleff{0}{\al T},i \in
I_{A/2}^\la}=(\eta^{\lambda,\pi,A}_t(i))_{t\in\intervalleff{0}{\al T},i \in
I_{A/2}^\la},\right.\\
        \left.
(Z_t^{\la,\pi}(x),D^{\lambda,\pi}_t(x))_{t\in\intervalleff{0}{T},x\in\intervalleff{-A/2}{A/2}}=(Z_t^{\la,\pi,A}(x),D^{\lambda,\pi,A}_t(x))_{t\in\intervalleff{0}{T},x\in\intervalleff{-A/2}{A/2}}\right]\\
        \geq 1-C_T e^{-\alpha_T A}.
\end{multline*}
\end{prop}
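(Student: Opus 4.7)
The plan is to adapt the proof of Proposition~\ref{restriction limite} to the discrete setting, constructing ``moving barriers'' inside bands of unit rescaled width on either side of $I_{A/2}^\la$. For each integer $a$, let $(X_i,\tau_i)_{i=1,\dots,N}$ enumerate the marks of $N^M$ in $\intervalleentier{a\nl}{(a+1)\nl-1}\times\intervalleff{0}{\al T}$ chronologically, with $\tau_0=0$, $\tau_{N+1}=\al T$, and define $\Omega_a^l$ to require: (i) $\max_i(\tau_{i+1}-\tau_i)<\al/4$; (ii) $X_1<\cdots<X_N$; (iii) the appropriate propagation event of Definition~\ref{definition2 application} holds at each $(X_i/\nl,\tau_i/\al)$; (iv) for every $i$, the fire ignited at $(X_i,\tau_i)$ leaves a vacant site which remains vacant throughout $\intervallefo{\tau_i}{\tau_{i+1}}$. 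Define $\Omega_a^r$ symmetrically with $X_1>\cdots>X_N$. The existence of some $\Omega_a^l$ for $a\in\intervalleentier{-A}{-A/2-1}$ and some $\Omega_a^r$ for $a\in\intervalleentier{A/2}{A-1}$ will block all influence from outside $I_A^\la$ on $I_{A/2}^\la$ during $\intervalleff{0}{\al T}$.

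On $\Omega_a^l$, by (iii) the fire at $(X_i,\tau_i)$ completes in time $o(\al)$ in every regime (bounded by $\al\kappa^0_{\la,\pi}$, $\al\varkappa^{A\vee B}_{\la,\pi}$, or $\al\kappa^z_{\la,\pi}$ per Section~\ref{application ffp}), and (iv) provides a persistent vacant site in the resulting vacant zone throughout the inter-match interval. Condition (ii) ensures these barrier sites form a non-decreasing sequence in time, so in rescaled space-time the union of barrier segments $\{X_i\}\times\intervallefo{\tau_i}{\tau_{i+1}}$ covers every trajectory of a fire moving rightward across the strip at speed $1/p$; for early times $t<\tau_1<\al/4$ the initial predominance of vacant sites blocks any fire. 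In $\cR(\infty,z_0)$ macroscopic fires become permanent burning-tree barriers, and in $\cR(0)$ fires are effectively instantaneous, simplifying these regimes.

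Matches in the strip form a PPP of rescaled intensity $\la\nl\al\simeq 1$, so $N\sim\mathrm{Poisson}(T)$; conditionally, (i)+(ii) hold with positive probability $c_T>0$ depending only on $T$, (iii) has probability tending to $1$ by Lemmas~\ref{propagation lemma p}--\ref{propagation lemma micro infty}, and (iv) has probability tending to $1$ since each destroyed cluster typically has size $\gg\la^{-1/4}$ and $1-(1-\la^{1/4})^s\to 1$ for $s\gg\la^{-1/4}$. Hence $\proba{\Omega_a^l}\geq q_T>0$ uniformly in $(\la,\pi)$ sufficiently close to the regime. Since $\Omega_a^l$ depends only on Poisson data in an $\e_\la$-thickening of the strip, the events $(\Omega_a^l)_a$ are independent for non-adjacent $a$, so
\[
\proba{\bigcup_{a\in\intervalleentier{-A}{-A/2-1}}\Omega_a^l\cap\bigcup_{a\in\intervalleentier{A/2}{A-1}}\Omega_a^r}\geq 1-2(1-q_T)^{A/4-2},
\]
yielding the estimate with $C_T=2/(1-q_T)^2$ and $\alpha_T=-\log(1-q_T)/4$. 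On this event the $(\la,\pi)$-FFP and $(\la,\pi,A)$-FFP coincide on $I_{A/2}^\la\times\intervalleff{0}{\al T}$, hence so do the derived processes $Z$ and $D$.

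The main obstacle is the verification of (iv) with high probability, since a typical destroyed cluster at a match has size only $\sim\la^{-1/4}$, borderline for the persistent-vacancy estimate; tightening the time-gap constant from $\al/4$ to a smaller value, or imposing an additional ``long micro-zone'' condition on $|X_{i+1}-X_i|$ so that successive matches destroy overlapping regions, should remedy this. A secondary subtlety is the rigorous space-time intersection argument for fire trajectories versus barrier segments in $\cR(p)$ with finite speed $1/p$, which must be carried out case-by-case to confirm that the cover is complete modulo boundary effects of width $p$ at each end.
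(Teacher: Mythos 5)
Your plan follows the same high-level strategy as the paper (construct blocking events in strips on both sides and exploit independence across strips), but the specific construction you propose has a gap that is not a small fixable detail, and the paper in fact uses three substantially different strip constructions to handle the three regimes.

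The core obstruction is exactly the one you flag under condition (iv), but it is more severe than you acknowledge. For a destroyed cluster to contain a site that stays vacant throughout an inter-match window of length $\geq \al/4$, one needs $|\text{cluster}|\cdot \la^{1/4}\to\infty$, i.e.\ cluster size $\gg \la^{-1/4}$. But if the previous match in your sequence cleared an overlapping region less than time $\al/4$ ago, the occupied cluster around $X_i$ has size only $\sim\la^{-\sigma}$ with $\sigma<1/4$, so $1-(1-\la^{1/4})^{\la^{-\sigma}}\to 0$, and (iv) fails. Spacing the $X_i$ apart helps with overlap but then creates the opposite problem: a barrier at $X_{i-1}$ must persist until the barrier at $X_i$ appears, and for a persistent vacant site you still need the original cluster at $X_{i-1}$ to have had time $\gtrsim\al/4$ to grow, circularly requiring large inter-match gaps. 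The paper escapes the circle by not using a chain of $N\sim\mathrm{Poisson}(T)$ consecutive vacant-site barriers. In $\cR(p)$ with $p>0$ it uses exactly two \emph{microscopic} fires falling at rescaled times $\geq 3/4$ (so the destroyed clusters have size $\sim\la^{-3/4}\gg\la^{-1/4}$, enough for the persistent-vacancy estimate 2-(c)), and then switches the barrier to the \emph{burning front} of a macroscopic fire ignited at $(X_3^\la,T_3^\la)$, which moves continuously at speed $\sim 1/p$ until $\al T$. This is what the strip width $2+4(T-1)/p$ is for: it must contain the front's excursion of rescaled length $\sim(T-1)/p$. Your unit-width strip is too narrow, and more importantly you have no barrier at all for rescaled times $t\gtrsim 3/2$.

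A second, related issue specific to $\cR(p)$: a non-decreasing sequence of vacant-site barriers $X_1<\cdots<X_N$ with the barrier at $X_i$ active only during $\intervallefo{\tau_i}{\tau_{i+1}}$ does not, in the discrete process, block a right-moving fire. Between $\tau_{i}$ and $\tau_{i+1}$ a fire of finite speed $\pi$ can pass $X_i$ \emph{before} $X_i$ becomes vacant (the vacancy only appears once the fire ignited at $(X_i,\tau_i)$ actually clears the cluster) and lie strictly between $X_i$ and $X_{i+1}$ when the barrier jumps. In the limit process this does not happen because $H>0$ lasts at least $1/2$ and $Z<1$ is instantaneous on match, but in the discrete process your argument requires the fire trajectory and barrier graph to intersect, which they need not if the barrier is a step function with gaps. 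The paper's moving burning front resolves this because it travels at the same speed as the incoming fire and therefore can never be overtaken.

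Finally, the regime $\cR(0)$ is qualitatively different: fires are effectively instantaneous and can cross a unit strip in $o(\al)$ time, so no arrangement of a handful of matches in a unit strip works; the paper instead uses a strip of width $\sim 6K$ with $K=\lfloor 4T\rfloor$ and a staircase of $2(K-1)$ matches, and the persistence of the vacant site is established by a dichotomy argument (Step 4 of that proof) that considers separately whether the site $X_k^\la$ was locally occupied or had been cleared by an external fire, producing a persistent vacant site in either case. Your proposal does not touch this. There is also a measurability issue in your definition of $\Omega_a^l$: condition (iv) refers to the FFP evolution, which can be influenced by Poisson data outside the strip, so $\Omega_a^l$ as written is not determined by the data in the strip, and the independence argument would not apply as stated; the paper instead defines its events purely in terms of the seed/match/propagation processes restricted to the strip and \emph{derives} the barrier existence from them.
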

Observe that the Proposition \ref{restriction limite loc} holds for the three regimes, with the same scales but for different reasons. We thus distinguish the three regimes. The proof given for $p=0$ can be adapted in order to work for $p>0$, as in Proposition \ref{restriction limite}, but the proof given here for $p>0$ is much simpler. 
\begin{proof}[Proof in the regime $\cR(p)$ for some $p>0$.] 
Consider the true $(\la,\pi)-$FFP $(\eta^{\la,\pi}_t(i))_{t\geq0,i\in\zz}$. It of course suffices to prove the result for $A$ large enough.
Temporarily assume that for $a\in\rr$, there is an event $\Omega_{a,T}^{\la,\pi}$,
depending only on the Poisson processes $N_t^S(i),N^M_t(i)$ and $N^P_t(i)$ for
$t\in\intervalleff{0}{\al (T+2)}$ and 
\[i\in\bar{J}_a^\la
\coloneqq\intervalleentier{\lfloor (a-1-2\frac{T-1}{p})\nl\rfloor}{\lfloor
(a+1+2\frac{T-1}{p})\nl\rfloor-1},\]
such that 
\begin{enumerate}[label=(\roman*)]
        \item on $\Omega_{a,T}^{\la,\pi}$, a.s., there are $\iota^{+}\colon\intervalleff{0}{\al T}
\mapsto \bar{J}_a^\la$ non
decreasing and $\iota^{-}\colon\intervalleff{0}{\al T} \mapsto \bar{J}_a^\la$ non increasing such that $\eta^{\la,\pi}_t(\iota^{+}_t)=0$ or $2$ and
$\eta^{\la,\pi}_t(\iota^{-}_t)=0$ or $2$ for all $t\in\intervalleff{0}{\al T}$,
        \item there exists $q_T>0$ such that for all $a\in\rr$, we have $\proba{\Omega_{a,T}^{\la,\pi}}\geq q_T$, for all $(\la,\pi)$ sufficiently close to the regime $\cR(p)$.
\end{enumerate}
The proof is then concluded using similar argument as Step 3 in the proof of Proposition \ref{restriction limite}: thanks to point (ii), the probability that there are $-A+1+2\frac{T-1}{p}<a_1<-A/2-1-2\frac{T-1}{p}$ and $A/2+1+2\frac{T-1}{p}<a_2<A-1-2\frac{T-1}{p}$ with $\Omega_{a_1,T}^{\la,\pi}$ and $\Omega_{a_2,T}^{\la,\pi}$ realized is easily bounded from below by $1-C_Te^{-\alpha_T A}$. Next, on this event, a fire starting at the left of $\lfloor(a_1-1-2\frac{T-1}{p})\nl\rfloor$ will never cross $\lfloor (a_1+1+2\frac{T-1}{p})\nl\rfloor\leq\lfloor-A\nl/2\rfloor$ (thanks to $\iota^+$). Same thing holds on the right: a fire starting at the right of $\lfloor(a_2+1+2\frac{T-1}{p})\nl\rfloor$ will never cross $\lfloor (a_2-1-2\frac{T-1}{p})\nl\rfloor\geq\lfloor A\nl/2\rfloor$ (thanks to $\iota^-$). Finally, the clusters $D^{\la,\pi}_t(x)$ and $D^{\la,\pi,A}_t(x)$ clearly coincide for all $x\in\intervalleff{-\frac{A}{2}}{\frac{A}{2}}$ and all $t\in\intervalleff{0}{T}$.

\md

\noindent{\bf Step 1.} Fix some $\alpha>0$ small enough, say $\alpha=0.001$. Define $\kappa_{\la,\pi}^0=\ml/(\al\pi)+\e_\la$ and assume that $\kappa_{\la,\pi}^0\leq\alpha/2$.

For $\la>0, \pi\geq1$ and $a\in\rr$, we define the event
$\tOmega^{\la,\pi}_{a,T}$ on which points 1 and 2 below are satisfied:
\begin{enumerate}
        \item The family of Poisson processes $(N^ M_t(i))_{t
\in\intervalleff{0}{\al T},i
\in \bar{J}_a^{\lambda}}$ has exactly $4$ marks in $\bar{J}_a^\la$, and we call them  $\{(X_1^\la,T_1^\la),(X_2^\la,T_2^\la),(X_3^\la,T_3^\la),(X_4^\la,T_4^\la)\}$,
in such a way 
the match $(X_1^\la,T_1^\la)$ (resp. $(X_2^\la,T_2^\la)$) belongs to 
\begin{align*}
&\intervalleentier{\lfloor(a-\frac{5}{6}-\frac{T-1}{p})\nl\rfloor}{\lfloor(a-\frac{2}{3}-\frac{T-1}{p})\nl\rfloor}\times\intervalleff{\al(\frac{3}{4}+\alpha)}{\al(1-\alpha)}\\
\text{(resp. }&\intervalleentier{\lfloor(a+\frac{2}{3}+\frac{T-1}{p})\nl\rfloor}{\lfloor(a+\frac{5}{6}+\frac{T-1}{p})\nl\rfloor}\times\intervalleff{\al(\frac{3}{4}+\alpha)}{\al(1-\alpha)}),
\end{align*}
and the match $(X_3^\la,T_3^\la)$ (resp. $(X_4^\la,T_4^\la)$) belongs to 
\begin{align*}
&\intervalleentier{\lfloor(a-\frac{1}{2}-\frac{T-1}{p})\nl\rfloor+1}{\lfloor(a-\frac{1}{3}-\frac{T-1}{p})\nl\rfloor}\times\intervalleff{\al(1+\alpha)}{\al(\frac{3}{2}-\alpha)}\\
\text{(resp. }&\intervalleentier{\lfloor(a+\frac{1}{3}+\frac{T-1}{p})\nl\rfloor}{\lfloor(a+\frac{1}{2}+\frac{T-1}{p})\nl\rfloor-1}\times\intervalleff{\al(1+\alpha)}{\al(\frac{3}{2}-\alpha)}).
\end{align*}
        \item The family of Poisson processes
$(N^S_t(i))_{t\geq0,i\in \bar{J}_a^\la}$ satisfies 
\begin{enumerate}
	\item for $k=1,2$, for all $i \in\intervalleentier{X_k^ \la-\lfloor\la^{-3/4}\rfloor}{X_k^
\la+\lfloor\la^{-3/4}\rfloor} , N^S_{T^{\lambda}_{k}}(i)>0$;
	\item for $k=1,2$, there are $i_1^k\in \intervalleentier{X^\la_k-\ml+1}{X^\la_k-\lfloor\la^{-3/4}\rfloor-1}$ and $i_2^k\in\intervalleentier{X^\la_k+\lfloor\la^{-3/4}\rfloor+1}{X^\la_k+\ml-1}$ such that
$N^S_{T^{\lambda}_{k}+\al\kappa_{\la,\pi}^0}(i_1^k)=N^S_{T^{\lambda}_{k}+\al\kappa_{\la,\pi}^0}(i_2^k)=0$;
	\item for $k=1,2$, there is $i^k_3 \in \intervalleentier{X_k^ \la-\lfloor\la^{-3/4}\rfloor}{X_k^
\la+\lfloor\la^{-3/4}\rfloor}$ such that
$N^S_{3\al/2}(i^k_3) -
N^S_{T^{\lambda}_{k}}(i^k_3) =0$;
	\item for all $i \in \intervalleentier{\lfloor(a-1-\frac{T-1}{p})\nl\rfloor}{\lfloor(a+1+\frac{T-1}{p})\nl\rfloor},
N^S_{\al(1+\alpha)}(i) >0$.
\end{enumerate}
\end{enumerate}
We now introduce the event $\Omega^P_{a,T}(\la,\pi)$ on which all these four fires propagate at the good speed
\[\Omega^{P}_{a,T}(\la,\pi)=\bigcap_{i=1}^4 {\Omega}^{P,T}_{\la,\pi}\left(\frac{X_i^\la}{\nl},\frac{T_i^\la}{\al}\right)\]
recall Definition \ref{definition2 application}. We finally set 
\[\Omega_{a,T}^{\la,\pi}=\tOmega_{a,T}^{\la,\pi}\cap\Omega_{a,T}^P(\la,\pi).\]

\md

\noindent{\bf Step 2.} We now prove that on $\Omega_{a,T}^{\la,\pi}$, there exist $(\iota^+_t)_{t\in\intervalleff{0}{\al T}}$ and $(\iota^-_t)_{t\in\intervalleff{0}{\al T}}$ which satisfy (i).

Indeed, sites $i_1^1$ and $i_2^1$ are vacant until $T_1^\la+\al\kappa_{\la,\pi}^0$ because we start from an vacant initial configuration and 2-(b). On the one hand, they protect the zone $\intervalleentier{i^1_1+1}{i^1_2-1}$ and thus, the zone $\intervalleentier{X_1^ \la-\lfloor\la^{-3/4}\rfloor}{X_1^
\la+\lfloor\la^{-3/4}\rfloor}\subset\intervalleentier{i^1_1+1}{i^1_2-1}$ is completely filled at time $T_1^\la-$, thanks to 2-(a). On the other hand, on $\Omega^{P,T}_{\la,\pi}(X_1^\la/\nl,T_1^\la/\al)$, as seen in {\bf Micro$(p)$} in Subsection \ref{application ffp}, 
\begin{enumerate}[label=$\rhd$]
	\item the match falling on $X_1^\la$ at time $T_1^\la$ destroys entirely the zone $\intervalleentier{X_1^ \la-\lfloor\la^{-3/4}\rfloor}{X_1^
\la+\lfloor\la^{-3/4}\rfloor}$ before $T_1^\la+\al\kappa_{\la,\pi}^0$ (it is still protected by $i_1^1$ and $i_1^2$),
	\item the fire does not affect the zone outside $\intervalleentier{i^1_1}{i^1_2}$,
	\item there is no more burning tree in the zone $\intervalleentier{i^1_1}{i^1_2}$ at time $T_1^\la+\al\kappa_{\la,\pi}^0$.
\end{enumerate}
Then, since no seed fall on $i_3^1$ during $\intervallefo{T_1^\la}{3\al/2}$, $i_3^1$ remains vacant since it burnt (this happened between $T_1^\la$ and $T_1^\la+\al\kappa_{\la,\pi}^0$)  until time $3\al/2$, thanks to 2-(c).

Remark that same considerations holds around $X_2^\la$: the match falling in $X_2^\la$ at time $T_2^\la$ doesn't affect the zone outside $\intervalleentier{i_1^2}{i_2^2}$ (because they remain vacant until time $T^\la_2+\al\kappa_{\la,\pi}^0$), and $i_3^2$ remains vacant during $\intervallefo{T^\la_2+\al\kappa_{\la,\pi}^0}{3\al/2}$.

All this implies that the zone $\intervalleentier{\lfloor(a-\frac{1}{2}-\frac{T-1}{p})\nl\rfloor}{\lfloor(a+\frac{1}{2}+\frac{T-1}{p})\nl\rfloor}$ is protected from all the fire until $3\al/2$ (except possibles those falling at $(X_3^\la,T_3^\la)$ and $(X_4^\la,T_4^\la)$). Thus, thanks to 2-(d), the zone $\intervalleentier{\lfloor(a-\frac{1}{2}-\frac{T-1}{p})\nl\rfloor}{\lfloor(a+\frac{1}{2}+\frac{T-1}{p})\nl\rfloor}$ is completely occupied at time $\al(1+\alpha)$.

Since now, on ${\Omega}^{P,T}_{\la,\pi}\left(\frac{X_3^\la}{\nl},\frac{T_3^\la}{\al}\right)$, the right front $(i_t^{3,+})_{t\geq0}$ of the fire ignited at $(X_3^\la/\nl,T_3^\la/\al)$ statisfies
\[i_{\al T-T_3^\la}^{3,+}\leq \pi(\al T- T_3^\la+\al\e_\la)\leq \al\pi( T-1-\alpha+\e_\la),\]
recall Lemma \ref{propagation lemma p}, then $i_{\al T-T_3^\la}^{3,+}\leq (T-1)\frac{\nl}{p}$ as soon as $\abs{\frac{\nl}{\al\pi}-p}\leq p\frac{\alpha}{2(T-1)}$ (recall that $2\e<\alpha$). This in particular implies that
\[X_3^\la+i^{3,+}_{\al T-T_3^\la}\leq \lfloor(a-\frac{1}{3}-\frac{T-1}{p})\nl\rfloor+(T-1)\frac{\nl}{p}<\lfloor\nl a\rfloor.\]
Similarly, on ${\Omega}^{P,T}_{\la,\pi}\left(\frac{X_4^\la}{\nl},\frac{T_4^\la}{\al}\right)$ and for $\abs{\frac{\nl}{\al\pi}-p}\leq p\frac{\alpha}{2(T-1)}$, we clearly have
\[\lfloor\nl a\rfloor< \lfloor(a+\frac{1}{3}+\frac{T-1}{p})\nl\rfloor-(T-1)\frac{\nl}{p}\leq X_4^\la+i^{4,-}_{\al T-T_4^\la}.\]
We easily deduce that for all $t\in\intervalleff{0}{\al T-T_3^\la}$, $\eta^{\la,\pi}_{t+T_3^\la}(X_3^\la+i^{3,+}_t)=2$ and for all $t\in\intervalleff{0}{\al T-T_4^\la}$, $\eta^{\la,\pi}_{t+T_4^\la}(X_4^\la+i^{4,-}_t)=2$.

Finally, we set, for all $t\in\intervalleff{0}{\al T}$
\[\iota^+_t=\begin{cases}
i_1^1 &\text{if }0\leq t<T_1^\la+\kappa_{\la,\pi}^0,\\
i_3^1 &\text{if }T_1^\la+\kappa_{\la,\pi}^0\leq t<T_3^\la,\\
X_3^\la+i^{3,+}_{t-T_3^\la}&\text{if }T_3^\la\leq t\leq \al T.
\end{cases}\]
Clearly, $(\iota^+_t)_{t\in\intervalleff{0}{\al T}}$ is non decreasing, $\eta^{\la,\pi}_{s}(\iota_s^+)$ is $0$ until $T_3^\la$ and $2$ between $T_3^\la$ and $\al T$.

Similarly, we can choose
\[\iota^-_t=\begin{cases}
i_2^2 &\text{if }0\leq t<T_2^\la+\kappa_{\la,\pi}^0,\\
i_3^2 &\text{if }T_2^\la+\kappa_{\la,\pi}^0\leq t<T_4^\la,\\
X_4^\la+i^{4,-}_{t-T_4^\la}&\text{if }T_4^\la\leq t\leq \al T.
\end{cases}\]
Clearly, $(\iota^-_t)_{t\in\intervalleff{0}{\al T}}$ is non increasing, $\eta^{\la,\pi}_{s}(\iota_s^-)$ is $0$ until $T_4^\la$ and $2$ between $T_4^\la$ and $\al T$.

\md

\noindent{\bf Step 3.} We now prove (ii). The quantity $\proba{\Omega_{a,T}^ {\la,\pi}}$ does obviously not depend on $a\in\rr$ by spatial invariance. Then, we observe that we can construct $N^M$ by 
using a Poisson measure $\pi_M$ on $\rr\times\intervallefo{0}{\infty}$ with
intensity measure $\diff x \diff t$, independent of $N^S$ and $N^P$, by setting, for all $i\in\zz$,
\[N_t^M(i)=\pi_M(i_\la\times\intervalleff{0}{t/\al}).\]
Hence, the event on which $N^M$ satisifies 1. contains the event $\Omega^M_{a,T}$ on which $\pi_M$ has exactly $4$ marks in $\intervalleff{a-1-2\frac{T-1}{p}}{a+1+2\frac{T-1}{p}}\times\intervalleff{0}{T}$, which can be called $(X_1,T_1),(X_2,T_2),(X_3,T_3)$ and $(X_4,T_4)$ in such a way $(X_1,T_1)$ (resp. $(X_2,T_2)$) belongs to
\begin{align*}
&\intervalleff{a-\frac{5}{6}-\frac{T-1}{p}+\alpha}{a-\frac{2}{3}-\frac{T-1}{p}-\alpha}\times\intervalleff{\frac{3}{4}+\alpha}{1-\alpha}\\
\text{(resp. }&\intervalleff{a+\frac{2}{3}+\frac{T-1}{p}+\alpha}{a+\frac{5}{6}+\frac{T-1}{p}-\alpha}\times\intervalleff{\frac{3}{4}+\alpha}{1-\alpha}),
\end{align*}
and $(X_3,T_3)$ (resp. $(X_4,T_4)$) belongs to 
\begin{align*}
&\intervalleff{a-\frac{1}{2}-\frac{T-1}{p}+\alpha}{a-\frac{1}{3}-\frac{T-1}{p}-\alpha}\times\intervalleff{1+\alpha}{\frac{3}{2}-\alpha}\\
\text{(resp. }&\intervalleff{a+\frac{1}{3}+\frac{T-1}{p}+\alpha}{a+\frac{1}{2}+\frac{T-1}{p}-\alpha}\times\intervalleff{1+\alpha}{\frac{3}{2}-\alpha}).
\end{align*}
Clearly, the probability $\proba{\Omega_{a,T}^M}$ does not depend on $a$ nor on $\la$ and $\pi$ and is positive. We then define $q_T>0$ by
\[\proba{\Omega_{a,T}^M}=2q_T.\quad(\star)\]
We then use basic consideration on i.i.d. Poisson processes with rate $1$ (we write $\mathbb{P}_M$ for the conditional probability w.r.t. $\pi_M$) to show that point 2. occurs with high probability.
\begin{itemize}
	\item For $k=1,2$, we have $T_k^\la\geq \al(3/4+\alpha)$ and
\[\mathbb{P}_M\left[\forall i \in\intervalleentier{X_k^ \la-\lfloor\la^{-3/4}\rfloor}{X_k^\la+\lfloor\la^{-3/4}\rfloor} , N^S_{T^{\lambda}_{k}}(i)>0\right]\geq(1-\la^ {3/4+\alpha})^{2\lfloor\la^ {-3/4}\rfloor+1}\]
which tends to $1$ when $\la\to0$.
	\item For $k=1,2$, we have $T_k^\la+\al\kappa_{\la,\pi}^0\leq \al(1-\alpha/2)$ (recall that $\kappa_{\la,\pi}^0\leq\alpha/2$) and
\begin{multline*}
\mathbb{P}_M\left[\exists i_2^k\in \intervalleentier{X^\la_k+\lfloor\la^{-3/4}\rfloor+1}{X^\la_k+\ml-1}, N^S_{T^{\lambda}_{k}+\al\kappa_{\la,\pi}^0}(i_j^k)=0\right]\\
\geq1-(1-\la^ {1-\alpha/2})^{\ml-\lfloor\la^{-3/4}\rfloor-1}
\end{multline*}
which tends to $1$ when $\la\to0$ (and similar computation for $i_1^k$).
	\item For $k=1,2$, we have $T_k^\la\geq \al(3/4+\alpha)$ and
\begin{multline*}
\mathbb{P}_M\left[\exists i \in \intervalleentier{X_k^ \la-\lfloor\la^{-3/4}\rfloor}{X_k^
\la+\lfloor\la^{-3/4}\rfloor},N^S_{3\al/2}(i) -N^S_{T^{\lambda}_{k}}(i) =0\right]\\
\geq 1-(1-\la^{3/4-\alpha})^ {2\lfloor\la^{-3/4}\rfloor+1}
\end{multline*}
which tends to $1$ when $\la\to0$;
	\item Finally, 
\begin{multline*}
\mathbb{P}_M\left[\forall i \in \intervalleentier{\lfloor(a-1-\frac{T-1}{p})\nl\rfloor}{\lfloor(a+1+\frac{T-1}{p})\nl\rfloor},
N^S_{\al(1+\alpha)}(i) >0\right]\\
=(1-\la^{1+\alpha})^{(2+2\frac{T-1}{p})\nl}
\end{multline*}
which tends also to $1$ when $\la\to0$.
\end{itemize}

Next, since $\pi_M$ is independent of the processes family $(N^S_t(i))_{i\in\zz,t\geq0}$ and   $(N^P_t(i))_{i\in\zz,t\geq0}$, Lemma \ref{propagation lemma p} directly imply  that, for all  $k=1,\dots,4$,  $\mathbb{P}_M\left[\Omega^{P,T}_{\la,\pi}(X_k,T_k)\right]$ tends to $1$ when $\la\to0$ and $\pi\to\infty$ in the regime $\cR(p)$. 

All this, together with $(\star)$, implies that $\proba{\Omega_{a,T}^{\la,\pi}}\geq q_T>0$ for all $(\la,\pi)$ sufficiently close to the regime $\cR(p)$.

In the end, for all $(\la,\pi)$ sufficiently close to the regime $\cR(p)$, the event $\Omega_{a,T}^{\la,\pi}$ depend only on the Poisson processes $N_t^S(i),N^M_t(i)$ and $N^P_t(i)$ for
$t\in\intervalleff{0}{\al (T+2)}$ and $i\in\bar{J}_a^\la$. This suffices to conclude the proof.
\end{proof}

\begin{proof}[Proof in the regime $\cR(\infty,z_0)$]
Let $z_0\in\intervalleff{0}{1}$. Consider the true $(\la,\pi)-$FFP $(\eta^{\la,\pi}_t(i))_{t\geq0,i\in\zz}$. We introduce
\[J_a^\la=\intervalleentier{\lfloor a\nl\rfloor}{\lfloor (a+1)\nl\rfloor-1}.\]

As above, for $a\in\rr$, we are going to construct an event $\Omega_{a,T}^{\la,\pi}$ depending only on the Poisson processes $N_t^S(i),N^M_t(i)$ and $N^P_t(i)$ for
$t\in\intervalleff{0}{\al (T+2)}$ and $i\in{J}_a^\la$ such that 
\begin{enumerate}[label=(\roman*)]
        \item on $\Omega_{a,T}^{\la,\pi}$, there exists $\iota^{+}\colon\intervalleff{0}{\al T}
\mapsto {J}_a^\la$ non
decreasing and $\iota^{-}\colon\intervalleff{0}{\al T} \mapsto {J}_a^\la$ non increasing such that $\eta^{\la,\pi}_t(\iota^{+}_t)=0$ or $2$ and
$\eta^{\la,\pi}_t(\iota^{-}_t)=0$ or $2$ for all $t\in\intervalleff{0}{\al T}$,
        \item there exists $q_T>0$ such that for all $a\in\rr$, we have $\proba{\Omega_{a,T}^{\la,\pi}}\geq q_T$ for all $(\la,\pi)$ sufficiently close to the regime $\cR(\infty,z_0)$.
\end{enumerate}
The proof is then concluded as previously. We divide the proof in two cases.

\vspace*{0.3cm}
\noindent{\it Case 1: $z_0\in\intervallefo{0}{1}$.} We fix $\alpha=0.001$ and $\gamma\in\intervalleoo{0}{\frac{1-z_0}{4}}$. Recall that $\bm_\la^ {\gamma}=\lfloor\frac{\gamma}{\la^ {\gamma+(1-\gamma)z_0}\al}\rfloor\ll\ml$ and $\ml\ll\nl$.

\md

\noindent{\bf Step 1.} For $\la>0, \pi\geq1$ and $a\in\rr$, we define the event
$\tOmega^{\la,\pi}_{a,T}$ on which points 1 and 2 below are satisfied:
\begin{enumerate}
        \item The family of Poisson processes $(N^ M_t(i))_{t
\in\intervalleff{0}{\al T},i
\in J_a^{\lambda}}$ has exactly $2$ marks in $J_a^\la$, and we call them  $(X_1^\la,T_1^\la),(X_2^\la,T_2^\la)$,
in such a way that
\begin{align*}
(X_1^\la,T_1^\la)&\in\intervalleentier{\lfloor a\nl\rfloor+\ml}{\lfloor(a+\frac{1}{2})\nl\rfloor-\ml-1}\times\intervalleff{\al(z_0+2\gamma)}{\al(1-\gamma)}\\
\text{and }(X_2^\la,T_2^\la)&\in\intervalleentier{\lfloor(a+\frac{1}{2})\nl\rfloor+\ml}{\lfloor(a+1)\nl\rfloor-\ml-1}\times\intervalleff{\al(z_0+2\gamma)}{\al(1-\gamma)}.
\end{align*}
        \item The family of Poisson processes
$(N^S_t(i))_{t\geq0,i\in {J}_a^\la}$ satisfies, for $k=1,2$, 
\begin{enumerate}
	\item for all $i \in\intervalleentier{X_k^ \la-\bm_\la^ {\gamma}}{X_k^
\la+\bm_\la^ {\gamma}} , N^S_{T^{\lambda}_{k}}(i)>0$;
	\item there are $i_1^k\in \intervalleentier{X^\la_k-\ml+1}{X^\la_k-\bm_\la^ {\gamma}-1}$ and $i_2^k\in\intervalleentier{X^\la_k+\bm_\la^ {\gamma}+1}{X^\la_k+\ml-1}$ such that
$N^S_{\al(1-\gamma)}(i_1^k)=N^S_{\al(1-\gamma)}(i_2^k)=0$.
\end{enumerate}
\end{enumerate}
We now introduce the event on which all of these two fires propagate at the correct speed,
\[\Omega^P_{a,T}(\la,\pi)=\Omega_ {\la,\pi}^{P,T,\gamma}\left(\frac{X_1^\la}{\nl},\frac{T_1^\la}{\al}\right)\cap\Omega_ {\la,\pi}^{P,T,\gamma}\left(\frac{X_2^\la}{\nl},\frac{T_2^\la}{\al}\right).\]
We finally set 
\[\Omega_{a,T}^{\la,\pi}=\tOmega_{a,T}^{\la,\pi}\cap\Omega_{a,T}^P(\la,\pi).\]

\md

\noindent{\bf Step 2.} We now prove that on $\Omega_{a,T}^{\la,\pi}$, (i) holds.

For $k=1,2$, thanks to 2-(b), the sites $i_1^k$ and $i_2^k$ remain vacant until $\al(1-\gamma)>T_k^\la$. Thus, no fire can affect the zone $\intervalleentier{X_k^ \la-\bm_\la^ {\gamma}}{X_k^
\la+\bm_\la^ {\gamma}}$ during $\intervalleff{0}{\al(1-\gamma)}$. Hence, the zone $\intervalleentier{X_k^ \la-\bm_\la^ {\gamma}}{X_k^
\la+\bm_\la^ {\gamma}}$ is completely filled at time $T_k^\la-$, thanks to 2-(a). On $\Omega^{P,T,\gamma}_{\la,\pi}\left(\frac{X_k^\la}{\nl},\frac{T_k^\la}{\al}\right)\subset\Omega_{a,T}^P(\la,\pi)$, the fire starting in $X_k^\la$ at time $T_k^\la$ does not affect the zone outside $\intervalleentier{X_k^\la-\bm_\la^ {\gamma}}{X_k^\la+\bm_\la^ {\gamma}}$ during $\intervalleff{0}{\al T}$, recall {\bf Macro$(\infty,z_0)$} in Subsection \ref{application ffp}. Since $X_2^\la-X_1^\la\geq2\ml\geq 2\bm_\la^ {\gamma}+1$, we deduce that $\eta^{\la,\pi}_{s}(X_1^\la+i_{s-T_1^\la}^{1,+})=2$ for all $s\in\intervalleff{T_1^\la}{\al T}$ and $\eta^{\la,\pi}_{s}(X_2^\la+i_{s-T_2^\la}^{2,-})=2$ for all $s\in\intervalleff{T_2^\la}{\al T}$.

Finally, we set, for all $t\in\intervalleff{0}{\al T}$
\[\iota^+_t=\begin{cases}
i_1^1 &\text{if }0\leq t<T_1^\la,\\
X_1^\la+i^{1,+}_{t-T_3^\la}&\text{if }T_1^\la\leq t\leq \al T.
\end{cases}\]
The process $(\iota^+_t)_{t\in\intervalleff{0}{\al T}}$ is non decreasing, $\eta^{\la,\pi}_{s}(\iota_s^+)$ is $0$ for $s\in\intervallefo{0}{T_1^\la}$ and $2$ for $s\in\intervalleff{T_1^\la}{\al T}$.

Similarly, we set for all $t\in\intervalleff{0}{\al T}$,
\[\iota^-_t=\begin{cases}
i_2^2 &\text{if }0\leq t<T_2^\la,\\
X_2^\la+i^{2,-}_{t-T_2^\la}&\text{if }T_2^\la\leq t\leq \al T,
\end{cases}\]
which also satisfies the requirements.

\md

\noindent{\bf Step 3.} The event $\Omega_{a,T}^{\la,\pi}$ also satisfies point $(ii)$. 

Indeed, the quantity $\proba{\Omega_{a,T}^ {\la,\pi}}$ does obviously not depend on $a\in\rr$ by spatial invariance. As previously, we can construct $N^M$ by 
using a Poisson measure $\pi_M$ on $\rr\times\intervallefo{0}{\infty}$ with
intensity measure $\diff x \diff t$, independent of $N^S$ and $N^P$, by setting, for all $i\in\zz$,
\[N_t^M(i)=\pi_M(i_\la\times\intervalleff{0}{t/\al}).\]
Hence, the event on which $N^M$ satisifies 1. contains the event $\Omega_{a,T}^M$ on which $\pi_M$ has exactly $2$ marks in $\intervalleff{a}{a+1}\times\intervalleff{0}{T}$, which can be called $(X_1,T_1)$ and $(X_2,T_2)$ such that (remark that $\gamma<1/4$)
\begin{align*}
(X_1,T_1)&\in\intervalleff{a+\gamma}{a+\frac{1}{2}-\gamma}\times\intervalleff{z_0+2\gamma}{1-\gamma}\\
\text{and }(X_2,T_2)&\in\intervalleff{a+\frac{1}{2}+\gamma}{a+1-\gamma}\times\intervalleff{z_0+2\gamma}{1-\gamma}.
\end{align*}
Clearly, the probability $\proba{\Omega_{a,T}^M}$ does not depend on $a$ nor on $\la$ and $\pi$ and is positive. We then define $q_T>0$ by
\[\proba{\Omega_{a,T}^M}=2q_T.\quad(\star)\]
We then use basic considerations on i.i.d. Poisson processes with rate $1$ (we write $\mathbb{P}_M$ for the conditional probability w.r.t. $\pi_M$) to show that point 2. occurs with high probability.
\begin{itemize}
	\item For $k=1,2$, we have $T_k^\la\geq \al(z_0+2\gamma)$ and
\begin{multline*}
\mathbb{P}_M\left[\forall i \in\intervalleentier{X_k^ \la-\bm_\la^ {\gamma}}{X_k^\la+\bm_\la^ {\gamma}} , N^S_{T^{\lambda}_{k}}(i)>0\right]\geq(1-\la^ {z_0+2\gamma})^{2\bm_\la^ {\gamma}+1}\\
\simeq \exp(-\la^ {z_0+2\gamma}\frac{\gamma\la^ {-\gamma-(1-\gamma)z_0}}{\al})
=\exp(-\gamma\frac{\la^{\gamma(z_0+1)}}{\al})
\end{multline*}
which tends to $1$ when $\la\to0$.
	\item For $k=1,2$, we have 
\[\mathbb{P}_M\left[\exists i_2^k\in \intervalleentier{X^\la_k+\bm_\la^ {\gamma}+1}{X^\la_k+\ml-1}, N^S_{\al(1-\gamma)}(i_2^k)=0\right]
=1-(1-\la^ {1-\gamma})^{\ml-\bm_\la^ {\gamma}-1}\]
which tends to $1$ when $\la\to0$, because $\bm_\la^ \gamma\ll\ml$ and $\la^ {1-\gamma}\ll\ml$ (similar computation holds for $i_1^k$).
\end{itemize}
Finally, since $\pi_M$ is independent of the processes family $(N^S_t(i))_{t\geq0,i\in\zz}$ and   $(N^P_t(i))_{t\geq0,i\in\zz}$, Lemma \ref{propagation lemma p} directly imply  that, for all  $k=1,2$,  $\mathbb{P}_M\left[\Omega^{P,T}_{\la,\pi}(X_k,T_k)\right]$ tends to $1$ when $\la\to0$ and $\pi\to\infty$ in the regime $\cR(p)$. 

All this, together with $(\star)$, implies that $\proba{\Omega_{a,T}^{\la,\pi}}\geq q_T>0$ for all $(\la,\pi)$ sufficiently close to the regime $\cR(p)$.

In the end, for all $(\la,\pi)$ sufficiently close to the regime $\cR(p)$, the event $\Omega_{a,T}^{\la,\pi}$ depend only on the Poisson processes $N_t^S(i),N^M_t(i)$ and $N^P_t(i)$ for
$t\in\intervalleff{0}{\al (T+2)}$ and $i\in J_a^\la$. This suffices to conclude the proof in the case $z_0\in\intervallefo{0}{1}$.

\vspace*{0.3cm}

\noindent{\it Case 2: $z_0=1$.} Fix some $\alpha>0$ small enough, say $\alpha=0.001$. Recall that 
\[\kappa_{\la,\pi}^{1-\alpha}=\frac{1}{\la^{1-\alpha}\al\pi}+\e_\la\]
and assume that $\kappa_{\la,\pi}^{1-\alpha}<\alpha$. We first define the event
$\tOmega^{\la,\pi}_{a,T}$ on which points 1 and 2 below are satisfied:
\begin{enumerate}
        \item The family of Poisson processes $(N^ M_t(i))_{t
\in\intervalleff{0}{\al T},i
\in J_a^{\lambda}}$ has exactly $4$ marks in $J_a^\la$, and we call them  $(X_k^\la,T_k^\la)_{k=1,\dots,4}$,
in such a way 
the match $(X_1^\la,T_1^\la)$ (resp. $(X_2^\la,T_2^\la)$) belongs to 
\begin{align*}
&\intervalleentier{\lfloor (a+\alpha)\nl\rfloor}{\lfloor(a+\frac{1}{4}-\alpha)\nl\rfloor}\times\intervalleff{\al(\frac{3}{4}+\alpha)}{\al(1-2\alpha)}\\
\text{(resp. }&\intervalleentier{\lfloor(a+\frac{3}{4}+\alpha)\nl\rfloor}{\lfloor(a+1-\alpha)\nl\rfloor}\times\intervalleff{\al(\frac{3}{4}+\alpha)}{\al(1-2\alpha)}),
\end{align*}
and the match $(X_3^\la,T_3^\la)$ (resp. $(X_4^\la,T_4^\la)$) belongs to 
\begin{align*}
&\intervalleentier{\lfloor (a+\frac{1}{4}+\alpha)\nl\rfloor}{\lfloor(a+\frac{1}{2}-\alpha)\nl\rfloor}\times\intervalleff{\al(1+\alpha)}{\al(\frac{5}{4}-2\alpha)}\\
\text{(resp. }&\intervalleentier{\lfloor(a+\frac{1}{2}+\alpha)\nl\rfloor}{\lfloor(a+\frac{3}{4}-\alpha)\nl\rfloor}\times\intervalleff{\al(1+\alpha)}{\al(\frac{5}{4}-2\alpha)}).
\end{align*}
	\item The family of Poisson processes
$(N^S_t(i))_{t\geq0,i\in {J}_a^\la}$ satisfies,  
\begin{enumerate}
	\item for $k=1,2$, $\forall i \in\intervalleentier{X_k^ \la-\lfloor\la^{-3/4}\rfloor}{X_k^
\la+\lfloor\la^{-3/4}\rfloor} , N^S_{T^{\lambda}_{k}}(i)>0$;
	\item for $k=1,2$, there are $i_1^k\in \intervalleentier{X^\la_k-\lfloor\la^{-(1-\alpha)}\rfloor-1}{X^\la_k}$ and $i_2^k\in\intervalleentier{X^\la_k}{X^\la_k+\lfloor\la^{-(1-\alpha)}\rfloor+1}$ such that
$N^S_{T_k^\la+\al\kappa_{\la,\pi}^{1-\alpha}}(i_j^k)=0$.
	\item for $k=1,2$, there exists $i^k_3 \in \intervalleentier{X_k^ \la-\lfloor\la^{-3/4}\rfloor}{X_k^
\la+\lfloor\la^{-3/4}\rfloor}$ such that
$N^S_{3\al/2}(i^k_3) -
N^S_{T^{\lambda}_{k}}(i^k_3) =0$;
	\item $\forall i \in \intervalleentier{\lfloor a\nl}{\lfloor(a+1)\nl},
N^S_{\al(1+\alpha)}(i) >0$.
\end{enumerate}
\end{enumerate}
We now introduce the event on which all these four fires propagate on the good speed
\[\Omega^P_{a,T}(\la,\pi)=\Omega^{P,1-\alpha}_{\la,\pi}(\frac{X_1^\la}{\nl},\frac{T_1^\la}{\al})\cap\Omega^{P,1-\alpha}_{\la,\pi}(\frac{X_2^\la}{\nl},\frac{T_2^\la}{\al})\cap{\Omega}^{P,T,\alpha}_{\la,\pi}(\frac{X_3^\la}{\nl},\frac{T_3^\la}{\al})\cap{\Omega}^{P,T,\alpha}_{\la,\pi}(\frac{X_4^\la}{\nl},\frac{T_4^\la}{\al}),\]
recall Definition \ref{definition2 application}.

We finally set 
\[\Omega_{a,T}^{\la,\pi}=\tOmega_{a,T}^{\la,\pi}\cap\Omega_{a,T}^P(\la,\pi).\]

We deduce that $\Omega_{a,T}^{\la,\pi}$ satisfies (i) as above: the match falling in $X_k^\la$, for $k=1,2$, destroys at least the zone $\intervalleentier{X_k^ \la-\lfloor\la^{-3/4}\rfloor}{X_k^
\la+\lfloor\la^{-3/4}\rfloor}$ (thanks to 2-(a)) but does not affect the zone outside $\intervalleentier{X^\la_k-\lfloor\la^{-(1-\alpha)}\rfloor}{X^\la_k+\lfloor\la^{-(1-\alpha)}\rfloor}$ (thanks to 2-(b) and recall {\bf Micro$(\infty,1)$} in Subsection \ref{application ffp}). Hence, for $k=1,2$, $i_3^k$ remains vacant from $T_k^\la+\al\kappa_{\la,\pi}^{1-\alpha}$ until $3\al/2$. Thus, $i_3^1$ and $i_3^2$ protect the zone $\intervalleentier{\lfloor(a+\frac{1}{4}-\alpha)\nl\rfloor}{\lfloor(a+\frac{3}{4}-\alpha)\nl\rfloor}$, which is completely filled at time $\al(1+\alpha)$, thanks to 2-(d). As previously, and since fires have only a local effect (recall that $\bm_\la^ \alpha=\lfloor\alpha\nl\rfloor$), the right front of the fire 3 and the left front of the fire 4 burn until $\al T$.

We then can set, for all $t\in\intervalleff{0}{\al T}$
\[\iota^+_t=\begin{cases}
i_1^1 &\text{if }0\leq t<T_1^\la+\al\kappa_{\al,\pi}^{1-\alpha},\\
i_3^1 &\text{if }T_1^\la+\al\kappa_{\al,\pi}^{1-\alpha}\leq t< T_3^\la,\\
X_3^\la+i^{3,+}_{t-T_3^\la}&\text{if }T_3^\la\leq t\leq \al T,
\end{cases}\]
and
\[\iota^-_t=\begin{cases}
i_2^2 &\text{if }0\leq t<T_2^\la+\al\kappa_{\al,\pi}^{1-\alpha},\\
i_3^2 &\text{if }T_2^\la+\al\kappa_{\al,\pi}^{1-\alpha}\leq t< T_4^\la,\\
X_4^\la+i^{4,-}_{t-T_4^\la}&\text{if }T_4^\la\leq t\leq \al T.
\end{cases}\]

We can check, as usual, that $\proba{\Omega_{a,T}^{\la,\pi}}\geq q_T$, for all $(\la,\pi)$ sufficiently close to the regime $\cR(\infty,1)$, where $2q_T$ is the probability that a Poisson measure $\pi_M$ has exactly 4 marks $(X_k,T_k)_{k=1,\dots,4}$ in $\intervalleff{a}{a+1}\times\intervalleff{0}{T}$ in such a way that
\begin{align*}
(X_1,T_1) &\in\intervalleff{a+\alpha}{a+\frac{1}{4}-\alpha}\times\intervalleff{\frac{3}{4}+\alpha}{1-2\alpha},\\
(X_2,T_2) &\in\intervalleff{a+\frac{3}{4}+\alpha}{a+1-\alpha}\times\intervalleff{\frac{3}{4}+\alpha}{1-2\alpha},\\
(X_3,T_3) &\in\intervalleff{a+\frac{1}{4}+\alpha}{a+\frac{1}{2}-\alpha}\times\intervalleff{1+\alpha}{\frac{5}{4}-2\alpha},\\
(X_4,T_4) &\in\intervalleff{a+\frac{1}{2}+\alpha}{a+\frac{3}{4}-\alpha}\times\intervalleff{1+\alpha}{\frac{5}{4}-2\alpha}.\qedhere
\end{align*}
\end{proof}

\begin{proof}[Proof in the regime $\cR(0)$]
We fix $T>0$. It of course suffices to prove the result for $A$ large enough. We consider the true $(\la,\pi)-$FFP $(\eta^{\lambda,\pi}_t(i))_{t\geq0,i \in \zz}$ and set $K=\lfloor 4T\rfloor$. For $a\in\rr$, we recall that
\[\varkappa_{\la,\pi}=\varkappa_{\la,\pi}^{2K}=\frac{2K\nl}{\al\pi}+\e_\la\]
and
\[J_a^\la\coloneqq\intervalleentier{\lfloor a \nl\rfloor}{\lfloor (a+1)\nl\rfloor-1}\]
and introduce
\[J_{a,K}^\la\coloneqq\intervalleentier{\lfloor (a-3K) \nl\rfloor}{\lfloor (a+3K+1)\nl\rfloor-1}.\]
As usual, for $a\in\rr$, we are going to build an event $\Omega_{a,T}^{\la,\pi}$ depending only on the Poisson processes $N_t^S(i),N^M_t(i)$ and $N^P_t(i)$ for
$t\in\intervalleff{0}{\al T}$ and $i\in J_{a,K}^\la$ such that 
\begin{enumerate}[label=(\roman*)]
        \item on $\Omega_{a,T}^{\la,\pi}$, there exists $\iota^{+}\colon\intervalleff{0}{\al T}
\mapsto {J}_{a,K}^\la$ (resp. $\iota^{-}\colon\intervalleff{0}{\al T} \mapsto {J}_{a,K}^\la$), non
decreasing (resp. non increasing), such that $\eta^{\la,\pi}_t(\iota^{+}_t)=0$ (resp.
$\eta^{\la,\pi}_t(\iota^{-}_t)=0$) for all $t\in\intervalleff{0}{\al T}$,
        \item there exists $q_T>0$ such that for all $a\in\rr$, we have $\proba{\Omega_{a,T}^{\la,\pi}}\geq q_T$ for all $(\la,\pi)$ sufficiently close to the regime $\cR(0)$.
\end{enumerate}
It is then routine to conclude the proof. We fix $\alpha=0.001$ and assume that $(\la,\pi)$ is sufficienly close to the regime $\cR(0)$ in such a way that $\varkappa_{\la,\pi}\leq \alpha$.

\md

\noindent{\bf Step 1.} Here we show that for all $b\in\rr$, there exists an event $\Omega_{b,0}^{\la,\pi}$, depending only on $(N^S_s(i),N^M_s(i),N^P_s(i))_{s\in\intervalleff{0}{3\al/4},i\in J_b^{\la}}$ such that
\begin{enumerate}[label=(\roman*)]
	\item on $\Omega^{\la,\pi}_{b,0}$, a.s., there is $i\in J_b^{\la}$ such that $\eta^{\la,\pi}_{\al s}(i)=0$ for all $s\in\intervalleff{0}{3/4}$;
	\item $\lim_{\la\to0}\proba{\Omega^{\la,\pi}_{b,0}}=1$.
\end{enumerate}
Simply consider the event $\Omega^{\la,\pi}_{b,0}=\{\exists i\in J_b^{\la}, N^S_{3\al/4}(i)=0\}$. Clearly, point (i) is satisfied, since there is a site in $J_b^\la$ on which no seed falls during $\intervalleff{0}{3\al/4}$. Since $\abs{J_b^\la}\simeq\nl\simeq1/(\la\log(1/\la))$, we deduce that
\[\proba{\Omega^{\la,\pi}_{b,0}}=1-(1-e^{-3\al/4})^{\nl}\simeq 1-e^{-1/(\la^{1/4}\al)}\xrightarrow[\la\to0]{}1,\]
whence (ii).

\md

\noindent{\bf Step 2.} For $\la>0$ and $\pi\geq1$, we put $\kl\coloneqq\lfloor\la^{-3/8}\rfloor$ and observe that $\kl\ll\nl$. For $k\in\{1,\dots,K-1\}$, we set
\[\tau_k=\frac{k+1}{4}\text{ and }\ttau_k=\frac{k+1}{4}+\frac{1}{8}.\]
Consider the event $\tOmega_{a,T}^{\la,\pi}$ on which points 1, 2 and 3 below are satisfied.
\begin{enumerate}
	\item The family of Poisson processes  $(N^ M_t(i))_{t
\in\intervalleff{0}{\al T},i
\in J_{a,K}^{\lambda}}$ has exactly $2(K-1)$ marks in $J_{a,K}^{\lambda}$, and we call them  
\[\{(X_1^\la,T_1^\la),\dots,(X_{K-1}^\la,T_{K-1}^\la)\}\text{ and }\{(\tX_1^\la,\tT_1^\la),\dots,(\tX_{K-1}^\la,\tT_{K-1}^\la)\},\]
in such a way that, for all $k\in\{1,\dots,K-1\}$,
\[(X_k^\la,T_k^\la)\in\intervalleentier{\lfloor(a-K+k+\frac1{3})\nl\rfloor}{\lfloor(a-K+k+\frac2{3})\nl\rfloor}\times\intervalleff{(\tau_k-1/12)\al}{\left(\tau_k-\varkappa_{\la,\pi}\right)\al}\]
and
\begin{multline*}
(\tX_k^\la,\tT_k^\la) \in\intervalleentier{\lfloor(a+K-(k+1)+\frac1{3})\nl\rfloor}{\lfloor(a+K-(k+1)+\frac2{3})\nl\rfloor}\\
\times\intervalleff{(\ttau_k-1/12)\al}{\left(\ttau_k-\varkappa_{\la,\pi}\right)\al}.
\end{multline*}
(See Figure \ref{sweet event} for a graphical example.)
	\item The family of Poisson processes $(N^S_t(i))_{t\geq0,i\in J_{a,K}^\la}$ satisfies, for all $k\in\{1,\dots, K-1\}$,
\begin{enumerate}
	\item there are $j_g\in\intervalleentier{\lfloor (a-K+k)\nl\rfloor}{\lfloor(a-K+k+1/4)\nl\rfloor}$ and $j_d\in\intervalleentier{\lfloor (a-K+k+3/4)\nl\rfloor}{\lfloor(a-K+k+1)\nl-1\rfloor}$ such that 
\[N^S_{\al(\tau_k+1/4)}(j_g)-N^S_{\al(\tau_k-1/2)}(j_g)=N^S_{\al(\tau_k+1/4)}(j_d)-N^S_{\al(\tau_k-1/2)}(j_d)=0;\]
	\item for all $i\in\intervalleentier{X_k^\la-\kl}{X_k^\la+\kl}$, 
\[N^S_{\al(\tau_k-1/12)}(i)-N^S_{\al(\tau_k-1/2)}(i)>0;\]
	\item there is $j_0\in\intervalleentier{X_k^\la-\kl}{X_k^\la+\kl}$ such that 
\[N^S_{\al(\tau_k+1/4)}(j_0)-N^S_{\al(\tau_k-1/12)}(j_0)=0.\]
\end{enumerate}
	\item The family of Poisson processes $(N^S_t(i))_{t\geq,i\in J_{a,K}^\la}$ satisfies, for all $k\in\{1,\dots, K-1\}$,
\begin{enumerate}
	\item there are $j_g\in\intervalleentier{\lfloor (a+K-(k+1))\nl\rfloor}{\lfloor(a+K-(k+1)+1/4)\nl\rfloor}$ and $j_d\in\intervalleentier{\lfloor (a+K-(k+1)+3/4)\nl\rfloor}{\lfloor(a+K-(k+1)+1)\nl-1\rfloor}$ such that 
\[N^S_{\al(\ttau_k+1/4)}(j_g)-N^S_{\al(\ttau_k-1/2)}(j_g)=N^S_{\al(\ttau_k+1/4)}(j_d)-N^S_{\al(\ttau_k-1/2)}(j_d)=0;\]
	\item for all $i\in\intervalleentier{\tX_k^\la-\kl}{\tX_k^\la+\kl}$, 
\[N^S_{\al(\ttau_k-1/12)}(i)-N^S_{\al(\ttau_k-1/2)}(i)>0;\]
	\item there is $j_0\in\intervalleentier{\tX_k^\la-\kl}{\tX_k^\la+\kl}$ such that 
\[N^S_{\al(\ttau_k+1/4)}(j_0)-N^S_{\al(\ttau_k-1/12)}(j_0)=0.\]
\end{enumerate}
\end{enumerate}
We also introduce the event 
\[\Omega^{P,K}_{\la,\pi}=\left(\bigcap_{k=1}^{K-1}\Omega^{P,2K,2K}_{\la,\pi}\left(\frac{X^\la_k}{\nl},\frac{T^\la_k}{\al}\right)\right)\cap\left(\bigcap_{k=1}^{K-1}\Omega^{P,2K,2K}_{\la,\pi}\left(\frac{\tX^\la_k}{\nl},\frac{\tT^\la_k}{\al}\right)\right),\]
recall Definition \ref{definition2 application}.

Finally, we set
\[\Omega_{a,T}^{\la,\pi}=\tOmega_{a,T}^{\la,\pi}\cap\Omega^{P,K}_{\la,\pi}\cap\Omega_{a-K,0}^{\la,\pi}\cap\Omega_{a+K-1,0}^{\la,\pi}.\]

\md

\noindent{\bf Step 3.} Here we prove (ii).

The probability of the event on which $N^M$ satisfies 1. does not depend on $a\in\rr$ by invariance by spatial translation. We also can construct $N^M$ using a Poisson measure $\pi_M$ on $\rr\times\intervallefo{0}{\infty}$ with intensity measure $\diff x\diff t$, independent of $N^S$ and $N^P$, by setting, for all $i\in\zz$
\[N_t^M(i)=\pi_M(i_\la\times\intervalleff{0}{t/\al}).\]
As usual, for all $\la>0$ small enough, the probability of the event on which $N^M$ satisfies 1 is then bounded from below by some constant $2q_T>0$, which does not depend on $a\in\rr$ nor on $\la>0$ and $\pi\geq1$. We write $\pp_M$ for the conditional probability w.r.t. $\pi_M$.

Let now $k\in\{1,\dots,K-1\}$. The probability of 2-(a) tends to $1$. Indeed, treating e.g. the case of $j_g$, there holds, recalling $\nl\simeq1/(\la\al)$ and $\al=\log(1/\la)$,
\begin{multline*}
\proba{\exists j\in\intervalleentier{\lfloor (a-K+k)\nl\rfloor}{\lfloor(a-K+k+1/4)\nl\rfloor}, N^S_{\al(\tau_k+1/4)}(j)-N^S_{\al(\tau_k-1/2)}(j)=0} \\
=1-(1-e^{-(3/4)\al})^{\nl/4}\simeq 1-e^ {-\nl\la^ {3/4}/4}\xrightarrow[\la\to0]{}1.
\end{multline*}
The probability of 2-(b) (conditionally on $\pi_M$) also tends to $1$. Indeed, it equals 
\[(1-e^{-5\al/12})^{2\kl+1}\simeq e^{-2\kl\la^{5/12}}\xrightarrow[\la\to0]{}1\]
since $\kl=\lfloor\la^{-3/8}\rfloor$ and since $3/8<5/12$. Finally, the probability of 2-(c) (conditionally on $\pi_M$) also tends to $1$, since it equals
\[1-(1-e^{-\al/3})^{2\kl+1}\simeq1-e^{-2\kl\la^{1/3}}\]
which tends to $1$ when $\la\to0$, since $1/3<3/8$.

Similar considerations hold for Point 3.

Finally, since $\pi_M$ is independent of the processes  family $(N^S_t(i))_{t\geq0,i\in\zz}$ and $(N^P_t(i))_{t\geq0,i\in\zz}$, Lemma \ref{propagation lemma 0} directly implies that, using space/time stationarity, for all $k\in\{1,\dots,K-1\}$, 
\[\pp_M\left[\Omega^{P,2K,2K}_{\la,\pi}(X_k^\la/\nl,T_k^\la/\al)\right]=\pp_M\left[\Omega^{P,2K,2K}_{\la,\pi}(\tX_k^\la/\nl,\tT_k^\la/\al)\right]\]
tends to $1$ when $\la\to0$ and $\pi\to\infty$ in the regime $\cR(0)$.

All this implies that there exists $q_T>0$ such that $\proba{\Omega_{a,T}^{\la,\pi}}>q_T$ for all $(\la,\pi)$ sufficiently close to the regime $\cR(0)$.

\md

\noindent{\bf Step 4.} Here we work on $\Omega_{a,T}^{\la,\pi}$ and we prove that, for all $k\in\{1,\dots,K-1\}$, if there is no burning tree in $J_{a-K+k}^\la$ at time $(\tau_k-1/2)\al$, then there is $i\in J_{a-K+k}^\la$ such that $\eta^{\la,\pi}_{\al t}(i)=0$ for all $t\in\intervalleff{\tau_k}{\tau_k+1/4}$. We distinguish two cases.
\begin{itemize}
	\item If the zone $\intervalleentier{X_k^\la-\kl}{X_k^\la+\kl}$ is completely occupied at time $T_k^\la-$, then each site burns at least one time (i.e. each site in this zone is ignited and then extinguished) during $\intervalleff{T_k^\la}{T_k^\la+\al \varkappa_{\la,\pi}}$, thanks to $\Omega^{P,2K,2K}_{\la,\pi}(X_k^\la/\nl,T_k^\la/\al)$, recall {\bf Macro$(0)$} in Subsection \ref{application ffp}. Since no seed falls on $j_0$, which belongs to this zone, during 
\[\intervalleff{\al(\tau_k-1/12)}{\al(\tau_k+1/4)}\supset\intervalleff{T_k^\la+\al\varkappa_{\la,\pi}}{\al(\tau_k+1/4)}\supset\intervalleff{\al \tau_k}{\al(\tau_k+1/4)},\]
we deduce that $\eta^{\la,\pi}_{\al s}(j_0)=0$ for all $s\in\intervalleff{\tau_k}{\tau_k+1/4}$.
	\item Assume now that there exists $i_0\in\intervalleentier{X_k^\la-\kl}{X_k^\la+\kl}$ that is vacant at time $T_k^\la-$. Recall that there is no match falling in $J_a^\la$ during $\intervallefo{\al(\tau_k-1/2)}{T_k^\la}$, that on each site of $\intervalleentier{X_k^\la-\kl}{X_k^\la+\kl}$, at least one seed falls during $\intervalleff{\al(\tau_k-1/2)}{\al(\tau_k-1/12)}\subset\intervallefo{\al(\tau_k-1/2)}{T_k^\la}$ and that there is no burning tree in $J_{a-K+k}^\la$ at time $\al(\tau_k-1/2)$. Then necessarily,  a fire starting at some $i'_M\not\in J_{a-K+k}^\la$ at some time $t'_M<T_k^\la$, has made vacant $i_0$. Assume e.g. that $i'_M<\lfloor (a-K+k)\nl\rfloor$ and observe that $i'_M<j_g<i_0$. The fire $(i'_M,t'_M)$ has then also necessarily made vacant $j_g$ during $\intervalleoo{\al(\tau_k-1/2)}{T_k^\la}$. Since no seed falls on $j_g$ during $\intervalleff{\al(\tau_k-1/2)}{\al(\tau_k+1/4)}$, we deduce that $j_g$ remains vacant during $\intervalleff{\al \tau_k}{\al(\tau_k+1/4)}$.
\end{itemize}

\md

\noindent{\bf Step 5.} We can show, exactly as above, that, on $\Omega_{a,T}^{\la,\pi}$, if there is no burning tree in $J_{a+K-(k+1)}^\la$ at time $(\ttau_k-1/2)\al$, for some $k\in\{1,\dots,K-1\}$, then there is $i\in J_{a+K-(k+1)}^\la$ such that $\eta^{\la,\pi}_{\al t}(i)=0$ for all $t\in\intervalleff{\ttau_k}{\ttau_k+1/4}$.

\md

\noindent{\bf Step 6.} To conclude the proof, we now prove by induction (see Figure \ref{sweet event}) that for all $k\in\{1,\dots,K-1\}$ 
	\begin{itemize}
		\item there exists $i_k\in J_{a-K+k}^\la$ such that $\eta_{\al t}^{\la,\pi}(i_k)=0$ for all $t\in\intervalleff{\tau_k}{\tau_k+1/4}$;
		\item there exists $j_k\in J_{a+K-(k+1)}^\la$ such that $\eta_{\al t}^{\la,\pi}(j_k)=0$ for all $t\in\intervalleff{\ttau_k}{\ttau_k+1/4}$;
		\item there is no burning tree in $\intervalleentier{i_k}{j_k}$ at time $\al\tau_k$ nor at time $\al\ttau_k$.
	\end{itemize}
\begin{enumerate}[label=$\rhd$]
	\item At time $0$, all sites are vacant. Thus, there are $i_0\in J_{a-K}^\la$ and $j_0\in J_{a+K-1}^\la$ which remain vacant until time $3\al/4$ (thanks to $\Omega_{a-K,0}^{\la,\pi}\cap\Omega_{a+K-1,0}^{\la,\pi}$). Since no match falls in $\intervalleentier{i_0}{j_0}$ until time $T_1^\la\geq\al(1/2-1/12)=5\al/12$, there is no burning tree in $\intervallefo{0}{5\al/12}$ (no match falling outside $\intervalleentier{i_0}{j_0}$ during $\intervallefo{0}{5\al/12}$ can affect this zone).
	
Thus, Step 4 shows that there are $i_1\in J_{a-K+1}^\la$ which is vacant during $\intervalleff{\al/2}{3\al/4}$ (because $\tau_1-1/2=0$) and $i_2\in J_{a-K+2}^\la$ which is vacant during $\intervalleff{3\al/4}{\al}$ (because $\tau_2-1/2=1/4<5/12$). Similarly, Step 5 above shows that there are $j_1\in J_{a+K-2}^\la$ which is vacant during $\intervalleff{5\al/8}{7\al/8}$ (because $\ttau_1-1/2=1/8<5/12$) and $j_2\in J_{a+K-3}^\la$ which is vacant during $\intervalleff{7\al/8}{9\al/8}$ (because $\ttau_2-1/2=3/8<5/12$).

Since $T_1^\la\leq (1/2-\varkappa_{\la,\pi})\al$ and $|X_1^\la-i_0|\leq|X_1^\la-j_0|\leq2K\nl$, as seen in {\bf Macro$(0)$} in Subsection \ref{application ffp} (recall that we work on $\Omega^{P,2K,2K}_{\la,\pi}(X_1^\la/\nl,T_1^\la/\al)$), there is no more burning tree in $\intervalleentier{i_0}{j_0}$ at time $T_1^\la+\al\varkappa_{\la,\pi}\leq \al/2=\al\tau_1$. Since no match falls in $\intervalleentier{i_0}{j_0}$ during $\intervalleff{T_1^\la+\al\varkappa_{\la,\pi}}{\al/2}$, we deduce that there is also no burning tree in $\intervalleentier{i_0}{j_0}\supset\intervalleentier{i_1}{j_1}$ at time $\al\tau_1$ (because $i_0$ and $j_0$ remain vacant until $\al/2$).

Since no match falls in $\intervalleentier{i_1}{j_0}$ during $\intervallefo{\al\tau_1}{\tT^\la_1}$, we deduce that there is no burning tree in $\intervalleentier{i_1}{j_0}$ at time $\tT_1^\la-$. Since  $\eta^{\la,\pi}_{t}(i_1)=\eta^{\la,\pi}_t(j_0)=0$ for all $t\in\intervalleff{\tT_1^\la}{\tT_1^\la+\al\varkappa_{\la,\pi}}$ and only one match falls in $\intervalleentier{i_1}{j_0}$ during $\intervalleff{\tT_1^\la}{\tT_1^\la+\al\varkappa_{\la,\pi}}$, we deduce, recall {\bf Macro$(0)$} in Subsection \ref{application ffp}, that there is no more burning tree in $\intervalleentier{i_1}{j_0}$ at time $\tT_1^\la+\al\varkappa_{\la,\pi}$. We easily deduce that there is also no burning tree in $\intervalleentier{i_1}{j_1}\subset\intervalleentier{i_1}{j_0}$ at time $\al\ttau_1$.

Similarly, since $i_0<i_1<i_2<j_2<j_1<j_0$ and thanks to $\Omega^{P,K}_{\la,\pi}$, there is no more burning tree in $\intervalleentier{i_1}{j_1}\supset\intervalleentier{i_2}{j_2}$ at time $\tau_2$ nor in $\intervalleentier{i_2}{j_1}\supset\intervalleentier{i_2}{j_2}$ at time $\ttau_2$.
	\item Assume now that there is $k\in\{2,\dots,K-2\}$ such that, for all $l\leq k$,
	\begin{itemize}
		\item there exists $i_l\in J_{a-K+l}^\la$ such that $\eta_{\al t}^{\la,\pi}(i_l)=0$ for all $t\in\intervalleff{\al \tau_l}{\al(\tau_l+1/4)}$;
		\item there exists $j_l\in J_{a+K-(l+1)}^\la$ such that $\eta_{\al t}^{\la,\pi}(j_l)=0$ for all $t\in\intervalleff{\al \ttau_l}{\al(\ttau_l+1/4)}$;
		\item there is no burning tree in $\intervalleentier{i_l}{j_l}$ at time $\al\tau_l$ nor at time $\al\ttau_l$.
	\end{itemize}
Since there is no burning tree in $J_{a-K+k+1}^\la\subset\intervalleentier{i_{k-1}}{j_{k-1}}$ at time $\al\tau_{k-1}=\al(\tau_{k+1}-1/2)$, see Step 4, there is $i_{k+1}\in J_{a-K+k+1}^\la$ which is vacant during $\intervalleff{\al\tau_{k+1}}{\al(\tau_{k+1}+1/4)}$. Furthermore, no match falls in $\intervalleentier{i_k}{j_k}$ during $\intervallefo{\al \ttau_k}{T^\la_{k+1}}\subset\intervalleff{\al \ttau_k}{\al(\tau_{k+1}-\varkappa_{\la,\pi})}$ and there is no burning tree in $\intervalleentier{i_k}{j_k}$ at time $\al \ttau_k$, thus, as seen in {\bf Macro$(0)$} in Subsection \ref{application ffp} and thanks to $\Omega_ {\la,\pi}^{P,2K,2K}\left(\frac{X_{k+1}^\la}{\nl},\frac{T_{k+1}^\la}{\al}\right)$, there is no more burning tree in $\intervalleentier{i_k}{j_k}$ at time $T^\la_{k+1}+\al\varkappa_{\la,\pi}$ nor at time $\al \tau_{k+1}$ (because $i_k$ and $j_k$ remain vacant until $\al\tau_{k+1}$ and no match falls in $\intervalleentier{i_k}{j_k}$ during $\intervalleof{T_{k+1}^\la+\al\varkappa_{\la,\pi}}{\al\tau_{k+1}}$). 

Since there is no burning tree in $J^\la_{a+K-(k+2)}\subset\intervalleentier{i_{k-1}}{j_{k-1}}$ at time $\al\ttau_{k-1}=\al(\ttau_{k+1}-1/2)$, we deduce by Step 5 that there is $j_{k+1}\in J_{a+K-(k+2)}^\la$ which is vacant during $\intervalleff{\al\ttau_{k+1}}{\al(\ttau_{k+1}+1/4)}$. No match falls in $\intervalleentier{i_{k+1}}{j_k}$ during  $\intervallefo{\al \tau_{k+1}}{\tT^\la_{k+1}}\subset\intervalleff{\al\tau_{k+1}}{\al(\ttau_{k+1}-\varkappa_{\la,\pi})}$ and there is no burning tree in $\intervalleentier{i_{k+1}}{j_k}$ at time $\al \tau_{k+1}$, thus, as seen in {\bf Macro$(0)$} in Subsection \ref{application ffp} and thanks to $\Omega_ {\la,\pi}^{P,2K,2K}\left(\frac{\tX_{k+1}^\la}{\nl},\frac{\tT_{k+1}^\la}{\al}\right)$, there is no more burning tree in $\intervalleentier{i_{k+1}}{j_k}$ at time $\tT^\la_{k+1}+\al\varkappa_{\la,\pi}$ nor at time $\al \ttau_{k+1}$, as usual.
\end{enumerate}
By the induction above, we deduce that there are
\[\iota^+\colon\intervalleff{0}{T}\to J_{a,K}^\la\]
non decreasing, such that for all $t\in\intervalleff{0}{T}$, $\eta^{\la,\pi}_{\al t}(\iota_{\al t}^+)=0$ and 
\[\iota^-\colon\intervalleff{0}{T}\to J_{a,K}^\la\]
non increasing, such that for all $t\in\intervalleff{0}{T}$, $\eta^{\la,\pi}_{\al t}(\iota_{\al t}^-)=0$.
This together with Step 3 conclude the proof in the regime $\cR(0)$.\qedhere

\begin{figure}[h!]
\fbox{
\begin{minipage}[c]{0.95\textwidth}
\centering
\begin{tikzpicture}
\begin{scope}
\clip (0,2) -- (1.76,2)--(1.76,2.5)-- (2.4,2.5)--(2.4,3) -- (2.86,3 )--(2.86 ,3.5 ) -- (3.24,3.5 )--(3.24 ,4) -- (3.78,4)--(3.78,4.5) -- (4.38,4.5)--(4.38,5) -- (4.84,5)--(4.84,5.5)-- (5.24,5.5)--(5.24,6)--(5.76,6)--(5.76,6.4)--(0,6.4)--cycle;
\foreach \x in {1,...,40} {
  \draw[dashed] (\x/2,2) -- (0,2+\x/2);
}
\end{scope}

\begin{scope}
\clip (12,2)--(10.8,2)--(10.8,2.25)--(10.24,2.25)--(10.24,2.75)-- (10,2.75)--(10,3.25) -- (9.24,3.25 )--(9.24 ,3.75 ) --(8.76,3.75 )--(8.76 ,4.3) --(8.22,4.25)--(8.22,4.75)-- (7.9,4.75)--(7.9,5.25)--(7.26,5.25)--(7.26,5.75)--(6.76,5.75)--(6.76,6.35) --(6.24,6.35)--(6.24,6.4)--(12.5,6.4)--(12,2);
\foreach \x in {1,...,40} {
  \draw[dashed] (\x/2,2) -- (0,2+\x/2);
}
\end{scope}
\fill[white] (6,2) rectangle (8,4);

\draw[->] (0,0)-- (0,7);
\draw[->] (0,0)-- (12.5,0);
\draw (-.1,2)--(.1,2) node at (0,-.3) {$-12\phantom{-}$} node at (3,-.3) {$-6\phantom{-}$} node at (9,-.3) {$6$} node at (12,-.3) {$12$};
\draw (-.1,6.4)--(.1,6.4) node at (-.3,6.4) {$T$} node at (-.3,2) {$1$};
\draw (-.1,1)--(.1,1)  node at (-.4,1) {$0{,}5$};
\draw (6,-.1)--(6,.1) node[below] at (6,-.1) {$0$};

\foreach \x in {0,...,12} {
\draw (\x,0.1cm) -- (\x,-0.1cm);
}
\foreach \x in {0.5,...,12} {
\draw (\x,0.1cm) -- (\x,-0.1cm);
}

\draw[decorate,decoration={random steps,segment length=0.5mm,amplitude=0.1mm},color=red, fill=gray!50!white]
	(1.76,2.2)--(10.8,2.2)--(10.8,2.1)--(1.76,2.1)--cycle;
\draw[decorate,decoration={random steps,segment length=0.5mm,amplitude=0.1mm},color=white,fill=gray!50!white, color=gray!50!white]
	(1.76,2)--(10.8,2)--(10.8,2.13)--(1.76,2.13)--cycle;
	
\draw[decorate,decoration={random steps,segment length=0.5mm,amplitude=0.1mm},color=red, fill=gray!50!white]
	(3.78,4.25)--(8.76,4.25)--(8.76,4.22)--(3.78,4.22)--cycle;

\draw[decorate,decoration={random steps,segment length=0.5mm,amplitude=0.1mm},color=white,fill=gray!50!white, color=gray!50!white]
	(3.78,4.2)--(8.76,4.2)--(8.76,4.23)--(3.78,4.23)--cycle;

\draw[decorate,decoration={random steps,segment length=0.5mm,amplitude=0.1mm},color=red, fill=gray!50!white]
	(5.76,6.3)--(6.76,6.3)--(6.76,6.27)--(5.76,6.27)--cycle;

\draw[decorate,decoration={random steps,segment length=0.5mm,amplitude=0.1mm},color=white,fill=gray!50!white, color=gray!50!white]
	(5.76,6.23)--(6.76,6.23)--(6.76,6.28)--(5.76,6.28)--cycle;

\draw[thick] (.2,0)--(.2,1);
\draw[thick] (.74,1)--(.74,1.5) node at (.74,.95) {\color{red}{$\bullet$}};
\draw[thick] (1.2,1.5)--(1.2,2) node at (1.2,1.4) {\color{red}{$\bullet$}};

\draw[thick] (1.76,2)--(1.76,2.5) node at (1.76,1.9) {\color{red}{$\bullet$}};
\draw[thick] (2.4,2.5)--(2.4,3) node at (2.2,2.45) {\color{red}{$\bullet$}};
\draw[thick] (2.86,3 )--(2.86 ,3.5 ) node at (2.76,2.9) {\color{red}{$\bullet$}};
\draw[thick] (3.24,3.5 )--(3.24 ,4) node at (3.24,3.45) {\color{red}{$\bullet$}};

\draw[thick] (3.78,4)--(3.78,4.5) node at (3.78,3.9) {\color{red}{$\bullet$}};
\draw[thick] (4.38,4.5)--(4.38,5) node at (4.26,4.5) {\color{red}{$\bullet$}};
\draw[thick] (4.84,5)--(4.84,5.5) node at (4.74,5) {\color{red}{$\bullet$}};
\draw[thick] (5.24,5.5)--(5.24,6) node at (5.24, 5.5) {\color{red}{$\bullet$}};

\draw[thick] (5.76,6)--(5.76,6.5) node at (5.76,6) {\color{red}{$\bullet$}};
\draw[thick] (11.8,0)--(11.8,1.25);
\draw[thick] (11.26,1.25)--(11.26,1.75) node at (11.26,1.25) {\color{red}{$\bullet$}};
\draw[thick] (10.8,1.75)--(10.8,2.25)node at (10.8,1.75) {\color{red}{$\bullet$}};

\draw[thick] (10.24,2.25)--(10.24,2.75);
\draw[fill=red,color=red] (10.24,2.2) circle (0.05cm);
\draw[thick] (10,2.75)--(10,3.25) node at (9.8,2.75) {\color{red}{$\bullet$}};
\draw[thick] (9.24,3.25 )--(9.24 ,3.75 ) node at (9.24,3.25) {\color{red}{$\bullet$}};
\draw[thick] (8.76,3.75 )--(8.76,4.3) node at (8.76,3.75) {\color{red}{$\bullet$}};

\draw[thick] (8.22,4.25)--(12-3.78,4.75);
\draw[fill=red,color=red] (8.22,4.25) circle (0.05cm);
\draw[thick] (7.9,4.75)--(7.9,5.25) node at (7.74,4.75) {\color{red}{$\bullet$}};
\draw[thick] (12-4.74,5.25)--(12-4.74,5.75) node at (7.26,5.25) {\color{red}{$\bullet$}};
\draw[thick] (6.76,5.75)--(12-5.24,6.35) node at (6.76,5.75) {\color{red}{$\bullet$}};

\draw[thick] (12-5.76,6.35)--(6.24,6.75);
\draw[fill=red,color=red] (6.24,6.3) circle (0.05cm);

\end{tikzpicture}\caption{The sweet event}\label{sweet event}
\vspace{.5cm}
\parbox{13.3cm}{
\footnotesize{Here $T=3.2, K=12$ and $a\in\intervallefo{0}{1}$. 
The marks of $\pi_M$ (matches) are represented as $\color{red}{\bullet}$'s. The filled zones represent macroscopic zones ($Z^{\la,\pi}_{\al t}(x)=1$). In the rest of the space, we always have $Z^{\la,\pi}_{\al t}(x)<1$. The plain vertical segments represent vacants sites i.e. sites where no seed falls after being propagated. Remark that sometimes the vacant site is above the match (that is in an interval with length $2\kl$) and sometimes it is next to the match (that is an $i^g$ or an $i^d$).
}}
\end{minipage}}
\end{figure}


\end{proof}

\section{Localization of the result}
In this section, we localize Theorems \ref{converge} and \ref{theoinfty}.
\subsection{Localization in the regime $\cR(p)$}
The following Theorem will be proved in Section \ref{convergence in the regime p}.
\begin{theo}\label{converge restriction}
Let $A>0$ and $p\geq0$ be fixed. Consider for each $\lambda\in\intervalleof{0}{1}, \pi\geq1$, the process
$(Z^{\la,\pi,A}_t(x),D_t^{\la,\pi,A})_{t\geq0,x\in\rr}$ associated with the
$(\lambda,\pi,A)-$FFP. Consider also the $A-$LFFP$(p)$
$(Z_t^{A}(x),H_t^{A}(x),F_t^{A}(x))_{t\geq
0,x\in \rr}$ and the associated $(D_t^A(x))_{t\geq0, x\in\rr}$. We assume that $\la\to0$ and $\pi\to\infty$ in the regime $\cR(p)$, for some $p\in\intervallefo{0}{+\infty}$.
\begin{enumerate}
        \item For any $T>0$, any finite subset $\{x_1,\dots,x_q\}\subset\rr$,
$(Z^{\la,\pi,A}_t(x_i),D_t^{\la,\pi,A}(x_i))_{t\in\intervalleff{0}{T},i=1,\dots,q}$ goes in law to
$(Z_t^{A}(x_i),D_t^A(x_i))_{t\in\intervalleff{0}{T},i=1,\dots,q}$ in
$\dd(\intervalleff{0}{T},\rr\times(\cI\cup\{\emptyset\}))$. Here $\dd(\intervalleff{0}{T},\rr\times(\cI\cup\{\emptyset\}))$ is endowed with
the distance $\bd_T$.
        \item For any subset
$\{(x_1,t_1),\dots,(x_q,t_q)\}\subset\rr\times\intervallefo{0}{\infty}$,
$(Z^{\la,\pi,A}_{t_i}(x_i),D_{t_i}^{\la,\pi,A}(x_i))_{i=1,\dots,q}$ goes in law to
$(Z_{t_i}^{A}(x_i),D_{t_i}^A(x_i))_{i=1,\dots,q}$ in $(\rr\times(\cI\cup\{\emptyset\}))^q$.
Here $\cI\cup\{\emptyset\}$ is endowed with $\bdelta$.
        \item\label{estim cluster size} For all $t>0$, 
\[\left( \frac{\log(|C(\eta_{\al t}^{\lambda,\pi,A},0)|)}{\log(1/\la)} \indiq{|C(\eta_{\al
t}^{\lambda,\pi,A},0)|\geq1}\right)\wedge 1\]
goes in law to $Z_t^{A}(0)$.
\end{enumerate}
\end{theo}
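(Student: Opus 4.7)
The plan is to couple the $(\la,\pi,A)$-FFP with the $A$-LFFP$(p)$ through a common Poisson measure $\pi_M$ of matches (setting $N^M_t(i)=\pi_M(i_\la\times\intervalleff{0}{t/\al})$, as in the proof of Proposition \ref{restriction limite loc}), the discrete process being also driven by independent seed and propagation Poisson processes $N^S,N^P$. I would then run the perfect-simulation algorithm of Proposition \ref{algo} to enumerate chronologically the events $T_q^k$ of the $A$-LFFP$(p)$ on $\intervalleff{0}{T}$ — that is, the marks of $\pi_M|_{\intervalleff{-A}{A}\times\intervalleff{0}{T}}$ (a.s.\ finite), together with the fire-meeting and barrier-stopping points in the sets $\cE_{T_q^k}^{T_q^{k+1}}$. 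The aim is to show, by induction over these finitely many events, that on a good event $\tOmega$ of probability tending to $1$, the discrete process realizes the same dynamics up to the small temporal fluctuations forced by non-instantaneous propagation.

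The good event $\tOmega$ is the intersection of three types of conditions verified at each of the events above. First, in each empty window between consecutive events and at every $x$ with $Z_t^A(x)<1$, a Bernoulli concentration bound on the $2\ml+1$ sites surrounding $\lfloor\nl x\rfloor$ forces $K_t^{\la,\pi,A}(x)$ to be close to $1-\la^{\,t-\sigma(x)}$ and hence $Z_t^{\la,\pi,A}(x)$ close to $t-\sigma(x)$, where $\sigma(x)$ denotes the last time $x$ was freshly emptied. Second, for each match $(X_k,T_k)$ falling on a site with $Z_{T_k-}^A(X_k)<1$ (microscopic fire), Lemma \ref{propagation lemma p} (resp.\ Lemma \ref{propagation lemma 0} when $p=0$) together with the analysis in Subsection \ref{application ffp} implies that on the event $\Omega_{\la,\pi}^{P,T}(X_k/\nl,T_k/\al)$ the cluster at $X_k$ is destroyed on the negligible timescale $\kappa_{\la,\pi}^0=\ml/(\al\pi)+\e_\la$ without escaping the $\ml$-neighbourhood of $\lfloor\nl X_k\rfloor$. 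Third, for each match with $Z_{T_k-}^A(X_k)=1$ (macroscopic fire), the same lemma locates the left and right fronts within $\pm\al\e_\la$ of the trajectories predicted by speed $1/p$, so that the site $\lfloor\nl y\rfloor$ is reached at a rescaled time within $\e_\la$ of $T_k+p|y-X_k|$; analogous control holds at the fire-meeting and barrier-stopping events. A union bound over the a.s.\ finite collection of events gives $\proba{\tOmega}\to 1$ in the regime $\cR(p)$.

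On $\tOmega$, the two coupled processes agree outside a union of short time windows of total Lebesgue measure $O(\kappa_{\la,\pi}^0+\e_\la)\to 0$, and outside spatial neighbourhoods (after rescaling) of size $O(\la^{1-z})$ around microscopic clusters, which yields the $\bd_T$-convergence of (1). For (2), the finitely many chosen times $t_i$ a.s.\ avoid the finitely many event times of the limit process, so the coupling is exact at these instants and convergence in probability of the joint marginal follows. For (3), the identity $Z_t^{\la,\pi,A}(0)=(-\log(1-K_t^{\la,\pi,A}(0))/\al)\wedge 1$ combines with the standard geometric cluster-size estimate for Bernoulli occupations: on $\tOmega$, either $Z_t^{\la,\pi,A}(0)=1$ and $|C(\eta^{\la,\pi,A}_{\al t},0)|\geq\ml\to\infty$, so $\log|C|/\al\wedge 1=1$; or $Z_t^{\la,\pi,A}(0)=z<1$ and the cluster at $0$ is the intersection of two independent geometric runs of parameter $\la^z$, hence of size $\sim \la^{-z}$ with high probability, so $\log|C|/\al\to z$ in probability. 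Thus (3) reduces to (2).

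The main obstacle will be the careful bookkeeping at macroscopic-fire events. Unlike in the limit process, where such a fire propagates at exact speed $1/p$, the discrete fire exhibits temporal fluctuations of order $\al\e_\la$ and is accompanied by brief sparks; while a macroscopic cluster is being traversed, one must rule out that a second match falls inside it, that a stray spark prematurely destroys a barrier, or that a fire reaches a fire-meeting or barrier-stopping location at the wrong instant. The buffers $\e_\la=\ba_\la^{-3}$ built into Lemmas \ref{propagation lemma p} and \ref{propagation lemma 0} provide just enough slack for each of these to be a rare event, and the a.s.\ finiteness of the number of events in $\intervalleff{-A}{A}\times\intervalleff{0}{T}$ allows one to close the union bound.
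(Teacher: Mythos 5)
Your overall strategy---couple the discrete and limit processes through a common Poisson measure $\pi_M$ of matches, enumerate chronologically the finitely many events $T_q^k$ of the $A$-LFFP$(p)$ on $\intervalleff{0}{T}$, build a good event of high probability using the propagation lemmas, and close the argument by a union bound---is indeed the one the paper uses, and the parts about local density concentration and fire-front positioning are correct as far as they go. But there is a real gap: your good event $\tOmega$ contains nothing that tracks the \emph{barrier} variable $H_t^A(x)$ of the limit process, and the density bound you rely on cannot serve as a substitute. When a microscopic fire falls on $X_k$ at time $T_k$ with $Z_{T_k-}^A(X_k)=z<1$, the limit process records that $X_k$ is a barrier for a time interval of length exactly $z$ (i.e.\ $H_t^A(X_k)>0$ on $\intervallefo{T_k}{T_k+z}$), while $Z_t^A$ near $X_k$ may well equal $1$ during that interval. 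In the discrete process this corresponds to a single lingering vacant site in the burnt patch, which disconnects the cluster even though the local density $K_t^{\la,\pi,A}$ is already indistinguishable from $1$ at the Bernoulli-concentration scale. The paper isolates this as the height-of-barrier statement (Lemma \ref{micro fire p}, or Lemma \ref{micro fire 0} for $p=0$): the random re-filling delay $\Theta_{\cM}^{\la,\pi}$ of the destroyed component concentrates at $z$. Your proposal never proves, or even states, any analogue of this.

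The second missing ingredient is the persistence (``ping-pong'') mechanism of Lemma \ref{persisplem} (resp.\ Lemma \ref{persisplem0}). After $H_t^A(X_k)$ has decayed to $0$, the point $X_k$ can remain a cluster boundary in the limit process because alternating macroscopic fires keep resetting $Z_t^A$ on either side before the two halves can reconnect. In the discrete process this requires showing that the vacant site near $\lfloor\nl X_k\rfloor$ is continually regenerated by those fires; this is a delicate, multi-step induction over the ping-pong sequence, and it is precisely what prevents a macroscopic fire from ``leaking through'' at the wrong place. You flag this kind of interaction as ``the main obstacle'' and assert that the buffers $\e_\la$ and a union bound take care of it, but $\e_\la$ only controls the position and duration of a single fire front; it says nothing about the microscopic-scale bookkeeping of a single vacant site surviving many successive fires. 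Without the height-of-barrier lemma and the persistence lemma---and a coupling fine enough to apply them locally (the paper uses fresh i.i.d.\ copies $(N^{S,q},N^{P,q})$ attached to each match window $[X_q]_{\la,\pi}$ precisely so these events can be estimated conditionally on $\pi_M$)---the inductive propagation of your good event from $T_q^k$ to $T_q^{k+1}$ cannot close, and the $\bd_T$ and marginal convergences in (1) and (2) do not follow.
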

 Assuming for a moment that this theorem holds true, we conclude the proof of Theorem \ref{converge}.
 
\begin{proof}[Proof of Theorem \ref{converge}]
Let us first 
prove 1. Consider a continuous bounded function 
$\Psi:\dd([0,T], \rr\times\cI\cup\{\emptyset\})^q \mapsto \rr$. 
We have to prove that $G_{\la,\pi}(\Psi)$ tends to $0$ when $\la\to0$ and $\pi\to\infty$ in the regime $\cR(p)$, where
\[G_{\la,\pi}(\Psi)=
\E\left[\Psi\left((Z^{\la,\pi}_t(x_i),D^{\la,\pi}_t(x_i))_{t\in [0,T],i=1,\dots,q} \right)
\right]-
\E\left[\Psi\left((Z_t(x_i),D_t(x_i))_{t\in [0,T],i=1,\dots,q} \right)
\right].\]
Using now Propositions \ref{restriction limite} and \ref{restriction limite loc}, we observe that for any
$A>2 \max_{i=1,\dots,q} |x_i|$, there holds that for all $(\la,\pi)$ sufficiently close to the regime $\cR(p)$,
\begin{align*}
&|G_{\la,\pi}(\Psi)|\\
\leq& 2||\Psi||_\infty\proba{(Z^{\la,\pi,A}_t(x),D^{\la,\pi,A}_t(x))_{t\in 
[0,T],x\in [-A/2,A/2]} \neq (Z^{\la,\pi}_t(x),D^{\la,\pi}_t(x))_{t\in 
[0,T],x\in [-A/2,A/2]}}\\
&+2||\Psi||_\infty \proba{(Z^{A}_t(x),D^{A}_t(x))_{t\in 
[0,T],x\in [-A/2,A/2]} \ne (Z_t(x),D_t(x))_{t\in 
[0,T],x\in [-A/2,A/2]}}\\
&+\left|\E\left[\Psi\left((Z^{\la,\pi,A}_t(x_i),D^{\la,\pi,A}_t(x_i))_{t\in 
[0,T],i=1,\dots,q} \right) \right]-
\E\left[\Psi\left((Z_t^{A}(x_i),D_t^A(x_i))_{t\in [0,T],i=1,\dots,q} \right)
\right]\right|\\
\leq& 4 ||\Psi||_\infty C_T e^{-\alpha_T A}\\
&+\left|\E\left[\Psi\left((Z^{\la,\pi,A}_t(x_i),D^{\la,\pi,A}_t(x_i))_{t\in 
[0,T],i=1,\dots,q} \right) \right]-
\E\left[\Psi\left((Z_t^{A}(x_i),D_t^A(x_i))_{t\in [0,T],i=1,\dots,q} \right)
\right]\right|.
\end{align*}
Thus Proposition \ref{converge restriction}-(1) implies that $|G_{\la,\pi}(\Psi)|
\leq 5 ||\Psi||_\infty C_T e^{-\alpha_T A}$ for all $(\la,\pi)$ sufficiently close to the regime $\cR(p)$. We conclude by making $A$ tend
to infinity.

Point (2) is checked similarly. The proof of (3) is also similar,
since $D^{\la,\pi}_t(0)=D^{\la,\pi,A}_t(0)$ implies that
$C(\eta^{\la,\pi}_{\al t},0)= C_A(\eta^{\la,\pi,A}_{\al t},0)$.
\end{proof}

\subsection{Localization in the regime $\cR(\infty,z_0)$}
The following Theorem will be proved in the next Section.
\begin{theo}\label{theoinfty loc}
Let $z_0\in\intervalleff{0}{1}$ and $A>0$. Consider for each $\la\in\intervalleof{0}{1}$ and $\pi\geq1$ the process $(D_t^{\la,\pi,A}(x))_{t\geq0,x\in\rr}$ associated with the $(\la,\pi,A)-$FFP. Consider also the LFFP$(\infty,z_0)$ $(Y_t(x))_{t\geq0,x\in\rr}$ and the associated $(D_t^A(x))_{t\geq0,x\in\rr}$ process. We assume that $\la\to0$ and $\pi\to\infty$ in the slow regime $\cR(\infty,z_0)$.
\begin{enumerate}
	\item For any $T>0$, any finite subset $\{x_1,\dots,x_q\}\subset\rr$, $(D_t^{\la,\pi,A}(x_i))_{t\in\intervalleff{0}{T},i=1,\dots,q}$ goes in law to $(D_t^A(x_i))_{t\in\intervalleff{0}{T}, i=1,\dots,q}$ in $\dd(\intervalleff{0}{T},\cI)^q$. Here $\dd(\intervalleff{0}{T},\cI)^q$ is endowed with $\bdelta_T$.
	\item For any finite subset $\{(x_1,t_1),\dots,(x_q,t_q)\}\subset\rr\times\intervallefo{0}{\infty}$, $(D_{t_i}^{\la,\pi,A}(x_i))_{i=1,\dots,q}$ goes in law to $(D_{t_i}^A(x_i))_{t\in\intervalleff{0}{T}, i=1,\dots,q}$ in $\cI^q$, $\cI$ being endowed with $\bdelta$.
\end{enumerate}
\end{theo}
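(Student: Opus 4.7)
The plan is a coupling argument. Construct the $(\la,\pi,A)-$FFP from the Poisson processes $N^S(i),N^M(i),N^P(i)$ of Definition \ref{definition FFP} (restricted to $I_A^\la$), and encode the match processes through a Poisson measure $\pi_M$ on $\intervalleff{-A}{A}\times\intervallefo{0}{\infty}$ with intensity $\diff x\diff t$ via $N^M_t(i)=\pi_M(i_\la\times\intervalleff{0}{t/\al})$. Use this same $\pi_M$ (which is independent of $N^S$ and $N^P$) to drive the LFFP$(\infty,z_0)$. The goal is to exhibit an event of probability close to $1$ on which $D_t^{\la,\pi,A}(x_i)$ approximates $D_t^A(x_i)$ uniformly over the finite collection, both pointwise for (2) and in $\bdelta_T$ for (1).

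Fix $T>0$ and $\delta>0$, and condition on $\pi_M$. Almost surely, the marks of $\pi_M$ in $\intervalleff{-A}{A}\times\intervalleff{0}{T}$ form a finite list $(X_k,T_k)_{k=1,\dots,N}$ with all $X_k$ distinct and no $T_k$ equal to $z_0$ or to~$1$. Choose $\e>0$ smaller than the minimum spacing of the $X_k$'s and such that $|T_k-z_0|>\e$ for every $k$. For each $k$ with $T_k<z_0$, pick $z^k\in\intervalleoo{T_k}{z_0}$; for each $k$ with $T_k>z_0$, pick $\gamma^k\in\intervalleoo{0}{\e/3}$. Introduce the event
\[\Omega^P_{\la,\pi}=\bigcap_{k\colon T_k<z_0}\Omega^{P,z^k}_{\la,\pi}(X_k,T_k)\cap\bigcap_{k\colon T_k>z_0}\Omega^{P,T,\gamma^k}_{\la,\pi}(X_k,T_k)\]
from Definition \ref{definition2 application}. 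By Lemmas \ref{propagation lemma micro infty} and \ref{propagation lemma infty} together with a union bound over the a.s. finitely many marks, $\proba{\Omega^P_{\la,\pi}\mid\pi_M}\geq 1-\delta$ for all $(\la,\pi)$ sufficiently close to the regime $\cR(\infty,z_0)$.

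On $\Omega^P_{\la,\pi}$, combining the analyses in \textbf{Micro}$(\infty,z_0)$ and \textbf{Macro}$(\infty,z_0)$ of Subsection \ref{application ffp} with standard Bernoulli estimates on $N^S$, we verify: (a) between matches each site becomes occupied independently at rate $1$, so by time $\al$ every site of $\intervalleff{-A}{A}$ outside a small neighborhood of the $X_k$'s is occupied, producing macroscopic clusters exactly as in Subsubsection \ref{formal dyn infty}; (b) a match $(X_k,T_k)$ with $T_k<z_0$ destroys its occupied neighborhood (of size $\simeq\la^{-z^k}$) in time $\simeq 1/(\la^{z^k}\al\pi)\ll\al$ and leaves $X_k$ vacant for a further time close to $\al T_k$, reproducing the barrier $Y_s(X_k)\simeq\max(0,2T_k-s)$; (c) a match $(X_k,T_k)$ with $T_k>z_0$ ignites a fire confined to $\intervalleff{X_k-\gamma^k}{X_k+\gamma^k}$ that keeps burning throughout $\intervalleff{T_k}{T}$, reproducing $Y_s(X_k)=1$. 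These three facts force $D_{t_i}^{\la,\pi,A}(x_i)$ to agree with $D_{t_i}^A(x_i)$ up to $O(\e+\max_k\gamma^k+\la^{-z^k}/\nl)$-endpoint perturbations, giving (2) after letting $\delta,\e\to0$. For (1), the same analysis shows that $\bdelta(D_t^{\la,\pi,A}(x_i),D_t^A(x_i))$ is small outside a set of times of Lebesgue measure $O(\e+\delta)$ (corresponding to the fast filling-up transitions and the short propagation windows), on which the integrand is uniformly bounded by $2A$, hence $\bdelta_T$ is small in probability.

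The main obstacle is coordinating the various propagation events simultaneously: the events $\Omega^{P,z^k}_{\la,\pi}(X_k,T_k)$ implicitly assume that no other match and no intruding fire perturbs the destruction of the cluster of $X_k$ during the short window $\al\kappa^{z^k}_{\la,\pi}$, which is why the spatial spacing lower bound $\e$ and the confinement estimate of Lemma \ref{propagation lemma infty} (with small $\gamma^k$) are used in tandem. A secondary subtlety is ensuring the local-density filling-up is uniform across the box by time $\al$: this uses independence of the seed processes at distinct sites, combined with the fact that fires from macroscopic matches $(X_k,T_k>z_0)$ remain trapped in their respective $\gamma^k$-neighborhoods and thus cannot destroy the filling-up elsewhere. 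Once these ingredients are assembled, combining everything with Proposition \ref{restriction limite loc} concludes the proof.
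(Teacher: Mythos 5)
Your approach — couple the discrete process to the limit via a common Poisson measure $\pi_M$, define a high-probability event collecting the propagation lemmas, and invoke the Micro/Macro analysis of Subsection \ref{application ffp} — does mirror the paper's strategy. But there is a genuine gap in step (b): you assert that a microscopic match $(X_k,T_k)$ with $T_k<z_0$ ``leaves $X_k$ vacant for a further time close to $\al T_k$,'' and treat this as a direct consequence of \textbf{Micro}$(\infty,z_0)$ plus Bernoulli estimates. It is not. \textbf{Micro}$(\infty,z_0)$ only localizes the burn and identifies the destroyed component $C^P$ up to time $\al(T_k+\kappa^{z^k}_{\la,\pi})$; it says nothing about how long the destroyed zone takes to \emph{fully regenerate}. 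That regeneration time is a nontrivial random quantity (the last site of the random set $C^P$ to receive a fresh seed), and its concentration around $\al T_k$ is exactly the content of Lemma \ref{micro fire infty} (``Height of the barrier''), whose proof requires sandwiching $C^P$ between two instantaneous-destruction clusters and an explicit computation. Your proposal neither proves this concentration nor constructs an analogue of the paper's event $\Omega^{S,P}_3(\gamma,\la,\pi)$, which is what actually lets one control how long the barrier persists in the discrete process. Without this ingredient, the comparison $D^{\la,\pi,A}_t(x_i)\approx D^A_t(x_i)$ at times $t\in(T_k,2T_k)$ and $t>2T_k$ is unsupported.

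A secondary and smaller issue: your spacing event only enforces that the $X_k$'s are separated and $|T_k-z_0|>\e$, but the limit process $D^A_t(x_0)$ also jumps at times of the form $2T_k$ (when barriers collapse) and at $t=1$ (when clusters merge). The paper's event $\Omega_M^0(\alpha)$ additionally separates the observation times $t_0$ from $\cT_M\cup\cS_M\cup\{0,z_0\}$ and from $\{1\}$; without that, the pointwise comparison at a fixed $t_0$ near such a jump can fail. Finally, for the reduction of statement (1) to (2), the paper simply uses dominated convergence twice (the integrand is bounded by $4A$); your argument about ``small Lebesgue measure $O(\e+\delta)$ of bad times'' conflates a probability bound $\delta$ with a Lebesgue measure and is unnecessary once one uses dominated convergence.
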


\begin{proof}[Proof of Theorem \ref{theoinfty}]
The proof easily follows from Proposition \ref{restriction limite infini}, Proposition \ref{restriction limite loc} and Theorem \ref{theoinfty loc}, as in the proof above.
\end{proof}

\section{Convergence in the regime $\cR(\infty,z_0)$}
The aim of this section is to prove Theorem \ref{theoinfty loc}. We thus fix the parameters $A>0$ and $T>0$.

We recall that $\al=\log(1/\la)$, $\nl=\lfloor 1/(\la\al)\rfloor$, $\ml=\lfloor 1/(\la\ba_\la^ 2)\rfloor$, $\e_\la=1/\ba_\la^3$ and that
\begin{align*}
A_\la &=\lfloor A\nl\rfloor,\\
I^\la_A &= \intervalleentier{-A_\la}{A_\la}.
\end{align*}
For $x\in\rr$, we define
\[(x)_\la =\intervalleentier{\lfloor\nl x\rfloor-\ml}{\lfloor\nl x\rfloor+\ml}.\]
For $\alpha\in\intervalleoo{0}{1}$, we also define
\begin{align*}
\mla &= \left\lfloor\frac{\alpha}{\la^{\alpha+(1-\alpha)z_0}\al}\right\rfloor,\\
(x)_\la^\alpha &=\intervalleentier{\lfloor\nl x\rfloor-\mla}{\lfloor\nl x\rfloor+\mla}.
\end{align*}
Observe that $\mla\leq \lfloor\alpha\nl\rfloor$ for all $z_0\in\intervalleff{0}{1}$.

\subsection{Occupation of vacant zone}
We start with some easy estimates.
\begin{lem}\label{speed infty}
Consider a family of i.i.d. Poisson processes $(N^S_t(i))_{t\geq0,i\in\zz}$. Let $0<z<1$, $\alpha\in\intervalleoo{0}{1}$ and $a<b$.
\begin{enumerate}
        \item For $t<z$, $\proba{\forall i \in
\intervalleentier{\lfloor a\la^{-z}\rfloor}{\lfloor b\la^{-z}\rfloor}, N^S_{\al
t}(i)>0}\xrightarrow[\la\to0]{}0$.
        \item For $t>z$, $\proba{\forall i \in
\intervalleentier{\lfloor a\la^{-z}\rfloor}{\lfloor b\la^{-z}\rfloor},
N^S_{\al t}(i)>0}\xrightarrow[\la\to0]{}1$.
        \item For $t\geq1$, $\proba{\forall i \in
\intervalleentier{\lfloor a\nl\rfloor}{\lfloor b\nl\rfloor}, N^S_{\al
t}(i)>0}\xrightarrow[\la\to0]{}1$.
        \item For $t<1$, $\proba{\forall i \in
\intervalleentier{\lfloor a\ml\rfloor}{\lfloor b\ml\rfloor}, N^S_{\al
t}(i)>0}\xrightarrow[\la\to0]{}0$.
	\item For $t>z_0+\alpha$, $\proba{\forall i \in
\intervalleentier{-\lfloor a\mla\rfloor}{\lfloor b\mla\rfloor}, N^S_{\al
t}(i)>0}\xrightarrow[\la\to0]{}1$.
\end{enumerate}
\end{lem}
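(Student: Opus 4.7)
The proof plan is straightforward: all five statements rest on the same one-line observation, namely that by independence of the Poisson processes and stationarity of their increments, for any finite interval $I\subset\zz$,
\[\proba{\forall i\in I,\ N^S_{\al t}(i)>0}=\bigl(1-e^{-\al t}\bigr)^{|I|}=\bigl(1-\la^{t}\bigr)^{|I|}.\]
Thus the only thing I need to decide for each of the five cases is the limit of $|I|\la^{t}$: indeed, writing $(1-\la^{t})^{|I|}=\exp\bigl(|I|\log(1-\la^{t})\bigr)$ and using $\log(1-x)\sim -x$ as $x\to 0$, the probability tends to $1$ when $|I|\la^{t}\to 0$ and to $0$ when $|I|\la^{t}\to\infty$.

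I would then just dispatch each item. For (1) and (2), $|I|\simeq (b-a)\la^{-z}$, hence $|I|\la^{t}\simeq(b-a)\la^{t-z}$, which tends to $\infty$ if $t<z$ and to $0$ if $t>z$. For (3), $|I|\simeq (b-a)\nl\simeq(b-a)/(\la\al)$, so $|I|\la^{t}\simeq (b-a)\la^{t-1}/\al$; when $t\geq 1$ this goes to $0$ (even at $t=1$, where it is of order $1/\al$). For (4), $|I|\simeq (b-a)\ml\simeq(b-a)/(\la\al^{2})$ gives $|I|\la^{t}\simeq (b-a)\la^{t-1}/\al^{2}\to\infty$ for $t<1$. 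Finally, for (5), recall $\mla=\lfloor\alpha/(\la^{\alpha+(1-\alpha)z_0}\al)\rfloor$, so $|I|\simeq (a+b)\mla$ yields $|I|\la^{t}\simeq C\la^{t-\alpha-(1-\alpha)z_0}/\al$; since $\alpha+(1-\alpha)z_0=z_0+\alpha(1-z_0)\leq z_0+\alpha$, the hypothesis $t>z_0+\alpha$ forces the exponent $t-\alpha-(1-\alpha)z_0>0$, so the quantity tends to $0$.

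There is essentially no obstacle: the only tiny point of care is the boundary case $t=1$ in (3), where $\la^{t-1}=1$ and one needs the extra $1/\al$ factor coming from $\nl\simeq 1/(\la\al)$ (as opposed to $1/\la$) to push $|I|\la^{t}$ to $0$; everything else is clean exponential decay or blow-up. I would present it as a single half-page proof with the identity $(1-\la^{t})^{|I|}$ stated once and the five cases handled as short bullet-style paragraphs.
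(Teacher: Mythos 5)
Your proposal is correct and takes essentially the same approach as the paper: both observe that the probability equals $(1-\la^t)^{|I|}\simeq e^{-|I|\la^t}$ and then classify the limit of $|I|\la^t$ case by case with the exact same exponent comparisons (including the $1/\al$ factor needed at the boundary $t=1$ in part 3). The only discrepancy is a harmless sign typo in the paper's displayed formula, which you correct.
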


\begin{proof}
To check Lemma \ref{speed infty}, observe that, for $k_\la\xrightarrow[\la\to0]{}\infty$,
\begin{equation}\label{oq}
\proba{\forall i \in
\intervalleentier{-\lfloor ak_\la\rfloor}{\lfloor bk_\la\rfloor}, N^S_{\al
t}(i)>0}\simeq(1-e^ {\al t})^{(b-a)k_\la}\simeq e^{-(b-a)k_\la \la^t}.
\end{equation}
In order to prove 1 and 2, use \eqref{oq} with $k_\la=\la^{-z}$ and observe that
\[k_\la\la^t=\la^{-z}\la^ t\xrightarrow[\la\to0]{}\begin{cases}
\infty &\text{if } t<z,\\
0 &\text{if } t>z.
\end{cases}\]
To prove 3, use \eqref{oq} with $k_\la=\nl$ and observe that, if $t\geq1$, $\nl \la^ t\simeq\la^ {t-1}/\al$ tends to $0$ when $\la\to0$. In the same way, 4 can be proved using $k_\la=\ml$ and observing that, if $t<1$, $\ml\la^ {t}\simeq\la^{t-1}/\ba_\la^2$ tends to $\infty$ when $\la\to0$. 

Finally, prove 5 with \eqref{oq} and using $k_\la=\mla$ and observing that $\mla\la^ {t}\simeq\frac{\alpha}{\al}\la^ {t-\alpha-(1-\alpha)z_0}$ tends to $0$ when $\la\to0$ as soon as $t-\alpha-(1-\alpha)z_0>0$ (in particular, for $t\geq z_0+\alpha>\alpha+(1-\alpha)z_0)$).
\end{proof}

\subsection{Height of the barrier}\label{height infty}
We describe here the time needed for a destroyed microscopic cluster to be
regenerated. Assume that a match
falls in the site $0$ at some time $\al t_1\in\intervalleoo{0}{\al z_0}$. As seen in {\bf Micro$(\infty,z_0)$} in Subsection \ref{application ffp}, on a suitable event, the $(\la,\pi)-$FFP is well understood around $0$ during $\intervalleff{\al t_1}{\al(t_1+\kappa_{\la,\pi}^z)}$, for some $0<z<z_0$ (it can be expressed using the sequence $(T^1_i)_{i\in\zz}$). We then denote by $\Theta^{\la,\pi}_{t_1}$ the delay needed for the destroyed cluster to be fully regenerated (after rescaling). We show that $\Theta^{\la,\pi}_{t_1}\simeq t_1$.

\begin{lem}\label{micro fire infty}
Consider two Poisson processes $(N_t^S(i))_{t\geq0,i\in\zz}$ and
$(N_t^P(i))_{t\geq0,i\in\zz}$ with respective rates $1$ and $\pi$, all this
processes being independent. Let $0<t_1<z_0$. We call $(T_i^ 1)_{i\in\zz}$ the burning times of the propagation process ignited in $0$ at time $\al t_1$, recall Definition \ref{definition1 application}.

Put, for all $t\geq0$ and $i\in\zz$, $\zeta_{t}^{\la,\pi}(i)=\min(N_{t}^S(i),1)$ and define 
\[C^P((\zeta_{t}^{\la,\pi}(i))_{t\geq0,i\in\zz},(0,t_1))=\intervalleentier{i^g}{i^d},\]
recall Definition \ref{def destroyed comp}.

We define a process $(\zeta^{\la,\pi}_{t_1,t}(i))_{t\in\intervalleff{0}{T},i\in\zz}$ in the following way (which is inspired by {\bf Micro$(\infty,z_0)$} in Subsection \ref{application ffp}): 
 we put, for all $i\in C^P((\zeta^{\la,\pi}_t(i))_{t\geq0,i\in\zz},(0,t_1))$
\[\zeta^{\la,\pi}_{t_1,t}(i)=\min(N_{\al t}^S(i),1)\text{ for }t\in\intervallefo{0}{t_1+(T^1_i/\al)}\]
and
\[\zeta^{\la,\pi}_{t_1,t}(i)=2\begin{cases}
\text{for }t\in\intervallefo{t_1+(T^1_i/\al)}{t_1+(T^1_{i+1}/\al)} &\text{if }i\geq0,\\
\text{for }t\in\intervallefo{t_1+(T^1_i/\al)}{t_1+(T^1_{i-1}/\al)} &\text{if }i\leq0\\
\end{cases}
\]
and
\[\zeta^{\la,\pi}_{t_1,t}(i)=\begin{cases}
\min(N_{\al (t+t_1)}^S(i)-N_{\al t_1+T_{i+1}^1}^S(i),1) &\text{for }t\in\intervalleff{t_1+(T^1_{i+1}/\al)}{ T} \text{ if }i\geq0,\\
\min(N_{\al (t+t_1)}^S(i)-N_{\al t_1+T_{i-1}^1}^S(i),1) &\text{for }t\in\intervalleff{t_1+(T^1_{i-1}/\al)}{ T} \text{ if }i\leq0.
\end{cases}
\]
For all $i\not\in C^P((\zeta^{\la,\pi}_t(i))_{t\geq0,i\in\zz},(0,t_1))$ and all $t\in\intervalleff{0}{T}$, we put
\[\zeta^{\la,\pi}_{t_1,t}(i)=\min(N^S_{\al t}(i),1).\]

We finally define
\[\Theta_ {t_1}^{\la,\pi}=\inf\enstq{t>t_1}{\forall i\in C^P((\zeta_{t}^{\la,\pi}(i))_{t\geq0,i\in\zz},(0,t_1)), \zeta_{t_1,t}^{\la,\pi}(i)=1}.\]
Then, for all $\delta>0$, as $\la\to0$ and $\pi\to\infty$ in the regime $\cR(\infty,z_0)$, there holds
\[\lim_{\la ,\pi} \proba{
|\Theta^{\la,\pi}_{t_1} - t_1|\geq
\delta} = 0.\]
\end{lem}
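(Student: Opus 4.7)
My plan is to combine Lemma \ref{propagation lemma micro infty}, which controls the destruction of the cluster around $0$, with Lemma \ref{speed infty}, which controls both the size of this cluster and the maximum regeneration delay; everything reduces to a concentration estimate on the maximum of roughly $\la^{-t_1}$ independent exponential variables of rate $\al$ (in rescaled time), pinned by the heuristic $\log(\la^{-t_1})/\al=t_1$.

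First, I would fix $\eta\in(0,\delta/2)$ with $t_1+\eta<z_0$, pick $z\in(t_1+\eta,z_0)$, and work on the event $\Omega^{P,z}_{\la,\pi}$ of Lemma \ref{propagation lemma micro infty}, whose probability tends to~$1$. Since $\al\kappa_{\la,\pi}^z=1/(\la^z\pi)+\al\e_\la\to 0$ in $\cR(\infty,z_0)$ when $z<z_0$, the destruction of $C^P$ is completed within rescaled delay $\kappa_{\la,\pi}^z\to 0$; moreover the fronts reach $\pm\lfloor\la^{-z}\rfloor$ before this time, so $C^P$ coincides with the occupied cluster of $\zeta^{\la,\pi}$ around $0$ at time $\al t_1{-}$, up to the at most two boundary sites on which a seed may fall during the short burning phase (an event of probability $O(\al\kappa_{\la,\pi}^z)\to 0$).

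Each site $i\in\zz$ is independently occupied in $\zeta^{\la,\pi}_{\al t_1}$ with probability $1-\la^{t_1}$, so Lemma \ref{speed infty}-2 with $z=t_1-\eta$ gives $|C^P|\geq 2\lfloor\la^{-(t_1-\eta)}\rfloor$ with probability tending to~$1$, while Lemma \ref{speed infty}-1 with $z=t_1+\eta$ applied on each side of~$0$ gives $|C^P|\leq 2\lfloor\la^{-(t_1+\eta)}\rfloor$ with probability tending to~$1$. The regeneration of $i\in C^P$ is driven by the seed process $(N^S_s(i))_{s\geq\al t_1+T^1_{\cdot}}$, independent across $i$ and independent of both $|C^P|$ and the propagation processes; the rescaled delay $\tau_i$ before the first post-burning seed at site $i$ is therefore essentially exponential with rate~$\al$.

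A union bound then yields
\[\proba{\exists i\in C^P:\ \tau_i>t_1+\delta/2}\leq|C^P|\cdot e^{-\al(t_1+\delta/2-\kappa_{\la,\pi}^z)}\lesssim\la^{\delta/2-\eta-\kappa_{\la,\pi}^z}\to 0,\]
while independence gives
\[\proba{\forall i\in C^P:\ \tau_i\leq t_1-\delta/2}\leq(1-\la^{t_1-\delta/2})^{|C^P|}\leq\exp\bigl(-\la^{\eta-\delta/2}\bigr)\to 0,\]
both since $\eta<\delta/2$. Combining these with the sandwich for $|C^P|$ gives $\proba{|\Theta^{\la,\pi}_{t_1}-t_1|\geq\delta}\to 0$. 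The main subtlety is that $|C^P|$ is itself random: the proof hinges on freezing it between $\la^{-(t_1\pm\eta)}$ on a high-probability event before running the concentration estimates, and on the independence of the pre- and post-$\al t_1$ seed processes that allows both ends of the sandwich to be combined without further conditioning.
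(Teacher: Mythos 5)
Your proposal is correct in its essentials, but it takes a genuinely different route from the paper. The paper's proof (Step~1) computes \emph{exactly} the distribution function $h\mapsto\proba{\Xi^\la_{\tau_1}\le h}$ for the simplified instantaneous-propagation process, using a closed-form geometric series, and concludes by showing this probability $\to 1$ for $h>\tau_1$ and $\to 0$ for $h<\tau_1$; then (Step~2) it sandwiches the true destroyed component $C^P$ between $C(\vartheta^\la_{t_1},0)$ and $C(\vartheta^\la_{t_1+\kappa^z_{\la,\pi}},0)$ on the event $\tOmega^{P,z}_{\la,\pi}$, and uses that $t\mapsto t+\Xi^\la_t$ is nondecreasing and right-continuous to transfer the convergence from the simplified model to $\Theta^{\la,\pi}_{t_1}$. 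You instead freeze $\abs{C^P}$ in a window $[\la^{-(t_1-\eta)},\la^{-(t_1+\eta)}]$ via Lemma~\ref{speed infty} and run a concentration-of-maximum argument for i.i.d.\ exponentials. Both strategies work, and your independence argument is essentially sound: conditionally on the propagation process, the event $\{i\in C^P\}$ depends on $(N^S_s(i))_{s<\al t_1+T^1_i}$ while the regeneration clock at $i$ uses $(N^S_s(i))_{s>\al t_1+T^1_{i+1}}$, so the post-burning exponentials are conditionally i.i.d.\ given $C^P$. Two imprecisions worth flagging: (1)~the statement "$C^P$ coincides with $C(\vartheta^\la_{t_1-},0)$ up to at most two boundary sites" is not quite right — the difference $C(\vartheta^\la_{t_1+\kappa^z},0)\setminus C(\vartheta^\la_{t_1},0)$ can contain more than two sites; what is really needed (and what the paper uses) is the inclusion sandwich, but since you only need the order of magnitude $\la^{-(t_1\pm\eta)}$ for the exponent, the slack $\eta$ absorbs the discrepancy and the conclusion stands. (2)~Your upper bound on $\abs{C^P}$ applies Lemma~\ref{speed infty}-1 at time $t_1$, whereas the sandwich bound requires vacancy at time $t_1+\kappa^z_{\la,\pi}$; again, since $\kappa^z_{\la,\pi}\to 0$ and $z=t_1+\eta$ is strictly larger than $t_1$, the same estimate $e^{-\la^{t_1+\kappa^z-(t_1+\eta)}}\to 0$ goes through, but you should state it with $t=t_1+\kappa^z_{\la,\pi}$ rather than $t=t_1$. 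The paper's route buys a self-contained exact formula and a very short monotonicity transfer; yours buys a more generic argument at the cost of having to track the extra high-probability event $\abs{C^P}\in[\la^{-(t_1-\eta)},\la^{-(t_1+\eta)}]$ and the conditioning that makes the union bound and the complementary bound compatible.
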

The process $(\zeta^{\la,\pi}_{t_1,t}(i))_{i\in\zz,t\geq0}$ is closely related to the process observed in {\bf Micro$(\infty,z_0)$} in Subsection \ref{application ffp} (on a suitable event).
\begin{proof}
We divide the proof in two steps. We first define a simplest process with an instantaneous propagation: if a match falls in a cluster, it destroys
instantaneously the entire connected component. The time needed for a microscopic cluster
to become again occupied is almost $t_1$. Secondly, we flank the killed cluster
$C^P((\zeta_{t}^{\la,\pi}(i))_{t\geq0,i\in\zz},(0,t_1))$ to estimate the time to become again occupied.

\md

\noindent{\bf Step 1.} Let $0<\tau_1<z_0$ be fixed. Put
$\vartheta_{t}^\la(i)=\min(N^{S}_{\al t}(i),1)$ and 
$\vartheta_{\tau_1,t}^\la(i)=\min(N^{S}_{\al(\tau_1+t)}(i)-N^{S}_{\al
\tau_1}(i),1)$ for all $t>0$ and $i\in \zz$. We define the time needed for the destroyed cluster to be fully regenerated
\[\Xi_{\tau_1}^\la=\inf \left\{ t>0 : \forall i \in
C(\vartheta_{\tau_1}^\la,0),\; \vartheta_{\tau_1,t}^\la(i)=1 \right\}.\]
Then for all $\delta>0$, 
\[\lim_{\la \to 0} \proba{ |\Xi_{\tau_1}^\la - \tau_1|\geq \delta} =
0.\]
Indeed, we write, for  $h >0$, 
\begin{multline*}
\proba{\Xi_{\tau_1}^\la \leq h}= \proba{N^S_{\al \tau_1}(0)=0} + \sum_{k\geq 1}
\sum_{j=0}^{k-1} \mathbb{P}\left[N^S_{\al \tau_1}(j-k)=N^S_{\al \tau_1}(j+1)=0,\right.\\
\left. \forall i\in\intervalleentier{j-k+1}{j}, N^S_{\al \tau_1}(i)>0,
N^S_{\al(\tau_1+h)}(i)>N^S_{\al \tau_1}(i) \right],
\end{multline*}
that is
\begin{align*}
\proba{\Xi_{\tau_1}^\la \leq h} &= \la^{\tau_1}+\sum_{k\geq1}\sum_{j=0}^{k-1}\la^{\tau_1}\times\la^{\tau_1}\times\left((1-\la^{\tau_1})(1-\la^h)\right)^{k}\\
&=\la^{\tau_1}+\la^{2\tau_1}\sum_{k\geq1}k(\left((1-\la^{\tau_1})(1-\la^h)\right)^k\\
&=\la^{\tau_1}+ \frac{\la^ {2 \tau_1}}{(1-(1-\la^ {\tau_1})(1-\la^ h))^2}(1-\la^ {\tau_1})(1-\la^ h)\\
&=\la^{\tau_1}+ \frac{\la^ {2 \tau_1}}{(\la^ {\tau_1}+\la^ h-\la^ {\tau_1+h})^2}(1-\la^ {\tau_1})(1-\la^ h).
\end{align*}
This quantity obviously tends to $1$ as $\la\to 0$ if $h>\tau_1$ and to $0$ if $h<\tau_1$.

\md

\noindent{\bf Step 2.} Let $z\in\intervalleoo{t_1}{z_0}$ and define  $\Omega^{P,z}_{\la,\pi}(0,t_1)$, recall Definition \ref{definition2 application}. Set
\begin{multline*}
\tOmega^{P,z}_{\la,\pi}(0,t_1)\coloneqq\Omega^{P,z}_{\la,\pi}(0,t_1)\cap\{\exists i_1\in\intervalleentier{0}{\lfloor\la^ {-z}\rfloor},N^S_{\al (t_1+\kappa_{\la,\pi}^{z})}(i_1)=0\}\\
\cap\{\exists i_2\in\intervalleentier{-\lfloor\la^ {-z}\rfloor}{0},N^S_{\al (t_1+\kappa_{\la,\pi}^{z})}(i_2)=0\}.
\end{multline*}
First, Lemma \ref{propagation lemma infty} together with Lemma \ref{speed infty}-1 show that $\proba{\tOmega^{P,z}_{\la,\pi}(0,t_1)}$ tends to $1$ when $\la\to0$ and $\pi\to\infty$ in the regime $\cR(\infty,z_0)$ (because $t_1+\kappa_{\la,\pi}^z<(z+t_1)/2<z$ for $(\la,\pi)$ sufficiently close to the regime $\cR(\infty,z_0)$).
Next, on $\tOmega^{P,z}_{\la,\pi}(0,t_1)$, there holds that
\[C(\vartheta_{t_1+\kappa_{\la,\pi}^z}^\la,0)\coloneqq\intervalleentier{C^-}{C^+}\subset\intervalleentier{-\lfloor\la^{-z}\rfloor}{\lfloor\la^{-z}\rfloor}.\]
Since $C^+$ and $C^-$ are vacant during $\intervalleff{\al t_1}{\al (t_1+\kappa_{\la,\pi}^z)}\subset\intervalleff{0}{\al (t_1+\kappa_{\la,\pi}^z)}$, there holds that, as seen in {\bf Micro$(\infty,z_0)$} in Subsection \ref{application ffp},
\[C^P((\zeta^{\la,\pi}_t(i))_{t\geq 0,i\in\zz},(0,t_1))\subset C(\vartheta_{t_1+\kappa_{\la,\pi}^z}^\la,0)\subset\intervalleentier{-\lfloor\la^{-z}\rfloor}{\lfloor\la^{-z}\rfloor}\]
and $\zeta^{\la,\pi}_{\al (t_1+\kappa_{\la,\pi}^{z})}(i)\leq 1$ for all $i\in\zz$. Besides, $C^P((\zeta^{\la,\pi}_t(i))_{t\geq 0,i\in\zz},(0,t_1))$ clearly contains $C(\vartheta_{t_1}^\la,0)$, see Figure \ref{fig height barrier}.

We trivially deduce that, conditionaly on $\tOmega^{P,z}_{\la,\pi}(0,t_1)$,
\[t_1+\Xi_{t_1}^\la\leq t_1+\Theta^{\la,\pi}_{t_1}\leq t_1+\kappa_{\la,\pi}^z+\Xi_{t_1+\kappa_{\la,\pi}^z}^\la.\]
Remark now that the function $\colon t\mapsto t+\Xi^\la_t$ is a.s. non decreasing and right-continuous. We thus deduce from Step 1 that
\[t_1+\Theta_{t_1}^{\la,\pi}\xrightarrow[\la\to0]{}2t_1\]
in probability, whence  for all $\delta>0$ and all $\e>0$, there holds  that $\proba{
|\Theta^{\la,\pi}_{t_1} - t_1|\geq
\delta} <\e$ for all $(\la,\pi)$ sufficiently close to the regime $\cR(\infty,z_0)$.\qedhere
\begin{figure}[h!]
\fbox{
\begin{minipage}[c]{0.95\textwidth}
\centering
\begin{tikzpicture}
\draw (0,0) -- (10,0);
\draw[->] (.2,-.2)--(.2,7) node[left] {$t$};

\draw[thick,decorate,decoration={random steps,segment length=0.5mm,amplitude=0.2mm},color=red]
	(5,3)--(3,4);

\draw[thick,decorate,decoration={random steps,segment length=0.5mm,amplitude=0.2mm},color=red]
	(5,3)--(6.9,4.2);

\draw (5,-.1)--(5,.1) node at (5,-.3) {$0$};
\draw (2,-.1)--(2,.1) node at (1.6,-.3) {$-\lfloor\lambda^{-z}\rfloor$};
\draw (8,-.1)--(8,.1) node at (8,-.3) {$\lfloor\lambda^{-z}\rfloor$};

\draw (.4,6)--(0,6) node[left] {$\al z_0$};

\draw (.4,3)--(0,3) node[left] {$\al t_1$};
\draw[ultra thick] (3.7,3)--(6.5,3);
\draw[decorate,decoration={brace,raise=0.2cm,mirror}] (3.7,3)--(6.5,3) node[below=0.2cm,pos=0.5,sloped] {$C(\vartheta_{\al t_1}^\lambda,0)$};

\draw (.4,5)--(0,5) node[left] {$\al (t_1+\kappa_{\la,\pi}^z)$};
\draw[ultra thick] (2.5,5)--(7.3,5);
\draw[decorate,decoration={brace,raise=0.2cm}] (2.5,5)--(7.3,5) node[above=0.2cm,pos=0.5,sloped] {$C(\vartheta_{\al (t_1+\kappa_{\la,\pi}^z)}^\lambda,0)$};

\draw[dashed] (2.5,0)--(2.5,5) node at (2.6,-.5) {$C^-$};
\draw[dashed] (7.3,0)--(7.3,5) node at (7.3,-.5) {$C^+$};

\draw[dashed] (3.7,0)--(3.7,3);
\draw[dashed] (6.5,0)--(6.5,3);

\draw[dashed] (3,4)--(3,0) node[below] {$i^g$};
\draw[dashed] (6.9,4.2)--(6.9,0) node[below] {$i^d$};;
\end{tikzpicture} \caption{Height of a barrier: the true killed cluster.}\label{fig height barrier}
\vspace{.5cm}
\parbox{13.3cm}{
\footnotesize{
A match falls in $0$ at time $\al t_1$. The dashed verticals lines represent vacant sites. The zones $C(\vartheta_{\al t_1}^\la,0)$ and $C(\vartheta_{\al (t_1+\varkappa_{\la,\pi}^z)}^\la,0)$ are  delimited by vacant sites. The site $i^g$ is the first non-positive site where $\eta_{\al t_1+T_i^1}^{\la,\pi}(i)=0$ and $i^d$ is the first non-negative site where $\eta_{\al t_1+T_i^1}^{\la,\pi}(i)=0$. On $\tOmega_{\la,\pi}^{P,z}(0,t_1)$, there holds that $-\lfloor\la^{-z}\rfloor<i^g<0<i^d<\lfloor\la^{-z}\rfloor$ and there is no spark in $\intervalleentier{i^g}{i^d}$. The slope lines represent the burning sites.

Finally, the true destroyed component is included in $C(\vartheta_{\al(t_1+\varkappa_{\la,\pi}^z)}^\la,0)$ but contains $C(\vartheta_{\al t_1}^\la,0)$.
}}
\end{minipage}}
\end{figure}
\end{proof}

\subsection{Proof of Theorem \ref{theoinfty loc}}
Let us fix $z_0\in\intervalleff{0}{1}$, $x_0 \in \intervalleoo{-A}{A}$, $t_0>0$ 
and $\e>0$. The aim of this Section is to prove the
\begin{lem}\label{lemmaconvinfty}
For all $\delta>0$, there holds that
\begin{align}
\proba{\bdelta(D_{t_0}^{\la,\pi,A}(x_0),D_{t_0}^A(x_0))>\e} &<\delta,\label{convinfty1}\\
\proba{\bdelta_T(D^{\la,\pi,A}(x_0),D^A(x_0))>\e} &<\delta,\label{convinfty2}
\end{align}
for all $(\la,\pi)$ sufficiently close to the regime $\cR(\infty,z_0)$.
\end{lem}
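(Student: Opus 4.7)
The plan is to couple the $(\la,\pi,A)-$FFP and the $A-$LFFP$(\infty,z_0)$ through a single Poisson measure $\pi_M(\diff x,\diff t)$ on $\rr\times\intervallefo{0}{\infty}$, realizing the matches in the discrete model by $N^M_t(i)=\pi_M(i_\la\times\intervalleff{0}{t/\al})$ for $i\in I_A^\la$, as in the proof of Proposition \ref{restriction limite loc}. Let $\{(X_k,T_k)\}_{k=1,\dots,n}$ be the (a.s. finite) marks of $\pi_M$ in $\intervalleff{-A}{A}\times\intervalleff{0}{T}$, listed chronologically, and fix a small $\eta>0$ depending on $\e$, $A$ and $T$, to be chosen at the end. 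On a \emph{separation event} $\Omega_{\mathrm{sep}}$ of probability at least $1-\delta/3$, no two marks are within $2\eta$ in space or within $\eta$ in time, and no $T_k$ lies within $\eta$ of $z_0$; each mark is then either \emph{early} ($T_k<z_0-\eta$) or \emph{late} ($T_k>z_0+\eta$), and the zones of influence $(X_k)_\la^\eta$ are pairwise disjoint.

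Conditionally on $\pi_M$, I would intersect $\Omega_{\mathrm{sep}}$ with a \emph{good event} $\Omega_{\la,\pi}^{\mathrm{good}}$ whose conditional probability tends to $1$ in the regime $\cR(\infty,z_0)$, assembled from the tools of Section \ref{section propagation lemma} and Subsection \ref{height infty}. It combines, for each early match $k$, the event $\Omega^{P,z}_{\la,\pi}(X_k/\nl,T_k/\al)$ for some $z\in\intervalleoo{T_k}{z_0}$ and the event of Lemma \ref{micro fire infty} giving $\abs{\Theta^{\la,\pi}_{T_k}-T_k}<\eta$; for each late match $k$, the event $\Omega^{P,T,\eta}_{\la,\pi}(X_k/\nl,T_k/\al)$ ensuring that the fire ignited at $(X_k,T_k)$ stays inside $(X_k)_\la^\eta$ throughout $\intervalleff{\al T_k}{\al T}$; and the density estimates from Lemma \ref{speed infty}, ensuring that outside the union of these zones every site is occupied at any time $t\geq 1+\eta$, while microscopic clusters have rescaled size $\leq \la^{1-t}$ for $t<1-\eta$.

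On $\Omega_{\mathrm{sep}}\cap\Omega_{\la,\pi}^{\mathrm{good}}$ I would then check, by going through the marks in chronological order, that at every time $t\in\intervalleff{0}{T}$ the two clusters $D_t^{\la,\pi,A}(x_0)$ and $D_t^A(x_0)$ have endpoints matching up to $O(\eta)$. The two regimes to verify are: for $t<1-\eta$, both clusters are essentially $\{x_0\}$ (the rescaled discrete cluster has size $\leq \la^{1-t}\to 0$); for $t\geq 1+\eta$, both clusters are delimited by the same marks $X_k$, namely the nearest $k$ with either $T_k<z_0$ and $T_k\leq t\leq\Theta^{\la,\pi}_{T_k}\simeq 2T_k$ (an early match still providing a barrier), or $T_k>z_0$ and $T_k\leq t$ (a late match behaving as a permanent burning site, cf.\ \textbf{Macro$(\infty,z_0)$} of Subsection \ref{application ffp}). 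The spatial discrepancy is at most $\bm_\la^\eta/\nl\leq\eta$, and the temporal discrepancy on each barrier lifetime is at most $\eta$. This yields \eqref{convinfty1} directly, and \eqref{convinfty2} after integrating over $t\in\intervalleff{0}{T}$.

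The main obstacle is the sharp bookkeeping around marks with $T_k$ close to $z_0$ (where the dichotomy early/late is unclear) and marks with $2T_k$ close to $t_0$ (where a barrier may or may not have just expired); both ambiguities are absorbed by $\Omega_{\mathrm{sep}}$ provided $\eta$ is chosen small enough depending on $\e$, and for \eqref{convinfty2} the set of times for which any ambiguity matters has Lebesgue measure $O(n\eta)$. A secondary technical point is checking non-interference between distinct marks: this reduces to the disjointness of the $(X_k)_\la^\eta$ combined with the fact, extracted from Lemma \ref{propagation lemma infty}, that late fires travel at most $\bm_\la^\eta/\nl\leq\eta$ during the whole interval $\intervalleff{0}{\al T}$.
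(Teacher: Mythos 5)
Your proposal follows essentially the same route as the paper's proof: the same coupling through a single Poisson measure, the same separation event $\Omega_M^0(\alpha)$, the same good event assembled from the propagation lemmas, Lemma \ref{micro fire infty}, and Lemma \ref{speed infty}, and the same case analysis between $t_0<1$ and $t_0\geq 1$ with the cluster boundary controlled within $\bm_\la^\alpha/\nl$. The only minor divergence is the passage from \eqref{convinfty1} to \eqref{convinfty2}, where the paper simply applies dominated convergence to the time integral (using boundedness by $4A$) rather than directly estimating the measure of the set of ambiguous times as you propose; both are valid.
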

Clearly, \eqref{convinfty1} and \eqref{convinfty2} will imply the result. Let us first show that \eqref{convinfty1} (which holds for an arbitrary value of $t_0\in\intervalleoo{0}{T}$) implies \eqref{convinfty2}. Indeed, we have by construction for any $t\in\intervalleff{0}{T}$, $\bdelta(D_{t}^{\la,\pi,A}(x_0),D_{t}^A(x_0))<4A$. Hence, by dominated convergence, \eqref{convinfty1} implies that $\E\left[\bdelta(D_{t}^{\la,\pi,A}(x_0),D_{t}^A(x_0)\right]<\delta$ for all $(\la,\pi)$ sufficiently close to the regime $\cR(\infty,z_0)$, whence again by dominated convergence, $\E\left[\bdelta_T(D^{\la,\pi,A}(x_0),D^A(x_0))\right]<\delta$.

\subsubsection{The coupling}\label{coupling infty}
We are going to construct a coupling between the $(\la,\pi,A)-$FFP
(on the time interval $\intervalleff{0}{\al T}$) and the LFFP$(\infty,z_0)$ (on $\intervalleff{0}{T}$):  we build the LFFP$(\infty,z_0)$ $(Y_t(x))_{t\in[0,T],x\in\intervalleff{-A}{A}}$ from a Poisson measure $\pi_M$ and we take for the matches for the discrete process the Poisson process
\[N^M_t(i)=\pi_M(\intervallefo{i/\nl}{(i+1)/\nl}\times\intervalleff{0}{t/\al})\]
for all $i\in I^\la_A$ and $t\in\intervalleff{0}{\al T}$.

We next introduce a family of i.i.d. Poisson processes $(N^{S}_t(i))_{t\geq0, i \in \zz}$ and $(N^{P}_t(i))_{t\geq0, i \in \zz}$ with respective parameter $1$ and $\pi$, independent of $\pi_M$.

The $(\la,\pi,A)-$FFP $(\eta^{\la,\pi}_{t}(i))_{t\geq0,i\in I^\la_A}$ is built from the seed processes $(N^{S}_t(i))_{t\geq0, i \in \zz}$, from the match processes $(N^{M}_t(i))_{t\geq0, i \in \zz}$ and from the propagation processes $(N^{P}_t(i))_{t\geq0, i \in \zz}$.

Observe that $(Y_t(x))_{t\in[0,T],x\in\intervalleff{-A}{A}}$ is
independent of $(N^{S}_t(i))_{t\in [0,\al T], i \in I^\la_A}$ and $(N^{P}_t(i))_{t\in [0,\al T], i \in I^\la_A}$.

When a match falls at some $x\in\intervalleff{-A}{A}$ at some time $t\in\intervalleff{0}{T}$ for the LFFP$(\infty,z_0)$, it will fall at $\lfloor\nl x\rfloor$ at time $\al t$ in the discrete process.
\subsubsection{A sweet event}
We call 
\[n:=\pi_M([0,T]\times\intervalleff{-A}{A})\]
and we consider the marks $(T_q,X_q)_{q=1,\dots,n}$ of $\pi_M$ ordered 
in such a way that $0<T_1<\dots<T_n<T$.
We introduce 
\[\cT_M=\{T_1,\dots,T_n\}\text{ and }\cB_M=\{X_1,\dots,X_n\}.\]  

We also introduce 
\[\cS_M=\enstq{2t}{t\in\cT_M, t<z_0},\]
which has
to be seen as the possible limit values of $t+\Theta_{t}^{\la,\pi} \simeq
t+t$, recall Lemma \ref{micro fire infty}.

For $\alpha>0$, we consider the event 
\[\Omega_M^0(\alpha)=\Big\{\min_{\substack{s\in \cT_M\cup \cS_M,\\ t\in\{0,z_0,t_0\}}}\abs{t-s}> 2\alpha, 
\min_{\substack{x,y \in \cB_M\cup\{x_0,-A,A\},\\ x\neq y}}\abs{x-y}> 2\alpha
\Big\},\]
which clearly satisfies $\lim_{\alpha \to 0} \proba{\Omega_M^0(\alpha)}=1$.
For any given $\alpha\in\intervalleoo{0}{1}$, on $\Omega_M^0(\alpha)$,
there holds that for all $x,y\in \cB_M\cup\{x_0\}$ with $x\neq y$, 
$(x)_\la^ \alpha\cap (y)_\la^ \alpha =\emptyset=(x)_\la\cap(y)_\la$.

We set 
\[z_\alpha=(z_0-\alpha)\vee(z_0/2).\]
For $q\in\{1,\dots,n\}$, using the seed processes family $(N^{S}_t(i))_{t\geq0, i \in \zz}$ and the propagation processes family $(N^{P}_t(i))_{t\geq0, i \in \zz}$, we build, recall Definition \ref{definition1 application}, $(\check{\zeta}_t^{\la,\pi,q}(i))_{t\geq0,i\in\zz}$ the propagation process ignited at $(X_q,T_q)$, $(i^{q,+}_t)_{t\geq0}$ and $(i^{q,-}_t)_{t\geq0}$ the corresponding right and  left fronts, and $(T^q_i)_{i\in\zz}$ the associated burning times. We also define $\Omega^{P,T,\alpha}_{\la,\pi}(X_q,T_q)$ and $\Omega_{\la,\pi}^{P,z_\alpha}(X_q,T_q)$, recall Definition \ref{definition2 application}. If $z_0\in\intervalleof{0}{1}$, we set
\[\Omega^{P,T}(\alpha,\la,\pi)=\bigcap_{q=1,\dots,n}(\Omega^{P,T,\alpha}_{\la,\pi}(X_q,T_q)\cap\Omega_{\la,\pi}^{P,z_\alpha}(X_q,T_q)).\]
If $z_0=0$, we simply set
\[\Omega^{P,T}(\alpha,\la,\pi)=\bigcap_{q=1,\dots,n}\Omega^{P,T,\alpha}_{\la,\pi}(X_q,T_q).\]
By Lemma \ref{propagation lemma infty} and since $\pi_M$ is independent of $(N^{S}_t(i))_{t\geq0, i \in \zz}$ and $(N^{P}_t(i))_{t\geq0, i \in \zz}$, we deduce that $\proba{\Omega^{P,T}(\alpha,\la,\pi)}$ tends to $1$ when $\la\to0$ and $\pi\to\infty$ in the regime $\cR(\infty,z_0)$.

Next we introduce the event $\Omega^S_1(\la,\pi)$ on which the following 
conditions hold:
for all $q\in\{1,\dots,n\}$, 
\begin{itemize}
	\item if $T_q<z_\alpha$, there are $-\lfloor\la^ {-z_\alpha}\rfloor<i_1^q<0<i_2^q<\lfloor\la^ {-z_\alpha}\rfloor$ with $N^S_{\al(T_q+\kappa_{\la,\pi}^{z_\alpha})}(\lfloor\nl X_q\rfloor+i_1^q)=N^S_{\al(T_q+\kappa_{\la,\pi}^{z_\alpha})}(\lfloor\nl X_q\rfloor+i_2^q)=0$;
	\item if $T_q>z_0+\alpha$, for all $i\in (X_q)_ \la^\alpha$, $N^S_{\al T_q}(i)>0$.
\end{itemize}
Since $\kappa_{\la,\pi}^{z_\alpha}$ can be made arbitrarily small in the regime $\cR(\infty,z_0)$, Lemma \ref{speed infty} then show that $\proba{\Omega^S_1 (\la,\pi) }$ tends to $1$ when $\la\to0$ and $\to\infty$ in the regime $\cR(\infty,z_0)$.

We also consider the event $\Omega_2^S(\la)$ on which the following conditions holds
\begin{itemize}
	\item if $t_0<1$, there are $\lfloor\nl x_0\rfloor-\ml<i_1^0<\lfloor\nl x_0\rfloor<i_2^0<\lfloor\nl x_0\rfloor+\ml$ with $N^{S}_{\al t_0}(i_1)=N^{S}_{\al t_0}(i_2)=0$;
	\item for all 
$i\in \intervalleentier{-A_\la}{A_\la}$, $N^{S}_{\al}(i)>0$.
\end{itemize}
Lemma \ref{speed infty} together with space/time stationarity implies that $\lim_{\la\to0}\proba{\Omega_2^S(\la)}=1$.

We also need $\Omega^{S,P}_3(\gamma,\la,\pi)$, defined for $\gamma>0$ as follows:
for all $q=1,\dots,n$ with $T_q<z_0$, there holds that $|\Theta^{\la,\pi,q}_{T_q} - T_q|<\gamma$. Here $\Theta^{\la,\pi,q}_{T_q}$ is defined as in 
Lemma \ref{micro fire infty} with the seed processes family
$(N^{S,q}_t(i))_{t\geq 0, i\in\zz}= (N^{S}_t(i+\lfloor\nl X_q\rfloor))_{t\geq 0, i\in\zz}$ and the propagation processes family $(N^{P,q}_t(i))_{t\geq 0, i\in\zz}=(N^P_t(i+\lfloor\nl X_q\rfloor))_{t\geq 0, i\in\zz}$.
Lemma \ref{micro fire infty} directly implies that for any $\gamma>0$, $\proba{\Omega^{S,P}_3(\gamma,\la,\pi)}$ tends to $1$ when $\la\to 0$ and $\pi\to\infty$ in the regime $\cR(\infty,z_0)$.

We finally introduce the event 
\[\Omega(\alpha,\gamma,\la,\pi)=\Omega_M^0(\alpha)\cap\Omega^{P,T}(\alpha,\la,\pi)\cap 
\Omega^S_1(\la,\pi)\cap\Omega^S_2(\la)\cap\Omega^{S,P}_3(\gamma,\la,\pi).\]
We have shown that for any $\delta>0$, there exists 
$\alpha\in\intervalleoo{0}{1}$ such that for any $\gamma>0$, there holds
$\proba{\Omega(\alpha,\gamma,\la,\pi)}>1-\delta$ for all $(\la,\pi)$ sufficiently close to the regime $\cR(\infty,z_0)$.

\subsubsection{Heart of the proof}
The next Lemma is the key of the proof: it guarantees that each fire have a local effect. It will be repeteadly used in Lemmas \ref{lemlocfeu} and \ref{lemloctrou}.
\begin{lem}\label{lemloc}
On $\Omega(\alpha,\gamma,\la,\pi)$, the match falling on $\lfloor\nl X_q\rfloor$ at time $\al T_q$, for some $q\in\{1,\dots,n\}$, does not affect the zone outside $(X_q)_\la^{\alpha}$ during $\intervalleff{\al T_q}{\al T}$.

Consequently, on $\Omega(\alpha,\gamma,\la,\pi)$, for all $i\in I_A^\la\setminus\cup_{q=1,\dots,n}(X_q)_\la^{\alpha}$ and all $t\in\intervalleff{0}{T}$, there holds that
\[\eta^{\la,\pi,A}_{\al t}(i)=\min(N^{S}_{\al t}(i),1).\]
\end{lem}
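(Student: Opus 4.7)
The plan is to treat each match $(X_q, T_q)$ separately and argue that the fire it ignites in the $(\la,\pi,A)-$FFP is spatially dominated by the propagation process $\proco^{\la,\pi,q}$ ignited at $(X_q, T_q)$. The coupling is through the propagation Poisson processes: both processes use the same $N^P(j)$ clocks via the time shift defining $N^{P,q}$. An easy induction along successive sites shows that if the $q$-th fire reaches $\lfloor\nl X_q\rfloor + i$ in the real FFP, it does so at precisely the same relative propagation time $T_i^q$ as in $\proco^{\la,\pi,q}$; and since the real FFP can only be stopped earlier by vacant sites, the set of sites burned by the $q$-th fire by real time $\al T_q + s$ is contained in $\intervalleentier{\lfloor\nl X_q\rfloor + i_s^{q,-}}{\lfloor\nl X_q\rfloor + i_s^{q,+}}$. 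Confining the $q$-th fire to $(X_q)_\la^\alpha$ therefore reduces to bounding the fronts $i_s^{q,\pm}$ by $\pm\mla$.

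This splits into two cases through $\Omega_M^0(\alpha)$, which forces every $T_q$ to lie outside $\intervalleff{z_0 - 2\alpha}{z_0 + 2\alpha}$. If $T_q > z_0 + \alpha$ (macroscopic regime), the event $\Omega^{P,T,\alpha}_{\la,\pi}(X_q, T_q)$ directly gives $i_{\al T}^{q,\pm} \in \intervalleentier{-\bm_\la^\alpha}{\bm_\la^\alpha} = \intervalleentier{-\mla}{\mla}$, and the monotonicity of the fronts in time propagates this bound to all propagation times $\leq \al T$. If $T_q < z_\alpha$ (microscopic regime, vacuous when $z_0 = 0$), we instead combine $\Omega^{P,z_\alpha}_{\la,\pi}(X_q, T_q)$ with $\Omega^S_1(\la,\pi)$: the latter produces two vacant barriers $\lfloor\nl X_q\rfloor + i_1^q, \lfloor\nl X_q\rfloor + i_2^q$ with $|i_j^q| \leq \lfloor\la^{-z_\alpha}\rfloor$ that persist throughout $\intervalleff{0}{\al(T_q + \kappa_{\la,\pi}^{z_\alpha})}$, while the former ensures $\proco^{\la,\pi,q}$ has already extended past $\pm\lfloor\la^{-z_\alpha}\rfloor$ by propagation time $\al\kappa_{\la,\pi}^{z_\alpha}$; the domination argument then forces the real FFP fire to be halted at the barriers, confining it to $\intervalleentier{\lfloor\nl X_q\rfloor - \lfloor\la^{-z_\alpha}\rfloor}{\lfloor\nl X_q\rfloor + \lfloor\la^{-z_\alpha}\rfloor}$.

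The main obstacle is the quantitative comparison in the microscopic case, where we need $\lfloor\la^{-z_\alpha}\rfloor \leq \mla$ asymptotically. Using $\mla \sim \alpha \la^{-z_0 - \alpha(1-z_0)}/\al$ and (for $\alpha < z_0$, so $z_\alpha = z_0 - \alpha$) $\la^{-z_\alpha} = \la^{\alpha - z_0}$, the ratio is of order $\la^{\alpha(2-z_0)}\al/\alpha$, which tends to $0$ since $\alpha(2-z_0) > 0$; the edge case $z_\alpha = z_0/2$ (when $z_0 \leq 2\alpha$) yields the same conclusion. A secondary subtlety is writing the coupling argument crisply: propagation from $j$ to $j \pm 1$ in both processes uses the first jump of $N^P(j)$ after $j$'s ignition time, and the time shift defining $N^{P,q}$ ensures this jump occurs at identical relative times, which is what makes the induction work.

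Once fire confinement is established for every $q$, the consequence is immediate: for any $i \in I_A^\la \setminus \cup_{q=1}^n (X_q)_\la^\alpha$, no match falls at $i$ (since every $\lfloor\nl X_q\rfloor \in (X_q)_\la^\alpha$) and no fire ever ignites $i$. Starting from the vacant initial condition, $i$ changes state only through seeds, so $\eta^{\la,\pi,A}_{\al t}(i) = \min(N^S_{\al t}(i), 1)$ for every $t \in \intervalleff{0}{T}$.
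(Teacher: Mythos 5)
Your argument is correct, and your Case 1 is in fact the paper's whole proof: on $\Omega(\alpha,\gamma,\la,\pi)$, the event $\Omega^{P,T,\alpha}_{\la,\pi}(X_q,T_q)$ is available for \emph{every} $q\in\{1,\dots,n\}$ (it is imposed for all $q$ in the intersection defining $\Omega^{P,T}(\alpha,\la,\pi)$, with no restriction on $T_q$), and it already confines the fronts of the propagation process ignited at $(X_q,T_q)$ to $\intervalleentier{-\mla}{\mla}$ at relative time $\al T$, hence (by monotonicity of the Poisson fronts and the domination of the real fire by $\proco^{\la,\pi,q}$) during all of $\intervalleff{\al T_q}{\al T}$. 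The case-split on $T_q$ via $\Omega_M^0(\alpha)$ is therefore superfluous: your ``microscopic'' branch (using $\Omega^{P,z_\alpha}_{\la,\pi}$, the vacant barriers from $\Omega^S_1$, and the asymptotic bound $\lfloor\la^{-z_\alpha}\rfloor\leq\mla$) is sound but buys nothing for this lemma beyond what the uniform argument gives for free, and it imports an extra deterministic hypothesis on $(\la,\pi)$ that the paper's proof does not need. (The tighter confinement to $\pm\lfloor\la^{-z_\alpha}\rfloor$ when $T_q<z_\alpha$ is genuinely used later, in Lemma~\ref{lemloctrou}, but not here.) The proof of the ``consequently'' clause is correct and matches the paper's implicit reasoning.
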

\begin{proof}
As be seen in {\bf Macro$(\infty,z_0)$} in Subsection \ref{application ffp}, on $\Omega^{P,T,\alpha}_{\la,\pi}(X_q,T_q)\subset\Omega(\alpha,\gamma,\la,\pi)$, there holds that
\[X_q-\frac{\mla}{\nl}\leq\frac{\lfloor\nl X_q\rfloor+i^{q,-}_{\al T}}{\nl}\leq X_q\leq\frac{\lfloor\nl X_q\rfloor+1+i^{q,+}_{\al T}}{\nl}
\leq X_q+\frac{\mla}{\nl}\]
with $\mla/\nl\leq\alpha$. Hence, each fire has only a local effect and does not affect the zone outside $(X_q)_\la^{\alpha}$.
\end{proof}
We now turn to fires of the second kind.
\begin{lem}\label{lemlocfeu}
Let $q\in\{1,\dots,n\}$ such that $T_q>z_0+\alpha$. On $\Omega(\alpha,\gamma,\la,\pi)$, for all $t\in\intervalleff{\al T_q}{\al T}$, there holds that
\[\eta^{\la,\pi,A}_{\al t}(\lfloor\nl X_q\rfloor+i^{q,+}_{\al (t-T_q)})=2=\eta^{\la,\pi,A}_{\al t}(\lfloor\nl X_q\rfloor+i^{q,-}_{\al (t-T_q)}).\]
\end{lem}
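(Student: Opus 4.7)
The strategy is to couple the real process $\eta^{\la,\pi,A}$, restricted to $(X_q)_\la^\alpha$ and suitably time-shifted, with the propagation process $\check\zeta^{\la,\pi,q}$ used to define the fronts $i^{q,\pm}$. Once these two processes are shown to coincide on the entire zone reached by the fronts during $[0,\al(T-T_q)]$, the conclusion follows immediately: the fronts are burning in $\check\zeta^{\la,\pi,q}$ by definition, hence also in $\eta^{\la,\pi,A}$ at the translated positions.

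The first step is to verify that, just before the match at $(X_q,T_q)$ falls, every site of $(X_q)_\la^\alpha$ is occupied in the real process. By $\Omega_M^0(\alpha)$, the zones $(X_{q'})_\la^\alpha$ for $q'\neq q$ are pairwise disjoint from $(X_q)_\la^\alpha$. Applying Lemma \ref{lemloc} to every match $(X_{q'},T_{q'})$ with $T_{q'}<T_q$ shows that none of those matches affects any site of $(X_q)_\la^\alpha$; hence $\eta^{\la,\pi,A}_{\al t}(i)=\min(N^S_{\al t}(i),1)$ for every $i\in(X_q)_\la^\alpha$ and every $t<T_q$. Since $T_q>z_0+\alpha$, the second bullet defining $\Omega^S_1(\la,\pi)$ gives $N^S_{\al T_q}(i)>0$ for all such $i$, so the whole zone is occupied at time $\al T_q-$.

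The second step is to propagate this coincidence forward in time. On $\Omega(\alpha,\gamma,\la,\pi)$, no other match falls in $(X_q)_\la^\alpha$ during $(\al T_q,\al T]$ (again by the disjointness in $\Omega_M^0(\alpha)$ combined with Lemma \ref{lemloc} applied to the later matches). Consequently, inside $(X_q)_\la^\alpha$ the evolution of $\eta^{\la,\pi,A}$ after $\al T_q$ is driven only by the ignition at $\lfloor\nl X_q\rfloor$ and by the seed and propagation Poisson processes, which (after translation by $(\lfloor\nl X_q\rfloor,\al T_q)$) are precisely those defining $\check\zeta^{\la,\pi,q}$. Since both processes also agree at their starting time (all sites occupied, ignition at the origin), an elementary induction on the successive jump times of these Poisson processes yields
\[
\eta^{\la,\pi,A}_{\al T_q+s}(\lfloor\nl X_q\rfloor+i)=\check\zeta^{\la,\pi,q}_s(i)
\]
for every $s\in[0,\al(T-T_q)]$ and every $i$ with $|i|\leq\mla$.

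Finally, on $\Omega^{P,T,\alpha}_{\la,\pi}(X_q,T_q)\subset\Omega(\alpha,\gamma,\la,\pi)$, Lemma \ref{propagation lemma infty} guarantees that $i^{q,\pm}_{\al(t-T_q)}\in\{-\mla,\dots,\mla\}$ for every $t\in[T_q,T]$, so the coupling applies at the fronts and the desired equality $\eta^{\la,\pi,A}_{\al t}(\lfloor\nl X_q\rfloor+i^{q,\pm}_{\al(t-T_q)})=2$ follows. The main obstacle is the induction step of the coupling: one must check that whenever a front in $\check\zeta^{\la,\pi,q}$ advances to a new neighbor, that neighbor still lies in the controlled zone $(X_q)_\la^\alpha$ where occupancy at time $\al T_q-$ has been established. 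This is precisely the content of Lemma \ref{propagation lemma infty} combined with the inequality $\mla/\nl\leq\alpha$.
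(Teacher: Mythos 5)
Your proposal is correct and follows essentially the same approach as the paper: first establish (via $\Omega^S_1(\la,\pi)$ and Lemma~\ref{lemloc}) that $(X_q)_\la^\alpha$ is completely occupied at time $\al T_q-$, then use $\Omega^{P,T,\alpha}_{\la,\pi}(X_q,T_q)$ to keep the fronts inside that zone and Lemma~\ref{lemloc} again to rule out outside interference, and conclude. The only difference is that the paper treats the final coupling step (the agreement of $\eta^{\la,\pi,A}$ with $\check\zeta^{\la,\pi,q}$ inside the zone) as "straightforward," whereas you spell it out as an induction on jump times; this is entirely consistent with the paper's construction, since both $\eta^{\la,\pi,A}$ and $\check\zeta^{\la,\pi,q}$ are driven by the same Poisson families $(N^S_t(i))$ and $(N^P_t(i))$ in Subsection~\ref{coupling infty}.
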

\begin{proof}
At time $\al T_q-$, at least one seed has fallen on each site of $(X_q)^\alpha_\la$, thanks to $\Omega_1^S(\la,\pi)$. Thus, the zone $(X_q)_\la^\alpha$ is completely filled at time $\al T_q-$, thanks to Lemma \ref{lemloc} (no fire can affect this zone during $\intervallefo{0}{\al T_q}$). The conclusion is then straightforward, since on $\Omega_{\la,\pi}^{P,T}(X_q,T_q)$ there holds that $i^{q,+}\leq \mla/\nl$ and $i^{q,-}\leq \mla/\nl$ (as seen in {\bf Macro$(\infty,z_0)$} in Subsection \ref{application ffp}) and since no match falling outside $(X_q)_\la^\alpha$ can affect this zone.
\end{proof}
Finally, we treat the case of the fires of the first kind.
\begin{lem}\label{lemloctrou}
Let $q\in\{1,\dots,n\}$ such that $T_q<z_0-\alpha$. On $\Omega(\alpha,\gamma,\la,\pi)$, there holds that
\[(\eta^{\la,\pi,A}_{\al t}(i))_{t\in\intervalleff{0}{T},i\in(X_q)_\la^\alpha}=(\zeta^{\la,\pi,q}_{T_q, t}(i-\lfloor\nl X_q\rfloor))_{t\in\intervalleff{0}{T},i\in(X_q)_\la^\alpha},\]
where the last process is defined as in Lemma \ref{micro fire infty}, using the seed processes family $(N^{S,q}_t(i))_{t\geq 0, i\in\zz}= (N^{S}_t(i+\lfloor\nl X_q\rfloor))_{t\geq 0, i\in\zz}$ and the propagation processes family $(N^{P,q}_t(i))_{t\geq 0, i\in\zz}=(N^P_t(i+\lfloor\nl X_q\rfloor))_{t\geq 0, i\in\zz}$.

Consequently, on $\Omega(\alpha,\gamma,\la,\pi)$, for some $\gamma\in\intervalleoo{0}{\alpha}$,
\begin{enumerate}[label=(\alph*)]
	\item if $t\in\intervalleff{T_q+\alpha}{2T_q-\alpha}$, then there exists $i\in (X_q)_\la^\alpha$ such that $\eta^{\la,\pi}_{\al t}(i)=0$,
	\item if $t\geq(2T_q+\alpha)\vee 1$, then $\eta^{\la,\pi}_{\al t}(i)=1$ for all $i\in(X_q)_\la^\alpha$.
\end{enumerate}
\end{lem}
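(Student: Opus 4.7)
The plan is to first show that the zone $(X_q)_\la^\alpha$ is dynamically isolated on $\Omega(\alpha,\gamma,\la,\pi)$, then to identify the restricted $(\la,\pi,A)$-FFP with the auxiliary process $\zeta^{\la,\pi,q}_{T_q,\cdot}$ by invoking the Micro$(\infty,z_0)$ description from Subsection \ref{application ffp}, and finally to deduce the two consequences from Lemma \ref{micro fire infty}.

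\textbf{Step 1 (isolation).} On $\Omega_M^0(\alpha)$ the marks of $\pi_M$ are spatially separated by more than $2\alpha$; since $\mla\leq\lfloor\alpha\nl\rfloor$, the zones $(X_{q'})_\la^\alpha$ for $q'\neq q$ are therefore disjoint from $(X_q)_\la^\alpha$ and no other match falls there. By Lemma \ref{lemloc}, the matches $(X_{q'},T_{q'})$ with $q'\neq q$ do not affect sites of $(X_q)_\la^\alpha$ during $[0,\al T]$. Hence the evolution of $(\eta^{\la,\pi,A}_{\al t}(i))_{t\in[0,T],\,i\in(X_q)_\la^\alpha}$ is driven only by the seed processes $(N^S_t(i))_{t\geq 0,\,i\in(X_q)_\la^\alpha}$, the propagation processes $(N^P_t(i))_{t\geq 0,\,i\in(X_q)_\la^\alpha}$, and the single match at $(\lfloor\nl X_q\rfloor,\al T_q)$.

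\textbf{Step 2 (identification).} For $t<T_q$ nothing disturbs the zone, so $\eta^{\la,\pi,A}_{\al t}(i)=\min(N^S_{\al t}(i),1)$, which matches $\zeta^{\la,\pi,q}_{T_q,t}(i-\lfloor\nl X_q\rfloor)$ by the first clause of the definition of $\zeta^{\la,\pi,q}_{T_q,\cdot}$ in Lemma \ref{micro fire infty}. At $t=T_q$ a match falls at $\lfloor\nl X_q\rfloor$. The event $\Omega^S_1(\la,\pi)$ supplies offsets $-\lfloor\la^{-z_\alpha}\rfloor<i^q_1<0<i^q_2<\lfloor\la^{-z_\alpha}\rfloor$ for which the sites $\lfloor\nl X_q\rfloor+i^q_j$ remain vacant throughout $[T_q,T_q+\kappa_{\la,\pi}^{z_\alpha}]$, and $\Omega^{P,z_\alpha}_{\la,\pi}(X_q,T_q)\subset\Omega^{P,T}(\alpha,\la,\pi)$ provides the required propagation control. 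The hypotheses of Micro$(\infty,z_0)$ in Subsection \ref{application ffp} are thus met, and the explicit description given there (each $i$ in the destroyed component evolves by seeds until the burning time $\al T_q+T^q_{i-\lfloor\nl X_q\rfloor}$, equals $2$ during the burning interval, then evolves again by fresh seeds; sites outside the destroyed component are only driven by seeds) coincides, clause by clause and after the spatial translation $i\mapsto i-\lfloor\nl X_q\rfloor$, with the definition of $\zeta^{\la,\pi,q}_{T_q,\cdot}$, because $N^{S,q}_t(i)=N^S_t(i+\lfloor\nl X_q\rfloor)$ and $N^{P,q}_t(i)=N^P_t(i+\lfloor\nl X_q\rfloor)$. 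The claimed equality follows.

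\textbf{Step 3 (consequences).} Choose $\gamma<\alpha$. On $\Omega^{S,P}_3(\gamma,\la,\pi)$ we have $|\Theta^{\la,\pi,q}_{T_q}-T_q|<\gamma$, hence $\Theta^{\la,\pi,q}_{T_q}>T_q-\gamma>T_q-\alpha$. For (a): if $t\in[T_q+\alpha,2T_q-\alpha]$, then $t-T_q\leq T_q-\alpha<\Theta^{\la,\pi,q}_{T_q}$, so by the definition of $\Theta$ there is $i\in C^P$ with $\zeta^{\la,\pi,q}_{T_q,\cdot}(i)\neq 1$; for $(\la,\pi)$ close enough to $\cR(\infty,z_0)$ we have $\kappa_{\la,\pi}^{z_\alpha}<\alpha$, so the fire has finished by $T_q+\alpha\leq t$ and this site is vacant rather than burning. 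For (b): if $t\geq(2T_q+\alpha)\vee 1$, then $t-T_q\geq T_q+\alpha>T_q+\gamma>\Theta^{\la,\pi,q}_{T_q}$, so every site of $C^P$ is occupied by Step 2; the sites of $(X_q)_\la^\alpha\setminus C^P$ evolve only by seeds and, by $\Omega^S_2(\la)$, every site of $I_A^\la$ has received a seed by rescaled time $1\leq t$.

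\textbf{Main obstacle.} The delicate point is the identification in Step 2: the definition of $\zeta^{\la,\pi,q}_{T_q,\cdot}$ is a three-clause, site-by-site bookkeeping indexed by the propagation burning times $T^q_i$, and one must check that the Micro$(\infty,z_0)$ description of the discrete FFP reproduces each clause exactly, with the correct time origin at $\al T_q$ and spatial origin at $\lfloor\nl X_q\rfloor$, taking care of the extinction phase between the first burning and the refill phase.
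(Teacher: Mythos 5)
Your proof is correct and follows essentially the same route as the paper: isolation via Lemma \ref{lemloc}, clause-by-clause identification with $\zeta^{\la,\pi,q}_{T_q,\cdot}$ across the three phases (pre-ignition, burning, refill) using the Micro$(\infty,z_0)$ description, and then the sandwich $2T_q-\gamma\leq T_q+\Theta^{\la,\pi,q}_{T_q}\leq 2T_q+\gamma$ from $\Omega_3^{S,P}(\gamma,\la,\pi)$ to conclude (a) and (b). Your version is in fact a little more careful at two points than the paper's own write-up: you make explicit the requirement $\kappa_{\la,\pi}^{z_\alpha}<\alpha$ needed so that the witness in (a) is vacant rather than still burning, and you correctly invoke $\Omega^S_2(\la)$ (all sites seeded by time $\al$) for the sites of $(X_q)_\la^\alpha\setminus C^P$ in (b), where the paper writes $\Omega^S_1$.
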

\begin{proof}
First observe that the process $(\eta^{\la,\pi,A}_{\al t}(\lfloor\nl X_q\rfloor+i))_{t\in\intervalleff{0}{T},i\in\intervalleentier{-\mla}{\mla}}$ and the process $(\zeta^{\la,\pi,q}_{T_q, t}(i))_{t\in\intervalleff{0}{T},i\in\intervalleentier{-\mla}{\mla}}$ evolve according to the same seed processes family and to the same propagation processes family.

Lemma \ref{lemloc} implies that, for all $i\in(X_q)_\la^\alpha$ and all $t\in\intervallefo{0}{ T_q}$,
\[\eta^{\la,\pi}_{\al t}(i)=\min(N^S_{\al t}(i),1),\]
because no match falls in $(X_q)_\la^\alpha$ during $\intervallefo{0}{\al T_q}$. This in particular implies that, for all $i\in(X_q)_\la^\alpha$ and all $t\in\intervallefo{0}{ T_q}$,
\[\eta^{\la,\pi}_{\al t}(i)=\zeta^{\la,\pi,q}_{T_q,t}(i-\lfloor\nl X_q\rfloor).\]

On $\Omega_{\la,\pi}^{P,z_\alpha}(X_q,T_q)\cap\Omega_1^S(\la,\pi)$, as seen in {\bf Micro$(\infty,z_0)$} in Subsection \ref{application ffp}, since the two processes are building using the same seed processes family and the same propagation processes family, there also holds true that for all $i\in(X_q)_\la^\alpha$ and all $t\in\intervalleff{T_q}{T_q+\kappa_{\la,\pi}^{z_\alpha}}$,
\[\eta^{\la,\pi}_{\al t}(i)=\zeta^{\la,\pi,q}_{T_q,t}(i-\lfloor\nl X_q\rfloor).\]

Finally, since there is no more burning tree in $(X_q)_\la^\alpha$ at time $\al(T_q+\kappa_{\la,\pi}^{z_\alpha})$ and since seeds fall according to the same processes, we deduce that, thanks again to Lemma \ref{lemloc}, the two processes remain equal during $\intervalleof{T_q+\kappa_{\la,\pi}^{z_\alpha}}{T}$.

All this implies that
\begin{equation}\label{plouf}
(\eta^{\la,\pi,A}_{\al t}(i))_{t\in\intervalleff{0}{T},i\in(X_q)_\la^\alpha}=(\zeta^{\la,\pi,q}_{T_q, t}(i-\lfloor\nl X_q\rfloor))_{t\in\intervalleff{0}{T},i\in(X_q)_\la^\alpha}.
\end{equation}
Consider now the zone destroyed by the match falling on $\lfloor\nl X_q\rfloor$ at time $\al T_q$
\[C^P\coloneqq C^P((\eta^{\la,\pi}_{t}(i))_{t\geq0,i\in\zz},(X_q,T_q)).\]
As seen in {\bf Micro$(\infty,z_0)$} in Subsection \ref{application ffp}, $C^P\subset\intervalleentier{-\lfloor\la^{-z_\alpha}\rfloor}{\lfloor\la^{-z_\alpha}\rfloor}$ because there are $i_1\in\intervalleentier{-\lfloor\la^{-z_\alpha}\rfloor}{0}$ and $i_2\in\intervalleentier{0}{\lfloor\la^{-z_\alpha}\rfloor}$ which are vacant until $\al(T_q+\kappa_{\la,\pi}^{z_\alpha})$, thanks to $\Omega_1^S(\la,\pi)$.

From \eqref{plouf} and since no match falling outside $(X_q)_\la^\alpha$ can affect this zone, it follows that
\[\Theta^{\la,\pi,q}_{T_q}=\inf\enstq{t>T_q}{\forall i\in C^P((\eta^{\la,\pi}_{t}(i))_{t\geq0,i\in\zz},(X_q,T_q)), \eta_{\al t}^{\la,\pi}(i)=1}.\]
Hence, the zone $C^P$ is not completely occupied during $\intervalleoo{\al (T_q+\kappa_{\la,\pi}^z)}{\al(T_q+\Theta^{\la,\pi,q}_{T_q})}$ but is completely filled at time $\al(T_k+\Theta^{\la,\pi,q}_{T_q})$.

Using $\Omega_3^{S,P}(\gamma,\la,\pi)\cap\Omega^0_M(\alpha)$ and since $\gamma\in\intervalleoo{0}{\alpha}$, we deduce that, 
\[T_q+\alpha<2T_q-\alpha\leq 2T_q-\gamma\leq T_q+\Theta^{\la,\pi,q}_{T_q}\leq 2T_q+\gamma\leq 2T_q+\alpha.\]
We now conclude.
\begin{enumerate}[label=(\alph*)]
	\item If $t\in\intervalleff{T_q+\alpha}{2T_q-\alpha}$, then the zone $C^P$ is not completely occupied at time $t$. Hence, there exists $i\in C^P\subset (X_q)_\la^\alpha$ such that $\eta^{\la,\pi}_{\al t}(i)=0$.
	\item If $t\geq (2T_q+\alpha)\vee 1$, then $C^P$ is completely filled at time $t$ because $t\geq T_q+\alpha$. 
	
Consider now $i\in(X_q)_\la^{\alpha}\setminus C^P$. Then $i$ has not been killed by the fire starting at $\lfloor\nl X_q\rfloor$. Thus $i$ cannot have been killed during $\intervalleff{0}{\al t}\supset\intervalleff{0}{\al}$, thanks to Lemma \ref{lemloc}. We conclude using that $t\geq1$, so that on $\Omega_1^S(\la)$, $i$ is occupied at time $\al t$.\qedhere
\end{enumerate}
\end{proof}

\subsubsection{Conclusion}
First, the case $t_0<1$ is simple.
\begin{lem}\label{lemma0-}
For $t_0<1$, on $\Omega(\alpha,\gamma,\la,\pi)$, there holds that
\[\bdelta(D_{t_0}^{\la,\pi,A}(x_0),D_{t_0}^A(x_0))<\frac{2\ml}{\nl}.\]
\end{lem}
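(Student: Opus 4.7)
The argument is essentially bookkeeping on the coupling event; no propagation estimate is needed because both processes are trivial before time $1$.

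First I would recall that, by the very definition of $D_t^A$ associated to the LFFP$(\infty,z_0)$, one has $L_{t_0}^A(x_0)=R_{t_0}^A(x_0)=x_0$ whenever $t_0<1$, so that $D_{t_0}^A(x_0)=[x_0,x_0]$. Hence it remains only to control $D_{t_0}^{\la,\pi,A}(x_0)=\frac1{\nl}C_A(\eta^{\la,\pi,A}_{\al t_0},\lfloor\nl x_0\rfloor)$.

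Next, the hypothesis $t_0<1$ puts us in the first alternative of $\Omega_2^S(\la)\subset\Omega(\alpha,\gamma,\la,\pi)$: there exist integers $i_1^0,i_2^0$ with
\[\lfloor\nl x_0\rfloor-\ml<i_1^0<\lfloor\nl x_0\rfloor<i_2^0<\lfloor\nl x_0\rfloor+\ml\]
and $N^S_{\al t_0}(i_1^0)=N^S_{\al t_0}(i_2^0)=0$. The key observation is that a site on which no seed has fallen is necessarily vacant (to be in state $2$ it would first have to reach state $1$, which requires a seed). Thus $\eta^{\la,\pi,A}_{\al t_0}(i_1^0)=\eta^{\la,\pi,A}_{\al t_0}(i_2^0)=0$ deterministically, without ever invoking a propagation lemma.

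From here I would split on the state of $\lfloor\nl x_0\rfloor$. If $\eta^{\la,\pi,A}_{\al t_0}(\lfloor\nl x_0\rfloor)\in\{0,2\}$, then by definition $C_A=\emptyset$ and $\bdelta(\emptyset,[x_0,x_0])=0<2\ml/\nl$. Otherwise $\eta^{\la,\pi,A}_{\al t_0}(\lfloor\nl x_0\rfloor)=1$, and since $i_1^0$ and $i_2^0$ surround $\lfloor\nl x_0\rfloor$ and are vacant, one has
\[l_A(\eta^{\la,\pi,A}_{\al t_0},\lfloor\nl x_0\rfloor)\geq i_1^0+1>\lfloor\nl x_0\rfloor-\ml+1,\quad r_A(\eta^{\la,\pi,A}_{\al t_0},\lfloor\nl x_0\rfloor)\leq i_2^0-1<\lfloor\nl x_0\rfloor+\ml-1.\]
Writing $D_{t_0}^{\la,\pi,A}(x_0)=[l_A/\nl,r_A/\nl]$ and using $|\lfloor\nl x_0\rfloor/\nl-x_0|<1/\nl$, a direct calculation gives
\[\bdelta\bigl(D_{t_0}^{\la,\pi,A}(x_0),[x_0,x_0]\bigr)=|l_A/\nl-x_0|+|r_A/\nl-x_0|<\frac{\ml-1}{\nl}+\frac{\ml-1}{\nl}<\frac{2\ml}{\nl},\]
which concludes the proof.

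The only mildly delicate point is making sure I am allowed to treat the vacancy of $i_1^0,i_2^0$ without appealing to Lemma \ref{lemloc}; the observation in the second paragraph handles this and is the one subtlety worth highlighting.
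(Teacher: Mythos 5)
Your proof is correct and follows the same route as the paper: invoke the first clause of $\Omega^S_2(\la)$ to find seed-free sites $i_1^0<\lfloor\nl x_0\rfloor<i_2^0$ in $(x_0)_\la$, observe they are vacant, and deduce that the discrete cluster containing $\lfloor\nl x_0\rfloor$ is trapped in $(x_0)_\la$ while the limit cluster is $\{x_0\}$. You add two small clarifications the paper leaves implicit — that $N^S_{\al t_0}(i)=0$ forces $\eta^{\la,\pi,A}_{\al t_0}(i)=0$ (since state $0$ can only be left by a seed), and the degenerate case $C_A=\emptyset$ — and you track the rounding $|\lfloor\nl x_0\rfloor/\nl - x_0|<1/\nl$ more carefully, which actually yields the strict inequality claimed in the lemma.
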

\begin{proof}
Thanks to $\Omega^S_2(\la)$, there are $i_1^0\in\intervalleentier{\lfloor \nl x_0\rfloor-\ml}{\lfloor\nl x_0\rfloor}$ and $i_2^0\in\intervalleentier{\lfloor\nl x_0\rfloor}{\lfloor \nl x_0\rfloor+\ml}$ such that $\eta^{\la,\pi,A}_{\al t_0}(i_1)=\eta^{\la,\pi,A}_{\al t_0}(i_2)=0$. Thus, $C(\eta^{\la,\pi}_{\al t_0},\lfloor\nl x_0\rfloor)\subset\intervalleentier{\lfloor\nl x_0\rfloor-\ml}{\lfloor\nl x_0\rfloor+\ml}$ whence $D^{\la,\pi,A}_{t_0}(x_0)\subset\intervalleff{x_0-\ml/\nl}{x_0+\ml/\nl}$. Since $D_{t_0}^A(x_0)=\{x_0\}$, we deduce that
\[\bdelta(D_{t_0}^{\la,\pi,A}(x_0),D_{t_0}^A(x_0))\leq \frac{2\ml}{\nl}.\qedhere\]
\end{proof}

We now turn to the case $t_0\geq1$.
\begin{lem}\label{lemma0+}
For $t_0\geq1$, on $\Omega(\alpha,\gamma,\la,\pi)$ for some $0<\gamma<\alpha$ and for all $(\la,\pi)$ sufficiently close to the regime $\cR(\infty,z_0)$ in such a way that $\kappa_{\la,\pi}^{z_\alpha}\leq \alpha$ and $\lfloor z^{-\alpha}\rfloor\leq\mla$, there holds that
\[\bdelta(D_{t_0}^{\la,\pi,A}(x_0),D_{t_0}^A(x_0))<\frac{2\mla}{\nl}.\]
\end{lem}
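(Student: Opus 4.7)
The plan is to show that, on $\Omega(\alpha,\gamma,\la,\pi)$, both $D^A_{t_0}(x_0)$ and $D^{\la,\pi,A}_{t_0}(x_0)$ are intervals whose endpoints are determined by the same marks of $\pi_M$, and that the discrete endpoints lie within $\mla/\nl$ of their limit counterparts. Set
\[R^{\star} = A\wedge \inf\{X_q\geq x_0 : Y_{t_0}(X_q)>0\},\qquad L^{\star} = (-A)\vee \sup\{X_q\leq x_0 : Y_{t_0}(X_q)>0\},\]
so that $D^A_{t_0}(x_0)=[L^{\star},R^{\star}]$. Recall that $Y_{t_0}(X_q)>0$ iff either $T_q>z_0$ and $T_q\leq t_0$ (second-kind fire still active) or $T_q<z_0$ and $T_q\leq t_0<2T_q$ (first-kind barrier still active). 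Using $\Omega_M^0(\alpha)$, every $T_q$ is at distance more than $2\alpha$ from $z_0$ and from $t_0$, and every $2T_q$ with $T_q<z_0$ is at distance more than $2\alpha$ from $t_0$; consequently each mark is unambiguously of first ($T_q<z_0-2\alpha$) or second ($T_q>z_0+2\alpha$) kind, and the temporal buffer needed to apply the propagation lemmas is automatic. Also, since $x_0\notin\cB_M$ and the zones $(X_q)_\la^\alpha$ are pairwise disjoint on $\Omega_M^0(\alpha)$, one checks that $\lfloor\nl x_0\rfloor$ lies in none of them, so Lemma \ref{lemloc} together with $\Omega^S_2(\la)$ and $t_0\geq 1$ force $\eta^{\la,\pi,A}_{\al t_0}(\lfloor\nl x_0\rfloor)=1$, i.e.\ $D^{\la,\pi,A}_{t_0}(x_0)$ is non-empty and contains $x_0$.

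Next I treat the right-hand endpoint; the left-hand endpoint is symmetric. Assume first $R^{\star}<A$, attained at some $X_{q^{\star}}$. If $T_{q^{\star}}>z_0+2\alpha$, then $T_{q^{\star}}\leq t_0-2\alpha$ and Lemma \ref{lemlocfeu} produces a burning site at position $\lfloor\nl X_{q^{\star}}\rfloor+i^{q^{\star},-}_{\al(t_0-T_{q^{\star}})}$, which on $\Omega^{P,T,\alpha}_{\la,\pi}(X_{q^{\star}},T_{q^{\star}})$ lies in $(X_{q^{\star}})_\la^\alpha$, hence within $\mla$ of $\lfloor\nl R^{\star}\rfloor$. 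If instead $T_{q^{\star}}<z_0-2\alpha$, then $T_{q^{\star}}+2\alpha<t_0<2T_{q^{\star}}-2\alpha$, and Lemma \ref{lemloctrou}(a) yields a vacant site in $(X_{q^{\star}})_\la^\alpha$, again within $\mla$ of $\lfloor\nl R^{\star}\rfloor$. In both cases, an obstacle (vacant or burning) appears in $(X_{q^{\star}})_\la^\alpha$, giving $r_A(\eta^{\la,\pi,A}_{\al t_0},\lfloor\nl x_0\rfloor)\leq\lfloor\nl R^{\star}\rfloor+\mla$. The case $R^{\star}=A$ is trivial, as $r_A$ is always capped at $A_\la$.

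The matching lower bound requires ruling out spurious obstacles in the interval $(\lfloor\nl x_0\rfloor,\lfloor\nl R^{\star}\rfloor-\mla)$. A site $i$ in this range either lies outside every $(X_q)_\la^\alpha$, in which case Lemma \ref{lemloc} gives $\eta^{\la,\pi,A}_{\al t_0}(i)=\min(N^S_{\al t_0}(i),1)=1$ on $\Omega^S_2(\la)$, or it belongs to some $(X_q)_\la^\alpha$ with $x_0<X_q<R^{\star}$, so that $Y_{t_0}(X_q)=0$. The latter splits into two subcases: either $T_q>z_0+2\alpha$ with $T_q>t_0+2\alpha$, in which case the match has not yet fallen and Lemma \ref{lemloc} applied to every $q'\neq q$ (using disjointness of the zones on $\Omega_M^0(\alpha)$) again gives $\eta^{\la,\pi,A}_{\al t_0}(i)=\min(N^S_{\al t_0}(i),1)=1$; or $T_q<z_0-2\alpha$ with $t_0\geq 2T_q+2\alpha$, in which case Lemma \ref{lemloctrou}(b) yields $\eta^{\la,\pi,A}_{\al t_0}(i)=1$. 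Combining the two halves, $|R^{\la,\pi,A}_{t_0}(x_0)-R^{\star}|\leq\mla/\nl$, and the analogous argument for $L^{\star}$ yields $|L^{\la,\pi,A}_{t_0}(x_0)-L^{\star}|\leq\mla/\nl$, so $\bdelta(D^{\la,\pi,A}_{t_0}(x_0),D^A_{t_0}(x_0))<2\mla/\nl$.

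The technical work is essentially bookkeeping rather than analysis: the bulk of the argument consists in matching each mark to the correct propagation lemma and verifying that the gap $2\alpha$ built into $\Omega_M^0(\alpha)$ suffices to meet the $\alpha$-slack hypotheses of Lemmas \ref{lemlocfeu} and \ref{lemloctrou}. The only mildly delicate point is the case $x_0<X_q<R^{\star}$ with $Y_{t_0}(X_q)=0$: one has to be sure that Lemma \ref{lemloctrou}(b) really guarantees full reoccupation of $(X_q)_\la^\alpha$ by time $t_0$ (which is where the hypothesis $t_0\geq 1$ combined with $\kappa_{\la,\pi}^{z_\alpha}\leq\alpha$ and $\lfloor\la^{-z_\alpha}\rfloor\leq\mla$ is used), and to notice that the additional fires at $T_{q'}\leq t_0$ with $q'\neq q$ do not propagate into $(X_q)_\la^\alpha$ thanks to Lemma \ref{lemloc}.
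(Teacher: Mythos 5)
Your proof follows the same strategy as the paper's: decompose $D^A_{t_0}(x_0)=[a,b]$ by the active barriers $Y_{t_0}>0$, show the discrete obstacles sit inside $(a)^\alpha_\la$ and $(b)^\alpha_\la$ using Lemma \ref{lemlocfeu} for second-kind fires and Lemma \ref{lemloctrou}(a) for first-kind barriers, and rule out spurious obstacles in the interior via Lemma \ref{lemloc}, $\Omega^S_2(\la)$, and Lemma \ref{lemloctrou}(b). The endpoint-by-endpoint bookkeeping you present is just a light reorganization of the paper's three-point argument, so the proposal is correct and essentially identical in substance.
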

\begin{proof}
Clearly, since $t_0\geq 1$, $D_{t_0}^A(x_0)=\intervalleff{a}{b}$ for some $a,b\in\cB_M\cup\{-A,A\}$. Assume $-A<a<b<A$, the other cases being treated similarly. In the limit process, we then have $Y_{t_0}(a)>0$, $Y_{t_0}(b)>0$ and $Y_{t_0}(x)=0$ for all $x\in\intervalleoo{a}{b}$. We will prove separately that
\begin{enumerate}
	\item there are $i\in(a)_\la^ {\alpha}$ and $j\in(b)_\la^ {\alpha}$ such that $\eta^{\la,\pi,A}_{\al t_0}(i)=0$ or $2$ and $\eta^{\la,\pi,A}_{\al t_0}(j)=0$ or $2$;
	\item for all $x\in\cB_M\cap\intervalleoo{a}{b}$, for all $i\in(x)_\la^ {\alpha}$, $\eta^{\la,\pi,A}_{\al t_0}(i)=1$;
	\item for all $i\in\intervalleentier{\lfloor\nl a\rfloor+\mla+1}{\lfloor\nl b\rfloor-\mla-1}\setminus\cup_{x\in\cB_M\cap\intervalleoo{a}{b}}(x)_\la^ {\alpha}$, we have $\eta^{\la,\pi,A}_{\al t_0}(i)=1$.
\end{enumerate}
Points 1., 2. and 3. imply that,
\[\intervalleentier{\lfloor\nl a\rfloor+\mla+1}{\lfloor\nl b\rfloor-\mla-1}\subset C(\eta^{\la,\pi}_{\al t_0},\lfloor\nl x_0\rfloor)\subset\intervalleentier{\lfloor\nl a\rfloor-\mla-1}{\lfloor\nl b\rfloor+\mla+1}\]
and thus $\intervalleff{a+\mla/\nl}{b-\mla/\nl}\subset D^{\la,\pi,A}_{t_0}(x_0)\subset\intervalleff{a-\mla/\nl}{b+\mla/\nl}$, whence, 
\[\bdelta(D_{t_0}^A(x_0),D_{t_0}^{\la,\pi,A}(x_0))\leq2\mla/\nl.\]
%
%

\noindent{\bf We prove 1.} Let $k\in\{1,\dots,n\}$ such that $a=X_k$. There are two cases.

{\it Case 1.} If $Y_{t_0}(X_k)=1$ in the limit process, then $t_0\geq T_k\geq z_0$ whence $t_0\geq T_k\geq z_0+2\alpha$ due to $\Omega_M^0(\alpha)$. We then use Lemma \ref{lemlocfeu} and conclude that there is a burning tree in $(a)^\alpha_\la$ at time $\al t_0$.

{\it Case 2.} If $Y_{t_0}(a)\in\intervalleoo{0}{1}$ in the limit process, then $T_k\leq z_0\leq1\leq t_0\leq 2T_k$ whence $T_k+4\alpha\leq z_0+2\alpha\leq t_0+2\alpha\leq 2T_k$, due to $\Omega_M^0(\alpha)$. We conclude using Lemma \ref{lemloctrou}-(a) that there is a vacant site in $(a)^\alpha_\la$ at time $\al t_0$.

Similar considerations hold for $b$.

\md

\noindent{\bf We prove 2.} Let $x\in\cB_M\cap\intervalleoo{a}{b}$ and let $k\in\{1,\dots,n\}$ such that $x=X_k$.

{\it Case 1.} If $T_k> t_0$, then no fire has fallen in $(X_k)_\la^\alpha$ during $\intervalleff{0}{\al t_0}$. Using $\Omega_1^S(\la,\pi)$ and Lemma \ref{lemloc}, we conclude that $(X_k)_\la^ \alpha$ is completely occupied at time $\al t_0$ (because no fire can affect this zone).

{\it Case 2.} If $T_k\leq t_0$, since in the limit process $Y_{t_0}(X_k)=0$, necessarily $T_k\leq z_0\leq t_0$ and $2T_k\leq t_0$ whence $T_k\leq z_0-2\alpha$ and $2T_k\leq t_0-2\alpha$ due to $\Omega_M(\alpha)$. Lemma \ref{lemloctrou}-(b) concludes this case since $t_0\geq(2T_q+\alpha)\vee1$.

\md

\noindent{\bf We prove 3.} Let $i\in\intervalleentier{\lfloor\nl a\rfloor+\mla+1}{\lfloor\nl b\rfloor-\mla-1}\setminus\cup_{j=1,\dots,n}(X_j)_\la^ {\alpha}$, using Lemma \ref{lemloc} and  $\Omega_2^S(\la)$, we immediately conclude that $i$ is occupied at time $\al t_0$.
\end{proof}

We now can conclude.
\begin{proof}[Proof of Lemma \ref{lemmaconvinfty}]
Let $\delta>0$ be fixed. We first consider $\alpha_0\in\intervalleoo{0}{\e/2}$, $\gamma_0\in\intervalleoo{0}{\alpha_0}$, $\la_0\in\intervalleof{0}{1}$, $\epsilon_0>0$ and $K_0\geq1$ such that for all $\la\in\intervalleoo{0}{\la_0}$, all $\pi\geq1$ in such a way that $\frac{\nl}{\al\pi}\geq K_0$ and $\abs{\frac{\log(\pi)}{\log(1/\la)}-z_0}<\epsilon_0$, we have 
\[\proba{\Omega(\alpha_0,\gamma_0,\la,\pi)}>1-\delta.\]
Then we consider $\la_1\in\intervalleoo{0}{\la_0}$, $K_1>K_0$ and $\epsilon_1\in\intervalleoo{0}{\epsilon_0}$ such that for all $\la\in\intervalleoo{0}{\la_1}$ and all $\pi\geq1$ in such a way that $\frac{\nl}{\al\pi}\geq K_1$ and $\abs{\frac{\log(\pi)}{\log(1/\la)}-z_0}<\epsilon_1$, we have
\begin{itemize}
	\item $2\ml/\nl<\e$,
	\item $\kappa_{\la,\pi}^ {z_\alpha}<\alpha$,
	\item $2\la^ {-z_\alpha}/\nl<2\mla/\nl<\e$.
\end{itemize}

For all $\la\in\intervalleoo{0}{\la_1}$, all $\pi\geq1$ in such a way that $\frac{\nl}{\al\pi}>K_1$ and $\abs{\frac{\log(\pi)}{\log(1/\la)}-z_0}<\epsilon_1$, Lemma \ref{lemma0-} implies that, if $t_0<1$,
\[\proba{\bdelta(D_{t_0}^A(x_0),D_{t_0}^{\la,\pi,A}(x_0))>\e}\leq\proba{\bdelta(D_{t_0}^A(x_0),D_{t_0}^{\la,\pi,A}(x_0))>\frac{2\ml}{\nl}}\leq\proba{\Omega(\alpha_0,\gamma_0,\la,\pi)^c}<\delta\]
while, if $t_0\geq 1$, Lemma \ref{lemma0+} implies that, (since $\alpha\geq\gamma$ and $\alpha\geq\kappa_{\la,\pi}^{z_\alpha}$)
\[\proba{\bdelta(D_{t_0}^A(x_0),D_{t_0}^{\la,\pi,A}(x_0))>\e}\leq\proba{\bdelta(D_{t_0}^A(x_0),D_{t_0}^{\la,\pi,A}(x_0))>\frac{2\bm_\la^{\alpha_0}}{\nl}}\leq\proba{\Omega(\alpha_0,\gamma_0,\la,\pi)^c}<\delta.\]
This concludes the proof.
\end{proof}

\section{Convergence in the regime $\cR(p)$, for some $p>0$}\label{convergence in the regime p}
The aim of this section is to prove Theorem \ref{converge restriction} for $p>0$ and this will conclude the proof of Theorem \ref{converge} for $p>0$.

In the whole section, we fix the parameters $A>0$, $T>2$ and $p>0$. We omit the subscript/superscript $A$ in the whole proof.

We recall that $\al=\log(1/\la)$, $\nl=\lfloor 1/(\la\al)\rfloor$, $\ml=\lfloor 1/(\la\ba_\la^2)\rfloor$, $\e_\la=1/\ba_\la^ {3}$. We set as usual $A_\la=\lfloor\nl A\rfloor$ and $I_A^\la=\intervalleentier{-A_\la}{A_\la}$. For $i\in\zz$, we set $i_\la=\intervallefo{i/\nl}{(i+1)/\nl}$. For $\intervalleff{a}{b}$ an interval of $\intervalleff{-A}{A}$ and $\la\in\intervalleoo{0}{1}$, we introduce, assuming that $-A<a<b<A$,
\begin{align*}
\intervalleff{a}{b}_\la &= \intervalleentier{\lfloor\nl a+\ml\rfloor+1}{\lfloor\nl b-\ml\rfloor-1}\subset\zz,\\
\intervalleff{-A}{b}_\la &= \intervalleentier{-A_\la}{\lfloor\nl b-\ml\rfloor-1}\subset\zz,\\
\intervalleff{a}{A}_\la &= \intervalleentier{\lfloor\nl a+\ml\rfloor+1}{A_\la}\subset\zz.
\end{align*}
For $\la\in\intervalleoo{0}{1}$ and $\pi\geq1$, we recall that
\[\kappa_{\la,\pi}^0 = \frac{\ml}{\al\pi}+\e_\la\]
and introduce
\begin{align}
\klp &=\left\lfloor\al\pi\left(\e_\la+\mathfrak{v}_{\la,\pi}\right)\right\rfloor,\label{klp}\\
\vlp &=\kappa_{\la,\pi}^0+\mathfrak{v}_{\la,\pi},\label{vlp}\\
\elp &=\e_\la+\mathfrak{v}_{\la,\pi},\label{elp}
\end{align}
where $\mathfrak{v}_{\la,\pi} = \left(\frac{T}{p}\vee2A\right)\abs{\frac{\nl}{\al\pi}-p}$. Observe that $\klp/\nl$, $\vlp$ and $\elp$ tend to $0$ as $\la\to0$ and $\pi\to\infty$ in the regime $\cR(p)$.

For $x\in\intervalleoo{-A}{A}$, $\la\in\intervalleoo{0}{1}$ and $\pi\geq1$, we introduce
\begin{align}
(x)_\la &=\intervalleentier{\lfloor\nl x\rfloor-\ml}{\lfloor\nl x\rfloor+\ml}\subset\zz,\\
\langle x\rangle_{\la,\pi} &=\intervalleentier{\lfloor\nl x\rfloor-\klp}{\lfloor\nl x\rfloor+\klp}\subset\zz,\\
[x]_{\la,\pi} &=\intervalleentier{\lfloor\nl x\rfloor-\ml-2\klp}{\lfloor\nl x\rfloor+\ml+2\klp}\subset\zz.
\end{align}

\subsection{Occupation of vacant zone}
We start with some easy estimates.
\begin{lem}\label{speed}
Consider a family of i.i.d. Poisson processes $(N^S_t(i))_{t\geq0,i\in\zz}$. Let $a<b$.
\begin{enumerate}
        \item For $t<1$, $\lim_{\la\to 0} \proba{\forall i \in
\intervalleentier{\lfloor a\ml \rfloor}{\lfloor b\ml \rfloor},
N^S_{\al t}(i)>0}=0$;
        \item For $t\geq1$, $\lim_{\la\to 0} \proba{\forall i \in
\intervalleentier{\lfloor a\ml \rfloor}{\lfloor b\ml \rfloor},
N^S_{\al t}(i)>0}=1$;
        \item For $t<1$, $\lim_{\la\to 0} \proba{\forall i \in
\intervalleentier{\lfloor a\nl \rfloor}{\lfloor b\nl \rfloor}, N^S_{\al
t}(i)>0}=0$;
        \item For $t\geq1$, $\lim_{\la\to 0} \proba{\forall i \in
\intervalleentier{\lfloor a\nl \rfloor}{\lfloor b\nl \rfloor}, N^S_{\al
t}(i)>0}=1$;
        \item For $t>0$, $\lim_{\la\to 0} \proba{\exists i \in
\intervalleentier{\lfloor a\nl \rfloor}{\lfloor b\nl \rfloor}, N^S_{\al
t}(i)>0}=1$;
		\item For $t>0$ and $\delta>0$, $\lim_{\la\to0}\proba{\forall i\in\intervalleentier{-\lfloor\la^{-(t+\delta)}\rfloor}{\lfloor\la^{-(t+\delta)}\rfloor}, N^S_{\al t}(i)>0}=0$;
		\item For $t>0$ and $\delta>0$, $\lim_{\la\to0}\proba{\forall i\in\intervalleentier{-\lfloor\la^{-(t-\delta)}\rfloor}{\lfloor\la^{-(t-\delta)}\rfloor}, N^S_{\al t}(i)>0}=1$;
        \item For $t<1$, $\lim_{\substack{\la\to 0\\ \pi\to\infty}} \proba{\forall i \in
\intervalleentier{\lfloor a\klp \rfloor}{\lfloor b\klp \rfloor},
N^S_{\al t}(i)>0}=0$ (when $\la\to0$ and $\pi\to\infty$ in the regime $\cR(p)$);
        \item For $t\geq1$, $\lim_{\substack{\la\to 0\\ \pi\to\infty}} \proba{\forall i \in
\intervalleentier{\lfloor a\klp \rfloor}{\lfloor b\klp \rfloor},
N^S_{\al t}(i)>0}=1$ (when $\la\to0$ and $\pi\to\infty$ in the regime $\cR(p)$).
\end{enumerate}
\end{lem}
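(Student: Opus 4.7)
The entire lemma reduces to a single elementary computation. For a family of i.i.d.\ Poisson processes $(N^S_t(i))_{t\geq 0,i\in\zz}$ with rate $1$, we have $\proba{N^S_{\al t}(i)=0} = e^{-\al t} = \la^t$, and by independence, for any finite interval $I_\la\subset\zz$ of cardinality $k_\la$,
\[
\proba{\forall i \in I_\la,\, N^S_{\al t}(i)>0} = (1-\la^t)^{k_\la},
\qquad
\proba{\exists i\in I_\la,\, N^S_{\al t}(i)>0} = 1-\la^{tk_\la}.
\]
The plan is therefore to treat all nine points uniformly via the asymptotic behaviour of $(1-\la^t)^{k_\la}\simeq \exp(-k_\la\la^t)$: this tends to $0$ if and only if $k_\la\la^t\to\infty$, and to $1$ if and only if $k_\la\la^t\to 0$. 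Similarly $1-\la^{tk_\la}$ tends to $1$ as soon as $t k_\la \to\infty$.

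Next I would go through the cases by inspecting the product $k_\la\la^t$ in each regime. For points 1--2 with $k_\la\sim\ml \sim 1/(\la \ba_\la^2)$ we get $k_\la\la^t \sim \la^{t-1}/\ba_\la^2$, which diverges if $t<1$ and vanishes if $t\geq 1$. For points 3--4 with $k_\la\sim \nl\sim 1/(\la\al)$ we get $k_\la\la^t\sim \la^{t-1}/\al$, again diverging for $t<1$ and vanishing for $t\geq 1$. Point 5 is immediate, since $\nl \to \infty$ forces $t \nl \to\infty$ so $\la^{t\nl}\to 0$. For points 6--7 with $k_\la \sim 2\la^{-(t\pm\delta)}$, we get $k_\la\la^t\sim 2\la^{\mp\delta}$, which diverges (respectively vanishes) as $\la\to 0$.

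The only slightly more subtle cases are points 8 and 9, where one must first analyse the size of $\klp$ in the regime $\cR(p)$. Recalling \eqref{klp} and the definition of $\mathfrak{v}_{\la,\pi}$, the quantity $\mathfrak{v}_{\la,\pi}\to 0$ in the regime $\cR(p)$, hence $\elp\sim\e_\la$ and $\klp\sim \al\pi \e_\la$. Since $\al\pi\sim \nl/p \sim 1/(p\la\al)$, one obtains $\klp\sim 1/(p\la\al^4)$, so that $\klp\la^t\sim \la^{t-1}/(p\al^4)$. This gives exactly the same dichotomy: divergence for $t<1$ and vanishing for $t\geq 1$. The only genuine work is keeping track of the polylogarithmic factors to verify that $\al^4$ in the denominator does not prevent the vanishing at $t=1$, which is immediate since $1/\al^4\to 0$. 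With all these computations in hand, the conclusion of each point follows from the uniform continuity of $x\mapsto e^{-x}$ at $0$ and at $+\infty$, so no genuine obstacle arises.
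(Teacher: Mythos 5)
Your proposal is correct and matches the paper's own proof essentially line for line: write the probability as $(1-\la^t)^{k_\la}\simeq e^{-k_\la\la^t}$ and study $k_\la\la^t$ in each case. One small caveat on points 8--9: the claim $\klp\sim\al\pi\e_\la$ does not actually follow from $\mathfrak{v}_{\la,\pi}\to 0$, since $\e_\la\to 0$ as well and the regime $\cR(p)$ puts no constraint on which of the two vanishes faster; the paper keeps both terms, writing $\klp\la^t\simeq\frac{\la^{t-1}}{\al p}(1/\al^3+\mathfrak{v}_{\la,\pi})$, and the dichotomy then follows because for $t<1$ the $\e_\la$-piece alone already diverges, while for $t\geq 1$ one uses $\al\pi\la^t\lesssim 1/\al\to 0$ together with $\e_\la+\mathfrak{v}_{\la,\pi}\to 0$. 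Your conclusion survives this correction, but the stated asymptotic equivalence should be replaced by those one-sided bounds.
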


\begin{proof}
This lemma is closely related to Lemma \ref{speed infty}. For $r_\la\xrightarrow[\la\to0]{}\infty$, we have
\[\proba{\forall i \in
\intervalleentier{-\lfloor ar_\la\rfloor}{\lfloor br_\la\rfloor}, N^S_{\al
t}(i)>0}\simeq(1-e^ {-\al t})^{(b-a)r_\la}\simeq e^{-(b-a)r_\la \la^t}.\]
Observe now that
\[\ml\la^t\simeq\frac{\la^{t-1}}{\ba_\la^2}\xrightarrow[\la\to0]{}\begin{cases}
\infty &\text{if } t<1,\\
0 &\text{if } t\geq1,
\end{cases}\]
from which points 1 and 2 follow, that
\[\nl\la^t\simeq\frac{\la^{t-1}}{\ba_\la}\xrightarrow[\la\to0]{}\begin{cases}
\infty &\text{if } t<1,\\
0 &\text{if } t\geq1,
\end{cases}\]
which implies points 3 and 4.  For the point 5, it suffices to note that, for any $i\in\zz$,
\[\proba{N_{\al t}^S(i)=0}=e^{-\al t}.\]
Hence
\[\proba{\exists i \in
\intervalleentier{\lfloor a\nl \rfloor}{\lfloor b\nl \rfloor}, N^S_{\al
t}(i)>0}\simeq 1-e^{-\al \nl t(b-a)}\xrightarrow[\la\to0]{}1.\]
For $t>0$ and $\delta>0$, we have
\[\proba{\forall i\in\intervalleentier{-\lfloor\la^{-(t+\delta)}\rfloor}{\lfloor\la^{-(t+\delta)}\rfloor}, N^S_{\al t}(i)>0}\simeq e^{-2\la^{-\delta}}\xrightarrow[\la\to0]{}0,\]
which prove point 6, whence
\[\proba{\forall i\in\intervalleentier{-\lfloor\la^{-(t-\delta)}\rfloor}{\lfloor\la^{-(t-\delta)}\rfloor}, N^S_{\al t}(i)>0}\simeq e^{-2\la^{\delta}}\xrightarrow[\la\to0]{}1\]
which is Point 7.

For the two last statement, as $\la\to0$ and $\pi\to\infty$ in the regime $\cR(p)$, we have, observing that $\mathfrak{v}_{\la,\pi}\to0$,
\[\klp\la^t\simeq\al\pi\la^t(\e_\la+\mathfrak{v}_{\la,\pi})\simeq \frac{\nl\la^t}{p}\left(\e_\la+\mathfrak{v}_{\la,\pi}\right)\simeq\frac{\la^{t-1}}{\al p}\left(1/\al^3+\mathfrak{v}_{\la,\pi}\right) \xrightarrow[\la,\pi]{}\begin{cases}
\infty &\text{if } t<1,\\
0 &\text{if } t\geq1.\qedhere
\end{cases}\]
\end{proof}

\subsection{Height of the barrier}\label{heightp}
We describe here the time needed for a destroyed microscopic cluster to be
regenerated. Roughly, we assume that the zone $\intervalleentier{-\ml}{\ml}$ around $0$ has been made vacant at some time $\al t_0$. Then we consider the situation where a match falls on $0$ at some time $\al t_1\in\intervalleoo{\al t_0}{\al(t_0+1)}$ and we compute the delay needed for the destroyed cluster to be fully regenerated. We have to distinguish two cases.
\begin{enumerate}[label=\alph*)]
	\item We first consider the case where a match falls on $0$ at time $\al t_1\in\intervalleoo{0}{\al}$. This case is closely related to Lemma \ref{height infty}.
	\item We then consider the case where a fire propagates through $\intervalleentier{-\ml}{\ml}$ at time $\al t_0$ and a match falls on $0$ at time $\al t_1\in\intervalleoo{\al t_0}{\al (t_0+1)}$. This case is a little bit different but is proved in the same way as the previous case.
\end{enumerate}

\begin{lem}\label{micro fire p}
Consider two Poisson processes $(N_t^S(i))_{t\geq0,i\in\zz}$ and
$(N_t^P(i))_{t\geq0,i\in\zz}$ with respective rates $1$ and $\pi$, all this
processes being independent. Consider also $\cM\coloneqq(i_0;t_0,t_1)\in\zz\times(\rr_+)^2$ with $|i_0|\in\intervalleentier{\ml}{\ml+2\klp}$, $t_0\in\{0\}\cup\intervalleoo{1}{\infty}$ and $t_1\in\intervalleoo{t_0}{t_0+1}$. For $i\in\zz$ and $t\geq0$, we consider the process
\begin{align*}
\zeta^{\la,\pi,\cM}_t(i) =& \left(1+\indiq{t\geq\al
(t_0-\vlp),i=i_0}\right)\times\indiq{t_0>1}\\
&+\indiq{t\geq\al
t_1,i=0,\zeta^{\la,\pi,\cM}_{\al t_1-}(0)=1}+\int_0^t\indiq{\zeta^{\la,\pi,\cM}_{s-}(i)=0}\diff
N_s^S(i)\\
&+\int_0^t \indiq{\zeta^{\la,\pi,\cM}_{s-}(i+1)=2,\zeta^{\la,\pi,\cM}_{s-}(i)=1}\diff
N_s^P(i+1)\\
&+\int_0^t
\indiq{\zeta^{\la,\pi,\cM}_{s-}(i-1)=2,\zeta^{\la,\pi,\cM}_{s-}(i)=1}\diff N_s^P(i-1)\\
&- 2\int_0^t \indiq{\zeta^{\la,\pi,\cM}_{s-}(i)=2}\diff N_s^P(i).
\end{align*}
Using the propagation processes $(N^P_t(i))_{t\geq0,i\in\zz}$, consider the burning times $(T^1_i)_{i\in\zz}$ of the propagation process iginited at $(0,t_1)$, recall Definition \ref{definition1 application}, and define the destroyed
cluster due to the match falling in $0$ at time $\al t_1$, recall \eqref{destroyed comp},
\[C^P((\zeta^{\la,\pi,\cM}_t(i))_{t\geq0,i\in\zz},(0,t_1))\coloneqq\intervalleentier{i^g}{i^d}.\]

We finally define the time needed for $C^P((\zeta^{\la,\pi,\cM}_t(i))_{t\geq0,i\in\zz},(0,t_1))$ to
become again occupied
\[\Theta_{\cM}^{\la,\pi}\coloneqq\inf\left\{ t>t_1 : \forall i\in
C^P((\zeta^{\la,\pi,\cM}_t(i))_{t\geq0,i\in\zz},(0,t_1)),\zeta^{\la,\pi,\cM}_{\al t}(i)=1 \right\}.\]

For all $\delta>0$, there holds that, 
\[\lim_{\la ,\pi} \proba{\abs{\Theta_{\cM}^{\la,\pi} - (t_1-t_0)}\geq
\delta} = 0\]
when $\la\to0$ and $\pi\to\infty$ in the regime $\cR(p)$. 
\end{lem}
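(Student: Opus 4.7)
The proof follows the strategy of Lemma \ref{micro fire infty}, adapted to the regime $\cR(p)$ and extended to handle case (b). Set $\tau := t_1 - t_0 \in (0, 1)$.

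\emph{Step 1: Reduction to instantaneous propagation.} Define $\vartheta^\la_t(i) := \min(N^S_{\al t}(i) - N^S_{\al t_0}(i), 1)$ (with $t_0 = 0$ in case (a)), which describes the configuration around $0$ if seeds accumulate from $\al t_0$ onwards with no further interaction. Set
\[\Xi^\la := \inf\Big\{s > 0 : \forall i \in C(\vartheta^\la_{t_1}, 0),\ N^S_{\al(t_1 + s)}(i) - N^S_{\al t_1}(i) \geq 1\Big\}.\]
The explicit computation of Step 1 in the proof of Lemma \ref{micro fire infty}, verbatim with $\tau$ in place of $\tau_1$ there, yields $\proba{\Xi^\la \leq h} \to \indiq{h > \tau}$, so $\Xi^\la \to \tau$ in probability.

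\emph{Step 2: Case (a), $t_0 = 0$.} Here $\zeta^{\la,\pi,\cM}$ coincides with $\vartheta^\la$ until time $\al t_1$. The plan is to transcribe Step 2 of the proof of Lemma \ref{micro fire infty}, replacing $\Omega^{P,z}_{\la,\pi}(0, t_1)$ by $\Omega^{P,T}_{\la,\pi}(0, t_1)$ (from Lemma \ref{propagation lemma p}) and $\kappa^z_{\la,\pi}$ by $\kappa^0_{\la,\pi}$. Existence of vacant barriers on both sides of $0$ at time $\al(t_1 + \kappa^0_{\la,\pi})$, needed to confine the destroyed cluster, is guaranteed by Lemma \ref{speed} combined with $\kappa^0_{\la,\pi} \to 0$. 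As in the infinite-velocity case, one shows that $C^P((\zeta^{\la,\pi,\cM}_t)_{t,i}, (0, t_1))$ is sandwiched between $C(\vartheta^\la_{t_1}, 0)$ and $C(\vartheta^\la_{t_1 + \kappa^0_{\la,\pi}}, 0)$, and monotonicity of $s \mapsto s + \Xi^\la_s$ combined with $\kappa^0_{\la,\pi} \to 0$ gives $\Theta^{\la,\pi}_\cM \to t_1 = \tau$ in probability.

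\emph{Step 3: Case (b), $t_0 > 1$.} The new ingredient is to control the fire propagating from $i_0$ starting at time $\al(t_0 - \vlp)$. On the event $\Omega^{P,T}_{\la,\pi}(i_0/\nl, t_0 - \vlp)$ from Lemma \ref{propagation lemma p}, this fire propagates at speed approximately $\pi$ with isolated, quickly-extinguishing sparks. Since $|i_0| \in \intervalleentier{\ml}{\ml + 2\klp}$ and $\al\pi\vlp \simeq \ml + \klp$, the fire crosses $0$ at some time within $O(\al\e_\la + \al\mathfrak{v}_{\la,\pi})$ of $\al t_0$; more generally, any site $i$ with $|i| \lesssim \la^{-\tau}$ (which is $\ll \ml$) is cleared by the fire at a time differing from $\al t_0$ by at most $O(\al\e_\la + \al\mathfrak{v}_{\la,\pi} + |i|/\pi)$. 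Since no match falls during $\intervalleff{\al t_0}{\al t_1}$ and since the fire's fronts have both left the relevant neighborhood well before $\al t_1$, the marginals of $\zeta^{\la,\pi,\cM}$ on this neighborhood at time $\al t_1$ coincide, up to vanishing error, with those of $\vartheta^\la$. The sandwiching argument of Step 2 then applies with $\tau$ in place of $t_1$, yielding $\Theta^{\la,\pi}_\cM \to \tau$ in probability.

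The main obstacle lies in the uniform clearance claim of Step 3: one must show that each site in the relevant neighborhood of $0$ is cleared by the propagating fire at a time close enough to $\al t_0$ — in rescaled units, within $o(1)$ — for the post-clearance evolution to match $\vartheta^\la$ restarted at $\al t_0$. This reduces to checking that $\e_\la$, $\mathfrak{v}_{\la,\pi}$, and $|i|/(\al\pi)$ for $|i| \lesssim \la^{-\tau}$ all vanish in the regime $\cR(p)$, which holds since $\la^{-\tau}/(\al\pi) \leq \la^{-1}/(\al\pi) \simeq p/\al \to 0$. Once this is in place, Step 3 reduces to the sandwiching argument of Step 2.
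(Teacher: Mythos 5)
Your overall strategy matches the paper's: reduce to the instantaneous-propagation refill time $\Xi^\la$ computed exactly as in Step~1 of Lemma~\ref{micro fire infty}, sandwich the killed cluster between the instantaneous clusters restarted at slightly shifted times, and use monotonicity of the refill time. The decomposition into cases $t_0=0$ and $t_0>1$, the use of $\Omega^{P,T}_{\la,\pi}(\cdot,\cdot)$ from Lemma~\ref{propagation lemma p}, and the use of Lemma~\ref{speed} are all the right ingredients, and in case (b) the paper indeed sandwiches $C^P$ between $C(\vartheta_{t_0+2\vlp,t_1}^\la,0)$ and $C(\vartheta_{t_0-\vlp,t_1+\kappa_{\la,\pi}^0}^\la,0)$, which is what your ``vanishing error'' is meant to express.

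However, the final verification is wrong as written. You assert $\la^{-\tau}/(\al\pi) \leq \la^{-1}/(\al\pi)\simeq p/\al\to 0$, but in fact $\la^{-1}/(\al\pi)=\nl\al/(\al\pi)=\nl/\pi\simeq p\,\al\to\infty$ in the regime $\cR(p)$, so the comparison to $\la^{-1}$ yields nothing. There is also a scale mismatch: the zone that must be cleared by the initial fire is all of $\intervalleentier{-\ml}{\ml}$, not merely $\{|i|\lesssim\la^{-\tau}\}$, because the vacant barriers $i_1,i_2$ that confine the killed cluster (provided by Lemma~\ref{speed}-1) may lie anywhere up to distance $\ml$ from $0$; without clearing that whole zone you cannot establish $C^P\subset\intervalleentier{i_1}{i_2}\subset\intervalleentier{-\ml}{\ml}$. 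The estimate you actually need is $\ml/(\al\pi)\simeq p/\al\to 0$ (and $\klp/(\al\pi)=\e_\la+\mathfrak{v}_{\la,\pi}\to 0$), which is what makes $\vlp\to 0$ and gives the fire time to sweep $\intervalleentier{-\ml}{\ml}$ inside a rescaled window of length $3\vlp$. (If you truly want $\la^{-\tau}/(\al\pi)\to 0$, you must use $\tau<1$ strictly: $\la^{-\tau}/(\al\pi)\simeq p\,\al\,\la^{1-\tau}\to 0$.) Once these two points are fixed, the sandwich goes through exactly as in the paper.
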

Let us explain the behaviour of the process $(\zeta^{\la,\pi,\cM}_t(i))_{t\geq 0,i\in\zz}$. If $t_0=0$, then the process starts from a vacant initial situation and a match falls on $0$ at time $\al t_1$. It does not depend on $i_0$ and since $0<t_1<1$, the zone $\intervalleentier{-\ml}{\ml}$ is not completely filled at time $\al (t_1+\kappa_{\la,\pi}^0)$, see Lemma \ref{speed}-1 (and because $\kappa_{\la,\pi}^0\to0$). The process is then governed by the propagation processes $(N_t^P(i))_{t\geq0,i\in\zz}$ and the seed processes $(N_t^S(i))_{t\geq0,i\in\zz}$ with the same rules as the $(\la,\pi)-$FFP. As seen in {\bf Micro$(p)$} in Subsection \ref{application ffp}, the fire is extinguished at time $\al(t_1+\kappa_{\la,\pi}^0)$.

If $t_0>1$, then the process starts at time $0$ from an occupied initial situation, nothing happens until a match falls on $i_0$ at time $\al(t_0-\vlp)$. Two fires start: one goes to the left and one goes to the right. Thus, on $\Omega_{\la,\pi}^{P,T}(i_0/\nl, t_0-\vlp)$, recall Definition \ref{definition2 application}, and since
\[\lfloor\al\pi(3\vlp-\e_\la)\rfloor\geq 2\ml+2\klp,\]
recall \eqref{klp} and \eqref{vlp}, each site of $\intervalleentier{-\ml}{\ml}$ burns and extinguishes before $\al(t_0+2\vlp)$, recall Lemma \ref{propagation lemma p}. Hence, the zone $\intervalleentier{-\ml}{\ml}$ is not completely filled when the match falls on $0$ at time $\al t_1$, see Lemma \ref{speed}-1 and because $\al(t_0+2\vlp)<\al t_1<\al(t_0+1)$ for all $(\la,\pi)$ sufficiently close to the regime $\cR(p)$.
\begin{proof}
The proof is in the same spirit as the proof of Lemma \ref{micro fire infty}. We first define the simplest process with an instantaneous propagation: if a match falls in a cluster, it destroys
instantaneously the entire connected component. Secondly, we flank the killed cluster
$C^P((\zeta^{\la,\pi,\cM}_t(i))_{t\geq0,i\in\zz},(0,t_1))$ to estimate the time needed to become again occupied, see Figure \ref{fig height barrierp}.

\md

\noindent{\bf Step 1.} Let $\tau_0<\tau_1<\tau_0+1$ be fixed. Put
$\vartheta_{\tau_0,t}^\la(i)=\min(N^{S}_{\al (\tau_0+t)}(i)-N^{S}_{\al \tau_0}(i),1)$ and
$\vartheta_{\tau_1,t}^\la(i)=\min(N^{S}_{\al(\tau_1+t)}(i)-N^{S}_{\al
\tau_1}(i),1)$ for all $t>0$ and all $i\in \zz$. We define the time needed for the destroyed cluster to be fully regenerated
\[\Xi_{\tau_0,\tau_1}^\la=\inf \left\{ t>0 : \forall i \in
C(\vartheta_{\tau_0,\tau_1-\tau_0}^\la,0),\; \vartheta_{\tau_1,t}^\la(i)=1 \right\}.\]
Then for all $\delta>0$, 
\[\lim_{\la \to 0} \proba{ |\Xi_{\tau_0,\tau_1}^\la -(\tau_1- \tau_0)|\geq \delta} =
0.\]

This has been checked in Step 1 of the proof of Lemma \ref{micro fire infty} when $\tau_0=0$. This of course extends without any difficulty, using time stationarity.

\md

\noindent{\bf Step 2.} Assume $t_0=0$. In that case, the process not depends on $i_0$. Consider the event $\Omega^{P,T}_{\la,\pi}(0,t_1)$, recall Definition \ref{definition2 application}. We  define
\begin{multline*}
\tOmega^{P,T,\cM}_{\la,\pi}=\Omega^{P,T}_{\la,\pi}(0,t_1)\cap\{\exists i_1\in\intervalleentier{-\ml}{0}, N^S_{\al (t_1+\kappa_{\la,\pi}^0)}(i_1)=0\}\\
\cap\{\exists i_2\in\intervalleentier{0}{\ml}, N^S_{\al (t_1+\kappa_{\la,\pi}^0)}(i_2)=0\}.
\end{multline*}
Lemma \ref{propagation lemma p} together with Lemma \ref{speed}-1 show that $\proba{\tOmega^{P,T,\cM}_{\la,\pi}}$ tends to $1$ when $\la\to0$ and $\pi\to\infty$ in the regime $\cR(p)$ (because $t_1+\kappa_{\la,\pi}^0<(t_1+1)/2<1$ for all $(\la,\pi)$ sufficiently close to the regime $\cR(p)$).

Next, on $\tOmega^{P,T}_{\la,\pi}(0,t_1)$, there holds that
\[C(\vartheta_{0,t_1+\kappa_{\la,\pi}^0}^\la,0)\coloneqq\intervalleentier{C^-}{C^+}\subset\intervalleentier{i_1}{i_2}\subset\intervalleentier{-\ml}{\ml}.\]
Since, by definition, no seed falls on $C^+$ and on $C^-$ until $\al(t_1+\kappa_{\la,\pi}^0)$ and since we start from a vacant initial situation, we deduce that
\[\zeta^{\la,\pi,\cM}_t(C^-)=\zeta^{\la,\pi,\cM}_t(C^+)=0\]
for all $t\in\intervalleff{0}{\al(t_1+\kappa_{\la,\pi}^0)}\supset\intervalleff{\al t_1}{\al(t_1+\kappa_{\la,\pi}^0)}$. As seen in {\bf Micro$(p)$} in Subsection \ref{application ffp}, the fire destroys exactly the zone $C^P((\zeta^{\la,\pi,\cM}_t(i))_{t\geq 0,i\in\zz},(0,t_1))$ and 
\[C^P((\zeta^{\la,\pi,\cM}_t(i))_{t\geq 0,i\in\zz},(0,t_1))\subset\intervalleentier{C^-}{C^+}\subset\intervalleentier{-\ml}{\ml}\]
with $\zeta^{\la,\pi,\cM}_{\al (t_1+\kappa_{\la,\pi}^0)}(i)\leq 1$ for all $i\in\zz$ (the fire is extinguished at time $\al(t_1+\kappa_{\la,\pi}^0)$). 

Since $C^P((\zeta^{\la,\pi,\cM}_t(i))_{t\geq 0,i\in\zz},(0,t_1))$ clearly contains $C(\vartheta_{0,t_1}^\la,0)$, we deduce that, on $\tOmega^{P,T,\cM}_{\la,\pi}$,
\[t_1+\Xi_{0,t_1}^\la\leq t_1+\Theta^{\la,\pi}_{\cM}\leq t_1+\kappa_{\la,\pi}^0+\Xi_{0,t_1+\kappa_{\la,\pi}^0}^\la.\]
Remark now that the function $\colon t\mapsto t+\Xi^\la_{0,t}$ is a.s. non decreasing and right-continuous. We thus deduce from Step 1 that
\[t_1+\Theta_{\cM}^{\la,\pi}\xrightarrow[\la,\pi]{\pp}2t_1\]
in probability, whence  for all $\delta>0$ and all $\e>0$, there holds  that $\proba{
\abs{\Theta^{\la,\pi}_{\cM}- t_1}\geq
\delta} <\e$ for all $(\la,\pi)$ sufficiently close to the regime $\cR(p)$.

\md

\noindent{\bf Step 3.} Assume now $t_0>1$. We may and will assume $i_0\in\intervalleentier{-\ml-2\klp}{-\ml}$, by symetry.

Consider the events $\Omega^{P,T}_{\la,\pi}(i_0/\nl,t_0-\vlp)$ and $\Omega^{P,T}_{\la,\pi}(0,t_1)$, recall Definition \ref{definition2 application}. We define
\begin{multline*}
\tOmega^{P,T,\cM}_{\la,\pi}\coloneqq \Omega^{P,T}_{\la,\pi}(0, t_1)\cap\Omega^{P,T}_{\la,\pi}(i_0/\nl,t_0-\vlp)\\
\cap\{\exists i_1 \in\intervalleentier{-\ml}{0},N^S_{\al (t_1+\kappa_{\la,\pi}^0)}(i_1)-N^S_{\al (t_0-\vlp)}(i_1)=0\}\\
\cap\{\exists i_2 \in\intervalleentier{0}{\ml},N^S_{\al (t_1+\kappa_{\la,\pi}^0)}(i_2)-N^S_{\al (t_0-\vlp)}(i_2)=0\}.
\end{multline*}
Lemma \ref{propagation lemma p} together with Lemma \ref{speed}-1 directly imply that $\proba{\tOmega^{P,T,\cM}_{\la,\pi}}$ tends to $1$ when $\la\to0$ and $\pi\to \infty$ in the regime $\cR(p)$ (because $t_1+\kappa_{\la,\pi}^0-(t_0-\vlp)=t_1-t_0+\kappa_{\la,\pi}^0+\vlp<(t_1-t_0+1)/2<1$ for all $(\la,\pi)$ sufficiently close to the regime $\cR(p)$).

Recall Lemma \ref{propagation lemma p}. Since all the sites are occupied at time $\al(t_0-\vlp)$ and since
\[i_0+\lfloor\al\pi(3\vlp-\e_\la)\rfloor\geq \ml,\]
on $\Omega^{P,T}_{\la,\pi}(i_0/\nl,t_0-\vlp)$, there is no more burning tree in $\intervalleentier{-\ml}{\ml}$ at time $\al(t_0+2\vlp)$ nor during the time interval $\intervallefo{\al(t_0+2\vlp)}{\al t_1}$. Thus, the match falling in $0$ at time $\al t_1$ destroys at least the zone $C(\vartheta_{t_0+2\vlp,t_1}^\la,0)$.

Next, on $\tOmega_{\la,\pi}^{P,T,\cM}$, we have
\[C(\vartheta_{t_0-\vlp,t_1+\kappa_{\la,\pi}^0}^\la,0)\coloneqq \intervalleentier{C^-}{C^+}\subset \intervalleentier{i_1}{i_2}\subset\intervalleentier{-\ml}{\ml}.\]

Since no seed falls on $C^-$ and on $C^+$ during $\intervalleff{\al(t_0-\vlp)}{\al(t_1+\kappa_{\la,\pi}^0)}$ and since $C^-$ and $C^+$ are made vacant during the time interval $\intervalleff{\al (t_0-\vlp)}{\al (t_0+2\vlp)}$, thanks to $\Omega^{P,T}_{\la,\pi}(i_0/\nl,t_0-\vlp)$, we deduce that there is no burning tree in $\intervalleentier{C^-}{C^+}$ at time $\al t_1-$ and
\[\zeta^{\la,\pi,\cM}_{\al t}(C^-)=\zeta^{\la,\pi,\cM}_{\al t}(C^+)=0 \text{ for all } t\in\intervalleff{t_1}{t_1+\kappa_{\la,\pi}^0}.\]
Hence, as seen in {\bf Micro$(p)$} in Subsection \ref{application ffp}, the match falling on $0$ at time $\al t_1$ destroys at most the zone $\intervalleentier{C^-}{C^+}\subset \intervalleentier{i_1}{i_2}$ and there is no more burning tree in $\intervalleentier{C^-}{C^+}$ at time $\al(t_1+\kappa_{\la,\pi}^0)$.

To summarize, on $\tOmega^{P,T,\cM}_{\la,\pi}$, see Figure \ref{fig height barrierp},  we have
\[C(\vartheta_{t_0+2\vlp,t_1}^\la,0) \subset
C^P((\zeta^{\la,\pi,\cM}_t(i))_{t\geq 0,i\in\zz},(0, t_1))\subset C(\vartheta_{t_0-\vlp,t_1+\kappa_{\la,\pi}^0}^\la,0)\subset \intervalleentier{i_1}{i_2}\]
with additionally $\zeta^{\la,\pi,\cM}_{\al (t_1+\kappa_{\la,\pi}^0)}(i)\leq 1$ for all $i\in\intervalleentier{-\ml}{\ml}$.

Since no fire affect the zone $\intervalleentier{-\ml}{\ml}$ during $\intervalleff{\al(t_1+\kappa_{\la,\pi}^0)}{\al T}$, thanks to $\Omega^{P,T}_{\la,\pi}(i_0/\nl,t_0-\vlp)$, we deduce that, on $\tOmega_{\la,\pi}^{P,T,\cM}$ and for all $(\la,\pi)$ sufficiently close to the regime $\cR(p)$,
\[t_1+\Xi_{t_0+2\vlp,t_1}^\la\leq t_1+\Theta^{\la,\pi}_{\cM}\leq t_1+\kappa_{\la,\pi}^0+\Xi_{t_0-\vlp,t_1+\kappa_{\la,\pi}^0}^\la.\]

Then, one easily concludes. The function $s\mapsto t_1+\Xi_{t_0+s,t_1}^\la$ is a.s. non increasing and right-continuous while the function $s\mapsto t_1+s+\Xi_{t_0-s,t_1+s}^\la$ is a.s. non decreasing and right-continuous. Since $\kappa_{\la,\pi}^0\to0$, we thus deduce from Step 1 that
\[t_1+\Theta^{\la,\pi}_{\cM}\xrightarrow[\la,\pi]{\pp}2t_1-t_0,\]
as desired.\qedhere
\begin{figure}[h!]
\fbox{
\begin{minipage}[c]{0.95\textwidth}
\centering
\begin{tikzpicture}
\draw[thick,decorate,decoration={random steps,segment length=0.5mm,amplitude=0.2mm},color=red]
	(-6,.4) -- (-4.5,0) -- (6,2.625);

\draw[->] (-6,0) -- (-6,8) node[right] {$t$};
\draw (6,0)--(-6,0) node[left] {\footnotesize $\al(t_0-\vlp)$};

\draw (0,.1)--(0,-.1) node[below] {$0$};
\draw (-4.5,.1)--(-4.5,-.1) node[below] {\footnotesize $i_0$};
\draw (-5.6,.1)--(-5.6,-.1) node[below] {\footnotesize $-\ml-2\klp$};
\draw (5.6,.1)--(5.6,-.1) node[below] {\footnotesize $\ml+2\klp$};
\draw (-3.5,.1)--(-3.5,-.1) node[below] {\footnotesize $-\ml$};
\draw (3.5,.1)--(3.5,-.1) node[below] {\footnotesize $\ml$};

\draw[thick,decorate,decoration={random steps,segment length=0.5mm,amplitude=0.2mm},color=red]
	(-2,5.5) -- (0,5)--(1.5,5.4);
	
\draw[dashed] (1.5,5.4) -- (1.5,0) node[below] {\footnotesize $i^d$};
\draw[dashed] (-2,5.5) -- (-2,0) node[below] {\footnotesize $i^g$};
\draw[dashed] (2.3,6) -- (2.3,0) node[below] {\footnotesize $C^+$};
\draw[dashed] (-2.5,6) -- (-2.5,0) node[below] {\footnotesize $C^-$};

\draw[ultra thick] (-1.5,5)--(1,5);
\draw[decorate,decoration={brace,raise=0.2cm,mirror}] (-1.5,5)--(1,5) node[below=0.2cm,pos=0.5,sloped] {$C(\vartheta_{t_0+2\vlp,t_1}^\lambda,0)$};

\draw[ultra thick] (-2.5,6)--(2.3,6);
\draw[decorate,decoration={brace,raise=0.2cm}] (-2.5,6)--(2.3,6) node[above=0.2cm,pos=0.5,sloped] {$C(\vartheta_{t_0-\vlp,t_1+\kappa_{\la,\pi}^0}^\lambda,0)$};

\draw (-5.9,6) -- (-6.1,6) node[left] {\footnotesize $\al(t_1+\kappa_{\la,\pi}^0)$};
\draw (-5.9,5) -- (-6.1,5) node[left] {\footnotesize $\al t_1$};
\draw[dashed] (3.5,2.3) -- (-6.1,2.3) node[left] {\footnotesize $\al (t_0+2\vlp)$};
\draw[dashed] (3.5,2.3) -- (3.5,0);
\end{tikzpicture} \caption{Height of a barrier in the regime $\cR(p)$, for $p>0$.}\label{fig height barrierp}
\vspace{.5cm}
\parbox{13.3cm}{
\footnotesize{
At time $\al(t_0-\vlp)-$, all the sites are occupied. A match falls on $i_0$ at time $\al (t_0-\vlp)$.  Two fires start: one goes to the left and one goes to the right. Thus, on $\Omega_{\la,\pi}^{P,T}(i_0/\nl, t_0-\vlp)$, each site of $\intervalleentier{-\ml}{\ml}$ burns and extinguishes before $\al(t_0+2\vlp)$ (because $i_0+\lfloor\al\pi(3\vlp-\e_\la)\rfloor\geq \ml$).

Next, a match falls on $0$ at time $\al t_1$. Since no seed fall on $C^-\in\intervalleentier{-\ml}{0}$ and $C^+\in\intervalleentier{0}{\ml}$ during $\intervalleff{\al(t_0-\vlp)}{\al(t_1+\kappa_{\la,\pi}^0)}$, they remain vacant after burning. Thus, the true killed cluster $\intervalleentier{i^g}{i^d}$ contains $C(\vartheta_{t_0+2\vlp,t_1}^\lambda,0)$ but is included in $\intervalleentier{C^-}{C^+}=C^P((\zeta^{\la,\pi\cM}_t)_{t\geq0,i\in\zz},(0,t_1))$.
}}
\end{minipage}}
\end{figure}
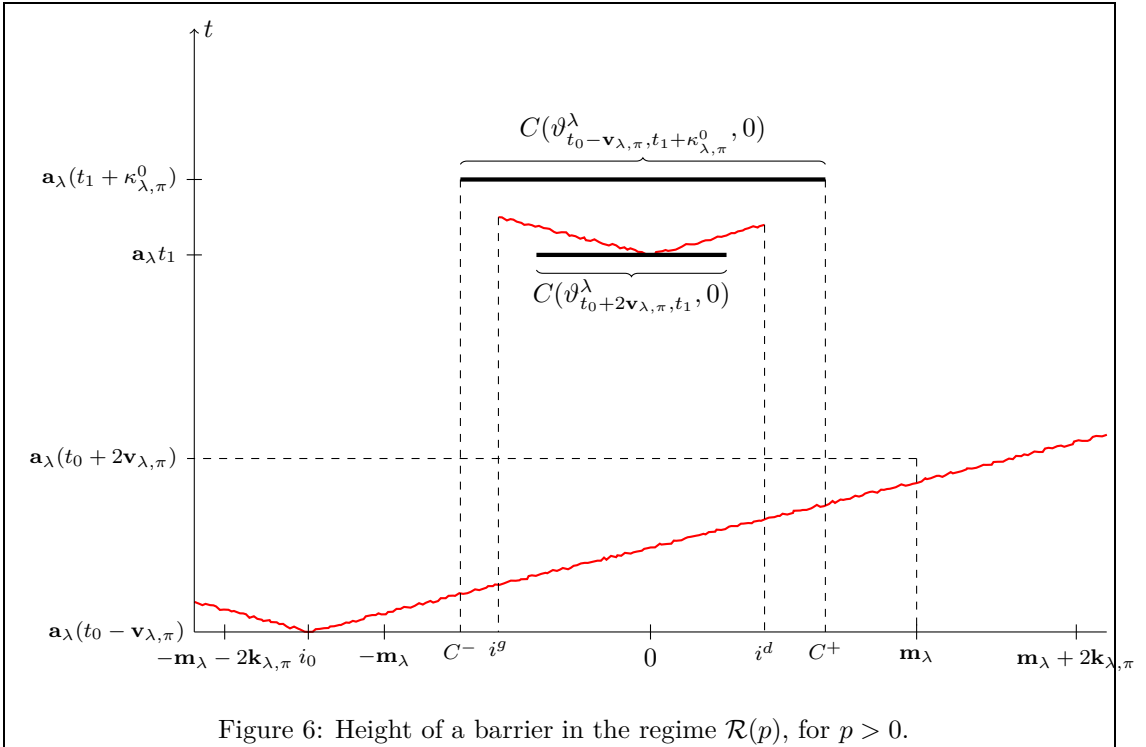
\end{proof}

\subsection{Persistent effect of microscopic fires}\label{persisp}
Here we study the effect of microscopic fires. First, they produce a barrier, and
then, if there are alternatively macroscopic fires on the left and right, they still
have an effect. This phenomenon is illustrated on Figure \ref{persiseffect} in
the case of the limit process.

We say that $\cP=(t_0,t_1,\dots,t_K)$
satisfies $(PP1)$ (like ping-pong) if 
\begin{enumerate}
        \item $K\geq2$;
        \item $t_0\in\{0\}\cup\intervalleoo{1}{\infty}$ and $t_0<t_1<t_2<\dots<t_K$;
        \item for all $k=0,\dots,K-1$, $t_{k+1}-t_k<1$;
        \item $t_2-t_0>1$ and for all $k=2,\dots,K-2, t_{k+2}-t_k>1$.
\end{enumerate}

We say that $\cI=(\e;i_0,i_2,\dots,i_K)$
satisfies $(PP2)$ if 
\begin{enumerate}
	\item $\e\in\{-1,1\}$;
	\item $|i_0|\in\intervalleentier{\ml}{\ml+2\klp}$;
	\item for all $k=2,\dots,K$, $\e_k i_k\in\intervalleentier{\ml}{\ml+2\klp}$, where we set $\e_k=(-1)^k\e$.
\end{enumerate}

Finally, we say that $\mathfrak{P}=(\cP,\cI)$ satisfies $(PP)$ if $\cP$ satisfies $(PP1)$ and $\cI$ satisfies $(PP2)$.

Let $\fP$ satisfy $(PP)$. Consider two Poisson processes $(N_t^S(i))_{t\geq0,i\in\zz}$ and
$(N_t^P(i))_{t\geq0,i\in\zz}$ with respective rates $1$ and $\pi$, all this
processes being independent. We define the process $(\zeta^{\la,\pi,\mathfrak{P}}_t(i))_{t\geq 0,i\in\zz}$ as follows
\begin{align*}
{\zeta}^{\lambda,\pi,\mathfrak{P}}_t(i)=&(1+\indiq{i=i_0,t\geq \al (t_0-\vlp)})\indiq{t_0\geq1}+\indiq{i=0,t\geq\al t_1,{\zeta}^{\lambda,\pi,\mathfrak{P}}_{\al t_1-}(0)=1}\\
&+\sum_{k=2}^K \indiq{i=i_k,t\geq\al (t_k-\vlp),{\zeta}^{\lambda,\pi,\mathfrak{P}}_{\al(t_k-\vlp)-}(i_k)=1}\\
&+\int_0^t\indiq{{\zeta}^{\lambda,\pi,\mathfrak{P}}_{s-}(i)=0}\diff
N_s^S(i)\\
&+\int_0^t
\indiq{{\zeta}^{\lambda,\pi,\mathfrak{P}}_{s-}(i-1)=2,{\zeta}^{\lambda,\pi,\mathfrak{P}}_{s-}(i)=1}\diff
N_s^P(i-1)+\int_0^t
\indiq{{\zeta}^{\lambda,\pi,\mathfrak{P}}_{s-}(i+1)=2,{\zeta}^{\lambda,\pi,\mathfrak{P}}_{s-}(i)=1}\diff
N_s^P(i+1)\\
&- 2\int_0^t \indiq{{\zeta}^{\lambda,\pi,\mathfrak{P}}_{s-}(i)=2}\diff
N_s^P(i).
\end{align*}
We now explain the behaviour of the process $(\zeta^{\la,\pi,\mathfrak{P}}_t(i))_{t\geq 0,i\in\zz}$.
\begin{itemize}
	\item If $t_0=0$, then the process starts from a vacant initial configuration. The match falling on $0$ at time $\al t_1\in\intervalleoo{0}{\al}$ creates a barrier, see Lemma \ref{micro fire p}, because $t_1\in\intervalleoo{0}{1}$. Then, fires start in $i_k$ alternately on the right and on the left of $0$ at times $\al(t_k-\vlp)$ for all $k=2,\dots,K$ and fires spread accross $\zz$ according to the same rules as the $(\la,\pi)-$FFP.
	\item If $t_0>1$, the process starts from an occupied initial situation. Nothing happens until a match falls on $i_0$ at time $\al (t_0-\vlp)$ and spreads across $\zz$. Next, a match falls on $0$ at time $\al t_1\in\intervalleoo{\al t_0}{\al(t_0+1)}$. It then creates a barrier, see Lemma \ref{micro fire p}. Afterwards, matches fall successively in $i_k$ at time $\al (t_k-\vlp)$ for each $k=2,\dots,K$ and fires spread accross $\zz$ according to the same rules as the $(\la,\pi)-$FFP.
\end{itemize}

Consider the event
\[\Omega_{\mathfrak{P}}^{S,P}(\la,\pi)=\{\forall k\in\{2,\dots,K\},\,\exists j\in\intervalleentier{-\ml}{\ml}, \,\forall t\in\intervallefo{t_k+2\vlp}{t_k+1-\vlp},\,\zeta^{\la,\pi,\fP}_{\al t}(j)=0\}.\]

\begin{lem}\label{persisplem}
Let $\cP=(t_0,\dots,t_K)$ satisfy $(PP1)$ and $\cI=(\e;i_0,i_2,\dots,i_K)$ satisify $(PP2)$. For each
$\lambda\in\intervalleoo{0}{1}$ and each $\pi\geq1$, consider the process $({\zeta}^{\lambda,\pi,\mathfrak{P}}_t(i))_{t\geq0,i\in\zz}$ defined above.

If $t_2-t_1<t_1-t_0$, when $\la\to0$ and $\pi\to\infty$ in the regime $\cR(p)$, there holds
\[\lim_{\lambda,\pi}\proba{\Omega_\mathfrak{P}^{S,P}(\lambda,\pi)}=1.\]
\end{lem}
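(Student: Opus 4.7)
The proof is an induction on $k$ built on an iterative use of Lemma \ref{micro fire p}: the first match at $(0,t_1)$ creates a barrier whose regeneration time is asymptotically $t_1-t_0$, and the assumption $t_2-t_1<t_1-t_0$ guarantees that this barrier is still alive when the second match at $(i_2,t_2-\vlp)$ falls. The alternating positions $\e_k=(-1)^k\e$ force each subsequent fire to propagate toward the barrier from the opposite side, so the fire is absorbed there and enlarges the vacant zone inside $[-\ml-2\klp,\ml+2\klp]$, in a manner closely parallel to the matches indexed by $q$ in the proof of Proposition \ref{restriction limite loc}.

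I would first define the good event as the intersection of the event $\tilde\Omega^{P,T,\cM}_{\la,\pi}$ built in the proof of Lemma \ref{micro fire p} (for $\cM=(i_0;t_0,t_1)$), of the propagation events $\Omega^{P,T}_{\la,\pi}(i_k/\nl,t_k-\vlp)$ from Lemma \ref{propagation lemma p} for $k=2,\dots,K$, and of seed conditions modelled on points 2-(a)-(b)-(c) of the proof of Lemma \ref{micro fire p}: two flanking vacant sites $i_1^k,i_2^k\in[-\ml-2\klp,\ml+2\klp]$ remaining vacant throughout each propagation window $[\al(t_k-\vlp),\al(t_k+2\vlp)]$, plus a site $i_3^k\in[-\ml,\ml]$ (analogue of $i_3^k$ in Lemma \ref{micro fire p}) on which no seed falls during $[\al(t_k+2\vlp),\al(t_k+1-\vlp)]$, which will serve as $j_k$. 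By Lemmas \ref{micro fire p}, \ref{propagation lemma p} and \ref{speed}, each of these events has probability tending to~$1$ in $\cR(p)$, hence so does the intersection.

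Then I would prove by induction on $k\in\{2,\dots,K\}$ the statement $(H_k)$: on the good event, a site $j_k\in[-\ml,\ml]$ satisfies $\zeta^{\la,\pi,\fP}_{\al t}(j_k)=0$ for every $t\in[t_k+2\vlp,t_k+1-\vlp)$. For $k=2$, Lemma \ref{micro fire p} yields $\Theta^{\la,\pi}_\cM\to t_1-t_0>t_2-t_1$, so at time $\al t_2^-$ a vacant site of $C^P((\zeta^{\la,\pi,\cM}_\cdot),(0,t_1))$ still belongs to $[-\ml,\ml]$; on $\Omega^{P,T}_{\la,\pi}(i_2/\nl,t_2-\vlp)$ the fire ignited at $(i_2,t_2-\vlp)$ reaches this vacant site by time $\al(t_2+2\vlp)$ (in the same way as in the proof of {\bf Micro$(p)$} of Subsection \ref{application ffp}), at which point the zone just burnt is flanked by the two auxiliary sites $i_1^2,i_2^2$ from the seed event and forms an interval of length of order $\ml+2\klp$ inside $[-\ml-2\klp,\ml+2\klp]$; the comparison with the instantaneous-propagation process $\vartheta^\la$ used in Step~1 of Lemma \ref{micro fire p} then shows that the site $i_3^2$ (which is in this fresh zone and has no seed on $[\al(t_2+2\vlp),\al(t_2+1-\vlp)]$ by construction) remains vacant throughout the required window, yielding $j_2$. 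For $k\geq 3$, $(H_{k-1})$ gives a vacant site at $\al t_k^-$ (because $[t_{k-1}+2\vlp,t_{k-1}+1-\vlp]$ contains $t_k^-$ thanks to $t_k-t_{k-1}<1$ and $\vlp\to 0$), and the same argument, applied on the opposite side of~$0$ via $\e_k=-\e_{k-1}$, produces $j_k\in[-\ml,\ml]$.

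The principal obstacle is to ensure that the freshly burnt interval at each stage $k$ is genuinely positioned inside $[-\ml,\ml]$ and is exposed to the seed event of $i_3^k$ (so that $j_k$ really lies in the required range), while at the same time $(PP1)$-(4) ($t_{k+2}-t_k>1$) gives enough time between fires on the same side for this zone to be consistent with the alternation. Here again one follows the bookkeeping of Lemma \ref{micro fire p}: flank the burnt interval by $i_1^k,i_2^k$, compare with $\vartheta^\la$ of Step 1 there, and use the cluster-size/regeneration-time trade-off $\Xi^\la\approx \tau_1-\tau_0$ to obtain the persistence over $[t_k+2\vlp,t_k+1-\vlp]$. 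No new technique beyond those of Lemma \ref{micro fire p} and Lemma \ref{propagation lemma p} is required; only the geometric and temporal alternation must be propagated through the induction.
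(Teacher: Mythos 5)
Your plan follows the paper's general strategy — a good event built from the propagation and seed lemmas, the regeneration-time estimate of Lemma~\ref{micro fire p} for the initial barrier, and an induction over the alternating fires — but the induction step as you sketch it cannot be closed, because you leave out the one piece of geometric bookkeeping the paper actually needs. Each fire~$k$ burns up to the previous vacant site~$j_{k-1}$ and then stops; the next barrier~$j_k$ must be a no-seed site located in the zone \emph{freshly burnt} by fire~$k$, on the side of $j_{k-1}$ opposite to where fire~$k+1$ will arrive, and still inside $\intervalleentier{-\ml}{\ml}$. If you only know $j_{k-1}\in\intervalleentier{-\ml}{\ml}$, the freshly burnt interval on the correct side could have length~$0$, and even if it is nonempty, the new $j_k$ could land anywhere in $\intervalleentier{-\ml-2\klp}{-\ml}$ (or its mirror), drifting out of $\intervalleentier{-\ml}{\ml}$ as the induction proceeds. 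You flag this as ``the principal obstacle'' but then defer to ``the bookkeeping of Lemma~\ref{micro fire p}'', which does not address it: that lemma treats a single microscopic fire and contains no mechanism for controlling where a sequence of barriers migrates.

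The paper resolves this by fixing $\alpha=1/K$ at the outset and subdividing $\intervalleentier{-\ml}{\ml}$ into~$K$ nested bands of width $\alpha\ml$. The barrier produced after fire~$2$ lives in $C^{P}\subset\intervalleentier{-\lfloor\alpha\ml\rfloor}{\lfloor\alpha\ml\rfloor}$; after fire~$3$ it lives in $\intervalleentier{-\lfloor2\alpha\ml\rfloor}{-\lfloor\alpha\ml\rfloor}$; after fire~$4$, in $\intervalleentier{\lfloor\alpha\ml\rfloor+1}{\lfloor2\alpha\ml\rfloor}$; and so on, the band moving outward by $\alpha\ml$ at each step and alternating sides. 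Because $K\alpha/2=1/2$, after all $K$ fires the barrier is still at distance at most $\sim\ml/2$ from the origin, hence inside $\intervalleentier{-\ml}{\ml}$; and because each band has width $\alpha\ml$ (with $\alpha$ fixed), the no-seed events needed at each stage (via Lemma~\ref{speed}-1) still have probability tending to~$1$. Moreover, this band structure is what ensures the prerequisites of each iteration: the zone $\intervalleentier{\lfloor k\alpha\ml\rfloor}{\ml+2\klp}$ was untouched by the two most recent fires on the other side, so $(PP1)$-(4) ($t_{k+2}-t_k>1$) makes it fully occupied when fire~$k+2$ arrives. None of this appears in your proof; without fixing $\alpha$ and tracking the band position explicitly, the statement $(H_k)$ with $j_k\in\intervalleentier{-\ml}{\ml}$ is not propagated by the induction, and the final claim about $\Omega^{S,P}_{\fP}(\la,\pi)$ does not follow.
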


\begin{proof}
We define, recall Definition \ref{definition2 application}, 
\[\Omega^{P,T,\mathfrak{P}}_{\la,\pi}=\Omega^{P,T}_{\la,\pi}(0,t_1)\cap\bigcap_{k=0,2,\dots,K}\Omega^{P,T}_{\la,\pi}\left(\frac{i_k}{\nl},t_k-\vlp\right).\]
There holds that $\proba{\Omega^{P,T,\fP}_{\la,\pi}}$ tends to $1$ as $\la\to0$ and $\pi\to\infty$ in the regime $\cR(p)$, by Lemma \ref{propagation lemma p}. In the whole proof, we work on $\Omega^{P,T,\fP}_{\la,\pi}$ and assume that $(\la,\pi)$ is sufficiently close to the regime $\cR(p)$ in such a way that $3\vlp<\min_{i=1,\dots,K}(t_{i+1}-t_i)<1-3\vlp$.

For simplicity, we assume that $\e=-1$, $t_0=0$ and that $K$ is even (see for example Step 3 in Lemma \ref{micro fire p}. The other cases are treated similarly. Fix $\alpha=1/K$. We define $\cM=(0;0,t_1)$, recall Lemma \ref{micro fire p}.

Observe that on $\Omega^{P,T,\fP}_{\la,\pi}$, a burning tree at time $\al t$ necessarily belongs to $\intervalleentier{i_k+\lfloor\al\pi(t-t_k-\e_\la)\rfloor}{i_k+\lfloor\al\pi(t-t_k+\e_\la)\rfloor}$ or to $\intervalleentier{i_k-\lfloor\al\pi(t-t_k+\e_\la)\rfloor}{i_k-\lfloor\al\pi(t-t_k-\e_\la)\rfloor}$, for some $k\in\{0,\dots,K\}$ and is either a front of a fire or has vacant neighbors.

Observe that for all $i\in\intervalleentier{-\ml-2\klp}{-\ml}$, we have, recall \eqref{klp} and \eqref{vlp},
\begin{equation}\label{estimpingmoins}
i+\lfloor\al\pi(3\vlp-\e_\la)\rfloor\geq\ml
\end{equation}
whence for all $i\in\intervalleentier{\ml}{\ml+2\klp}$, we have
\begin{equation}\label{estimpingplus}
i-\lfloor\al\pi(3\vlp-\e_\la)\rfloor\leq-\ml.
\end{equation}

\md

\noindent{\bf First fire.} We put $C^P=C^P((\zeta^{\la,\pi,\fP}_t(i))_{t\geq 0,i\in\zz},(0,t_1))$, the destroyed cluster due to the match falling on $0$ at time $\al t_1$, recall \eqref{destroyed comp}. Since $0<t_1<1$, there holds  $C^P\subset\intervalleentier{-\lfloor\alpha\ml\rfloor}{\lfloor\alpha\ml\rfloor}$ with probability tending to $1$ (use Lemma \ref{speed}-1, space/time stationarity and {\bf Micro$(p)$} in Subsection \ref{application ffp}). Thus the match falling at time $\al t_1$ destroys nothing outside $\intervalleentier{-\lfloor\alpha\ml\rfloor}{\lfloor\alpha\ml\rfloor}$ and there is no more burning tree in $\zz$ at time $\al(t_1+\kappa_{\la,\pi}^0)$.

\md

\noindent{\bf Second fire.} Since $t_2-\vlp>1$, at least one seed has fallen, during $\intervallefo{0}{\al(t_2-\vlp)}$, on each site of $\intervalleentier{-\ml-2\klp}{-\lfloor\alpha\ml\rfloor-1}$ with probability tending to $1$ (use Lemma \ref{speed}-2 and space/time stationarity). Since this zone has not been affected by a fire during the time interval $\intervallefo{0}{\al (t_2-\vlp)}$, this zone is completely occupied at time $\al(t_2-\vlp)-$.
     
Besides, with probability tending to $1$, there is (at least) an empty site in $C^P\subset\intervalleentier{-\lfloor\alpha\ml\rfloor}{\lfloor\alpha\ml\rfloor}$ during the time interval $\intervalleoo{\al(t_1+\kappa_{\la,\pi}^0)}{\al(t_2+2\vlp)}$ because $t_2+2\vlp<t_1+\Theta^{\la,\pi}_{\cM}$ with probability tending to $1$ (by Lemma \ref{micro fire p}, $\Theta^{\la,\pi}_\cM\simeq t_1-t_0=t_1$ and $t_2-t_1<t_1-t_0=t_1$ by assumption) and because by definition of $\Theta^{\la,\pi}_{\cM}$, there is an empty site in $C^P\subset\intervalleentier{-\lfloor\alpha\ml\rfloor}{\lfloor\alpha\ml\rfloor}$ during $\intervalleff{\al(t_1+\kappa_{\la,\pi}^0)}{\al(t_1+\Theta^{\la,\pi}_{\cM})}$.
        
Thus, the fire ignited on $i_2\in\intervalleentier{-\ml-2\klp}{-\ml}$ at time $\al (t_2-\vlp)$ burns each site of $\intervalleentier{-\ml-2\klp}{-\lfloor\alpha\ml\rfloor-1}$ before $\al(t_2+2\vlp)$ and does not affect the zone $\intervalleentier{\lfloor\alpha\ml\rfloor+1}{\ml+2\klp}$, thanks to \eqref{estimpingmoins} and $\Omega^{P,T}_{\la,\pi}(i_2/\nl,t_2-\vlp)$ (because the right front of the fire $2$ reach a vacant site and thus extinguish).

\md

\noindent{\bf Third fire.} All the sites of $\intervalleentier{\lfloor\alpha\ml\rfloor}{\ml+2\klp}$ are occupied at time
$\al(t_3-\vlp)-$ with probability tending to $1$
(because on $\Omega^{P,T}_{\la,\pi}(0,t_1)\cap\Omega^{P,T}_{\la,\pi}(i_2/\nl,t_2-\vlp)$, they have not been affected by a fire during
$\intervallefo{0}{\al (t_3-\vlp)}$
and because $t_3-\vlp>t_2-\vlp>1$, see Lemma \ref{speed}-2.).

Next, the probability that there is a site in
$\intervalleentier{-\lfloor2\alpha\ml\rfloor}{-\lfloor \alpha\ml\rfloor}$ where no seed falls during $\intervalleff{\al (t_2-\vlp)}{\al(t_2-\vlp+1)}$ tends to $1$ as $\la\to0$ and $\pi\to\infty$ in the regime $\cR(p)$ (use Lemma \ref{speed}-1 and space/time stationarity). Thus, since $t_3-t_2<1$, with probability tending to $1$, there exists a vacant site in $\intervalleentier{-\lfloor2\alpha\ml\rfloor}{-\lfloor \alpha\ml\rfloor}$ during 
\[\intervalleff{\al (t_2+2\vlp)}{\al(t_2-\vlp+1)}\supset\intervalleff{\al(t_3-\vlp)}{\al(t_3+2\vlp)}\]
(because each site of $\intervalleentier{-\lfloor2\alpha\ml\rfloor}{-\lfloor\alpha\ml\rfloor}$ has been made vacant by the second fire during $\intervalleff{\al(t_2-\vlp)}{\al(t_2+2\vlp)}$).

Thus, the fire ignited on $i_3\in\intervalleentier{\ml}{\ml+2\klp}$ at time $\al (t_3-\vlp)$ burns each site of $\intervalleentier{\lfloor\alpha\ml\rfloor+1}{\ml+2\klp}$ before $\al(t_3+2\vlp)$ and does not affect the zone $\intervalleentier{-\ml-2\klp}{-\lfloor\alpha\ml\rfloor-1}$ with probability tending to $1$, thanks to \eqref{estimpingplus} and $\Omega^{P,T}_{\la,\pi}(i_3/\nl,t_3-\vlp)$ (because the left front of the fire $3$ reach a vacant site and thus extinguish).

\md

\noindent{\bf Fourth fire.} All the sites of $\intervalleentier{-\ml-2\klp}{-\lfloor\alpha\ml\rfloor-1}$ are occupied at time $\al(t_4-\vlp)-$ with probability tending to $1$ (because on $\Omega^{P,T}_{\la,\pi}(0,t_1)\cap\Omega^{P,T}_{\la,\pi}(i_2/\nl,t_2-\vlp)\cap\Omega^{P,T}_{\la,\pi}(i_3/\nl,t_3-\vlp)$, they have not been affected by a fire during $\intervalleoo{\al(t_2+2\vlp)}{\al (t_4-\vlp)}$ and because $t_4-3\vlp-t_2>1$, see Lemma \ref{speed}-2 and spae/time stationarity).

The probability that there is a site in
$\intervalleentier{\lfloor \alpha\ml\rfloor+1}{\lfloor2\alpha\ml\rfloor}$ where no seed falls during $\intervalleff{\al(t_3-\vlp)}{\al(t_3-\vlp+1)}$ tends to $1$ as $\la\to0$ and $\pi\to\infty$ in the regime $\cR(p)$ (use Lemma \ref{speed}-1 and space/time stationarity). Hence, since $t_4-t_3<1$, there is at least one vacant site in $\intervalleentier{\lfloor \alpha\ml\rfloor+1}{\lfloor2\alpha\ml\rfloor}$
during
\[\intervalleff{\al(t_3+2\vlp)}{\al(t_3-\vlp+1)}\supset\intervalleff{\al (t_4-\vlp)}{\al(t_4+2\vlp)},\]
with probability tending to $1$.

Thus, the fire ignited on $i_4\in\intervalleentier{-\ml-2\klp}{-\ml}$ at time $\al (t_4-\vlp)$ burns each site of $\intervalleentier{-\ml-2\klp}{-\lfloor\alpha\ml\rfloor-1}$ before $\al(t_4+2\vlp)$ and does not affect the zone $\intervalleentier{\lfloor\alpha\ml\rfloor}{\ml+2\klp}$ with probability tending to $1$,  thanks to \eqref{estimpingmoins} and $\Omega^{P,T}_{\la,\pi}(i_4/\nl,t_4-\vlp)$.

\md

\noindent{\bf Last fire and conclusion.} Iterating the procedure, we see that
with a probability tending to $1$ as $\la\to 0$ and $\pi\to\infty$ in the regime $\cR(p)$, the zone $\intervalleentier{-\ml-2\klp}{-\lfloor(K\alpha/2)\ml\rfloor-1} =\intervalleentier{-\ml-2\klp}{- \lfloor \ml/2
\rfloor-1}$ is completely occupied at time $\al(t_K-\vlp)-$ and there is at least one vacant site in $\intervalleentier{\lfloor(K-1)\alpha/2\ml\rfloor}{\lfloor (K\alpha/2)\ml\rfloor}$ during the time interval $\intervalleoo{\al(t_{K-1}+2\vlp)}{\al(t_{K-1}-\vlp+1)}\supset\intervalleoo{\al(t_K-\vlp)}{\al(t_K+2\vlp)}$. Thus, the fire ignited on $i_K\in\intervalleentier{-\ml-2\klp}{-\ml}$ at time $\al(t_K-\vlp)$ destroys each site of the zone
$\intervalleentier{-\ml-2\klp}{- \lfloor \ml/2
\rfloor-1}$ before $\al(t_K+2\vlp)$ and does not affect the zone $\intervalleentier{\ml/2}{\ml}$, thanks to \eqref{estimpingmoins} and $\Omega^{P,T}_{\al,\pi}(i_K/\nl,t_K-\vlp)$. 

Finally, the probability that there is at least
one site in $\intervalleentier{-\ml}{- \ml/2}$ with
no seed falling during $\intervalleff{\al(t_K-\vlp)}{\al (t_K-\vlp+1)}$ tends to $1$  (by Lemma \ref{speed}-1.). Consequently, the probability that there is a vacant site in $\intervalleentier{-\ml}{- \ml/2}$ during $\intervalleff{\al(t_K+2\vlp)}{\al (t_K-\vlp+1)}$ tends to $1$ (because it has been made vacant by the fire $K$).

All this implies that for all $k\in\{2,\dots,K\},$, there is $j\in\intervalleentier{-\ml}{\ml}$ such that for all $t\in\intervallefo{t_k+2\vlp}{t_k+1-\vlp}$ there holds $\zeta^{\la,\pi,\fP}_{\al t}(j)=0$, as desired.
\qedhere

\begin{figure}[h!]
\fbox{
\begin{minipage}[c]{0.95\textwidth}
\centering
\begin{tikzpicture}

\fill[fill=gray!50!white] (-5,0) --(5,2.5)-- (5,0)--cycle;
\draw (-5,0)--(5,0) node at (0,-.3) {$0$};

\draw[red] (-5,0) --(5,2.5) node at (-5,0) {$\bullet$};
\draw[dashed] (0,1.25)--(-5,1.25) node[left] {$t_0$};

\fill[fill=gray!50!white] (-5,2) --(0,3.25)-- (0,4.25)--(-5,3)--cycle;
\draw[red] (-5,3) --(0,4.25) node at (-5,3) {$\bullet$};
\draw[dashed] (0,4.25) -- (-5,4.25) node[left] {$t_2$};

\fill[fill=gray!50!white] (0,3.25)--(5,4.5)--(5,4.9)-- (0,6.15)--(0,5.75)--cycle;
\draw[red] (5,4.9) --(0,6.15) node at (5,4.9) {$\bullet$};
\draw[dashed] (0,6.15) -- (5,6.15) node[right] {$t_3$};

\fill[fill=gray!50!white] (-5,5) --(0,6.25)-- (0,7.25)--(-5,6)--cycle;
\draw[red] (-5,6) --(0,7.25) node at (-5,6) {$\bullet$};
\draw[dashed] (0,7.25) -- (-5,7.25) node[left] {$t_4$};

\draw[dashed] (0,2.9)--(5,2.9) node[right] {$t_1$};
\draw[thick] (0,2.9)--(0,4.55);
\draw[red] node at (0,2.9) {$\bullet$};

\fill[fill=gray!50!white]
(5,6.9)--(0,8.15)--(0,9.25)--(-5,8)--(-5,10)--(5,10)--(5,8.75)--cycle;
\end{tikzpicture} \caption{Persistent effect of microscopic fires.}\label{persiseffect}
\vspace{.5cm}
\parbox{13.3cm}{
\footnotesize{
Here we focus on the limit process with $t_0>1$. A first fire starts at time $\al(t_0-\vlp)$ and spread across $\zz$. Thus, the match falling in $0$ at time $\al t_1$ creates a barrier during $\al(t_1-t_0)$. If there are alternatively macroscopic fires on the left and right, there necessarily exists a vacant site around $0$ during $\intervalleoo{\al (t_0+2\vlp)}{\al (t_K+1-\vlp)}$.
}}
\end{minipage}}
\end{figure}
\end{proof}

\subsection{Heart of the proof}

\subsubsection{The coupling}\label{coupling}
We are going to construct a coupling between the $(\la,\pi,A)-$FFP
(on the time interval $\intervalleff{0}{\al T}$) and the $A-$LFFP$(p)$ (on $\intervalleff{0}{T}$). Let $\pi_M$ be a Poisson measure on $\rr\times \intervallefo{0}{\infty}$ with  intensity  measure $\diff x\diff t$.

First, we take for the matches of the discrete process the Poisson processes
\[N^M_t(i)=\pi_M(\intervallefo{i/\nl}{(i+1)/\nl}\times\intervalleff{0}{t/\al})\]
for all $i\in\zz$ and $t\in\intervalleff{0}{T}$.

We call $n:=\pi_M([0,T]\times\intervalleff{-A}{A})$
and we consider the marks $(T_q,X_q)_{q=1,\dots,n}$ of $\pi_M$ ordered 
in such a way that $0<T_1<\dots<T_n<T$.

Next, we introduce some i.i.d. families of i.i.d. Poisson processes $(N^{S,q}_t(i))_{t\geq 0,i\in \zz}$ and $(N^{P,q}_t(i))_{t\geq 0,i\in \zz}$ with respective parameter
$1$ and $\pi$, for $q=0,1,\dots$, independent of $\pi_M$.

Then we build two families of i.i.d. Poisson processes $(N^{S,\la,\pi}_t(i))_{t\geq 0, i \in \zz}$ and $(N^{P,\la,\pi}_t(i))_{t\geq 0, i \in \zz}$ as follows.
\begin{itemize}
	\item For $q \in \{1, \ldots, n\}$, for all $i \in [X_q]_{\la,\pi}$,
set $(N^{S,\la,\pi}_t(i))_{t\geq 0}=
(N^{S,q}_t(i-\lfloor \nl X_q \rfloor))_{t\geq 0}$ and $(N^{P,\la,\pi}_t(i))_{t\geq 0}=
(N^{P,q}_t(i-\lfloor \nl X_q \rfloor))_{t\geq 0}$ 
(if $i$ belongs to  $[X_q]_{\la,\pi}\cap [X_r]_{\la,\pi}$ for some $q<r$, set \eg 
$(N^{S,\la,\pi}_t(i))_{t\geq 0}=(N^{S,q}_t(i-\lfloor \bn_\la X_q \rfloor))_{t\geq 0}$ and $(N^{P,\la,\pi}_t(i))_{t\geq 0}=(N^{P,q}_t(i-\lfloor \bn_\la X_q \rfloor))_{t\geq 0}$.
This will occur with a very small probability, so that this choice is not
important).
	\item For all other $i \in \zz$ 
set $(N^{S,\la,\pi}_t(i))_{t\geq 0} = (N^{S,0}_t(i))_{t\geq 0}$ and $(N^{P,\la,\pi}_t(i))_{t\geq 0} = (N^{P,0}_t(i))_{t\geq 0}$.
\end{itemize}

The $(\la,\pi,A)-$FFP
$(\eta^{\la,\pi}_t(i))_{t\geq 0, i \in I_A^\la}$ is built from the seed processes
$(N^{S,\la,\pi}_t(i))_{t\geq 0, i \in \zz}$, the match processes $(N^{M}_t(i))_{t\geq 0, i \in \zz}$ and the propagation processes $(N^{P,\la,\pi}_t(i))_{t\geq 0, i \in \zz}$.

Finally, we build the $A-$LFFP$(p)$ $(Z_t(x),H_t(x),F_t(x))_{t\in[0,T],x\in\intervalleff{-A}{A}}$ from $\pi_M$ and observe that  it is
independent of $(N^{S,q}_t(i))_{t\in [0,\al T], i \in \zz, q\geq 0}$ and $(N^{P,q}_t(i))_{t\in [0,\al T], i \in \zz, q\geq 0}$.

Observe that if a match falls at some $X_q$ at time $T_q$ for the LFFP$(p)$, it will fall at $\lfloor\nl X_q \rfloor$ at time $\al T_q$ in the discrete process, and thus if the resulting fire is
microscopic in the limit process, it will involve with the same seed and propagation processes for all values of $\la$ and $\pi$ in discrete process.

\subsubsection{A favorable event}
We set $T_0=0$ and introduce 
\[\mathcal{T}_M=\{T_0,T_1,\dots,T_n\}\text{ and }\mathcal{B}_M=\{X_1,\dots,X_n\}.\]

For $q\in\{1,\dots,n\}$, $x\in\intervalleff{-A}{A}$ and $t\in\intervalleff{0}{T}$, we define
\begin{align}
T_q(x) &=T_q+p|x-X_q|\\
X_q^+(t) &=X_q+\frac{t-T_q}{p}\\
X_q^-(t) &=X_q-\frac{t-T_q}{p}
\end{align}
which are respectively the possible transit time in $x$ of the fire starting in $X_q$ at time $T_q$ and the possible location of the right and the left front at time $t$ of the fire starting in $X_q$ at time $T_q$. Observe that all $x\in\intervalleff{-A}{A}$ either equal to $X_k^+(T_k(x))$ or $X_k^-(T_k(x))$.

We next introduce, for $q\in\{1,\dots,n\}$,
\[\cS_{M,q}=\enstq{T_k(X_q)=T_k+p\abs{X_q-X_k}}{ k\neq q}\]
the set of all the possible transit times in $X_q$ of the other fire $k$ and 
\[\cS_M=\cup_{q=1,\dots,n}\,\cS_{M,q}.\]

We also introduce
\[\mathcal{S}^1_M=\{2T_q-s : q\in\{1,\dots,n\}, s\in\cS_{M,q},s< T_q\}\]
which has to be seen as the set of the possible end of the microscopic fires, recall Lemma \ref{micro fire p} and, for $q\in\{2,\dots,n\}$,
\[\cB^1_{M,q}=\enstq{X_k^+(T_q)=X_k+\frac{T_q-T_k}{p}}{1\leq k<q}\cup\enstq{X_k^-(T_q)=X_k+\frac{T_k-T_q}{p}}{1\leq k<q}\]
which has to be seen as the set of the possible locations of the fire $k$ at time $T_q$.

We finally introduce 
\[\cB^2_M=\enstq{\frac{T_q-T_k}{2p}+\frac{X_q+X_k}{2}}{X_k<X_q}\text{ and }\cS^2_M=\enstq{\frac{T_q+T_k}{2}+p\frac{X_q+X_k}{2}}{1\leq k< q\leq n}\]
which has to be seen as the set of the possible locations and the set of the possible times where two fires may meet as well as the set $\cC_M$ of connected component of $\intervalleff{-A}{A}\setminus(\cB_M\cup\cB_M^2)$ (sometimes refers as cells).

For $\alpha>0$, we consider the event
\begin{multline*}
\Omega_M(\alpha)=\left\{
\min_{\substack{s,t\in\mathcal{T}_M\cup\mathcal{S}_M\cup\mathcal{S}_M^1\cup\mathcal{S}_M^2,\\
s\neq t}}\abs{t-s}\geq3\alpha,
\min_{s,t\in\mathcal{T}_M\cup\mathcal{S}_M\cup\mathcal{S}_M^1\cup\mathcal{S}_M^2,}\abs{t-(s+1)}\geq3\alpha,\right.\\
\left.
\min_{\substack{x,y\in\mathcal{B}_M\cup\mathcal{B}^2_M\cup\{-A,A\},\\
x\neq y}}\abs{x-y}\geq\frac{3\alpha}{p}\right\}
\end{multline*}
which clearly satisfies $\lim_{\alpha\to0}\proba{\Omega_M(\alpha)}=1$. For any given $\alpha>0$, there exists $\la_\alpha\in\intervalleoo{0}{1}$ and $\e_\alpha>0$ such that for all $\la\in\intervalleoo{0}{\la_\alpha}$ and all $\pi\geq1$ in such a way that $|\nl/(\al\pi)-p|<\e_\alpha$, on $\Omega_M(\alpha)$, there
holds that for all $x,y\in\mathcal{B}_M\cup\mathcal{B}_M^2\cup\{-A,A\}$, with $x\neq
y$, $[x]_{\la,\pi}\cap[y]_{\la,\pi}=\emptyset$.

For $q\in\{1,\dots,n\}$, using the seed processes $(N^{S,\la,\pi}_t(i))_{t\geq0,i\in\zz}$ and the propagation processes $(N^{P,\la,\pi}_t(i))_{t\geq0,i\in\zz}$, we build, recall Definition \ref{definition1 application}, $(\check{\zeta}_t^{\la,\pi,q}(i))_{t\geq0,i\in\zz}$ (the propagation process ignited at $(X_q,T_q)$), $(i^{q,+}_t)_{t\geq0}$ and $(i^{q,-}_t)_{t\geq0}$ (the corresponding right and  left fronts) and $(T^q_i)_{i\in\zz}$ (the associated burning times). We also use $\Omega^{P,T}_{\la,\pi}(X_q,T_q)$, recall Definition \ref{definition2 application}. We set
\[\Omega^{P,T}(\la,\pi)=\bigcap_{q=1,\dots,n}\,\Omega^{P,T}_{\la,\pi}(X_q,T_q).\]
Since $\pi_M$ is independent of the processes $(N^{S,\la,\pi}_t(i))_{t\geq0,i\in\zz}$ and $(N^{P,\la,\pi}_t(i))_{t\geq0,i\in\zz}$, Lemma \ref{propagation lemma p} implies that $\proba{\Omega^{P,T}(\la,\pi)}$ tends to $1$ when $\la\to0$ and $\pi\to\infty$ in the regime $\cR(p)$.

Let $q\in\{1,\dots,n\}$. We define
\begin{align}
\cI^{q,+}&\coloneqq\enstq{\lfloor\nl X_k\rfloor+i^{k,+}_{\al (T_k(X_q)-\vlp-T_k)}-\lfloor\nl X_k^+(T_k(X_q))\rfloor}{k\neq q}\label{iplus}\\
\cI^{q,-}&\coloneqq\enstq{\lfloor\nl X_k\rfloor+i^{k,-}_{\al (T_k(X_q)-\vlp-T_k)}-\lfloor\nl X_k^-(T_k(X_q))\rfloor}{k\neq q}\label{imoins}.
\end{align}
Observe that, on $\Omega^{P,T}(\la,\pi)$,  $\cI^{q,-}\subset\intervalleentier{\ml}{\ml+2\klp}$ whence $\cI^{q,+}\subset\intervalleentier{-\ml-2\klp}{-\ml}$. We then call $\mathcal{U}_q$ the set of all possible
$\fP=(\cP,\cI)$ satisfying $(PP)$ where
\begin{itemize}
	\item $\cP=(t_0,T_q,t_2,\dots,t_K)$ satisfies $(PP1)$ with $\{t_0,t_2,\dots,t_K\}\subset\cS_{M,q}\cup\{0\}$ and with $T_q-t_0>t_2-T_q$;
	\item $\cI=(\e;i_0,i_2,\dots,i_K)$ satisfies $(PP2)$ with $\e\in\{-1,1\}$ and $\{i_0,i_2,\dots,i_K\}\subset \cI^{q,+}\cup\cI^{q,-}$.
\end{itemize}
For $\fP\in\mathcal{U}_q$, we introduce the
event $\Omega_\fP^{S,P,q}(\lambda,\pi)$, defined as in Subsection \ref{persisp}, with the Poisson
processes $(N^{S,q}_t(i))_{t\geq0,i\in\zz}$ and $(N^{P,q}_t(i))_{t\geq0,i\in\zz}$. Then we put
\[\Omega_1^{S,P}(\la,\pi)=\cap_{q=1}^n\left\{ \text{for all }\fP\in\mathcal{U}_q,\, \Omega_{\fP}^{S,P,q}(\la,\pi)\text{ holds}\right\},\]
which satisfies $\lim_{\la,\pi}\proba{\Omega_1^{S,P}(\la,\pi)}=1$ when $\la\to0$ and $\pi\to\infty$ in the regime $\cR(p)$. 
Indeed, by construction, $\pi_M$ is independent of $(N^{S,q}_t(i))_{t\geq0,i\in\zz}$ and $(N^{P,q}_t(i))_{t\geq0,i\in\zz}$. Observe that for $l\in\{1,\dots,n\}$, the location $i^{l,+}_{\al (T_l(X_q)-\vlp-T_l)}$ depends only on the propagation process $N^{P,\la,\pi}$ restricted to $\intervalleff{\al T_l}{\al (T_l(X_q)-\vlp)}\times\zz$ whereas the event $\Omega_{\fP}^{S,P,q}(\la,\pi)$ depends on the location only after $\al (T_l(X_q)-\vlp)$. Thus, it suffices to work with some
fixed $\{t_0, t_2,\dots, t_K\}\subset\cS_{M,q}$ and some fixed $(i_k)_{k=0,2,\dots,K}\subset\cI^{q,+}\cup\cI^{q,-}$. The result then follows from Lemma \ref{persisplem}.

We also consider the event $\Omega_2^S(\la,\pi)$ on which the following conditions hold:
for all $t_1,t_2\in\cT_M\cup\cS_M\cup\cS^1_M$ with $0<t_2-t_1<1$, for all $q=1,\dots,n$, there are
\[-\ml-2\klp<i_1<-\ml<i_2<0<i_3<\ml<i_4<\ml+2\klp\]
such that $N^{S,q}_{\al (t_2+4\vlp)}(i_j)-N^{S,q}_{\al (t_1-4\vlp)}(i_j)=0$ for $j=1,\dots,4$. There holds that $\proba{\Omega_2^S(\la,\pi)}$ tends to $1$ as $\la\to$ and $\pi\to\infty$ in the regime $\cR(p)$. Indeed, it suffices to prove
that almost surely, $\lim_{\substack{\la\to0\\ \pi\to\infty}}\probacond{\Omega_2^S(\la,\pi)}{\pi_M}=1$. Since
there are a.s. finitely many possibilities for $q,t_1,t_2$ and since $\pi_M$ is independent
of $(N^{S,q}_t(i))_{t\geq0,i\in\zz}$, it suffices to work with a fixed $q\in\{1,\dots,n\}$ and some
fixed $0<t_2-t_1<1$. The  result then follows from Lemma \ref{speed}-1,8 together with space/time stationarity and the fact that $\vlp\to0$.

Next we introduce the event $\Omega^S_3(\la,\pi)$ on which the following conditions hold: for all $q\in\{1,\dots,n\}$ and all $i\in I^\la_A$
\[N^{S,\la,\pi}_{\al (T_q(i/\nl)+1+\elp)}(i)-N^{S,\la,\pi}_{\al(T_q(i/\nl)+\elp)}(i)>0\]
and if $T_q(i/\nl)\geq 1$,
\[N^{S,\la,\pi}_{\al (T_q(i/\nl)-4\vlp)}(i)-N^{S,\la,\pi}_{\al(T_q(i/\nl)-1-4\vlp)}(i)>0.\]
There holds that $\proba{\Omega_3^S(\la,\pi)}$ tends to $1$ as $\la\to$ and $\pi\to\infty$ in the regime $\cR(p)$. Observing that $\abs{I_A^\la}\simeq 2A\nl$, Lemma \ref{speed} and space/time
stationarity shows the result. 

We also need $\Omega_4^{S,P}(\gamma,\la,\pi)$, defined for $\gamma>0$ as follows: for all
$q=1,\dots, n$, for all $\cM=(i_0;t_0,T_q)$ such that $t_0\in\cS_{M,q}\cup\{0\}$ with $t_0<T_q<t_0+1$ and  $i_0\in\cI^{q,+}\cup\cI^{q,-}$, there holds
that $\abs{\Theta^{\la,\pi,q}_\cM-(T_q-t_0)}<\gamma$. Here, $\Theta^{\la,\pi,q}_\cM$ is defined as in Lemma \ref{micro fire p} with
the seed processes family $(N^{S,q}_t(i))_{t\geq0,i\in\zz}$ and the propagation processes family $(N^{P,q}_t(i))_{t\geq0,i\in\zz}$. Lemma \ref{micro fire p} directly implies that for any $\gamma>0$, $\proba{\Omega_4^{S,P}(\gamma,\la,\pi)}$ tends to $1$ as $\la\to$ and $\pi\to\infty$ in the regime $\cR(p)$.

We finally introduce the event 
\[\Omega(\alpha,\gamma,\la,\pi)=\Omega_M(\alpha)\cap\Omega^{P,T}(\la,\pi)\cap\Omega^{S,P}_1(\la,\pi)\cap\Omega^S_2(\la,\pi)\cap\Omega^S_3(\la,\pi)\cap\Omega^{S,P}_4(\gamma,\la,\pi).\]
We have shown that for any $\delta>0$, there exists $\alpha\in\intervalleoo{0}{1}$ such that for any $\gamma>0$, there holds that $\proba{\Omega(\alpha,\gamma,\la,\pi)}>1-\delta$ for all $(\la,\pi)$ sufficiently close to the regime $\cR(p)$.

\subsubsection{Heart of the proof}
Consider the $A-$LFFP$(p)$ $( Z_t(x), H_t(x), F_t(x))_{t\geq0, x\in\intervalleff{-A}{A}}$.

For $x\in\intervalleoo{-A}{A}$, we put
\begin{gather*}
Z_{t-}(x)=\lim_{s\nearrow t}Z_s(x),\\
Z_t(x+)=\lim_{y\searrow x}Z_t(y)\text{ and }Z_t(x-)=\lim_{y\nearrow x}Z_t(y),\\
Z_{t-}(x+)=\lim_{y\searrow x}Z_{t-p(y-x)-}(y)\text{ and }Z_{t-}(x-)=\lim_{y\nearrow x}Z_{t+p(y-x)-}(y).
\end{gather*}

For $t\in\intervalleff{0}{T}$, we set
\begin{align*}
\chi_t^+ &=\enstq{x\in\intervalleff{-A}{A}}{F_t(x)>0\text{ and }Z_t(x+)=1},\\
\chi_t^- &=\enstq{x\in\intervalleff{-A}{A}}{F_t(x)>0\text{ and }Z_t(x-)=1},\\
\chi_t^0
&=\enstq{x\in\intervalleff{-A}{A}}{H_t(x)>0\text{ or }(F_t(x)=0\text{ and }
Z_t(x+)\neq Z_t(x-))}\cup\{-A,A\},\\
\chi_t &=\chi_t^+\cup\chi_t^-\cup\chi_t^0.
\end{align*}

For $x\in\mathcal{B}_M$ and $t\geq0$ we set
\begin{equation}
\tH_t(x)=\max(H_t(x),1-Z_t(x),1-Z_t(x+),1-Z_t(x-)).
\end{equation}
Actually, $Z_{t-}(x)$ always
equals either $Z_{t-}(x-)$ or $Z_{t-}(x+)$ and these can be distinct only at a point
where has occured a microscopic fire (that is if $x=X_q$ for some $q\in\{1\dots,n\}$
with $T_q<t$ and $Z_{T_q-}(X_q)<1$).

For all $x\in\intervalleoo{-A}{A}$ we define for all $t\in\intervalleff{0}{T}$
\begin{equation}\label{tau}
\tau_t(x)=\sup\enstq{s\leq t}{F_s(x)>0\text{ and }\tH_{s-}(x)=0}\vee0,
\end{equation}
which represents the last time before $t$ that a macroscopic fire has crossed $x$. Observe that 
\begin{align}
\text{for } x\not\in\mathcal{B}_M,\, Z_t(x) &=\min(t-\tau_t(x),1)\mtext{for all}
t\in\intervalleff{0}{T},\label{zpasbmp}\\
\text{for } q=1,\dots, n,\, Z_t(X_q) &=\min(t-\tau_t(X_q),1) \mtext{for all}
t\in\intervallefo{0}{T_q}.\label{zbmp}
\end{align}

We also define for all $i\in I^\la_A$ and all  $t\in\intervalleff{0}{T}$
\begin{equation}\label{crossfire}
\rho_t^{\la,\pi}(i) = \sup\enstq{s\leq t}{\eta^{\la,\pi}_{\al s-}(i)=2}
\end{equation}
where $\al \rho_t^{\la,\pi}(i)$ represents the last time before $\al t$ that the site $i$ has been burnt in the discrete process (with the convention $\eta^{\la,\pi}_{0-}(i)=2$ and $\eta^{\la,\pi}_0(i)=0$ for all $i\in I^\la_A$).

For $q\in\{1,\dots,n\}$, we define \emph{the death time of the right front of the $q$'s fire} as the time where the fire $q$ is stopped in the limit process, that is,
\begin{equation}\label{deathtime}
T_q^{D,+}=\inf\enstq{t\geq T_q}{F_t\left(X_q+\frac{t-T_q}{p}\right)=0}
\end{equation}
as well as \emph{the death position of the right front of the $q$'s fire} as the position where the fire $q$ is stopped in the limit process, that is,
\begin{equation}\label{deathpos}
X_q^{D,+}=X_q+\frac{T_q^{D,+}-T_q}{p}.
\end{equation}
Similarly, \emph{the death time and position of the left front of the $q$'s fire} are defined as
\[T_q^{D,-}=\inf\enstq{t\geq T_q}{F_t(X_q-\frac{t-T_q}{p})=0}\text{ and }X_q^{D,-}=X_q-\frac{T_q^{D,-}-T_q}{p}.\]
Observe that, if $Z_{T_q-}(X_q)<1$,  then $T_q^{D,-}=T_q=T_q^{D,+}$ and $X_q^{D,+}=X_q=X_q^{D,-}$.

We set
\begin{align}
\cB_M^D&\coloneqq\{X_1^{D,+},X_1^{D,-},\dots,X_n^{D,+},X_n^{D,-}\}\subset\cB_M\cup\cB_M^2,\label{deathposset}\\
\cT_M^D&\coloneqq\{T_1^{D,+},T_1^{D,-},\dots,T_n^{D,+},T_n^{D,-}\}\subset\cT_M\cup\cS_M\cup\cS_M^2.\label{deathtimeset}
\end{align}

Let $t\in\intervalleff{0}{T}$ and $q\in\{1,\dots,n\}$. If $t\in\intervallefo{0}{T_q^{D,+}+\vlp}$, we set
\[\Omega_{q,t}^{\la,\pi,+}=\{\forall s\in\intervalleff{T_q}{(T_q^{D,+}+\vlp)\wedge t}, \eta^{\la,\pi}_{\al s}(\lfloor\nl X_q\rfloor+i^{q,+}_{\al(s-T_q)})=2\}\]
and, if $t\in\intervalleff{T_q^{D,+}+\vlp}{T}$, we set
\[\Omega_{q,t}^{\la,\pi,+}=\Omega^{\la,\pi,+}_{q,T_q^{D,+}}\cap\{\exists s\in\intervalleff{T_q^{D,+}-\vlp}{T_q^{D,+}+\vlp}, \eta^{\la,\pi}_{\al s}(\lfloor\nl X_q\rfloor+i^{q,+}_{\al(s-T_q)})=0\}.\]

Similarly, we set, if $t\in\intervallefo{0}{T_q^{D,-}+\vlp}$, 
\[\Omega_{q,t}^{\la,\pi,-}=\{\forall s\in\intervalleff{T_q}{(T_q^{D,-}-\vlp)\wedge t}, \eta^{\la,\pi}_{\al s}(\lfloor\nl X_q\rfloor+i^{q,-}_{\al(s-T_q)})=2\}\]
and, if $t\in\intervalleff{T_q^{D,-}+\vlp}{T}$, we set
\[\Omega_{q,t}^{\la,\pi,-}=\Omega^{\la,\pi,-}_{q,T_q^{D,-}}\cap\{\exists s\in\intervalleff{T_q^{D,-}-\vlp}{T_q^{D,-}+\vlp}, \eta^{\la,\pi}_{\al s}(\lfloor\nl X_q\rfloor+i^{q,-}_{\al(s-T_q)})=0\}.\]

Finally, we set, for all $t\in\intervalleff{0}{T}$,
\[\Omega^{\la,\pi}_t=\bigcap_{q=1,\dots,n}\left(\Omega^{\la,\pi,+}_{q,t}\cap\Omega^{\la,\pi,-}_{q,t}\right).\]

The aim of this section is to prove the following Lemma.
\begin{lem}\label{heartlem}
Let $\alpha>\gamma>0$. For all $(\la,\pi)$ sufficiently close to the regime $\cR(p)$ in such a way that $4(\vlp+p(\ml+2\klp)/\nl)\leq\alpha$, $\Omega_T^{\la,\pi}$ a.s. holds on $\Omega(\alpha,\gamma,\la,\pi)$.
\end{lem}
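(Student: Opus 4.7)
The plan is to proceed by strong induction on the chronologically ordered matches $T_1 < T_2 < \dots < T_n$, maintaining at each stage $q$ a joint invariant that describes the entire discrete configuration $(\eta^{\la,\pi}_{\al s}(i))_{s\in\intervalleff{0}{T_{q+1}},\, i\in I_A^\la}$ (with $T_{n+1}:=T$) in terms of the LFFP$(p)$. The invariant says: (i) the front trajectory $s\mapsto \lfloor\nl X_k\rfloor + i^{k,\pm}_{\al(s-T_k)}$ of each still-alive fire $k\leq q$ carries a burning tree in the discrete process; (ii) every macroscopic zone of the limit (maximal region where $Z_s(\cdot)=1$) has its lattice image completely occupied; and (iii) at each barrier site $X_k\in\cB_M$ with $Z_{T_k-}(X_k)<1$, there is a vacant site inside $\intervalleentier{-\ml}{\ml}+\lfloor\nl X_k\rfloor$ throughout the limit-barrier lifetime. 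The propagation backbone of the whole argument is $\Omega^{P,T}(\la,\pi)$, and the base case $q=0$ is handled by $\Omega^S_3(\la,\pi)$, which forces $\intervalleentier{-A_\la}{A_\la}$ to be fully occupied at scaled time $\al$.

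To go from $q$ to $q+1$, I would analyse the new ignition at $(X_{q+1},T_{q+1})$ and any front death occurring in $\intervalleoo{T_q}{T_{q+1}}$. The ignition splits into the macroscopic case ($Z_{T_{q+1}-}(X_{q+1})=1$), where the invariant and $\Omega^S_3$ give $\langle X_{q+1}\rangle_{\la,\pi}$ completely occupied at $\al T_{q+1}-$, so two discrete fronts start and $\Omega^{P,T}$ tracks them; and the microscopic case, where Lemma \ref{micro fire p} combined with $\Omega^{S,P}_4(\gamma,\la,\pi)$ yields a regeneration time equal to $Z_{T_{q+1}-}(X_{q+1})\pm\gamma$, reproducing the limit barrier up to the tolerance $\alpha$. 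A front death at $T_k^{D,+}=s$ (similarly for left fronts) corresponds to exactly one of three mechanisms, each reflected by the death position $X_k^{D,+}\in\cB_M\cup\cB_M^2$: (a) collision with another active front, handled through the invariant on the other fire and the $\vlp$-precision of $\Omega^{P,T}$; (b) arrival at a microscopic zone $\langle X_j\rangle_{\la,\pi}$ with $Z_{s-}(X_j)<1$ and $H_{s-}(X_j)=0$, where $\Omega^S_3$ provides the stopping vacant site; (c) arrival at a persistent barrier created by an earlier microscopic fire at some $X_j$, handled by Lemma \ref{persisplem}.

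The hard part is case (c). I would read off from the limit process the history $t_0<T_j<t_2<\dots<t_K$ of all previous events at $X_j$ (initial filling, the microscopic ignition, and each earlier front arrival) together with the corresponding incoming positions $i_0,i_2,\dots,i_K\in\cI^{j,+}\cup\cI^{j,-}$, and verify that $\fP=((t_0,T_j,t_2,\dots,t_K),(\e;i_0,i_2,\dots,i_K))$ belongs to $\mathcal{U}_j$. The time-gap conditions of $(PP1)$ all follow from $\Omega_M(\alpha)$ applied to $\cT_M\cup\cS_M\cup\cS^1_M$, the position conditions of $(PP2)$ are exactly the content of $\Omega^{P,T}$ applied inductively to the earlier fires, and the initial-gap condition $t_2-T_j<T_j-t_0$ holds because $H_{s-}(X_j)>0$ forces the barrier to still be alive. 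Applying $\Omega^{S,P,j}_{\fP}(\la,\pi)$ then yields a vacant site in $\intervalleentier{-\ml}{\ml}+\lfloor\nl X_j\rfloor$ during $\intervallefo{\al(t_K+2\vlp)}{\al(t_K+1-\vlp)}$, and the numerical hypothesis $4(\vlp+p(\ml+2\klp)/\nl)\leq\alpha$ combined with the separation $\alpha$ in $\Omega_M(\alpha)$ ensures that $\al s$ lies in this interval. This vacant site extinguishes the incoming discrete front within $\vlp$ of $T_k^{D,+}$, completing the inductive step and establishing $\Omega_T^{\la,\pi}$ on $\Omega(\alpha,\gamma,\la,\pi)$.
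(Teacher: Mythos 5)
Your outline captures the right skeleton --- induction on the ordered matches, an invariant tracking fronts, macroscopic zones and barriers, and the key role of Lemmas \ref{micro fire p} and \ref{persisplem} with $\Omega^{P,T}$ --- and this is indeed the route the paper takes, but there are genuine gaps. First, in your case (b), $\Omega^S_3$ cannot supply the stopping vacant site: $\Omega^S_3$ is a seed event that guarantees occupancy, never vacancy. When a limit front dies at a barrier $X_j\in\cB_M$, the vacant site in the discrete process comes either from the still-regenerating destroyed component of a recent microscopic fire when $H_{T_q^{k+1}-}(X_j)>0$ (Lemma \ref{micro fire p} via $\Omega^{S,P}_4$, the paper's Lemma \ref{stop} Case 1), or from the ping-pong persistent effect when $H_{T_q^{k+1}-}(X_j)=0$ but $Z_{T_q^{k+1}-}(X_j+)\neq Z_{T_q^{k+1}-}(X_j-)$ (Lemma \ref{persisplem} via $\Omega^{S,P}_1$, Case 2); your (b) and (c) should be exactly these two cases, and neither involves $\Omega^S_3$. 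Relatedly, your justification ``$H_{s-}(X_j)>0$ forces the barrier to still be alive'' for the gap condition $t_2-T_j<T_j-t_0$ is off: in the persistent case $H_{s-}(X_j)=0$ by hypothesis, and the correct argument is that $H_{t_2-}(X_j)>0$ at the time $t_2$ of the first later jump, since otherwise the fire at $t_2$ would cross $X_j$ and equalize $Z$ on both sides forever after.

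Second, and more seriously, your invariant (ii) cannot be carried through the induction on its own: after each fire one must re-establish that the macroscopic zone is fully occupied, which requires the quantitative link $|\rho^{\la,\pi}_t(i)-\tau_t(i/\nl)|\leq\elp$ between the limit last-crossed time and the discrete last-burned time --- the content of the paper's Lemma \ref{corestim} (its Stage~0). Without this control you cannot show that sites ahead of a discrete front are occupied when the front arrives, which is precisely what Lemma \ref{propastep2} does: $\Omega^{P,T}$ only locates where the (hypothetical) front would be, not that the fire is actually still alive there. Finally, the paper cuts $\intervalleff{T_q}{T_{q+1}}$ at the death times $T_q^k$ and shifts each sub-step's base point by $4\vlp$ (Stages~1--3) so that the $\vlp$-precision of $\Omega^{P,T}$ stabilizes across successive events; this device, and hence the machinery needed to handle multiple front deaths inside one inter-match interval, is missing from your outline.
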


We work on $\Omega(\alpha,\gamma,\la,\pi)$. We fix $\e_\alpha>0$ and $\la_\alpha\in\intervalleoo{0}{1}$ such that for all $\la\in\intervalleoo{0}{\la_\alpha}$ and all $\pi\geq1$ in such a way $|\nl/(\al\pi)-p|<\e_\alpha$, we have $\vlp+3p(\ml+2\klp)/\nl\leq\alpha$. Observe that for all $x,y\in\mathcal{B}_M\cup\mathcal{B}_M^2\cup\{-A,A\}$, with $x\neq
y$, we then have $[x]_{\la,\pi}\cap[y]_{\la,\pi}=\emptyset$. Clearly, $\Omega^{\la,\pi}_{T_1}$ a.s. holds, because no match falls in $I^\la_A$ before $\al T_1$. We will show that for $q=0,\dots,n-1$,
$\Omega^{\la,\pi}_{T_q}$ implies $\Omega^{\la,\pi}_{T_{q+1}}$. This will prove that
$\Omega^{\la,\pi}_{T_{n}}$ holds. The extension to $\Omega^{\la,\pi}_{T}$ will be straightforward and will be omitted.

We thus fix  $q\in\{0,\dots,n-1\}$ and assume $\Omega^{\la,\pi}_{T_q}$. Let $\cA_q$ be the set of points where a fire stops during the time interval $\intervalleoo{T_q}{T_{q+1}}$ that is, $(x,t)\in\cA_q$ if $(x,t)=(X_k^{D,+},T_k^{D,+})$ (or $(X_k^{D,-},T_k^{D,-})$) for some $k\leq q$ with $T_k^{D,+}$ (or $T_k^{D,+}$) in $\intervalleoo{T_q}{T_{q+1}}$. We then put 
\[\{(X_q^0,T_q^0),\dots,(X_q^{N_q+1},T_q^{N_q+1})\}=\cA_q\cup\{(X_q,T_q),(X_{q+1},T_{q+1})\}\]
ordered chronologically (thus $(X_q,T_q)=(X_q^0,T_q^0)$ and $(X_{q+1},T_{q+1})=(X_q^{N_q+1},T_q^{N_q+1})$).

We recall that if $Z_{T_l-}(X_l)=1$, for some $l\in\{1,\dots,n\}$, on $\Omega_M(\alpha)$, we have by construction,
\begin{itemize}
	\item $T^{D,+}_l\wedge T_l^{D,-}\geq T_l+3\alpha$;
	\item $Z_{T_l-}(y)=1$ for all $y\in\intervalleoo{X_l-3\alpha/p}{X_l+3\alpha/p}$;
	\item $F_{T_l(y)}(y)=1$ and $\tH_{T_l(y)-}(y)=0$ for all $y\in\intervalleoo{X_l^{D,-}}{X_l^{D,+}}$;
	\item for all $t\in\intervalleff{T_l}{T^{D,+}_l-3\alpha}$ and all $y\in\intervalleoo{X_l^+(t)}{X_l^+(t)+3\alpha/p}$, $\tH_{t}(y)=0$ (similar thing for $X_l^-(t)$);
	\item for all $t\in\intervallefo{T^{D,+}_l-3\alpha}{T^{D,+}_l}$ and all $y\in\intervalleoo{X_l^+(t)}{X_l^+(t)+(T_l^{D,+}-t)/p}$, $Z_t(y)=1$ (similar thing for $X_l^-(t)$).
\end{itemize}

Recall that on $\Omega_M(\alpha)$, for all $k\in\intervalleentier{0}{N_q}$,
\[T_q^{k+1}-T_q^k>3\alpha.\]

We decompose the proof in four stages. 
\begin{enumerate}[label=$-$]
	\item{\itshape Stage 0.} We deduce, on $\Omega(\alpha,\gamma,\la,\pi)$, the last time that a site has been burned.
	\item{\itshape Stage 1.} We prove that on $\Omega(\alpha,\gamma,\la,\pi)$, $\Omega_{T_q}^{\la,\pi}$ implies $\Omega_{T_q+4\vlp}^{\la,\pi}$.
	\item{\itshape Stage 2.} We prove that on $\Omega(\alpha,\gamma,\la,\pi)$, for $0\leq k< N_q$, $\Omega_{T_q^k+4\vlp}^{\la,\pi}$ implies $\Omega_{T_q^{k+1}+4\vlp}^{\la,\pi}$.
	\item{\itshape Stage 3.} We prove that on $\Omega(\alpha,\gamma,\la,\pi)$, $\Omega_{T_q^{N_q}+4\vlp}^{\la,\pi}$ implies $\Omega_{T_{q+1}}^{\la,\pi}$, which is the goal.
\end{enumerate}

In the whole proof, we repeatedly use the following estimates. For $k\in\{1,\dots,n\}$ and $t\geq  T_k$, there holds that, recall \eqref{klp}, \eqref{vlp} and \eqref{elp},
\begin{equation}\label{estimposplus}
\intervalleentier{\lfloor\nl X_k\rfloor+\lfloor\al\pi(t-T_k-\e_\la)\rfloor}{\lfloor\nl X_k\rfloor+\lfloor\al\pi(t-T_k+\e_\la)\rfloor}\subset\langle X_k^+(t)\rangle_{\la,\pi}
\end{equation}
which is the possible location of the right front of the fire $k$ at time $\al t$, recall Lemma \ref{propagation lemma p}, 
\begin{multline}\label{estimposplusavant}
\intervalleentier{\lfloor\nl X_k\rfloor+\lfloor\al\pi(t-\vlp-T_k-\e_\la)\rfloor}{\lfloor\nl X_k\rfloor+\lfloor\al\pi(t-\vlp-T_k+\e_\la)\rfloor} \\
\subset\intervalleentier{\lfloor\nl X_k^+(t)\rfloor-\ml-2\klp}{\lfloor\nl X_k^+(t)\rfloor-\ml}
\end{multline}
which is the possible location of the right front of the fire $k$ at time $\al (t-\vlp)$,
\begin{multline}\label{estimposplusapres}
\intervalleentier{\lfloor\nl X_k\rfloor+\lfloor\al\pi(t+\vlp-T_k-\e_\la)\rfloor}{\lfloor\nl X_k\rfloor+\lfloor\al\pi(t+\vlp-T_k+\e_\la)\rfloor} \\
\subset\intervalleentier{\lfloor\nl X_k^+(t)\rfloor+\ml}{\lfloor\nl X_k^+(t)\rfloor+\ml+2\klp}
\end{multline}
which is the possible location of the right front of the fire $k$ at time $\al (t+\vlp)$.

For $k\in\{1,\dots,n\}$ and $t\geq T_k$ there also holds true that
\begin{equation}\label{reach}
\lfloor\nl X_k\rfloor+\lfloor\al\pi(t-\elp-T_k+\e_\la)\rfloor \leq \lfloor\nl X_k^+(t)\rfloor
\end{equation}
and
\begin{align}
\lfloor\nl X_k\rfloor+\lfloor\al\pi(t-4\vlp-T_k+\e_\la)\rfloor &\leq \lfloor\nl X_k^+(t)\rfloor-\ml-3\klp,\label{entree}\\
\lfloor\nl X_k\rfloor+\lfloor\al\pi(t+4\vlp-T_k-\e_\la)\rfloor &\geq \lfloor\nl X_k^+(t)\rfloor+\ml+3\klp.\label{sortie}
\end{align}

Very similar estimations of course hold for $X_k^-(t)$. 

Finally, for all $i\in I^\la_A$ and all $k\in\{1,\dots,n\}$, there holds that
\begin{equation}\label{estimtps}
\left[T_k+\frac{|i-\lfloor\nl x\rfloor|}{\al\pi}-\e_\la,\,T_k+\frac{|i-\lfloor\nl x\rfloor|}{\al\pi}+\e_\la\right]\subset\left[T_k\left(\frac{i}{\nl}\right)-\elp,\,T_k\left(\frac{i}{\nl}\right)+\elp\right]
\end{equation}
which has to be seen as the time interval where a tree may be burn due to the fire $k$.
\begin{center}
{\bf \MakeUppercase{Stage 0.}}
\end{center}
In this Stage we fix some $s_0\in\intervalleff{0}{T}$ and work on $\Omega(\alpha,\gamma,\la,\pi)\cap\Omega^{\la,\pi}_{s_0}$. We deduce an estimate of the last time that a given site has been burned.

\begin{lem}\label{corestim}
Let $s_0\in\intervalleff{0}{T}$ and $q_0$ such that $s_0\in\intervallefo{T_{q_0}}{T_{q_0+1}}$. On $\Omega(\alpha,\gamma,\la,\pi)\cap\Omega_{s_0}^{\la,\pi}$, for all $(i,t)\in I^\la_A\times \intervalleff{0}{s_0}$ such that  $i\not\in\bigcup_{x\in\chi_t}\langle x\rangle_{\la,\pi}\cup\bigcup_{1\leq k\leq q_0}\left([X_k^{D,+}]_{\la,\pi}\cup[X_k^{D,-}]_{\la,\pi}\right)$,
\begin{enumerate}
	\item $\tau_t(i/\nl)=0$ if and only if $\rho^{\la,\pi}_t(i)=0$;
	\item if $\tau_t(i/\nl)=T_k(i/\nl)$, for some $k\in\{1,\dots,q_0\}$, then
\[\rho^{\la,\pi}_t(i)\in\left[T_k+\frac{|i-\lfloor\nl X_k\rfloor|}{\al\pi}-\e_\la,\,T_k+\frac{|i-\lfloor\nl X_k\rfloor|}{\al\pi}+\e_\la\right].\]
\end{enumerate}
\end{lem}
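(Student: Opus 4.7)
The strategy is to match each fire event burning the site $i$ in the discrete process with a macroscopic fire crossing the point $x = i/\nl$ in the limit process, then apply the front-propagation estimates from Lemma~\ref{propagation lemma p}.

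First, I would observe that on the event $\Omega^{P,T}(\la,\pi)$, any burning tree in the discrete process at some time $\al s$ with $s\le s_0$ is either the right or left front $\lfloor\nl X_k\rfloor + i^{k,\pm}_{\al(s-T_k)}$ of some fire $k\le q_0$, or a short-lived spark of size~$1$ burning for at most $\al\e_\la/4$ and located within $\al\pi\e_\la$ of a front. In particular, for fixed $i$ and varying $s$, the site $i$ can be ignited only during the time windows \eqref{estimtps} around the various $T_k(i/\nl)$ (using \eqref{estimposplus}), and these windows are pairwise disjoint thanks to $\Omega_M(\alpha)$, since $\elp \to 0$ while the $T_k(i/\nl)$ differ by at least~$3\alpha$.

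Next, I claim that the discrete front of fire $k$ actually passes through $i$ if and only if fire $k$ crosses $i/\nl$ \emph{macroscopically} in the limit, i.e.\ with $F_{T_k(i/\nl)}(i/\nl)>0$ and $\tH_{T_k(i/\nl)-}(i/\nl)=0$. For the ``only if'' direction, on $\Omega^{\la,\pi,\pm}_{k,s_0}$ the discrete front of $k$ burns at $\lfloor\nl X_k\rfloor + i^{k,\pm}_{\al(s-T_k)}$ exactly up to a time within $\vlp$ of $T_k^{D,\pm}$, at which it stops near $\lfloor\nl X_k^{D,\pm}\rfloor$. The exclusion $i\not\in[X_k^{D,\pm}]_{\la,\pi}$ then forces any discrete-front visit of $i$ to occur strictly before the death time, so the limit fire $k$ is still macroscopic at $T_k(i/\nl)$. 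For the ``if'' direction, the same event guarantees that the discrete front follows the theoretical trajectory at least until its death, and since the macroscopicity assumption places $i/\nl$ strictly inside $(X_k^{D,-},X_k^{D,+})$ with margin at least $(\ml+2\klp)/\nl$, the discrete front indeed visits $i$.

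Combining these two observations, the last fire to burn $i$ in the discrete process is exactly the last fire to cross $i/\nl$ macroscopically in the limit, which yields~(1) directly. For~(2), if $\tau_t(i/\nl) = T_k(i/\nl)$, the front-location estimate of Lemma~\ref{propagation lemma p} places the corresponding discrete burning time of $i$ in the window announced in the statement, and by the disjointness of ignition windows no later fire $k' > k$ reignites $i$ (else $\tau_t(i/\nl)$ would exceed $T_k(i/\nl)$). The main subtlety is to rule out spurious contributions from sparks or from a currently active front: on $\Omega^{P,T}(\la,\pi)$ sparks burn only during $\al\e_\la/4$ and lie within $\al\pi\e_\la$ of a front, while the exclusion $i\not\in\bigcup_{x\in\chi_t}\langle x\rangle_{\la,\pi}$ precisely removes the sites reachable by an ongoing fire at time~$t$. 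Hence neither sparks nor live fronts contribute to $\rho^{\la,\pi}_t(i)$ beyond the last front passage identified above.
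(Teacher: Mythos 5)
Your proposal is correct and follows essentially the same route as the paper, recast as a clean biconditional (the discrete front of fire $k$ visits $i$ iff the limit fire $k$ crosses $i/\nl$ macroscopically), whereas the paper proceeds through a more granular seven-step argument. The paper's Steps 4--5 supply the case splits your sketch glosses over---for a competing fire $l$, whether $t$ lies inside its ignition window (so the $\chi_t$-exclusion applies) or strictly beyond it (so $\Omega_{s_0}^{\la,\pi}$ and $\Omega_M(\alpha)$ yield a contradiction with $\tau_t(i/\nl)=T_k(i/\nl)$), and for the fire $k$ itself, whether $s_0$ precedes or exceeds $T_k^{D,+}-\vlp$, which changes how $\Omega_{s_0}^{\la,\pi}$ is invoked to guarantee the front actually burns through $i$.
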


Observe that for $(i,t)$ be as in the statement, in the two cases, there holds that, using \eqref{estimtps},
\[\abs{\rho^{\la,\pi}_t(i)-\tau_t(i/\nl)}\leq \elp.\]

For $t\in\intervalleff{0}{s_0}$ and $x\in\intervalleoo{-A}{A}$ in such a way that $[x]_{\la,\pi}\cap[y]_{\la,\pi}=\emptyset$ for all $y\in\chi_t\cup\cB_M^D$, if $\tau_t(x)=T_l(x)$, for some $l\in\{1,\dots,n\}$, then by construction $\tau_t(i/\nl)=T_l(i/\nl)$ for all $i\in[x]_{\la,\pi}$.
Thus, using \eqref{estimposplusavant} and \eqref{estimposplusapres}, Lemma \ref{corestim} implies that for all $i\in(x)_\la$,
\[\abs{\rho^{\la,\pi}_t(i)-\tau_t(x)}\leq \vlp\]
whence, using \eqref{entree} and \eqref{sortie}, for all $i\in[x]_{\la,\pi}$, there holds that
\[\abs{\rho^{\la,\pi}_t(i)-\tau_t(x)}\leq 4\vlp.\]

\begin{proof}
Let $s_0\in\intervalleff{0}{T}$ and $q_0$ such that $s_0\in\intervallefo{T_{q_0}}{T_{q_0+1}}$.

\md

\noindent{\bf Step 1.} The key of the proof is the observation that if a site $i\in I^\la_A$ is burning at time $\al t\leq \al s_0$ then there exists $k\in\{1,\dots,q_0\}$ such that $\zeta^{\la,\pi,k}_{\al(t-T_k)}(i-\lfloor\nl X_k\rfloor)=2$ (a burning tree in the $(\la,\pi,A)-$FFP corresponds to a burning tree in some propagation process).

Indeed, assume that a match falls on $\lfloor\nl X_k\rfloor$ at time $\al T_k\leq \al t$. Recall that the propagation process ignited at $(X_k,T_k)$ is defined using the seed processes $(N^{S,\la,\pi}_t(i))_{t\geq0,i\in\zz}$ and the propagation processes $(N^{P,\la,\pi}_t(i))_{t\geq0,i\in\zz}$.  Thus, with our coupling, the right front of the fire in the propagation process $(\zeta^{\la,\pi,k}_t(i))_{t\geq0,i\in\zz}$ at some time $\al s$ is $i^{k,+}_{\al s}$ whence the (hypothetical) right front of the $(\la,\pi,A)-$FFP at time $\al (s+T_k)$ is $\lfloor\nl X_k\rfloor+i^{k,+}_{\al s}$. Recall that a spark in the propagation process $(\zeta^{\la,\pi,k}_t(i))_{t\geq 0,i\in\zz}$ corresponds to a site $i\in\zz$ where a seed has fallen between the instant at which $i$ propagates for the first time and the instant at which $i+1$ if $i\geq0$ or $i-1$ if $i\leq0$ propagates for the first time. On $\Omega^{P,T}_{\la,\pi}(X_k,T_k)$, such a spark has vacant neighbors. Thus, with our coupling, the site $\lfloor\nl X_k\rfloor+i$ is a spark in the $(\la,\pi)-$FFP (that is a burning tree which is not a front of a fire) if the site $i$ is a spark in the propagation process. Such a spark in the $(\la,\pi,A)-$FFP has inevitably vacant neighbors.

\md

\noindent{\bf Step 2.}  By Step 1, Lemma \ref{propagation lemma p} and \eqref{estimposplus}, we deduce that a burning tree at time $\al t$ in the $(\la,\pi,A)-$FFP necessarily belongs to
\[\intervalleentier{\lfloor\nl X_k\rfloor+\lfloor\al\pi(t-T_k-\e_\la)\rfloor}{\lfloor\nl X_k\rfloor+\lfloor\al\pi(t-T_k+\e_\la)\rfloor}\subset\langle X_k^+(t)\rangle_{\la,\pi}\]
or to 
\[\intervalleentier{\lfloor\nl X_k\rfloor-\lfloor\al\pi(t-T_k+\e_\la)\rfloor}{\lfloor\nl X_k\rfloor-\lfloor\al\pi(t-T_k-\e_\la)\rfloor}\subset\langle X_k^-(t)\rangle_{\la,\pi}\]
for some $k\in\{1,\dots,q_0\}$  such that $T_k\leq t$.

Conversely, if a site $i\in I^\la_A$ is burning at time $\al t\leq \al s_0$ then there is $k\in\{1,\dots,n\}$ such that, recalling \eqref{estimtps},
\[t\in\left[T_k+\frac{|i-\lfloor\nl X_k\rfloor|}{\al\pi}-\e_\la,\,T_k+\frac{|i-\lfloor\nl X_k\rfloor|}{\al\pi}+\e_\la\right]\subset\left(T_k\left(\frac{i}{\nl}\right)-\elp,\,T_k\left(\frac{i}{\nl}\right)+\elp\right).\]

\md

\noindent{\bf Step 3.} Next, we observe that if a site $j$ is burning at some time $\al u\leq \al s_0$, then there is $k\in\{1,\dots,q_0\}$ such that $u\in\intervalleff{T_k+(T^k_{j-\lfloor\nl X_k\rfloor}/\al)}{T_k+\frac{|j-\lfloor\nl X_k\rfloor|}{\al\pi}+\e_\la}$ and for all $s\in\intervalleff{T_k}{T_k+(T^k_{j-\lfloor\nl X_k\rfloor}/\al)}$ we have 
\[\eta^{\la,\pi}_{\al s}(\lfloor\nl X_ k\rfloor+i^{k,+}_{\al(s-T_k)})=2\]
if $j\geq\lfloor\nl X_k\rfloor$ while if $j\leq\lfloor\nl X_k\rfloor$, we have
\[\eta^{\la,\pi}_{\al s}(\lfloor\nl X_ k\rfloor+i^{k,-}_{\al(s-T_k)})=2.\]

Indeed, by construction, a fire starting on $\lfloor\nl X_k\rfloor$ at time $\al T_k$, for some $k\in\{1,\dots,q_0\}$, does not affect the site $j$ before $\al T_k+T^k_{j-\lfloor\nl X_k\rfloor}$ and by $\Omega^{P,T}_{\la,\pi}(X_k,T_k)$, as been checked on Step 1, does not affect the site $j$ after $\al T_k+\frac{|j-\lfloor\nl X_k\rfloor|}{\pi}+\al\e_\la$.

Assume \eg that $j\geq \lfloor\nl X_k\rfloor$ and that there is $s\in\intervallefo{T_k}{T_k+(T^k_{j-\lfloor\nl X_k\rfloor}/\al)}$ such that $\eta^{\la,\pi}_{\al s}(\lfloor\nl X_ k\rfloor+i^{k,+}_{\al(s-T_k)})=0$: the right front reaches a vacant site. Since sparks has vacant neighbors, the right front can not propagate more and is stopped (after a while, thanks to our coupling). Hence, the right front cannot reach $j$.

\md

\noindent{\bf Step 4.} Here we prove that for $i$ and $t$ be as in the statement and if $\tau_t(i/\nl)=T_k(i/\nl)>0$, for some $k\in\{1,\dots,q_0\}$, then $i$ is not affected (in the discrete process) by any fire during the time interval $\intervalleff{\al (T_k+\frac{|i-\lfloor\nl X_k\rfloor|}{\al\pi}+\e_\la)}{\al t}$.

Assume \eg that $i/\nl=X_k^+(T_k(i/\nl))\in\chi_{T_k(i/\nl)}^+$. We have $i/\nl\leq X_k^{D,+}$ and $T_k(i/\nl)\leq T_k^{D,+}$ whence $\lfloor\nl X_k\rfloor\leq i\leq\lfloor \nl X_k^{D,+}\rfloor-\ml-2\klp$ (because $i\not\in[X^{D,+}_k]_{\la,\pi}$) and $T_k(i/\nl)+\vlp\leq T^{D,+}_k$ (thanks to \eqref{estimposplusavant}).

So that there is $u_0\in\intervalleff{T_k+\frac{|i-\lfloor\nl X_k\rfloor|}{\al\pi}+\e_\la}{t}$ such that the site $i$ is burning at time $\al u_0$, it is necessary that there is $l\neq k$ such that $u_0\in\intervalleff{T_l+\frac{|i-\lfloor\nl X_l\rfloor|}{\al\pi}-\e_\la}{T_l+\frac{|i-\lfloor\nl X_l\rfloor|}{\al\pi}+\e_\la}$, recall Step 3, with
\[\eta^{\la,\pi}_{\al T_l+T^l_{j-\lfloor\nl X_l\rfloor}}(j)=2\text{ for all }j\in\intervalleentier{\lfloor\nl X_l\rfloor}{i}\]
if $i\geq\lfloor\nl X_k\rfloor$, or
\[\eta^{\la,\pi}_{\al T_l+T^l_{j-\lfloor\nl X_l\rfloor}}(j)=2\text{ for all }j\in\intervalleentier{i}{\lfloor\nl X_l\rfloor}\]
if $i\leq\lfloor\nl X_k\rfloor$.

If $i/\nl=X_l^+(T_l(i/\nl))$, then $i\geq \lfloor\nl X_l^{D,+}\rfloor+\ml+2\klp$ whence $T_l(i/\nl)\geq T_l^{D,+}+\vlp$, thanks to \eqref{estimposplusapres}. Indeed
\begin{enumerate}[label=(\alph*)]
	\item if $t\in\intervalleff{T_l+\frac{i-\lfloor\nl X_l\rfloor}{\al\pi}-\e_\la}{T_l+\frac{i-\lfloor\nl X_l\rfloor}{\al\pi}+\e_\la}$, then $i\in\langle X_l^+(t)\rangle_{\la,\pi}$. Since $i\not\in\bigcup_{x\in\chi_t}\langle x\rangle_{\la,\pi}$, we deduce that $X_l^+(t)\not\in\chi_t^+$ whence $T^{D,+}_l\leq t$. But $i\not\in[X_l^{D,+}]_{\la,\pi}$, thus $T^{D,+}_l<t-\vlp$, recall \eqref{estimposplusapres}, and $i\geq \lfloor\nl X_l^{D,+}\rfloor+\ml+2\klp$;
	\item if $t\geq T_l+\frac{i-\lfloor\nl X_l\rfloor}{\al\pi}+\e_\la\geq T_k+\frac{i-\lfloor\nl X_k\rfloor}{\al\pi}+\e_\la$ and $i\leq \lfloor\nl X_l^{D,+}\rfloor-\ml-2\klp$, using $\Omega^{\la,\pi}_t$, we deduce that $T_l(i/\nl)+\elp\leq t$ and $T_l(i/\nl)+\vlp\leq T_l^{D,+}$, recall \eqref{estimposplus} and  \eqref{estimposplusavant}. Thus, $F_{T_l(i/\nl)}(i/\nl)=1$. But by construction there holds that $|T_l(i/\nl)-T_k(i/\nl)|\geq 3\alpha$, thanks to $\Omega_M(\alpha)$, whence $T_l(i/\nl)\geq T_k(i/\nl)+3\alpha$, a contradiction since $\tau_t(i/\nl)=T_k(i/\nl)$. Thus, $i\geq \lfloor\nl X_l^{D,+}\rfloor+\ml+2\klp$, as desired.
\end{enumerate}

If $i/\nl=X_l^-(T_l(i/\nl))$, then $i\leq \lfloor\nl X_l^{D,-}\rfloor-\ml-2\klp$ whence $T_l(i/\nl)\geq T_l^{D,-}+\vlp$, thanks to \eqref{estimposplusapres}. Indeed
\begin{enumerate}[label=(\alph*')]
	\item if $t\in\intervalleff{T_l+\frac{\lfloor\nl X_l\rfloor-i}{\al\pi}-\e_\la}{T_l+\frac{\lfloor\nl X_l\rfloor-i}{\al\pi}+\e_\la}$, we conclude as in case (a) above that  $T^{D,-}_l\leq t-\vlp$ and $i\leq \lfloor\nl X_l^{D,-}\rfloor-\ml-2\klp$;
	\item if $t\geq T_l+\frac{\lfloor\nl X_l\rfloor-i}{\al\pi}+\e_\la\geq T_k+\frac{i-\lfloor\nl X_k\rfloor}{\al\pi}+\e_\la$ and $i\geq \lfloor\nl X_l^{D,-}\rfloor+\ml+2\klp$, using $\Omega^{\la,\pi}_t$, we deduce that $T_l(i/\nl)+\elp\leq t$ and $T_l(i/\nl)+\vlp\leq T_l^{D,-}$, thanks to \eqref{estimposplus} and \eqref{estimposplusavant}. Thus, $F_{T_l(i/\nl)}(i/\nl)=1$ and $Z_{T_l(i/\nl)-}(i/\nl)=1$ whence $T_l(i/\nl)\geq T_k(i/\nl)+1$, a contradiction since $\tau_t(i/\nl)=T_k(i/\nl)$. Thus $i\leq \lfloor\nl X_l^{D,-}\rfloor-\ml-2\klp$, as desired.
\end{enumerate}

Using $\Omega_t^{\la,\pi}$, we deduce that
\begin{itemize}
	\item if $i/\nl=X_l^+(T_l(i/\nl))$, there is $s\in\intervalleff{T_l^{D,+}-\vlp}{T_l^{D,+}+\vlp}$ such that $\eta^{\la,\pi}_{\al s}(\lfloor\nl X_l\rfloor+i^{l,+}_{\al(s-T_l)})=0$ whence $\eta^{\la,\pi}_{\al T_l+T^l_{j-\lfloor\nl X_l\rfloor}}(j)=0$ for some $j\in[X_l^{D,+}]_{\la,\pi}$, thanks to \eqref{estimposplusavant} and \eqref{estimposplusapres};
	\item if $i/\nl=X_l^-(T_l(i/\nl))$, there is $s\in\intervalleff{T_l^{D,-}-\vlp}{T_l^{D,-}+\vlp}$ such that $\eta^{\la,\pi}_{\al s}(\lfloor\nl X_l\rfloor+i^{l,-}_{\al(s-T_l)})=0$ whence $\eta^{\la,\pi}_{\al T_l+T^l_{j-\lfloor\nl X_l\rfloor}}(j)=0$ for some $j\in[X_l^{D,-}]_{\la,\pi}$, thanks to \eqref{estimposplusavant} and \eqref{estimposplusapres}.
\end{itemize}

Thus, the site $i$ can not be burned during the time interval $\intervalleff{T_k+\frac{|i-\lfloor\nl X_k\rfloor|}{\al\pi}+\e_\la}{t}$.

\md

\noindent{\bf Step 5.} Here we prove that for $i$ and $t$ be as in the statement, if $\tau_t(i/\nl)=T_k(i/\nl)>0$ for some $k\in\{1,\dots,n\}$, then $\eta^{\la,\pi}_{\al T_k+T^k_{i-\lfloor\nl X_k\rfloor}}(i)=2$.

Indeed, assume for example that $i/\nl=X^+_k(T_k(i/\nl))$, for some $k\in\{1,\dots,n\}$. By construction, there holds that $i/\nl\leq X^{D,+}_k$ and $i/\nl\leq X_k^+(s_0)$ whence $\lfloor\nl X_k\rfloor\leq i\leq \lfloor\nl X_k^{D,+}\rfloor-\ml-2\klp$ (because $i\not\in[X_k^{D,+}]_{\la,\pi}$) and $\lfloor\nl X_k\rfloor\leq i\leq \lfloor\nl X_k^+(s_0)\rfloor-\klp$ (because if $s_0\leq T_k^{D,+}$ then  $i\not\in\langle X_k^+(s_0)\rangle_{\la,\pi}$ and if $s_0> T_k^{D,+}$ then $\lfloor\nl X_k^+(s_0)\rfloor-\klp\geq \lfloor\nl X_k^{D,+}\rfloor-\ml-2\klp$). We distinguish two cases.
\begin{itemize}
	\item If $s_0\geq T_k^{D,+}-\vlp$, then by $\Omega^{\la,\pi}_{s_0}$, we deduce that $\eta^{\la,\pi}_{\al s}(\lfloor\nl X_k\rfloor+i^{k,+}_{\al (s-T_k)})=2$ for all $s\in\intervalleff{T_k}{T_k^{D,+}-\vlp}$. This also implies, thanks to \eqref{estimposplusavant}, that $\eta^{\la,\pi}_{\al T_k+T^k_{j-\lfloor\nl X_k\rfloor}}(j)=2$ for all $j\in\intervalleentier{\lfloor\nl X_k\rfloor}{\lfloor\nl X_k^{D,+}\rfloor-\ml-2\klp}$. It especially holds for $i$, thanks to the previous observation.
	\item If $s_0< T_k^{D,+}-\vlp$,  we deduce, by $\Omega^{P,T}(\la,\pi)$, \eqref{estimposplus} and the previous observation, that
\begin{equation}\label{blop}
\lfloor\nl X_k\rfloor\leq i\leq \lfloor\nl X_k^+(s_0)\rfloor-\klp\leq \lfloor\nl X_k\rfloor+\lfloor\al\pi(s_0-T_k-\e_\la)\rfloor\leq\lfloor\nl X_k\rfloor+i^{k,+}_{\al(s_0-T_k)}.
\end{equation}
Finally, by $\Omega^{\la,\pi}_{s_0}$, we have $\eta^{\la,\pi}_{\al u}(\lfloor\nl X_k\rfloor+i^{k,+}_{\al (u-T_k)})=2$ for all $u\in\intervalleff{T_k}{s_0}$ which implies the claim.
\end{itemize}

\md

\noindent{\bf Step 6.} We now conclude in the case $\tau_t(i/\nl)=T_k(i/\nl)>0$. By Step 4, we deduce that 
\[\rho^{\la,\pi}_t(i)\leq T_k+\frac{|i-\lfloor\nl X_k\rfloor|}{\al\pi}+\e_\la.\]
By Step 5, we deduce that $\rho^{\la,\pi}_t(i)\geq T_k+T^k_{i-\lfloor\nl X_k\rfloor|}/\al$
and conclude using $\Omega^{P,T}(\la,\pi)$ that
\[\rho^{\la,\pi}_t(i)\geq T_k+\frac{|i-\lfloor\nl X_k\rfloor|}{\al\pi}-\e_\la.\]

\md

\noindent{\bf Step 7.} Finally, if $\tau_t(i/\nl)=0$, we conclude, using similar argument as in Step 4 (recall that $i\not\in\bigcup_{1\leq k\leq q_0}\left([X_k^{D,+}]_{\la,\pi}\cup[X_k^{D,-}]_{\la,\pi}\right)$), that no fire can affect the site $i$ until $\al t$ and thus $\rho^{\la,\pi}_t(i)=0$.

Conversely, if $\rho^{\la,\pi}_t(i)=0$, then for all $l\in\{1,\dots,n\}$ such that $T_l(i/\nl)<t$, we necessarily have $F_{T_l(i/\nl)}(i/\nl)=0$ (else, applying $\Omega_t^{\la,\pi}$, one should have $\eta^{\la,\pi}_{\al T_l+T^l_{i-\lfloor\nl X_l\rfloor}}(i)=2$). This concludes the proof.
\end{proof}

\begin{center}
{\bf \MakeUppercase{Stage 1.}}
\end{center}

The aim of this stage is to prove that on $\Omega(\alpha,\gamma,\la,\pi)$, $\Omega_{T_q}^{\la,\pi}$ implies $\Omega_{T_q+4\vlp}^{\la,\pi}$. 

Observe that for all $i\in I^\la_A\setminus\{\lfloor\nl X_q\rfloor\}$, \[\eta^{\la,\pi}_{\al T_q}(i)=\eta^{\la,\pi}_{\al T_q-}(i)\]
while 
\[\eta^{\la,\pi}_{\al T_q}(\lfloor\nl X_q\rfloor)=2\indiq{\eta^{\la,\pi}_{\al T_q-}(\lfloor\nl X_q\rfloor)=1}.\]

First, we situate the burning trees at time $\al T_q$ for the $(\la,\pi,A)-$FFP.
\begin{lem}\label{macro match}
We work on $\Omega_{T_q}^{\la,\pi}\cap\Omega(\alpha,\gamma,\la,\pi)$.
\begin{enumerate}
	\item At time $\al T_q$, a burning tree which is not located at $\lfloor\nl X_q\rfloor$ necessarily belongs to $\langle x\rangle_{\la,\pi}$, for some $x\in\chi_{T_q}^+\cup \chi_{T_q}^-\subset\cB_{M,q}^1$, and is either at $\lfloor\nl X_k\rfloor+i_{\al(T_q-T_k)}^{k,+}$ or at
$\lfloor\nl X_k\rfloor+i_{\al(T_q-T_k)}^{k,-}$, for some $k< q$, or has vacant neighbors.
	\item If $X_k^+(T_q)=X_k+\frac{T_q-T_k}{p}\in\chi_{T_q}^+$ for some $k<q$, then $\eta^{\la,\pi}_{\al T_q}(\lfloor\nl X_k\rfloor+i^{k,+}_{\al (T_q-T_k)})=2$ and $\eta^{\la,\pi}_{\al T_q}(i)=1$ for all $i\in\intervalleentier{\lfloor\nl X_k\rfloor+i^{k,+}_{\al (T_q-T_k)}+1}{\lfloor\nl (X_k+2\alpha/p)\rfloor}$.
	\item If $X_k^-(T_q)=X_k-\frac{T_q-T_k}{p}\in\chi_{T_q}^-$ for some $k<q$, then $\eta^{\la,\pi}_{\al T_q}(\lfloor\nl X_k\rfloor+i^{k,-}_{\al (T_q-T_k)})=2$ and $\eta^{\la,\pi}_{\al T_q}(i)=1$ for all $i\in\intervalleentier{\lfloor\nl (X_k-2\alpha/p)\rfloor}{\lfloor\nl X_k\rfloor+i^{k,+}_{\al (T_q-T_k)}-1}$.
\end{enumerate}
\end{lem}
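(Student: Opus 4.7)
The plan is to prove the three claims one after another, all under the inductive hypothesis $\Omega^{\la,\pi}_{T_q}$ conjoined with $\Omega(\alpha,\gamma,\la,\pi)$. The four main ingredients are: (i) the fact that the $(\la,\pi,A)$-FFP is dominated by each propagation process $(\check{\zeta}^{\la,\pi,k})_{k<q}$ (via the coupling of Section \ref{coupling}), which gives that any burning tree at time $\al T_q$ must correspond to a burning tree in some $\check{\zeta}^{\la,\pi,k}$ with $k<q$; (ii) Lemma \ref{propagation lemma p} applied through $\Omega^{P,T}(\la,\pi)$, which locates burning trees in the propagation processes within $\langle X_k^+(T_q)\rangle_{\la,\pi}\cup\langle X_k^-(T_q)\rangle_{\la,\pi}$; (iii) the event $\Omega^{\la,\pi,\pm}_{k,T_q}$ to distinguish live from dead fronts of past fires; and (iv) Lemma \ref{corestim} together with $\Omega^S_3(\la,\pi)$ to transfer occupation information from the limit process to the $(\la,\pi,A)$-FFP.

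For Part 1, I would reproduce Steps 1--2 of the proof of Lemma \ref{corestim}: the coupling ensures that a burning tree at $(i,\al T_q)$ with $i\neq\lfloor\nl X_q\rfloor$ lifts to a burning tree at time $\al(T_q-T_k)$ in $\check{\zeta}^{\la,\pi,k}$ for some $k<q$, which by Lemma \ref{propagation lemma p} and the inclusion \eqref{estimposplus} lies in $\langle X_k^+(T_q)\rangle_{\la,\pi}\cup\langle X_k^-(T_q)\rangle_{\la,\pi}$ and is either a front $\lfloor\nl X_k\rfloor+i^{k,\pm}_{\al(T_q-T_k)}$ or a spark with vacant neighbors. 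It then remains to rule out $T_q\geq T_k^{D,+}$ (the case of $T_k^{D,-}$ is symmetric). In that situation $\Omega_M(\alpha)$ forces $T_q\geq T_k^{D,+}+3\alpha>T_k^{D,+}+\vlp$, and $\Omega^{\la,\pi,+}_{k,T_q}$ delivers some $s\in\intervalleff{T_k^{D,+}-\vlp}{T_k^{D,+}+\vlp}$ at which the discrete right front of fire $k$ reaches a vacant site; after this instant the right front cannot propagate any further, and since by $\Omega_M(\alpha)$ the set $\langle X_k^+(T_q)\rangle_{\la,\pi}$ is disjoint from $[X_k^{D,+}]_{\la,\pi}$, no burning tree can sit in $\langle X_k^+(T_q)\rangle_{\la,\pi}$ at time $\al T_q$, a contradiction.

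For Part 2 the hypothesis $X_k^+(T_q)\in\chi^+_{T_q}$ yields $T_q<T_k^{D,+}\leq T_k^{D,+}+\vlp$, so $\Omega^{\la,\pi,+}_{k,T_q}$ immediately gives $\eta^{\la,\pi}_{\al T_q}(\lfloor\nl X_k\rfloor+i^{k,+}_{\al(T_q-T_k)})=2$. The occupation claim for $i\in\intervalleentier{\lfloor\nl X_k\rfloor+i^{k,+}_{\al(T_q-T_k)}+1}{\lfloor\nl(X_k+2\alpha/p)\rfloor}$ is vacuous unless $T_q\leq T_k+2\alpha$; under this condition every such $i$ corresponds to $y:=i/\nl\in(X_k^+(T_q),X_k+2\alpha/p)\subset(X_k-3\alpha/p,X_k+3\alpha/p)$, which by $\Omega_M(\alpha)$ and the macroscopicity of the cluster igniting fire $k$ satisfies $Z_{T_k-}(y)=1$, hence $\tau_{T_k}(y)\leq T_k-1$. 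By $\Omega^{P,T}(\la,\pi)$ and $\Omega_M(\alpha)$ no fire other than $k$ can touch $y$ during $\intervalleff{T_k}{T_q}$ without crossing the macroscopic cluster of $X_k$, and $i$ lies strictly ahead of the discrete right front of fire $k$, so $i$ is not burnt during $\intervalleff{\al T_k}{\al T_q}$. Applying Lemma \ref{corestim} at time $T_k$ (valid since $i$ is far from every element of $\chi_{T_k}\cup\cB_M^D$ thanks to $\Omega_M(\alpha)$) produces $\rho^{\la,\pi}_{T_q}(i)\leq T_k-1+\elp$, and $\Omega^S_3(\la,\pi)$ provides a seed on $i$ in the window $\intervalleff{\al(T_k(y)-1-4\vlp)}{\al(T_k(y)-4\vlp)}\subset\intervalleoo{\al\rho^{\la,\pi}_{T_q}(i)}{\al T_q}$, whence $\eta^{\la,\pi}_{\al T_q}(i)=1$. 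Part 3 is obtained by applying the same argument with ``$+$'' replaced by ``$-$'' throughout.

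The main obstacle is the occupation claim in Part 2 for sites $i$ that actually lie in $\langle X_k^+(T_q)\rangle_{\la,\pi}$, where Lemma \ref{corestim} cannot be invoked at time $T_q$. The remedy is to transport the occupation estimate back to time $T_k$, where the site $i$ is safely away from every element of $\chi_{T_k}$ (by macroscopicity around $X_k$ and $\Omega_M(\alpha)$), and then observe that between $\al T_k$ and $\al T_q$ the site $i$ can be affected only by fire $k$'s right front, which has by hypothesis not yet reached $i$ at time $\al T_q$.
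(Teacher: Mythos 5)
Your treatment of Part 1 follows the paper's argument and is sound. The problem is in Part 2, and it stems from taking too literally what appears to be a typo in the statement of the lemma. The upper endpoint of the occupation interval should be $\lfloor\nl(X_k^+(T_q)+2\alpha/p)\rfloor$ rather than $\lfloor\nl(X_k+2\alpha/p)\rfloor$ (and symmetrically in Part 3): this is how the interval is written in the parallel Lemma \ref{positionfire2}, and it is the form actually used when Lemma \ref{macro match}-2 is invoked in Step 1 of the proof of Corollary \ref{propstep1}. Thus the interval is a band of width roughly $2\alpha/p$ just ahead of the \emph{current} front position $X_k^+(T_q)$, and it is nonempty for all $T_q>T_k$, not only for $T_q\leq T_k+2\alpha$. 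Your observation that the claim as printed is ``vacuous unless $T_q\leq T_k+2\alpha$'' was the right instinct that something is off, but the correct reaction is to fix the typo, not to prove a nearly vacuous statement that cannot support the downstream applications.

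Once the interval is read correctly, the ``transport back to $T_k$'' strategy does not work: the sites $i$ are near $X_k^+(T_q)$, which may be far from $X_k$, so there is no useful macroscopicity statement for them at time $T_k$. The paper instead works entirely at time $T_q$. It first notes that for $y\in\intervalleff{x}{x+2\alpha/p}$ with $x=X_k^+(T_q)\in\chi_{T_q}^+$, one has $Z_{T_q-}(y)=1$ and, via $\Omega_M(\alpha)$, $T_q-\tau_{T_q-}(y)\geq 1+\alpha$. The interval is then split in two: for $i\in\intervalleentier{\lfloor\nl x\rfloor+\klp+1}{\lfloor\nl(x+2\alpha/p)\rfloor}$, the site is outside all $\langle z\rangle_{\la,\pi}$ with $z\in\chi_{T_q}$ and all $[z]_{\la,\pi}$ with $z\in\cB_M^D$, so Lemma \ref{corestim} applies \emph{at time $T_q$} and gives $\rho^{\la,\pi}_{T_q-}(i)\leq\tau_{T_q-}(i/\nl)+\elp\leq T_q-1-\alpha+\elp$, and $\Omega^S_3(\la,\pi)$ finishes; for $i\in\intervalleentier{\lfloor\nl X_k\rfloor+i^{k,+}_{\al(T_q-T_k)}+1}{\lfloor\nl x\rfloor+\klp}$ (the near-front band where Lemma \ref{corestim} is unavailable) the paper notes that $i$ has not yet been touched by fire $k$, and then argues directly: either $\rho^{\la,\pi}_{T_q-}(i)=0$ (in which case $T_q\geq1$ and $\Omega^S_3$ closes the argument) or $\rho^{\la,\pi}_{T_q}(i)>0$, in which case $\Omega^{P,T}(\la,\pi)$ pins $\rho^{\la,\pi}_{T_q}(i)$ to a window around $T_l(i/\nl)$ for the fire $l$ with $\tau_{T_q-}(x)=T_l(x)$, yielding the same bound $\rho^{\la,\pi}_{T_q}(i)\leq T_q-1-\alpha+\elp$. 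Your proposal also passes over a secondary difficulty: even under your (typo'd) reading, the sites $i$ with $i^{k,+}_{\al(T_q-T_k)}<i-\lfloor\nl X_k\rfloor\leq\klp$ lie in $\langle X_k\rangle_{\la,\pi}$ at time $T_k$, where $X_k\in\chi_{T_k}$, so Lemma \ref{corestim} cannot be invoked at time $T_k$ for those sites either. Finally, your claim that no fire other than $k$ can burn $i$ during $\intervalleff{T_k}{T_q}$ is left unjustified; fires from the right are not excluded by the fact that $i$ is ahead of fire $k$'s front, and ruling them out requires precisely the kind of $\Omega^{\la,\pi}_{T_q}$-plus-$\Omega^{P,T}(\la,\pi)$ bookkeeping the paper does at time $T_q$.
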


\begin{proof}
First, observe that, by $\Omega_M(\alpha)$, $|x-y|>3\alpha/p$ for all $x,y\in\cB_{M,q}^1\cup\cB_M^D$ with $x\neq y$. Hence, for all $x\in\cB_{M,q}^1$, there is a unique $k<q$ such that $x=X_k^+(T_q)$ or $x=X_k^-(T_q)$. 

In the whole proof, we work on $\Omega(\alpha,\gamma,\la,\pi)\cap\Omega^{\la,\pi}_{T_q}$.

\md

\noindent{\bf Step 1.} We first prove 1. As claimed in Step 2 in the proof of Lemma \ref{corestim}, due to $\Omega^{P,T}(\la,\pi)$, if a tree burns at time $\al T_q$ in the $(\la,\pi,A)-$FFP, it necessarily belongs to $\langle X_k^+(T_q)\rangle_{\la,\pi}$ or $\langle X_k^-(T_q)\rangle_{\la,\pi}$ for some $k<q$ and is either $\lfloor\nl X_k\rfloor+i_{\al(T_q-T_k)}^{k,+}$ or
$\lfloor\nl X_k\rfloor+i_{\al(T_q-T_k)}^{k,-}$, or has vacant neighbors. 

It remains to prove that if $x\in\cB^1_{M,q}\setminus(\chi_{T_q}^+\cup\chi_{T_q}^-)$, then there is no burning tree in $\langle x\rangle_{\la,\pi}$ at time $\al T_q$. We assume \eg that  $x=X_k^+(T_q)$ for some $k<q$. Since $x\not\in\chi_{T_q}^+$, there holds that $T_k^{D,+}\leq T_q$ whence $T_k^{D,+}\leq T_q-3\alpha$ and $x\geq X_k^{D,+}+3\alpha/p$, due to $\Omega_M(\alpha)$. We deduce, by $\Omega_{T_q}^{\la,\pi}$, that there is $s\in\intervalleff{T_k^{D,+}-\vlp}{T_k^{D,+}+\vlp}$ such that $\eta^{\la,\pi}_{\al s}(\lfloor\nl X_k\rfloor+i^{k,+}_{\al (s-T_k)})=0$ whence as usual (using \eqref{estimposplusavant} and \eqref{estimposplusapres}) that there is $j\in[X_k^{D,+}]_{\la,\pi}$ such that $\eta^{\la,\pi}_{\al T_k+T^k_{j-\lfloor\nl X_k\rfloor}}(j)=0$. Since $k$ is unique, we conclude, using same arguments as in Step 3 in the proof of Lemma \ref{corestim}, that there can not be burning tree in $\langle x\rangle_{\la,\pi}$ at time $\al T_q$ (because the right front has been stopped in $[X_k^{D,+}]_{\la,\pi}$ and  $\lfloor\nl x\rfloor-\klp\geq  \lfloor X_k^{D,+}\rfloor+\ml+2\klp$).

\md

\noindent{\bf Step 2.} We next prove 2. Let $k<q$. We set $x\coloneqq X_k^+(T_q)\in\cB_{M,q}^1$. Since $x\not\in\cB_m^D$, we have $T_k^{D,+}>T_q>T_k$ whence, by $\Omega_M(\alpha)$, $T_k^{D,+}>T_q+3\alpha>T_k+6\alpha$. Recall that, since $Z_{T_q-}(x)=1$, there holds that $T_q-\tau_{T_q-}(x)\geq 1$ whence $T_q-\tau_{T_q-}(x)\geq 1+3\alpha$, thanks to $\Omega_M(\alpha)$. We deduce that $Z_{T_q-}(y)=1$ and $T_q-\tau_{T_q-}(y)\geq 1+\alpha$ for all $y\in\intervalleff{x}{x+2\alpha/p}$. We set $\tau_{T_q-}(x)=T_l(x)$, for some $l\in\{0,\dots,q-1\}$.

Let us fix $i\in\intervalleentier{\lfloor\nl x\rfloor+\klp+1}{\lfloor\nl(x+2\alpha/p)\rfloor}$. Observing that $i\not\in\bigcup_{x\in\chi_{T_q}}\langle x\rangle_{\la,\pi}\cup\bigcup_{1\leq k\leq q}\left([X_k^{D,+}]_{\la,\pi}\cup[X_k^{D,-}]_{\la,\pi}\right)$, we deduce from Lemma \ref{corestim} and by \eqref{estimtps} that $\rho^{\la,\pi}_{T_q-}(i)\leq \tau_{T_q-}(i/\nl)+\elp$ whence 
\[\rho^{\la,\pi}_{T_q-}(i)\leq T_q-1-\alpha+\elp.\]
We conclude using $\Omega^S_3(\la,\pi)$ that $i$ is occupied at time $\al T_q$.

Let now $i\in\intervalleentier{\lfloor\nl X_k\rfloor+i^{k,+}_{\al (T_q-T_k)}+1}{\lfloor\nl x\rfloor+\klp}$. The site $i$ has not (yet) been affected by the fire $k$. Observe that if $\rho^{\la,\pi}_{T_q-}(i)=0$, since $T_q\geq 1$, we deduce by $\Omega_3^S(\la,\pi)$ that $i$ is occupied at time $\al T_q$. If $\rho^{\la,\pi}_{T_q}(i)>0$, by $\Omega^{P,T}(\la,\pi)$, we necessarily have $\rho^{\la,\pi}_{T_q}(i)\in\intervalleff{T_l+\frac{|i-\lfloor\nl X_l\rfloor|}{\al\pi}-\e_\la}{T_l+\frac{|i-\lfloor\nl X_l\rfloor|}{\al\pi}+\e_\la}$. We deduce as above that
\[\rho^{\la,\pi}_{T_q}(i)\leq T_l(i/\nl)+\elp\leq T_q-1-\alpha+\elp\]
and conclude using using $\Omega^S_3(\la,\pi)$ that $i$ is occupied at time $\al T_q$.

\md

\noindent{\bf Step 3.} Finally, point 3 is proved exactly as Point 2.
\end{proof}

We finally examine the $(\la,\pi,A)-$FFP around $\lfloor\nl X_q\rfloor$ at time $\al T_q$. 
\begin{lem}\label{micro match}
We work on $\Omega(\alpha,\gamma,\la,\pi)\cap\Omega_{T_q}^{\la,\pi}$.
\begin{enumerate}
	\item If $Z_{T_q-}(X_q)<1$ then there are $j_1,j_2\in(X_q)_\la$ such that
$j_1<\lfloor\nl X_q\rfloor<j_2$ and $\eta^{\la,\pi}_{\al s}(j_1)=\eta^{\la,\pi}_{\al
s}(j_2)=0$ for all $s\in\intervalleff{T_q}{T_q+\kappa_{\la,\pi}^0}$.
	\item If $Z_{T_q-}(X_q)=1$ then $\eta^{\la,\pi}_{\al T_q-}(i)=1$ for all $i\in\intervalleentier{\lfloor\nl(X_q-2\alpha/p)\rfloor}{\lfloor\nl(X_q+2\alpha/p)\rfloor}$.
\end{enumerate}
\end{lem}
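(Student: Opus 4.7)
The plan is to set $\tau \coloneqq \tau_{T_q-}(X_q)$ so that by \eqref{zbmp} one has $Z_{T_q-}(X_q) = \min(T_q - \tau, 1)$, with $\tau \in \{0\}\cup\cS_{M,q} \subset \cT_M\cup\cS_M$. The preparatory observation is that on $\Omega_M(\alpha)$ the point $X_q$ is separated from every other element of $\cB_M\cup\cB_M^2\cup\cB_M^D\cup\chi_{T_q-}$ by at least $3\alpha/p$; for $(\la,\pi)$ sufficiently close to $\cR(p)$, the $\langle\cdot\rangle_{\la,\pi}$ and $[\cdot]_{\la,\pi}$-neighborhoods of those points are therefore disjoint from $[X_q]_{\la,\pi}$, so the hypotheses of Lemma \ref{corestim} are met at every site we shall inspect near $\lfloor\nl X_q\rfloor$ for any $t\in\intervalleff{0}{T_q-}$.

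For Case 1, $T_q - \tau < 1$, strengthened to $T_q-\tau\le 1 - 3\alpha$ on $\Omega_M(\alpha)$. I would apply $\Omega^S_2(\la,\pi)$ with $t_1 = \tau$ and $t_2 = T_q$ (both in $\cT_M\cup\cS_M$ with $0 < t_2 - t_1 < 1$) to extract, via the coupling of Subsection \ref{coupling}, two sites $j_1, j_2 \in (X_q)_\la$ with $j_1 < \lfloor\nl X_q\rfloor < j_2$ on which $N^{S,\la,\pi}$ has no increment over $[\al(\tau-4\vlp), \al(T_q+4\vlp)]$. Lemma \ref{corestim}, combined with the sharper estimate $|\rho^{\la,\pi}_{T_q-}(j_k)-\tau|\le \vlp$ valid on $(X_q)_\la$ (obtained from \eqref{estimposplusavant}--\eqref{estimposplusapres}, as observed after Lemma \ref{corestim}), then shows that each $j_k$ was last burnt before $\al(\tau+4\vlp)$ and has received no seed until $\al(T_q+4\vlp)$. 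Since $\kappa_{\la,\pi}^0 \le \vlp \le 4\vlp$, both sites are vacant throughout $[T_q, T_q+\kappa_{\la,\pi}^0]$, giving the claim.

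For Case 2, $T_q - \tau \ge 1 + 3\alpha$ on $\Omega_M(\alpha)$. Because the open interval $(X_q - 2\alpha/p, X_q + 2\alpha/p)$ contains no other element of $\cB_M\cup\cB_M^2$, the function $y\mapsto \tau_{T_q-}(y)$ is structurally constant there: either identically $0$, or of the form $T_l(y)=T_l+p|y-X_l|$ for the unique $l$ realizing $\tau$. The identity $T_l(i/\nl)-T_l(X_q)=p(|i/\nl-X_l|-|X_q-X_l|)$ furnishes $|T_l(i/\nl)-\tau|\le p|i/\nl-X_q|\le 2\alpha$, so $\tau_{T_q-}(i/\nl)\le T_q-1-\alpha$ for every $i$ in the target range. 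Lemma \ref{corestim} then delivers $\rho^{\la,\pi}_{T_q-}(i)\le \tau_{T_q-}(i/\nl)+\elp \le T_q-1-\alpha+\elp$, and the appropriate clause of $\Omega^S_3(\la,\pi)$ (its first clause indexed by $l$ when $\tau_{T_q-}(i/\nl)=T_l(i/\nl)>0$, its second clause indexed by $q$ when $\tau_{T_q-}(i/\nl)=0$) produces a seed on $i$ strictly inside $(\al\rho^{\la,\pi}_{T_q-}(i),\al T_q)$. Since no fire affects $i$ on that interval (by definition of $\rho$), the seed remains, and $\eta^{\la,\pi}_{\al T_q-}(i)=1$.

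The delicate point I expect to be the main obstacle is the $\tau = 0$ sub-case of Case 2: one must verify that the specific window provided by the second clause of $\Omega^S_3(\la,\pi)$, namely $[\al(T_q(i/\nl)-1-4\vlp),\al(T_q(i/\nl)-4\vlp)]$, actually lies before $\al T_q$ when $p|i/\nl-X_q|$ is as large as $2\alpha$. Combining $T_q \ge 1 + 3\alpha$ with the freedom to choose $(\la,\pi)$ so that $4\vlp \ll \alpha$, and exploiting the continuity estimate above to fall back, whenever possible, on the first clause with an earlier fire index, should close the gap; everything else in the argument is an essentially bookkeeping application of Lemma \ref{corestim} and the favorable events $\Omega^S_2,\Omega^S_3$.
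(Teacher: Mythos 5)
Your overall architecture matches the paper's: Case~1 ($Z_{T_q-}(X_q)<1$) via $\Omega_2^S$ plus Lemma~\ref{corestim} to extract two vacant flanking sites, Case~2 ($Z_{T_q-}(X_q)=1$) via Lemma~\ref{corestim}, the observation that $\tau_{T_q-}(\cdot)$ is either identically $0$ or given by a single $T_l(\cdot)$ on $\intervalleoo{X_q-3\alpha/p}{X_q+3\alpha/p}$, and $\Omega_3^S$ to plant a seed after the last burn. That much is correct and follows the paper's route.

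The gap is exactly where you flagged it — the $\tau_{T_q-}(X_q)=0$ sub-case of Case~2 — but the two repairs you sketch would not close it. When $\tau=0$ there is no earlier fire index $l$ to use, so ``falling back on the first clause with an earlier fire index'' is vacuous. And making $4\vlp\ll\alpha$ works \emph{against} you: the window $\intervalleff{\al(T_q(i/\nl)-1-4\vlp)}{\al(T_q(i/\nl)-4\vlp)}$ furnished by the second clause of $\Omega_3^S$ with index $q$ falls before $\al T_q$ only when $p\abs{i/\nl-X_q}<4\vlp$, which is a vanishing fraction of the target range $\intervalleentier{\lfloor\nl(X_q-2\alpha/p)\rfloor}{\lfloor\nl(X_q+2\alpha/p)\rfloor}$ as $\vlp\to0$.

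The actual resolution is simpler and does not route through $\Omega_3^S$. With $\tau_{T_q-}(X_q)=0$, your preparatory observation together with Lemma~\ref{corestim}-1 gives $\rho^{\la,\pi}_{T_q-}(i)=0$ for every $i$ in the target range, hence $\eta^{\la,\pi}_{\al T_q-}(i)=\min(N^{S,\la,\pi}_{\al T_q-}(i),1)$. Since $Z_{T_q-}(X_q)=1$ forces $T_q\geq1$, and on $\Omega_M(\alpha)$ even $T_q\geq1+3\alpha$, it suffices that every site of $I^\la_A$ receive at least one seed during $\intervalleff{0}{\al}$; that event has probability tending to $1$ by Lemma~\ref{speed}-4 and should be added to your favorable event family. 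This is what the paper invokes when it writes that a seed has fallen on each site during $\intervalleff{0}{\al}$ — it is a separate seed-density estimate, not a consequence of either clause of $\Omega_3^S$ as defined.
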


\begin{proof}
First observe that $|x-X_q|>3\alpha/p$ for all $y\in\cB_{M,q}^1\cup\cB_m^D$ whence $F_{T_q-}(y)=0$ for all $y\in\intervalleoo{X_q-3\alpha/p}{X_q+3\alpha/p}$. We deduce, by Lemma \ref{macro match}, that there is no burning tree in $\intervalleentier{\lfloor\nl (X_q-2\alpha/p)\rfloor}{\lfloor\nl (X_q+2\alpha/p)\rfloor}$ at time $\al T_q-$ in the $(\la,\pi,A)-$FFP. We distinguish two cases.

\md

\noindent{\bf Step 1.} We first study the case $\tau_{T_q-}(X_q)>0$. By construction, recalling \eqref{zbmp} and since no match has fallen in $X_q$ during $\intervallefo{0}{T_q}$, there is a unique $k<q$ such that $\tau_{T_q-}(y)=T_k(y)$ for all $y\in\intervalleoo{X_q-3\alpha/p}{X_q+3\alpha/p}$.
\begin{description}
	\item[If $Z_{T_q-}(X_q)<1$,] then $Z_{T_q-}(X_q)=T_q-\tau_{T_q-}(X_q)<1$ whence $T_q-\tau_{T_q-}(X_q)<1-3\alpha$, thanks to $\Omega_M(\alpha)$. Recall that for $i\in(X_q)_\la$, seeds fall according to $(N^{S,q}_t(i-\lfloor\nl X_q\rfloor))_{t\geq0}$.
	
By Lemma \ref{corestim}, for all $i\in(X_q)_\la$, 
\begin{multline*}
\rho^{\la,\pi}_{T_q-}(i)\in\intervalleff{T_k+\frac{|i-\lfloor\nl X_k\rfloor|}{\al \pi}-\e_\la}{T_k+\frac{|i-\lfloor\nl X_k\rfloor|}{\al \pi}+\e_\la}\\
\subset\intervalleoo{\tau_{T_q-}(X_q)-\vlp}{\tau_{T_q-}(X_q)+\vlp}.
\end{multline*}
Since we work on $\Omega_2^S(\la,\pi)$ and since $T_q,\tau_{T_q-}(X_q)\in\cB_M\cup\cB_{M,q}^1$, there are some $-\ml<i_1<0<i_2<\ml$ such that	no seed has fallen on $i_1$ and on $i_2$ during $\intervalleff{\al(\tau_{T_q-}(X_q)-4\vlp)}{\al(T_q+4\vlp)}\supset\intervalleff{\al T_q}{\al(T_q+\kappa_{\la,\pi}^0)}$. All this implies that $i_1$ and $i_2$ remain vacant during (at least) the time interval $\intervalleff{\al T_q}{\al(T_q+\kappa_{\la,\pi}^0)}$.
	\item[If $Z_{T_q-}(X_q)=1$,] then $T_q-\tau_{T_q-}(X_q)\geq 1$ whence $T_q-\tau_{T_q-}(X_q)>1+3\alpha$ and $T_q-\tau_{T_q-}(y)>1+\alpha$ for all 
$y\in\intervalleoo{x-2\alpha/p}{x+2\alpha/p}$, thanks to $\Omega_M(\alpha)$.

By Lemma \ref{corestim}, for all $i\in\intervalleentier{\lfloor\nl(X_q-2\alpha/p)\rfloor}{\lfloor\nl(X_q+2\alpha/p)\rfloor}$, we deduce 
\[\rho^{\la,\pi}_{T_q-}(i)\in\intervalleff{T_k(i/\nl)-\elp}{T_k(i/\nl)+\elp}.\]
Since we work on $\Omega_3^S(\la,\pi)$, at least one seed has fallen on each site during $\intervalleff{\al(T_k(i/\nl)+\elp)}{\al(T_k(i/\nl)+1+\elp)}\subset \intervallefo{\al(T_k(i/\nl)+\elp)}{\al T_q}$.
Since, by definition, $i$ cannot been affected by a fire during $\intervalleoo{\rho^{\la,\pi}_{T_q-}(i)}{\al T_q}$, we deduce that the zone $\intervalleentier{\lfloor\nl(X_q-2\alpha/p)\rfloor}{\lfloor\nl(X_q+2\alpha/p)\rfloor}$ is completely filled at time $\al T_q-$.
\end{description}

\md

\noindent{\bf Step 2.} Here we study the case $\tau_{T_q-}(X_q)=0$. By $\Omega_M(\alpha)$, we have $\tau_{T_q-}(y)=0$ for all $y\in\intervalleoo{X_q-3\alpha/p}{X_q+3\alpha/p}$.
\begin{description}
	\item[If $Z_{T_q-}(X_q)<1$,] then $Z_{T_q}(X_q)=T_q<1$ whence $T_q<1-3\alpha$. Since we still work on $\Omega_2^S(\la,\pi)$, there are some $-\ml<i_1<0<i_2<\ml$ such that no seed has fallen on $i_1$ and on $i_2$ during $\intervalleff{0}{\al(T_q+4\vlp)}\supset\intervalleff{0}{\al(T_q+\kappa_{\la,\pi}^0)}$. Since we start with a vacant initial configuration, we deduce that $i_1$ and $i_2$ remain vacant during (at least) the time interval $\intervalleff{\al T_q}{\al(T_q+\kappa_{\la,\pi}^0)}$.
	\item[If $Z_{T_q-}(X_q)=1$,] then $T_q>1$ whence $T_q>1+3\alpha$. By Lemma \ref{corestim} we deduce that $\rho^{\la,\pi}_{T_q-}(i)=0$ for all $i\in\intervalleentier{\lfloor\nl(X_q-2\alpha/p)\rfloor}{\lfloor\nl(X_q+2\alpha/p)\rfloor}$ and thus 
\[\eta^{\la,\pi}_{\al T_q-}(i)=\min(N^{S,\la,\pi}_{\al T_q-}(i),1).\] Since we work on $\Omega_3^S(\la,\pi)$, at least one seed has fallen on each site during $\intervalleff{0}{\al}\subset \intervalleff{0}{\al T_q}$.
All this implies that the zone $\intervalleentier{\lfloor\nl(X_q-2\alpha/p)\rfloor}{\lfloor\nl(X_q+2\alpha/p)\rfloor}$ is completely filled at time $\al T_q-$.\qedhere
\end{description}
\end{proof}

The following corollary completes Stage 1.
\begin{cor}\label{propstep1}
On $\Omega(\alpha,\gamma,\la,\pi)$, $\Omega_{T_q}^{\la,\pi}$ implies $\Omega_{T_q+4\vlp}^{\la,\pi}$.
\end{cor}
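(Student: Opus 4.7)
The plan is to verify, for each $k \in \{1,\dots,n\}$, that $\Omega^{\la,\pi,+}_{k,T_q+4\vlp}$ and $\Omega^{\la,\pi,-}_{k,T_q+4\vlp}$ hold on $\Omega(\alpha,\gamma,\la,\pi)\cap\Omega^{\la,\pi}_{T_q}$, via a case analysis on the position of $k$ relative to $q$ and on whether the $k$-th fire is still active. Throughout, I use that under the assumption $4(\vlp+p(\ml+2\klp)/\nl)\leq\alpha$, the bound $4\vlp<\alpha$ holds, and that $\Omega_M(\alpha)$ separates every pair of distinct elements of $\cT_M\cup\cS_M\cup\cS_M^1\cup\cS_M^2$ by at least $3\alpha$.

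\emph{Case 1 ($k>q$):} Since $T_k\geq T_{q+1}>T_q+3\alpha>T_q+4\vlp$, the interval $\intervalleff{T_k}{(T_k^{D,\pm}+\vlp)\wedge(T_q+4\vlp)}$ is empty, so $\Omega^{\la,\pi,\pm}_{k,T_q+4\vlp}$ holds trivially.

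\emph{Case 2 ($k<q$ with $T_k^{D,+}+\vlp<T_q$, and symmetrically for the left front):} By construction, $\Omega^{\la,\pi,+}_{k,t}$ depends on $t$ only through the truncation at $(T_k^{D,+}+\vlp)\wedge t$; once $t\geq T_k^{D,+}+\vlp$, this event is independent of $t$. Hence $\Omega^{\la,\pi,+}_{k,T_q+4\vlp}=\Omega^{\la,\pi,+}_{k,T_q}$, which holds by hypothesis.

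\emph{Case 3 ($k<q$ with $T_k^{D,+}>T_q$):} By $\Omega_M(\alpha)$, in fact $T_k^{D,+}>T_q+3\alpha$, so we are still in the first regime of the definition at time $T_q+4\vlp$. Since $\Omega^{\la,\pi,+}_{k,T_q}$ already gives the desired property for $s\in\intervalleff{T_k}{T_q}$, it suffices to extend it to $s\in\intervalleff{T_q}{T_q+4\vlp}$. This extension relies on three ingredients. First, Lemma \ref{macro match}(2) asserts that the sites in $\intervalleentier{\lfloor\nl X_k\rfloor+i^{k,+}_{\al(T_q-T_k)}+1}{\lfloor\nl(X_k+2\alpha/p)\rfloor}$ are all occupied at time $\al T_q$; second, on $\Omega^{P,T}_{\la,\pi}(X_k,T_k)$ the right front in the propagation process satisfies $i^{k,+}_{\al(s-T_k)}\in\intervalleentier{\lfloor\al\pi(s-T_k-\e_\la)\rfloor}{\lfloor\al\pi(s-T_k+\e_\la)\rfloor}$, so the front advances by at most $\al\pi(4\vlp+\e_\la)$ sites, a quantity bounded (via $\vlp+p(\ml+2\klp)/\nl\leq\alpha/4$) by the width of the occupied strip $2\alpha\nl/p$; third, during $\intervalleoo{T_q}{T_q+4\vlp}\subset\intervalleoo{T_q}{T_{q+1}}$ no match falls in this strip, and by the disjointness of the zones $\langle X_l^\pm(t)\rangle_{\la,\pi}$ for distinct fire fronts (consequence of $\Omega_M(\alpha)$ and the fact that each front moves at most $\alpha/p$ in space during $4\vlp$ time), no other fire can reach it. Combining these facts with our coupling (seeds and propagations on $[X_k]_{\la,\pi}$ are driven by $(N^{S,k},N^{P,k})$), the burning front in the $(\la,\pi,A)-$FFP agrees with $\lfloor\nl X_k\rfloor+i^{k,+}_{\al(s-T_k)}$ throughout $\intervalleff{T_q}{T_q+4\vlp}$, as required. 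Symmetrically for $\Omega^{\la,\pi,-}_{k,T_q+4\vlp}$.

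\emph{Case 4 ($k=q$):} Apply Lemma \ref{micro match} to distinguish two sub-cases. If $Z_{T_q-}(X_q)=1$, Lemma \ref{micro match}(2) gives $\eta^{\la,\pi}_{\al T_q-}(i)=1$ for all $i\in\intervalleentier{\lfloor\nl(X_q-2\alpha/p)\rfloor}{\lfloor\nl(X_q+2\alpha/p)\rfloor}$. The match therefore ignites a genuine fire at $\lfloor\nl X_q\rfloor$, and one argues exactly as in Case 3 (with $T_k$ replaced by $T_q$, and using $T_q^{D,\pm}>T_q+3\alpha$) that the two fronts propagate correctly through $\intervalleff{T_q}{T_q+4\vlp}$, giving $\Omega^{\la,\pi,\pm}_{q,T_q+4\vlp}$. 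If $Z_{T_q-}(X_q)<1$, then $T_q^{D,+}=T_q=T_q^{D,-}$ and $T_q+4\vlp>T_q^{D,\pm}+\vlp$, so we are in the second regime. The first conjunct $\Omega^{\la,\pi,+}_{q,T_q}$ involves only $s=T_q$ and is handled via Lemma \ref{micro match}(1) (which locates the killed zone inside $(X_q)_\la$ and shows that $i^{q,\pm}_0=0$ corresponds to the burning tree created by the match, modulo the standard convention). The second conjunct asks for a vacant site in $\intervalleff{T_q^{D,\pm}-\vlp}{T_q^{D,\pm}+\vlp}$; the vacant sites $j_1,j_2\in(X_q)_\la$ provided by Lemma \ref{micro match}(1) immediately realize it, since on $\Omega^{P,T}_{\la,\pi}(X_q,T_q)$ the fronts $i^{q,\pm}_{\al s}$ for $|s|\leq\vlp$ sweep out an interval contained in $(X_q)_\la$ and must pass through $j_1$ or $j_2$.

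The main obstacle is Case 3: one must simultaneously (a) show that the sites the propagation process \emph{would} burn during $\intervalleff{T_q}{T_q+4\vlp}$ are \emph{actually} present in the $(\la,\pi,A)-$FFP, (b) rule out interference from other live fires, and (c) rule out interference from new matches. Each point is handled by combining the occupation estimate of Lemma \ref{macro match}(2), the speed bound from $\Omega^{P,T}(\la,\pi)$, and the quantitative separation afforded by $\Omega_M(\alpha)$, which together pin the evolution of every live front in a cell disjoint from all others for the short window of length $4\vlp$.
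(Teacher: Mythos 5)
Your proposal is correct and takes essentially the same approach as the paper: the case analysis on the index $k$ relative to $q$ and on the status of the $k$-th fire, combined with Lemma \ref{macro match} for the macroscopic case, Lemma \ref{micro match} for the microscopic case, the speed bound from $\Omega^{P,T}(\la,\pi)$, and the quantitative separation from $\Omega_M(\alpha)$. Your Cases 1 and 2 make explicit what the paper leaves implicit (fires that have not yet started, or that have been certified dead at a strictly earlier time), and your Cases 3 and 4 mirror the paper's Steps 1--3. One small imprecision in Case 4 (microscopic sub-case): the first conjunct $\Omega^{\la,\pi,+}_{q,T_q^{D,+}}$ is in fact \emph{vacuous} here, since $T_q^{D,+}=T_q$ and the intended time interval $\intervalleff{T_q}{T_q^{D,+}-\vlp}$ is empty (the paper's definition has a sign typo, $+\vlp$ where $-\vlp$ is meant, as one sees from Corollary \ref{corstage2}); so no invocation of Lemma \ref{micro match}(1) is needed for that conjunct, only for the existence of a vacant site in $\intervalleff{T_q-\vlp}{T_q+\vlp}$, which you correctly get from the $j_1,j_2$ of that lemma. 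This does not affect the validity of the argument.
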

\begin{proof}
Let $k<q$ such that $T_k^{D,+}\in\intervalleoo{T_q}{T_{q+1}}$. By $\Omega_M(\alpha)$, we have $T_q+3\alpha<T_k^{D,+}$ whence $T_q+4\vlp<T_k^{D,+}-\vlp$. Thus, no fire extinguishes during $\intervalleff{T_q}{T_q+4\vlp}$ (in the limit process). Hence, we have to prove that 
\begin{itemize}
	\item if $X_k^+(T_q)\in\chi_{T_q}^+$, for some $k\leq q$, then $\eta^{\la,\pi}_{\al t}(\lfloor\nl X_k\rfloor+i^{k,+}_{\al(t-T_k)})=2$ for all $t\in\intervalleff{T_q}{T_q+4\vlp}$;
	\item if $X_k^-(T_q)\in\chi_{T_q}^-$, for some $k\leq q$, then $\eta^{\la,\pi}_{\al t}(\lfloor\nl X_k\rfloor+i^{k,-}_{\al(t-T_k)})=2$ for all $t\in\intervalleff{T_q}{T_q+4\vlp}$;
	\item if $Z_{T_q-}(X_q)<1$, then the left and right fronts of the fire ignited at $(X_q,T_q)$ are stopped during the time interval $\intervalleff{\al T_q}{\al(T_q+\vlp)}$.
\end{itemize}

Observe that, on $\Omega^{P,T}(\la,\pi)$ there a.s. holds that, for all $k\leq q$,
\[0\leq i_{\al (T_q+4\vlp-T_k)}^{k,+}-i_{\al (T_q-T_k)}^{k,+}\leq4(\ml+2\klp)\leq \lfloor\nl\alpha/p\rfloor\]
and
\[-\lfloor\nl\alpha/p\rfloor\leq-4(\ml+2\klp)\leq i_{\al (T_q+4\vlp-T_k)}^{k,-}-i_{\al (T_q-T_k)}^{k,-}\leq 0.\]

All this implies that a front of a fire at time $\al T_q$, which belong to $\langle x\rangle_{\la,\pi}$ for some $x\in\cB_{M,q}^1\cup\{\nl X_q\}$, can not affect the zone outside $\intervalleentier{\lfloor\nl (x-\alpha/p)\rfloor}{\lfloor\nl (x+\alpha/p)\rfloor}$ during the time interval $\intervalleff{\al T_q}{\al(T_q+4\vlp)}$.

\md

\noindent{\bf Step 1.} Here we prove that for $k\leq q$ such that $x\coloneqq X_k^+(T_q)\in\chi_{T_q}^+$ then $\eta_{\al t}^{\la,\pi}(\lfloor\nl X_k\rfloor+ i_{\al (t-T_k)}^{k,+})=2$ for all $t\in\intervalleff{T_q}{T_q+4\vlp}$.

Indeed, by Lemma \ref{macro match}-2 if $k<q$ or by Lemma \ref{micro match}-2 if $k=q$, there holds that 
\[\eta^{\la,\pi}_{\al T_q}(\lfloor\nl X_k\rfloor+i^{k,+}_{\al (T_q-T_k)})=2\]
and 
\[\eta^{\la,\pi}_{\al T_q}(i)=1\text{ for all }i\in\intervalleentier{\lfloor\nl X_k\rfloor+i^{k,+}_{\al (T_q-T_k)}+1}{\lfloor\nl (x+2\alpha/p)\rfloor}.\]
But by the previous consideration, no fire, except this one, can affect the zone $\intervalleentier{\lfloor\nl X_k\rfloor+i^{k,+}_{\al (T_q-T_k)}+1}{\lfloor\nl(x+\alpha/p)\rfloor}$ during $\intervalleff{\al T_q}{\al (T_q+4\vlp)}$ and conversely, this fire can not affect the zone outside $\intervalleentier{\lfloor \nl (x-\alpha/p)\rfloor}{\lfloor\nl(x+\alpha/p)\rfloor}$. Hence, the right front of the fire $k$ is not stopped during the time interval $\intervalleff{\al T_q}{\al (T_q+4\vlp)}$, as desired.

\md

\noindent{\bf Step 2.} Let $k\leq q$, if $x\coloneqq X_k^-(T_q)\in\chi_{T_q}^-$ then $\eta_{\al t}^{\la,\pi}(\lfloor\nl X_k\rfloor+ i_{\al (t-T_k)}^{k,-})=2$ for all $t\in\intervalleff{T_q}{T_q+4\vlp}$. This can be shown using similar arguments as in Step 1 above.

\md

\noindent{\bf Step 3.} If $Z_{T_q-}(X_q)<1$, we have $T_q=T_q^{D,+}=T_q^{D,-}$. By Lemma \ref{micro match}-1, we deduce that there are $j_1,j_2\in(X_q)_\la$ such that
$j_1<\lfloor\nl X_q\rfloor<j_2$ and
\[\eta^{\la,\pi}_{\al s}(j_1)=\eta^{\la,\pi}_{\al
s}(j_2)=0\text{ for all }s\in\intervalleff{T_q}{T_q+\kappa_{\la,\pi}^0}.\]
Hence, on $\Omega^{P,T}_{\la,\pi}(X_q,T_q)$, $\eta^{\la,\pi}_{\al T_q+T^q_{j_1-\lfloor\nl X_q\rfloor}}(\lfloor\nl X_q\rfloor+i^{q,-}_{T^q_{j_1-\lfloor\nl X_q\rfloor}})=0$ because $T_q +T^q_{j_1-\lfloor\nl X_q\rfloor}/\al\leq T_q+\kappa_{\la,\pi}^0$ and $\eta^{\la,\pi}_{\al T_q+T^q_{j_2-\lfloor\nl X_q\rfloor}}(\lfloor\nl X_q\rfloor+i^{q,+}_{T^q_{j_1-\lfloor\nl X_q\rfloor}})=0$ because $T_q +T^q_{j_2-\lfloor\nl X_q\rfloor}/\al\leq T_q+\kappa_{\la,\pi}^0$, as desired.
\end{proof}

\begin{center}
{\bf \MakeUppercase{Stage 2.}}
\end{center}

In this Stage, we assume that $\cA_q\neq\emptyset$ and we fix $k\in\intervalleentier{0}{N_q-1}$. We work on $\Omega(\alpha,\gamma,\la,\pi)\cap\Omega^{\la,\pi}_{T_q^k+4\vlp}$ and prove that $\Omega_{T_q^{k+1}+4\vlp}$ a.s. holds. We repeatedly use the fact that no match falls in $\intervalleff{-A}{A}$ during the time interval $\intervalleff{T_q^k+4\vlp}{T_q^{k+1}+\alpha}$.
Observe that, for all $i\in I^\la_A$,
\[\eta^{\la,\pi}_{\al (T_q^k+4\vlp)-}(i)=\eta^{\la,\pi}_{\al (T_q^k+4\vlp)}(i).\]

We first examine the position of the burning trees of the $(\la,\pi,A)-$FFP at time $\al(T_q^k+4\vlp)$.

\begin{lem}\label{positionfire2}
We work on $\Omega(\alpha,\gamma,\la,\pi)\cap\Omega_{T_q^k+4\vlp}^{\la,\pi}$.
\begin{enumerate}
	\item At time $\al (T_q^k+4\vlp)$, a burning tree necessarily belongs to $\langle x\rangle_{\la,\pi}$, for some $x\in\chi_{T_q^k+4\vlp}^+\cup \chi_{T_q^k+4\vlp}^-$, and is either $\lfloor\nl X_l\rfloor+i_{\al(T_q^k+4\vlp-T_l)}^{l,+}$ or $\lfloor\nl X_l\rfloor+i_{\al(T_q^k+4\vlp-T_l)}^{l,-}$, for some $l\leq q$, or has vacant neighbors.
	\item If $X_l^+(T_q^k+4\vlp)\in\chi_{T_q^k+4\vlp}^+$ for some $l\leq q$, then $\eta^{\la,\pi}_{\al T_q^k+4\vlp}(\lfloor\nl X_l\rfloor+i^{l,+}_{\al (T_q^k+4\vlp-T_l)})=2$ and $\eta^{\la,\pi}_{\al T_q^k+4\vlp}(i)=1$ for all $i\in\intervalleentier{\lfloor\nl X_l\rfloor+i^{l,+}_{\al (T_q^k+4\vlp-T_l)}+1}{\lfloor\nl (X_l^+(T_q^k+4\vlp)+2\alpha/p)\rfloor}$.
	\item If $X_l^-(T_q^k+4\vlp)\in\chi_{T_q^k+4\vlp}^-$ for some $l\leq q$, then $\eta^{\la,\pi}_{\al T_q^k+4\vlp}(\lfloor\nl X_l\rfloor+i^{l,-}_{\al (T_q^k+4\vlp-T_l)})=2$ and $\eta^{\la,\pi}_{\al T_q^k+4\vlp}(i)=1$ for all $i\in\intervalleentier{\lfloor\nl (X_l^-(T_q^k+4\vlp)-2\alpha/p)\rfloor}{\lfloor\nl X_l\rfloor+i^{l,-}_{\al (T_q^k+4\vlp-T_l)}-1}$.
\end{enumerate}
\end{lem}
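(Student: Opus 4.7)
The proof plan closely parallels the proof of Lemma~\ref{macro match}, with the time $\al T_q$ replaced by $\al(T_q^k+4\vlp)$ and the induction hypothesis $\Omega^{\la,\pi}_{T_q}$ replaced by the stronger $\Omega^{\la,\pi}_{T_q^k+4\vlp}$. I first extract the purely combinatorial consequences of $\Omega_M(\alpha)$: since $T_q^{k+1}-T_q^k>3\alpha$ and $4\vlp<\alpha$ for all $(\la,\pi)$ sufficiently close to $\cR(p)$, no match falls in $\intervalleff{-A}{A}$ during $\intervalleof{T_q^k}{T_q^k+4\vlp}$, and the separation $|x-y|>3\alpha/p$ persists for all distinct $x,y\in\chi_{T_q^k+4\vlp}\cup\cB_M^D\cup\{-A,A\}$, so that $[x]_{\la,\pi}\cap[y]_{\la,\pi}=\emptyset$. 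Consequently Lemma~\ref{corestim} applies at the intermediate time $T_q^k+4\vlp$ outside the singular boxes.

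For Point~1, the argument of Step~2 of Lemma~\ref{corestim} shows on $\Omega^{P,T}(\la,\pi)$ that any burning tree at time $\al(T_q^k+4\vlp)$ must lie in $\langle X_l^+(T_q^k+4\vlp)\rangle_{\la,\pi}\cup\langle X_l^-(T_q^k+4\vlp)\rangle_{\la,\pi}$ for some $l\leq q$ with $T_l\leq T_q^k+4\vlp$, and must be either the running front $\lfloor\nl X_l\rfloor+i^{l,+}_{\al(T_q^k+4\vlp-T_l)}$ or $\lfloor\nl X_l\rfloor+i^{l,-}_{\al(T_q^k+4\vlp-T_l)}$, or a spark with vacant neighbours. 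It remains to exclude indices $l$ for which $x:=X_l^+(T_q^k+4\vlp)\notin\chi^+_{T_q^k+4\vlp}$ (and symmetrically for~$-$). For such an $l$ one has $T_l^{D,+}\leq T_q^k+4\vlp$, hence by $\Omega_M(\alpha)$ even $T_l^{D,+}\leq T_q^k+4\vlp-2\alpha$. The hypothesis $\Omega^{\la,\pi}_{T_q^k+4\vlp}$ then produces a vacant site in $[X_l^{D,+}]_{\la,\pi}$ visited by the right front by time $T_l^{D,+}+\vlp$, and the no-return argument of Step~3 of Lemma~\ref{corestim} guarantees that no burning tree survives in $\langle x\rangle_{\la,\pi}$ at the current time.

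For Point~2, fix $l\leq q$ with $x=X_l^+(T_q^k+4\vlp)\in\chi^+_{T_q^k+4\vlp}$, so that $T_l^{D,+}>T_q^k+4\vlp+\alpha$. Then $\Omega^{\la,\pi}_{T_q^k+4\vlp}$ directly gives that $\lfloor\nl X_l\rfloor+i^{l,+}_{\al(T_q^k+4\vlp-T_l)}$ is burning. To show that the sites of $\intervalleentier{\lfloor\nl X_l\rfloor+i^{l,+}_{\al(T_q^k+4\vlp-T_l)}+1}{\lfloor\nl(x+2\alpha/p)\rfloor}$ are occupied I split as in Step~2 of Lemma~\ref{macro match}: for $i$ outside $\langle x\rangle_{\la,\pi}$, the equalities $Z_{(T_q^k+4\vlp)-}(y)=1$ and $T_q^k+4\vlp-\tau_{(T_q^k+4\vlp)-}(y)\geq 1+\alpha$ on $\intervalleff{x}{x+2\alpha/p}$, combined with Lemma~\ref{corestim}, yield $\rho^{\la,\pi}_{(T_q^k+4\vlp)-}(i)\leq T_q^k+4\vlp-1-\alpha+\elp$, and $\Omega^S_3(\la,\pi)$ then forces a seed to have fallen between $\al\rho^{\la,\pi}_{(T_q^k+4\vlp)-}(i)$ and $\al(T_q^k+4\vlp)$; for $i$ inside $\langle x\rangle_{\la,\pi}$ but ahead of the front, I distinguish $\rho^{\la,\pi}_{(T_q^k+4\vlp)-}(i)=0$ (use $T_q^k+4\vlp>1+3\alpha$ together with $\Omega^S_3(\la,\pi)$) from $\rho^{\la,\pi}_{(T_q^k+4\vlp)-}(i)>0$ (use $\Omega^{P,T}(\la,\pi)$ to trap $\rho$ near some $T_m(i/\nl)\leq T_q^k+4\vlp-1-\alpha$ and conclude by $\Omega^S_3(\la,\pi)$).

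Point~3 is symmetric to Point~2 under left/right reflection. The main obstacle is essentially bookkeeping: one must verify carefully that each of the control events $\Omega^{P,T}(\la,\pi)$, $\Omega^{\la,\pi}_{T_q^k+4\vlp}$, $\Omega_M(\alpha)$ and $\Omega^S_3(\la,\pi)$ is strong enough to apply at the non-standard time $T_q^k+4\vlp$, which need not belong to $\cT_M\cup\cT_M^D$, and that the quantitative bound $4\vlp<\alpha$ really suffices for every spacing estimate to propagate from $\Omega_M(\alpha)$. Once this is settled, the proof of Lemma~\ref{macro match} transposes essentially verbatim.
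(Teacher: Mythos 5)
Your proposal is essentially identical to the paper's own proof, which opens with the same observation (``The proof is very similar to the proof of Lemma~\ref{macro match}'') and then executes exactly the two pieces you describe: Point~1 via Steps~2 and~3 of Lemma~\ref{corestim}, with the extra check that distinct running fronts are $\Omega_M(\alpha)$-separated so the relevant index $l$ is unique and $\langle X_l^+(T_q^k+4\vlp)\rangle_{\la,\pi}\cap[X_l^{D,+}]_{\la,\pi}=\emptyset$ by~\eqref{sortie}; and Points~2--3 by splitting the interval of sites at $\lfloor\nl X_l^+(T_q^k+4\vlp)\rfloor+\klp$ and applying Lemma~\ref{corestim} plus $\Omega^S_3(\la,\pi)$ exactly as in Step~2 of Lemma~\ref{macro match}.

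Two small quantitative slips worth noting. First, when $X_l^+(T_q^k+4\vlp)\notin\chi^+_{T_q^k+4\vlp}$, the paper only concludes $T_l^{D,+}\leq T_q^k$ (equality is possible when $k\geq1$, since $T_q^k$ is itself a death time), whereas you assert the stronger $T_l^{D,+}\leq T_q^k+4\vlp-2\alpha<T_q^k$; since~\eqref{sortie} already yields the disjointness from $T_l^{D,+}\leq T_q^k$, the conclusion is unaffected, but the strict inequality is not justified. Second, the separation the paper actually extracts for the running fronts at time $T_q^k+4\vlp$ is $(3\alpha-8\vlp)/p>\frac{5\alpha}{2p}$, not the $3\alpha/p$ you state; again this is harmless since all that is used is that the $\klp$-boxes around distinct fronts are disjoint. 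Neither slip alters the argument, and your ``bookkeeping'' concern is precisely what the paper addresses via the explicit separation estimates.
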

\begin{proof}
The proof is very similar to the proof of Lemma \ref{macro match}.

Indeed, we prove point 1 using $\Omega^{P,T}(\la,\pi)$ (as in the proof of Lemma \ref{corestim}) which implies that a burning tree necessarily belongs to $\langle X_l^+(T_q^k+4\vlp)\rangle_{\la,\pi}$ or $\langle X_l^-(T_q^k+4\vlp)\rangle_{\la,\pi}$ for some $l\leq q$ and is either $\lfloor\nl X_l\rfloor+i_{\al(T_q^k+4\vlp-T_l)}^{l,+}$ or $\lfloor\nl X_l\rfloor+i_{\al(T_q^k+4\vlp-T_l)}^{l,-}$ or has vacant neighbors. Furthermore, if $X_l^+(T_q^k+4\vlp)<X_{l'}^-(T_q^k+4\vlp)$, for some $l\neq l'$, we deduce, by $\Omega_M(\alpha)$, that
\[X_{l'}^-(T_q^k+4\vlp)-X_l^+(T_q^k+4\vlp)>(3\alpha-8\vlp)/p>\frac{5\alpha}{2p}.\]
Thus, as claimed in Step 3 in the proof of Lemma \ref{corestim}, for a site $i_0$ in $\langle X^+_l(T_q^k+4\vlp)\rangle_{\la,\pi}$ is burning at time $\al(T_q^k+4\vlp)$, since $l$ is unique, it is necessary that 
\[\eta_{\al T_l+T^l_{j-\lfloor\nl X_l\rfloor}}^{\la,\pi}(j)=2\text{ for all }j\in\intervalleentier{\lfloor\nl X_l\rfloor}{i_0}.\]
But, if $X^+_l(T_q^k+4\vlp)\not\in\chi_{T_q^k+4\vlp}^+$ then $T_l^{D,+}\leq T_q^k$. By $\Omega_{T_q^k+4\vlp}^{\la,\pi}$, we deduce that there is $j\in[X_l^{D,+}]_{\la,\pi}$ such that $\eta_{\al T_l+T^l_{j-\lfloor\nl X_l\rfloor}}^{\la,\pi}(j)=0$ (because there is $s\in\intervalleff{T_l^{D,+}-\vlp}{T_l^{D,+}+\vlp}$ such that $\eta^{\la,\pi}_{\al s}(\lfloor\nl X_l\rfloor+i^{l,+}_{\al (s-T_l)})=0$, recall \eqref{estimposplusavant} and \eqref{estimposplusapres}). Since $\langle X^+_l(T_q^k+4\vlp)\rangle_{\la,\pi}\cap [X^{D,+}_l]_{\la,\pi}=\emptyset$, thanks \eqref{sortie} (recall that $X_l^{D,+}=X_l^+(T_l^{D,+})$), there is no burning tree in $\langle X^+_l(T_q^k+4\vlp)\rangle_{\la,\pi}$ at time $\al(T_q^k+4\vlp)$.

Point 2 (or point 3) is proved as in Lemma \ref{macro match}. Indeed if $X_l^+(T_q^k+4\vlp)\in\chi_{T_q^k+4\vlp}^+$, then $T^{D,+}_l\geq T_q^{k+1}\geq T_q^k+3\alpha$ and $|X_l^+(T_q^k+4\vlp)-y|>2\alpha$ for all $y\in\cB_M^D$. Furthermore, on $\Omega_M(\alpha)$, by construction, we have 
\[\tH_{T_q^k+4\vlp}(y)=0\text{ for all }y\in\intervalleoo{X_l^+(T_q^k+4\vlp)}{X_l^+(T_q^k+4\vlp)+(3\alpha-4\vlp)/p)}\]
Thus, we prove that $\eta^{\la,\pi}_{\al(T_q^k+4\vlp)}(j)=1$ for all $j\in\intervalleentier{\lfloor\nl X_l\rfloor+i^{l,+}_{\al (T_q^k+4\vlp-T_l)}+1}{\lfloor\nl (X_l^+(T_q^k+4\vlp)+2\alpha/p)\rfloor}$ by distinguishing the cases $j\in\intervalleentier{\lfloor\nl X_l\rfloor+i^{l,+}_{\al (T_q^k+4\vlp-T_l)}+1}{\lfloor\nl X_l^+(T_q^k+4\vlp)\rfloor+\klp}$ and $j\in\intervalleentier{\lfloor\nl X_l^+(T_q^k+4\vlp)\rfloor+\klp}{\lfloor\nl (X_l^+(T_q^k+4\vlp)+2\alpha/p)\rfloor}$ (recalling that $X_l^+(T_q^k+4\vlp)\not\in\cB_M^D$).
\end{proof}

We then compute the cluster destroyed by a microscopic fire. We use the notation introduced in Lemma \ref{micro fire p}.
\begin{lem}\label{destroyed component}
Let $m\leq q$, if $Z_{T_m-}(X_m)<1$, we define $t_0=T_m-Z_{T_m-}(X_m)$,  which is nothing but $\tau_{T_m-}(X_m)$, recall \eqref{zbmp}. We then define, recall \eqref{iplus} and \eqref{imoins},
\begin{enumerate}[label=(\roman*)]
	\item if $t_0=T_l(X_m)>0$ for some $l<m$ and if $X_m=X_l^+(t_0)$, 
\[\cM\coloneqq (\lfloor\nl X_l\rfloor+i^{l,+}_{\al (t_0-\vlp-T_l)}-\lfloor\nl X_m\rfloor; t_0,T_m);\]
	\item if $t_0=T_l(X_m)>0$ for some $l<m$ and if $X_m=X_l^-(t_0)$, 
\[\cM\coloneqq (\lfloor\nl X_l\rfloor+i^{l,-}_{\al (t_0-\vlp-T_l)}-\lfloor\nl X_m\rfloor; t_0,T_m);\]
	\item if $t_0=0$,
\[\cM\coloneqq (0; 0,T_m),\]
\end{enumerate}
Then, working on $\Omega(\alpha,\gamma,\la,\pi)\cap\Omega^{\la,\pi}_{T_q^k+4\vlp}$, in each case, there holds that
\[(\eta^{\la,\pi}_{\al t}(i))_{t\in\intervalleff{t_0-\vlp}{T_m+\kappa_{\la,\pi}^0},i\in (X_m)_\la}=(\zeta^{\la,\pi,\mathcal{M},m}_{\al t}(i-\lfloor\nl X_m\rfloor))_{t\in\intervalleff{t_0-\vlp}{T_m+\kappa_{\la,\pi}^0},i\in  (X_m)_\la}\]
where the last process is defined as in Lemma \ref{micro fire p} using the seed processes family $(N^{S,m}_t(i))_{t\geq 0,i\in\zz}$ and the propagation processes family $(N^{P,m}_t(i))_{t\geq 0,i\in\zz}$.

This in particular implies that, still on $\Omega(\alpha,\gamma,\la,\pi)\cap\Omega^{\la,\pi}_{T_q^k+4\vlp}$,
\[C^P((\eta^{\la,\pi}_{t}(i))_{t\geq 0,i\in\zz},(X_m,T_m))=\intervalleentier{\lfloor\nl X_m\rfloor+i^g}{\lfloor\nl X_m\rfloor+i^d}\subset(X_m)_\la\]
where $\intervalleentier{i^g}{i^d}= C^P((\zeta^{\la,\pi,\mathcal{M},m}_t(i))_{t\geq 0,i\in \zz},(0,T_m))\subset\intervalleentier{-\ml}{\ml}$, recall Lemma \ref{micro fire p}.
\end{lem}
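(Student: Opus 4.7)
The strategy is to identify both processes as solutions of the same Poisson-driven SDE on the spatial window $(X_m)_\la$ over the time window $[t_0 - \vlp, T_m + \kappa_{\la,\pi}^0]$, with matching initial data and no external perturbation, and then conclude by uniqueness of trajectories. I would split the verification into noise/match alignment (Step 1), absence of external fires (Step 2), and matching initial state (Step 3); the last one is the delicate step.

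First I would verify noise and match alignment. By the coupling in Subsection~\ref{coupling}, together with $\Omega_M(\alpha)$ ensuring $[X_m]_{\la,\pi}\cap[X_{m'}]_{\la,\pi}=\emptyset$ for $m'\neq m$ whenever $(\la,\pi)$ is close enough to $\cR(p)$, the seed and propagation Poisson processes driving $\eta^{\la,\pi}$ on $(X_m)_\la$ coincide, after the shift $i\mapsto i-\lfloor\nl X_m\rfloor$, with the families $N^{S,m}$ and $N^{P,m}$ driving $\zeta^{\la,\pi,\cM,m}$. The same event forbids any match inside $(X_m)_\la$ apart from the one at $\lfloor\nl X_m\rfloor$ at time $\al T_m$, which is precisely the explicit match at position $0$ in $\zeta^{\la,\pi,\cM,m}$. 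For the absence of external fires on $[t_0-\vlp,T_m+\kappa_{\la,\pi}^0]$, by $\Omega^{P,T}(\la,\pi)$ any burning tree at time $\al t$ lies in some $\langle X_{l'}^\pm(t)\rangle_{\la,\pi}$ with $T_{l'}\leq t$; by $\Omega_M(\alpha)$ and the quantitative choice $4(\vlp+p(\ml+2\klp)/\nl)\leq\alpha$ made before Lemma~\ref{heartlem}, any such front with $l'\notin\{l,m\}$ stays at distance strictly larger than $(\ml+2\klp)/\nl$ from $X_m$ throughout the interval. Hence the dynamics inside $(X_m)_\la$ are driven only by the common noise and by the two identified matches.

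The core of the proof is the matching of the initial state. In case (iii), I would apply Lemma~\ref{corestim} at time $T_m-$, which is legitimate since $T_m\leq T_q^k+4\vlp$ and $\Omega^{\la,\pi}_{T_q^k+4\vlp}\subset\Omega^{\la,\pi}_{T_m}$: we have $\tau_{T_m-}(X_m)=0$, and by $\Omega_M(\alpha)$ no exclusion zone of the lemma meets $(X_m)_\la$, so $\rho^{\la,\pi}_{T_m-}(i)=0$ for every $i\in(X_m)_\la$, which gives $\eta^{\la,\pi}_{\al t}(i)=\min(N^{S,\la,\pi}_{\al t}(i),1)$ on $[0,T_m)$, matching $\zeta^{\la,\pi,\cM,m}$. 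In cases (i)-(ii), I would check at time $\al(t_0-\vlp)-$ that (a) the site $\lfloor\nl X_m\rfloor+i_0$ is burning, which is exactly $\Omega^{\la,\pi,\pm}_{l,\,t_0-\vlp}$ after substituting $i_0=\lfloor\nl X_l\rfloor+i^{l,\pm}_{\al(t_0-\vlp-T_l)}-\lfloor\nl X_m\rfloor$, and (b) every other $i\in(X_m)_\la$ is occupied. For (b), sites outside the exclusion zones of Lemma~\ref{corestim} are treated by $\rho^{\la,\pi}_{(t_0-\vlp)-}(i)\leq\tau_{(t_0-\vlp)-}(i/\nl)+\elp\leq t_0-1-3\alpha+\elp$, where the last bound uses $Z_{t_0-}(X_m)=1$ propagated uniformly to a $3\alpha/p$-neighborhood of $X_m$ by $\Omega_M(\alpha)$; then $\Omega_3^S$ supplies a seed on $i$ before $\al(t_0-\vlp)$.

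The main obstacle is the few sites $i\in(X_m)_\la$ that lie inside the only exclusion zone meeting $(X_m)_\la$, namely $\langle X_l^\pm(t_0-\vlp)\rangle_{\la,\pi}$, yet strictly beyond the advancing front; for them Lemma~\ref{corestim} is not directly applicable, and I would handle them by a short ad hoc repetition of its burning-time argument: fire $l$ has by definition not yet reached such a site, no other fire can (by the separation already used in Step 2), and $\Omega_3^S$ then yields occupancy. Once Steps 1-3 are in hand, uniqueness of the Poisson-driven SDE gives the claimed equality of processes on $(X_m)_\la$ during $[t_0-\vlp,T_m+\kappa_{\la,\pi}^0]$. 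The final inclusion $C^P\subset(X_m)_\la$ is then immediate from the arguments in the proof of Lemma~\ref{micro fire p}, which on the high-probability event subsumed by $\Omega(\alpha,\gamma,\la,\pi)$ place the destroyed cluster of $\zeta^{\la,\pi,\cM,m}$ inside $[-\ml,\ml]$.
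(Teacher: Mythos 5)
Your overall strategy---align the noise via the coupling, verify equal initial data at $\al(t_0-\vlp)$, and conclude by trajectory uniqueness---is the right idea and matches the paper's in spirit. The initial-state verification in Step 3 (applying Lemma \ref{corestim} at $T_m-$ for case (iii), and checking at $\al(t_0-\vlp)-$ in cases (i)--(ii) that $\lfloor\nl X_m\rfloor+i_0$ is burning while all other sites of $(X_m)_\la$ are occupied, with the ad hoc handling of the few sites in the exclusion zone of Lemma \ref{corestim}) is essentially what the paper does. But there is a genuine gap in Step 2.

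Your claim that ``any such front with $l'\notin\{l,m\}$ stays at distance strictly larger than $(\ml+2\klp)/\nl$ from $X_m$ throughout the interval'' is false in general, and $\Omega_M(\alpha)$ does not deliver it. Since $0<T_m-t_0<1$, the site $X_m$ is a microscopic barrier during all of $(t_0,T_m)$: there $\tH_{t-}(X_m)=1-(t-t_0)>0$. Consequently another fire $l'$ (with $l'\neq l,m$) may well arrive at $X_m$ at some time $T_{l'}(X_m)\in(t_0+3\alpha,\,T_m-3\alpha)$ and be \emph{stopped} there, giving $X_{l'}^{D,\pm}=X_m$ and $T_{l'}^{D,\pm}\in(t_0,T_m)$. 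The event $\Omega_M(\alpha)$ separates \emph{distinct} points of $\cB_M\cup\cB_M^2$, so it cannot rule out $X_{l'}^{D,\pm}=X_m$; and for that $l'$, $\Omega^{\la,\pi}_{T_q^k+4\vlp}$ tells you the discrete front of fire $l'$ does reach $[X_m]_{\la,\pi}$. So the dynamics inside $(X_m)_\la$ are \emph{not} a priori driven only by the common noise and the two identified matches, and the SDE-uniqueness conclusion does not follow from what you have proved. (This is precisely the ping-pong regime treated in Lemma \ref{persisplem} and used in Lemma \ref{stop}.)

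The paper's proof closes this gap with an ingredient you never invoke: the event $\Omega_2^S(\la,\pi)$. Since $t_0,T_m\in\cT_M\cup\cS_M$ with $0<T_m-t_0<1$, $\Omega_2^S(\la,\pi)$ supplies sites $\lfloor\nl X_m\rfloor+i_1$ and $\lfloor\nl X_m\rfloor+i_2$ with $-\ml-2\klp<i_1<-\ml$ and $\ml<i_2<\ml+2\klp$ on which no seed falls during $(\al(t_0-4\vlp),\,\al(T_m+\kappa_{\la,\pi}^0))$. Fire $l$ empties them during $(\al(t_0-4\vlp),\al(t_0+4\vlp))$ (using \eqref{entree} and \eqref{sortie}), so they stay vacant on $[\al(t_0+4\vlp),\,\al(T_m+\kappa_{\la,\pi}^0)]$ and act as barriers protecting $(X_m)_\la$ from \emph{all} external fires---including the possible $l'$ stopped at $X_m$. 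Only after this protection is established can one say the two processes, driven by the same noise on $[X_m]_{\la,\pi}$, coincide on $(X_m)_\la$ during $[\al(t_0+4\vlp),\al T_m)$ and then through the match at $T_m$. Add the $\Omega_2^S$-barrier argument to your Step~2 and the uniqueness conclusion becomes legitimate.
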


\begin{proof}
We only treat the case $(i)$. The case (ii) is of course similar and the case (iii) is easier.

We thus fix $1\leq l<m\leq q$ in such a way that 
\[\tau_{T_m-}(X_m)=t_0=T_l(X_m)\text{ and }X_m=X_l^+(t_0).\]
By $\Omega_M(\alpha)$, we deduce that $T_l^{D,+}>t_0+3\alpha$ and $T_m>t_0+3\alpha>T_l+6\alpha$. Hence, by construction, there holds that $Z_{t_0-\vlp}(y)=1$ for all $y\in\intervalleoo{X_m-\vlp/p}{X_m+2\alpha/p}$. Observe that $T_q^k+4\vlp\geq T_m+\kappa_{\la,\pi}^0$.

By $\Omega^{\la,\pi}_{T_q^k+4\vlp}$, we deduce that at time $\al(t_0-\vlp)$ the site 
\[\lfloor\nl X_l\rfloor+i^{l,+}_{\al (t_0-\vlp-T_l)}\in\intervalleentier{\lfloor\nl X_m\rfloor-\ml-2\klp}{\lfloor\nl X_m\rfloor-\ml}\]
is burning whereas the zone $\intervalleentier{\lfloor\nl X_l\rfloor+i^{l,+}_{\al (t_0-\vlp-T_l)}+1}{\lfloor\nl (X_m+2\alpha/p)\rfloor}$ is completely occupied (use very similar arguments as in Lemma \ref{positionfire2}-2, recalling that no match falls on $X_m$ during $\intervallefo{0}{T_m}\supset\intervallefo{0}{t_0}$). Comparing $(\eta^{\la,\pi}_{ t}(i))_{t\geq0,i\in \zz}$ and $(\zeta^{\la,\pi,\mathcal{M},m}_{t}(i-\lfloor\nl X_m\rfloor))_{t\geq0,i\in \zz}$, we deduce that they are equal on $\intervalleentier{\lfloor\nl X_l\rfloor+i^{k,+}_{\al (t_0-\vlp-T_l)}}{\lfloor\nl X_m\rfloor+\ml+2\klp}\supset(X_m)_\la$ at time $\al(t_0-\vlp)$.

Since, with our coupling, seeds fall according to the same processes and fires spread according to the same processes on $[X_m]_{\la,\pi}$, we deduce that the fire preads in the same way through $\intervalleentier{\lfloor\nl X_l\rfloor+i^{k,+}_{\al (t_0-\vlp-T_l)}}{\lfloor\nl X_m\rfloor+\ml+2\klp}$. Thus, $(\eta^{\la,\pi}_{ t}(i))_{t\geq0,i\in \zz}$ and $(\zeta^{\la,\pi,\mathcal{M},m}_{t}(i-\lfloor\nl X_m\rfloor))_{t\geq0,i\in \zz}$ remain equal on $\intervalleentier{\lfloor\nl X_l\rfloor+i^{k,+}_{\al (t_0-\vlp-T_l)}+1}{\lfloor\nl X_m\rfloor+\ml+2\klp}\supset(X_m)_\la$ during the time interval $\intervalleff{\al(t_0-\vlp)}{\al(t_0+4\vlp)}$, recall \eqref{sortie}. No other fire affect the zone $(X_m)_\la$ until a match falls on $\lfloor\nl X_m\rfloor$ at time $\al T_m$ because the zone $(X_m)_\la$ is protected by vacant site during the time interval $\intervalleff{\al(t_0+4\vlp)}{\al(T_m+\kappa_{\la,\pi}^0)}$ (by construction for $\zeta^{\la,\pi,\mathcal{M},m}$ and because in the $(\la,\pi,A)-$FFP, on $\Omega_2^S(\la,\pi)$, there are 
\[-\ml-2\klp<i_1<-\ml<\ml<i_2<\ml+2\klp\]
where no seed fall during the time interval $\intervalleoo{\al(t_0-4\vlp)}{\al(T_m+\kappa_{\la,\pi}^0)}$ and because the sites $\lfloor\nl X_m\rfloor+i_1$ and $\lfloor\nl X_m\rfloor+i_2$ has been made vacant by the fire $l$ during $\intervalleoo{\al(t_0-4\vlp)}{\al(t_0+4\vlp)}$, recall \eqref{entree} and \eqref{sortie}). Thus, since seeds fall on $[X_m]_{\la,\pi}$ according to the same processes, $(\eta^{\la,\pi}_{ t}(i))_{t\geq0,i\in \zz}$ and $(\zeta^{\la,\pi,\mathcal{M},m}_{t}(i-\lfloor\nl X_m\rfloor))_{t\geq0,i\in \zz}$ remain equal on $(X_m)_\la$ during $\intervallefo{\al(t_0+4\vlp)}{\al T_m}$. Finally, by $\Omega_2^S(\la,\pi)$, we deduce that there are some sites 
\[-\ml<i_3<0<i_4<\ml\]
where no seed fall during the time interval $\intervalleff{\al(t_0-\vlp)}{\al(T_m+\kappa_{\la,\pi}^0)}$ whence, as usual, in both cases, the sites  $\lfloor\nl X_m\rfloor+i_3$ and $\lfloor\nl X_m\rfloor+i_4$ are vacant during $\intervalleff{\al(t_0+\vlp)}{\al(T_m+\kappa_{\la,\pi}^0)}$, recall \eqref{estimposplusapres} (because they are made vacant by the fire $l$). Since the two processes evolve according to the same rules, the match falling on $\lfloor\nl X_m\rfloor$ at time $\al T_m$ destroys the same zone. Thus, $(\eta^{\la,\pi}_{ t}(i))_{t\geq0,i\in \zz}$ and $(\zeta^{\la,\pi,\mathcal{M},m}_{t}(i-\lfloor\nl X_m\rfloor))_{t\geq0,i\in \zz}$ are also equal on $(X_m)_\la$ during $\intervalleff{\al T_m}{\al (T_m+\kappa_{\la,\pi}^0)}$.

We deduce, on $\Omega_2^S(\la,\pi)$, as seen in {\bf Micro$(p)$} in Subsection \ref{application ffp}, that
\[C^P((\zeta^{\la,\pi,\mathcal{M},m}_t(i))_{t\geq 0,i\in \zz},(0,T_m))\coloneqq\intervalleentier{i^g}{i^d}\subset\intervalleentier{-\ml}{\ml}\]
and that there is no more burning tree in $(X_m)_\la$ at time $\al(T_m+\kappa_{\la,\pi}^0)$, whence
\[C^P((\eta^{\la,\pi}_{t}(i))_{t\geq 0,i\in\zz},(X_m,T_m))=\intervalleentier{\lfloor\nl X_m\rfloor+i^g}{\lfloor\nl X_m\rfloor+i^d}\subset(X_m)_\la.\qedhere\]
\end{proof}

We will need the following lemma.

\begin{lem}\label{noaffect}
Let $s_0\in\intervalleff{T_q^k+\alpha}{T_q^{k+1}+\alpha}$. We work on $\Omega(\alpha,\gamma,\la,\pi)\cap\Omega^{\la,\pi}_{T_q^k+4\vlp}$.
\begin{enumerate}
	\item In the limit process, if, for some $l\leq q$, $X_l^+(T_q^k+4\vlp)\in\chi_{T_q^k+4\vlp}^+$ in such a way that $s_0\leq T_l^{D,+}$ and 
\begin{equation}\label{condition}
F_{T_q^k+4\vlp}(y)=0\text{ for all }y\in\intervalleoo{X_l^+(T_q^k+4\vlp)}{X_l^+(s_0+\alpha)},
\end{equation}
then, in the discrete process, the site $\lfloor\nl X_l^+(s_0)\rfloor$ is not affected by a fire during the time interval $\intervalleff{\al (T_q^k+4\vlp)}{\al  (s_0-\elp)}$.
	\item In the limit process, if, for some $l\leq q$, $X_l^-(T_q^k+4\vlp)\in\chi_{T_q^k+4\vlp}^-$ in such a way that $s_0\leq T_l^{D,-}$ and $F_{T_q^k+4\vlp}(y)=0$ for all $y\in\intervalleoo{X_l^-(s_0+\alpha)}{X_l^-(T_q^k+4\vlp)}$, then, in the discrete process, the site $\lfloor\nl X_l^-(s_0)\rfloor$ is not affected by a fire during the time interval $\intervalleff{\al (T_q^k+4\vlp)}{\al  (s_0-\elp)}$.
\end{enumerate}
\end{lem}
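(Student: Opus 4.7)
The plan is by contradiction. I will prove part 1; part 2 is obtained by the symmetry $y \mapsto -y$. Throughout I work on $\Omega(\alpha,\gamma,\la,\pi) \cap \Omega^{\la,\pi}_{T_q^k+4\vlp}$, and write $x \coloneqq X_l^+(s_0)$ and $i_0 \coloneqq \lfloor\nl x\rfloor$. Suppose for contradiction that some fire $m \in \{1,\dots,n\}$ burns $i_0$ at some time $\al u$ with $u \in \intervalleff{T_q^k+4\vlp}{s_0-\elp}$. By $\Omega_M(\alpha)$ one has $s_0 \leq T_q^{k+1}+\alpha < T_{q+1}$, so no match of $\pi_M$ falls in $I_A^\la$ during $\intervalleff{T_q^k+4\vlp}{u}$, whence $T_m \leq T_q \leq T_q^k$. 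Replicating Steps 2--3 of the proof of Lemma~\ref{corestim} on $\Omega^{P,T}(\la,\pi)$ places $u$ in the window $\intervalleff{T_m + |i_0 - \lfloor\nl X_m\rfloor|/(\al\pi) - \e_\la}{T_m + |i_0 - \lfloor\nl X_m\rfloor|/(\al\pi) + \e_\la}$. Comparing $|i_0 - \lfloor\nl X_m\rfloor|/(\al\pi)$ with $p|x - X_m|$ via $|x - X_m| \leq 2A$ and the definition of $\mathfrak{v}_{\la,\pi}$ yields $|u - T_m(x)| \leq \elp$ up to a negligible $O(1/(\al\pi))$ term. Hence $T_m(x) \leq s_0$ and $T_m(x) \geq T_q^k + 4\vlp - \elp > T_q^k$, using $\vlp > \elp$.

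Next I rule out $m \neq l$ by a geometric argument in the limit process. Without loss of generality (swap left and right otherwise), assume $X_m > x$, so fire $m$ is left-moving and would reach $x$ at time $T_m(x) \in \intervalleof{T_q^k}{s_0}$. I split according to the position $X_m^-(T_q^k+4\vlp)$ of fire $m$ at time $T_q^k + 4\vlp$. If it lies in $\intervalleoo{X_l^+(T_q^k+4\vlp)}{X_l^+(s_0+\alpha)}$, then $F_{T_q^k+4\vlp}(X_m^-(T_q^k+4\vlp)) > 0$ contradicts the hypothesis. If $X_m^-(T_q^k+4\vlp) \geq X_l^+(s_0+\alpha)$, then fires $l$ and $m$ collide in the limit at time $t^* = (T_m(x)+s_0)/2 \leq s_0$, forcing $T_l^{D,+} \leq t^* \leq s_0$, which contradicts $s_0 \leq T_l^{D,+}$ outside the boundary case $T_m(x) = s_0$. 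If $X_m^-(T_q^k+4\vlp) \leq X_l^+(T_q^k+4\vlp)$, the collision would have occurred at some $t^* \leq T_q^k + 4\vlp < s_0$, again contradicting $s_0 \leq T_l^{D,+}$.

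Finally, I rule out $m = l$ by $T_l(x) = s_0$ together with $|u - s_0| \leq \elp$, which gives $u \geq s_0 - \elp$, clashing with $u \leq s_0 - \elp$ once the negligible rounding terms are tracked. The main obstacle I anticipate is handling the two boundary cases ($T_m(x) = s_0$ in the second paragraph, and $u = s_0 - \elp$ in the case $m = l$) rigorously: this requires a careful comparison between the discrete Poisson fluctuation $\e_\la$ and the macroscopic slack $\mathfrak{v}_{\la,\pi}$ embedded in $\elp$, together with the spacing condition $\Omega_M(\alpha)$ to separate $T_l^{D,+} \in \cT_M^D$ from generic arrival times, ensuring that the $O(1/(\al\pi))$ rounding error---which is $\ll \e_\la$ in the regime $\cR(p)$---is safely dominated.
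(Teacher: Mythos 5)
Your broad strategy is the same as the paper's (localize the burning time of a candidate fire $m$ via $\Omega^{P,T}(\la,\pi)$ and Lemma~\ref{corestim}, then rule out each $m$ with limit-process geometry), but the execution has real gaps. First, the reduction ``swap left and right otherwise, assume $X_m>x$'' is not a valid symmetry here: the index $l$ is fixed as a \emph{right}-moving fire, so the case $m\neq l$ with $X_m<x$ — a right-moving fire other than $l$ whose right front could reach $x$ — is not obtained from the $X_m>x$ case by a reflection and must be treated on its own. (It can be handled: if $T_m(x)<s_0$ then $X_m^+(T_q^k+4\vlp)\in\intervalleoo{X_l^+(T_q^k+4\vlp)}{x}$, contradicting \eqref{condition} when $m$ is still alive, and otherwise $\Omega^{\la,\pi}_{T_q^k+4\vlp}$ forces the discrete right front of $m$ to be stopped — but you never say this.) Second, in the case $X_m^-(T_q^k+4\vlp)\geq X_l^+(s_0+\alpha)$ your collision argument ``$t^*=(T_m(x)+s_0)/2$ forces $T_l^{D,+}\leq t^*$'' only applies if fire $m$'s left front is actually alive until $t^*$. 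If $m$ has been stopped by a barrier or another fire before the hypothetical collision (which is generic in the limit process), there is no collision and no bound on $T_l^{D,+}$ from this route, so the contradiction evaporates. Third, the two boundary cases you flag (the $m=l$ case with $u=s_0-\elp$, and $T_m(x)=s_0$) are genuinely unresolved in your write-up.

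The paper sidesteps all of this by never arguing by contradiction or by collision. It shows (a) using \eqref{reach} directly that the right front of $l$ satisfies $\lfloor\nl X_l\rfloor+i^{l,+}_{\al(s_0-\elp-T_l)}<\lfloor\nl X_l^+(s_0)\rfloor$, so it cannot reach the target site in time, and no other fire coming from the left lies ahead of it in the gap isolated by Lemma~\ref{positionfire2} and \eqref{condition}; and (b) for any left front $l'\in\chi^-_{T_q^k+4\vlp}$, the fact that $l$ is still alive at $s_0$ forces $X_{l'}^-(s_0)\geq X_l^+(s_0)$ in the \emph{limit} process, and then \eqref{reach} converts this into the discrete bound $\lfloor\nl X_{l'}\rfloor+i^{l',-}_{\al(s_0-\elp-T_{l'})}>\lfloor\nl X_l^+(s_0)\rfloor$. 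Observation (b) holds regardless of whether fire $l'$ dies before $s_0$ — the Poisson process $i^{l',-}$ gives an outer bound on where the discrete front can be — which is precisely the robustness your collision argument lacks. To fix your proof you would need to (i) treat the $X_m<x$, $m\neq l$ case explicitly, (ii) replace the collision argument by the positional bound $X_{m}^-(s_0)\geq X_l^+(s_0)$ (valid while $l$ is alive), and (iii) deal with the boundary cases via \eqref{reach} as in the paper rather than by ``$|u-T_m(x)|\leq\elp$''.
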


\begin{proof}
It of course suffices to prove 1.

First, using \eqref{condition}, we deduce that
\[\intervalleoo{X_l^+(T_q^k+4\vlp)}{X_l^+(s_0+\alpha)}\cap\left(\chi^+_{T_q^k+4\vlp}\cup\chi^-_{T_q^k+4\vlp}\right)=\emptyset.\]
Hence, by Lemma \ref{positionfire2}-1 and by \eqref{estimposplus}, we deduce that there is no burning tree in $\intervalleentier{\lfloor\nl X_l\rfloor+i^{l,+}_{\al (T_q^k+4\vlp-T_l)}+1}{\lfloor\nl X_l^+(s_0+\alpha)\rfloor-\klp}$ at time $\al (T_q^k+4\vlp)$.

On the one hand, on $\Omega(\alpha,\gamma,\la,\pi)$, recall \eqref{reach} and Lemma \ref{propagation lemma p}, there holds that 
\[\lfloor\nl X_l\rfloor+i^{l,+}_{\al (s_0-\elp-T_l)}< \lfloor\nl X_l^+(s_0)\rfloor.\]
Thus the right front of the fire $l$ does not reach $\lfloor\nl X_l^+(s_0)\rfloor$ before $\al (s_0-\elp)$. Hence, no fire coming from the left can affect the site $\lfloor\nl X_l^+(s_0)\rfloor$ during the considered time interval.

On the other hand, no fire coming from the right can affect $\lfloor\nl X_l^+(s_0)\rfloor$ before $\al (s_0-\elp)$. Indeed, since there is no fire in $\intervalleentier{\lfloor\nl X_l^+(s_0)\rfloor}{\lfloor\nl X_l^+(s_0+\alpha)\rfloor-\klp}$ at time $\al (T_q^k+4\vlp)$, we deduce, by $\Omega(\alpha,\gamma,\la,\pi)$, that if a fire affect the site $\lfloor\nl X_l^+(s_0)\rfloor$ during the time interval $\intervalleff{\al(T_q^k+4\vlp)}{\al(s_0-\elp)}$, it is necessarily a left front. But, by construction, if $X_{l'}^-(T_q^k+4\vlp)\in\chi^-_{T_q^k+4\vlp}$, for some $l'\leq q$, then $X_l^+(s_0)\leq X_{l'}^-(s_0)$ (because $s_0\leq T_l^{D,+}$). By \eqref{reach} and Lemma \ref{propagation lemma p}, we then have
\[\lfloor\nl X_{l'}\rfloor+i^{l',-}_{\al (s_0-\elp-T_{l'})}>\lfloor\nl X_{l'}^-(s_0)\rfloor\geq \lfloor\nl X_{l}^-(s_0)\rfloor.\]
Hence, no fire coming from the right can affect $\lfloor\nl X_l^+(s_0)\rfloor$ during the considered time interval.
\end{proof}

The two following lemmas are the keys of this Stage. The first of them insure that a fire indeed propagates. The second insure that a fire is stopped when it meet a microscopic zone.

\begin{lem}\label{propastep2}
Let $s_0\in\intervalleff{T_q^k+\alpha}{T_q^{k+1}+\alpha}$. We work on $\Omega(\alpha,\gamma,\la,\pi)\cap\Omega^{\la,\pi}_{T_q^k+4\vlp}$. 
\begin{enumerate}
	\item In the limit process, if $X_l^+(T_q^k+4\vlp)\in\chi_{T_q^k+4\vlp}^+$ for some $l\leq q$ in such a way that $s_0\leq T_l^{D,+}$ and $F_{T_q^k+4\vlp}(y)=0$ for all $y\in\intervalleoo{X_l^+(T_q^k+4\vlp)}{X_l^+(s_0+\alpha)}$,
then 
\[\eta^{\la,\pi}_{\al T_l+T^l_{i-\lfloor\nl X_l\rfloor}}(i)=2\]
for all $i\in\intervalleentier{\lfloor\nl X_l\rfloor+i^{l,+}_{\al(T_q^k+4\vlp-T_l)}}{\lfloor\nl X_l^+(s_0)\rfloor-\ml-2\klp}$.
	\item In the limit process, if $X_l^-(T_q^k+4\vlp)\in\chi_{T_q^k+4\vlp}^-$ for all $y\in\intervalleoo{X_l^-(s_0+\alpha)}{X_l^-(T_q^k+4\vlp)}$,
then $\eta^{\la,\pi}_{\al T_l+T^l_{i-\lfloor\nl X_l\rfloor}}(i)=2$ for all $i\in\intervalleentier{\lfloor\nl X_l^-(s_0)\rfloor+\ml+2\klp}{\lfloor\nl X_l\rfloor+i^{l,-}_{\al(T_q^k+4\vlp-T_l)}}$.
\end{enumerate}
\end{lem}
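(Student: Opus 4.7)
I will prove only Part 1; Part 2 follows by the same arguments, mirrored. Thanks to Lemma \ref{positionfire2}-2 applied to the hypothesis $X_l^+(T_q^k+4\vlp)\in\chi^+_{T_q^k+4\vlp}$, at time $\al(T_q^k+4\vlp)$ the site $j_0\coloneqq\lfloor\nl X_l\rfloor+i^{l,+}_{\al(T_q^k+4\vlp-T_l)}$ is burning, and every site of $\intervalleentier{j_0+1}{\lfloor\nl(X_l^+(T_q^k+4\vlp)+2\alpha/p)\rfloor}$ is occupied. From this base, I aim to show that the $l$-th fire continues to propagate forward along the burning times $(T^l_j)_{j}$ prescribed by the propagation process, all the way to $\lfloor\nl X_l^+(s_0)\rfloor-\ml-2\klp$.

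To this end, it is enough to check, for every $i$ in the claimed range, the two properties: (a) no other fire affects site $i$ during the window $[\al(T_q^k+4\vlp),\al(T_l+T^l_{i-\lfloor\nl X_l\rfloor})]$; and (b) site $i$ is occupied just before the prescribed burning time $\al(T_l+T^l_{i-\lfloor\nl X_l\rfloor})$. Once (a) and (b) hold simultaneously for every such $i$, the evolution around the front is entirely governed by the propagation Poisson processes $(N^{P,\la,\pi}_t(\cdot))_{t\ge 0}$ (which agree with those used to define $\check\zeta^{\la,\pi,l}$ on $[X_l]_{\la,\pi}$ by our coupling), so the front reaches each site exactly at time $\al(T_l+T^l_{i-\lfloor\nl X_l\rfloor})$.

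For (a), the reasoning of Lemma \ref{noaffect}-1 applies verbatim with $\lfloor\nl X_l^+(s_0)\rfloor$ replaced by any site $i$ in the range: condition \eqref{condition} ensures $\intervalleoo{X_l^+(T_q^k+4\vlp)}{X_l^+(s_0+\alpha)}\cap(\chi^+_{T_q^k+4\vlp}\cup\chi^-_{T_q^k+4\vlp})=\emptyset$, the hypothesis $s_0\le T_l^{D,+}$ excludes interference from the $l$-th fire itself reaching $i$ prematurely from the right, and the bound $T_l+T^l_{i-\lfloor\nl X_l\rfloor}/\al\le T_l(i/\nl)+\elp\le s_0-\elp$ (by \eqref{estimtps}) keeps us inside the window covered by Lemma \ref{noaffect}. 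For (b) I split on the distance to the current front. If $i\le\lfloor\nl(X_l^+(T_q^k+4\vlp)+2\alpha/p)\rfloor$, then $i$ is already occupied at $\al(T_q^k+4\vlp)$ by Lemma \ref{positionfire2}-2, and (a) preserves the occupation until the fire arrives. Otherwise, in the limit process the site $i/\nl$ must be macroscopic at $T_q^k+4\vlp$ (since $s_0\le T_l^{D,+}$ forces the $l$-th fire to cross it later), so either $\tau_{T_q^k+4\vlp}(i/\nl)=0$, or $\tau_{T_q^k+4\vlp}(i/\nl)=T_r(i/\nl)$ for some $r$; in the latter case, $T_q^k+4\vlp-T_r(i/\nl)\ge1$ together with $\Omega_M(\alpha)$ gives $T_r(i/\nl)\le T_q^k+4\vlp-1-3\alpha$. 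Applying Lemma \ref{corestim} then yields $\rho^{\la,\pi}_{T_q^k+4\vlp}(i)\le T_r(i/\nl)+\elp\le T_q^k+4\vlp-1-\alpha$ (with $\rho^{\la,\pi}_{T_q^k+4\vlp}(i)=0$ in the other case), and $\Omega^S_3(\la,\pi)$ provides a seed on $i$ well before $T_q^k+4\vlp$, which (b) preserves.

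The main obstacle is the second case of (b): one must track sites lying far beyond the current front at time $T_q^k+4\vlp$, recover the time of the last macroscopic fire through them using Lemma \ref{corestim}, and then invoke $\Omega_M(\alpha)$ to guarantee that this last fire is old enough for $\Omega^S_3(\la,\pi)$ to reoccupy the site before the $l$-th fire arrives. The verification that the corresponding $i$ indeed lies outside $\bigcup_{x\in\chi_{T_q^k+4\vlp}}\langle x\rangle_{\la,\pi}\cup\bigcup_{k\le q}([X_k^{D,+}]_{\la,\pi}\cup[X_k^{D,-}]_{\la,\pi})$ (so that Lemma \ref{corestim} applies) uses precisely condition \eqref{condition} and the separation $|X_l^+(s_0)-y|\ge3\alpha/p$ guaranteed by $\Omega_M(\alpha)$, after restricting $(\la,\pi)$ sufficiently close to $\cR(p)$ so that $4(\vlp+p(\ml+2\klp)/\nl)\le\alpha$.
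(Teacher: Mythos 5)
Your overall architecture mirrors the paper's: establish the base state at $\al(T_q^k+4\vlp)$ via Lemma \ref{positionfire2}-2, then prove that each site $i$ in the range is (a) unaffected by other fires and (b) occupied just before the $l$-th front arrives. Your treatment of (a) via Lemma \ref{noaffect} is sound, and for (b) your split between sites near the front (use Lemma \ref{positionfire2}-2) and sites far beyond it (use Lemma \ref{corestim} plus $\Omega_3^S$) is the right idea.

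However, there is a genuine gap in the far-from-front case. You assert that every $i$ in the range lies outside $\bigcup_{x\in\chi_{T_q^k+4\vlp}}\langle x\rangle_{\la,\pi}\cup\bigcup_{m\le q}\left([X_m^{D,+}]_{\la,\pi}\cup[X_m^{D,-}]_{\la,\pi}\right)$, which you need in order to invoke Lemma \ref{corestim}. The first part of the exclusion follows from condition \eqref{condition}, but the second part does not: the hypothesis $F_{T_q^k+4\vlp}(y)=0$ only says no fire is \emph{currently} located at $y$, whereas $\cB_M^D$ records the death positions of fires that may have died at earlier times $T_m^{D,\pm}<T_q^k$. Such death positions can perfectly well lie in $\intervalleoo{X_l^+(T_q^k+4\vlp)}{X_l^+(s_0)}$, and for any $i\in[y]_{\la,\pi}$ with $y\in\cB_M^D$ in that open interval, Lemma \ref{corestim} simply does not apply. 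The paper devotes an entire separate step to precisely these sites: for $y\in\cB_M^D\cap\intervalleoo{X_l^+(T_q^k+4\vlp)}{X_l^+(s_0)}$ and $j\in[y]_{\la,\pi}$, it proves $\eta^{\la,\pi}_{\al(T_l(y)-4\vlp)}(j)=1$ by a case analysis (five cases) according to whether $y\in\cB_M^2$, whether $y=X_m$ with $m\geq q+1$ or $m\le q$, and whether the fire at $X_m$ was macroscopic or microscopic, in the microscopic case further using $\Omega_4^{S,P}$ and Lemma \ref{destroyed component}. Without this case analysis your proof is incomplete: the last-burning-time estimate you derive is not available on the excluded neighborhoods $[y]_{\la,\pi}$, and it is exactly there that the most subtle interactions (e.g.\ a recent microscopic fire that has not yet fully regenerated) could prevent site $i$ from being occupied when the $l$-th front arrives.
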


We have the propagation of the fire $l$ only to $\lfloor\nl X_l^+(s_0)\rfloor-\ml-2\klp$. Unfortunately, in the case where $s_0=T_q^{k+1}=T^{D,+}_l$ and $X_l^+(T_q^{k+1})=X_q^{k+1}=X_l^{D,+}$ (that is if the right front of the fire $l$ is stopped at time $T_q^{k+1}$ in the limit process), we can not say anything more on the discrete process, due to \eqref{estimposplusavant}. We will show below (see Lemma \ref{stop}) that, in this special case, the zone $\intervalleentier{\lfloor\nl X_q^{k+1}\rfloor-\ml-2\klp}{\lfloor\nl X_q^{k+1}\rfloor-\ml}$ is actually completely occupied at time $\al (T_q^{k+1}-4\vlp)$. This will imply that the fire propagates indeed until $\al(T_q^{k+1}-\vlp)$, thanks to \eqref{estimposplusavant}.

\begin{proof}
Lemma \ref{noaffect} shows that the site $\lfloor\nl X_l^+(s_0)\rfloor$ is not affected by a fire during $\intervalleff{\al (T_q^k+4\vlp)}{\al (s_0-\elp)}$. Hence, no fire coming from the right affect the zone $\intervalleentier{\lfloor\nl X_l\rfloor+i^{l,+}_{\al(T_q^k+4\vlp-T_l)}+1}{\lfloor\nl X_l^+(s_0)\rfloor}$ during the time interval $\intervalleff{\al(T_q^k+4\vlp)}{\al (s_0-\vlp)}$ and, conversely, the right front of the fire $l$ does not affect the zone on the right of $\lfloor\nl X_l^+(s_0)\rfloor$. Since $\eta^{\la,\pi}_{\al (T_q^k+4\vlp)}(\lfloor\nl X_l\rfloor+i^{l,+}_{\al(T_q^k+4\vlp-T_l)})=2$, thanks to Lemma \ref{positionfire2}-2, it then suffices to show that for all $i\in\intervalleentier{\lfloor\nl X_l\rfloor+i^{l,+}_{\al(T_q^k+4\vlp-T_l)}+1}{\lfloor\nl X_l^+(s_0)\rfloor-\ml-2\klp}$, 
\[\eta^{\la,\pi}_{\al T_l+T^l_{i-\lfloor\nl X_l\rfloor}-}(i)=1\]
\ie the site $i$ is occupied just before that the right front of the fire $l$ reaches $i$.

Observe that by construction, in the limit process, no fire affect the site $i/\nl\in\intervalleoo{X_l^+(T_q^k+4\vlp)}{X_l^+(s_0)}$ during $\intervalleoo{T_q^k+4\vlp}{T_l(i/\nl)}$ whence in the discrete process, no fire can affect the site $i\in\intervalleentier{\lfloor\nl X_l\rfloor+i^{l,+}_{\al(T_q^k+4\vlp-T_l)}+1}{\lfloor\nl X_l^+(s_0)\rfloor-\ml-2\klp}$ during $\intervallefo{\al(T_q^k+4\vlp)}{\al T_l+T^l_{i-\lfloor\nl X_l\rfloor}}$. All this implies that for all  $i/\nl\in\intervalleoo{X_l^+(T_q^k+4\vlp)}{X_l^+(s_0)}$, we  have 
\[\tau_{T_l(i/\nl)-}(i/\nl) = \tau_{T_q^k+4\vlp}(i/\nl)\]
while for all $i\in\intervalleentier{\lfloor\nl X_l\rfloor+i^{l,+}_{\al(T_q^k+4\vlp-T_l)}+1}{\lfloor\nl X_l^+(s_0)\rfloor-\ml-2\klp}$ we have 
\[\rho^{\la,\pi}_{T_l+T^l_{i-\lfloor\nl X_l\rfloor}/\al-}(i)=\rho^{\la,\pi}_{T_q^k+4\vlp}(i).\]
 
\md

\noindent{\bf Step 1.} Here we show that for all $j\in\intervalleentier{\lfloor\nl X_l\rfloor+i_{\al(T_q^k+4\vlp-T_l)}^{l,+}+1}{\lfloor\nl X_l^+(T_q^k+4\vlp)\rfloor+\klp}$, we have $\eta^{\la,\pi}_{\al T_l+T^l_{j-\lfloor\nl X_l\rfloor}-}(j)=1$.

In Lemma \ref{positionfire2}-2 we have proved that $\eta^{\la,\pi}_{\al (T_q^k+4\vlp)}(j)=1$ for all $j\in\intervalleentier{\lfloor\nl X_l\rfloor+i^{l,+}_{\al (T_q^k+4\vlp-T_l)}+1}{\lfloor\nl X_l^+(T_q^k+4\vlp)\rfloor+\klp}$. The result follows from the previous observation.

\md

\noindent{\bf Step 2.} Here we show that for all
$j\in\intervalleentier{\lfloor\nl X_l^+(T_q^k+4\vlp)\rfloor+\klp+1}{\lfloor\nl X_l^+(s_0)\rfloor-\ml-2\klp}\setminus\cup_{y\in\mathcal{B}_M^D}[y]_{\la,\pi}$, we have 
$\eta_{\al T_l+T^l_{j-\lfloor\nl X_l\rfloor}-}^ {\la,\pi}(j)=1$.

Indeed, on the one hand, $Z_{T_l(j/\nl)-}(j/\nl)=1$, then $T_l(j/\nl)-\tau_{T_l(j/\nl)-}(j/\nl)>1$ whence 
\[\tau_{T_l(j/\nl)-}(j/\nl)<T_l(j/\nl)-1-3\alpha,\]
thanks to $\Omega_M(\alpha)$. On the other hand, recalling that there is no burning tree in $\intervalleentier{\lfloor\nl X_l^+(T_q^k+4\vlp)\rfloor+\klp+1}{\lfloor\nl X_l^+(s_0)\rfloor}$ at time $\al(T_q^k+4\vlp)$ (thus $j\not\in\bigcup_{x\in\chi_{T_q^k+4\vlp}}
\langle x\rangle_{\la,\pi}$) and since $j\not\in\bigcup_{x\in\cB_M^D}[x]_{\la,\pi}$, we deduce from Lemma \ref{corestim} and by \eqref{estimtps} that 
\[\rho^{\la,\pi}_{T_q^k+4\vlp}(j)\leq \tau_{T_q^k+4\vlp}(j/\nl)+\elp.\]

All this implies that
\[\rho^{\la,\pi}_{T_l+T^l_{j-\lfloor\nl X_l\rfloor}/\al-}(j)\leq T_l(j/\nl)-1-3\alpha+\elp.\]
Recalling that $T_l+T^l_{j-\lfloor\nl X_l\rfloor}/\al\geq T_l(j/\nl)-\elp$, thanks to \eqref{estimtps}, and $\elp<\alpha$, we conclude using $\Omega_3^S(\la,\pi)$ that the site $j$ is occupied at time $\al T_l+T^l_{j-\lfloor\nl X_l\rfloor}-$.

\md

\noindent{\bf Step 3.} Here we show that for all $y\in\cB_M^D\cap\intervalleoo{X_l^+(T_q^k+4\vlp)}{X_l^+(s_0)}$, for all $j\in[y]_{\la,\pi}$, there holds $\eta^{\la,\pi}_{\al(T_l(y)-4\vlp)}(j)=1$. This will conclude Lemma \ref{propastep2} since $\al T_l+T^l_{j-\lfloor\nl X_l\rfloor}\geq \al(T_l(y)-4\vlp)$ for all $j\in[y]_{\la,\pi}$, thanks to \eqref{entree}.
        
{\it Preliminary considerations.}
Let $y\in\cB_M^D\cap\intervalleoo{X_l^+(T_q^k+4\vlp)}{X_l^+(s_0)}$. Since $X_l^+(s_0)\leq X_l^{D,+}$, we have $y\leq X^{D,+}_l-3\alpha/p$. We may assume $X_l^+(s_0)\geq y+\alpha/p$, by $\Omega_M(\alpha)$. We know that
$\tH_{T_l(y)-}(y)=0$, whence $H_{T_l(y)-}(y)=0$ and
$Z_{T_l(y)-}(y)=Z_{T_l(y)-}(y_+)=Z_{T_l(y)-}(y_-)=1$. This implies that
$T_l(y)\geq 1$ (because $Z_t(y)=t$ for all $t<1$ and all $y\in
[-A,A]$).
      
As pointed out in Step 2, we have, setting $j_g=\lfloor\nl y\rfloor-\ml-2\klp-1$ and observing that $T_l+T^l_{j_g-\lfloor\nl X_l\rfloor}/\al\geq T_l(y)-4\vlp\geq T_q^k+4\vlp$, 
\begin{multline*}
\rho^{\la,\pi}_{T_l(y)-4\vlp}(j_g)\leq T_l(j_g/\nl)-1-3\alpha+\elp=T_l(y)-1-3\alpha+\elp-p\frac{\ml+2\klp+1}{\nl}.
\end{multline*}
Using a similar argument for $j_d=\lfloor\nl y\rfloor+\ml+2\klp+1$, we conclude that no match falling outside $[y]_{\la,\pi}=\intervalleentier{j_g+1}{j_d-1}$ can affect $[y]_{\la,\pi}$ during $\intervalleoo{\al(T_l(y)-1-\alpha)}{\al (T_l(y)-4\vlp)}$, because 
\[\rho^{\la,\pi}_{T_l(y)-4\vlp}(j_g)+2\e_\la+2\frac{\ml+2\klp}{\al\pi}\leq T_l(y)-1-\alpha\]
and because to affect a site $i\in [y]_{\la,\pi}$, a match falling outside $[y]_{\la,\pi}$ needs to cross $j_d$ or $j_g$ and thus must verify, recall Lemma \ref{corestim},
\[\rho^{\la,\pi}_{T_l(y)-4\vlp}(i)\leq (\rho^{\la,\pi}_{T_l(y)-4\vlp}(j_g/\nl)\vee \rho^{\la,\pi}_{T_l(y)-4\vlp}(j_d/\nl))+2(\kappa_{\la,\pi}^0+\elp).\]

\md

{\it Case 1.} First assume that $y\in\cB_M^2$. Then we know that no match has fallen on
$[y]_{\la,\pi}$ during $\intervallefo{0}{\al T_l(y)}$. Due to the preliminary considerations, we deduce that no fire at all has concerned $[y]_{\la,\pi}$ during
$\intervalleoo{\al(T_l(y)-1-\alpha)}{\al (T_l(y)-4\vlp)}$. Using
$\Omega_3^ S(\la,\pi)$, we conclude that $[y]_{\la,\pi}$ is completely occupied at time
$\al(T_l(y)-4\vlp)$.

\md

{\it Case 2.} Assume that $y=X_m\in\cB_M$ with $m\geq q+1$. Then we know that no match has fallen on $[X_m]_{\la,\pi}$ during $\intervallefo{0}{\al T_l(X_m)}\subset\intervallefo{0}{\al T_m}$. We conclude as in Case 1 using $\Omega^S_3(\la,\pi)$ that the zone $[X_m]_{\la,\pi}$ is completely occupied at time $\al(T_l(y)-4\vlp)$.

\md

{\it Case 3.} Assume that $y=X_m\in\cB_M$ with $m\leq q$ and $Z_{T_m-}(X_m)=1$, so that there
already has been a macroscopic fire in $[X_m]_{\la,\pi}$ (at time $\al T_m$). There is no more burning tree in $[X_m]_{\la,\pi}$ at time $\al(T_m+4\vlp)$, thanks to $\Omega^{P,T}_{\la,\pi}(X_m,T_m)$ and \eqref{sortie}. Since
$Z_{T_m}(X_m)=0$ and $Z_{T_l(X_m)-}(X_m)=1$, we deduce that
$T_l(X_m)-T_m\geq1$, whence $T_l(X_m)-T_m\geq1+3\alpha$ as usual. We
conclude as in case 1 that no fire at all has concerned $[X_m]_{\la,\pi}$ during
$\intervalleoo{\al(T_l(X_m)-1-\alpha)}{\al (T_l(X_m)-4\vlp)}$, which implies
the claim by $\Omega_3^ S(\la,\pi)$.

\md

{\it Case 4.} Assume that $y=X_m\in\cB_M$ with $m\leq q$ and $Z_{T_m-}(X_m)<1$ and
$T_l(X_m)-T_m\geq 1$, whence $T_l(X_m)-T_m\geq 1+3\alpha$ due to
$\Omega_M(\alpha)$. Then there already has been a microscopic fire in $[X_m]_{\la,\pi}$ (at
time $\al T_m$). There is no more burning tree in $[X_m]_{\la,\pi}$ at time $\al(T_m+4\vlp)$, thanks to $\Omega^{P,T}_{\la,\pi}(X_m,T_m)$ and \eqref{sortie}. No match falls on $[X_m]_{\la,\pi}$ during $\intervalleoo{\al
(T_m+4\vlp)}{\al (T_l(X_m)-4\vlp)}\supset
\intervalleoo{\al(T_l(X_m)-1-\alpha)}{\al (T_l(X_m)-4\vlp)}$ and we conclude
as in case 1.

\md

{\it Case 5.} Assume that $y=X_m\in\cB_M$ with $m\leq q$ and $Z_{T_m-}(X_m)<1$ and
$T_l(X_m)-T_m< 1$, whence $T_l(X_m)-T_m\leq 1-3\alpha$ due to
$\Omega_M(\alpha)$. There has been a microscopic fire in $[X_m]_{\la,\pi}$ (at time $\al
T_m$). Since $H_{T_l(X_m)}(X_m)=0$, we deduce that
$T_m+Z_{T_m-}(X_m)\leq T_l(X_m)$, whence $T_m+Z_{T_m-}(X_m)\leq
T_l(X_m)-3\alpha$ by $\Omega_M(\alpha)$. We define $\cM=(i_0;t_0,T_m)$ as in Lemma \ref{destroyed component}.

Consider the zone $C^P\coloneqq C^P((\eta^{\la,\pi}_t(i))_{t\geq0,i\in\zz},(X_m,T_m))\subset(X_m)_\la$ destroyed by
the match falling on $\lfloor\nl X_m\rfloor$ at time $\al T_m$. This zone is completely occupied at time
$\al (T_m+\Theta^{\la,\pi,m}_\cM)$: this follows from the definition
of $\Theta^{\la,\pi,m}_{\cM}$ (see Lemma \ref{micro fire p}), from Lemma \ref{destroyed component} and from the preliminary considerations (because $T_m\geq T_l(X_m)-1-\alpha$). Using $\Omega^S_4(\gamma,\la,\pi)$, we deduce that
$T_m+\Theta^{\la,\pi,m}_\cM \leq T_m+Z_{T_m-}(X_m)+\gamma <
T_l(X_m)-2\alpha$, since $\gamma<\alpha$. Hence $C^P$ is completely occupied at time $\al(T_l(X_m)-4\vlp)$.

Consider now $i\in [X_m]_{\la,\pi} \setminus C^P$. Then $i$ has not been killed by the fire
starting at $\lfloor \nl X_m \rfloor$. Thus $i$ cannot have been killed during
$\intervalleoo{\al(T_l(X_m)-1-\alpha)}{\al (T_l(X_m)-4\vlp)}$ (due to the
preliminary considerations) and we conclude, using $\Omega^S_3(\la,\pi)$, that $i$ is
occupied at time $\al (T_l(X_m)-4\vlp)$. This implies the claim.
\end{proof}

We now examine the process at time $\al T^{k+1}_q$ around $\lfloor\nl X_q^{k+1}\rfloor$ in the case where the fire is stopped by a microscopic zone (in the limit process).

\begin{lem}\label{stop}
On $\Omega(\alpha,\gamma,\la,\pi)\cap\Omega^{\la,\pi}_{T_q^k+4\vlp}$, if $F_{T_q^{k+1}}(X_q^{k+1})\leq 1$, there exists $i\in(X_q^{k+1})_\la$ such that 
\[\eta^{\la,\pi}_{\al s}(i)=0\text{ for all }s\in\intervalleff{T_q^{k+1}-\vlp}{T_q^{k+1}+\vlp}.\]

Furthermore,
\begin{enumerate}[label=(\roman*)]
	\item if $X_q^{k+1}=X_l^+(T_q^{k+1})$ for some $l\leq q$, then the zone $\intervalleentier{\lfloor\nl X_q^{k+1}\rfloor-\ml-2\klp}{\lfloor\nl X_q^{k+1}\rfloor-\ml}$ is completely occupied at time $\al(T_q^{k+1}-4\vlp)$;
	\item if $X_q^{k+1}=X_l^-(T_q^{k+1})$ for some $l\leq q$, then the zone $\intervalleentier{\lfloor\nl X_q^{k+1}\rfloor+\ml}{\lfloor\nl X_q^{k+1}\rfloor+\ml+2\klp}$ is completely occupied at time $\al(T_q^{k+1}-4\vlp)$.
\end{enumerate}
\end{lem}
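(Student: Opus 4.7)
The plan is to first exploit the hypothesis $F_{T_q^{k+1}}(X_q^{k+1})\leq 1$ to pin down the microscopic fire responsible for the barrier: the fire being stopped at $X_q^{k+1}$ forces $X_q^{k+1}\in\chi^0_{T_q^{k+1}}$, so (ignoring the harmless boundary case $X_q^{k+1}\in\{-A,A\}$) there exists $m\leq q$ with $X_m=X_q^{k+1}$ and $Z_{T_m-}(X_m)<1$. Denote $t_0^m=\tau_{T_m-}(X_m)$ and apply Lemma \ref{destroyed component} to this $m$, which yields $C^P\coloneqq C^P((\eta^{\la,\pi}_t(i)),(X_m,T_m))\subset(X_m)_\la$ together with an explicit description of $\eta^{\la,\pi}$ on $(X_m)_\la$ during $\intervalleff{t_0^m-\vlp}{T_m+\kappa_{\la,\pi}^0}$.

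For Part 1, I would then list chronologically the subsequent fires $l_2<\dots<l_K\leq q$ whose fronts sweep across $X_m$ in the limit process during $\intervalleoo{T_m}{T_q^{k+1}}$; their passage times $T_{l_j}(X_m)$ lie in $\cS_{M,m}$ and, together with $t_0^m$ and $T_m$, form a tuple $\cP$ satisfying $(PP1)$ by $\Omega_M(\alpha)$ (in particular $T_m-t_0^m>T_{l_2}(X_m)-T_m$ because $H$ and the initial barrier were still active). The front positions $\lfloor\nl X_{l_j}\rfloor+i^{l_j,\pm}_{\al(T_{l_j}(X_m)-\vlp-T_{l_j})}-\lfloor\nl X_m\rfloor$ belong to $\cI^{m,+}\cup\cI^{m,-}$ by $\Omega^{P,T}(\la,\pi)$ and give a $\cI$ satisfying $(PP2)$, so $\fP=(\cP,\cI)\in\cU_m$ and $\Omega^{S,P,m}_{\fP}(\la,\pi)$ holds on $\Omega^{S,P}_1(\la,\pi)$. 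Now $F_{T_q^{k+1}}(X_q^{k+1})\leq 1$ combined with $\Omega_M(\alpha)$ ensures $T_q^{k+1}+\vlp\in\intervallefo{t_K+2\vlp}{t_K+1-\vlp}$ (where $t_K$ is the last passage time, or $t_K=T_m$ in the pure barrier case), so $\Omega^{S,P,m}_\fP$ delivers the required vacant site in $(X_m)_\la=(X_q^{k+1})_\la$ throughout $\intervalleff{T_q^{k+1}-\vlp}{T_q^{k+1}+\vlp}$.

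For Parts (i)--(ii), treat case (i): the incoming fire $l$ satisfies $X_l^+(T_q^{k+1})=X_q^{k+1}$ and $F_{T_q^k+4\vlp}(y)=0$ for $y\in\intervalleoo{X_l^+(T_q^k+4\vlp)}{X_l^+(T_q^{k+1}-\alpha/2+\alpha)}$ by the definition of $T_q^{k+1}\in\cT_M^D$ and $\Omega_M(\alpha)$. Applying Lemma \ref{propastep2} with $s_0=T_q^{k+1}-\alpha/2$ propagates the fire up to $\lfloor\nl X_l^+(T_q^{k+1}-\alpha/2)\rfloor-\ml-2\klp$ by time $\al(T_q^{k+1}-4\vlp)$ (using $4\vlp<\alpha/2$). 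For the remaining sites lying in the annulus $\intervalleentier{\lfloor\nl X_q^{k+1}\rfloor-\ml-2\klp}{\lfloor\nl X_q^{k+1}\rfloor-\ml}$ — which sits \emph{just outside} $(X_q^{k+1})_\la$ and hence outside $C^P$ by Lemma \ref{destroyed component} — I would run the Case~5 argument from Step~3 of Lemma \ref{propastep2}: every such site, not having been hit by the micro fire at $T_m$, is governed only by its seed process after time $\al(T_m+4\vlp)$; the preliminary computation of Step~3 ensures it has not been affected by any other fire during $\intervalleoo{\al(T_l(X_q^{k+1})-1-\alpha)}{\al(T_q^{k+1}-4\vlp)}$, so $\Omega^S_3(\la,\pi)$ forces occupation at $\al(T_q^{k+1}-4\vlp)$. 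Case (ii) is symmetric.

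The main obstacle will be producing the correct ping-pong tuple $\fP\in\cU_m$ and verifying the strict inequalities in $(PP1)$--$(PP2)$: one must check that consecutive passage times $t_k,t_{k+1}$ of fires at $X_m$ satisfy $t_{k+1}-t_k<1$ (so the barrier never fully regenerates in the limit, otherwise $X_q^{k+1}\notin\chi^0_{T_q^{k+1}}$) while $t_{k+2}-t_k>1$ (by $\Omega_M(\alpha)$ and the structure of $\cS_{M,m}^1$), and that the corresponding front locations fall in $\cI^{m,+}\cup\cI^{m,-}$ rather than on $[X_m]_{\la,\pi}$ itself; the fact that $F_{T_l(X_m)}(X_m)=1$ in the limit (fire passes, not stopped at $X_m$) is what allows us to locate each front strictly away from $\lfloor\nl X_m\rfloor$ via $\Omega^{P,T}(\la,\pi)$.
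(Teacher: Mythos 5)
Your outline reproduces the bulk of the paper's argument (locate $m\le q$ with $X_m=X_q^{k+1}$ and $Z_{T_m-}(X_m)<1$ via $\tH_{T_q^{k+1}}(X_q^{k+1})>0$; invoke Lemma~\ref{destroyed component}; build a ping-pong tuple from the passage times of fires sweeping across $X_m$ in the limit process; apply $\Omega_1^{S,P}(\la,\pi)$ and then run the Lemma~\ref{propastep2}--Step~3 type argument for the annulus). However, there is a genuine gap in how you try to make the ping-pong step cover all cases.

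You try to fold the case $H_{T_q^{k+1}-}(X_q^{k+1})>0$ into the ping-pong framework by writing ``or $t_K=T_m$ in the pure barrier case.'' But the ping-pong event $\Omega_{\mathfrak P}^{S,P}(\la,\pi)$ only comes with content via Lemma~\ref{persisplem}, which requires the tuple $\cP=(t_0,\dots,t_K)$ to satisfy $(PP1)$, i.e.\ $K\ge 2$. If $H_{T_q^{k+1}-}(X_m)>0$, the barrier created by the microscopic fire at $(X_m,T_m)$ is still active at $T_q^{k+1}$, so no subsequent macroscopic fire front has passed through $X_m$ in the limit process during $(T_m,T_q^{k+1})$. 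You then only have $t_0=\tau_{T_m-}(X_m)$ and $t_1=T_m$, i.e.\ $K=1$, so no admissible $\fP\in\cU_m$ exists and $\Omega_1^{S,P}(\la,\pi)$ is silent. The paper handles this case separately: it uses $\Omega_4^{S,P}(\gamma,\la,\pi)$ (Lemma~\ref{micro fire p}) directly, observing that $\Theta^{\la,\pi,m}_{\cM}\ge Z_{T_m-}(X_m)-\gamma$ together with $T_m+Z_{T_m-}(X_m)>T_q^{k+1}+2\alpha$ forces a vacant site in $C^P\subset(X_m)_\la$ to survive past $\al(T_q^{k+1}+\vlp)$. You need this separate branch; the ping-pong lemma only applies in the complementary situation $H_{T_q^{k+1}-}(X_m)=0$, where the paper establishes $K\ge 2$ precisely because there must have been at least one crossing after $t_0+1>T_m$ (otherwise $Z_{T_q^{k+1}-}(X_m+)$ would already be $1$).

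Aside from that, two smaller imprecisions: (a)~for your ping-pong case you assert $T_q^{k+1}+\vlp<t_K+1-\vlp$; this needs the observation (made explicitly in the paper) that $0<T_q^{k+1}-t_K<1$ combined with the $3\alpha$-spacing of $\Omega_M(\alpha)$, not merely the definitions. (b)~For the annulus statement you propose $s_0=T_q^{k+1}-\alpha/2$ in Lemma~\ref{propastep2}; the paper uses $s_0=T_q^{k+1}$, which is permissible because $T_q^{k+1}=T_l^{D,+}$ so the hypothesis $s_0\le T_l^{D,+}$ still holds, and gives a slightly cleaner statement, but your shifted choice is also workable.
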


\begin{proof}
We have $\tH_{T_q^{k+1}}(X_q^{k+1})>0$: in the limit process, a fire is stopped in $X_q^{k+1}$ at time $T_q^{k+1}$ by a microscopic zone. Without loss of generality, we assume that $Z_{T_q^{k+1}-}(X_q^{k+1}-)=1$. We have either $H_{T_q^{k+1}-}(X_q^{k+1})>0$ or  $Z_{T_q^{k+1}-}(X_q^{k+1}+)<1$. Clearly, $X_q^{k+1}=X_m\in\cB_M$ for some $m\leq q$,
with $Z_{T_m-}(X_m)<1$ (else, we would have  $H_{T_q^{k+1}}(X_q^{k+1})=0$ and
$Z_{T_q^{k+1}-}(X_q^{k+1}-)=Z_{T_q^{k+1}-}(X_q^{k+1}+)$). We define $\cM=(i_0;t_0,T_m)$ as in Lemma \ref{destroyed component}.

By construction, there is $l\in\{1,\dots,q\}$ such that $X_m=X^+_l(T_q^{k+1})$. Hence, $T_q^{k+1}=T_l^{D,+}$ and $X_l^+(T_q^k+4\vlp)\in\chi_{T_q^k+4\vlp}^+$ with $F_{T_q^k+4\vlp}(y)=0$ for all $y\in\intervalleoo{X_l^+(T_q^k+4\vlp)}{X_q^{k+1}+\alpha/p}$. By Lemma \ref{positionfire2}, we deduce that there is no burning tree in $\intervalleentier{\lfloor\nl X_l\rfloor+i^{l,+}_{\al(T_q^k+4\vlp-T_l)}+1}{\lfloor\nl X_q^{k+1}\rfloor}$ at time $\al(T_q^k+4\vlp)$ whence by Lemma \ref{noaffect}, that the site $\lfloor\nl X_q^{k+1}\rfloor$ is not affected by a fire during $\intervalleff{\al(T_q^k+4\vlp)}{\al(T_q^{k+1}-4\vlp)}$. The site $\lfloor\nl X_q^{k+1}\rfloor-\ml-2\klp-1$ is not been affected by any fire during the time interval $\intervalleoo{\al(T_q^{k+1}-1-2\alpha)}{\al(T_q^{k+1}-4\vlp)}$, recall Step 2 in the proof of Lemma \ref{propastep2}.

\md

\noindent{\bf Case 1.} Assume first that $H_{T_q^{k+1}-}(X_q^{k+1})>0$. Then by construction, there holds $T_m+Z_{T_m-}(X_m) >
T_q^{k+1}>T_m$, whence by $\Omega_M(\alpha)$, $T_m+Z_{T_m-}(X_m) > T_q^{k+1} + 2\alpha>T_m+4\alpha$. 

We deduce from Lemma \ref{micro fire p} that there is a vacant site in 
\[C^P=  C^P((\zeta^{\la,\pi,\cM,m}_t(i))_{t\geq0,i\in\zz},(0,T_m))=\intervalleentier{i^g}{i^d}\subset\intervalleentier{-\ml}{\ml}\]
during the time interval $\intervalleff{\al(T_m+\kappa_{\la,\pi}^0)}{\al(T_m+\Theta^{\la,\pi,m}_\mathcal{M})}$ (by definition of $\Theta^{\la,\pi,m}_\mathcal{M}$). By Lemma \ref{destroyed component} and with our coupling (recall that seeds fall on $(X_m)_\la$ according to the processes $(N^{S,m}_t(i-\lfloor\nl X_m\rfloor))_{t\geq0,i\in(X_m)}$), we deduce that there is also a vacant site in $\intervalleentier{\lfloor\nl X_m\rfloor+i^g}{\lfloor\nl X_m\rfloor+i^d}\subset(X_m)_\la$ during $\intervalleff{\al(T_m+\kappa_{\la,\pi}^0)}{\al(T_m+\Theta^{\la,\pi,m}_\mathcal{M})}$. But by $\Omega_4^{S,P}(\gamma,\la,\pi)$, we see that $\Theta^{\la,\pi,m}_\mathcal{M}\geq Z_{T_m-}(X_m)- \gamma$ whence $T_m+\Theta^{\la,\pi,m}_\mathcal{M}\geq T_m+Z_{T_m-}(X_m)-\gamma>T_q^{k+1}+2\alpha-\gamma>T_q^{k+1}+\vlp$ since $\gamma<\alpha$ and $\vlp<\alpha$. All this implies that there is a vacant site in $C^P\subset(X_m)_\la$ during $\intervalleff{\al (T_q^{k+1}-\vlp)}{\al (T_q^{k+1}+\vlp)}$.

Since the match falling on $\lfloor\nl X_m\rfloor$ does not affect the zone outside $(X_m)_\la$, we deduce from the preliminary considerations that the zone $\intervalleentier{\lfloor\nl X_q^{k+1}\rfloor-\ml-2\klp}{\lfloor\nl X_q^{k+1}\rfloor-\ml}$ is not affected by any fire during $\intervalleff{\al(T_q^{k+1}-1-\alpha)}{\al(T_q^{k+1}-4\vlp)}$, which implies the claim by $\Omega_3^S(\la,\pi)$.

\md

\noindent{\bf Case 2.} Assume that $H_{T_q^{k+1}-}(X_m)=0$. Then by construction, there holds $T_q^{k+1}-
[T_m-Z_{T_m-}(X_m)] \geq 1$, whence $T_q^{k+1}-
[T_m-Z_{T_m-}(X_m)] \geq 1+3\alpha$. Since $H_{T_q^{k+1}-}(X_m)=0$, we have $Z_{T_q^{k+1}-}(X_m+)< 1 =
Z_{T_q^{k+1}-}(X_m-)$ and $T_m+ Z_{T_m-}(X_m) \leq T_q^{k+1}$, so that $T_m+ Z_{T_m-}(X_m)
\leq T_q^{k+1}-3\alpha$.

We aim to use the event $\Omega^{S,P}_1(\la,\pi)$. We recall that
$t_0=T_m-Z_{T_m-}(X_m)=\tau_{T_m-}(X_m)$. Observe that
$Z_{t_0-}(X_m)=Z_{t_0-}(X_m-)=Z_{t_0-}(X_m+)=1$ because there is no match falling on $x$ during $\intervallefo{0}{T_m}$.

Set now $t_1=T_m$. Observe that $0<t_1-t_0<1$ (because $Z_ {T_m}(X_m)<1$). Necessarily,
$Z_{t-}(x_+)$ has jumped to $0$ at least one time between $t_0$ and $T_q^{k+1}-$ (else,
one would have  $Z_{T_q^{k+1}-} (x_+)=1$, since $T_{q}^{k+1}-t_0\geq 1$ by assumption) and
this jump occurs after $t_0+1>t_1$ (since a jump of  $Z_{t-}(x_+)$ requires that
$Z_{t-}(x_+)=1$, and since for all $t\in (t_0,t_0+1)$, $Z_{t-}(x_+)=t-t_0<1$).

We thus may denote by $t_2<t_3<\dots<t_K$, for some $K\geq 2$, the successive times
of jumps of the process $(Z_{t-}(x_-),Z_{t-}(x_+))$  during $(t_0+1,T_q^{k+1})$. Then we observe that $Z_{t-}(x+)$ and $Z_{t-}(x-)$ do never jump to
0 at the same time during $\intervalleoo{t_0}{T_q^{k+1}}$ (else it would mean that $x$ is crossed by a fire at some time $u$, whence necessarily
$H_r(x)=0$ and $Z_{r-}(x+)=Z_{r-}(x-)$ for all $r\in\intervalleff{u}{T_q^{k+1}}$). 

Furthermore there is always at least one jump of $(Z_{t-}(x_-),Z_{t-}(x_+))$ of any time
interval of length $1$ (during $\intervalleoo{t_0}{T_q^{k+1}}$), because else, $Z_{t-}(x_-)$
and $Z_{t-}(x_+)$ would both become to be equal to $1$ and thus would remain equal
forever.

Finally, observe that two jumps of $Z_{t-}(x_+)$  cannot occur in a time of length $1$
(since a jump of $Z_{t-}(x_+)$ requires that $Z_{t-}(x_+)=1$) and the same thing holds for
$Z_{t-}(x_-)$.

Consequently the family $\cP=\{t_0,\dots,t_K\}$ necessarily
satisfies the condition $(PP1)$ of Subsection \ref{persisp}.

For each $l\in\{0,2,\dots,K\}$, there is a unique (thanks to $\Omega_M(\alpha)$) $k_l\in\intervalleentier{0}{q}$ such that $t_l=T_{k_l}(X_m)$. We set, for all $l\in\{0,2,\dots,K\}$,
\[i_l=\lfloor\nl X_{k_l}\rfloor + i^{k_l,+}_{\al (t_l-\vlp-T_{k_l})}-\lfloor\nl X_m\rfloor\]
if the jump at time $t_l$ is a jump of $Z_{t-}(X_m-)$ (that is if $x=X_{k_l}^+(t_l)$) and
\[i_l=\lfloor\nl X_{k_l}\rfloor + i^{k_l,-}_{\al (t_l-\vlp-T_{k_l})}-\lfloor\nl X_m\rfloor\]
if the jump at time $t_l$ is a jump of $Z_{t-}(X_m+)$ (that is if $x=X_{k_l}^-(t_l)$). Set for example $i_0=0$ if $t_0=0$. We also put $\e=-1$ if $x=X_{l_2}^+(t_2)$ and $\e=1$ else. We thus may denote $\cI=(\e;i_{k_0},i_{k_2},\dots,i_{k_K})$. Clearly, $\cI$ satisfies $(PP2)$, thanks to \eqref{estimposplusavant}.

All this implies that $\fP=(\cP,\cI)$ satifies $(PP)$.

Next, there holds that $t_2-t_1 <Z_{T_m-}(X_m)=t_1-t_0$, because else, we would have
$H_{t_2-}(X_m)=0$ and thus the fire $k_2$ would cross $X_m$, so
that $Z_{t-}(x_+)$ and $Z_{t-}(x_-)$ would remain equal forever.  Furthermore, we have $0<T_q^{k+1}-t_K<1$ because else, we would have $Z_{T_q^{k+1}}(X_m-)=Z_{T_q^{k+1}}(X_m+)=1$.

Finally, we check that 
\begin{equation}\label{equalproc}
(\eta^{\la,\pi}_{\al t}(i))_{t\in\intervalleff{t_0-\vlp}{t_K+4\vlp}, i \in
(X_m)_\la}=(\zeta^{\la,\pi,\fP,m}_{\al t}(i-\lfloor\nl x\rfloor))_{t\in\intervalleff{t_0-\vlp}{t_K+4\vlp}, i \in (X_m)_\la}
\end{equation}
 this
last process being built with the family of seed processes $(N^{S,m}_{
t}(i))_{t\geq 0, i \in \zz}$ and the family of propagation processes $(N^{P,m}_{ t}(i))_{t\geq 0, i \in \zz}$ as in
Subsection \ref{persisp}. We do \eg it in the case where $\e=-1$ and $t_0>1$, the other cases being treated similarly. 

Observe that for all $l\in\{0,2,\dots,K\}$ there holds $t_l=T_{k_l}(X_m)=T^{D,+}_{k_l}$ (if $X_m=X_{k_l}^+(t_l)$) or $T_{k_l}^{D,-}$ (if $X_m=X_{k_l}^-(t_l)$). Hence, since $T_q^k+4\vlp\geq T_l+\vlp$, we have 
\begin{equation}\label{sitefire}
\eta^{\la,\pi}_{\al(t_l-\vlp)}(\lfloor\nl X_m\rfloor +i_l)=2
\end{equation}
for all $l\in\{0,2,\dots,K\}$, thanks to $\Omega^{\la,\pi}_{T_q^k+4\vlp}$. 

We already have checked in Lemma \ref{destroyed component} that $(\eta^{\la,\pi}_{t}(i))_{t\geq0, i \in
\zz}$ and $(\zeta^{\la,\pi,\fP,m}_{t}(i-\lfloor\nl x\rfloor))_{t\geq0, i \in \zz}$ are equal on $(X_m)_\la$ during the time interval $\intervalleff{\al (t_0-\vlp)}{\al(T_m+\kappa_{\la,\pi}^0)}$. Using similar argument, observing that seeds fall on $[X_m]_{\la,\pi}$ and fires spreads through $[X_m]_{\la,\pi}$ according to the same processes and using \eqref{sitefire}, we easily deduce that \eqref{equalproc} holds on $\Omega(\alpha,\gamma,\la,\pi)$.

We thus can use $\Omega^{S,P}_1(\la,\pi)$ and conclude that 
\begin{itemize}
	\item there is $i\in(X_m)_\la$ with $\eta^{\la,\pi}_{\al t}(i)=0$ for all $t\in\intervalleff{T_q^{k+1}-\vlp}{T_q^{k+1}+\vlp}\subset\intervalleff{t_K+2\vlp}{t_K+1-\vlp}$;
	\item no fire coming from the right can affect the zone on the left of $\lfloor\nl X_q^{k+1}\rfloor-\ml$ during the time interval $\intervalleff{\al T_m}{\al (T_q^{k+1}-4\vlp)}$ (because the fire are stopped by vacant site in $(X_m)_\la$). Hence, to affect the zone $\intervalleentier{\lfloor\nl X_q^{k+1}\rfloor-\ml-2\klp}{\lfloor\nl X_q^{k+1}\rfloor-\ml}$ during this time interval, a fire must come from the left and thus must affect the site $\lfloor\nl X_q^{k+1}\rfloor-\ml-2\klp-1$. We deduce from the preliminary considerations that the zone $\intervalleentier{\lfloor\nl X_q^{k+1}\rfloor-\ml-2\klp}{\lfloor\nl X_q^{k+1}\rfloor-\ml}$ is not affected by any fire during $\intervalleff{\al(T_q^{k+1}-1-\alpha)}{\al(T_q^{k+1}-4\vlp)}$ which implies the claim by $\Omega_3^S(\la,\pi)$.\qedhere
\end{itemize}
\end{proof}

We deduce the following corollary, which is the goal of Stage 2.
\begin{cor}\label{corstage2}
On $\Omega(\alpha,\gamma,\la,\pi)$, $\Omega_{T_q^k+4\vlp}^{\la,\pi}$ implies $\Omega_{T_q^{k+1}+4\vlp}^{\la,\pi}$.
\end{cor}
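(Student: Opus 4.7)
The plan is to check, for every $l\le q$, the right-front inclusion $\Omega^{\la,\pi,+}_{l,T_q^k+4\vlp}\subset\Omega^{\la,\pi,+}_{l,T_q^{k+1}+4\vlp}$; the left-front version is symmetric. The crucial structural point is that by the very definition of $T_q^0<T_q^1<\cdots<T_q^{N_q+1}$, no match falls in $\intervalleff{-A}{A}$ and no limit fire is extinguished in the open interval $\intervalleoo{T_q^k}{T_q^{k+1}}$; moreover $\Omega_M(\alpha)$ with $4\vlp\le\alpha$ forces any death time of fire $l$'s right front lying in $\intervalleff{T_q^k+4\vlp}{T_q^{k+1}+4\vlp}$ to equal exactly $T_q^{k+1}$. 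Thus for each active $l$ I have only two scenarios to treat: either $T_l^{D,+}>T_q^{k+1}+4\vlp$ (fire $l$ survives this event) or $T_l^{D,+}=T_q^{k+1}$ (fire $l$ dies at this event).

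In the first scenario I apply part~(1) of Lemma~\ref{propastep2} with $s_0=T_q^{k+1}+4\vlp$. The hypothesis $F_{T_q^k+4\vlp}(y)=0$ on $\intervalleoo{X_l^+(T_q^k+4\vlp)}{X_l^+(s_0+\alpha)}$ holds because no stopping event occurs in $\intervalleoo{T_q^k}{T_q^{k+1}}$ and the cell separations in $\Omega_M(\alpha)$ keep any other fire out of this slab. The conclusion yields $\eta^{\la,\pi}_{\al T_l+T^l_{i-\lfloor\nl X_l\rfloor}}(i)=2$ for every front-trajectory site $i$ up to $\lfloor\nl X_l^+(s_0)\rfloor-\ml-2\klp$; combined with the coupling between $(\eta^{\la,\pi}_t)$ and the propagation process $\proco^{\la,\pi,l}$ on $[X_l]_{\la,\pi}$ and the front-location estimate \eqref{estimposplus} from $\Omega^{P,T}(\la,\pi)$, this upgrades to $\eta^{\la,\pi}_{\al s}(\lfloor\nl X_l\rfloor+i^{l,+}_{\al(s-T_l)})=2$ for every $s\in\intervalleff{T_q^k+4\vlp}{T_q^{k+1}+4\vlp}$, which jointly with the induction hypothesis establishes $\Omega^{\la,\pi,+}_{l,T_q^{k+1}+4\vlp}$.

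In the second scenario I split according to the cause of death. If $X_q^{k+1}\in\cB_M$ (microscopic zone or barrier, case~2a), Lemma~\ref{stop} provides two key ingredients: a persistent vacant site $i_\star\in(X_q^{k+1})_\la$ throughout $\intervalleff{T_q^{k+1}-\vlp}{T_q^{k+1}+\vlp}$, and complete occupation of the corridor $\intervalleentier{\lfloor\nl X_q^{k+1}\rfloor-\ml-2\klp}{\lfloor\nl X_q^{k+1}\rfloor-\ml}$ at time $\al(T_q^{k+1}-4\vlp)$. Part~(1) of Lemma~\ref{propastep2} with $s_0=T_q^{k+1}$ then carries the right front safely up to the left edge of this corridor; the occupied corridor plus the coupling on $[X_q^{k+1}]_{\la,\pi}$ forces the front to burn straight across it; and the collision with $i_\star$ at some $s^\star\in\intervalleff{T_q^{k+1}-\vlp}{T_q^{k+1}+\vlp}$ leaves the front with a vacant target, so it self-extinguishes, yielding $\eta^{\la,\pi}_{\al s^\star}(\lfloor\nl X_l\rfloor+i^{l,+}_{\al(s^\star-T_l)})=0$, exactly the vacant-site clause required by $\Omega^{\la,\pi,+}_{l,T_q^{k+1}+4\vlp}$. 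If instead $X_q^{k+1}\in\cB_M^2$ (case~2b), the right front of $l$ collides with the simultaneously propagating left front of some $l'$; I apply part~(1) of Lemma~\ref{propastep2} to $l$ and part~(2) to $l'$, both with $s_0=T_q^{k+1}$, and use \eqref{estimposplus} to see that the two fronts meet in $\langle X_q^{k+1}\rangle_{\la,\pi}$ during $\intervalleff{T_q^{k+1}-\vlp}{T_q^{k+1}+\vlp}$. At the collision each front faces a burning (state~$2$) rather than occupied neighbor, so propagation stalls and both fronts self-extinguish, again supplying the vacant-site condition for $l$ and $l'$ simultaneously.

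The principal obstacle is case~2a: Lemma~\ref{propastep2} alone only propagates the right front to $\lfloor\nl X_q^{k+1}\rfloor-\ml-2\klp$, leaving a corridor of width $\ml+2\klp$ between the propagation guarantee and the killing site $i_\star$. Bridging this gap is precisely why the second conclusion of Lemma~\ref{stop} is essential; one must then carefully track, using the propagation-time estimates \eqref{estimposplus}--\eqref{sortie} and the coupling on $[X_q^{k+1}]_{\la,\pi}$, that the front indeed crosses the corridor and hits $i_\star$ within the prescribed $\vlp$-window, so that the ``burning up to the death event'' clause and the ``vacant site in the $\vlp$-window'' clause of $\Omega^{\la,\pi,+}_{l,T_q^{k+1}+4\vlp}$ fit together consistently.
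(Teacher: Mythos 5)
Your decomposition into ``fire survives'' versus ``fire dies at $T_q^{k+1}$'' is the right first move, and your treatment of the two death mechanisms (via Lemma~\ref{stop} for a microscopic barrier, and via collision of two fronts) matches the paper's Steps~1 and~2 in substance. However, your first scenario — the surviving fire — has a genuine gap.

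You claim that for $l$ with $T_l^{D,+}>T_q^{k+1}+4\vlp$, the hypothesis $F_{T_q^k+4\vlp}(y)=0$ for all $y\in\intervalleoo{X_l^+(T_q^k+4\vlp)}{X_l^+(s_0+\alpha)}$ holds because ``the cell separations in $\Omega_M(\alpha)$ keep any other fire out of this slab.'' This is false. Since $p>0$, fires move at finite speed, and there is nothing preventing another rightward-moving front $X_{l'}^+(T_q^k+4\vlp)$ from sitting strictly between $X_l^+(T_q^k+4\vlp)$ and $X_l^+(T_q^{k+1}+4\vlp+\alpha)$ at time $T_q^k+4\vlp$. In that case $F_{T_q^k+4\vlp}$ is nonzero somewhere inside the slab, and Lemma~\ref{propastep2}(1) cannot be applied directly with $s_0=T_q^{k+1}+4\vlp$: its hypothesis fails. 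The event $\Omega_M(\alpha)$ only separates the elements of $\cB_M\cup\cB_M^2\cup\{-A,A\}$ in space and $\cT_M\cup\cS_M\cup\cS_M^1\cup\cS_M^2$ in time; it does not prevent ``trailing'' cascades of fronts pointing in the same direction.

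The paper's Step~4 handles precisely this configuration: it first treats the clean case $F_{T_q^k+4\vlp}(y)=0$ for all $y>x_0$ (your scenario), then introduces $x_1=\inf\enstq{y>x_0}{F_{T_q^k+4\vlp}(y)=1}$ and proceeds by an induction over the sequence of fronts $x_0<x_1<\cdots<x_m$ ahead of $l$. The key extra ingredient, which your argument is missing, is that for $i$ lying between $x_1$ and $X_l^+(T_q^{k+1}+\alpha)$ one must control $\rho^{\la,\pi}$ via the \emph{previous} burn by $l'$ (so $\tau_{T_l(i/\nl)-}(i/\nl)=T_{l'}(i/\nl)$), verify that $T_l(y)\geq T_{l'}(y)+1+3\alpha$ for those $y$, and invoke $\Omega_3^S(\la,\pi)$ to show the site is re-occupied before $l$'s front arrives. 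Only after applying Lemma~\ref{propastep2} with a smaller $s_0=T_l(x_1)-\alpha$ and patching across $[x_1]_{\la,\pi}$ can one reach the target time $T_q^{k+1}+4\vlp$. Your argument collapses all of this into one application of Lemma~\ref{propastep2} under a hypothesis that may fail, so it does not establish $\Omega^{\la,\pi,+}_{l,T_q^{k+1}+4\vlp}$ in general. (You also omit the boundary case $X_q^{k+1}\in\{-A,A\}$, which needs its own short verification, but that is minor.)
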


\begin{proof}
We have to prove that for $l\leq q$,
\begin{enumerate}[label=(\alph*)]
	\item if $X_l^+(T_q^k+4\vlp)\in\chi_{T_q^k+4\vlp}^+$ and if $T_l^{D,+}\neq T_q^{k+1}$, then $\eta^{\la,\pi}_{\al s}(\lfloor\nl X_l\rfloor+i^{l,+}_{\al (s-T_l)})=2$ for all $s\in\intervalleff{T_q^k+4\vlp}{T_q^{k+1}+4\vlp}$;
	\item if $X_l^-(T_q^k+4\vlp)\in\chi_{T_q^k+4\vlp}^-$ and if $T_l^{D,-}\neq T_q^{k+1}$, then $\eta^{\la,\pi}_{\al s}(\lfloor\nl X_l\rfloor+i^{l,-}_{\al (s-T_l)})=2$ for all $s\in\intervalleff{T_q^k+4\vlp}{T_q^{k+1}+4\vlp}$;
	\item if $X_l^+(T_q^k+4\vlp)\in\chi_{T_q^k+4\vlp}^+$ and if $T_l^{D,+}= T_q^{k+1}$, then $\eta^{\la,\pi}_{\al s}(\lfloor\nl X_l\rfloor+i^{l,+}_{\al (s-T_l)})=2$ for all $s\in\intervalleff{T_q^k+4\vlp}{T_q^{k+1}-\vlp}$ and there is $s\in\intervalleff{T_q^{k+1}-\vlp}{T_q^{k+1}+\vlp}$ such that $\eta^{\la,\pi}_{\al s}(\lfloor\nl X_l\rfloor+i^{l,+}_{\al (s-T_l)})=0$;
	\item if $X_l^-(T_q^k+4\vlp)\in\chi_{T_q^k+4\vlp}^-$ and if $T_l^{D,-}= T_q^{k+1}$, then $\eta^{\la,\pi}_{\al s}(\lfloor\nl X_l\rfloor+i^{l,-}_{\al (s-T_l)})=2$ for all $s\in\intervalleff{T_q^k+4\vlp}{T_q^{k+1}-\vlp}$ and there is $s\in\intervalleff{T_q^{k+1}-\vlp}{T_q^{k+1}+\vlp}$ such that $\eta^{\la,\pi}_{\al s}(\lfloor\nl X_l\rfloor+i^{l,-}_{\al (s-T_l)})=0$.
\end{enumerate}
All this will imply the result (observe that only these four cases may occur).

Observe that either $F_{T_q^{k+1}}(X_q^{k+1})=2$ (\ie two fires meet at time $T_q^{k+1}$) or $F_{T_q^{k+1}}(X_q^{k+1})\leq1$ (\ie a fire is stopped by a microscopic zone).

\md

\noindent{\bf Step 1.} We start by studying the case where $F_{T_q^{k+1}}(X_q^{k+1})=2$. There are $l_1$ and $l_2$ such that $X_{l_1}^+(T_q^{k+1})=X_q^{k+1}=X_{l_2}^-(T_q^{k+1})$. In this Step, we prove (c) for the fire $l_1$ and (d) for the fire $l_2$.

By construction, we have $X_{l_1}^+(T_q^{k}+4\vlp)\in\chi_{T_q^k+4\vlp}^+$ and $X_{l_2}^-(T_q^k+4\vlp)\in\chi_{T_q^k+4\vlp}^-$ with $F_{T_q^k+4\vlp}(y)=0$ for all $y\in\intervalleoo{X_{l_1}^+(T_q^{k}+4\vlp)}{X_{l_2}^-(T_q^{k}+4\vlp)}$ and $X_{l_2}^-(T_q^{k}+4\vlp)-X_{l_1}^+(T_q^{k}+4\vlp)=2(T_q^{k+1}-T_q^k-4\vlp)/p\geq 5\alpha/p$.

We first prove that $\eta^{\la,\pi}_{\al s}(\lfloor\nl X_{l_1}\rfloor+i^{l_1,+}_{\al(s-T_{l_1})})=2$ for all $s\in\intervalleff{\al(T_q^k+4\vlp)}{\al(T_q^{k+1}-\vlp)}$. Equivalently, we prove that
\[\eta^{\la,\pi}_{\al T_{l_1}+T^{l_1}_{j-\lfloor\nl X_{l_1}\rfloor}}(j)=2\]
for all $j\in\intervalleentier{\lfloor\nl X_{l_1}\rfloor+i^{l_1,+}_{\al(T_q^k+4\vlp-T_{l_1})}}{\lfloor\nl X_{l_1}\rfloor+i^{l_1,+}_{\al(T_q^{k+1}-\vlp-T_{l_1})}}$.

Firstly, Lemma \ref{propastep2} with $s_0=T_q^{k+1}$ directly implies that 
$\eta^{\la,\pi}_{\al T_{l_1}+T^{l_1}_{j-\lfloor\nl X_{l_1}\rfloor}}(j)=2$ for all $j\in\intervalleentier{\lfloor\nl X_{l_1}\rfloor+i^{l_1,+}_{\al(T_q^k+4\vlp-T_{l_1})}}{\lfloor\nl X_q^{k+1}\rfloor-\ml-2\klp}$.

Secondly, we prove that
\[\eta^{\la,\pi}_{\al (T_q^{k+1}-4\vlp)}(i)=1\text{ for all }i\in[X_q^{k+1}]_{\la,\pi}.\]
This will completes the claim, using similar arguments as in Lemma \ref{propastep2} since there is no burning tree in $\intervalleentier{\lfloor\nl X_{l_1}\rfloor+i^{l_1,+}_{\al(T_q^k+4\vlp-T_{l_1})}+1}{\lfloor\nl X_{l_2}\rfloor+i^{l_2,-}_{\al(T_q^k+4\vlp-T_{l_2})}+1}$ at time $\al(T_q^k+4\vlp)$, thanks to Lemma \ref{positionfire2} and since $\lfloor\nl X_{l_1}\rfloor+i^{l_1,+}_{\al(T_q^{k+1}-\vlp-T_{l_1})}\leq \lfloor\nl X_q^{k+1}\rfloor-\ml$ and $\lfloor\nl X_{l_2}\rfloor+i^{l_2,-}_{\al(T_q^{k+1}-\vlp-T_{l_2})}\geq \lfloor\nl X_q^{k+1}\rfloor+\ml$, thanks to $\Omega^{P,T}(\la,\pi)$ and \eqref{estimposplusavant}.

No fire can affect the zone $[X_q^{k+1}]_{\la,\pi}$ during $\intervalleff{\al(T_q^k+4\vlp)}{\al(T_q^{k+1}-4\vlp)}$, thanks to \eqref{entree} and to Lemma \ref{positionfire2},
 (which implies that there is no burning tree in $\intervalleentier{\lfloor\nl X_{l_1}\rfloor+i^{l_1,+}_{\al(T_q^k+4\vlp-T_{l_1})}+1}{\lfloor\nl X_{l_2}\rfloor+i^{l_2,-}_{\al(T_q^k+4\vlp-T_{l_2})}-1}$).
By construction, we have $Z_{T_q^{k+1}-}(X_q^{k+1})=Z_{T_q^{k+1}-}(X_q^{k+1}+)=Z_{T_q^{k+1}-}(X_q^{k+1}-)=1$, whence $T_q^{k+1}-\tau_{T_q^{k+1}}(X_q^{k+1})\geq 1$ and $T_q^{k+1}-\tau_{T_q^{k+1}}(X_q^{k+1})\geq 1+3\alpha$ by $\Omega_M(\alpha)$. Since no match has fallen on $X_q^{k+1}\in\cB_M^2$ during $\intervalleff{0}{T_q^{k+1}}$, using similar argument as in Case 1 Step 3 in the proof of Lemma \ref{propastep2}, we then deduce that for all $j\in[X_q^{k+1}]_{\la,\pi}$,
\[\rho^{\la,\pi}_{\al(T_q^{k+1}-4\vlp)}(j)\leq T_q^{k+1}-1-\alpha,\]
which implies the claim by $\Omega_3^{S}(\la,\pi)$. Same thing of course holds for $l_2$.

Furthermore, we have shown that at time $\al(T_q^{k+1}-\vlp)$, the sites $\lfloor\nl X_{l_1}\rfloor+i^{l_1,+}_{\al(T_q^{k+1}-\vlp-T_{l_1})}$ and $\lfloor\nl X_{l_2}\rfloor+i^{l_2,-}_{\al(T_q^{k+1}-\vlp-T_{l_2})}$ are burning and
\begin{equation}\label{fireb2}
\eta^{\la,\pi}_{\al (T_q^{k+1}-\vlp)}(i)=1
\end{equation}
for all $i\in\intervalleentier{\lfloor\nl X_{l_1}\rfloor+i^{l_1,+}_{\al(T_q^{k+1}-\vlp-T_{l_1})}+1}{\lfloor\nl X_{l_2}\rfloor+i^{l_2,-}_{\al(T_q^{k+1}-\vlp-T_{l_2})}-1}$.

We next show that the fires are stopped during $\intervalleff{\al (T_q^{k+1}-\vlp)}{\al (T_q^{k+1}+\vlp)}$. Observe that, on $\Omega^{P,T}(\la,\pi)$, thanks to \eqref{estimposplusapres}, there is $i_0\in [X_q^{k+1}]_{\la,\pi}$ such that
\[i_0=\lfloor\nl X_{l_1}\rfloor+i^{l_1,+}_{T^{l_1}_{i_0+1-\lfloor\nl X_{l_1}\rfloor-}}=\lfloor\nl X_{l_2}\rfloor+i^{l_2,-}_{T^{l_2}_{i_0-1-\lfloor\nl X_{l_2}\rfloor-}}.\]
We deduce from \eqref{fireb2}, that 
\[\eta^{\la,\pi}_{\al T_{l_1}+T^{l_1}_{j-\lfloor\nl X_{l_1}\rfloor}}(j)=2\text{ for all }j\in\intervalleentier{\lfloor\nl X_{l_1}\rfloor+i^{l_1,+}_{\al(T_q^{k+1}-\vlp-T_{l_1})}}{i_0}\]
and
\[\eta^{\la,\pi}_{\al T_{l_2}+T^{l_2}_{j-\lfloor\nl X_{l_2}\rfloor}}(j)=2\text{ for all }j\in\intervalleentier{i_0}{\lfloor\nl X_{l_2}\rfloor+i^{l_2,-}_{\al(T_q^{k+1}-\vlp-T_{l_2})}}.\]

We know that the fire in $i_0$ propagates at time 
\[\al T_{l_1}+T^{l_1}_{i_0+1-\lfloor\nl X_{l_1}\rfloor}=\al T_{l_2}+T^{l_2}_{i_0-1-\lfloor\nl X_{l_2}\rfloor}.\]
Thus, with our coupling and on $\Omega^{P,T}(\la,\pi)$, at time $\al T_{l_1}+T^{l_1}_{i_0+1-\lfloor\nl X_{l_1}\rfloor}$, either the site $i_0+1$ is vacant (because it has been burnt by the fire $l_2$) or the site $i_0+1$ is occupied but has vacant neighbors until it propagates, that is until $\al T_{l_1}+T^{l_1}_{i_0+2-\lfloor\nl X_{l_1}\rfloor}$ (because it is a spark for the fire $l_2$). In any case, since 
\[\al T_{l_1}+T^{l_1}_{i_0+2-\lfloor\nl X_{l_1}\rfloor}\in\intervalleff{\al(T_q^{k+1}-\vlp)}{\al(T_q^{k+1}+\vlp)},\]
recall \eqref{estimtps}, there is $s_1\in\intervalleff{T_q^{k+1}-\vlp}{T_q^{k+1}+\vlp}$ such that $\eta^{\la,\pi}_{\al s}(\lfloor\nl X_{l_1}\rfloor+i^{l_1,+}_{\al (s_1-T_{l_1})})=0$. Similarly, we can find $s_2\in\intervalleff{T_q^{k+1}-\vlp}{T_q^{k+1}+\vlp}$ such that $\eta^{\la,\pi}_{\al s_2}(\lfloor\nl X_{l_2}\rfloor+i^{l_2,+}_{\al (s_2-T_{l_2}})=0$, which completes this Step.

\md

\noindent{\bf Step 2.} Here, we study the case where $F_{T_q^{k+1}}(X_q^{k+1})\leq 1$ and $X_q^{k+1}\not\in\{-A,A\}$. Assume for example that $X_q^{k+1}=X_{l_0}^+(T_q^{k+1})$ for some $l_0\leq q$. In this Step, we prove (c) for the fire $l_0$.

By construction, $X_{l_0}^+(T_q^{k}+4\vlp)\in\chi_{T_q^k+4\vlp}^+$ and $F_{T_q^k+4\vlp}(y)=0$ for all $y\in\intervalleoo{X_{l_0}^+(T_q^{k}+4\vlp)}{X_q^{k+1}+\alpha/p}$.

We first prove that $\eta^{\la,\pi}_{\al s}(\lfloor\nl X_{l_0}\rfloor+i^{l_0,+}_{\al(s-T_{l_0})})=2$ for all $s\in\intervalleff{\al(T_q^k+4\vlp)}{\al(T_q^{k+1}-\vlp)}$. Equivalently, we prove that
\[\eta^{\la,\pi}_{\al T_{l_0}+T^{l_0}_{j-\lfloor\nl X_{l_0}\rfloor}}(j)=2\]
for all $j\in\intervalleentier{\lfloor\nl X_{l_0}\rfloor+i^{l_0,+}_{\al(T_q^k+4\vlp-T_{l_0})}}{\lfloor\nl X_{l_0}\rfloor+i^{l_0,+}_{\al(T_q^{k+1}-\vlp-T_{l_0})}}$.

Firstly, using Lemma \ref{propastep2} with $s_0=T_q^{k+1}$, we deduce that $\eta^{\la,\pi}_{\al T_{l_0}+T^{l_0}_{j-\lfloor\nl X_{l_0}\rfloor}}(j)=2$ for all $j\in\intervalleentier{\lfloor\nl X_{l_0}\rfloor+i^{l_0,+}_{\al(T_q^k+4\vlp-T_{l_0})}}{\lfloor\nl X_q^ {k+1}\rfloor-\ml-2\klp}$. 

Secondly, Lemma \ref{stop}-1 shows that the zone $\intervalleentier{\lfloor\nl X_q^{k+1}\rfloor-\ml-2\klp}{\lfloor\nl X_q^{k+1}\rfloor-\ml}$ is completely occupied at time $\al(T_q^{k+1}-4\vlp)$. Since no fire coming from the right can affect the zone on the left of $\lfloor\nl X_q^{k+1}\rfloor$  until $\al(T_q^{k+1}-\vlp)$, we deduce the claim using similar argument as in Lemma \ref{propastep2}.

Finally, Lemma \ref{stop} directly imply that there is $j\in(X_q^{k+1})_\la$ such that $\eta^{\la,\pi}_{\al s}(j)=0$ for all $s\in\intervalleff{T_q^{k+1}-\vlp}{T_q^{k+1}+\vlp}$. Since
\[\lfloor\nl X_{l_0}\rfloor+i^{l_0,+}_{\al (T_q^{k+1}+\vlp-T_{l_0})}\geq \lfloor\nl X_q^{k+1}\rfloor+\ml,\]
recall \eqref{estimposplusapres}, there is $s\in\intervalleff{T_q^{k+1}-\vlp}{T_q^{k+1}+\vlp}$ such that $\eta^{\la,\pi}_{\al s}(\lfloor\nl X_{l_0}\rfloor+i^{l_0,+}_{\al (s-T_{l_0})})=0$, as desired.

\md

\noindent{\bf Step 3.} Here we study the case where $X_q^{k+1}\in\{-A,A\}$. Assume for example that $X_q^{k+1}=X_{l_0}^+(T_q^{k+1})=A$ for some $l_0\leq q$. In this Step, we prove (c) for the fire $l_0$.

This case is very simple: by construction, $X_{l_0}^+(T_q^{k}+4\vlp)\in\chi_{T_q^k+4\vlp}^+$ and $F_{T_q^k+4\vlp}(y)=0$ for all $y\in\intervalleoo{X_{l_0}^+(T_q^{k}+4\vlp)}{A}$. 

Since there is no burning tree in $\intervalleentier{\lfloor\nl X_{l_0}\rfloor+i^{l_0,+}_{\al(T_q^k+4\vlp-T_{l_0})}+1}{\lfloor\nl A\rfloor}$ at time $\al(T_q^k+4\vlp)$ (thanks to Lemma \ref{positionfire2}), we deduce, using similar argument as in the proof of Lemma \ref{propastep2}, that $\eta^{\la,\pi}_{\al T_{l_0}+T^l_{j-\lfloor\nl X_l\rfloor}}(j)=2$ for all $j\in\intervalleentier{\lfloor\nl X_{l_0}\rfloor+i^{l_0,+}_{\al(T_q^k+4\vlp-T_{l_0})}}{\lfloor\nl A\rfloor-\ml-2\klp}$. The zone $\intervalleentier{\lfloor\nl A\rfloor-\ml-2\klp}{\lfloor\nl A\rfloor}$ is not affected by any fire during $\intervalleff{\al(T_q^{k+1}-1-\alpha)}{\al(T_q^{k+1}-4\vlp)}$ (recall Step 3 in the proof of Lemma \ref{propastep2}) and no match falls in this zone during $\intervalleff{0}{\al T}$. We deduce as usual, using $\Omega_3^S(\la,\pi)$, that this zone is completely occupied at time $\al(T_q^{k+1}-4\vlp)$. Thus, we have
\[\eta^{\la,\pi}_{\al T_{l_0}+T^l_{j-\lfloor\nl X_{l_0}\rfloor}}(j)=2\]
for all $j\in\intervalleentier{\lfloor\nl A\rfloor-\ml-2\klp}{\lfloor\nl A\rfloor}$, which implies the claim since $\lfloor\nl X_{l_0}\rfloor +i^{l_0,+}_{\al(T_q^{k+1}-\vlp-T_{l_0})} \leq \lfloor\nl A\rfloor-\ml$.

We immediately deduce the claim since $\eta^{\la,\pi}_{s}(\lfloor\nl A\rfloor+1)=0$ for all $s\in\intervallefo{0}{\infty}$.

\md

\noindent{\bf Step 4.} Here we study the case where $x_0\coloneqq X_{l_0}^+(T_q^k+4\vlp)\in\chi_{T_q^k+4\vlp}^+$ with $T_{l_0}^{D,+}\neq T_q^{k+1}$, for some $l_0\leq q$. We prove (a) for the fire $l_0$. By $\Omega_M(\alpha)$, there holds $T_{l_0}^{D,+}\geq T_q^{k+1}+3\alpha$.

By $\Omega_M(\alpha)$, we have $T_{l_0}^{D,+}\geq T_q^{k+1}+3\alpha$. If $F_{T_q^k+4\vlp}(y)=0$ for all $y>x_0$, necessarily $F_{T_q^k+4\vlp}(y)=0$ for all $y\in\intervalleoo{x_0}{X_{l_0}^+(T_q^{k+1}+3\alpha)}$. Lemma \ref{propastep2} with $s_0=T_q^{k+1}+2\alpha$ directly implies the result, since on $\Omega^{P,T}(\la,\pi)$, recall \eqref{estimposplus}, there holds
\begin{multline*}
\lfloor\nl X_{l_0}\rfloor+i^{l_0,+}_{\al (T_q^{k+1}+4\vlp-T_{l_0})}\leq \lfloor\nl X_{l_0}^+(T_q^{k+1}+4\vlp)\rfloor+\klp\\
\leq  \lfloor\nl X_{l_0}^+(T_q^{k+1}+2\alpha)\rfloor-\ml-2\klp.
\end{multline*}

Else, we define 
\[x_1\coloneqq \inf\enstq{y>x_0}{F_{T_q^k+4\vlp}(y)=1}\]
and distinguish several cases.

\md

{\it Case 1.} Assume that $x_1-x_0>(T_q^{k+1}-T_q^k+2\alpha)/p$. Using Lemma \ref{propastep2} with $s_0=T_q^{k+1}+\alpha$, we immediately deduce that 
\[\eta^{\la,\pi}_{\al T_{l_0}+T^l_{i-\lfloor\nl X_{l_0}\rfloor}}(i)=2\]
for all $i\in\intervalleentier{\lfloor\nl X_l\rfloor+i^{{l_0},+}_{\al(T_q^k+4\vlp-T_{l_0})}}{\lfloor\nl X_{l_0}^+(T_q^{k+1}+\alpha)\rfloor-\ml-2\klp}$ whence 
\[\eta^{\la,\pi}_{\al s}(\lfloor\nl X_{l_0}\rfloor+i^{{l_0},+}_{\al (s-T_{l_0})})=2\text{ for all }s\in\intervalleff{T_q^k+4\vlp}{T_q^{k+1}+4\vlp}\]
because on $\Omega^{P,T}(\la,\pi)$, there holds $\lfloor\nl X_{l_0}\rfloor+i^{l_0,+}_{\al(T_q^{k+1}+4\vlp-T_{l_0})}\leq \lfloor\nl X_l^+(T_q^{k+1}+\alpha)\rfloor-\ml-2\klp$.

\md

{\it Case 2.} Assume that $x_1-x_0\leq(T_q^{k+1}-T_q^k+2\alpha)/p$ but $F_{T_q^k+4\vlp}(y)=0$ for all $y\in\intervalleoo{x_1}{x_1+(T_q^{k+1}-T_q^k+2\alpha)/p}$. Necessarily $x_1=X_{l_1}^+(T_q^k+4\vlp)\in\chi_{T_q^k+4\vlp}^+$ for some $l_1\leq q$. 

Using Lemma \ref{propastep2} with $s_0=T_q^{k+1}\leq T_{l_1}^{D,+}$, we deduce that $\eta^{\la,\pi}_{\al T_{l_1}+T^{l_1}_{i-\lfloor\nl X_{l_1}\rfloor}}(i)=2$ for all $i\in\intervalleentier{\lfloor\nl X_ {l_1}\rfloor+i^{l_1,+}_{\al (T_q^k+4\vlp-T_{l_1})}}{\lfloor\nl X_{l_1}^+(T_q^{k+1})\rfloor-\ml-2\klp}$. Thus, using \eqref{entree}, we deduce
\[\eta^{\la,\pi}_{\al s}\left(\lfloor\nl X_{l_1}\rfloor+i^{l_1,+}_{\al (s-T_{l_1})}\right)=2\text{ for all }s\in\intervalleff{T_{l_1}}{T_q^{k+1}-4\vlp}.\]

We now prove that for all $i\in\intervalleentier{\lfloor\nl X_ {l_0}\rfloor+i^{l_0,+}_{\al (T_q^k+4\vlp-T_{l_0})}}{\lfloor\nl X_ {l_0}\rfloor+i^{l_0,+}_{\al (T_q^{k+1}+4\vlp-T_{l_0})}}$, we have 
\[\eta^{\la,\pi}_{\al T_{l_0}+T^{l_0}_{i-\lfloor\nl X_0\rfloor}}(i)=2.\]
This will concludes this case.

Firstly, by construction, we have $x_1>x_0+1/p$ whence by $\Omega_M(\alpha)$, $x_1>x_0+(1+3\alpha)/p$. Thus, using again Lemma \ref{propastep2} with $s_0=T_{l_0}(x_1)-\alpha$, we deduce that 
\[\eta^{\la,\pi}_{\al T_{l_0}+T^{l_0}_{j-\lfloor\nl X_{l_0}\rfloor}}(j)=2\]
for all $j\in\intervalleentier{\lfloor\nl X_ {l_0}\rfloor+i^{l_0,+}_{\al (T_q^k+4\vlp-T_{l_0})}}{\lfloor\nl (x_1-\alpha/p)\rfloor-\ml-2\klp}$ (recall that $X_{l_0}^+(T_{l_0}(x_1))=x_1$). 

Secondly, oberve that $T_{l_1}<T_q^k$ (because else $T_{l_1}=T_q^k$ and $X_{l_1}^-(T_q^k+4\vlp)\in\chi_{T_q^k+4\vlp}^-$ with $x_0<X_{l_1}^-(T_q^k+4\vlp)<X_{l_1}^+(T_q^k+4\vlp)$) whence by $\Omega_M(\alpha)$, $T_{l_1}<T_q^k-3\alpha$.
This especially imply that $T_{l_0}(y)\geq T_{l_1}(y)+1+3\alpha$ for all $y\in\intervalleff{x_1-3\alpha/p}{X_{l_0}^+(T_q^{k+1}+\alpha)}$. Recall that no match falls on any site $y\in\intervalleoo{x_1-3\alpha/p}{X_{l_0}^+(T_q^{k+1}+\alpha)}$ during the time interval $\intervalleoo{T_q^k-3\alpha}{T_q^{k+1}+\alpha}$. Thus, in the limit process, for all $y\in\intervalleoo{x_1-3\alpha/p}{X_{l_0}^+(T_q^{k+1}+\alpha)}$, we have $\tau_{T_{l_0}(y)-}(y)=T_{l_1}(y)$.

Let now $i\in\intervalleentier{\lfloor\nl (x_1-2\alpha/p)\rfloor}{\lfloor\nl X_{l_0}^+(T_q^{k+1}+\alpha)\rfloor}$. Observe that there is no burning tree in $\intervalleentier{\lfloor\nl X_{l_0}\rfloor+i^{l_0,+}_{\al(T_q^k+4\vlp-T_{l_0})}+1}{\lfloor \nl x_1\rfloor-\klp}$ at time $\al(T_q^k+4\vlp)$, thanks to Lemma \ref{positionfire2}. Since no match falls on $i$ during $\intervalleff{\al(T_{l_1}(i/\nl)+\elp)}{\al(T_q^{k+1}+\alpha)}$, we deduce that no fire at all can affect the site $i$ during the time interval $\intervallefo{\al(T_{l_1}(i/\nl)+\elp)}{\al T_{l_0}+T^{l_0}_{j-\lfloor\nl X_{l_0}\rfloor}}$
whence
\[\rho^{\la,\pi}_{T_{l_0}+T^{l_0}_{j-\lfloor\nl X_{l_0}\rfloor}/\al-}(i)\leq T_{l_1}(i/\nl)+\elp.\]

Thus, for all $i\in\intervalleentier{\lfloor\nl (x_1-2\alpha/p)\rfloor}{\lfloor\nl X_{l_0}^+(T_q^{k+1}+\alpha)\rfloor}$, we have 
\[\rho^{\la,\pi}_{T_{l_0}+T^{l_0}_{j-\lfloor\nl X_{l_0}\rfloor}/\al-}(i)\leq T_{l_0}(i/\nl)-1-3\alpha+\elp\]
and conclude using $\Omega_3^S(\la,\pi)$ that $\eta^{\la,\pi}_{\al T_{l_0}+T^{l_0}_{i-\lfloor\nl X_{l_0}\rfloor}-}(i)=1$
whence
\[\eta^{\la,\pi}_{\al T_{l_0}+T^{l_0}_{i-\lfloor\nl X_{l_0}\rfloor}}(i)=2\]
because $\eta^{\la,\pi}_{\al (T_q^k+4\vlp)}(\lfloor\nl X_{l_0}\rfloor+i^{l_0,+}_{\al (T_q^k+4\vlp-T_{l_1})})=2$.

All this implies that $\eta^{\la,\pi}_{\al T_{l_0}+T^{l_0}_{i-\lfloor\nl X_{l_0}\rfloor}}(i)=2$ for all $i\in\intervalleentier{\lfloor\nl X_{l_0}\rfloor+i^{l_0,+}_{\al(T_q^k+4\vlp-T_{l_0})}}{\lfloor\nl X_{l_0}^+(T_q^{k+1}+\alpha)\rfloor}$ whence
\[\eta^{\la,\pi}_{\al s}(\lfloor\nl X_{l_0}\rfloor+i^{{l_0},+}_{\al (s-T_{l_0})})=2\text{ for all }s\in\intervalleff{T_q^k+4\vlp}{T_q^{k+1}+4\vlp}\]
since $\lfloor\nl X_{l_0}\rfloor+i^{{l_0},+}_{\al (T_q^{k+1}+4\vlp-T_{l_0})}\leq \lfloor\nl X_{l_0}^+(T_q^{k+1}+\alpha)\rfloor$. This completes this case.

\md

{\it Case 3.} In the general case, by construction, there are $x_0<x_1<x_2<\dots<x_m$ such that, for all $j\in\{0,\dots,m-1\}$,
\[x_j-x_{j+1}\leq (T_q^{k+1}-T_q^k+2\alpha)/p\]
and 
\[F_{T_q^k+4\vlp}(y)=0\text{ for all }y\in\intervalleoo{x_j}{x_{j+1}}\]
and finally 
\[F_{T_q^k+4\vlp}(y)=0\text{ for all }y\in\intervalleoo{x_{m}}{x_m+(T_q^{k+1}-T_q^k-2\alpha)/p}.\]

Clearly, for all $j\in\{1,\dots,m\}$, we have $x_j\in\chi_{T_q^k+4\vlp}^+$ whence there exists $l_j\in\{1,\dots,q\}$ such that $x_j= X_{l_j}^+(T_q^k+\vlp)$.

We first prove, exactly as in case 2, that
\[\eta^{\la,\pi}_{\al s}(\lfloor\nl X_{l_m}\rfloor+i^{{l_m},+}_{\al (s-T_{l_m})})=2\text{ for all }s\in\intervalleff{T_q^k+4\vlp}{T_q^{k+1}-4\vlp}.\]

Next, exactly as in Case 2, we can prove that
\[\eta^{\la,\pi}_{\al s}(\lfloor\nl X_{l_{m-1}}\rfloor+i^{{l_{m-1}},+}_{\al (s-T_{l_{m-1}})})=2\text{ for all }s\in\intervalleff{T_q^k+4\vlp}{T_q^{k+1}+4\vlp}\]
and so on.

\md

\noindent{\bf Step 5.} Finally, if $x_0\coloneqq X_{l_0}^-(T_q^k+4\vlp)\in\chi_{T_q^k+4\vlp}^-$ with $T_{l_0}^{D,+}\neq T_q^{k+1}$, for some $l_0\leq q$, we deduce (b) for the fire $l_0$ using similar argument as in Step 4.

This completes the proof.
\end{proof}

\begin{center}
{\bf \MakeUppercase{Stage 3.}}
\end{center}

In this Stage, we treat the time interval $\intervalleff{T_q^{N_q}+4\vlp}{T_{q+1}}$. On this time interval, no fire is stopped in the limit process. A match falls in $X_{q+1}$ at time $T_{q+1}$. The proof of the following lemma is very similar to the proof of the previous Stage.
\begin{lem}\label{finallemma}
On $\Omega(\alpha,\la,\gamma,\pi)$, $\Omega^{\la,\pi}_{T_q^{N_q}+4\vlp}$ implies
$\Omega^{\la,\pi}_{T_{q+1}}$.
\end{lem}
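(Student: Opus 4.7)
The plan is to adapt the proof of Corollary \ref{corstage2} to the final sub-interval $\intervalleff{T_q^{N_q}+4\vlp}{T_{q+1}}$. The setup is more favorable here than in Stage 2 because, by the maximality of $T_q^{N_q}$ in the ordered set $\cA_q \cup \{T_q\}$, no death time of the limit process lies in $\intervalleoo{T_q^{N_q}}{T_{q+1}}$, and by construction no match of $\pi_M$ in $I_A^\la$ does either. Consequently only statements analogous to cases (a)--(b) of Corollary \ref{corstage2} need verification; the stopping conditions (c)--(d) are vacuous at $t=T_{q+1}$ since the definition of $\Omega^{\la,\pi,\pm}_{l,T_{q+1}}$ requires a vacant site near the stopping position only when $T_{q+1}\geq T_l^{D,\pm}+\vlp$.

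Fix $l\leq q$ with, say, $X_l^+(T_q^{N_q}+4\vlp)\in\chi_{T_q^{N_q}+4\vlp}^+$. Two cases arise. Generically $T_l^{D,+}>T_{q+1}$, whence by $\Omega_M(\alpha)$ we have $T_l^{D,+}\geq T_{q+1}+3\alpha$; we then invoke Lemma \ref{propastep2}(1) with $s_0=T_{q+1}+2\alpha$. The hypothesis $F_{T_q^{N_q}+4\vlp}(y)=0$ on $\intervalleoo{X_l^+(T_q^{N_q}+4\vlp)}{X_l^+(s_0+\alpha)}$ is established exactly as in Cases 1--3 of Step 4 in the proof of Corollary \ref{corstage2}: any converging left-moving front in this range would produce a collision time lying in $\intervalleoo{T_q^{N_q}}{T_{q+1}}$, violating the maximality of $T_q^{N_q}$; intermediate parallel right-moving fronts $x_1<\dots<x_m$ must be handled successively from right to left, combining $\Omega^S_3(\la,\pi)$ with the bound $\rho^{\la,\pi}_{\cdot}(i)\leq T_{l_j}(i/\nl)+\elp$ to ensure that gap zones are fully re-occupied before fire $l$ arrives. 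Lemma \ref{propastep2}(1) then propagates the fire up to site $\lfloor\nl X_l^+(s_0)\rfloor-\ml-2\klp$, which by $\Omega^{P,T}(\la,\pi)$, \eqref{estimposplus} and the standing assumption $4(\vlp+p(\ml+2\klp)/\nl)\leq\alpha$ is at least $\lfloor\nl X_l\rfloor+i^{l,+}_{\al(T_{q+1}-T_l)}$. This yields $\eta^{\la,\pi}_{\al s}(\lfloor\nl X_l\rfloor+i^{l,+}_{\al(s-T_l)})=2$ for all $s\in\intervalleff{T_q^{N_q}+4\vlp}{T_{q+1}}$, as required.

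In the borderline case $T_l^{D,+}=T_{q+1}$ (the right front of fire $l$ is absorbed at $X_{q+1}$ exactly when the match there falls, or two fronts collide precisely at $T_{q+1}$), we apply Lemma \ref{propastep2}(1) directly with $s_0=T_{q+1}$; since $T_{q+1}\leq T_l^{D,+}+\vlp$, no stopping clause of $\Omega^{\la,\pi,+}_{l,T_{q+1}}$ is activated and propagation up to time approximately $T_{q+1}-\vlp$ suffices. The $F$-vanishing hypothesis on $\intervalleoo{X_l^+(T_q^{N_q}+4\vlp)}{X_l^+(T_{q+1}+\alpha)}$ is verified by the same partitioning procedure, observing that any hypothetical fire present beyond $X_l^+(T_{q+1})$ at time $T_q^{N_q}+4\vlp$ would not create a collision with fire $l$ before $T_{q+1}$. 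Symmetric arguments handle $\chi_{T_q^{N_q}+4\vlp}^-$.

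The main obstacle, as already in the proof of Corollary \ref{corstage2}, is the bookkeeping of a potentially long chain of parallel intermediate fronts between $X_l^+(T_q^{N_q}+4\vlp)$ and $X_l^+(T_{q+1}+3\alpha)$; the iterative procedure of Case 3 in Step 4 there transposes verbatim, using that consecutive elements of $\cT_M\cup\cS_M\cup\cS_M^1\cup\cS_M^2$ are separated by at least $3\alpha$ thanks to $\Omega_M(\alpha)$. Combining the extension of every active front with the (trivially preserved) information about previously stopped fires (already encoded in $\Omega^{\la,\pi}_{T_q^{N_q}+4\vlp}$) gives $\Omega^{\la,\pi}_{T_{q+1}}$.
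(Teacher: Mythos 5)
Your overall approach is the paper's: observe that no death time of the limit process lies in $\intervalleoo{T_q^{N_q}}{T_{q+1}}$, deduce that only the fire-propagation (non-stopping) statements need extending to time $T_{q+1}$, and argue via the analogues of Lemmas \ref{noaffect} and \ref{propastep2} obtained by substituting $T_q^k\mapsto T_q^{N_q}$ and $T_q^{k+1}\mapsto T_{q+1}$, exactly as in Steps 4 and 5 of the proof of Corollary \ref{corstage2}. The paper's sketch says precisely this in two sentences.

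The one misstep is the ``borderline case'' $T_l^{D,+}=T_{q+1}$, which cannot occur on $\Omega_M(\alpha)$. The death time $T_l^{D,+}$ lies in $\cS_M\cup\cS_M^2$ (a transit time at a microscopic zone or a fire-collision time), while $T_{q+1}\in\cT_M$; on $\Omega_M(\alpha)$, distinct elements of $\cT_M\cup\cS_M\cup\cS_M^1\cup\cS_M^2$ differ by at least $3\alpha$, so $T_l^{D,+}\neq T_{q+1}$. Combined with $\cT_M^D\cap\intervalleoo{T_q^{N_q}}{T_{q+1}}=\emptyset$ and $T_l^{D,+}\geq T_q^{N_q}+4\vlp$, this forces $T_l^{D,+}\geq T_{q+1}+3\alpha$, which is exactly what the paper asserts. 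Moreover your treatment of that spurious case is itself broken: with $T_l^{D,+}=T_{q+1}$, $\Omega_{l,T_{q+1}}^{\la,\pi,+}$ requires the front to be burning for all $s\in\intervalleff{T_l}{T_{q+1}}$ (the cap $(T_l^{D,+}+\vlp)\wedge T_{q+1}$ equals $T_{q+1}$), whereas Lemma \ref{propastep2}(1) with $s_0=T_{q+1}$ only propagates to site $\lfloor\nl X_l^+(T_{q+1})\rfloor-\ml-2\klp$, i.e.\ time $\approx T_{q+1}-\vlp$, and filling the remaining $\vlp$ would need a Lemma-\ref{stop}-type argument that is not available since the match at $T_{q+1}$ has just fallen. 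Since the case is vacuous this does no harm; simply delete that paragraph and replace it by the observation that $T_l^{D,\pm}\geq T_{q+1}+3\alpha$ always.
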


\begin{proof}[Sketch of the proof]
Observe that $\cT_M^D\cap\intervalleoo{T_q^{N_q}}{T_{q+1}}=\emptyset$. Hence, we have to prove that if $x\coloneqq X_l^+(T_q^{N_q}+4\vlp)\in\chi_{T_q^{N_q}+4\vlp}^+$ (or $X_l^-(T_q^{N_q}+4\vlp)\in\chi_{T_q^{N_q}+4\vlp}^-$) for some $l\leq q$, then $\eta^{\la,\pi}_{\al s}(\lfloor\nl X_l\rfloor+i^{l,+}_{\al (s-T_l)})=2$ (or $\eta^{\la,\pi}_{\al s}(\lfloor\nl X_l\rfloor+i^{l,-}_{\al (s-T_l)})=2$) for all $s\in\intervalleff{T_q^{N_q}+4\vlp}{T_{q+1}}$ (because $T_l^{D,+}>T_{q+1}+3\alpha$).

We can prove similar lemmas as Lemmas \ref{noaffect} and \ref{propastep2} replacing $T_q^k$ by $T_q^{N_q}$ and $T_q^{k+1}$ by $T_{q+1}$. Thus, Lemma \ref{finallemma} follows exactly as in Step 4 and Step 5 in the proof of Corollary \ref{corstage2}.
\end{proof}

The proof of Lemma \ref{heartlem} is completed.

\subsection{Proof of Theorem \ref{converge restriction} for $p>0$}\label{proofp>0}
We finally give the proof of the Theorem \ref{converge restriction} in the case $p>0$.
\begin{proof}
Let us fix $x_0\in\intervalleoo{-A}{A}$, $t_0\in\intervalleoo{0}{T}$ and $\varepsilon>0$. We will prove that with our coupling (see Subsection \ref{coupling}), when $\la\to0$ and $\pi\to \infty$ in the regime $\cR(p)$, there holds that
\begin{enumerate}[label=(\alph*)]
	\item $\lim_{\la,\pi}\proba{\bdelta(D_{t_0}^{\la,\pi}(x_0),D_{t_0}(x_0))>\e}=0$;
	\item $\lim_{\la,\pi}\proba{\bdelta_T(D^{\la,\pi}(x_0),D(x_0))>\e}=0$;
	\item $\lim_{\la,\pi}\proba{ |Z_t^{\la,\pi}(x_0)-Z_t(x_0)|>\e}=0$;	
	\item $\lim_{\la,\pi}\proba{\int_0^T |Z_t^{\la,\pi}(x_0)-Z_t(x_0)|\diff t>\e}=0$;
	\item $\lim_{\la,\pi}\proba{|W^{\la,\pi}_{t_0}(x_0)-Z_{t_0}(x_0)|>\e}=0$, where
\[W^{\la,\pi}_{t_0}(x_0)=\left(\frac{\log(|C(\eta^{\la,\pi}_{\al t_0},\lfloor\nl x_0\rfloor)|)}{\log(1/\la)}\indiq{|C(\eta^{\la,\pi}_{\al t_0},\lfloor\nl x_0\rfloor)|\geq1}\right)\wedge1.\]
\end{enumerate}

These points will clearly imply the result.

First, we introduce the event $\Omega^{x_0,t_0}_{A,T}(\alpha,\la,\pi)$ on which 
\begin{enumerate}[label=(\roman*)]
	\item $x_0\not\in\cup_{y\in\cB_M^D\cup\chi_{t_0}}\intervalleoo{y-3\alpha/p}{y+3\alpha/p}$;
	\item for all $s\in\enstq{T_k(x_0)}{k=1,\dots,n}\cup\cT_M\cup\cS_M\cup\cS_M^1\cup\cS_M^2$ with $s\leq t_0$, there holds that $t_0-s>3\alpha$;
	\item if $t_0\neq 1$, for all $s\in\enstq{T_k(x_0)}{k=1,\dots,n}\cup\cT_M\cup\cS_M\cup\cS_M^1\cup\cS_M^2$ with $s\leq t_0$, there holds that $|t_0-(s+1)|>3\alpha$;
	\item if $t_0\geq 1$, for all $i\in I^\la_A$, $N^{S,\la,\pi}_{\al t_0}(i)-N^{S,\la,\pi}_{\al (t_0-1)}(i)>0$;
	\item if $t_c=t_0-\tau_{t_0-}(x_0)<1$, there are 
\[-\lfloor\la^{-(t_c+\alpha)}\rfloor<i_1<-\lfloor\la^{-(t_c-\alpha)}\rfloor
<0<\lfloor\la^{-(t_c-\alpha)}\rfloor<i_2<\lfloor\la^{-(t_c+\alpha)}\rfloor\]
such that 
\begin{itemize}
	\item $N^{S,0}_{\al t_0}(\lfloor\nl x_0\rfloor+i_1)-N^{S,0}_{\al (\tau_{t_0-}(x_0)-\vlp)}(\lfloor\nl x_0\rfloor+i_1)=0$ and $N^{S,0}_{\al t_0}(\lfloor\nl x_0\rfloor+i_2)-N^{S,0}_{\al (\tau_{t_0-}(x_0)-\vlp)}(\lfloor\nl x_0\rfloor+i_2)=0$;
	\item $N^{S,0}_{\al t_0}(\lfloor\nl x_0\rfloor+j)-N^{S,0}_{\al (\tau_{t_0-}(x_0)+\vlp)}(\lfloor\nl x_0\rfloor+j)>0$ for all $j\in\intervalleentier{-\lfloor\la^{-(t_c-\alpha)}\rfloor}{\lfloor\la^{-(t_c-\alpha)}\rfloor}$.
\end{itemize}
\end{enumerate}
Since $t_0-\tau_{t_0-}(x_0)=1$ occurs with positive probability only if $t_0=1$ (and $\tau_{t_0}(x_0)=0$) the probability of the three first points clearly tend to $1$ when $\alpha$ tends to $0$. Since $(\tau_{t}(x_0))_{t\geq0}$ is independent of $(N^{S,\la,\pi}_t(i))_{t\geq0,i\in\zz}$ and since $(\tau_{t}(x_0))_{t\geq0}\subset\enstq{T_k(x_0)}{k=1,\dots,n}$, the probability of the two last points tend to $1$ as $\alpha\to0$ and $\la\to0$ and $\pi\to\infty$ in the regime $\cR(p)$, thanks to Lemma \ref{speed}-4,6,7. All this implies that for all $\delta>0$, there is $\alpha>0$ such that $\proba{\Omega^{x_0,t_0}_{A,T}(\alpha,\la,\pi)}>1-\delta$ for all $(\la,\pi)$ sufficiently close to the regime $\cR(p)$. 

Let us now fix $\delta>0$. We consider $\alpha_0\in\intervalleoo{0}{\e}$, $\gamma_0\in\intervalleoo{0}{\alpha_0}$, $\la_0\in\intervalleoo{0}{1}$ and $\epsilon_0\in\intervalleoo{0}{1}$ such that for all $\la\in\intervalleoo{0}{\la_0}$ and all $\pi\geq1$ in such a way that $|\nl/(\al\pi)-p|<\epsilon_0$, we have 
\[\proba{\Omega(\alpha_0,\gamma_0,\la,\pi)\cap\Omega^{x_0,t_0}_{A,T}(\alpha_0,\la,\pi)}>1-\delta.\]

We then consider $\la_1\in\intervalleoo{0}{\la_0}$ and $\epsilon_1\in\intervalleoo{0}{\epsilon_0}$ such that for all all $\la\in\intervalleoo{0}{\la_1}$ and all $\pi\geq1$ in such a way that $|\nl/(\al\pi)-p|<\epsilon_1$, we have
\begin{itemize}
	\item $4(\vlp+p(\ml+2\klp)/\nl)\leq \alpha_0$;
	\item $\alpha_0+\log(\al)/\log(1/\la)<\e$;
	\item $4(\ml+\klp)/\nl< \e$;
	\item $1/(2\ml\la^{t_c-\e})< \delta$ and $1/(2\ml\la^{t_c+\vlp})< \delta$ if $t_c<1$.
\end{itemize}
All this can be done properly by using the fact that $\vlp\to0$ and $(\ml+\klp)/\nl\to0$.

In the rest of the proof, we consider $\la\in\intervalleoo{0}{\la_1}$ and $\pi\geq1$ in such a way that $|\nl/(\al\pi)-p|\leq \epsilon_1$. Observe that, on $\Omega(\alpha_0,\gamma_0,\la,\pi)$, there holds that $\tau_{t_0-}(x_0)=\tau_{t_0}(x_0)$ and $[x_0]_{\la,\pi}\cap\left(\bigcup_{x\in\cB_M^D\cup\chi_{t_0}}[x]_{\la,\pi}\right)=\emptyset$.

\md

\noindent{\bf Step 1.} We first show that (a) (which holds for an arbitrary value of $t_0\in\intervalleoo{0}{T}$) implies (b). Indeed, we have by construction, for any $t\in\intervalleff{0}{T}$, $\bdelta(D_t^{\la,\pi}(x_0),D_t(x_0))<4A$. Hence, by dominated convergence, (a) implies that $\lim_{\la,\pi}\mathbb{E}\left[\bdelta(D_t^{\la,\pi}(x_0),D_t(x_0))\right]=0$, whence again by dominated convergence, $\lim_{\la,\pi}\mathbb{E}\left[\bdelta_T(D^{\la,\pi}(x_0),D(x_0))\right]=0$.

\md

\noindent{\bf Step 2.} Next, (c) implies (d), exactly as in Step 1.

\md

\noindent{\bf Step 3.} Due to Lemma \ref{corestim}, we know that, on $\Omega(\alpha_0,\gamma_0,\la,\pi)\cap\Omega_{A,T}^{x_0,t_0}(\alpha_0,\la,\pi)$, since $t_0>\tau_{t_0}(x_0)+3\alpha_0$, for all $i\in(x_0)_\la$,
\[\abs{\rho_{t_0}^{\la,\pi}(i)-\tau_{t_0}(x_0)}\leq \vlp.\]

For all $i\in (x_0)_\la$, since $\eta^{\la,\pi}_{\al t_0}(i)\leq 1$, there holds	
\[\eta_ {\al t_0}^{\la,\pi}(i)=\min(N^{S,\la,\pi}_{\al t_0}(i)-N^{S,\la,\pi}_{\al \rho_{t_0}^{\la,\pi}(i)}(i),1).\]

Thus, for all $i\in(x_0)_\la$,	
\[\underline{\eta}^{\la,\pi}_{\al t_0}(i)\leq \eta_ {\al t_0}^{\la,\pi}(i)\leq \overline{\eta}^{\la,\pi}_{\al t_0}(i)\]
where
\begin{align*}
\underline{\eta}^{\la,\pi}_{\al t_0}(i) &\coloneqq \min(N^{S,0}_{\al t_0}(i)-N^{S,0}_{\al (\tau_{t_0}(x_0)+\vlp)}(i),1),\\
\overline{\eta}^{\la,\pi}_{\al t_0}(i) &\coloneqq \min(N^{S,0}_{\al t_0}(i)-N^{S,0}_{\al (\tau_{t_0}(x_0)-\vlp)\vee 0}(i),1).
\end{align*}
We also recall that by construction, $(\tau_t(x_0))_{t\geq 0}$ is independent of $(N^{S,0}_ t(i))_{t\geq0,i\in\zz}$.

\md

\noindent{\bf Step 4.} Here we prove (e). We work on $\Omega(\alpha_0,\gamma_0,\la,\pi)\cap\Omega_{A,T}^{x_0,t_0}(\alpha_0,\la,\pi)$. By Step 3 and point (v) of the event $\Omega_{A,T}^{x_0,t_0}(\alpha_0,\la,\pi)$, we observe that if $0<t_c=t_0-\tau_{t_0}(x_0)<1$, then 
\begin{multline*}
\intervalleentier{\lfloor\nl x_0\rfloor-\lfloor\la^{-(t_c-\alpha_0)}\rfloor}{\lfloor\nl x_0\rfloor+\lfloor\la^{-(t_c-\alpha_0)}\rfloor}\subset C(\underline{\eta}^{\la,\pi}_{\al t_0}, \lfloor\nl  x_0\rfloor)\\
\subset C(\eta^{\la,\pi}_{\al t_0}, \lfloor\nl  x_0\rfloor)\subset C(\overline{\eta}^{\la,\pi}_{\al t_0}, \lfloor\nl  x_0\rfloor)
\subset\intervalleentier{\lfloor\nl x_0\rfloor+i_1}{\lfloor\nl x_0\rfloor+i_2}\\
\subset\intervalleentier{\lfloor\nl x_0\rfloor-\lfloor\la^{-(t_c+\alpha_0)}\rfloor}{\lfloor\nl x_0\rfloor+\lfloor\la^{-(t_c+\alpha_0)}\rfloor}.
\end{multline*}
Thus, this implies that,
\[|W_{t_0}^{\la,\pi}(x_0)-(t_0-\tau_{t_0}(x_0))|\leq\alpha_0+\frac{\log(2)}{\log(1/\la)}<\e.\]

If now $t_0-\tau_{t_0}(x_0)>1$, then $t_0-\tau_{t_0}(x_0)>1+3\alpha_0$ thanks to  $\Omega_{A,T}^{x_0,t_0}(\alpha_0,\la,\pi)$. Then Step 3 and point (iv) of $\Omega_{A,T}^{x_0,t_0}(\alpha_0,\la,\pi)$  imply that $(x_0)_\la\subset C(\eta^{\la,\pi}_{\al t_0}, \lfloor\nl  x_0\rfloor)$ whence $|C(\eta^{\la,\pi}_{\al t_0}, \lfloor\nl  x_0\rfloor)|\geq  2\ml$. Consequently, 
\[W^{\la,\pi}_{t_0}(x_0)\geq 1-\frac{\log(\al)}{\log(1/\la)}>1-\e.\]

It only remains to study what happens when $t_0=1$. By construction, we have $\tau_{t_0}(x_0)=0$ and by Lemma \ref{corestim}, we have $\rho_{t_0}^{\la,\pi}(i)=0$ for all $i\in(x_0)_\la$. By Step 3 and point (iv) of the event $\Omega^{x_0,t_0}_{A,T}(\alpha_0,\la,\pi)$, we deduce as above that $(x_0)_\la\subset C(\eta^{\la,\pi}_{\al t_0}, \lfloor\nl  x_0\rfloor)$ and conclude  $|C(\eta^{\la,\pi}_{\al t_0}, \lfloor\nl  x_0\rfloor)|\geq  2\ml$ whence 
\[W^{\la,\pi}_{t_0}(x_0)\geq 1-\frac{\log(\al)}{\log(1/\la)}\geq 1-\e.\]

Recalling that $Z_{t_0}(x_0)=(t_0-\tau_{t_0}(x_0))\wedge1$, we have proved that
\[\proba{|W_{t_0}^{\la,\pi}(x_0)-Z_{t_0}(x_0))|<\e}\geq \proba{\Omega(\alpha_0,\gamma_0,\la,\pi)\cap\Omega^{x_0,t_0}_{A,T}(\alpha_0,\la,\pi)}\geq 1-\delta,\]
as desired.

\md

\noindent{\bf Step 5.} Here we prove (c). Recall that $Z^{\la,\pi}_{t_0}(x_0)=\left(-\frac{\log(1-K^{\la,\pi}_{t_0}(x_0))}{\log(1/\la)}\right)\wedge1$ where $K^{\la,\pi}_{t_0}(x_0)=(2\ml+1)^{-1}\abs{\enstq{i\in\intervalleentier{\lfloor\nl X_0\rfloor-\ml}{\lfloor\nl X_0\rfloor+\ml}}{\eta^{\la,\pi}_{\al t_0}(i)=1}}$. We work on $\Omega(\alpha_0,\gamma_0,\la,\pi)\cap\Omega^{x_0,t_0}_{A,T}(\alpha_0,\la,\pi)$ and set $t_c= t_0-\tau_{t_0}(x_0)$.

\md

{\it Case 1.} If $t_c\geq 1$, we have checked in Step 4 that $\eta^{\la,\pi}_{\al t_0}(i)=1$ for all $i\in(x_0)_\la$, whence $K^{\la,\pi}_{t_0}(x_0)=1$ and $Z^{\la,\pi}_{t_0}(x_0)=1$. 

\md

{\it Case 2.} If now $0<t_c< 1$, we deduce from Step 3 that
\[\underline{K}^{\la,\pi}_{t_0}(x_0)\leq K^{\la,\pi}_{t_0}(x_0)\leq \overline{K}^{\la,\pi}_{t_0}(x_0)\]
where
\begin{align*}
\underline{K}^{\la,\pi}_{t_0}(x_0) &=(2\ml+1)^{-1}\abs{\enstq{i\in\intervalleentier{\lfloor\nl X_0\rfloor-\ml}{\lfloor\nl x_0\rfloor+\ml}}{\underline{\eta}^{\la,\pi}_{\al t_0}(i)=1}},\\
\overline{K}^{\la,\pi}_{t_0}(x_0) &=(2\ml+1)^{-1}\abs{\enstq{i\in\intervalleentier{\lfloor\nl X_0\rfloor-\ml}{\lfloor\nl x_0\rfloor+\ml}}{\overline{\eta}^{\la,\pi}_{\al t_0}(i)=1}}.
\end{align*}

The random variable $\underline{X}^{\la,\pi}_{t_0}(x_0)=(2\ml+1)\underline{K}^{\la,\pi}_{t_0}(x_0)$ has a binomial distribution with parameters $2\ml+1$ and $1-\la^{t_c-\vlp}$. Then, using Bienaym\'e-Chebyshev's inequality,
\begin{align*}
\pp&\left[\underline{K}^{\la,\pi}_{t_0}(x_0)\leq  1-\la^{t_c-\e}\right]=\proba{\underline{X}^{\la,\pi}_{t_0}(x_0)\leq (2\ml+1)(1-\la^{t_c-\e})}\\
	&\leq \proba{\abs{\underline{X}^{\la,\pi}_{t_0}(x_0)-(2\ml+1)(1-\la^{t_c-\vlp})}\geq (2\ml+1)\left(\la^{t_c-\e}-\la^{t_c-\vlp}\right)}\\
	&\leq \frac{(2\ml+1)\left(1-\la^{t_c-\vlp}\right)\la^{t_c-\vlp}}{(2\ml+1)^2(\la^{t_c-\e}-\la^{t_c-\vlp})^2}\\
	&= \frac{1-\la^{t_c-\vlp}}{(2\ml+1)\la^{t_c-\vlp}(\la^{\vlp-\e}-1)^2}\simeq \frac{1}{2\ml\la^{t_c-2\e+\vlp}}\\
	&\leq \frac{1}{2\ml\la^{t_c-\e}}\,(\text{because }0<\vlp<\alpha_0<\e)\\
	&\leq \delta.
\end{align*}

By the same way, since $\overline{X}^{\la,\pi}_{t_0}(x_0)=(2\ml+1)\underline{K}^{\la,\pi}_{t_0}(x_0)$ has a binomial distribution with parameters $2\ml+1$ and $1-\la^{t_c+\vlp}$,
\begin{align*}
\pp&\left[\overline{K}^{\la,\pi}_{t_0}(x_0)\geq  1-\la^{t_c+\e}\right]=\proba{\overline{X}^{\la,\pi}_{t_0}(x_0)\geq (2\ml+1)(1-\la^{t_c+\e})}\\
	&\leq \proba{\abs{\overline{X}^{\la,\pi}_{t_0}(x_0)-(2\ml+1)(1-\la^{t_c+\vlp})}\geq (2\ml+1)\left(\la^{t_c+\vlp}-\la^{t_c+\e}\right)}\\
	&\leq \frac{(2\ml+1)\left(1-\la^{t_c+\vlp)}\right)\la^{t_c+\vlp}}{(2\ml+1)^2(\la^{t_c+\vlp}-\la^{t_c+\e})^2}	\simeq\frac{1}{2\ml\la^{t_c+\vlp}}\leq \delta.
\end{align*}

All this implies that, 
\[\proba{ K^{\la,\pi}_{t_0}(x_0)\in\intervalleoo{1-\la^{t_c-\e}}{1-\la^{t_c+\e}}}\geq1-c\delta,\]
for some constante $c>0$, whence 
\[\proba{Z^{\la,\pi}_{t_0}(x_0)\in\intervalleoo{t_c-\e}{t_c+\e}}\geq1-c\delta.\]
This is nothing but the goal, since $Z_{t_0}(x_0)=t_0-\tau_{t_0}(x_0)=t_c$ as soon as $Z_{t_0}(x_0)<1$.

\md

\noindent{\bf Step 6.} It remains to prove (a). On $\Omega(\alpha_0,\gamma_0,\la,\pi)\cap\Omega^{x_0,t_0}_{A,T}(\alpha_0,\la,\pi)$, we check that
\begin{enumerate}[label=(\roman*)]
	\item If $Z_{t_0}(x_0)<1$, then $D_{t_0}(x_0)=\{x_0\}$ and $C(\eta^{\la,\pi}_{\al t_0},\lfloor\nl x_0\rfloor)\subset (x_0)_\la$ (see Step 4 above), whence 
\[D^{\la,\pi}_{t_0}(x_0)\subset\intervalleff{x_0-\ml/\nl}{x_0+\ml/\nl}.\]
We deduce that 
\[\bdelta(D^{\la,\pi}_{t_0}(x_0),D_{t_0}(x_0))\leq 2\ml/\nl.\]
	\item If $Z_{t_0}(x_0)=1$ and $D_{t_0}(x_0)=\intervalleff{a}{b}$, for some $a,b\in\chi_{t_0}$, then 
\begin{itemize}
	\item for all $i\in\intervalleentier{\lfloor\nl a\rfloor+\ml+2\klp}{\lfloor\nl b\rfloor-\ml-2\klp}\setminus\left(\cup_{x\in\cB_M^D}[x]_{\la,\pi}\right)$, $\eta^{\la,\pi}_{\al t_0}(i)=1$. Indeed, there is no burning tree in $\intervalleentier{\lfloor\nl a\rfloor+\klp}{\lfloor\nl b\rfloor-\klp}$ at time $\al t_0$ (use a very similar result as in Lemma \ref{macro match}). Next, by construction, $Z_{t_0}(y)=1$ for all $y\in\intervalleoo{a}{b}$ whence  $\tau_{t_0}(y)\leq t_0-1$. Using $\Omega^{x_0,t_0}_{A,T}(\alpha_0,\la,\pi)$, we deduce that $\tau_{t_0}(y)\leq t_0-1-3\alpha_0$. Using finally Lemma \ref{corestim} and $\Omega_3^S(\la,\pi)$, we deduce the claim;
	\item for all $x\in\cB_M^D\cap\intervalleoo{a}{b}$, and all $i\in[x]_{\la,\pi}$, $\eta^{\la,\pi}_{\al t_0}(i)=1$. Indeed, on $\Omega^{x_0,t_0}_{A,T}(\alpha_0,\la,\pi)$, we have $\tH_{t_0-}(x)=0$ whence $\tau_{t_0}(x_0)\leq t_0-1-3\alpha_0$. We deduce that no match falling outside $[x]_{\la,\pi}$ affect this zone during the time interval $\intervalleff{\al(t_0-1-\alpha_0)}{\al t_0}$ and conclude by distinguishing several cases, as in Step 3 in the proof of Lemma \ref{propastep2};
	\item if $a\in\chi_{t_0}^+\cup\chi_{t_0}^-$ there is $i\in \langle a\rangle_{\la,\pi}$ such that $\eta^{\la,\pi}_{\al t_0}(i)=2$ (thanks to $\Omega_{T}^{\la,\pi}$, since on $\Omega^{x_0,t_0}_{A,T}(\alpha_0,\la,\pi)$, we have $\abs{t_0-s}\geq3\alpha$ for all $s\in\cT_M^D$) whereas if $a\in\chi_{t_0}^0$, there is $i\in (a)_\la$ such that $\eta^{\la,\pi}_{\al t_0}(i)=0$ (use similar argument as in Lemma \ref{stop}, observing that $\abs{t_0-s}\geq3\alpha$ for all $s\in\cT_M^D$). Similar observation of course holds for $b$;
\end{itemize}
so that
\begin{multline*}
\intervalleentier{\lfloor\nl a\rfloor+\ml+2\klp}{\lfloor\nl b\rfloor-\ml-2\klp}\subset C(\eta^{\la,\pi}_{\al t_0},\lfloor\nl x_0\rfloor)\\
\subset \intervalleentier{\lfloor\nl a\rfloor-\ml-\klp}{\lfloor\nl b\rfloor+\ml+\klp}
\end{multline*}
and thus 
\[\intervalleff{a+\frac{\ml+2\klp}{\nl}}{b-\frac{\ml+2\klp}{\nl}}\subset D^{\la,\pi}_{t_0}(x_0)\subset \intervalleff{a-\frac{\ml+2\klp}{\nl}}{b+\frac{\ml+2\klp}{\nl}},\]
whence $\bdelta(D^{\la,\pi}_{t_0}(x_0),D_{t_0}(x_0))\leq 4(\ml+\klp)/\nl$.
\end{enumerate}

Thus, on $\Omega(\alpha_0,\gamma_0,\la,\pi)\cap\Omega^{x_0,t_0}_{A,T}(\alpha_0,\la,\pi)$, we always have $\bdelta(D^{\la,\pi}_{t_0}(x_0),D_{t_0}(x_0))\leq 4(\ml+\klp)/\nl$. We conclude that
\[\proba{\bdelta(D^{\la,\pi}_{t_0}(x_0),D_{t_0}(x_0))\leq \e}\geq \proba{\Omega(\alpha_0,\gamma_0,\la,\pi)\cap\Omega^{x_0,t_0}_{A,T}(\alpha_0,\la,\pi)}\geq 1-\delta.\]
This concludes the proof of Theorem \ref{converge restriction} for $p>0$.
\end{proof}

\subsection{Cluster size distribution when $p>0$}

The aim of this section is to prove Corollary \ref{cor1} when $p>0$.

\subsubsection{Study of the LFFP$(p)$}
Recall Subsection \ref{notations} and Definition \ref{dfplffp}.
\begin{defin}
Let $(Z_t(x),H_t(x),F_t(x))_{t\geq 0, x\in \rr}$ be a LFFP$(p)$. For all $x\in\rr$ and all $t\geq0$, we define
\[\mathscr{D}_t(x)=\intervalleff{\mathscr{L}_t(x)}{\mathscr{R}_t(x)}\]
where
\begin{align*}
\mathscr{L}_t(x) &= \inf\enstq{y\leq x}{\forall (r,v)\in\Lambda^
p_{(x,t)}(y,t-p(x-y))\, ,\,
Z_{v-}(r)=1\text{ and }H_{v-}(r)=0},\\
\mathscr{R}_t(x) &= \sup\enstq{y\geq x}{\forall (r,v)\in\Lambda^
p_{(x,t)}(y,t+p(x-y))\, ,\,
Z_{v-}(r)=1\text{ and }H_{v-}(r)=0}.
\end{align*}
\end{defin}

Observe that for all $t\in\intervalleff{0}{T}$ and all $x\in\rr$,
\begin{itemize}
	\item $Z_t(x)=0$ if and only if $\pi_M\left((\mathscr{D}_{t-}(x)\times\rr)\cap\Lambda^p_{(x,t)}\right)>0$;
	\item $\mathscr{D}_{t}(x)=\{x\}$ if $t\in\intervallefo{0}{1}$;
	\item $|\mathscr{D}_{t}(x)|\leq 2(t-1)/p$.
\end{itemize}
\begin{lem}\label{zunif}
Let $(Z_t(x),H_t(x),F_t(x))_{t\geq 0, x\in \rr}$ be a LFFP$(p)$ and consider $(D_t(x))_{t\geq 0, x\in \rr}$ and $(\mathscr{D}_t(x))_{t\geq 0, x\in \rr}$ the associated processes.  There
are some constants $0<c_1<c_2$ and $0 < \kappa_1<\kappa_2$, depending  only on $p$, such that the
following estimates hold.
\begin{enumerate}[label = (\roman*)]
        \item For any $t\in (1,\infty)$, any $x\in \rr$, any $z\in [0,1)$, $\proba{Z_t(x)=z}=0$.
        \item For any $t\in[0,\infty)$, any $B>0$, any $x\in\rr$, $\proba{|D_t(x)|=B}=0$.
        \item For all $t\in [0,\infty)$, all $x\in \rr$, all $B>0$, $\proba{|D_t(x)|\geq B}
\leq c_2 e^{-\kappa_1 B}$.
        \item For all $t\in [\frac{11}{8},\infty)$, all $x\in\rr$, all $B>0$, $\proba{|D_t(x)|\geq B}
\geq c_1 e^{-\kappa_2 B}$.
		\item For all $t\in [0,\infty)$, all $x\in \rr$, all $B>0$, $\proba{|\mathscr{D}_t(x)|\geq B}
\leq c_2 e^{-\kappa_1 B}$.
		\item For all $t\in [\frac{3}{2},\infty)$, all $x\in\rr$, all $B\in\intervalleoo{0}{(2t-3)/p}$, $\proba{|\mathscr{D}_t(x)|\geq B}
\geq c_1 e^{-\kappa_2 (B+B^2)}$.
        \item For all $t\in[(5+p)/2,\infty)$, all $0\leq a < b < 1$, all $x\in \rr$, 
\[c_1(b-a)\leq \proba{Z_t(x)\in \intervalleff{a}{b}}   \leq c_2 (b-a).\]	
\end{enumerate}
\end{lem}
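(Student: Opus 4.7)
The plan is to handle (i)--(ii) by absolute continuity, (iii)--(vi) by Poisson-geometric estimates, and (vii)---the main obstacle---via a Palm-style decomposition on the last fire-crossing at $x$.

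For (i), the event $\{Z_t(x) = z\}$ with $z \in (0,1)$ and $t > 1$ forces the last macroscopic crossing of $x$ to occur at the fixed time $t-z$. After localizing to a finite spatial box via Proposition~\ref{restriction limite}, these crossing times form a finite collection of random variables of the form $T_q + p|X_q - x|$ (plus marks at $x$ itself), whose joint law is absolutely continuous; so the probability any of them equals $t-z$ is zero, and the boundary case $z=0$ is similar. The same argument applied to $L_t(x)$ and $R_t(x)$---themselves continuous functions of finitely many Poisson atoms---yields (ii).

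For (iii), $R_t(x) \geq x+B$ requires $Z_t(y) = 1$ and $H_t(y) = 0$ for all $y \in [x, x+B)$; this in particular forbids any mark of $\pi_M$ from creating a microscopic barrier inside $[x, x+B] \times [\max(0,t-1), t]$, whose Poisson probability is $\leq c_2 e^{-\kappa_1 B}$. For (v), the cone condition defining $\mathscr{R}_t(x) \geq x+B$ traces back (by one unfolding of the definition of $\mathscr{D}$) to absence of marks in a triangular region of area $\asymp p B^2/2$ plus a strip of area $\asymp B$, giving the same exponential bound. The lower bounds (iv) and (vi) come from explicit favorable configurations: for (iv), place two macroscopic-fire-inducing marks at times $T_1, T_2 \in (5/4, 11/8)$ (fitting by $t \geq 11/8$) flanking $[x-B/2, x+B/2]$ and require no mark inside this interval during $[t-1, t]$; for (vi), require no mark in the triangular past cone of $[x-B/2, x+B/2] \times \{t\}$ (area $\asymp pB^2/2$) together with matching flanking marks, giving the $B + B^2$ form. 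The constraint $B < (2t-3)/p$ in (vi) ensures the triangle lies in the window $[3/2, t]$.

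The main obstacle is (vii). Writing $\tau := \tau_t(x)$ for the last macroscopic crossing of $x$ before $t$, on $\{Z_t(x) < 1\}$ we have $Z_t(x) = t - \tau$, so $\{Z_t(x) \in [a,b]\} = \{\tau \in [t-b, t-a]\}$. A crossing at time $s$ corresponds to a mark $(X_q, T_q)$ of $\pi_M$ with $T_q + p|X_q - x| = s$ and $x \in \mathscr{D}_{T_q-}(X_q)$; by symmetry of the cone condition, the set of such contributing horizontal positions has Lebesgue measure exactly $|\mathscr{D}_{s-}(x)|$. The Palm--Mecke formula for $\pi_M$ then gives
\[\proba{Z_t(x) \in [a,b]} = \int_{t-b}^{t-a} \E\!\left[|\mathscr{D}_{s-}(x)|\,\indiq{\text{no later crossing of }x\text{ in }(s,t]}\right]\diff s.\]
Dropping the indicator and applying (v) gives the upper bound $c_2(b-a)$. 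The lower bound is the delicate step: I restrict the expectation to an event on which $|\mathscr{D}_{s-}(x)| \geq \varepsilon_0$ uniformly (via (vi), requiring $s \geq 3/2$) and on which no subsequent crossing occurs, whose probability is bounded below via (v) since the relevant past cone of $(s,t]$ has area $\asymp p(t-s)^2 \leq p/2$ when $t-s \leq 1$. Uniformity demands both $s \geq 3/2$ and $s \geq t - 1$, whence $t \geq 3/2 + 1 + p/2 = (5+p)/2$, matching the stated threshold.
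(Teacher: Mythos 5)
The lower-bound arguments (iv), (vi), (vii) are where your proposal breaks down; the upper-bound/absolute-continuity parts (i), (ii), (iii), (v) and the upper half of (vii) match the paper's approach in spirit, modulo small geometric slips that don't affect the conclusions.

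The genuine gap is in (iv). Placing \emph{two} flanking marks at $X_1 < x-B/2 < x+B/2 < X_2$ with times $T_1,T_2\in(5/4,11/8)$ and requiring $\pi_M\bigl([x-B/2,x+B/2]\times[t-1,t]\bigr)=0$ does \emph{not} imply $[x-B/2,x+B/2]\subset D_t(x)$. First, for $p>0$ fires arrive from outside the flanked interval at finite speed; the barrier at $X_i$ lasts at most one unit of time after $T_i$, so for $t$ only slightly larger than $T_i+1$ the barrier is already gone and has refilled, and a fire ignited far away can burn through $X_i$ and reach $[x-B/2,x+B/2]$ before $t$. Second, if the flanking marks themselves ignite macroscopic fires (as you want), one of each pair of fronts travels \emph{towards} $[x-B/2,x+B/2]$ and will cross it unless stopped by yet further barriers, which your event does not provide. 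This is exactly why the paper's event $\Omega_{t,B}$ is built from $K\asymp B$ marks arranged as a chain with spacing $\Delta=3/(16p)$ in the time layer $[t-11/8,t-5/4]$: each mark's macroscopic fire is stopped by its immediate neighbors, and fires from outside are stopped within one cell, so that nothing reaches $[0,B]$ during $(t-1,t]$. The exponential cost $e^{-\kappa_2 B}$ is then paid through the $\asymp B$ cells, not through a single length-$B$ exclusion strip on its own. Since (vi) is obtained in the paper by translating $\Omega_{t,B}$ in time and adding a triangle-exclusion event, your (vi) inherits the same gap.

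For (vii), your Palm--Mecke identity $\proba{Z_t(x)\in[a,b]}=\int_{t-b}^{t-a}\E\bigl[|\mathscr{D}_{s-}(x)|\indiq{\text{no later crossing}}\bigr]\diff s$ is correct and gives the upper bound immediately (this is essentially what the paper does, after Markov's inequality on the crossing count). But your lower-bound sketch does not close. You need a uniform-in-$s$ lower bound on $\E\bigl[|\mathscr{D}_{s-}(x)|\indiq{\text{no later crossing}}\bigr]$, and the two events in the product are not independent of each other (both depend on the state at time $s$), nor is ``no later crossing'' simply ``no mark in a forward cone,'' since a mark \emph{before} time $s$ can still induce a crossing of $x$ inside $(s,t]$ for $p>0$. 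The paper avoids all of this by the clean inclusion $\{\pi_M(\mathscr{D}_{t-b}(0)\times[t-b,t-a])\geq1\}\subset\{Z_t(0)\in[a,b]\}$ together with the independence of $\mathscr{D}_{t-b}(0)$ from $\pi_M$ restricted to $\rr\times(t-b,\infty)$. Finally, the threshold $(5+p)/2$ you quote is reverse-engineered rather than derived: it comes from the constraint $B<(2(t-b)-3)/p$ in (vi) applied at $B=1$, i.e.\ $t-b>(3+p)/2$ with $b<1$, not from a ``cone of area $\leq p/2$.'' Your arithmetic $3/2+1+p/2$ happens to land on the same number but the $p/2$ term has no justification in your argument.
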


\begin{proof} By invariance by translation, it suffices to treat the case $x=0$.

\md 

\noindent{\bf Point (i).} For $t\in [0,1]$, we have a.s. $Z_t(0)=t$. But for $t> 1$ and $z\in
[0,1)$, $Z_t(0)=z$ implies that a fire has crossed $0$ at time
$t-z$, so that necessarily 
$\pi_M(\Lambda^p_{(0,t)})>0$, recall Subsection \ref{notations}. This happens with
probability $0$. 

\md 

\noindent{\bf Point (ii).} For any $t>0$, $|D_t(0)|$ is either $0$ or of the form
$\abs{x-y}$, for some $x,y\in\chi_t$. We easily conclude as previously
that for $B>0$, $\Pr(|D_t(0)|=B)=0$.

\md 

\noindent{\bf Point (iii).} First if $t \in [0,1)$, we have a.s. $|D_t(0)|=0$ and the result
is obvious. Recall that for $(X,\tau)$ a mark of $\pi_M$, we have $H_t(X)>0$ or
$Z_t(X)<1$ for all $t\in [\tau,\tau+1/2)$ (see the proof of Proposition
\ref{restriction limite}-Step 1).
This implies that for $t\geq 1$,
\begin{multline*}
\{|D_t(0)|\geq B\} \subset \{[0,B/2] \hbox{ is connected at time $t$ or $[-B/2,0]$ is
connected at time $t$} \}\\
\subset \left\{\pi_M([0,B/2]\times[t-1/4,t])=0 \right\}\cup \left\{\pi_M([-B/2,0]\times[t-1/4,t])=0\right\}.
\end{multline*}
Consequently, $\Pr[|D_t(0)|\geq B] \leq 2 e^{-B/8}$ as desired.

\md 

\noindent{\bf Point (iv).}
Fix $B>0$ and $t\geq 11/8$. Set $\Delta=\frac{3}{16p}$ and $K=\left\lfloor\frac{1}{\Delta}\left(B+\frac{11}{4p}\right)\right\rfloor+1$. Consider the event
$\Omega_{t,B}=\Omega^0_{t,B} \cap\bigcap_{k=0}^{K-1}\Omega_{t,B,k}$, illustrated by Figure \ref{omegatb}, where 
\begin{itemize}
	\item $\Omega^{0}_{t,B}=\{\pi_M([-5/(4p),B+5/(4p)]\times\intervalleff{t-5/4}{t})=0\}$;
	\item for all $k\in\intervalleentier{0}{K-1}$, $\Omega_{t,B,k}=\left\{\pi_M(D_k)=1\right\}\cap \left\{\pi_M\left(C_k\setminus D_k\right)=0\right\}$ where 
\begin{align*}
C_k &=\left[-\frac{11}{8p}+k\Delta,-\frac{11}{8p}+(k+1)\Delta\right]\times\intervalleff{t-11/8}{t-5/4}\\
D_k &=\left[-\frac{11}{8p}+(k+\frac{1}{3})\Delta,-\frac{11}{8p}+(k+\frac{2}{3})\Delta\right]\times\intervalleff{t-11/8}{t-5/4},
\end{align*}
see Figure \ref{omegatbk}. Observe that $\bigcup_{k=0}^{K-1} C_k\supset\intervalleff{-11/(8p)}{B+11/(8p)}$.
\end{itemize}

We have $\proba{\Omega_{t,B}^0}=\exp\left(-\frac{5}{4}(B+\frac{5}{2p})\right)$ whence for all $k\in\intervalleentier{0}{K-1}$,  $\proba{\Omega_{t,B,k}}=\frac{\Delta}{24}\times e^{-\frac{\Delta}{24}}\times e^{-\frac{\Delta}{12}}$. All these events being independent, we conclude that 
\[\proba{\Omega_{t,B}}=\exp\left(-\frac{5}{4}(B+\frac{5}{2p})\right)\times \left(\frac{\Delta}{24} e^{-\frac{\Delta}{8}}\right)^K\geq c_1 e^{-\kappa_2 B}\]
for some constant $c_1$ and $\kappa_2$ not depending on $B$. To conclude the proof of (iv), it thus suffices to check that $\Omega_{t,B}\subset \{[0,B]\subset D_t(0)\}$. But on $\Omega_{t,B}$, using the same arguments as in Point (iii),
we observe that:
\begin{itemize}
	\item for $(X,\tau)$ a mark of $\pi_M$, $H^A_s(X)>0$ or $Z_s^A(X)<1$ for all $s\in\intervalleff{\tau}{\tau+3/8}$. Thus, for all $k\in\intervalleentier{0}{K-1}$, there is $x\in D_k$ such that $H^A_s(x)>0$ or $Z_s^A(x)<1$ for all $s\in\intervalleff{t-5/4}{t-1}$;
	\item calling $(X_k,\tau_k)$ the mark of $\pi_M$ in $D_k$, we have $\tau_k+p(X_{k+1}-X_k)\in\intervalleff{t-5/4}{t-1}$ and $\tau_k+p(X_k-X_{k-1})\in\intervalleff{t-5/4}{t-1}$, see Figure \ref{omegatbk}. Thus, if the fire starting on $X_k$ at time $\tau_k$ is macroscopic, it is (at least) stopped by the marks $(X_{k-1},\tau_{k-1})$ and $(X_{k+1},\tau_{k+1})$ and does not affect the zone $\intervalleff{0}{B}$ after $t-1$;
	\item for $(Y,S)$ a mark of $\pi_M$ such that  $(Y,S)\not\in\intervalleff{-11/(8p)}{B+11/(8p)}\times\intervalleff{t-11/8}{t}$ and $Y+(t-S)/p\in\intervalleff{0}{B}$, then there exists $k\in\intervalleentier{0}{K-1}$ such that 
\[Y+\frac{t-11/8-S}{p}\in\left[-\frac{11}{8p}+(k-\frac{1}{3})\Delta,\,-\frac{11}{8p}+(k+\frac{2}{3})\Delta\right].\]
We immediately conclude that $S+p(X_{k+1}-Y)\in\intervalleff{t-5/4}{t-1}$. Thus, the right front of $(Y,S)$ is stopped by the match $(X_{k+1},\tau_{k+1})$ and does not affect the zone $\intervalleff{0}{B}$ after $t-1$;
	\item for $(Y,S)$ a mark of $\pi_M$ such that $(Y,S)\not\in\intervalleff{-11/(8p)}{B+11/(8p)}\times\intervalleff{t-11/8}{t}$ and $Y-(t-S)/p\in\intervalleff{0}{B}$, we prove as above that the left front of $(Y,S)$ is stopped by such a match $(X_{k-1},\tau_{k-1})$ and does not  affect the zone $\intervalleff{0}{B}$ after $t-1$;
	\item by construction, the other fires may not affect the zone $\intervalleff{-11/(8p)}{B+11/(8p)}$ during the time interval $\intervalleff{t-1}{t}$.
\end{itemize}

As a conclusion, the zone $\intervalleff{0}{B}$ is not affected by any fire
during $\intervalleff{t-1}{t}$. Since the length of this time interval is 
greater than $1$, we deduce that for all $x \in [0,B]$,
$Z_t(x)=\min(Z_{t-1}(x) + 1,1)=1$ 
and $H_t(x)=\max(H_{t-1}(x) - 1,0)=0$, whence $[0,B]\subset D_t(0)$.

\md 

\noindent{\bf Point (v)} First if $t \in [0,1)$, we have a.s. $|\mathscr{D}_t(0)|=0$ and the result
is obvious. If $t\geq1$ and $B>2(t-1)/p$, 
\[\proba{|\mathscr{D}_t(0)|\geq B}=0.\]
Recall that for $(X,\tau)$ a mark of $\pi_M$, we have $H_t(X)>0$ or
$Z_t(X)<1$ for all $t\in [\tau,\tau+1/2)$ (see the proof of Proposition
\ref{restriction limite}-Step 1).
This implies that for $t\geq 1$ and $B\in\intervalleoo{0}{2(t-1)/p}$,
\begin{multline*}
\{|\mathscr{D}_t(0)|\geq B\} \subset \{[0,B/2]\subset\intervalleff{0}{\mathscr{R}_t(x)} \text{ or }[-B/2,0]\subset\intervalleff{\mathscr{L}_t(x)}{0}\}\\
\subset \left\{\pi_M\left(\enstq{(r,v)\in\Lambda^p_{(0,s)}(B/2,s-pB/2)}{s\in\intervalleff{t-1/4}{t}}\right)=0 \right\}\\
\cup \left\{\pi_M\left(\enstq{(r,v)\in\Lambda^p_{(0,s)}(-B/2,s-pB/2)}{s\in\intervalleff{t-1/4}{t}}\right)=0 \right\}.
\end{multline*}
Consequently, $\proba{|\mathscr{D}_t(0)|\geq B} \leq 2 e^{-B/8}$, as desired.

\md

\noindent{\bf Point (vi)} Let $t\geq3/2$ and $B\in\intervalleoo{0}{(2t-3)/p}$. From Point (iv), using space/time stationarity, we define an event $\tOmega_{t,B}$, depending on the Poisson measure $\pi_M(\diff x,\diff s)$ restricted to $\intervalleff{-B/2-11/(8p)}{B/2+11/(8p)}\times\intervalleff{t-pB/2-3/2}{t-pB/2}$, on which $D_{t-pB/2}(0)\supset\intervalleff{-B/2}{B/2}$. Next consider the event
\[\tOmega_{t,B}^0=\left\{\pi_M\left(\intervalleff{-B/2}{B/2}\times \intervalleff{t-pB/2}{t}\right)=0\right\}.\]
We have $\proba{\tOmega_{t,B}^0}=e^{-pB^2/2}$.

The events $\tOmega_{t,B}$ and $\tOmega_{t,B}^0$ are independent, thus we have, recalling point (iv)
\[\proba{\tOmega_{t,B}\cap\tOmega_{t,B}^0}= \proba{\tOmega_{t,B}}\times\proba{\tOmega_{t,B}^0}\geq c_1 e^{-\kappa_2 (B+ B^2)}.\]

Finally, we observe that for $(X,t-pB/2)$ a fire a time $t-pB/2$ with, for example, $X<-B/2$, we have, by construction, $X+(t-(t-pB/2))/p<0$. Thus,
\[\tOmega_{t,B}\cap\tOmega_{t,B}^0\subset\{|\mathscr{D}_t(0)|\geq B\}.\]
This concludes the point.

\md

\noindent{\bf Point (vii)} For $0\leq a\leq b<1$ and $t\geq 1$, we have $Z_t(0)\in\intervalleff{a}{b}$ if and only if there is $\tau\in\intervalleff{t-b}{t-a}$ such that $Z_\tau(0)=0$. And this happens if and only if 
\[X_{t,a,b}\coloneqq \int_{t-b}^{t-a}\int_\rr\indiq{(y,s-p|x-y|)\in \mathscr{D}_{s-}(0)\times\intervalleff{0}{s}}\pi_M(\diff y,\diff s)\geq 1.\]
We deduce that
\[\proba{Z_t(0)\in\intervalleff{a}{b}}=\proba{X_{t,a,b}\geq 1}\leq \mathbb{E}\left[X_{t,a,b}\right]=\int_{t-b}^{t-a}\mathbb{E}\left[\abs{\mathscr{D}_s(0)}\right]\diff s\leq C(b-a),\]
where we used Point (v) for the last inequality.

Next, we have $\{\pi_M(\mathscr{D}_{t-b}(0)\times\intervalleff{t-b}{t-a})\geq 1\}\subset\{X_{t,a,b}\geq 1\}$: it suffices to note that a.s., 
\begin{multline*}
\{X_{t,a,b}=0\}\subset\{X_{t,a,b}=0,\mathscr{D}_{t-b}(0)\subset \mathscr{D}_s(0)\text{ for all }s\in\intervalleff{t-b}{t-a}\}\\
\subset\{\pi_M(\mathscr{D}_{t-b}(0)\times\intervalleff{t-b}{t-a})=0\}.
\end{multline*} Since now $\mathscr{D}_{t-b}(0)$ is independent of $\pi_M(\diff x,\diff s)$ restricted to $\rr\times\intervalleoo{t-b}{\infty}$, we deduce that for $t\geq (5+p)/2$
\begin{align*}
\proba{Z_t(0)\in\intervalleff{a}{b}}&\geq \proba{\pi_M(\mathscr{D}_{t-b}(0)\times\intervalleff{t-b}{t-a}) \geq1}\\
	&\geq \proba{\abs{\mathscr{D}_{t-b}(0)}\geq 1}(1-e^{-(b-a)})\\
	&\geq c(1-e^{-(b-a)}),
\end{align*}
where we used Point (vi) (here $t-b\geq 3/2$ and $(2t-3)/p\geq1$) to get the last inequality. This concludes the proof, since $1-e^{-x}\geq x/2$ for all $x\in\intervalleff{0}{1}$.\qedhere

\begin{figure}[h!]
\fbox{
\begin{minipage}[c]{0.95\textwidth}
\centering
\begin{tikzpicture}[scale=.83]
\draw (0,0) -- (4.5,9) -- (10,9) -- (14.5,0) -- cycle node[below] at (0,0) {\footnotesize $-\frac{11}{8p}$} node[below] at (14.5,0) {\footnotesize $B+\frac{11}{8p}$};
\draw[dashed] (0,0) -- (-.5,0) node[left] {\footnotesize $t-11/8$};
\draw[dashed] (4.5,9) -- (-.5,9) node[left] {\footnotesize $t$};
\draw[dashed] (10,9) -- (14.5,9);
\draw (4.5,9) -- (4.5,0) node[below] {\footnotesize $0$} node[above] at (4.5,9) {$0$};
\draw (10,9) -- (10,0) node[below] {\footnotesize $B$} node[above] at (10,9) {$B$};
\draw (.5,1) -- (14,1);
\draw[dashed] (.5,1) -- (-.5,1) node[left] {\footnotesize $t-5/4$};
\draw[dashed] (14,1) -- (14.5,1);
\draw (1.5,3) -- (13,3);
\draw[dashed] (1.5,3) -- (-.5,3) node[left] {\footnotesize $t-1$};
\draw[dashed] (13,3) -- (14.6,3);

\foreach \x in {0,...,58} {
\draw (\x/4,0.1cm) -- (\x/4,-0.1cm);
}
\foreach \x in {0,...,57} {
\draw[dashed] (\x/4+.125,1cm) -- (\x/4+.125,3cm);
}
\foreach \x in {0,...,12} {
\draw node at (\x/2+.125,.5+\x/30) {\footnotesize $\bullet$};
}
\foreach \x in {13,...,21} {
\draw  node at (\x/2+.125,.75-\x/70) {\footnotesize $\bullet$};
}
\foreach \x in {22,...,28} {
\draw  node at (\x/2+.125,.5+\x/100) {\footnotesize $\bullet$};
}
\foreach \x in {0,...,14} {
\draw node at (\x+.375,.9-\x/20) {\footnotesize $\bullet$};
}
\foreach \x in {0,...,13} {
\draw node at (\x+.875,.3+\x/20) {\footnotesize $\bullet$};
}
\end{tikzpicture} \caption{The event $\Omega_{t,B}$.}\label{omegatb}
\vspace{.5cm}
\parbox{13.3cm}{
\footnotesize{
The marks of $\pi_M$ are represented by \textcolor{red}{$\bullet$}. A match falls on each zone $D_k\subset C_k$.
}}
\end{minipage}}
\end{figure}

\begin{figure}[h!]
\fbox{
\begin{minipage}[c]{0.95\textwidth}
\centering
\begin{tikzpicture}
\draw (0,0) rectangle (9,6) node[below] at (0,0) {\footnotesize $(k-1)\Delta$} node[below] at (9,0) {\footnotesize $(k+2)\Delta$};
\draw (0,2) -- (9,2);
\draw[dashed] (0,0) -- (-1,0) node[left] {$t-11/8$};
\draw[dashed] (0,2) -- (-1,2) node[left] {$t-5/4$};
\draw[dashed] (0,6) -- (-1,6) node[left] {$t-1$};
\draw[dashed] (9,0) -- (10,0);
\draw[dashed] (9,2) -- (10,2);
\draw[dashed] (9,6) -- (10,6);

\draw[dashed] (1,6) -- (1,0);
\draw[dashed] (2,0) -- (2,6);
\draw (3,6) -- (3,0) node[below] {\footnotesize $k\Delta$};

\draw[dashed] (4,6) -- (4,0) node at (4,-.5) {\footnotesize $\frac{3k+1}{3}\Delta$};
\draw[ultra thick] (4.8,2) --(4.8,6);
\draw[ultra thick,dashed] (4.8,0.5) --(4.8,2);
\draw[red, dashed, very thick] (4.8,0.5) -- (9,4.7) node at (4.8,0.5) {$\bullet$};
\draw[red, dashed, very thick] (4.8,0.5) -- (0,5.3);
\draw[dashed] (5,6) -- (5,0) node at (5,-.7) {\footnotesize $\frac{3k+2}{3}\Delta$};
\draw (6,6) -- (6,0) node[below] {\footnotesize $(k+1)\Delta$};

\draw (5,0) -- (9,4);
\draw (4,0) -- (0,4);
\draw (5,3) -- (8,6);
\draw (4,3) -- (1,6);

\draw[dashed] (7,0) -- (7,6);
\draw[dashed] (8,0) -- (8,6);
\end{tikzpicture} \caption{The event $\Omega_{t,B,k}$.}\label{omegatbk}
\vspace{.5cm}
\parbox{13.3cm}{
\footnotesize{
A match falls on $D_k =\left[-\frac{11}{8p}+(k+\frac{1}{3})\Delta,-\frac{11}{8p}+(k+\frac{2}{3})\Delta\right]\times\intervalleff{t-11/8}{t-5/4}$ and is represented by \textcolor{red}{$\bullet$}. The dashed slope lines stand for the hypothetical fronts of the fire. The plain slope lines stand for the upper and lower possible positions of the fronts. The plain vertical thick line is the possible microscopic zone due to the fire in $D_k$. Thus, if the match falling on $D_k$ is macroscopic, it is necessarily stopped by a microscopic zone in $D_{k+1}$ or in $D_{k-1}$, since $H_s(X_{k+1})>0$ or $Z_s(X_{k+1})<1$ for all $s\in\intervalleff{t-5/4}{t-1}$ and $H_s(X_{k-1})>0$ or $Z_s(X_{k-1})<1$ for all $s\in\intervalleff{t-5/4}{t-1}$.
}}
\end{minipage}}
\end{figure}
\end{proof}

\subsubsection{Proof of Corollary \ref{cor1} when $p>0$}
We finally give the
\begin{proof}[Proof of Corollary \ref{cor1} when $p>0$.]
For each $\la\in\intervalleoo{0}{1}$ and each $\pi\geq 1$, consider a $(\la,\pi)-$FFP $(\eta^{\la,\pi}_t(i))_{t\geq0,i\in\zz}$. Let also $(Z_t(x),H_t(x),F_t(x))_{t\geq0,x\in\rr}$ be a LFFP$(p)$ and consider the corresponding process $(D_t(x))_{t\geq0,x\in\rr}$.

\md

\noindent{\bf Point (b).} Using Lemma \ref{zunif}-(iii)-(iv) and recalling that $|C(\eta^{\la,\pi}_{\al t},0)|/\nl=|D_t^{\la,\pi}(0)|$, it suffices to check that for all $t\geq 3/2$ and all $B>0$, when $\la\to0$ and $\pi\to\infty$ in the regime $\cR(p)$,
\[\lim_{\la,\pi}\proba{|D_t^{\la,\pi}(0)|\geq B}=\proba{|D_t(0)|\geq B}.\]
This follows from Theorem \ref{converge}-2, which implies that $|D_t^{\la,\pi}(0)|$ goes in law to $\abs{D_t(0)}$ and from Lemma \ref{zunif}-(ii).

\md

\noindent{\bf Point (a).} Due to Lemma \ref{zunif}-(v) we only need that for all $0<a<b<1$, all $t\geq (5+p)/2$, when $\la\to0$ and $\pi\to\infty$ in the regime $\cR(p)$,
\[\lim_{\la,\pi}\proba{|C(\eta^{\la,\pi}_{\al t},0)|\in\intervalleff{\la^{-a}}{\la^{-b}}}=\proba{Z_t(0)\in\intervalleff{a}{b}}.\]
But using Theorem \ref{converge}-3 and Lemma \ref{zunif}-(i), we know that
\[\lim_{\la,\pi}\proba{\frac{\log(|C(\eta^{\la,\pi}_{\al t},0)|)}{\log(1/\la)}\indiq{|C(\eta^{\la,\pi}_{\al t},0)|\geq1}\in\intervalleff{ a}{b}}=\proba{Z_t(0)\in\intervalleff{a}{b}}\]
 when $\la\to0$ and $\pi\to\infty$ in the regime $\cR(p)$. One immediately concludes.
\end{proof}

\section{Convergence in the regime $\cR(0)$}\label{convergence in the regime 0}
The aim of this section is to prove Theorem \ref{converge restriction} when $p=0$ and this will conclude the proof of Theorem \ref{converge}.

In the whole section, we fix the parameters $A>0$ and $T>2$. We omit the subscript/superscript $A$ in the whole proof. The proof follows the ideas of the Section \ref{convergence in the regime p}.

We recall that $\al=\log(1/\la)$, $\nl=\lfloor 1/(\la\al)\rfloor$, $\ml=\lfloor 1/(\la\ba_\la^2)\rfloor$, $\e_\la=1/\ba_\la^ {3}$. We set as usual $A_\la=\lfloor\nl A\rfloor$ and $I_A^\la=\intervalleentier{-A_\la}{A_\la}$. For $i\in\zz$, we set $i_\la=\intervallefo{i/\nl}{(i+1)/\nl}$. For $\intervalleff{a}{b}$ an interval of $\intervalleff{-A}{A}$ and $\la\in\intervalleoo{0}{1}$, we recall, assuming that $-A<a<b<A$, that
\begin{align*}
\intervalleff{a}{b}_\la &= \intervalleentier{\lfloor\nl a\rfloor+\ml+1}{\lfloor\nl b\rfloor-\ml-1}\subset\zz,\\
\intervalleff{-A}{b}_\la &= \intervalleentier{-A_\la}{\lfloor\nl b\rfloor-\ml-1}\subset\zz,\\
\intervalleff{a}{A}_\la &= \intervalleentier{\lfloor\nl a\rfloor+\ml+1}{A_\la}\subset\zz.
\end{align*}
For $\la\in\intervalleoo{0}{1}$ and $\pi\geq1$, we recall that
\[\varkappa_{\la,\pi} = \frac{2\nl A}{\al\pi}+\e_\la.\]

For $x\in\intervalleoo{-A}{A}$, $\la\in\intervalleoo{0}{1}$ and $\pi\geq1$, we also recall that
\[(x)_\la =\intervalleentier{\lfloor\nl x\rfloor-\ml}{\lfloor\nl x\rfloor+\ml}\subset\zz.\]

\subsection{Occupation of vacant zone}
For simplicity, we recall Lemma \ref{speed}.
\begin{lem}\label{speed0}
Consider a family of i.i.d. Poisson processes $(N^S_t(i))_{t\geq0,i\in\zz}$. Let $a<b$.
\begin{enumerate}
        \item For $t<1$, $\lim_{\la\to 0} \proba{\forall i \in
\intervalleentier{\lfloor a\ml \rfloor}{\lfloor b\ml \rfloor},
N^S_{\al t}(i)>0}=0$;
        \item For $t\geq1$, $\lim_{\la\to 0} \proba{\forall i \in
\intervalleentier{\lfloor a\ml \rfloor}{\lfloor b\ml \rfloor},
N^S_{\al t}(i)>0}=1$;
        \item For $t<1$, $\lim_{\la\to 0} \proba{\forall i \in
\intervalleentier{\lfloor a\nl \rfloor}{\lfloor b\nl \rfloor}, N^S_{\al
t}(i)>0}=0$;
        \item For $t\geq1$, $\lim_{\la\to 0} \proba{\forall i \in
\intervalleentier{\lfloor a\nl \rfloor}{\lfloor b\nl \rfloor}, N^S_{\al
t}(i)>0}=1$;
        \item For $t>0$, $\lim_{\la\to 0} \proba{\exists i \in
\intervalleentier{\lfloor a\nl \rfloor}{\lfloor b\nl \rfloor}, N^S_{\al
t}(i)>0}=1$;
		\item For $t>0$ and $\delta>0$, $\lim_{\la\to0}\proba{\forall i\in\intervalleentier{-\lfloor\la^{-(t+\delta)}\rfloor}{\lfloor\la^{-(t+\delta)}\rfloor}, N^S_{\al t}(i)>0}=0$;
		\item For $t>0$ and $\delta>0$, $\lim_{\la\to0}\proba{\forall i\in\intervalleentier{-\lfloor\la^{-(t-\delta)}\rfloor}{\lfloor\la^{-(t-\delta)}\rfloor}, N^S_{\al t}(i)>0}=1$.
\end{enumerate}
\end{lem}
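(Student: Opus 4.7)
The statement is a verbatim restatement of items (1)--(7) of Lemma \ref{speed} (only items (8)--(9), which involved $\bk_{\lambda,\pi}$ and were specific to the regime $\cR(p)$ with $p>0$, have been dropped). So the plan is simply to invoke the proof already given for Lemma \ref{speed}; no new ingredient is needed. I recall the one-line computation at the heart of it.

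The key observation is that the processes $(N^S_{\al t}(i))_{i\in\zz}$ are i.i.d., and for any $i$, $\proba{N^S_{\al t}(i)>0}=1-e^{-\al t}=1-\la^t$. Hence for any sequence $k_\la\to\infty$,
\[\proba{\forall i\in\intervalleentier{-\lfloor ak_\la\rfloor}{\lfloor bk_\la\rfloor},\,N^S_{\al t}(i)>0}=(1-\la^t)^{(b-a)k_\la+O(1)}\simeq e^{-(b-a)k_\la\la^t},\]
and dually, $\proba{\exists i\in\intervalleentier{\lfloor ak_\la\rfloor}{\lfloor bk_\la\rfloor},\,N^S_{\al t}(i)>0}=1-\la^{t(b-a)k_\la+O(1)}$. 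The proof then reduces to computing $k_\la\la^t$ in each case.

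For (1)--(2), take $k_\la=\ml\simeq\la^{-1}/\ba_\la^2$, so $k_\la\la^t\simeq \la^{t-1}/\ba_\la^2$, which tends to $\infty$ for $t<1$ and to $0$ for $t\geq1$. For (3)--(4), take $k_\la=\nl\simeq\la^{-1}/\ba_\la$, so $k_\la\la^t\simeq \la^{t-1}/\ba_\la$, with the same dichotomy. For (5), note that $\la^{t(b-a)\nl}\to 0$ for any $t>0$ since $(b-a)\nl\to\infty$. For (6)--(7), take $k_\la=\la^{-(t\pm\delta)}$, so $k_\la\la^t=\la^{\mp\delta}$, which tends to $\infty$ if the sign is $+\delta$ and to $0$ if it is $-\delta$. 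There is no obstacle; this is a routine Poisson concentration argument, identical to the one carried out in the proof of Lemma \ref{speed}, so I would simply write ``The proof is identical to that of Lemma \ref{speed}, items (1)--(7).''
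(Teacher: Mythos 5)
Your proposal is correct and takes exactly the route the paper takes: the paper's Section~9 states that Lemma~\ref{speed0} simply ``recalls'' Lemma~\ref{speed}, and gives no new proof, so the argument reduces to the one you recap (an elementary Poisson computation with $k_\la=\ml,\nl,\la^{-(t\pm\delta)}$). Nothing is missing.
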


\subsection{Height of the barrier}\label{height0}
We describe here the time needed for a destroyed microscopic cluster to be
regenerated. Roughly, we assume that the zone $(x_1)_\la$ around $\lfloor\nl x_1\rfloor$, for some $x_1\in\intervalleff{-A}{A}$, has been made vacant at some time $\al t_0$. Then we consider the situation where a match falls on $\lfloor\nl x_1\rfloor$ at some time $\al t_1\in\intervalleoo{\al t_0}{\al(t_0+1)}$ and we compute the delay needed for the destroyed cluster to be fully regenerated. As in Subsection \ref{heightp}, we have to distinguish the cases $t_0=0$ and $t_0>1$.

\begin{lem}\label{micro fire 0}
Consider two Poisson processes $(N_t^S(i))_{t\geq0,i\in\zz}$ and
$(N_t^P(i))_{t\geq0,i\in\zz}$ with respective rates $1$ and $\pi$, all these
processes being independent. Consider also $\cM\coloneqq ((x_0,t_0),(x_1,t_1))$ with $x_0,x_1\in\intervalleoo{-A}{A}$, $t_0\in\{0\}\cup\intervalleoo{1}{\infty}$ and $t_1\in\intervalleoo{t_0}{t_0+1}$. For $i\in I^\la_A$ and $t\geq0$, we consider the process
\begin{align*}
\zeta^{\la,\pi,\cM}_t(i) =& \left(1+\indiq{t\geq\al
t_0,i=\lfloor\nl x_0\rfloor}\right)\times\indiq{t_0>1}\\
&+\indiq{t\geq\al
t_1,i=\lfloor\nl x_1\rfloor,\zeta^{\la,\pi,\cM}_{\al t_1-}(\lfloor\nl x_1\rfloor)=1}+\int_0^t\indiq{\zeta^{\la,\pi,\cM}_{s-}(i)=0}\diff
N_s^S(i)\\
&+\int_0^t \indiq{\zeta^{\la,\pi,\cM}_{s-}(i+1)=2,\zeta^{\la,\pi,\cM}_{s-}(i)=1}\diff
N_s^P(i+1)\\
&+\int_0^t
\indiq{\zeta^{\la,\pi,\cM}_{s-}(i-1)=2,\zeta^{\la,\pi,\cM}_{s-}(i)=1}\diff N_s^P(i-1)\\
&- 2\int_0^t \indiq{\zeta^{\la,\pi,\cM}_{s-}(i)=2}\diff N_s^P(i)
\end{align*}
with the convention $\zeta^{\la,\pi,\cM}_t(\lfloor\nl A\rfloor+1)=\zeta^{\la,\pi,\cM}_t(-\lfloor\nl A\rfloor-1)=0$ for all $t\in\intervallefo{0}{\infty}$.

Using the Poisson processes $(N^P(i))_{t\geq 0,i\in\zz}$, consider the burning times $(T^1_i)_{i\in\zz}$ of the propagation processes iginited at $(x_1,t_1)$, recall Definition \ref{definition1 application}, and define the destroyed
cluster due to the match falling in $\lfloor\nl x_1\rfloor$ at time $\al t_1$, recall Definition \ref{def destroyed comp},
\[C^P((\zeta^{\la,\pi}_t(i))_{t\geq0,i\in\zz},(x_1,t_1))\coloneqq\intervalleentier{\lfloor\nl x_1\rfloor+i^g}{\lfloor\nl x_1\rfloor+i^d}.\]

We finally define the time needed for $C^P((\zeta^{\la,\pi,\cM}_t(i))_{t\geq0,i\in\zz},(x_1,t_1))$ to
become again occupied
\[\Theta_{\cM}^{\la,\pi}\coloneqq\inf\left\{ t>t_1 : \forall i\in
C^P((\zeta^{\la,\pi,\cM}_t(i))_{t\geq0,i\in\zz},(x_1,t_1)),\zeta^{\la,\pi,\cM}_{\al t}(i)=1 \right\}.\]

For all $\delta>0$, there holds that, 
\[\lim_{\la ,\pi} \proba{\abs{\Theta_{\cM}^{\la,\pi} - (t_1-t_0)}\geq
\delta} = 0\]
when $\la\to0$ and $\pi\to\infty$ in the regime $\cR(0)$. 
\end{lem}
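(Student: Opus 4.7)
The plan is to mirror the proof of Lemma \ref{micro fire p} with each $\cR(p)$-specific ingredient replaced by its $\cR(0)$-counterpart: $\kappa_{\la,\pi}^0$ and $\vlp$ both become $\varkappa_{\la,\pi} = 2A\nl/(\al\pi)+\e_\la$, the event $\Omega^{P,T}_{\la,\pi}(\cdot,\cdot)$ from Lemma \ref{propagation lemma p} is replaced by $\Omega^{P,2A,2A}_{\la,\pi}(\cdot,\cdot)$ from Lemma \ref{propagation lemma 0}, and the \textbf{Micro$(p)$}/\textbf{Macro$(p)$} analysis of Subsection \ref{application ffp} is replaced by \textbf{Micro$(0)$}/\textbf{Macro$(0)$}. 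Step 1 of Lemma \ref{micro fire p} transfers verbatim: setting, for $0 \leq \tau_0 < \tau_1 < \tau_0 + 1$,
\[
\vartheta^\la_{\tau_0,t}(i) = \min\bigl(N^S_{\al(\tau_0+t)}(i)-N^S_{\al\tau_0}(i),\,1\bigr), \qquad \Xi^\la_{\tau_0,\tau_1} = \inf\bigl\{t>0 : \forall i \in C(\vartheta^\la_{\tau_0,\tau_1-\tau_0},\lfloor\nl x_1\rfloor),\,\vartheta^\la_{\tau_1,t}(i)=1\bigr\},
\]
the same computation as there yields $\Xi^\la_{\tau_0,\tau_1}\xrightarrow[\la,\pi]{\pp}\tau_1-\tau_0$, since it involves only the i.i.d. seed processes.

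For $t_0 = 0$, I work on the event
\[
\tOmega^{P,\cM}_{\la,\pi} = \Omega^{P,2A,2A}_{\la,\pi}(x_1,t_1) \cap \bigl\{\exists\,i_1\in\intervalleentier{\lfloor\nl x_1\rfloor-\ml}{\lfloor\nl x_1\rfloor},\,N^S_{\al(t_1+\varkappa_{\la,\pi})}(i_1)=0\bigr\} \cap \bigl\{\exists\,i_2\in\intervalleentier{\lfloor\nl x_1\rfloor}{\lfloor\nl x_1\rfloor+\ml},\,N^S_{\al(t_1+\varkappa_{\la,\pi})}(i_2)=0\bigr\},
\]
whose probability tends to $1$ by Lemma \ref{propagation lemma 0} and Lemma \ref{speed0}-1 (since $t_1+\varkappa_{\la,\pi}<1$ once $(\la,\pi)$ is close enough to $\cR(0)$). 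On $\tOmega^{P,\cM}_{\la,\pi}$, \textbf{Micro$(0)$} gives $C^P \subset \intervalleentier{i_1}{i_2}\subset(x_1)_\la$ and $\zeta^{\la,\pi,\cM}_{\al(t_1+\varkappa_{\la,\pi})}(i)\leq 1$ for all $i\in(x_1)_\la$, whence the sandwich
\[
C(\vartheta^\la_{0,t_1},\lfloor\nl x_1\rfloor) \subset C^P \subset C(\vartheta^\la_{0,t_1+\varkappa_{\la,\pi}},\lfloor\nl x_1\rfloor)
\]
and therefore $t_1+\Xi^\la_{0,t_1}\leq t_1+\Theta^{\la,\pi}_\cM\leq t_1+\varkappa_{\la,\pi}+\Xi^\la_{0,t_1+\varkappa_{\la,\pi}}$. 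Monotonicity and right-continuity of $t\mapsto t+\Xi^\la_{0,t}$, combined with Step 1 and $\varkappa_{\la,\pi}\to 0$, conclude this case.

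For $t_0 > 1$, one has $\zeta^{\la,\pi,\cM}_{\al t_0-}\equiv 1$ on $I^\la_A$, so the match at $(x_0,t_0)$ ignites a fire whose cluster is all of $I^\la_A$; on $\Omega^{P,2A,2A}_{\la,\pi}(x_0,t_0)$ (\textbf{Macro$(0)$}), this fire burns every site of $I^\la_A$ during $[\al t_0,\al(t_0+\varkappa_{\la,\pi})]$ and leaves $\zeta\leq 1$ everywhere afterwards. I then work on
\[
\tOmega^{P,\cM}_{\la,\pi} = \Omega^{P,2A,2A}_{\la,\pi}(x_0,t_0) \cap \Omega^{P,2A,2A}_{\la,\pi}(x_1,t_1) \cap \bigl\{\exists\,i_1,i_2\text{ as in the case }t_0=0\text{ with }N^S_{\al(t_1+\varkappa_{\la,\pi})}(i_j)-N^S_{\al t_0}(i_j)=0\bigr\},
\]
which has probability tending to $1$ because the interval $[\al t_0,\al(t_1+\varkappa_{\la,\pi})]$ has length $(t_1-t_0)+\varkappa_{\la,\pi}<1$ for $(\la,\pi)$ close enough to $\cR(0)$. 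On this event the barrier sites $i_1,i_2$ are burnt by the sweep and never refilled, hence stay vacant throughout $[\al t_0,\al(t_1+\varkappa_{\la,\pi})]$, so the argument of Step 3 of Lemma \ref{micro fire p} (with $\vlp$ and $\kappa_{\la,\pi}^0$ both replaced by $\varkappa_{\la,\pi}$) yields the sandwich
\[
C(\vartheta^\la_{t_0+\varkappa_{\la,\pi},\,t_1-t_0-\varkappa_{\la,\pi}},\lfloor\nl x_1\rfloor) \subset C^P \subset C(\vartheta^\la_{t_0,\,t_1-t_0+\varkappa_{\la,\pi}},\lfloor\nl x_1\rfloor)
\]
and $t_1+\Xi^\la_{t_0+\varkappa_{\la,\pi},t_1}\leq t_1+\Theta^{\la,\pi}_\cM\leq t_1+\varkappa_{\la,\pi}+\Xi^\la_{t_0,t_1+\varkappa_{\la,\pi}}$; the usual monotonicity of $s\mapsto s+\Xi^\la_{t_0+s,t_1}$ (non-increasing) and of $s\mapsto s+\Xi^\la_{t_0,t_1+s}$ (non-decreasing), together with Step 1, give $\Theta^{\la,\pi}_\cM\xrightarrow[\la,\pi]{\pp}t_1-t_0$. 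The only genuinely new point compared to Lemma \ref{micro fire p} is that the fire from $(x_0,t_0)$ is global rather than local: where $i_0$ had to be placed within distance $\ml+2\klp$ of $0$ to ensure the macroscopic sweep reached the barrier zone, here $\Omega^{P,2A,2A}_{\la,\pi}(x_0,t_0)$ directly clears all of $I^\la_A$ regardless of the position of $x_0\in\intervalleoo{-A}{A}$, so no such constraint is needed.
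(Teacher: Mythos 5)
Your proof mirrors the paper's argument step by step: a common Step 1 defining the instantaneous-destruction regeneration time $\Xi^\la_{\tau_0,\tau_1}$ and proving its limit, then separate sandwich arguments for $t_0=0$ and $t_0>1$ with the $\cR(p)$ ingredients ($\kappa_{\la,\pi}^0$, $\vlp$, $\Omega^{P,T}_{\la,\pi}$, Micro$(p)$/Macro$(p)$) replaced by their $\cR(0)$ counterparts ($\varkappa_{\la,\pi}$, $\Omega^{P,2A,2A}_{\la,\pi}$, Micro$(0)$/Macro$(0)$), followed by the same monotonicity-in-the-reset-time argument. This is exactly the paper's proof, including the observation that the match at $(x_0,t_0)$ now sweeps all of $I^\la_A$ (so no constraint on the position of $x_0$ is needed), which the paper also exploits; the only cosmetic differences are that the paper fixes $x_1=0$ by stationarity while you keep $x_1$ general, and the exact sub-interval over which you assert $i_1,i_2$ are vacant is stated slightly loosely (they are only vacant from the time the sweep passes them, $\le\al(t_0+\varkappa_{\la,\pi})$, onward), which does not affect the sandwich.
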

The process $(\zeta^{\la,\pi,\cM}_t(i))_{t\geq 0,i\in\zz}$ defined in Lemma \ref{micro fire 0} is closely related to the process defined in Lemma \ref{micro fire p}. If $t_0=0$, then the process starts from a vacant initial situation and a match falls on $\lfloor\nl x_1\rfloor$ at time $\al t_1$. It does not depend on $x_0\in\rr$. Since $0<t_1<1$, the zone $(x_1)_\la$ is not completely filled at time $\al (t_1+\varkappa_{\la,\pi})$, see Lemma \ref{speed0}-1 (using space stationarity). The process is then governed by the propagation processes $(N_t^P(i))_{t\geq0,i\in\zz}$ and the seed processes $(N_t^S(i))_{t\geq0,i\in\zz}$ with the same rules as the $(\la,\pi)-$FFP. As seen in {\bf Micro$(0)$} in Subsection \ref{application ffp}, the fire is extinguish at time $\al(t_1+\varkappa_{\la,\pi})$.

If $t_0>1$, then the process starts at time $0$ from an occupied initial situation, nothing happens until a match falls in $\lfloor\nl x_0\rfloor\in I_{A}^\la$ at time $\al t_0$. Two fires start: one goes to the left and one goes to the right. Thus, on $\Omega_{\la,\pi}^{P,2A,2A}(x_0, t_0)$, recall Definition \ref{definition2 application}, each site of $I_{A}^\la$ burns and extinguishes before $\al(t_0+\varkappa_{\la,\pi})$, recall {\bf Macro$(0)$} in Subsection \ref{application ffp}. Hence, the zone $(x_1)_\la$ is not completely filled when the match falls on $\lfloor\nl x_1\rfloor$ at time $\al t_1$, see Lemma \ref{speed0}-1, because $\al(t_0+\varkappa_{\la,\pi})<\al t_1<\al(t_0+1)$ for all $(\la,\pi)$ sufficiently close to the regime $\cR(0)$.
\begin{proof}
The proof is very similar to the proof of Lemma \ref{micro fire p}. We first define the simplest process with an instantaneous propagation: if a match falls in a cluster, it destroys
instantaneously the entire connected component. Secondly, we flank the killed cluster
$C^P((\zeta^{\la,\pi,\cM}_t(i))_{t\geq0,i\in\zz},(x_1,t_1))$ to estimate the time needed to become again occupied.

Without loss of generality, we assume that $x_1=0$ and $x_0\in\intervalleff{-A}{A}$ (using space stationarity).

\md

\noindent{\bf Step 1.} Let $\tau_0<\tau_1<\tau_0+1$ be fixed. Put
$\vartheta_{\tau_0,t}^\la(i)=\min(N^{S}_{\al (\tau_0+t)}(i)-N^{S}_{\al \tau_0}(i),1)$ and
$\vartheta_{\tau_1,t}^\la(i)=\min(N^{S}_{\al(\tau_1+t)}(i)-N^{S}_{\al
\tau_1}(i),1)$ for all $t>0$ and all $i\in \zz$. We define the time needed for the destroyed cluster to be fully regenerated
\[\Xi_{\tau_0,\tau_1}^\la=\inf \left\{ t>0 : \forall i \in
C(\vartheta_{\tau_0,\tau_1-\tau_0}^\la,0),\; \vartheta_{\tau_1,t}^\la(i)=1 \right\}.\]
Then for all $\delta>0$, 
\[\lim_{\la \to 0} \proba{ |\Xi_{\tau_0,\tau_1}^\la -(\tau_1- \tau_0)|\geq \delta} =
0.\]
This has been checked in Step 1 in the proof of Lemma \ref{micro fire p}.

\md

\noindent{\bf Step 2.} Assume $t_0=0$. In that case, the process does not depend on $x_0$. Consider the event $\Omega^{P,2A,2A}_{\la,\pi}(0,t_1)$, recall Definition \ref{definition2 application}. We  define
\begin{multline*}
\tOmega^{P,A,\cM}_{\la,\pi}=\Omega^{P,2A,2A}_{\la,\pi}(0,t_1)\cap\{\exists i_1\in\intervalleentier{-\ml}{0}, N^S_{\al (t_1+\varkappa_{\la,\pi})}(i_1)=0\}\\
\cap\{\exists i_2\in\intervalleentier{0}{\ml}, N^S_{\al (t_1+\varkappa_{\la,\pi})}(i_2)=0\}.
\end{multline*}
Lemma \ref{propagation lemma 0} together with Lemma \ref{speed0}-1 show that $\proba{\tOmega^{P,A,\cM}_{\la,\pi}}$ tends to $1$ when $\la\to0$ and $\pi\to\infty$ in the regime $\cR(0)$ (because $t_1+\varkappa_{\la,\pi}<(t_1+1)/2<1$ for all $(\la,\pi)$ sufficiently close to the regime $\cR(0)$).

Next, on $\tOmega^{P,A,\cM}_{\la,\pi}(0,t_1)$, there holds that
\[C(\vartheta_{0,t_1+\kappa_{\la,\pi}}^\la,0)\coloneqq\intervalleentier{C^-}{C^+}\subset\intervalleentier{i_1}{i_2}\subset\intervalleentier{-\ml}{\ml}.\]
Since, by definition, no seed falls on $C^+$ and on $C^-$ until $\al(t_1+\varkappa_{\la,\pi})$ and since we start from a vacant initial situation, we also deduce that
\[\zeta^{\la,\pi,\cM}_t(C^-)=\zeta^{\la,\pi,\cM}_t(C^+)=0\]
for all $t\in\intervalleff{0}{\al(t_1+\varkappa_{\la,\pi})}\supset\intervalleff{\al t_1}{\al(t_1+\varkappa_{\la,\pi})}$. As seen in {\bf Micro$(0)$} in Subsection \ref{application ffp}, the match falling on $0$ at time $\al t_1$ destroys exactly the zone $C^P((\zeta^{\la,\pi,\cM}_t(i))_{t\geq 0,i\in\zz},(0,t_1))$ and 
\[C^P((\zeta^{\la,\pi,\cM}_t(i))_{t\geq 0,i\in\zz},(0,t_1))\subset\intervalleentier{C^-}{C^+}\subset\intervalleentier{-\ml}{\ml}\]
with $\zeta^{\la,\pi,\cM}_{\al (t_1+\varkappa_{\la,\pi})}(i)\leq 1$ for all $i\in\zz$ (the fire is extinguished at time $\al(t_1+\varkappa_{\la,\pi})$). 

Since $C^P((\zeta^{\la,\pi,\cM}_t(i))_{t\geq 0,i\in\zz},(0,t_1))$ clearly contains $C(\vartheta_{0,t_1}^\la,0)$, we deduce that, on $\tOmega^{P,A,\cM}_{\la,\pi}$,
\[t_1+\Xi_{0,t_1}^\la\leq t_1+\Theta^{\la,\pi}_{\cM}\leq t_1+\varkappa_{\la,\pi}+\Xi_{0,t_1+\varkappa_{\la,\pi}}^\la.\]
Remark now that the function $\colon t\mapsto t+\Xi^\la_{0,t}$ is a.s. non decreasing and right-continuous. We thus deduce from Step 1 that
\[t_1+\Theta_{\cM}^{\la,\pi}\xrightarrow[\la,\pi]{\pp}2t_1\]
in probability, whence  for all $\delta>0$ and all $\e>0$, there holds  that $\proba{
\abs{\Theta^{\la,\pi}_{\cM}- t_1}\geq
\delta} <\e$ for all $(\la,\pi)$ sufficiently close to the regime $\cR(0)$.

\md

\noindent{\bf Step 3.} Assume now $t_0>1$. We may and will assume $x_0\in\intervalleoo{-A}{0}$, by symmetry.

Consider the events $\Omega^{P,2A,2A}_{\la,\pi}(x_0,t_0)$ and $\Omega^{P,2A,2A}_{\la,\pi}(0,t_1)$, recall Definition \ref{definition2 application}. We define
\begin{multline*}
\tOmega^{P,A,\cM}_{\la,\pi}\coloneqq \Omega^{P,2A,2A}_{\la,\pi}(0, t_1)\cap\Omega^{P,2A,2A}_{\la,\pi}(x_0,t_0)\\
\cap\{\exists i_1 \in\intervalleentier{-\ml}{0},N^S_{\al (t_1+\varkappa_{\la,\pi})}(i_1)-N^S_{\al t_0}(i_1)=0\}\\
\cap\{\exists i_2 \in\intervalleentier{0}{\ml},N^S_{\al (t_1+\varkappa_{\la,\pi})}(i_2)-N^S_{\al t_0}(i_2)=0\}.
\end{multline*}
Lemma \ref{propagation lemma 0} together with Lemma \ref{speed0}-1 directly imply that $\proba{\tOmega^{P,A,\cM}_{\la,\pi}}$ tends to $1$ when $\la\to0$ and $\pi\to \infty$ in the regime $\cR(0)$ (because $t_1+\varkappa_{\la,\pi}-t_0<(t_1-t_0+1)/2<1$ for all $(\la,\pi)$ sufficiently close to the regime $\cR(0)$).

First, since the sites $\lfloor\nl A\rfloor+1$ and $-\lfloor\nl A\rfloor-1$ remain vacant all the time and since $I^\la_A$ is completely occupied at time $\al t_0$, on $\Omega^{P,2A,2A}_{\la,\pi}(x_0,t_0)$, as seen in {\bf Macro$(0)$} in Subsection \ref{application ffp}, the match falling on $\lfloor\nl x_0\rfloor$ at time $\al t_0$ destroys each site of $I^\la_{A}$ during the time interval $\intervalleff{\al t_0}{\al(t_0+\varkappa_{\la,\pi})}$. Furthermore, there is no more burning tree in $I^\la_A$ at time $\al(t_0+\varkappa_{\la,\pi})$.

Next, on $\tOmega_{\la,\pi}^{P,A,\cM}$, since no seed falls on $i_1$ and $i_2$ during the time interval $\intervalleff{\al t_0}{\al(t_1+\varkappa_{\la,\pi})}$, we clearly have
\[C(\vartheta_{t_0,t_1+\varkappa_{\la,\pi}}^\la,0)\coloneqq \intervalleentier{C^-}{C^+}\subset \intervalleentier{i_1}{i_2}\subset\intervalleentier{-\ml}{\ml}.\]

Since, by definition, no seed falls on $C^-$ and on $C^+$ during $\intervalleff{\al t_0}{\al(t_1+\varkappa_{\la,\pi})}$ and since $C^-$ and $C^+$ are made vacant during the time interval $\intervalleff{\al t_0}{\al (t_0+\varkappa_{\la,\pi})}$, we deduce that
\[\zeta^{\la,\pi,\cM}_{\al t}(C^-)=\zeta^{\la,\pi,\cM}_{\al t}(C^+)=0 \text{ for all } t\in\intervalleff{t_1}{t_1+\varkappa_{\la,\pi}}.\]
Hence, as seen in {\bf Micro$(0)$} in Subsection \ref{application ffp}, the match falling on $0$ at time $\al t_1$ destroys exactly the zone $C^P((\zeta^{\la,\pi,\cM}_t(i))_{t\geq 0,i\in\zz},(0,t_1))\subset\intervalleentier{C^-}{C^+}\subset \intervalleentier{i_1}{i_2}$.

To summarize, since $C^P((\zeta^{\la,\pi,\cM}_t(i))_{t\geq 0,i\in\zz},(0, t_1))$ clearly contains $C(\vartheta_{t_0+\varkappa_{\la,\pi},t_1}^\la,0)$, on $\tOmega^{P,A,\cM}_{\la,\pi}$, we have
\[C(\vartheta_{t_0+\varkappa_{\la,\pi},t_1}^\la,0) \subset
C^P((\zeta^{\la,\pi,\cM}_t(i))_{t\geq 0,i\in\zz},(0, t_1))\subset C(\vartheta_{t_0,t_1+\varkappa_{\la,\pi}}^\la,0)\subset \intervalleentier{i_1}{i_2}\]
with additionally $\zeta^{\la,\pi,\cM}_{\al (t_1+\varkappa_{\la,\pi})}(i)\leq 1$ for all $i\in I^\la_{A}$.

We deduce that, on $\tOmega_{\la,\pi}^{P,A,\cM}$ and for all $(\la,\pi)$ sufficiently close to the regime $\cR(0)$,
\[t_1+\Xi_{t_0+\varkappa_{\la,\pi},t_1}^\la\leq t_1+\Theta^{\la,\pi}_{\cM}\leq t_1+\varkappa_{\la,\pi}+\Xi_{t_0,t_1+\varkappa_{\la,\pi}}^\la.\]

Then, one easily concludes. The function $s\mapsto t_1+\Xi_{t_0+s,t_1}^\la$ is a.s. non increasing and right-continuous, while the function  $s\mapsto t_1+s+\Xi_{t_0,t_1+s}^\la$ is a.s. non decreasing and right-continuous. We thus deduce from Step 1 that
\[t_1+\Theta^{\la,\pi}_{\cM}\xrightarrow[\la,\pi]{\pp}2t_1-t_0,\]
as desired.\qedhere
\end{proof}

\subsection{Persistent effect of microscopic fires}\label{persis0}
Here we study the effect of microscopic fires. First, they produce a barrier, and
then, if there are alternatively macroscopic fires on the left and right, they still
have an effect. This phenomenon is illustrated on Figure \ref{persiseffect0} in the case of the limit process.

We say that $\cP=(\e;(x_0,t_0),(x_1,t_1),\dots,(x_K,t_K))$
satisfies $(PP)$ if 
\begin{enumerate}
        \item $K\geq2$ and $\e\in\{-1,1\}$;
        \item $t_0\in\{0\}\cup\intervalleoo{1}{\infty}$ and $t_0<t_1<t_2<\dots<t_K$;
        \item for all $k=0,\dots,K-1$, $t_{k+1}-t_k<1$;
        \item $t_2-t_0>1$ and for all $k=2,\dots,K-2, t_{k+2}-t_k>1$;
        \item for all $k=0,\dots,K$, $x_k\in\intervalleoo{-A}{A}$ and for all $k=2,\dots,K$, $\e_k (x_k-x_1)>0$, where we set $\e_k=(-1)^k\e$.
\end{enumerate}

Let $\cP$ satisfy $(PP)$. Consider two Poisson processes $(N_t^S(i))_{t\geq0,i\in\zz}$ and
$(N_t^P(i))_{t\geq0,i\in\zz}$ with respective rates $1$ and $\pi$, all these
processes being independent. We define the process $(\zeta^{\la,\pi,\cP}_t(i))_{t\geq 0,i\in I^\la_A}$ as follows
\begin{align*}
{\zeta}^{\lambda,\pi,\cP}_t(i)=&(1+\indiq{i=\lfloor\nl x_0\rfloor,t\geq \al t_0})\indiq{t_0\geq1}+\indiq{i=\lfloor\nl x_1\rfloor,t\geq\al t_1,{\zeta}^{\lambda,\pi,\cP}_{\al t_1-}(\lfloor\nl x_1\rfloor)=1}\\
&+\sum_{k=2}^K \indiq{i=\lfloor\nl x_k\rfloor,t\geq\al t_k,{\zeta}^{\lambda,\pi,\cP}_{\al t_k-}(\lfloor\nl x_k\rfloor)=1}\\
&+\int_0^t\indiq{{\zeta}^{\lambda,\pi,\cP}_{s-}(i)=0}\diff
N_s^S(i)\\
&+\int_0^t
\indiq{{\zeta}^{\lambda,\pi,\cP}_{s-}(i-1)=2,{\zeta}^{\lambda,\pi,\cP}_{s-}(i)=1}\diff
N_s^P(i-1)+\int_0^t
\indiq{{\zeta}^{\lambda,\pi,\cP}_{s-}(i+1)=2,{\zeta}^{\lambda,\pi,\cP}_{s-}(i)=1}\diff
N_s^P(i+1)\\
&- 2\int_0^t \indiq{{\zeta}^{\lambda,\pi,\cP}_{s-}(i)=2}\diff
N_s^P(i)
\end{align*}
with the convention ${\zeta}^{\lambda,\pi,\cP}_t(\lfloor\nl A\rfloor+1)={\zeta}^{\lambda,\pi,\cP}_t(-\lfloor\nl A\rfloor-1)=0$ for all $t\in\intervallefo{0}{\infty}$.

We now explain the behaviour of the process $(\zeta^{\la,\pi,\cP}_t(i))_{t\geq 0,i\in I_A^\la}$.
\begin{itemize}
	\item If $t_0=0$, then the process starts from a vacant initial configuration. The match falling on $\lfloor\nl x_1\rfloor$ at time $\al t_1\in\intervalleoo{0}{\al}$ creates a barrier, see Lemma \ref{micro fire 0}, because $t_1\in\intervalleoo{0}{1}$. Then, fires start in $\lfloor\nl x_k\rfloor$ alternately on the right and on the left of $\lfloor\nl x_1\rfloor$ at times $\al t_k$ for all $k=2,\dots,K$ and fires spread accross $\zz$ according to the same rules as the $(\la,\pi,A)-$FFP.
	\item If $t_0>1$, the process starts from an occupied initial situation. Nothing happens until a match falls in $\lfloor\nl x_0\rfloor$ at time $\al t_0$ and spreads across $I^\la_A$ (because all the sites are occupied at time $\al t_0-$ and $\lfloor\nl A\rfloor+1$ and $-\lfloor\nl A\rfloor-1$ are vacants). Next, a match falls on $\lfloor\nl x_1\rfloor$ at time $\al t_1\in\intervalleoo{\al t_0}{\al(t_0+1)}$. It then creates a barrier, see Lemma \ref{micro fire 0}. Afterwards, matches fall successively in $\lfloor\nl x_k\rfloor$ at times $\al  t_k$ for each $k=2,\dots,K$ and fires spread accross $I^\la_A$ according to the same rules as the $(\la,\pi,A)-$FFP.
\end{itemize}

Consider the event
\[\Omega_{\cP}^{S,P}(\la,\pi)=\{\forall k\in\{2,\dots,K\},\,\exists j\in(x_1)_\la, \,\forall t\in\intervalleff{t_k+\varkappa_{\la,\pi}}{t_k+1},\,\zeta^{\la,\pi,\cP}_{\al t}(j)=0\}.\]

\begin{lem}\label{persisplem0}
Let $\cP=(\e;(x_0,t_0),(x_1,t_1),\dots,(x_K,t_K))$ satisfy $(PP)$. For each
$\lambda\in\intervalleoo{0}{1}$ and each $\pi\geq1$, consider the process $({\zeta}^{\lambda,\pi,\cP}_t(i))_{t\geq0,i\in\zz}$ defined above.

If $t_2-t_1<t_1-t_0$, when $\la\to0$ and $\pi\to\infty$ in the regime $\cR(0)$, there holds
\[\lim_{\lambda,\pi}\proba{\Omega_{\cP}^{S,P}(\lambda,\pi)}=1.\]
\end{lem}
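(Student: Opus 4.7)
The proof plan mirrors that of Lemma \ref{persisplem} but exploits the essentially instantaneous character of fires in the regime $\cR(0)$, where any macroscopic fire destroys its connected component within a rescaled time $\varkappa_{\la,\pi}\to 0$. First I would define
\[\Omega^{P,\cP}_{\la,\pi} \coloneqq \Omega^{P,2A,2A}_{\la,\pi}(x_1, t_1) \cap \bigcap_{k \in\{0,2,\dots,K\}} \Omega^{P,2A,2A}_{\la,\pi}(x_k, t_k),\]
recalling Definition \ref{definition2 application}, which satisfies $\proba{\Omega^{P,\cP}_{\la,\pi}} \to 1$ in the regime $\cR(0)$ by Lemma \ref{propagation lemma 0}. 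The entire argument is carried out on this event, with $(\la,\pi)$ sufficiently close to $\cR(0)$ that $3\varkappa_{\la,\pi} < \min_{k=1,\dots,K-1}(t_{k+1}-t_k) \wedge 1$, and with a fixed small parameter $\alpha=1/K$ dominating $\varkappa_{\la,\pi}$.

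For the initial fire at $(x_1,t_1)$, I follow the analysis of Lemma \ref{micro fire 0} (Step 2 if $t_0=0$, Step 3 otherwise): either we start from a vacant configuration, or the fire at $(x_0,t_0)$ wipes out all of $I^\la_A$ by time $\al(t_0+\varkappa_{\la,\pi})$ via \textbf{Macro$(0)$} in Subsection \ref{application ffp}. In both cases, combining this with Lemma \ref{speed0}-1 and space/time stationarity yields, with high probability, sites $j_1^\pm \in (x_1)_\la$ flanking $\lfloor\nl x_1\rfloor$ on which no seed falls during $\intervalleff{\al t_0}{\al(t_1+\varkappa_{\la,\pi})}$, so that the destroyed cluster $C^P((\zeta_t^{\la,\pi,\cP})_{t\geq 0},(x_1,t_1))$ is contained in $\intervalleentier{\lfloor\nl x_1\rfloor-\lfloor\alpha\ml\rfloor}{\lfloor\nl x_1\rfloor+\lfloor\alpha\ml\rfloor}$, and there are no burning trees in $I^\la_A$ at time $\al(t_1+\varkappa_{\la,\pi})$.

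Then I would proceed by induction on $k = 2,\dots,K$, assuming for definiteness $\e=-1$ (so $x_k < x_1$ for even $k$, $x_k > x_1$ for odd $k\geq 3$). For each such $k$ I check:
\begin{enumerate}
\item[(i)] The connected component of $\lfloor\nl x_k\rfloor$ at time $\al t_k-$ extends on the $\e_k$-side up to within $\lfloor(k\alpha/2)\ml\rfloor$ of $\lfloor\nl x_1\rfloor$. Since $t_k - t_{k-2}>1$ (or $t_2-t_0>1$ if $k=2$), Lemma \ref{speed0}-4, combined with the inductive hypothesis (the zone beyond the vacant sites near $x_1$ had a duration at least $1-3\varkappa_{\la,\pi}$ of undisturbed seed growth), guarantees complete occupation.
\item[(ii)] By $\Omega^{P,2A,2A}_{\la,\pi}(x_k,t_k)$ and \textbf{Macro$(0)$} in Subsection \ref{application ffp}, the fire destroys this zone during $\intervalleff{\al t_k}{\al(t_k+\varkappa_{\la,\pi})}$ and, crucially, empties the $\e_k$-half of $(x_1)_\la$. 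The fire is stopped by the vacant barrier at $x_1$, whose persistence follows from Lemma \ref{micro fire 0} combined with the hypothesis $t_2-t_1<t_1-t_0 \simeq \Theta^{\la,\pi}_\cM$ (for $k=2$) or from the inductive vacant site $j_{k-1}$ (for $k\geq 3$); thus the opposite half of $(x_1)_\la$ is untouched.
\item[(iii)] Applying Lemma \ref{speed0}-1 and space/time stationarity once more, with probability tending to $1$ there exists $j_k\in(x_1)_\la$ on the $\e_k$-side on which no seed falls during $\intervalleff{\al t_k}{\al(t_k+1)}$; since $j_k$ was just emptied in step~(ii), it remains vacant throughout $\intervalleff{\al(t_k+\varkappa_{\la,\pi})}{\al(t_k+1)}$, which is exactly the assertion of $\Omega^{S,P}_\cP(\la,\pi)$.
\end{enumerate}

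The main obstacle is property (i): unlike in the $p>0$ setting, where ignition points were confined to an $O(\ml/\nl)$ neighborhood of the origin, here $x_k$ may lie anywhere in $\intervalleoo{-A}{A}$ and one must exclude the possibility that some earlier fire created a vacant barrier in $\intervalleoo{x_k}{x_1}$ that would prevent the component of $x_k$ from reaching $(x_1)_\la$. The inductive invariant "the $x_1$-side endpoint of the component of $x_k$ at time $\al t_k-$ lies strictly between $\lfloor\nl x_1\rfloor - \lfloor((k-2)\alpha/2)\ml\rfloor$ and $\lfloor\nl x_1\rfloor - \lfloor(k\alpha/2)\ml\rfloor$" has to be carefully maintained, and uniformity in the $x_k$'s is obtained by combining $\Omega^{P,2A,2A}_{\la,\pi}(x_j,t_j)$ (which tells us the footprint of each previous fire is confined in the corresponding emptied component, of size $O(A)$ but fully re-seeded within a unit of time) with Lemma \ref{speed0}-4 applied on each such re-filled component.
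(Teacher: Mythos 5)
Your proposal is correct and takes essentially the same approach as the paper's proof: normalizing to work on the event $\Omega^{P,2A,2A}_{\la,\pi}$ at each ignition point, analysing the fires one at a time, tracking the expanding "re-seeded" interval (in units of $\alpha\ml$) and the vacant barrier near $x_1$ stopping each fire, and using Lemma \ref{micro fire 0} for the initial microscopic barrier and Lemma \ref{speed0}-1/-4 for the no-seed/full-reoccupation estimates. The "obstacle" you flag about $x_k$ ranging over all of $\intervalleoo{-A}{A}$ is handled in the paper exactly as you propose, by the observation that each macroscopic fire clears its whole side of $I^\la_A$ and the condition $t_{k+2}-t_k>1$ guarantees full re-seeding before the next same-side ignition.
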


\begin{proof}
Without loss of generality, we assume $x_1=0$ and $(x_k)_{k=0,2,\dots,K}\subset\intervalleff{-A}{A}$.

We define, recall Definition \ref{definition2 application}, 
\[\Omega^{P,A,\cP}_{\la,\pi}=\Omega^{P,2A,2A}_{\la,\pi}(0,t_1)\cap\bigcap_{k=0,2,\dots,K}\Omega^{P,2A,2A}_{\la,\pi}(x_k,t_k).\]
There holds that $\proba{\Omega^{P,A,\cP}_{\la,\pi}}$ tends to $1$ as $\la\to0$ and $\pi\to\infty$ in the regime $\cR(0)$ by Lemma \ref{propagation lemma 0}. In the whole proof, we work on $\Omega^{P,A,\cP}_{\la,\pi}$ and assume that $(\la,\pi)$ is sufficiently close to the regime $\cR(0)$ in such a way that $\varkappa_{\la,\pi}<\min_{i\neq j}\abs{t_i-t_j}$ and $\min_{k=0,2,\dots,K}|\lfloor\nl x_k\rfloor|\geq \ml$.

For simplicity, we assume that $\e=-1$, $t_0=0$ and that $K$ is even. The other cases are treated similarly (see for example Lemma \ref{micro fire 0}). Fix $\alpha=1/K$. We define $\cM\coloneqq((0,0),(0,t_1))$, recall Lemma \ref{micro fire 0}.

\begin{figure}[h!]
\fbox{
\begin{minipage}[c]{0.95\textwidth}
\centering
\begin{tikzpicture}
\fill[fill=gray!50!white] (-7,0) rectangle (5,1);

\draw[red] (5,1) --(-7,1) node at (-5,1) {$\bullet$};
\draw node at (-7.3,1) {$t_0$};
\draw (-5,.1) -- (-5,-.1) node[below] {$x_0$};

\fill[fill=gray!50!white] (-7,8.4)--(0,8.4)--(0,10.1)--(5,10.1)--(5,11)--(-7,11)--cycle;

\fill[fill=gray!50!white] (-7,3) -- (5,3) -- (5,5)--(0,5)--(0,3.8) -- (-7,3.8)--cycle;
\draw[red] (-7,3.8) --(0,3.8) node at (-3,3.8) {$\bullet$};
\draw node at (-7.3,3.8) {$t_2$};
\draw (-3,.1) -- (-3,-.1) node[below] {$x_2$};
\draw[red] (0,5) --(5,5) node at (2,5) {$\bullet$};
\draw node at (5.3,5) {$t_3$};
\draw (2,.1) -- (2,-.1) node[below] {$x_3$};

\draw[dashed] (0,2.5)--(5,2.5) node[right] {$t_1$};
\draw[very thick] (0,2.5)--(0,4);
\draw[red] node at (0,2.5) {$\bullet$};

\fill[fill=gray!50!white] (-7,5.8) --(0,5.8)-- (0,6.4)--(-7,6.4)--cycle;
\draw[red] (-7,6.4) --(0,6.4) node at (-4,6.4) {$\bullet$};
\draw[dashed] node at (-7.3,6.4) {$t_4$};
\draw (-4,.1) -- (-4,-.1) node[below] {$x_4$};

\fill[fill=gray!50!white] (0,7) --(5,7)-- (5,8.1)--(0,8.1)--cycle;
\draw[red] (0,8.1) --(5,8.1) node at (1,8.1) {$\bullet$};
\draw[dashed] node at (5.3,8.1) {$t_5$};
\draw (1,.1) -- (1,-.1) node[below] {$x_5$};

\draw (-7,0)--(5,0) node at (0,-.3) {$x_1$};
\draw (0,.1)--(0,-.1);
\draw (-7,11)--(-7,0) node[below] {$-A$};
\draw (5,11)--(5,0) node[below] {$A$};

\draw node at (-7.5,3) {$t_0+1$} node at (-7.5,5.8) {$t_2+1$} node at (-7.5,8.4) {$t_4+1$};

\draw node at (5.6,7) {$t_3+1$} node at (5.6,10.1) {$t_5+1$};

\end{tikzpicture} \caption{Persistent effect of microscopic fires.}\label{persiseffect0}
\vspace{.5cm}
\parbox{13.3cm}{
\footnotesize{
Here $\cP=(-1;(x_0,t_0),(x_1,t_1),(x_2,t_2),(x_3,t_3),(x_4,t_4),(x_5,t_5))$.
}}
\end{minipage}}
\end{figure}

Since $\lfloor\nl A\rfloor+1$ and $-\lfloor\nl A\rfloor-1$ remain vacant all the time, on $\Omega^{P,A,\cP}_{\la,\pi}$, a burning tree at time $\al t$ is either a front of a fire or has vacant neighbors. Thus, there is no burning tree outside $\cup_{k=1,\dots,K}\intervalleff{\al t_k}{\al(t_k+\varkappa_{\la,\pi})}$.

\md

\noindent{\bf First fire.} We put $C^P\coloneqq C^P((\zeta^{\la,\pi,\cP}_t(i))_{t\geq 0,i\in\zz},(0,t_1))$, the destroyed cluster, recall \eqref{destroyed comp}. Since $t_1+\varkappa_{\la,\pi}<1$, $C^P\subset\intervalleentier{-\lfloor\alpha\ml\rfloor}{\lfloor\alpha\ml\rfloor}$ with probability tending to $1$ (use Lemma \ref{speed0}-1, space/time stationarity and {\bf Micro$(0)$} in Subsection \ref{application ffp}). Thus the match falling at time $\al t_1$ destroys nothing outside $\intervalleentier{-\lfloor\alpha\ml\rfloor}{\lfloor\alpha\ml\rfloor}$ and there is no more burning tree in $I^\la_A$ at time $\al(t_1+\varkappa_{\la,\pi})$.

\md

\noindent{\bf Second fire.} Since $t_2>1$, at least one seed has fallen, during $\intervallefo{0}{\al t_2}$, on each site of $\intervalleentier{-\lfloor\nl A\rfloor}{-\lfloor\alpha\ml\rfloor-1}$ with probability tending to $1$ (use Lemma \ref{speed0}-4 and space/time stationarity). Since this zone has not been affected by a fire during the time interval $\intervallefo{0}{\al t_2}$, this zone is completely occupied at time $\al t_2-$.
     
Besides, with probability tending to $1$, there is (at least) an empty site in $C^P\subset\intervalleentier{-\lfloor\alpha\ml\rfloor}{\lfloor\alpha\ml\rfloor}$ during the time interval $\intervalleoo{\al(t_1+\varkappa_{\la,\pi})}{\al(t_2+\varkappa_{\la,\pi})}$ because $t_1+\varkappa_{\la,\pi}<t_2<t_2+\varkappa_{\la,\pi}<t_1+\Theta^{\la,\pi}_{\cM}$ with probability tending to $1$ (by Lemma \ref{micro fire 0}, $\Theta^{\la,\pi}_\cM\simeq t_1-t_0=t_1$ and $t_2-t_1<t_1-t_0=t_1$ by assumption) and because by definition of $\Theta^{\la,\pi}_{\cM}$, there is an empty site in $C^P\subset\intervalleentier{-\lfloor\alpha\ml\rfloor}{\lfloor\alpha\ml\rfloor}$ during $\intervalleff{\al(t_1+\varkappa_{\la,\pi})}{\al(t_1+\Theta^{\la,\pi}_{\cM})}$.
        
Thus, the fire ignited on $\lfloor\nl x_2\rfloor\in \intervalleentier{-\lfloor\nl A\rfloor}{-\ml}$ at time $\al t_2$ burns each site of the zone $\intervalleentier{-\lfloor\nl A\rfloor}{-\lfloor\alpha\ml\rfloor-1}$ before $\al(t_2+\varkappa_{\la,\pi})$ and does not affect the zone $\intervalleentier{\lfloor\alpha\ml\rfloor+1}{\lfloor\nl A\rfloor}$, thanks to $\Omega^{P,2A,2A}_{\la,\pi}(x_2,t_2)$, as seen in {\bf Macro$(0)$} in Subsection \ref{application ffp}.

\md

\noindent{\bf Third fire.} All the sites of $\intervalleentier{\lfloor\alpha\ml\rfloor+1}{\lfloor\nl A\rfloor}$ are occupied at time
$\al t_3-$ with probability tending to $1$
(because on $\Omega^{P,2A,2A}_{\la,\pi}(0,t_1)\cap\Omega^{P,2A,2A}_{\la,\pi}(x_2,t_2)$, they have not been affected by a fire during
$\intervallefo{0}{\al t_3}$
and because $t_3>t_2>1$, see Lemma \ref{speed0}-4).

Next, since $t_3-t_2<1$, the probability that there is a site in
$\intervalleentier{-\lfloor2\alpha\ml\rfloor}{-\lfloor \alpha\ml\rfloor-1}$ where no seed falls during $\intervallefo{\al t_2}{\al(t_2+1)}$ tends to $1$ as $\la\to0$ and $\pi\to\infty$ in the regime $\cR(0)$ (use Lemma \ref{speed0}-1 and space/time stationarity). Thus, with probability tending to $1$, there exists a vacant site in $\intervalleentier{-\lfloor2\alpha\ml\rfloor}{-\lfloor \alpha\ml\rfloor}$ during $\intervallefo{\al (t_2+\varkappa_{\la,\pi})}{\al(t_2+1)}\supset\intervalleff{\al t_3}{\al(t_3+\varkappa_{\la,\pi})}$ (because all the sites of $\intervalleentier{-\lfloor\nl A\rfloor}{-\lfloor\alpha\ml\rfloor-1}$ have been made vacant by the fire $2$).

Thus, the fire ignited on $\lfloor\nl x_3\rfloor\in\intervalleentier{\ml}{\lfloor\nl A\rfloor}$ at time $\al  t_3$ burns each site of $\intervalleentier{\lfloor\alpha\ml\rfloor+1}{\lfloor\nl A\rfloor}$ before $\al(t_3+\varkappa_{\la,\pi})$ and does not affect the zone $\intervalleentier{-\lfloor\nl A\rfloor}{-\lfloor2\alpha\ml\rfloor}$ with probability tending to $1$, thanks to $\Omega^{P,2A,2A}_{\la,\pi}(x_3,t_3)$, as seen in {\bf Macro$(0)$} in Subsection \ref{application ffp}.

\md

\noindent{\bf Fourth fire.} All the sites of $\intervalleentier{-\lfloor\nl A\rfloor}{-\lfloor2\alpha\ml\rfloor-1}$ are occupied at time $\al t_4-$ with probability tending to $1$ (because on $\Omega^{P,2A,2A}_{\la,\pi}(0,t_1)\cap\Omega^{P,2A,2A}_{\la,\pi}(x_2,t_2)\cap\Omega^{P,2A,2A}_{\la,\pi}(x_3,t_3)$, they have not been affected by a fire during $\intervalleoo{\al(t_2+\varkappa_{\la,\pi})}{\al t_4}$ and because $t_4-t_2-\varkappa_{\la,\pi}>1$, see Lemma \ref{speed0}-4 and space/time stationarity).

Since $t_4-t_3<1$, the probability that there is a site in
$\intervalleentier{\lfloor \alpha\ml\rfloor+1}{\lfloor2\alpha\ml\rfloor}$ where no seed falls during $\intervallefo{\al t_3}{\al(t_3+1)}$ tends to $1$ as $\la\to0$ and $\pi\to\infty$ in the regime $\cR(0)$ (use Lemma \ref{speed0}-1 and space/time stationarity). Hence there is at least one vacant site in $\intervalleentier{\lfloor \alpha\ml\rfloor+1}{\lfloor2\alpha\ml\rfloor}$
during
$\intervallefo{\al(t_3+\varkappa_{\la,\pi})}{\al(t_3+1)}\supset\intervalleff{\al t_4}{\al(t_4+\varkappa_{\la,\pi})}$, with probability tending to $1$ (because all the sites of $\intervalleentier{\lfloor\alpha\ml\rfloor+1}{\lfloor\nl A\rfloor}$ have been made vacant by the fire $3$).

Thus, the fire ignited on $\lfloor\nl x_4\rfloor\in\intervalleentier{-\lfloor\nl A\rfloor}{-\ml}$ at time $\al t_4$ burns each site of the zone $\intervalleentier{-\lfloor\nl A\rfloor}{-\lfloor2\alpha\ml\rfloor-1}$ before $\al(t_4+\varkappa_{\la,\pi})$ and does not affect the zone $\intervalleentier{\lfloor2\alpha\ml\rfloor+1}{\lfloor\nl A\rfloor}$ with probability tending to $1$,  thanks to $\Omega^{P,2A,2A}_{\la,\pi}(x_4,t_4)$, as seen in {\bf Macro$(0)$} in Subsection \ref{application ffp}.

\md

\noindent{\bf Last fire and conclusion.} Iterating the procedure, we see that
with probability tending to $1$ as $\la\to 0$ and $\pi\to\infty$ in the regime $\cR(0)$, the zone $\intervalleentier{-\lfloor\nl A\rfloor}{-\lfloor(K\alpha/2)\ml\rfloor-1} =\intervalleentier{-\lfloor\nl A\rfloor}{- \lfloor \ml/2
\rfloor-1}$ is completely occupied at time $\al t_K-$ and there is at least one vacant site in $\intervalleentier{\lfloor(K-1)\alpha/2\ml\rfloor+1}{\lfloor\ml/2\rfloor}$ during the time interval $\intervallefo{\al(t_{K-1}+\varkappa_{\la,\pi})}{\al(t_{K-1}+1)}\supset\intervalleff{\al t_K}{\al(t_K+\varkappa_{\la,\pi})}$. Thus, the fire ignited on $\lfloor\nl x_K\rfloor\in\intervalleentier{-\lfloor\nl A\rfloor}{-\ml}$ at time $\al t_K$ destroys each site of the zone
$\intervalleentier{-\lfloor\nl A\rfloor}{- \lfloor \ml/2
\rfloor-1}$ before $\al(t_K+\varkappa_{\la,\pi})$ and does not affect the zone $\intervalleentier{\lfloor\ml/2\rfloor}{\lfloor\nl A\rfloor}$. 

Finally, the probability that there is at least
one site in $\intervalleentier{-\ml}{- \ml/2}$ with
no seed falling during $\intervalleff{\al t_K}{\al (t_K+1)}$ tends to $1$  (by Lemma \ref{speed0}-1). Consequently, the probability that there is a vacant site in $\intervalleentier{-\ml}{-\lfloor \ml/2\rfloor}$ during $\intervalleff{\al(t_K+\varkappa_{\la,\pi})}{\al (t_K+1)}$ tends to $1$. All this  implies the claim.
\end{proof}

\subsection{Heart of the proof}

\subsubsection{The coupling}\label{coupling0}
We are going to construct a coupling between the $(\la,\pi,A)-$FFP
(on the time interval $\intervalleff{0}{\al T}$) and the $A-$LFFP$(0)$ (on $\intervalleff{0}{T}$). Let $\pi_M$ be a Poisson measure on $\rr\times \intervallefo{0}{\infty}$ with  intensity  measure $\diff x\diff t$.

First, we take for the matches of the discrete process the Poisson processes
\[N^M_t(i)=\pi_M(\intervallefo{i/\nl}{(i+1)/\nl}\times\intervalleff{0}{t/\al})\]
for all $i\in\zz$ and $t\in\intervalleff{0}{T}$.

We call $n:=\pi_M([0,T]\times\intervalleff{-A}{A})$
and we consider the marks $(T_q,X_q)_{q=1,\dots,n}$ of $\pi_M$ ordered 
in such a way that $0<T_1<\dots<T_n<T$.

Next, we introduce two families of i.i.d. Poisson processes $(N^{S}_t(i))_{t\geq 0,i\in \zz}$ and $(N^{P}_t(i))_{t\geq 0,i\in \zz}$ with respective parameters
$1$ and $\pi$, independent of $\pi_M$.

The $(\la,\pi,A)-$FFP
$(\eta^{\la,\pi}_t(i))_{t\geq 0, i \in I_A^\la}$ is built from the seed processes
$(N^{S}_t(i))_{t\geq 0, i \in \zz}$, the match processes $(N^{M}_t(i))_{t\geq 0, i \in \zz}$ and the propagation processes $(N^{P}_t(i))_{t\geq 0, i \in \zz}$.

Finally, we build the $A-$LFFP$(0)$ $(Z_t(x),H_t(x),F_t(x))_{t\in[0,T],x\in\intervalleff{-A}{A}}$ from $\pi_M$ and observe that  it is
independent of $(N^{S}_t(i))_{t\in [0,\al T], i \in \zz}$ and $(N^{P}_t(i))_{t\in [0,\al T], i \in \zz}$.

Observe that if a match falls on some $X_q$ at time $T_q$ for the $A-$LFFP$(0)$, it also falls on $\lfloor\nl X_q \rfloor$ at time $\al T_q$ in the discrete process.

\subsubsection{A favorable event}
We set $T_0=0$ and introduce 
\[\mathcal{T}_M=\{T_0,T_1,\dots,T_n\}\text{ and }\mathcal{B}_M=\{X_1,\dots,X_n\}\]
as well as the set $\cC_M$ of connected components of $\intervalleff{-A}{A}\setminus\cB_M$ (sometimes referred to as cells). We also introduce
\[\mathcal{S}_M=\{2t-s : s,t\in\cT_M,s<t\}\]
which has to be seen as the set of the possible extinction times of the microscopic fires, recall Lemma \ref{height0}.

For $\alpha>0$, we consider the event
\[\Omega_M(\alpha)=\left\{
\min_{\substack{s,t\in\mathcal{T}_M\cup\mathcal{S}_M\\
s\neq t}}\abs{t-s}\geq2\alpha,
\min_{s,t\in\mathcal{T}_M\cup\mathcal{S}_M}\abs{t-(s+1)}\geq2\alpha,
\min_{\substack{x,y\in\mathcal{B}_M\cup\{-A,A\},\\
x\neq y}}\abs{x-y}\geq2\alpha\right\}\]
which clearly satisfies $\lim_{\alpha\to0}\proba{\Omega_M(\alpha)}=1$. For any given $\alpha>0$, there exists $\la_\alpha>0$ 
such that for all $\la\in\intervalleoo{0}{\la_\alpha}$, on $\Omega_M(\alpha)$, there
holds that 
\begin{itemize}
	\item for all $x,y\in\mathcal{B}_M\cup\{-A,A\}$, with $x\neq
y$, $(x)_{\la}\cap(y)_{\la}=\emptyset$;
	\item the family $\{c_\la,c\in\cC_M\}\cup\{(x)_\la,x\in\cB_M\}$ is a partition of $I^\la_A$.
\end{itemize}

For $q\in\{1,\dots,n\}$, using the seed processes $(N^{S}_t(i))_{t\geq0,i\in\zz}$ and the propagation processes $(N^{P}_t(i))_{t\geq0,i\in\zz}$, we build, recall Definition \ref{definition1 application}, $(\check{\zeta}_t^{\la,\pi,q}(i))_{t\geq0,i\in\zz}$ (the propagation process ignited at $(X_q,T_q)$), $(i^{q,+}_t)_{t\geq0}$ and $(i^{q,-}_t)_{t\geq0}$ (the corresponding right and  left fronts) and $(T^q_i)_{i\in\zz}$ (the associated burning times). We also use $\Omega^{P,2A,2A}_{\la,\pi}(X_q,T_q)$, recall Definition \ref{definition2 application}. We set
\[\Omega^{S,P}_A(\la,\pi)=\bigcap_{q=1,\dots,n}\,\Omega^{P,2A,2A}_{\la,\pi}(X_q,T_q).\]
Since $\pi_M$ is independent of the processes $(N^{S}_t(i))_{t\geq0,i\in\zz}$ and $(N^{P}_t(i))_{t\geq0,i\in\zz}$, Lemma \ref{propagation lemma 0} implies that $\proba{\Omega^{S,P}_A(\la,\pi)}$ tends to $1$ when $\la\to0$ and $\pi\to\infty$ in the regime $\cR(0)$.

Let $q\in\{1,\dots,n\}$. We call $\mathcal{U}_q$ the set of all possible
$\cP=(\e;(x_0,t_0),(X_q,T_q),\dots,(x_K,t_K))$ satisfying $(PP)$ where $\{t_0,t_2,\dots,t_K\}\subset\cT_M$, $\{x_0,x_2,\dots,x_K\}\subset\cB_M$ with $T_q-t_0>t_2-T_q$ and with $\e\in\{-1,1\}$. For $\cP\in\mathcal{U}_q$, we introduce the
event $\Omega_\cP^{S,P}(\lambda,\pi)$, defined as in Subsection \ref{persis0}, with the Poisson
processes $(N^{S}_t(i))_{t\geq0,i\in\zz}$ and $(N^{P}_t(i))_{t\geq0,i\in\zz}$. Then we put
\[\Omega_1^{S,P}(\la,\pi)=\cap_{q=1}^n\cap_{\cP\in\cU_q}\Omega_{\cP}^{S,P}(\la,\pi),\]
which satisfies $\lim_{\la,\pi}\proba{\Omega_1^{S,P}(\la,\pi)}=1$ when $\la\to0$ and $\pi\to\infty$ in the regime $\cR(p)$, thanks to Lemma \ref{persisplem0}. 

We also consider the event $\Omega_2^S(\la,\pi)$ on which the following conditions hold:
for all $t_1,t_2\in\cT_M$ with $0<t_2-t_1<1$, for all $q=1,\dots,n$, there are
\[-\ml<i_1<0<i_2<\ml\]
such that $N^{S}_{\al (t_2+\varkappa_{\la,\pi})}(\lfloor\nl X_q\rfloor+i_j)-N^{S}_{\al t_1}(\lfloor\nl X_q\rfloor+i_j)=0$ for $j=1,2$. There holds that $\proba{\Omega_2^S(\la,\pi)}$ tends to $1$ as $\la\to$ and $\pi\to\infty$ in the regime $\cR(0)$. Indeed, it suffices to prove
that almost surely, $\lim_{\substack{\la\to0\\ \pi\to\infty}}\probacond{\Omega_2^S(\la,\pi)}{\pi_M}=1$. Since
there are a.s. finitely many possibilities for $q,t_1,t_2$ and since $\pi_M$ is independent
of $(N^{S}_t(i))_{t\geq0,i\in\zz}$, it suffices to work with a fixed $q\in\{1,\dots,n\}$ and some
fixed $0<t_2-t_1<1$. The  result then follows from Lemma \ref{speed0}-1 together with space/time stationarity.

Next we introduce the event $\Omega^S_3(\la,\pi)$ on which the following conditions hold: for all $t_1,t_2\in\cT_M\cup\cS_M$,
\begin{itemize}
	\item if $t_2-t_1>1$, for all $c\in\cC_M$, for all $i\in c_\la$ with $N^{S}_{\al t_2}(i)-N^{S}_{\al (t_1+\varkappa_{\la,\pi})}(i)>0$;
	\item if $t_2-t_1>1$, for all $x\in\cB_M$, for all $i\in (x)_\la$ with $N^{S}_{\al t_2}(i)-N^{S}_{\al (t_1+\varkappa_{\la,\pi})}(i)>0$.
\end{itemize}
There holds that $\proba{\Omega_3^S(\la,\pi)}$ tends to $1$ as $\la\to$ and $\pi\to\infty$ in the regime $\cR(0)$. As previously, it suffices to work with some fixed $t_1,t_2\in\cT_M$, $x\in\cB_M$ and $c=\intervalleoo{a}{b}\subset\intervalleoo{-A}{A}$. Observing that $|c_\la|\simeq(b-a)\nl$ and that $|(x)_\la|\simeq2\ml$, Lemma \ref{speed0} and space/time
stationarity shows the result. 

We also need $\Omega_4^{S,P}(\gamma,\la,\pi)$, defined for $\gamma>0$ as follows: for all
$q=1,\dots, n$, for all $\cM=((x_0,t_0),(X_q,T_q))$ such that $t_0\in\cT_M$ with $t_0<T_q<t_0+1$ and $x_0\in\cB_M\setminus\{X_q\}$, there holds
that $\abs{\Theta^{\la,\pi}_\cM-(T_q-t_0)}<\gamma$. Here, $\Theta^{\la,\pi}_\cM$ is defined as in Lemma \ref{micro fire 0} with
the seed processes family $(N^{S}_t(i))_{t\geq0,i\in\zz}$ and the propagation processes family $(N^{P}_t(i))_{t\geq0,i\in\zz}$. Lemma \ref{micro fire 0} directly implies that for any $\gamma>0$, $\proba{\Omega_4^{S,P}(\gamma,\la,\pi)}$ tends to $1$ as $\la\to$ and $\pi\to\infty$ in the regime $\cR(0)$.

We finally introduce the event 
\[\Omega(\alpha,\gamma,\la,\pi)=\Omega_M(\alpha)\cap\Omega^{S,P}_A(\la,\pi)\cap\Omega^{S,P}_1(\la,\pi)\cap\Omega^S_2(\la,\pi)\cap\Omega^S_3(\la,\pi)\cap\Omega^{S,P}_4(\gamma,\la,\pi).\]
We have shown that for any $\delta>0$, there exists $\alpha\in\intervalleoo{0}{1}$ such that for any $\gamma>0$, there holds that $\proba{\Omega(\alpha,\gamma,\la,\pi)}>1-\delta$ for all $(\la,\pi)$ sufficiently close to the regime $\cR(0)$.

\subsubsection{Heart of the proof}
We now handle the main part of the proof.

Consider the $A-$LFFP$(0)$. Observe that by construction, we have, for $c\in\cC_M$ and $x,y\in c$, $Z_t(x)=Z_t(y)$ for all $t\in\intervalleff{0}{T}$, thus we can introduce $Z_t(c)$.

If $x\in\cB_M$, it is at the boundary of two cells $c_-, c_+ \in\cC_M$  and then we set $Z_t(x_-)=Z_t(c_-)$ and $Z_t(x_+)=Z_t(c_+)$ for all $t\in\intervalleff{0}{T}$.

If $x\in\intervalleoo{-A}{A}\setminus\cB_M$, we put $Z_t(x_-)=Z_t(x_+)=Z_t(x)$ for all $t\in\intervalleff{0}{T}$.

For $x\in\cB_M$ and $t\geq0$ we set $\tH(x)=\min(H_t(x),1-Z_t(x),1-Z_t(x_-),1-Z_t(x_+))$.

Actually $Z_t(x)$ always equals either $Z_t(x_-)$ or $Z_t(x_+)$ and these can be distinct only at
a point where has occurred a microscopic fire (that is if $x = X_q$ for some $q\in\{1, \dots , n\}$ with $T_q< t$ and $Z_{T_q-} (X_q) < 1$).

For all $x\in \intervalleoo{-A}{A}$ and $t\in\intervalleff{0}{T}$, we put
\[\tau_t(x)=\sup\enstq{s\leq t}{Z_s(x_+)=Z_s(x_-)=Z_s(x)=0}\in\cT_M.\]
For $c\in\cC_M$ and $t\in\intervalleff{0}{T}$, we can define $\tau_t(c)$ as usual with the convention $Z_{0-}(x)=1$ for all $x\in\intervalleff{-A}{A}$.

Observe that
\begin{gather}
\text{for }x\not\in \cB_M,\, Z_t(x)=\min(t-\tau_t(x),1)\text{ for all }t\in\intervalleff{0}{T},\label{zbm}\\
\text{for } q = 1, \dots, n,\, Z_t(X_q) = \min (t - \tau_t (X_q), 1) \text{ for all } t\in\intervallefo{0}{T_q}.
\end{gather}
We also define, for all $t\in\intervalleff{0}{T}$, all $i\in I_\la^A$,
\[\rho^{\la,\pi}_t(i) =\sup\enstq{s\leq t}{\eta^{\la,\pi}_{\al s-}(i)=2}\]
with the convention $\eta^{\la,\pi}_{0-}(i)=2$ and $\eta^{\la,\pi}_{0}(i)=0$.

For $t\in\intervalleff{0}{T}$, consider the event
\[\Omega^{\la,\pi}_t=\left\{\forall s\in\intervalleff{0}{t}\setminus\bigcup_{q=1}^n\intervallefo{T_q}{T_q+\varkappa_{\la,\pi}},\,\forall c\in\cC_M,\,\forall i\in c_\la,\, \abs{\rho^{\la,\pi}_s(i)-\tau_s(c)}\leq\varkappa_{\la,\pi}\right\}.\]

\begin{lem}\label{lemconvergence}
Let $\alpha>\gamma>0$. For all $\la\in\intervalleoo{0}{\la_\alpha}$ and $\pi\geq1$ such that $\varkappa_{\la,\pi}\leq \alpha$, $\Omega_T^{\la,\pi}$ a.s. holds on $\Omega(\alpha,\gamma,\la,\pi)$.
\end{lem}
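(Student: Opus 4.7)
My plan is to proceed by induction on the matches $T_1<T_2<\dots<T_n$, showing that on $\Omega(\alpha,\gamma,\la,\pi)$ the relation $\Omega_{T_q}^{\la,\pi}$ implies $\Omega_{T_{q+1}}^{\la,\pi}$ (the initialization $\Omega_{T_0}^{\la,\pi}=\Omega_0^{\la,\pi}$ being trivial since all sites are vacant and $\tau_0(c)=0$ for every cell $c\in\cC_M$). This is the $p=0$ analogue of Stages 1--3 in the proof of Lemma \ref{heartlem}, and the fact that in the regime $\cR(0)$ macroscopic fires burn their whole cluster in rescaled time $\varkappa_{\la,\pi}\to 0$ makes the bookkeeping substantially lighter: no death positions $X_k^{D,\pm}$ or meeting points $\cB_M^2$ need to be tracked, and the set $\cE_{T_q}^{T_{q+1}}$ is empty, so no intermediate times $T_q^k$ arise between consecutive matches.

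Between $\al T_q$ and $\al T_{q+1}$ I will distinguish two elementary regimes. First, during $\intervalleff{T_q}{T_q+\varkappa_{\la,\pi}}$ the match on $\lfloor \nl X_q\rfloor$ has its short-time effect. If $Z_{T_q-}(X_q)=1$, then by induction every $i$ in the two cells $c_-,c_+\in\cC_M$ adjacent to $X_q$ satisfies $|\rho_{T_q-}^{\la,\pi}(i)-\tau_{T_q-}(c_\pm)|\le\varkappa_{\la,\pi}$, and since $T_q-\tau_{T_q-}(c_\pm)\ge 1+2\alpha$ (by $\Omega_M(\alpha)$), $\Omega_3^S(\la,\pi)$ forces $c_\pm$ to be completely filled just before $\al T_q$; together with $\Omega_A^{S,P}(\la,\pi)$ and the macroscopic description in {\bf Macro$(0)$} of Subsection \ref{application ffp}, the fire destroys exactly $c_-\cup\{X_q\}\cup c_+$ before $\al(T_q+\varkappa_{\la,\pi})$, which matches $\tau_{T_{q}}(c_-\cup\{X_q\}\cup c_+)=T_q$ in the limit. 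If $Z_{T_q-}(X_q)<1$, then Lemma \ref{micro fire 0} (via the coupling of Subsection \ref{coupling0}) confines the destroyed zone to $(X_q)_\la$, so the rest of each cell is untouched and the $\rho$-$\tau$ estimate for $i\in c_\la\setminus(X_q)_\la$ is preserved; the cell containing $X_q$ is not affected for the purposes of $\Omega_{T_q+\varkappa_{\la,\pi}}^{\la,\pi}$ since such $X_q$ sits in a barrier zone.

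Second, on $\intervalleff{T_q+\varkappa_{\la,\pi}}{T_{q+1}}$ no match falls in $I_A^\la$, so the $(\la,\pi,A)-$FFP evolves only through seeds. For each cell $c\in\cC_M$ I will verify the estimate $|\rho_s^{\la,\pi}(i)-\tau_s(c)|\le\varkappa_{\la,\pi}$ on $c_\la$ by combining (i) the inductive control at time $T_q+\varkappa_{\la,\pi}$, (ii) the fact that $\tau_s(c)=\tau_{T_q}(c)$ for all $s\in[T_q,T_{q+1})$ because no $Z(x_\pm)$ at $x\in\cB_M$ touches $0$ in between, and (iii) the crucial point that no microscopic barrier $X_m$ with $m\le q$ can leak a late destruction into neighbouring cells: this is exactly what Lemma \ref{persisplem0} (via $\Omega_1^{S,P}(\la,\pi)$) guarantees in the ping-pong configuration where alternating macroscopic fires from each side reach $X_m$, and what $\Omega_4^{S,P}(\gamma,\la,\pi)$ gives when no ping-pong occurs and the microscopic zone simply refills after the barrier height $\simeq T_m-\tau_{T_m-}(X_m)$.

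The delicate point, and what I expect to be the main obstacle, is Case 5 of Step 3 in the Stage 2 proof of Lemma \ref{heartlem} transposed to $\cR(0)$: ensuring that after a microscopic fire at $(X_m,T_m)$ the zone $[X_m]$ is correctly repopulated by time $\al\bigl(T_l(X_m)-\varkappa_{\la,\pi}\bigr)$, where $T_l(X_m)$ is the arrival of the next fire front through $X_m$ in the limit process. The subtlety is that the $p=0$ analogue of Lemma \ref{destroyed component} must be phrased via the alternating ping-pong sequence $(\cP,\cI)$ before the tag $t_K$, and one needs to check that the history of fires through $X_m$ up to $T_q$ actually realises a $(PP)$-configuration in $\cU_m$ so that $\Omega_\cP^{S,P}(\la,\pi)$ applies; once this is done, $\Omega_2^S(\la,\pi)$ supplies the vacant sentinels in $(X_m)_\la$ needed to prove that the microscopic fire remains confined and completes its refill on time. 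After this case, the remaining cases (vacant cell never yet touched, cell refilling from a fully destroyed macroscopic component, cell containing a barrier that has finished its ping-pong life) are routine combinations of $\Omega_3^S$, $\Omega_2^S$ and $\Omega_M(\alpha)$, and the induction closes. Finally, the extension from $\Omega_{T_n}^{\la,\pi}$ to $\Omega_T^{\la,\pi}$ is immediate since no match falls after $T_n$.
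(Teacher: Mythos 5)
Your overall architecture matches the paper's: induction on $q$ showing $\Omega^{\la,\pi}_{T_q}\Rightarrow\Omega^{\la,\pi}_{T_{q+1}}$, with $\Omega^{\la,\pi}_0$ trivial, the rough split into "what happens during $[\al T_q,\al(T_q+\varkappa_{\la,\pi})]$" and "pure seed growth on $[\al(T_q+\varkappa_{\la,\pi}),\al T_{q+1}]$", and the use of $\Omega_M(\alpha)$, $\Omega^{S,P}_A$, $\Omega^S_2$, $\Omega^S_3$, $\Omega^{S,P}_4$, Lemma~\ref{micro fire 0}, Lemma~\ref{persisplem0}, and the discussion in {\bf Macro$(0)$}/{\bf Micro$(0)$} of Subsection~\ref{application ffp}.

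There is, however, a concrete gap in the macroscopic case. When $Z_{T_q-}(X_q)=1$ you claim the fire destroys ``exactly $c_-\cup\{X_q\}\cup c_+$'', the two cells of $\cC_M$ adjacent to $X_q$. This is wrong in general: in the limit process $D_{T_q-}(X_q)=[a,b]$ with $a,b\in\cB_M\cup\{-A,A\}$, and this interval may contain several intermediate marks $x\in\cB_M\cap\intervalleoo{a}{b}$ whose barriers have expired, i.e.\ $\tH_{T_q-}(x)=0$; such $x$ do not stop the fire, so $[a,b]$ is typically strictly larger than $c_-\cup\{X_q\}\cup c_+$. For the induction to close one must show $\rho^{\la,\pi}_{T_q+\varkappa_{\la,\pi}}(i)\in[T_q,T_q+\varkappa_{\la,\pi}]$ for all $i\in c_\la$ and \emph{all} cells $c\subset\intervalleoo{a}{b}$, not just for $c_\pm$. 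This requires verifying (1) that every cell $c\subset\intervalleoo{a}{b}$ is completely occupied at time $\al T_q-$, (2) that every strip $(x)_\la$ with $x\in\cB_M\cap\intervalleoo{a}{b}$ and $\tH_{T_q-}(x)=0$ is also completely occupied at $\al T_q-$ (this is the content of the paper's Step~5, which itself splits into four sub-cases according to whether $x$ was never ignited, refilled after a macroscopic fire, refilled after a microscopic fire more than $1$ unit of time ago, or caught in a ping-pong), (3) that the fire therefore burns through all of $[a,b]_\la$ in rescaled time $\leq\varkappa_{\la,\pi}$, and (4) that it is stopped at $a$ and $b$ by vacant sentinels inside $(a)_\la$ and $(b)_\la$ (the paper's Step~6, where either $\Omega^{S,P}_4$ or the ping-pong event $\Omega^{S,P}_1$ supplies the sentinel depending on the fire history at the endpoint). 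You do cite the right lemmas for a single barrier, but your plan never assembles them into this systematic verification of the fire's full extent and stopping; as written, it would give the wrong value of $\tau_{T_q}(c)$ for cells $c\subset\intervalleoo{a}{b}\setminus(c_-\cup c_+)$, and the induction would not close.

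One smaller inaccuracy: you write $T_l(X_m)$ for the arrival time of the next fire at $X_m$, which is $p>0$ notation; at $p=0$ fronts are effectively instantaneous and that time is just $T_l$ (up to $\varkappa_{\la,\pi}$). This is harmless but signals that the $p>0$ argument has not been fully translated.
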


\begin{proof}
We work on $\Omega(\alpha,\gamma,\la,\pi)$ and assume that $\la\in\intervalleoo{0}{\la_\alpha}$ and $\pi\geq1$ are such that $\varkappa_{\la,\pi}\leq \alpha$. Clearly, $\tau_0(c) = 0$ and $\rho_0^{\la,\pi}(i) = 0$ for all $c\in\cC_M$ and all $i\in I^\la_A$, so that $\Omega^{\la,\pi}_0$ a.s. holds. We will show that for $q = 0,\dots, n - 1$, $\Omega^{\la,\pi}_{T_q}$ implies $\Omega_{T_{q+1}}^{\la,\pi}$. The extension to $\Omega_T^{\la,\pi}$ will be straightforward and will be omitted.

We thus fix $q\in\{0, \dots, n-1 \}$ and assume $\Omega_{T_q}^{\la,\pi}$. We repeatedly use below that for all $k\leq q$, on the
time interval $\intervalleoo{T_k}{T_{k+1}}$, there are no fires at all (in $\intervalleff{-A}{A}$) for the $A-$LFFP$(0)$ and, on $\Omega^{S,P}_A(\la,\pi)$, no burning tree at all (in $I^\la_A$) during $\intervalleoo{\al (T_k+\varkappa_{\la,\pi})}{\al T_{k+1}}$ for the $(\la,\pi,A)-$FFP.

Besides, $\eta^{\la,\pi}_{\al T_q-}(i)=\eta^{\la,\pi}_{\al T_q}(i)$ for all $i\in I^\la_A\setminus\{\lfloor\nl X_q\rfloor\}$ while 
\[\eta^{\la,\pi}_{\al T_q}(\lfloor\nl X_q\rfloor)=2\indiq{\eta^{\la,\pi}_{\al T_q-}(\lfloor\nl X_q\rfloor)=1}.\]

\md

\noindent{\bf Step 1.} Here we prove that, on $\Omega^{\la,\pi}_{T_q}$, for all $1\leq k<q$, if $D_{T_k-}(X_k)=\intervalleff{a}{b}$, for some $a<b$, $a,b\in\cB_M\cup\{-A,A\}$, then
\[\eta^{\la,\pi}_{\al T_k+T^k_{i-\lfloor\nl X_k\rfloor}}(i)=2\]
for all $i\in\intervalleff{a}{b}_\la$.

On the one hand, by construction, for all $c\in \cC_M$, $c\subset\intervalleoo{a}{b}$, we have $\tau_{T_k}(c)=T_k$. By $\Omega_{T_q}^{\la,\pi}\subset\Omega_{T_k+\varkappa_{\la,\pi}}^{\la,\pi}$, we deduce that $T_k\leq \rho^{\la,\pi}_{T_k+\varkappa_{\la,\pi}}(\lfloor\nl b\rfloor-\ml-1)\leq T_k+\varkappa_{\la,\pi}$.

On the other hand, recall Lemma \ref{propagation lemma 0}: on $\Omega^{P,2A,2A}_{\la,\pi}(X_k,T_k)$, a burning tree is either a front or has vacant neighbors. Recall that there is no burning tree at all in $I^\la_A$ at time $\al T_k-$. Assume for example that there is a site $i\in\intervalleentier{\lfloor\nl X_k\rfloor}{\lfloor\nl b\rfloor-\ml-1}$ such that $\eta^{\la,\pi}_{\al T_k+T^k_{i-\lfloor\nl X_k\rfloor}}(i)=0$. Then the fire starting at $\lfloor\nl X_k\rfloor$ at time $\al T_k$ does not affect thet zone $\intervalleentier{i}{\lfloor\nl A\rfloor}$, as seen in {\bf Macro$(0)$} in Subsection \ref{application ffp}. This especially implies that $\eta^{\la,\pi}_{\al t}(\lfloor\nl b\rfloor-\ml-1)\leq1$ for all $t\in\intervalleff{T_k}{T_k+\varkappa_{\la,\pi}}$ (because no other match falls on $I^\la_A$ during $\intervalleff{\al T_k}{\al(T_k+\varkappa_{\la,\pi})}$) whence $\rho^{\la,\pi}_{T_k+\varkappa_{\la,\pi}}(\lfloor\nl b\rfloor-\ml-1)<T_k$, a contradiction.

\md

\noindent{\bf Step 2.} We show that on $\Omega_{T_q}^{\la,\pi}$, for all $c\in\cC_M$, all $i\in c_\la$, 
\begin{equation}\label{ineq}
\underline{\eta}^{\la,\pi}_{\al T_q-}(i)\leq \eta^{\la,\pi}_{\al T_q-}(i)\leq \overline{\eta}^{\la,\pi}_{\al T_q-}(i)
\end{equation}
where
\begin{align*}
\underline{\eta}^{\la,\pi}_{\al T_q-}(i) &=\min(N^{S}_{\al T_q-}(i)-N^{S}_{\al \tau_{T_q-}(c)+\varkappa_{\la,\pi}}(i),1),\\
\overline{\eta}^{\la,\pi}_{\al T_q-}(i) &=\min(N^{S}_{\al T_q-}(i)-N^{S}_{\al \tau_{T_q-}(c)}(i),1).
\end{align*}

Indeed, thanks to $\Omega^{S,P}_A(\la,\pi)\cap\Omega_M(\alpha)$, there is no burning tree in $I^\la_A$ at time $\al T_q-$. Furthermore, for $c\in\cC_M$, by $\Omega^{\la,\pi}_{T_q}$, we have 
\[\tau_{T_q-}(c)\leq \rho^{\la,\pi}_{T_q-}(i)\leq \tau_{T_q-}(c)+\varkappa_{\la,\pi}\,\text{ for all }i\in c_\la.\]
By definition, no fire can affect the site $i$ during $\intervalleoo{\al \rho^{\la,\pi}_{T_q-}(i)}{\al T_q}$ whence  \eqref{ineq}.

\md

\noindent{\bf Step 3.} We show here that if $Z_{T_q-}(X_q)<1$, there exist $j_1,j_2\in(X_q)_\la$ such that
\begin{gather*}
j_1<\lfloor\nl X_q\rfloor<j_2\\
\eta^{\la,\pi}_{\al t}(j_1)=\eta^{\la,\pi}_{\al t}(j_2)=0\text{ for all } t\in\intervalleff{T_q}{T_q+\varkappa_{\la,\pi}}.
\end{gather*}

Indeed, since no match falls on $X_q$ during the time interval $\intervallefo{0}{T_q}$, we have $\tau_{T_q-}(X_q)=T_q-Z_{T_q-}(X_q)=T_k$, for some $0\leq k<q$. Observe that $Z_{T_q-}(X_q)<1$ implies that $T_q-\tau_{T_q-}(X_q)<1$.

\begin{itemize}
	\item If $1\leq k<q$, then, by construction, we have $X_q\in \mathring{D}_{T_k-}(X_k)=\intervalleoo{a}{b}$, for some $a,b\in\cB_M\cup\{-A,A\}$. By $\Omega_M(\alpha)$, we have $|a-X_k|\wedge|b-X_k|>2\alpha$ whence $(X_q)_\la\subset\intervalleff{a}{b}_\la$. We deduce from Step 1 that $\eta^{\la,\pi}_{\al T_k+T^k_{i-\lfloor\nl X_k\rfloor}}(i)=2$ for all $i\in(X_q)_\la$. Since we work on $\Omega_2^S(\la,\pi)$ and $T_k,T_q\in\cT_M$, we know that there are some sites
\[\lfloor\nl X_k\rfloor-\ml<j_1<\lfloor\nl X_k\rfloor<j_2<\lfloor\nl X_k\rfloor+\ml\]
such that no seed has fallen on $j_1$ and $j_2$ during $\intervalleff{\al \tau_{T_q-}(X_q)}{\al (T_q+\varkappa_{\la,\pi})}$. Since they are made vacant by the fire $k$ during the time interval $\intervallefo{\al T_k}{\al(T_k+\varkappa_{\la,\pi})}$, we deduce that they remain vacant during $\intervalleff{\al( T_k+\varkappa_{\la,\pi})}{\al(T_q+\varkappa_{\la,\pi})}\supset\intervalleff{\al T_q}{\al (T_q+\varkappa_{\la,\pi})}$.
	\item If $k=0$, that is if $\tau_{T_q-}(X_q)=0$ we deduce that $T_q<1$. We conclude using $\Omega_2^S(\la,\pi)$ that there are $j_1<\lfloor\nl X_q\rfloor<j_2$ with $j_1,j_2\in(X_q)_\la$ where no seed fall during $\intervalleff{0}{\al (T_q+\varkappa_{\la,\pi})}$. Since all the sites are vacant at time $0$, we deduce that $j_1$ and $j_2$ remain vacant until $\al(T_q+\varkappa_{\la,\pi})$.
\end{itemize}

\md

\noindent{\bf Step 4.} Next we check that if $Z_{T_q-}(c)=1$ for some $c\in\cC_M$, then 
\[\eta^{\la,\pi}_{\al T_q-}(i)=1\text{ for all }i\in c_\la.\]
Recalling \eqref{zbm}, we see that $Z_{T_q-}(c)=1$ implies that $T_q-\tau_{T_q-}(c)\geq 1$ and $T_q-\tau_{T_q-}(c)\geq 1+2\alpha$ by $\Omega_M(\alpha)$. Using Step 2, we see that for all $i\in c_\la$,
\[\eta^{\la,\pi}_{\al T_q-}(i)\geq \underline{\eta}^{\la,\pi}_{\al T_q}(i)=\min(N^{S}_{\al T_q-}(i)-N^{S}_{\al \tau_{T_q-}(c)+\varkappa_{\la,\pi}}(i),1).\]
We conclude using $\Omega_3^S(\la,\pi)$ that for all $i\in c_\la$, $\underline{\eta}^{\la,\pi}_{\al T_q}(i)=1$ whence $\eta^{\la,\pi}_{\al T_q-}(i)=1$, as desired.

\md

\noindent{\bf Step 5.} We now prove that if $\tH_{T_q-}(x)=0$ for some $x\in\cB_M$, then
\[\eta^{\la,\pi}_{\al T_q-}(i)=1 \text{ for all }i\in(x)_\la.\]

{\it Preliminary considerations.} Let $k\in\{1,\dots , n\}$ such that $x = X_k$, which is at the boundary of two cells $c_-,\,c_+\in\cC_M$. We know that $\tH_{T_q-}(x)=0$, whence $H_{T_{q}-} (x) = 0$ and $Z_{T_{q}-}(x) = Z_{T_{q}-} (c_+)= Z_{T_{q}-} (c_- ) = 1$. This implies that $T_{q}\geq1$ (because $Z_t(x) = t$ for all $t < 1$ and all $x\in\intervalleff{-A}{A}$) and thus $T_{q}\geq 1 +2\alpha$ due to $\Omega_M (\alpha)$.

No fire has concerned $j_g=\lfloor\nl X_k\rfloor-\ml-1\in(c_-)_\la$ during $\intervalleoo{\al\rho_{T_{q} -}^{\la,\pi} (j_g)}{\al T_q }$. By $\Omega_{T_q}^{\la,\pi}$, we deduce that $\tau_{T_q-}(c_-)\leq \rho_{T_{q} -}^{\la,\pi} (j_g)\leq \tau_{T_q-}(c_-)+\varkappa_{\la,\pi}$. Recalling \eqref{zbm}, $Z_{T_q-}(c_-)= 1$ implies that $\tau_{T_q-} (c_-) \leq  T_q-1$ whence, by $\Omega_M(\alpha)$, there holds that $\tau_{T_q-} (c_-)< T_q- 1 -2\alpha$. Using a similar
argument for $j_d=\lfloor\nl X_k\rfloor+\ml+1\in(c_+)_\la$, we conclude that no match falling outside $(X_k)_\la$ can affect $(X_k)_\la$ during $\intervalleoo{\al (T_q-1-\alpha)}{\al T_q}$ (because to affect $(X_k)_\la$, a match falling outside $(X_k)_\la$ needs to cross $j_d$ or $j_g$).

\md

{\it Case 1.} First assume that $k\geq q$. Then we know that no fire has fallen on $(X_k)_\la$ during $\intervallefo{0}{\al T_q}$. Due to the preliminary considerations, we deduce that no fire at all has
concerned $(X_k)_\la$ during $\intervalleoo{\al (T_q-1-\alpha)}{\al T_q}$. Using $\Omega_3^S(\la,\pi)$, we conclude that $(X_k)_\la$ is completely occupied at time $\al T_q-$.

\md

{\it Case 2.} Assume that $k< q$ and $Z_{T_k-}(X_k) = 1$, so that there already has been a macroscopic fire in $(X_k)_\la$ (at time $\al T_k$). Since $Z_{T_k}(X_k) = 0$ and $Z_{T_q-}(X_k) = 1$, we deduce that
$T_q-T_k\geq 1$, whence $T_q-T_k\geq 1+2\alpha$ as usual. Since there is no more burning tree in $(X_k)_\la$ at time $\al(T_k+\varkappa_{\la,\pi})$, thanks to $\Omega^{P,A}_{\la,\pi}(X_k,T_k)$, we conclude as in Case 1 that no fire at all has concerned $(X_k)_\la$ during $\intervalleoo{\al(T_q-1-\alpha)}{\al T_q}$, which implies the claim by $\Omega_3^S(\la,\pi)$.

\md

{\it Case 3.} Assume that $k< q$ and $Z_{T_k-}(X_k) < 1$ and $T_q-T_k\geq 1$, whence $T_q-T_k\geq 1+2\alpha$ due to $\Omega_M(\alpha)$. Then there already has been a microscopic fire in $(X_k)_\la$ (at time $\al T_k$). But there are no fire in $(X_k)_\la$ during $\intervalleoo{\al (T_k+\varkappa_{\la,\pi})}{\al T_q}\supset\intervalleoo{\al(T_q-1-\alpha)}{\al T_q}$ and we conclude as in Case 2.

\md

{\it Case 4.} Assume finally that $k < q$ and $Z_{T_k-}(X_k) < 1$ and $T_q- T_k < 1$, whence $T_q- T_k < 1-2\alpha$ due to $\Omega_M(\alpha)$. There has been a microscopic fire in $(X_k)_\la$ (at time $\al T_k$). Since $H_{T_q-}(X_k) = 0$, we deduce that $T_k + Z _{T_k} (X_k)\leq T_q$, whence $T_k + Z _{T_k} (X_k)\leq T_q-2\alpha$ by
$\Omega_M(\alpha)$. There is $l<k$ such that $\tau_{T_k-}(X_k)=T_l$. We set $\cM\coloneqq ((X_l,T_l),(X_k,T_k))$, recall Subsection \ref{height0} (if $l=0$ \ie $\tau_{T_k-}(X_k)=0$, set for example $X_0=0$).

We first show that
\begin{equation}\label{procstep5}
(\eta^{\la,\pi}_t(i))_{t\in\intervalleff{\al T_l}{\al (T_k+\varkappa_{\la,\pi})},i\in(X_k)_\la}=(\zeta^{\la,\pi,\cM}_t(i))_{t\in\intervalleff{\al T_l}{\al (T_k+\varkappa_{\la,\pi})},i\in(X_k)_\la}.
\end{equation}
Here, the process $(\zeta^{\la,\pi,\cM}_t(i))_{t\in\intervalleff{\al T_l}{\al (T_k+\varkappa_{\la,\pi})},i\in(X_k)_\la}$ is built as in Subsection \ref{height0} using the seed processes $(N^S_t(i))_{t\geq0,i\in\zz}$ and the propagation processes $(N^P_t(i))_{t\geq0,i\in\zz}$.

\begin{itemize}
	\item We first assume that $T_l\geq 1$, whence $T_l\geq1+2\alpha$ by $\Omega_M(\alpha)$. Since no match has fallen on $(X_k)_\la$ during $\intervalleff{0}{\al T_l}$ and since $Z_{T_l-}(X_k)=1$, the zone $(X_k)_\la$ is completely occupied at time $\al T_l-$, recall Case 1. Thus, $(\eta^{\la,\pi}_t(i))_{t\geq0,i\in\zz}$ and $(\zeta^{\la,\pi,\cM}_t(i))_{t\geq0,i\in\zz}$ are equal on $(X_k)_\la$ at time $\al T_l$. By Step 1, we deduce, that 
\[\eta^{\la,\pi}_{\al T_l+T^l_{i-\lfloor\nl X_l\rfloor}}(i)=2\text{ for all }i\in D_{T_l-}(X_l)_\la.\]
Since $(X_k)_\la\subset D_{T_l-}(X_l)_\la$, we deduce that $\eta^{\la,\pi}_{\al T_l+T^l_{i-\lfloor\nl X_l\rfloor}}(i)=2$ for all $i\in (X_k)_\la$. Observe that, with our coupling, the fire $l$ propagates according to the same processes in both cases. Since seeds fall on $(X_k)_\la$ according to the same processes and since $(\eta^{\la,\pi}_t(i))_{t\geq0,i\in\zz}$ and $(\zeta^{\la,\pi,\cM}_t(i))_{t\geq0,i\in\zz}$ evolve according to the same rules, we deduce that they remain equals on $(X_k)_\la$ during $\intervalleff{\al T_l}{\al(T_l+\varkappa_{\la,\pi})}$. Next, no fire affects the zone $(X_k)_\la$ during $\intervallefo{\al (T_l+\varkappa_{\la,\pi})}{\al T_k}$ (because to affect the zone $(X_k)_\la$, we need $Z_{s-}(c_-)=1$ or $Z_{s-}(c_+)=1$ for some $s\in\intervalleoo{T_l}{T_k}$ whereas $Z_{s}(c_-)=Z_{s}(c_+)=s-T_l$ for all $s\in\intervalleff{T_l}{T_k}$) and since seeds fall on $(X_k)_\la$ according to the same processes, they are again equal during this time interval. Finally, $C^P((\zeta^{\la,\pi,\cM}_t(i))_{t\geq0,i\in\zz},(X_k,T_k))\subset(X_k)_\la$, recall Lemma \ref{micro fire 0}. We deduce \eqref{procstep5} because the match falling on $\lfloor\nl X_k\rfloor$ at time $\al T_k$ destroys the same zone, since the two processes evolve with the same rules on $(X_k)_\la$.
	\item If $T_l<1$, then by construction $l=0$ and $\tau_{T_k-}(X_k)=0$. We also deduce \eqref{procstep5} using similar arguments as above (this case is easier).
\end{itemize}

Consider now the zone $C^P = C^P((\eta^{\la,\pi}_t(i))_{t\geq 0,i\in\zz} , (X_k,T_k))$ destroyed by the match falling on $\lfloor\nl X_k\rfloor$ at time $\al T_k$. This zone is completely occupied at time $\al (T_k + \Theta^{\la,\pi}_\cM)$: this follows from the definition of $\Theta^{\la,\pi}_\cM$, see Lemma \ref{micro fire 0},  from \eqref{procstep5} and from the preliminary considerations. Using $\Omega_4^S(\gamma,\la,\pi)$, we deduce that $T_k +\Theta^{\la,\pi}_\cM\leq T_k+Z_{T_k-}(X_k)+\gamma< T_q$, since $\gamma<\alpha$. Hence $C^P$ is completely occupied at time $\al T_q-$.

Consider now $i\in (X_k)_\la\setminus C^P$. Then $i$ has not been killed by the fire starting at $\lfloor\nl X_k\rfloor$. Thus $i$ cannot have been killed during $\intervalleoo{\al (T_q-1-\alpha)}{\al T_q}$ (due to the preliminary considerations) and we conclude, using $\Omega_3^S(\la,\pi)$, that $i$ is occupied at time $\al T_q-$. This implies the claim.

\md

\noindent{\bf Step 6.} Let us now prove that if $\tH_{T_q-}(x)>0$ and $Z_{T_q-}(x_+)=1$ for some $x\in\cB_M$, there is $i_1\in(x)_\la$ such that $\eta^{\la,\pi}_{\al t}(i_1)=0$ for all $t\in\intervalleff{T_q}{T_q+\varkappa_{\la,\pi}}$. Recall that $x$ is at the boundary of two cells $c_-,c_+$.

We have either $H_{T_q-}(x) > 0$ or $Z_{T_q-}(c_-) < 1$ (because $Z_{T_q-} (c_+) = 1$ by assumption). Clearly, $x = X_k$ for some $k<q$, with $Z_{T_k-}(X_k) < 1$ (else, we would have $H_t(x) = 0$ and
$Z_t(c_-) = Z_t (c_+)$ for all $t\in\intervallefo{0}{T_q}$). Thus, recalling \eqref{zbm}, $T_k - Z_{T_k-}(X_k) = \tau_{T_k-}(X_k)=T_l$, for some $l<k$.

As checked in case 4 in the previous Step, on $\Omega(\alpha,\gamma,\la,\pi)$, setting $\cM=((X_l,T_l),(X_k,T_k))$ (if $l=0$, set for example $X_0=0$)
\[(\eta^{\la,\pi}_t(i))_{t\in\intervalleff{\al T_l}{\al (T_k+\varkappa_{\la,\pi})},i\in(X_k)_\la}=(\zeta^{\la,\pi,\cM}_t(i))_{t\in\intervalleff{\al T_l}{\al (T_k+\varkappa_{\la,\pi})},i\in(X_k)_\la}\]
where the process $(\zeta^{\la,\pi,\cM}_t(i))_{t\in\intervalleff{\al T_l}{\al (T_k+\varkappa_{\la,\pi})},i\in(X_k)_\la}$ is built as in Subsection \ref{height0} using the seed processes $(N^S_t(i))_{t\geq0,i\in\zz}$ and the propagation processes $(N^P_t(i))_{t\geq0,i\in\zz}$. Hence, either $l=0$ whence $\eta^{\la,\pi}_0(i)=0$ for all $i\in(X_k)_\la$ or all the sites in $(X_k)_\la$ burn at least on time during $\intervallefo{\al T_l}{\al(T_l+\varkappa_{\la,\pi})}$.

\md

{\it Case 1.} Assume first that $H_{T_q-}(x) > 0$. Then by construction, there holds $T_k + Z_{T_k-}(X_k) > T_q > T_k$, whence by
$\Omega_M(\alpha)$, $T_k + Z_{T_k-}(X_k) > T_q+2\alpha > T_k + 4\alpha$.

Consider $C^P = C^P((\eta^{\la,\pi}_{t}(i))_{t\geq 0,i\in\zz} , (X_k,T_k))$ the zone destroyed by the match falling on $\lfloor\nl X_k\rfloor$ at time $\al T_k$. By $\Omega_2^S(\la,\pi)$ and \eqref{procstep5}, we have $C^P\subset (X_k)_\la$ (because $T_k-Z_{T_k-}(X_k)$ and $T_k$ belong to $\cT_M$, because $0<Z_{T_k-}(X_k) < 1$ and because all the sites in $(X_k)_\la$ have been made vacant during $\intervallefo{\al T_l}{\al(T_l+\varkappa_{\la,\pi})}$).

By Definition of $\Theta^{\la,\pi}_\cM$, see Lemma \ref{micro fire 0} and by \eqref{procstep5}, we deduce that $C^P$ is not completely occupied at time $\al(T_k + \Theta^{\la,\pi}_\cM)$ (because in both cases, seeds fall on $(X_k)_\la$ according to the same processes). 
But by $\Omega_4^{S,P}(\gamma,\la,\pi)$, we see that $\Theta^{\la,\pi}_\cM \geq Z_{T_k-}(X_k)-\gamma$, whence $T_k +\Theta^{\la,\pi}_\cM\geq T_k + Z_{T_k-}(X_k)-\gamma+2\alpha > T_q+\varkappa_{\la,\pi}$ since $\gamma<\alpha$ and $\varkappa_{\la,\pi}<\alpha$. All this implies that there is a vacant site in $C^P$ during $\intervalleff{\al T_q}{\al(T_q+\varkappa_{\la,\pi})}$.

\md

{\it Case 2.} Assume next that $H_{T_q-}(x) = 0$ and that $T_q- T_l < 1$ (whence $T_q- T_l < 1-2\alpha$).
\begin{itemize}
	\item If $l\geq1$, recall that a match has fallen (in the limit process) on $X_l\in\cB_M$ at time $T_l\in\cT_M$ with $X_k\in \mathring{D}_{T_l-}(X_l)$. Since $T_l$ and $T_q$
belong to $\cT_M$ and since their difference is smaller than $1$ by assumption, $\Omega_2^S(\la,\pi)$ guarantees us the existence of $i_1\in(X_k)_\la$, such that no seed fall on $i_1$ during $\intervalleff{\al T_l}{\al (T_q+\varkappa_{\la,\pi})}$. Since all the sites in $(X_k)_\la$ have been made vacant during the time interval $\intervalleff{\al T_l}{\al (T_l+\varkappa_{\la,\pi})}$ (see Step 1), one easily concludes that $i_1$ is vacant during $\intervalleff{\al T_q}{\al(T_q+\varkappa_{\la,\pi})}$.
	\item If $l=0$ that is if $0<T_q<1$, there holds $0<T_q<1-2\alpha$ by $\Omega_M(\alpha)$. We conclude using $\Omega_2^S(\la,\pi)$ that there is a site $i_1\in(X_k)_\la$ where no seed has fallen during $\intervalleff{0}{\al(T_q+\varkappa_{\la,\pi})}$ whence $\eta^{\la,\pi}_{\al s}(i_1)=0$ for all $s\in\intervalleff{\al T_q}{\al(T_q+\varkappa_{\la,\pi})}$, as desired.
\end{itemize}

\md

{\it Case 3.} Assume finally that $H_{T_q-}(x) = 0$ and that $T_q-[T_k- Z_{T_k-}(X_k)] \geq 1$, whence $T_q-[T_k-Z_{T_k-}(X_k)] \geq 1 +2\alpha$ by $\Omega_M(\alpha)$. Since $H_{T_q-}(x) = 0$, there holds
$Z_{T_q-}(c_-) < 1 = Z_{T_q-}(c_+)$ and $T_k + Z_{T_k-}(X_k) \leq T_q$, so that $T_k + Z_{T_k-}(X_k) \leq  T_q-2\alpha$.

We aim to use the event $\Omega^{S,P}_1(\la,\pi)$. We introduce 
\[t_0= T_k-Z_{T_k-}(X_k) = \tau_{T_k-}(X_k)=T_l.\]
Observe that $\tau_{T_k-}(c_-) = \tau_{T_k-}(c_+) = \tau_{T_k-}(x)$ because there has been no fire (exactly) at $x$ during $\intervallefo{0}{T_k}$. Thus $Z_{t_0-}(x) = Z_{t_0-}(x_-) = Z_{t_0-}(x_+) = 1$ and $Z_{t_0}(x) =
Z_{t_0}(c_-) = Z_{t_0}(c_+) = 0$ (using the convention $Z_{0-}(y)=1$ for all $y\in\intervalleff{-A}{A}$).

Set now $t_1= T_k$. Observe that $0 < t_1 - t_0 < 1$. Necessarily, $Z_t (c_-)$ has jumped to $0$ at least one time between $t_0$ and $T_q-$ (else, one would have $Z_{T_q-}(c_-) = 1$,
since $T_q- t_0\geq 1$ by assumption) and this jump occurs after $t_0 + 1 > t_1$ (since a jump of $Z_t (c_-)$ requires that $Z_t(c_-) = 1$, and since for all $t\in\intervallefo{t_0}{t_0 + 1}$, $Z_t(c_-) = t - t_0 < 1$).

We thus may denote by $t_2 < t_3 < \dots < t_K$, for some $K\geq2$, the successive times of jumps of the process $(Z_t (c_-), Z_t(c_+)$) during $\intervalleoo{t_0 + 1}{T_q}$ and say $x_2,\dots,x_K$ the corresponding locations of the fires. We also put $\e = 1$ if $t_2$ is a jump of $Z_t(c_+)$ and $\e = -1$ else. 

Then we observe that $Z_t (c_-)$ and $Z_t (c_+)$ do never jump to $0$ at the same time during $\intervalleoo{t_0}{T_q}$ (else, it would mean that they are killed by the same fire at some time $u$,
whence necessarily, $H_r(u) = 0$ and $Z_r(c_-)=Z_r(c_+)$ for all $r\in\intervalleoo{u}{T_q}$). Furthermore, there is always at least one jump of $(Z_t(c_-), Z_t(c_+))$ in any time interval of
length 1 (during $\intervallefo{t_0 + 1}{T_q}$), because else, $Z_t (c_+ )$ and $Z_t (c_-)$ would both become equal to $1$
and thus would remain equal forever.
Finally, observe that two jumps of $Z_t(c_-)$ cannot occur in a time interval of length $1$ (since a jump of $Z_t(c_-)$ requires that $Z_t(c_- ) = 1$) and the same thing holds for $Z_t(c_+)$.

Consequently, the family $\cP= \{\e; (x_0,t_0),(X_k,T_k),\dots , (x_K,t_K)\}$ necessarily satisfies the condition $(PP)$ of Subsection \ref{persis0}.

Next, there holds that $t_2 - t_1 < Z_{T_k-}(X_k) = t_1-t_0$, because else, we would have $H_{t_2-}(X_k) = 0$ and thus the fire destroying $c_+$ (or $c_-$) at time $t_2$ would also destroy $c_-$ (or $c_+$), we thus would have $Z_{t_2}(c_+) = Z{t_2}(c_-) = 0$, so that $Z_t(c_+)$ and $Z_t(c_-)$ would remain equal forever. Furthermore, we have $t_K<T_q<t_K+1$ because else, we would have $Z_{T_q-}(c_+)=Z_{T_q-}(c_-)=1$.

Finally, we check that 
\[(\eta^{\la,\pi}_{t}(i))_{ t\in\intervalleff{\al t_0}{\al (t_K+\varkappa_{\la,\pi})} ,i\in(X_k)_\la} = (\zeta^{\la,\pi,\cP}_{t} (i))_{ t\in\intervalleff{\al t_0}{\al(t_K+\varkappa_{\la,\pi})} ,i\in (X_k)_\la},\]
this last process being built upon the families $(N_t^{S} (i))_{t\geq 0,i\in\zz}$ and $(N_t^{P} (i))_{t\geq 0,i\in\zz}$ as in Subsection \ref{micro fire 0}. Indeed, seeds fall according to the same processes and fires propagate according to the same processes on $(X_k)_\la$. We already have checked that $(\eta^{\la,\pi}_{t}(i))_{ t\geq0 ,i\in\zz}$ and $(\zeta^{\la,\pi,\cP}_{t} (i))_{ t\geq0 ,i\in \zz}$ are equal on $(X_k)_\la$ during the time interval $\intervalleff{\al t_0}{\al(T_k+\varkappa_{\la,\pi})}$. Nothing happens on $(X_k)_\la$ during $\intervallefo{\al(T_k+\varkappa_{\la,\pi})}{\al t_2}$. In both cases (say $\e=-1$), a match falls on $\lfloor\nl x_2\rfloor\in\intervalleentier{-\lfloor\nl A\rfloor}{\lfloor\nl X_k\rfloor-\ml}$ at time $\al t_2$. This fire destroys destroys the zone containing $\lfloor\nl X_k\rfloor-\ml$ (by definition of $\zeta^{\la,\pi,\cP}$ and because, by construction, $D_{t_2-}(x_2)=\intervalleff{a}{X_k}$, for some $a\in\cB_M\cup\{-A\}$, whence  $\eta^{\la,\pi}_{\al t_2-}(j)=1$ for all $j\in\intervalleentier{\lfloor\nl x_2\rfloor}{\lfloor\nl X_m\rfloor-\ml}$, see Steps 4 and 5 above) at the same time, since with our coupling, the second fire spreads according to the same rules and to the same processes in both cases. This implies that $(\eta^{\la,\pi}_{t}(i))_{ t\geq0 ,i\in\zz}$ and $(\zeta^{\la,\pi,\cP}_{t} (i))_{ t\geq0 ,i\in \zz}$ are also equal on $(X_k)_\la$ during the time interval $\intervalleff{\al(T_k+\varkappa_{\la,\pi})}{\al(t_2+\varkappa_{\la,\pi})}$. And so on.

We thus can use $\Omega_1^{S,P}(\la,\pi)$ and conclude that there is a site $i_1$ in $(X_k)_\la$ which is vacant during $\intervalleff{\al (t_K+\varkappa_{\la,\pi})}{\al (t_K+1)}$ for $(\zeta^{\la,\pi,\cP}_{t} (i))_{ t\geq0 ,i\in \zz}$. Since seeds fall on $(X_k)_\la$ according to the same processes, we deduce that there is also a vacant site in $(X_k)_\la$ during $\intervalleff{\al (t_K+\varkappa_{\la,\pi})}{\al (t_K+1)} \subset\intervalleff{\al T_q}{\al(T_q+\varkappa_{\la,\pi})}$ for the $(\la,\pi,A)-$FFP, as desired.

\md

\noindent{\bf Step 7.} We now conclude. We put $z\coloneqq Z_{T_q-}(X_q)$ and consider separately the cases $z\in\intervalleoo{0}{1}$ and $z=1$. Observe that $z=0$ do never happens, since by construction, $Z_{T_q-}(X_q)=\min(Z_{T_{q-1}}(X_q)+T_q-T_{q-1},1)>0$ and since $T_q>T_{q-1}$.

\md

{\it Case $z\in\intervalleoo{0}{1}$.} Then in the $A-$LFFP$(0)$, we have $Z_{T_q-}(X_q)=Z_{T_q}(X_q)$ for all $x\in\intervalleoo{-A}{A}$ whence $\tau_{T_q-}(c)=\tau_{T_q}(c)=\tau_{T_q+\varkappa_{\la,\pi}}(c)$ for all $c\in \cC_M$. Using Step 3, as seen in {\bf Micro$(0)$} in Subsection \ref{application ffp}, we see that the match falling on $\lfloor\nl X_q\rfloor$ at time $\al T_q$ destroys nothing outside $\intervalleentier{j_1}{j_2}\subset(X_q)_\la$ and there is no more burning tree in $I^\la_A$ at time $\al(T_q+\varkappa_{\la,\pi})$. We deduce that $\rho^{\la,\pi}_{s}(i)=\rho^{\la,\pi}_{T_q}(i)$ for all $s\in\intervalleff{T_q}{T_q+\varkappa_{\la,\pi}}$ and all $i\not\in(X_q)_\la$. Thus, applying $\Omega_{T_q}^{\la,\pi}$, we deduce that for all $c\in\cC_M$ and all $i\in c_\la$,
\[\tau_{T_q+\varkappa_{\la,\pi}}(c)=\tau_{T_q}(c)\leq \rho^{\la,\pi}_{T_q}(i)=\rho^{\la,\pi}_{T_q+\varkappa_{\la,\pi}}(i)\leq \tau_{T_q}(c)+\varkappa_{\la,\pi}=\tau_{T_q+\varkappa_{\la,\pi}}(c)+\varkappa_{\la,\pi}.\]
Thus, on $\Omega(\alpha,\gamma,\la,\pi)$, $\Omega^{\la,\pi}_{T_q}$ implies $\Omega^{\la,\pi}_{T_q+\varkappa_{\la,\pi}}$. Since no match falls on $I^\la_A$ during $\intervalleoo{\al(T_q+\varkappa_{\la,\pi})}{\al T_{q+1}}$ and since $\eta^{\la,\pi}_{\al T_{q+1}-}(i)=\eta^{\la,\pi}_{\al T_{q+1}}(i)$ for all $i\neq\lfloor\nl X_{q+1}\rfloor$, we deduce that on $\Omega(\alpha,\gamma,\la,\pi)$, for all $c\in\cC_M$ and all $i\in c_\la$,
\[\rho^{\la,\pi}_{T_q+\varkappa_{\la,\pi}}(i)=\rho^{\la,\pi}_{T_{q+1}}(i)\text{ and }\tau_{T_q+\varkappa_{\la,\pi}}(c)=\tau_{T_{q+1}}(c).\]

All this implies that on $\Omega(\alpha,\gamma,\la,\pi)$, $\Omega^{\la,\pi}_{T_q}$ implies $\Omega_{T_{q+1}}^{\la,\pi}$ when $z\in\intervalleoo{0}{1}$.

\md

{\it Case $z=1$.} Then there are $a,b\in\cB_M\cup\{-A,A\}$ such that $D_{T_q-}(X_q)=\intervalleff{a}{b}$. We assume that $a,b\in\cB_M$, the other cases being treated similarly. By construction, we know that for all $c\in\cC_M$ with $c\subset\intervalleoo{a}{b}$, $Z_{T_q-}(c)=1$, for all $x\in\cB_M\cap\intervalleoo{a}{b}$, $\tH_{T_q-}(x)=0$ while finally $\tH_{T_q-}(a)>0$ and $\tH_{T_q-}(b)>0$.

For the $A-$LFFP$(0)$, we have
\begin{enumerate}[label=(\roman*)]
	\item $\tau_{T_q}(c)= T_q$ for all $c\in\cC_M$ with $c\subset\intervalleoo{a}{b}$,
	\item $\tau_{T_q}(c)=\tau_{T_q-}(c)$ for all $c\in\cC_M$ with $c\cap\intervalleoo{a}{b}=\emptyset$.
\end{enumerate}
Next, using Steps 4, 5, using Step 6 for $a$ (and a very similar result for $b$), we immediately check that the fire occurring on $\lfloor\nl X_q\rfloor$ at time $\al T_q$, as seen in {\bf Macro$(0)$} in Subsection \ref{application ffp},
\begin{itemize}
	\item destroys completely all the cells $c\in\cC_M$ with $c\subset\intervalleoo{a}{b}$,
	\item destroys completely all the zones $(x)_\la$ with $x\in\cB_M\cap\intervalleoo{a}{b}$,
	\item does not destroy completely $(a)_\la$ nor $(b)_\la$,
	\item does not destroy at all the sites $i\in I^\la_A$ with $i\not\in\intervalleentier{\lfloor\nl a\rfloor-\ml}{\lfloor\nl b\rfloor+\ml}$.
\end{itemize}
Consequently, we have, for all $c\in\cC_M$ with $c\subset\intervalleoo{a}{b}$ and all $i\in (c)_\la$,
\[\tau_{T_q+\varkappa_{\la,\pi}}(c)=\tau_{T_q}(c)=T_q\leq \rho^{\la,\pi}_{T_q+\varkappa_{\la,\pi}}(i)\leq T_q+\varkappa_{\la,\pi}=\tau_{T_q}(c)+\varkappa_{\la,\pi}=\tau_{T_q+\varkappa_{\la,\pi}}(c)+\varkappa_{\la,\pi},\]
while if $c\cap\intervalleoo{a}{b}=\emptyset$, for all $i\in (c)_\la$,
\begin{multline*}
\tau_{T_q+\varkappa_{\la,\pi}}(c)=\tau_{T_q}(c)=\tau_{T_q-}(c)\leq \rho^{\la,\pi}_{T_q-}(i)=\rho^{\la,\pi}_{T_q+\varkappa_{\la,\pi}}(i)\\
\leq \tau_{T_q-}(c)+\varkappa_{\la,\pi}=\tau_{T_q}(c)+\varkappa_{\la,\pi}=\tau_{T_q+\varkappa_{\la,\pi}}(c)+\varkappa_{\la,\pi}.
\end{multline*}
We conclude that when $z = 1$, $\Omega_{T_q}^{\la,\pi}$ implies $\Omega_{T_q+\varkappa_{\la,\pi}}^{\la,\pi}$. Since no match falls on $I^\la_A$ during $\intervallefo{\al(T_q+\varkappa_{\la,\pi})}{\al T_{q+1}}$ and since $\eta^{\la,\pi}_{\al T_{q+1}-}(i)=\eta^{\la,\pi}_{\al T_{q+1}}(i)$ for all $i\neq\lfloor\nl X_{q+1}\rfloor$, we deduce that on $\Omega(\alpha,\gamma,\la,\pi)$, $\Omega^{\la,\pi}_{T_q+\varkappa_{\la,\pi}}$ implies $\Omega_{T_{q+1}}^{\la,\pi}$. 

All this implies that on $\Omega(\alpha,\gamma,\la,\pi)$, $\Omega^{\la,\pi}_{T_q}$ implies $\Omega_{T_{q+1}}^{\la,\pi}$ when $z=1$. This completes the proof.
\end{proof}

\subsection{Proof of Theorem \ref{converge restriction} for $p=0$}
We finally give the proof of the Theorem \ref{converge restriction} in the case $p=0$. The proof is closely related to the proof in the case $p>0$, recall Subsection \ref{proofp>0}.
\begin{proof}
Let us fix $x_0\in\intervalleoo{-A}{A}$, $t_0\in\intervalleoo{0}{T}$ and $\varepsilon>0$. We will prove that with our coupling (see Subsection \ref{coupling0}), when $\la\to0$ and $\pi\to \infty$ in the regime $\cR(0)$, there holds that
\begin{enumerate}[label=(\alph*)]
	\item $\lim_{\la,\pi}\proba{\bdelta(D_{t_0}^{\la,\pi}(x_0),D_{t_0}(x_0))>\e}=0$;
	\item $\lim_{\la,\pi}\proba{\bdelta_T(D^{\la,\pi}(x_0),D(x_0))>\e}=0$;
	\item $\lim_{\la,\pi}\proba{\abs{Z_t^{\la,\pi}(x_0)-Z_t(x_0)}>\e}=0$;	
	\item $\lim_{\la,\pi}\proba{\int_0^T\abs{Z_t^{\la,\pi}(x_0)-Z_t(x_0)}\diff t>\e}=0$;
	\item $\lim_{\la,\pi}\proba{\abs{W^{\la,\pi}_{t_0}(x_0)-Z_{t_0}(x_0)}>\e}=0$, where
\[W^{\la,\pi}_{t_0}(x_0)=\left(\frac{\log(|C(\eta^{\la,\pi}_{\al t_0},\lfloor\nl x_0\rfloor)|)}{\log(1/\la)}\indiq{|C(\eta^{\la,\pi}_{\al t_0},\lfloor\nl x_0\rfloor)|\geq1}\right)\wedge1.\]
\end{enumerate}

These points will clearly imply the result.

First, we introduce the event $\Omega^{x_0,t_0}_{A,T}(\alpha,\la,\pi)$ on which 
\begin{enumerate}[label=(\roman*)]
	\item $x_0\not\in\cup_{y\in\cB_M}\intervalleoo{y-2\alpha}{y+2\alpha}$;
	\item for all $s\in\cT_M\cup\cS_M$ with $s\leq t_0$, there holds that $t_0-s>2\alpha$;
	\item if $t_0\neq 1$, for all $s\in\cT_M\cup\cS_M$ with $s\leq t_0$, there holds that $|t_0-(s+1)|>2\alpha$;
	\item if $t_0> 1$, for all $i\in I^\la_A$, $N^{S}_{\al t_0}(i)-N^{S}_{\al (t_0-1)}(i)>0$;
	\item if $t_c=t_0-\tau_{t_0-}(x_0)<1$, there are $i_1$ and $i_2$ such that
\[-\lfloor\la^{-(t_c+\alpha)}\rfloor<i_1<-\lfloor\la^{-(t_c-\alpha)}\rfloor
<0<\lfloor\la^{-(t_c-\alpha)}\rfloor<i_2<\lfloor\la^{-(t_c+\alpha)}\rfloor\]
and such that 
\begin{itemize}
	\item $N^{S}_{\al t_0}(\lfloor\nl x_0\rfloor+i_1)-N^{S}_{\al \tau_{t_0-}(x_0)}(\lfloor\nl x_0\rfloor+i_1)=0$ whereas $N^{S}_{\al t_0}(\lfloor\nl x_0\rfloor+i_2)-N^{S}_{\al \tau_{t_0-}(x_0)}(\lfloor\nl x_0\rfloor+i_2)=0$;
	\item for all $j\in\intervalleentier{-\lfloor\la^{-(t_c-\alpha)}\rfloor}{\lfloor\la^{-(t_c-\alpha)}\rfloor}$, $N^{S}_{\al t_0}(\lfloor\nl x_0\rfloor+j)-N^{S}_{\al (\tau_{t_0-}(x_0)+\varkappa_{\la,\pi})}(\lfloor\nl x_0\rfloor+j)>0$.
\end{itemize}
\end{enumerate}
Since $t_0-\tau_{t_0-}(x_0)=1$ occurs with positive probability only if $t_0=1$ (and $\tau_{t_0-}(x_0)=0$), the probability of the three first points clearly tend to $1$ when $\alpha$ tends to $0$. Since $(\tau_{t}(x_0))_{t\geq0}$ is independent of $(N^{S}_t(i))_{t\geq0,i\in\zz}$ and since $(\tau_{t}(x_0))_{t\geq0}\subset\cT_M\cup\cS_M$, the probability of the two last points also tend to $1$ as $\alpha\to0$ and $\la\to0$ and $\pi\to\infty$ in the regime $\cR(0)$, thanks to Lemma \ref{speed0}-4,6,7 and space/time stationarity (recall that $\varkappa_{\la,\pi}\to0$). All this implies that for all $\delta>0$, there is $\alpha>0$ such that $\proba{\Omega^{x_0,t_0}_{A,T}(\alpha,\la,\pi)}>1-\delta$ for all $(\la,\pi)$ sufficiently close to the regime $\cR(0)$.

Let us now fix $\delta>0$. In the rest of the proof, we consider $\alpha_0\in\intervalleoo{0}{\e}$, $\gamma_0\in\intervalleoo{0}{\alpha_0}$, $\la_0\in\intervalleoo{0}{1}$ and $\epsilon_0\in\intervalleoo{0}{1}$ such that for all $\la\in\intervalleoo{0}{\la_0}$ and all $\pi\geq1$ in such a way that $\nl/(\al\pi)<\epsilon_0$, we have
\[\proba{\Omega(\alpha_0,\gamma_0,\la,\pi)\cap\Omega^{x_0,t_0}_{A,T}(\alpha_0,\la,\pi)}>1-\delta.\]

We then consider $\la_1\in\intervalleoo{0}{\la_0}$ and $\epsilon_1\in\intervalleoo{0}{\epsilon_0}$ such that for all all $\la\in\intervalleoo{0}{\la_1}$ and all $\pi\geq1$ in such a way that $\nl/(\al\pi)<\epsilon_1$, we have
\begin{itemize}
	\item $\varkappa_{\la,\pi}\leq \alpha_0$;
	\item $\alpha_0+\log(\al)/\log(1/\la)<\e$;
	\item $4\ml/\nl\leq \e$;
	\item $1/(2\ml\la^{t_c-2\e})\leq \delta$ and $1/(2\ml\la^{t_c+\varkappa_{\la,\pi}})\leq \delta$ if $t_c<1$.
\end{itemize}
All this can be done properly by using the fact that $\varkappa_{\la,\pi}\to0$ and $\ml/\nl\to0$.

In the rest of the proof, we consider $\la\in\intervalleoo{0}{\la_1}$ and $\pi\geq1$ in such a way that $\nl/(\al\pi)\leq\epsilon_1$. Observe that, on $\Omega^{x_0,t_0}_{A,T}(\alpha_0,\la,\pi)$, we have $\tau_{t_0-}(x_0)=\tau_{t_0}(x_0)$ and $(x_0)_\la\cap\left(\bigcup_{x\in\cB_M}(x)_\la\right)=\emptyset$. We call $c_0\in\cC_M$ the cell containing $x_0$.

\md

\noindent{\bf Step 1.} As in Subsection \ref{proofp>0}, Steps 1 and 2, (a) (which holds for an arbitrary value of $t_0\in\intervalleoo{0}{T}$) implies (b) and (c) implies (d).

\md

\noindent{\bf Step 2.} Due to Lemma \ref{lemconvergence}, we know that, on $\Omega(\alpha_0,\gamma_0,\la,\pi)\cap\Omega_{A,T}^{x_0,t_0}(\alpha_0,\la,\pi)$, since $t_0>\tau_{t_0}(x_0)+3\alpha_0$, for all $i\in(x_0)_\la$,
\[\tau_{t_0}(c_0)\leq \rho^{\la,\pi}_{t_0}(i)\leq \tau_{t_0}(c_0)+\varkappa_{\la,\pi}.\]

For all $i\in (x_0)_\la$, since $\eta^{\la,\pi}_{\al t_0}(i)\leq 1$, there holds
\[\eta_ {\al t_0}^{\la,\pi}(i)=\min(N^{S,\la,\pi}_{\al t_0}(i)-N^{S,\la,\pi}_{\al \rho_{t_0}^{\la,\pi}(i)}(i),1).\]
Thus, for all $i\in(x_0)_\la$,	
\[\underline{\eta}^{\la,\pi}_{\al t_0}(i)\leq \eta_ {\al t_0}^{\la,\pi}(i)\leq \overline{\eta}^{\la,\pi}_{\al t_0}(i)\]
where
\begin{align*}
\underline{\eta}^{\la,\pi}_{\al t_0}(i) &\coloneqq \min(N^{S}_{\al t_0}(i)-N^{S}_{\al (\tau_{t_0}(x_0)+\varkappa_{\la,\pi})}(i),1),\\
\overline{\eta}^{\la,\pi}_{\al t_0}(i) &\coloneqq \min(N^{S}_{\al t_0}(i)-N^{S}_{\al \tau_{t_0}(x_0)}(i),1).
\end{align*}
We also recall that by construction, $(\tau_t(x_0))_{t\geq 0}$ is independent of $(N^{S}_ t(i))_{t\geq0,i\in\zz}$.

\md

\noindent{\bf Step 3.} Here we prove (e). We work on $\Omega(\alpha_0,\gamma_0,\la,\pi)\cap\Omega_{A,T}^{x_0,t_0}(\alpha_0,\la,\pi)$. By Step 2 and point (v) of the event $\Omega_{A,T}^{x_0,t_0}(\alpha_0,\la,\pi)$, we observe that if $0<t_c=t_0-\tau_{t_0}(x_0)<1$, then 
\begin{multline*}
\intervalleentier{\lfloor\nl x_0\rfloor-\lfloor\la^{-(t_c-\alpha_0)}\rfloor}{\lfloor\nl x_0\rfloor+\lfloor\la^{-(t_c-\alpha_0)}\rfloor}\\
\subset C(\underline{\eta}^{\la,\pi}_{\al t_0}, \lfloor\nl  x_0\rfloor)
\subset C(\eta^{\la,\pi}_{\al t_0}, \lfloor\nl  x_0\rfloor)\subset C(\overline{\eta}^{\la,\pi}_{\al t_0}, \lfloor\nl  x_0\rfloor)\\
\subset\intervalleentier{\lfloor\nl x_0\rfloor-\lfloor\la^{-(t_c+\alpha)}\rfloor}{\lfloor\nl x_0\rfloor+\lfloor\la^{-(t_c+\alpha)}\rfloor}.
\end{multline*}

Thus, this implies that
\[|W_{t_0}^{\la,\pi}(x_0)-(t_0-\tau_{t_0}(x_0))|\leq\alpha_0+\frac{\log(2)}{\log(1/\la)}<\e.\]

If now $t_0-\tau_{t_0}(x_0)>1$, then $t_0-\tau_{t_0}(x_0)>1+2\alpha_0$ thanks to  $\Omega_{A,T}^{x_0,t_0}(\alpha_0,\la,\pi)$. Then Step 2 and point (iv) of $\Omega_{A,T}^{x_0,t_0}(\alpha_0,\la,\pi)$  imply that $(x_0)_\la\subset C(\eta^{\la,\pi}_{\al t_0}, \lfloor\nl  x_0\rfloor)$ whence $|C(\eta^{\la,\pi}_{\al t_0}, \lfloor\nl  x_0\rfloor)|\geq  2\ml$. Consequently, 
\[W^{\la,\pi}_{t_0}(x_0)\geq 1-\frac{\log(\al)}{\log(1/\la)}>1-\e.\]

It only remains to study what happens when $t_0=1$. By construction, we have $\tau_{t_0}(x_0)=0$. Observe that on $\Omega(\alpha,\gamma,\la,\pi)$, a match falling on $\lfloor\nl X_k\rfloor$ at time $\al T_k\leq 1$, for some $k\in\{1,\dots,n\}$, does not affect the zone outside $(X_k)_\la$. Thus, for all $i\in(x_0)_\la$,
\[\eta^{\la,\pi}_{\al}(i)=\min(N^S_{\al}(i),1).\]
Using point (iv) of the event $\Omega^{x_0,t_0}_{A,T}(\alpha_0,\la,\pi)$, we deduce that
\[(x_0)_\la\subset C(\eta^{\la,\pi}_{\al t_0}, \lfloor\nl  x_0\rfloor)\]
and conclude that $|C(\eta^{\la,\pi}_{\al t_0}, \lfloor\nl  x_0\rfloor)|\geq  2\ml$, whence 
\[W^{\la,\pi}_{t_0}(x_0)\geq 1-\frac{\log(\al)}{\log(1/\la)}\geq 1-\e.\]

Recalling that $Z_{t_0}(x_0)=(t_0-\tau_{t_0}(x_0))\wedge1$, we have proved that
\[\proba{|W_{t_0}^{\la,\pi}(x_0)-Z_{t_0}(x_0))|<\e}\geq \proba{\Omega(\alpha_0,\gamma_0,\la,\pi)\cap\Omega^{x_0,t_0}_{A,T}(\alpha_0,\la,\pi)}\geq 1-\delta,\]
as desired.

\md

\noindent{\bf Step 4.} Here we prove (c). Recall that $Z^{\la,\pi}_{t_0}(x_0)=\left(-\frac{\log(1-K^{\la,\pi}_{t_0}(x_0))}{\log(1/\la)}\right)\wedge1$ where $K^{\la,\pi}_{t_0}(x_0)=(2\ml+1)^{-1}\abs{\enstq{i\in\intervalleentier{\lfloor\nl X_0\rfloor-\ml}{\lfloor\nl X_0\rfloor+\ml}}{\eta^{\la,\pi}_{\al t_0}(i)=1}}$. We work on $\Omega(\alpha_0,\gamma_0,\la,\pi)\cap\Omega^{x_0,t_0}_{A,T}(\alpha_0,\la,\pi)$ and set $t_c= t_0-\tau_{t_0}(x_0)$.

\md

{\it Case 1.} If $t_c\geq 1$, we have checked in Step 3 that $\eta^{\la,\pi}_{\al t_0}(i)=1$ for all $i\in(x_0)_\la$, whence $K^{\la,\pi}_{t_0}(x_0)=1$ and $Z^{\la,\pi}_{t_0}(x_0)=1$. 

\md

{\it Case 2.} If now $0<t_c< 1$, we deduce from Step 3 that
\[\underline{K}^{\la,\pi}_{t_0}(x_0)\leq K^{\la,\pi}_{t_0}(x_0)\leq \overline{K}^{\la,\pi}_{t_0}(x_0)\]
where
\begin{align*}
\underline{K}^{\la,\pi}_{t_0}(x_0) &=(2\ml+1)^{-1}\abs{\enstq{i\in\intervalleentier{\lfloor\nl X_0\rfloor-\ml}{\lfloor\nl x_0\rfloor+\ml}}{\underline{\eta}^{\la,\pi}_{\al t_0}(i)=1}},\\
\overline{K}^{\la,\pi}_{t_0}(x_0) &=(2\ml+1)^{-1}\abs{\enstq{i\in\intervalleentier{\lfloor\nl X_0\rfloor-\ml}{\lfloor\nl x_0\rfloor+\ml}}{\overline{\eta}^{\la,\pi}_{\al t_0}(i)=1}}.
\end{align*}

Recalling Step 5 in Subsection \ref{proofp>0}, we deduce that
\[\proba{ K^{\la,\pi}_{t_0}(x_0)\in\intervalleoo{1-\la^{t_c-\e}}{1-\la^{t_c+\e}}}\geq1-c\delta,\]
for some constant $c>0$, whence 
\[\proba{Z^{\la,\pi}_{t_0}(x_0)\in\intervalleoo{t_c-\e}{t_c+\e}}\geq1-c\delta.\]
This is nothing but the goal, since $Z_{t_0}(x_0)=t_0-\tau_{t_0}(x_0)=t_c$ as soon as $Z_{t_0}(x_0)<1$.

\md

\noindent{\bf Step 5.} It remains to prove (a). On $\Omega(\alpha_0,\gamma_0,\la,\pi)\cap\Omega^{x_0,t_0}_{A,T}(\alpha_0,\la,\pi)$, we check that
\begin{enumerate}[label=(\roman*)]
	\item If $Z_{t_0}(x_0)<1$, then $D_{t_0}(x_0)=\{x_0\}$ and $C(\eta^{\la,\pi}_{\al t_0},\lfloor\nl x_0\rfloor)\subset (x_0)_\la$ (see Step 3 above), whence $D^{\la,\pi}_{t_0}(x_0)\subset\intervalleff{x_0-\ml/\nl}{x_0+\ml/\nl}$. We deduce that $\bdelta(D^{\la,\pi}_{t_0}(x_0),D_{t_0}(x_0))\leq 2\ml/\nl$.
	\item If $Z_{t_0}(x_0)=1$ and $D_{t_0}(x_0)=\intervalleff{a}{b}$, for some $a,b\in\cB_M\cup\{-A,A\}$, then 
\begin{itemize}
	\item for $c\in\cC_M$ with $c\subset\intervalleoo{a}{b}$, $\eta^{\la,\pi}_{\al t_0}(i)=1$ for all $i\in c_\la$ (see Step 4 of the preceeding proof);
	\item for $x\in\cB_M\cap\intervalleoo{a}{b}$, $\eta^{\la,\pi}_{\al t_0}(i)=1$ for all $i\in (x)_\la$ (see Step 5 of the preceeding proof);
	\item there are $i\in(a)_\la$ and $j\in(b)_\la$ such that $\eta^{\la,\pi}_{\al t_0}(i)=\eta^{\la,\pi}_{\al t_0}(j)=0$ (see Step 6 of the preceeding proof);
\end{itemize}
so that
\[\intervalleentier{\lfloor\nl a\rfloor+\ml}{\lfloor\nl b\rfloor-\ml}\subset C(\eta^{\la,\pi}_{\al t_0},\lfloor\nl x_0\rfloor)\subset \intervalleentier{\lfloor\nl a\rfloor-\ml}{\lfloor\nl b\rfloor+\ml}\]
and thus 
\[\intervalleff{a+\ml/\nl}{b-\ml/\nl}\subset D^{\la,\pi}_{t_0}(x_0)\subset \intervalleff{a-\ml/\nl}{b+\ml/\nl},\]
whence $\bdelta(D^{\la,\pi}_{t_0}(x_0),D_{t_0}(x_0))\leq 4\ml/\nl$.
\end{enumerate}

Thus, on $\Omega(\alpha_0,\gamma_0,\la,\pi)\cap\Omega^{x_0,t_0}_{A,T}(\alpha_0,\la,\pi)$, we always have $\bdelta(D^{\la,\pi}_{t_0}(x_0),D_{t_0}(x_0))\leq 4\ml/\nl$. We conclude that
\[\proba{\bdelta(D^{\la,\pi}_{t_0}(x_0),D_{t_0}(x_0))\leq \e}\geq \proba{\Omega(\alpha_0,\gamma_0,\la,\pi)\cap\Omega^{x_0,t_0}_{A,T}(\alpha_0,\la,\pi)}\geq 1-\delta.\]
This concludes the proof.\qedhere
\end{proof}

\subsection{Cluster size distribution when $p=0$}

The aim of this section is to prove Corollary \ref{cor1} when $p=0$. We first recall a result of [\cite{bfnew}, Lemma 3.11.1].

\begin{lem}\label{zunif0}
Let $(Z_t(x),H_t(x),F_t(x))_{t\geq 0, x\in \rr}$ be a LFFP$(0)$ and consider $(D_t(x))_{t\geq 0, x\in \rr}$ the associated process.  There
are some constants $0<c_1<c_2$ and $0 < \kappa_1<\kappa_2$ such that the
following estimates hold.
\begin{enumerate}[label = (\roman*)]
        \item For any $t\in (1,\infty)$, any $x\in \rr$, any $z\in [0,1)$, $\proba{Z_t(x)=z}=0$.
        \item For any $t\in[0,\infty)$, any $B>0$, any $x\in\rr$, $\proba{|D_t(x)|=B}=0$.
        \item For all $t\in [0,\infty)$, all $x\in \rr$, all $B>0$, $\proba{|D_t(x)|\geq B}
\leq c_2 e^{-\kappa_1 B}$.
        \item For all $t\in [\frac{3}{2},\infty)$, all $x\in\rr$, all $B>0$, $\proba{|D_t(x)|\geq B}
\geq c_1 e^{-\kappa_2 B}$.
        \item For all $t\in[5/2,\infty)$, all $0\leq a < b < 1$, all $x\in \rr$,  \[c_1(b-a)\leq \proba{Z_t(x)\in \intervalleff{a}{b}}   \leq c_2 (b-a).\]
\end{enumerate}
\end{lem}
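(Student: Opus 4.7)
The plan is to mirror the proof of Lemma \ref{zunif} (the $p>0$ analog already proved in the paper), taking advantage of the major simplification that in the LFFP$(0)$ fires are instantaneous, so the cone structures $\Lambda^p, \mathcal{V}^p$ collapse to horizontal lines and the auxiliary process $\mathscr{D}$ reduces to $D$. By translation invariance, we fix $x=0$ throughout.

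For points (i) and (ii), one observes that $Z_t(0)=z\in[0,1)$ for some $t>1$ forces a fire to cross $0$ exactly at time $t-z$, which requires the existence of a mark of $\pi_M$ lying on a deterministic line, an event of probability zero; similarly for $|D_t(0)|=B$. For point (iii), the same observation used in Proposition \ref{restriction limite} (Step~1) gives that for every mark $(X,\tau)$ of $\pi_M$, either $H_s(X)>0$ or $Z_s(X)<1$ throughout $[\tau,\tau+1/2)$, so that
\[\{|D_t(0)|\geq B\} \subset \bigl\{\pi_M([0,B/2]\times[t-1/4,t])=0\bigr\}\cup\bigl\{\pi_M([-B/2,0]\times[t-1/4,t])=0\bigr\},\]
and the Poisson bound $\proba{|D_t(0)|\geq B}\leq 2e^{-B/8}$ follows.

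For point (iv), I would imitate the construction of $\Omega_{t,B}$ from the proof of Lemma \ref{zunif}-(iv), but now \emph{without} the slope/propagation constraints, since $p=0$. Concretely, for $t\geq 3/2$, I would cover $[-5/4,B+5/4]$ by disjoint boxes $C_k$ of length $\Delta\simeq 1/16$ in space during the time slab $[t-5/4,t-1]$, together with a forbidden rectangle $[-5/4,B+5/4]\times[t-1,t]$, and demand that exactly one match of $\pi_M$ falls in the central third $D_k$ of each $C_k$. On the resulting event, every mark in $D_k$ creates a barrier (microscopic zone) of width $\simeq 1/2$ that shields $[0,B]$ from every external macroscopic fire that could reach it during $[t-1,t]$, while the forbidden rectangle ensures no fire is ignited inside $[0,B]$ itself during $[t-1,t]$. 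Since $t-1\geq 1/2$, the zone $[0,B]$ regenerates to $Z_{t-1}\equiv1$ (or remains so) and then remains connected up to time $t$. Independence of the boxes yields $\proba{\Omega_{t,B}}\geq c_1e^{-\kappa_2 B}$.

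For point (v), I would first establish the analog of Lemma \ref{zunif}-(v)--(vi) for $p=0$ — i.e.\ $\proba{|D_t(0)|\geq B}$ admits both exponential bounds with constants independent of $t\geq 3/2$ — which follows from (iii) and (iv) above. Then, exactly as in Step (vii) of the proof of Lemma \ref{zunif}, write $\{Z_t(0)\in[a,b]\} = \{X_{t,a,b}\geq 1\}$ where
\[X_{t,a,b}=\int_{t-b}^{t-a}\int_{\rr}\indiq{y\in D_{s-}(0)}\,\pi_M(\diff y,\diff s).\]
The upper bound comes from $\proba{X_{t,a,b}\geq 1}\leq\mathbb{E}[X_{t,a,b}]=\int_{t-b}^{t-a}\mathbb{E}[|D_s(0)|]\diff s\leq C(b-a)$ via (iii). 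For the lower bound, one notes that $D_{t-b}(0)$ is non-decreasing in time until the next fire crosses $0$, hence $\{\pi_M(D_{t-b}(0)\times[t-b,t-a])\geq 1\}\subset\{X_{t,a,b}\geq 1\}$; combining independence of $D_{t-b}(0)$ from $\pi_M$ restricted to $(t-b,\infty)\times\rr$ with (iv) applied at time $t-b\geq 3/2$ (which requires $t\geq 5/2$) yields $\proba{Z_t(0)\in[a,b]}\geq c(1-e^{-(b-a)})\geq c(b-a)/2$.

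The main obstacle is the construction for (iv): one must size the boxes $C_k,D_k$ and the forbidden rectangle carefully so that the barriers genuinely prevent any potentially macroscopic fire originating outside $[-5/4,B+5/4]\times[t-1,t]$ from connecting to $[0,B]$ during the crucial interval $[t-1,t]$. In the $p=0$ setting the absence of propagation delay simplifies the bookkeeping considerably compared to Lemma \ref{zunif}-(iv), but one still needs the barriers to persist long enough (which is why the height of a barrier created at time $\tau$ equals $\tau$, forcing the boxes to sit in $[t-5/4,t-1]$ rather than earlier).
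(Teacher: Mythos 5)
Your overall plan — transpose the $p>0$ argument of Lemma \ref{zunif} to the $p=0$ setting, using that $\Lambda^0,\cV^0$ degenerate to horizontal lines — is reasonable, and note the paper itself does not reprove this lemma but simply cites \cite{bfnew}, Lemma~3.11.1. Points (i), (ii) and (iii) are fine as you sketch them, and point (v) follows from (iii)--(iv) exactly as in part (vii) of the proof of Lemma \ref{zunif} (with $\mathscr{D}_t=D_t$ and no $B^2$ term since the forbidden rectangle $[-B/2,B/2]\times[t-pB/2,t]$ has zero area when $p=0$).

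Point (iv), however, has a genuine gap. Your claim that ``the height of a barrier created at time $\tau$ equals $\tau$'' is wrong for the LFFP$(0)$ (you are thinking of the LFFP$(\infty,z_0)$). As Step~1 of the proof of Proposition \ref{restriction limite} shows, a mark $(X,\tau)$ with $Z_{\tau-}(X)=z$ gives a barrier that persists only on $\intervallefo{\tau}{\tau+\max(z,1-z)}$ if $z<1$, and on $\intervallefo{\tau}{\tau+1}$ if $z=1$; the guaranteed duration is therefore only $1/2$, not $\tau$, and never exceeds $1$. Consequently, if all your barrier marks lie in the slab $[t-5/4,t-1]$, every barrier can have faded by time $t-1/2$. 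In the $p>0$ proof this is harmless precisely \emph{because} of the finite propagation speed: an external fire headed for $[0,B]$ at a time in $(t-1,t]$ was ignited earlier and crosses the barrier band at a time within $[t-5/4,t-1]$, when the barriers are still up. With $p=0$ the fire is instantaneous, so a macroscopic fire falling at some $(Y,S)$ with $S\in(t-1/2,t]$ and $Y$ outside $[-5/4,B+5/4]$ can cross the whole band at time $S$, after the barriers have disappeared, and destroy $[0,B]$. The ``forbidden rectangle'' $[-5/4,B+5/4]\times[t-1,t]$ does nothing against this: it only prevents ignitions \emph{inside} the band, not fires arriving from outside. To fix (iv) you need barriers that are active throughout the whole interval $(t-1,t]$, which cannot be achieved by a single layer of marks. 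One way is to place, on each side of $[0,B]$, a short chain of marks at positions moving away from $[0,B]$ (so each new mark sits behind the barrier left by the previous one and hence cannot ignite a macroscopic fire reaching $[0,B]$) and at times spaced less than $1/2$ apart, starting no later than $t-1$ and ending no earlier than $t-1/2$; three or four marks per side suffice. Since the number of marks used is bounded independently of $B$, the resulting event still has probability $\geq c_1 e^{-\kappa_2 B}$, which is what (iv) requires.
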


We now handle the
\begin{proof}[Proof of Corollary \ref{cor1} when $p=0$.]
For each $\la\in\intervalleoo{0}{1}$ and each $\pi\geq 1$, consider a $(\la,\pi)-$FFP $(\eta^{\la,\pi}_t(i))_{t\geq0,i\in\zz}$. Let also $(Z_t(x),H_t(x),F_t(x))_{t\geq0,x\in\rr}$ be a LFFP$(0)$ and consider the corresponding process $(D_t(x))_{t\geq0,x\in\rr}$.

\md

\noindent{\bf Point (b).} Using Lemma \ref{zunif0}-(iii)-(iv) and recalling that $|C(\eta^{\la,\pi}_{\al t},0)|/\nl=|D_t^{\la,\pi}(0)|$, it suffices to check that for all $t\geq 3/2$ and all $B>0$, when $\la\to0$ and $\pi\to0$ in the regime $\cR(0)$,
\[\lim_{\la,\pi}\proba{|D_t^{\la,\pi}(0)|\geq B}=\proba{|D_t(0)|\geq B}.\]
This follows from Theorem \ref{converge}-2, which implies that $|D_t^{\la,\pi}(0)|$ goes in law to $\abs{D_t(0)}$ and from Lemma \ref{zunif0}-(ii).

\md

\noindent{\bf Point (a).} Due to Lemma \ref{zunif0}-(v) we only need that for all $0<a<b<1$, all $t\geq 5/2$, when $\la\to0$ and $\pi\to0$ in the regime $\cR(0)$,
\[\lim_{\la,\pi}\proba{|C(\eta^{\la,\pi}_{\al t},0)|\in\intervalleff{\la^{-a}}{\la^{-b}}}=\proba{Z_t(0)\in\intervalleff{a}{b}}.\]
But using Theorem \ref{converge}-3 and Lemma \ref{zunif0}-(i), we know that
\[\lim_{\la,\pi}\proba{\frac{\log(|C(\eta^{\la,\pi}_{\al t},0)|)}{\log(1/\la)}\indiq{|C(\eta^{\la,\pi}_{\al t},0)|\geq1}\in\intervalleff{ a}{b}}=\proba{Z_t(0)\in\intervalleff{a}{b}}\]
as $\la\to0$ and $\pi\to0$ in the regime $\cR(0)$. One immediately concludes.
\end{proof}

\clearpage
\renewcommand{\refname}{Bibliography}

\end{document}